\documentclass[10pt,leqno]{article}

\usepackage{lmodern}
\usepackage[french]{babel}
\usepackage{amsmath,amssymb,amscd}
\usepackage{a4wide}
\usepackage[utf8]{inputenc}

\usepackage[all]{xy} 
\usepackage{fancyhdr}
\usepackage[T1]{fontenc}
\usepackage{hyperref}
\usepackage{chngcntr}
\usepackage{comment}

\newenvironment{paragr}[1][]{\refstepcounter{subsubsection} \noindent \textbf{\thesubsubsection . \ #1}}{\medskip}

\newenvironment{theoreme}{ \medskip\refstepcounter{theo}  \noindent\textbf{Th\'eor\`eme \thetheo}. ---\em}{\em \medskip}
\newenvironment{proposition}{\medskip\refstepcounter{theo}   \noindent\textbf{Proposition \thetheo}. ---\em}{\em\medskip}
\newenvironment{corollaire}{\medskip\refstepcounter{theo}  \noindent\textbf{Corollaire \thetheo}. ---\em}{\em\medskip}

\newenvironment{lemme}{\medskip\refstepcounter{theo}   \noindent\textbf{Lemme \thetheo}. ---\em}{\em\medskip}

\newenvironment{preuve}[1][]{\noindent \textbf{Démonstration.} #1 --- }{\hfill
  \ensuremath{\square} \medskip}

\newenvironment{remarque}{\medskip\refstepcounter{theo}  \noindent\textbf{Remarque \thetheo}. ---}{\medskip}

\DeclareMathOperator{\rs}{rss}

\DeclareMathOperator{\vol}{vol}

\DeclareMathOperator{\Ad}{Ad}

\DeclareMathOperator{\End}{End}
\DeclareMathOperator{\ad}{ad}

\DeclareMathOperator{\supp}{supp}

\DeclareMathOperator{\Cent}{Cent}

\DeclareMathOperator{\Aut}{Aut}
\DeclareMathOperator{\Gal}{Gal}
\DeclareMathOperator{\Hom}{Hom}

\DeclareMathOperator{\Int}{Int}

\DeclareMathOperator{\Id}{Id}

\DeclareMathOperator{\Res}{Res}
\DeclareMathOperator{\cusp}{cusp}

\DeclareMathOperator{\Spec}{Spec}

\DeclareMathOperator{\trace}{trace}

\DeclareMathOperator{\Prd}{Prd}
\DeclareMathOperator{\Nrd}{Nrd}
\DeclareMathOperator{\Trd}{Trd}
\DeclareMathOperator{\unip}{unip}
\DeclareMathOperator{\nilp}{nilp}

\DeclareMathOperator{\diag}{diag}
\newcommand{\ZZ}{\mathbb{Z}}

\newcommand{\GmD}{\mathbb{G}_{m,D}}
\newcommand{\SG}{\mathfrak{S}}
\newcommand{\NN}{\mathbb{N}}
\newcommand{\RR}{\mathbb{R}}
\newcommand{\AAA}{\mathbb{A}}
\newcommand{\CC}{\mathbb{C}}

\newcommand{\QQ}{\mathbb{Q}}

\newcommand{\ga}{\gamma}

\newcommand{\oc}{\mathcal{O}}

\newcommand{\Sc}{\mathcal{S}}

\newcommand{\uc}{\mathcal{U}}
\newcommand{\vc}{\mathcal{V}}

\newcommand{\yc}{\mathcal{Y}}
\newcommand{\lc}{\mathcal{L}}

\newcommand{\fc}{\mathcal{F}}
\newcommand{\pc}{\mathcal{P}}

\newcommand{\nc}{\mathcal{N}}

\newcommand{\bc}{\mathcal{B}}

\newcommand{\PP}{\mathbf{P}}

\newcommand{\gl}{\mathfrak{gl}}
\newcommand{\ggo}{\mathfrak{g}}

\newcommand{\of}{\mathfrak{o}}

\newcommand{\mgo}{\mathfrak{m}}
\newcommand{\ngo}{\mathfrak{n}}
\newcommand{\ago}{\mathfrak{a}}
\newcommand{\cgo}{\mathfrak{c}}
\newcommand{\pgo}{\mathfrak{p}}

\newcommand{\sgo}{\mathfrak{s}}

\newcommand{\hgo}{\mathfrak{h}}

\newcommand{\vgo}{\mathfrak{v}}

\newcommand{\Xgo}{\mathfrak{X}}
\newcommand{\Fgo}{\mathfrak{F}}

\newcommand{\ov}{\overline}
\newcommand{\wt}{\widetilde}

\newcommand{\al}{\alpha}
\newcommand{\be}{\beta}

\newcommand{\om}{\omega}
\newcommand{\Om}{\Omega}

\newcommand{\La}{\Lambda}
\newcommand{\la}{\lambda}

\newcommand{\Ga}{\Gamma}
\newcommand{\back}{\backslash}

\newcommand{\Cc}{C_c^\infty}

\newcommand{\bg}{\langle}
\newcommand{\bd}{\rangle}

\newcommand{\eps}{\varepsilon}

\renewcommand{\leq}{\leqslant}
\renewcommand{\geq}{\geqslant}

\newcommand{\tlK}{\widetilde{K}}

\title{Une formule des traces pour les espaces symétriques.\\ Le cas de  Guo-Jacquet.}
\author{Pierre-Henri Chaudouard et Huajie Li}
\date{ }

\begin{document}
\counterwithin{equation}{subsubsection}

\maketitle

\begin{abstract}
Nous établissons, sur le modèle de la formule des traces d'Arthur, une formule des traces générale pour les espaces symétriques associés à la variété des involutions d'un module de type fini sur une algèbre à division centrale sur un corps de nombres $F$. Une telle formule devrait être utile pour étudier le spectre automorphe de ces espaces symétriques et les liens profonds entre périodes linéaires et valeurs spéciales de fonctions $L$ standard en leur centre de symétrie. De fait, notre formule donne une identité entre des distributions spectrales, qui généralisent les caractères relatifs construits sur les périodes linéaires, et des distributions géométriques, qui sont une extension des intégrales orbitales relatives. Nous montrons que les distributions spectrales sont, en un certain sens, asymptotiques à des intégrales  tronquées des composantes du noyau automorphe associées à une donnée cuspidale: ceci donne une prise sur ces distributions et a permis, dans un article annexe, d'exprimer certaines de ces distributions sous forme d'un caractère relatif pondéré.
Les distributions géométriques attachées à des données géométriques \og semi-simples régulières \fg{} s'expriment sous forme d'intégrales orbitales relatives pondérées. En général, pour une donnée géométrique non régulière, on introduit une procédure de descente au centralisateur ce qui permet d'exprimer toute distribution géométrique à la contribution nilpotente de formules des traces infinitésimales étudiées dans des articles antérieurs. 
\end{abstract}

{\selectlanguage{english}
\begin{abstract}
In the spirit of Arthur's trace formula, we establish a general trace formula for symmetric spaces associated with the variety of involutions of a finite  $D$-module where $D$ is a division algebra central over a number field $F$. Such a formula should be useful for studying the automorphic spectrum of these symmetric spaces and the deep links between linear periods and special values of standard $L$-functions at their center of symmetry. Indeed, our formula yields an identity between spectral distributions, which generalize relative characters built on linear periods, and geometric distributions, which are an extension of relative orbital integrals. We show that the spectral distributions are, in a certain sense, asymptotic to truncated integrals of the components of the automorphic kernel associated with a cuspidal datum: this provides a handle on these distributions and has allowed, in a companion paper, to express some of these distributions in the form of a weighted relative character.
The geometric distributions attached to "regular semi-simple" geometric data are expressed as  weighted relative orbital integrals. In general, for non-regular geometric data, we introduce a procedure of descent to the centralizer, which allows us to express any geometric distribution in terms of the nilpotent contribution of infinitesimal trace formulas studied in previous papers.
\end{abstract}
}

\tableofcontents

\section{Introduction}

\subsection{Espace symétrique et formule des traces relative}

\begin{paragr}\label{S-intro:D} Soit $F$ un corps de nombres et $\AAA$ l'anneau des adèles de $F$.   Soit $n\geq 1$ un entier et  $D$ une algèbre à division centrale sur $F$ d'indice $d$, c'est-à-dire $\dim_F(D)=d^2$.  Soit $G$ le groupe des automorphismes du $D$-module à droite $D^n$. C'est donc un  groupe algébrique affine, réductif, connexe et défini sur $F$. Soit $Z_G$ son centre. Soit
  \begin{align}\label{eq-intro:S}
    S=\{g \in G\mid g^2=1\}.
  \end{align}
  C'est une sous-variété fermée de $G$ qui est munie de l'action du groupe $G$ par conjugaison. La variété $S$ n'est ni connexe ni même, si $n>1$,  de dimension pure. Une  composante connexe de $S$ est indexée par un  couple d'entiers naturels $(p,q)$ tels que $p+q=n$ où $p$ et $q$ sont les dimensions des espaces propres des éléments de $S$ respectivement pour les valeurs propres $1$ et $-1$.
  Chaque composante connexe est  un espace homogène sous l'action de $G$ et possède un  point rationnel. Soit $\Xi$ un système de représentants dans $S(F)$ des classes de conjugaison de $G(F)$. L'application,  qui, à $\theta\in \Xi$, associe $S_\theta$ l'orbite de $\theta$ sous l'action de $G$, induit une bijection de $\Xi$ sur   l'ensemble (fini) des composantes connexes de $S$. On note $G^\theta$ le centralisateur de $\theta$ dans $G$.
\end{paragr}

\begin{paragr}     Soit $\Sc(S(\AAA))$, resp.  $\Sc(G(\AAA))$, l'espace des  fonctions de  Schwartz sur $S(\AAA)$, resp. $G(\AAA)$. Pour tout $\theta\in \Xi$,  on fixe $dh$ une mesure de Haar sur $G^\theta(\AAA)$. Pour tout $\Phi\in \Sc(S(\AAA)) $,  on fixe  une fonction $\Phi_\theta\in \Sc(G(\AAA))$ telle que
  \begin{align}\label{eq:intro-fxi}
    \forall g\in G(\AAA),    \   \Phi(g\theta g^{-1})= \int_{G^\theta(\AAA)} \Phi_\theta(gh)\,dh .
  \end{align}
\end{paragr}

\begin{paragr}[Périodes.] ---  Soit $(\pi,V_\pi)$ une représentation automorphe cuspidale  de $G(\AAA)$ de caractère central trivial sur $Z_G(\AAA)$.  Soit $\theta\in \Xi$. On définit la  $G^\theta$-période $\pc_{G^\theta}$  comme la forme linéaire sur $V_\pi$ donnée par 
 \begin{align}\label{eq-intro:H per}
    \phi\in V_\pi \mapsto \pc_{G^\theta}(\phi)=\int_{Z_G(\AAA)G^\theta(F)\back G^\theta(\AAA)  }  \phi(h)\, dh.
  \end{align}
  La fonction $\varphi$ étant cuspidale, l'intégrande est à décroissance rapide et  l'intégrale ci-dessus est bien définie.  Nous dirons que la représentation $\pi$ est $G^\theta$-distinguée si la forme linéaire $ \pc_{G^\theta}$ est non nulle sur $V_\pi$.   
\end{paragr}

\begin{paragr}   Pour tout $\Phi\in \Sc(S(\AAA)) $, on définit  une fonction $K_\Phi$ sur $G(F)Z_G(\AAA)\back G(\AAA)$ par
  \begin{align*}
    K_\Phi(g)&=\sum_{\theta \in S(F)} \Phi(g^{-1} \theta g)\\
    &=\sum_{\theta\in \Xi} \int_{G^\theta(F)\back G^\theta(\AAA)  } K_{\Phi_\theta}(g,h)\, dh
  \end{align*}
  où l'on introduit, pour toute fonction $f\in \Sc(G(\AAA))$ et  tous $x,y\in G(\AAA)$, le \og noyau automorphe \fg{}
  \begin{align}\label{eq-intro:noyau autom}
     K_{f}(x,y)=\sum_{\gamma\in G(F)} f(x^{-1}\ga y).
  \end{align}

  Le groupe $G(\AAA)$ agit par translation à droite sur l'espace engendré par les fonctions $K_\Phi$ lorsque $\Phi$ varie dans $\Sc(S(\AAA))$. Jacquet a soulevé le problème de décomposer spectralement  cet espace, cf.  \cite{Jac-Edin} par exemple.  Le  calcul suivant, qui vaut pour tout $\phi\in V_\pi$
  \begin{align*}
    \int_{Z_G (\AAA)G(F)\back G(\AAA)    } K_\Phi(g) \overline{\phi(g)}\, dg=\sum_{\theta\in \Xi}\int_{  G^\theta(\AAA)  \back G(\AAA)   } \Phi(g^{-1} \theta g) \overline{\int_{Z_G(\AAA)G^\theta(F)\back G^\theta(\AAA)  }  \phi(hg)\, dh} \, dg,
  \end{align*}
  montre que   la composante cuspidale de $K_\Phi$ est formée des représentations automorphes cuspidales    qui sont distinguées pour au moins un des groupes  $G^\theta$ avec  $\theta\in \Xi$.
\end{paragr}

\begin{paragr}[Formule des traces relatives.] --- La formule des traces relative devrait fournir un moyen commode d'exploiter l'information spectrale contenue dans les fonctions $K_\Phi$. Il y a plusieurs manières de présenter cette formule hypothétique. Si l'on suit  Sakellaridis dans  \cite[section 1.2.2]{Sak-Vietnam},  il s'agit d'étudier  le produit  hermitien
  \begin{align*}
 (\Phi,\Phi')\in \Sc(S(\AAA))^2 \mapsto  \bg K_\Phi, K_{\Phi'}  \bd= \int_{ G(F)Z_G(\AAA)\back G(\AAA)   } \overline{K_\Phi(g)} K_{\Phi'}(g)\, dg
  \end{align*}
qui,  en général,  ne converge pas. Néanmoins, en ignorant les problèmes de convergence et   en introduisant la fonction  $\Phi_\theta^*\in \Sc(G(\AAA))$ définie par $\Phi_\theta^*(g)=\overline{\Phi_\theta(g^{-1})}$, on peut écrire
  \begin{align*}
    \bg K_\Phi, K_{\Phi'}  \bd&= \sum_{\theta,\theta'\in \Xi}  \int_{Z_G (\AAA)G(F)\back G(\AAA)    }   \int_{ G^\theta(F) \back G^\theta(\AAA)   }  K_{\Phi_\theta^*}(h,g) \, dh   \int_{ G^{\theta'}(F)\back G^{\theta'}(\AAA)   } K_{\Phi_{\theta'}}(g, h ')\, dh' dg\\
    &=  \sum_{\theta,\theta'\in \Xi}   \int_{ Z_G (\AAA) G^\theta(F) \back G^\theta(\AAA)   }        \int_{ G^{\theta'}(F)\back G^{\theta'}(\AAA)   }     \int_{G(\AAA)    } \Phi_\theta^*(h^{-1}g) K_{\Phi_{\theta'}}(g, h ') \, dg dh'dh\\
   & =  \sum_{\theta,\theta'\in \Xi}   \int_{ Z_G (\AAA) G^\theta(F) \back G^\theta(\AAA)   }        \int_{ G^{\theta'}(F)\back G^{\theta'}(\AAA)   }   K_{\Phi_{\theta,\theta'}   } (h,h') \, dh'dh
  \end{align*}
  où $\Phi_{\theta,\theta'}$ désigne le produit de convolution $\Phi_\theta^**\Phi_{\theta'}$.

On est donc amené à étudier l'intégrale \emph{a priori} divergente
\begin{align}\label{eq-intro:integ noyau}
   \int_{ Z_G (\AAA) G^\theta(F) \back G^\theta(\AAA)   }        \int_{ G^{\theta'}(F)\back G^{\theta'}(\AAA)   }   K_{f  } (h,h') \, dh'dh
\end{align}
pour une fonction $f\in \Sc(G(\AAA))$ arbitraire, ce qui est plutôt le point de vue avancé par Jacquet dans \cite{Jac-Edin}. Le but de  cet article, est de construire une variante modifiée du noyau automorphe   $K_{f  } (h,h')$, qui dépend d'un paramètre auxiliaire,  pour laquelle l'intégrale ci-dessus converge absolument. Il est alors possible d'obtenir,  sur le modèle de la formule des traces d'Arthur  cf. \cite{Ar-cours}, une identité remarquable entre deux  sommes de distributions sur $\Sc(G(\AAA))$ de nature différente:   d'une part, des distributions  spectrales qui, en première approximation, sont des caractères \og relatifs \fg{} construits à l'aide des périodes $\pc_{G^\theta}$ et $\pc_{G^{\theta'}}$  et, d'autre part, des distributions géométriques, qui généralisent les intégrales orbitales \og relatives \fg{} associées aux doubles classes dans  $G^{\theta}(F)\back G(F)/ G^{\theta'}(F)$. Précisons toutefois que, dès qu'on prend $n>2$,  la formule que nous obtenons, si elle est invariante pour l'action à droite de $G^{\theta'}(\AAA)$ sur $\Sc(G(\AAA))$,  est non-invariante pour celle à gauche de  $G^{\theta}(\AAA)$. On peut aussi regarder notre formule comme une distribution sur $\Sc(S_{\theta'}(\AAA))$ : dans ce cas, la formule n'est pas invariante pour l'action par conjugaison de $G^\theta$ sur  $\Sc(S_{\theta'}(\AAA))$.  Autrement dit, la formule des traces pour les espaces symétriques (en tout cas ceux que nous considérons) semble bien plus proche de la formule des traces d'Arthur que les formules des traces pour certaines variétés qui apparaissent dans les conjectures de Gan-Gross-Prasad: rappelons que, dans son travail \cite{Z3}, Zydor obtient  des distributions invariantes. En pratique, les principaux termes géométriques et spectraux de notre formule sont respectivement des intégrales orbitales relatives pondérées et des caractères relatifs pondérés.
\end{paragr}

\begin{paragr} Avant de rentrer plus en détail dans les résultats, nous voudrions donner quelques motivations pour l'étude de ces formules des traces relatives. Revenons à la question fondamentale de la décomposition spectrale automorphe des espaces symétriques $G/G^\theta$, essentiellement celle de la décomposition spectrale de l'espace engendré par les fonctions $K_\Phi$.  Suite aux travaux, entre autres, de Nadler \cite{Nadler}, Sakerallidis \cite{Sak-spherical}, Sakellaridis-Venkatesh  \cite{SaVen} et Takeda \cite{takeda2023}, on s'attend à ce que le spectre automorphe de $G/G^\theta$  soit contrôlé par un certain groupe dual au travers duquel se factorisent les paramètres de Langlands  (bien sûr conjecturaux à l'heure actuelle) des représentations automorphes de $G(\AAA)$ qui apparaissent dans le spectre automorphe de $G/G^\theta$. Pour un peu plus de précisions, on renvoie le lecteur à  la discussion dans \cite[section 1.1]{caractererelatifpondere}.

  Supposons provisoirement  $D=F$,  de  sorte que $G$ est le groupe général linéaire $GL(n)$ sur $F$. Les conjectures ci-dessus devraient alors admettre  une formulation concrète. Illustrons cela  en écartant le cas trivial $n=1$ pour lequel l'espace symétrique est réduit à un point. Pour $n\geq 2$,  Friedberg-Jacquet dans   \cite{FriJa} observent  que  le spectre automorphe cuspidal de $G/G^\theta$ est vide sauf si $n$ est pair et $\theta$ est conjugué à la matrice
  \begin{align*}
  \theta_0=  \begin{pmatrix}
      I_{n/2} & 0 \\ 0 & -I_{n/2}
    \end{pmatrix}.
  \end{align*}
  De plus, dans cas, Friedberg-Jacquet  montrent qu'une représentation automorphe cuspidale $\pi$ de $G(\AAA)=GL(n,\AAA)$ apparaît dans le spectre de $G/G^{\theta_0}$ si et seulement si la représentation $\pi$ est de type symplectique et vérifie  $L(1/2,\pi)\not=0$ où $L(s,\pi)$ est la fonction $L$ standard. La condition d'être de type  symplectique se traduit par le fait que la fonction $L(s,\pi,\La^2)$ de carré extérieur a un pôle en $s=1$. Bien sûr, l'étude ne s'arrête pas au spectre cuspidal.  Considérons, par exemple, la situation étudiée dans  \cite{caractererelatifpondere}, pour laquelle  $n=2p+1$ avec $p\geq 1$ et
  \begin{align*}
  \theta=  \begin{pmatrix}
      I_{p+1} & 0 \\ 0 & -I_{p}
    \end{pmatrix}.                         
  \end{align*}
  Soit $\sigma$ la représentation de $G(\AAA)=GL(2p+1,\AAA)$ induite à partir de la représentation $1\boxtimes \pi $ du sous-groupe de Levi  $GL(1,\AAA)\times GL(2p,\AAA)$ où $1$ est la représentation triviale de $GL(1,\AAA)$ et $\pi$ est une représentation automorphe cuspidale de $GL(2p,\AAA)$. Alors, d'après \cite{caractererelatifpondere}, la représentation $\sigma$ appartient au spectre automorphe  de $G/G^\theta$ si et seulement si $\pi$ est de type symplectique.

  Revenons maintenant au cas d'une algèbre à division $D$ centrale sur $F$ quelconque d'indice $d$. On dispose de la correspondance globale de Jacquet-Langlands entre les représentations automorphes de $G(\AAA)$ et celles de $G^*(\AAA)$ avec  $G^*=GL(nd)$. Elle a été établie en toute généralité par Badulescu et Renard, cf. \cite{Badu,BaduR}. Cette correspondance peut servir de substitut aux paramètres globaux conjecturaux de Langlands.  Il est  alors tentant de prédire des liens entre la présence d'une représentation automorphe $\pi$ de $G(\AAA)$ dans le spectre automorphe de la variété $S$  et l'apparition de son transfert de Jacquet-Langlands dans celui de  $S^*$, la variété analogue attachée à $G^*$. Pour des résultats concernant le cas $n$ pair et la composante associée à l'élément $\theta_0$ ci-desssus, on renvoie le lecteur au travail récent de  Matringe-Offen-Yang  \cite[théorème 1.3]{matringe2025}. Une  façon d'aborder la question est d'utiliser une comparaison entre la  formule des traces relative pour $G$ et celle pour $G^*$ (comme, par exemple,  suggéré dans \cite{Zha2}). À l'instar de la correspondance de Jacquet-Langlands qui s'incarne dans des identités locales entre les caractères des représentations des groupes $G(\AAA)$ et $G^*(\AAA)$, identités   qui sont duales d'identités entre intégrales orbitales locales (dans une perspective plus large, à l'image de  la  théorie de l'endoscopie pour la formule des traces d'Arthur), il devrait exister  des identités entre intégrales orbitales relatives  duales d'identités  entre caractères relatifs.  Cette approche devrait donner également des factorisations explicites des périodes, dans l'esprit des formules d'Ichino-Ikeda,  en terme de fonctionnelles locales et de valeurs spéciales de fonctions $L$.

  La formule des traces relative pour $S^* $ et $G^*$  devrait également jouer un rôle pour comprendre le spectre automorphe d'autres espaces symétriques.  Ainsi, soit $E/F$ une extension quadratique  de corps de nombres dont on note $\eta$ le caractère quadratique de $\AAA^\times$  associé  par la théorie du corps de classe. On suppose de plus que l'algèbre matricielle  $M(n,D)$ contient $E$. Cela entraîne que   $nd$ est pair. Soit  $H$ le centralisateur dans $G$ de $E$. Soit $\pi$ une représentation automorphe cuspidale $\pi$ de $G(\AAA)$  et soit $\pi^*$ son transfert de Jacquet-Langlands à $G^*$. Alors  $\pi^* $ est une représentation automorphe de $G^*(\AAA)$, qu'on suppose, en outre,  cuspidale. Matringe-Offen-Yang, dans \cite[théorème 1.4]{matringe2025}, prouvent le résultat suivant,  conjecturé par   Guo-Jacquet dans \cite{Guo}, qui généralise un fameux résultat de Waldspurger \cite{Walds85}, cf. aussi \cite{Ja86}: si la représentation $\pi$ apparaît dans le spectre automorphe de $G/H$  alors  $\pi^*$ et $\pi^*\otimes\eta\circ\det$ apparaissent dans le spectre automorphe de $G^*/(G^*)^{\theta}$ avec
  \begin{align*}
  \theta=  \begin{pmatrix}
      I_{nd/2} & 0 \\ 0 & -I_{nd/2}
    \end{pmatrix}.
  \end{align*}
  Précisons que Matringe-Offen-Yang n'utilisent pas dans leur travail la formule des traces relative mais étudient certaines périodes tronquées de séries d'Eisenstein. Ces troncatures jouent aussi un rôle pour nous: dans l'article, nous relions les variantes modifiées des noyaux automorphes à des troncatures de ces noyaux.   Matringe-Offen-Yang démontrent  également une réciproque de  leur théorème,  plus difficile à formuler. Bien sûr,  on aimerait outrepasser l'hypothèse  $\pi^*$ cuspidale, voire décrire tout le spectre automorphe de   $G/H$ et factoriser les périodes en termes d'objets locaux. À cet égard, la formule des traces relatives devrait être un outil puissant (cf. \cite{Ja86, Jacquet-Chen,FMW,Zha2,Xue-Zhang}   pour des résultats suggestifs). 
\end{paragr}

\subsection{Présentation des résultats}

\begin{paragr}
  On continue avec les notations de la section précédente. Soit $G(\AAA)^1$ le noyau du module du  déterminant sur $G(\AAA)$. On a une décomposition  $G(\AAA)=G(\AAA)^1\times A_G^\infty$  où $A_G^\infty$ est un certain sous-groupe central de $G(\AAA)$, cf. § \ref{S:compact}. On fixe $P_0\subset G$ un sous-groupe parabolique défini sur $F$ minimal. On fixe aussi $M_0$ un facteur de Levi de $P_0$ défini sur $F$. On prend $\theta,\theta'\in \Xi$. On peut et on va supposer que $\theta$ et $\theta'$ sont des éléments de $M_0(F)$ qui commutent. On pose  $[G^\theta]= G(F)\back G^\theta(\AAA)$ et  $[G^\theta]^G=G^\theta(F)\back (G^\theta(\AAA)\cap G(\AAA)^1)$. Cette notation vaut aussi pour $\theta'$. Le problème de la formule des traces relatives est alors de donner une façon d'intégrer sur son domaine de définition l'application
  \begin{align*}
    (x,y)\in [G^{\theta'}]^G \times [G^\theta]\mapsto K_f(x,y)
  \end{align*}
  où $f\in \Sc(G(\AAA))$. Pour $n=1$, le quotient $[G^{\theta'}]^G$ est compact et l'application ci-dessus est à décroissance rapide. En particulier, elle est intégrable sur son domaine de définition.   En général, cette application est à décroissance rapide en une des variables mais à croissance lente en l'autre ce qui ne suffit pas pour garantir la convergence de l'intégrale sur $[G^{\theta'}]^G \times [G^\theta]$. Inspirés par les travaux d'Arthur sur la formule des traces, cf. \cite{ar1,Ar-cours}, nous introduisons un noyau modifié    $K^T_f(x,y)$ qui dépend, d'une part, du choix d'un sous-groupe compact maximal de $G(\AAA)$ en bonne position relative à $M_0$ et, d'autre part, d'un paramètre auxiliaire $T$ qui vit dans un certain cône d'un espace vectoriel réel; la définition est donnée dans la  sous-section \ref{ssec:maj noy tronq} où ce noyau modifié est plutôt noté $K^{T,\theta',\theta}_f(x,y)$ car la modification utilisée dépend du choix de  $\theta'$ et $\theta$, cf. remarque \ref{rq-intro:permutation}. Ce noyau modifié admet deux décompositions. La première est spectrale et repose sur l'interprétation de  $K_f(x,y)$ comme noyau de l'opérateur de convolution à droite $R(f)$  sur   l'espace $L^2([G])$ des fonctions de carré intégrable sur $[G]=G(F)\back G(\AAA)$.  La décomposition due à Langlands  
  \begin{align}\label{eq-intro:L2chi}
      L^2([G])=\hat\oplus_{ \chi\in \Xgo(G)}  L^2_\chi([G]),
  \end{align}
  selon l'ensemble $\Xgo(G)$ des données cuspidales  de $G$,  en sous-espaces fermés  invariants (ainsi que celle de certains espaces analogues  associés à des sous-groupes paraboliques de $G$) donne une décomposition
   \begin{align}\label{eq-intro:KTspec}
    K^T_f(x,y)=\sum_{\chi\in \Xgo(G)} K_{\chi,f}^T(x,y).
  \end{align}
  
  La seconde est géométrique et est introduite en \eqref{eq:sum-of}: c'est une somme indexée par les points rationnels du quotient catégorique $\cgo$ de $G$ par l'action à gauche et à droite respectivement de $G^{\theta'}$ et $G^\theta$:
  \begin{align}\label{eq-intro:KTgeo}
    K^T_f(x,y)=\sum_{\of \in \cgo(F)} K_{\of,f}^T(x,y).
  \end{align}
  
 Voici maintenant l'un des principaux résultats de convergence que nous obtenons.

 \begin{theoreme}\label{thm-intro:1} (pour des versions plus fortes et plus précises, cf. théorèmes \ref{thm:cv-spec} et \ref{thm:cv-geo})
    Pour toute fonction $f\in \Sc(G(\AAA))$ et tout $T$ dans un certain cône, on a
     \begin{align}
      \sum_{\chi\in\Xgo(G)}       \int_{[G^{\theta'}]^G\times [G^\theta]     }  |K_{\chi,f}^T(x,y)| \, dxdy <\infty\\
      \sum_{\of \in\cgo(F)}    \int_{[G^{\theta'}]^G\times [G^\theta]     }  |K^T_{\of,f}(x,y)| \, dxdy <\infty.
    \end{align}

      \end{theoreme}

      Soit $\eta$ un caractère de $\AAA^\times/F^\times$. Par composition avec le déterminant, on obtient un caractère de $G(\AAA)$ trivial sur le sous-groupe $G(F)$ qu'on note encore $\eta$.

 Aux §§ \ref{S:JchiT}  et \ref{S:dist geo}, on pose,  pour tous $\chi\in\Xgo(G)$ et $\of\in \cgo(F)$,
 \begin{align*}
   J_\chi^T(\eta,f)&= \int_{[G^{\theta'}]^G\times [G^\theta] }  K_{\chi,f}^T(x,y) \, \eta(x) dxdy\ ; \\
        J_\of^T(\eta,f)&= \int_{[G^{\theta'}]^G\times [G^\theta] }  K^T_{\of,f}(x,y) \, \eta(x) dxdy.
      \end{align*}
      La convergence absolue des intégrales ci-dessus est assurée par le théorème \ref{thm-intro:1}, du moins si $eta$ est unitaire, sinon par les  théorèmes \ref{thm:cv-spec} et \ref{thm:cv-geo}.
  Les formes linéaires $J_\of^T(\eta)$ et $J_\chi^T(\eta)$ ainsi obtenues sont  continues pour la topologie naturelle sur l'espace de Schwartz. On obtient alors l'énoncé suivant.
      
      \begin{theoreme} (cf. théorème \ref{thm:FTGJ avec T}) \label{cor-intro:1} Pour toute fonction $f\in \Sc(G(\AAA))$, on a 
        \begin{align*}
                   \sum_{\chi\in\Xgo(G)} J_\chi^T(\eta,f)=\sum_{\of \in\cgo(F)} J_\of^T(\eta,f).
        \end{align*}
      \end{theoreme}

    À ce stade, on suppose que $\eta$ est trivial sur $A_G^\infty$. Cela implique que le caractère $\eta$ est unitaire. On a remarqué déjà remarqué que, pour les questions de convergence, cette hypothèse était  superflue. Toutefois, sans elle, il faudrait reprendre la formulation de dépendance en $T$ ainsi que celle de la covariance des distributions, cf. § \ref{S-intro:cova} ci-dessous.  On montre, en effet, que $J_\chi^T(\eta,f)$, comme fonction du paramètre  $T$, coïncide avec une fonction polynôme-exponentielle, cf. proposition \ref{prop:JTchi poly}. On définit alors $J_\chi(\eta,f)$ comme le terme constant de cette expression. De façon analogue, on définit $J_\of(\eta,f)$, cf. § \ref{S:dist geo}. 
    
    On en déduit \og la formule des traces de Guo-Jacquet\fg{}.
    
      \begin{theoreme} (cf. théorème \ref{thm:FTGJ}) Les formes  linéaires $J_\chi(\eta)$ et $J_\of(\eta)$ sont  continues sur $\Sc(G(\AAA))$. De plus, pour toute fonction $f\in \Sc(G(\AAA))$,
      \begin{align*}
                   \sum_{\chi\in\Xgo(G)} J_\chi(\eta,f)=\sum_{\of \in\cgo(F)} J_\of(\eta,f).
       \end{align*}
      \end{theoreme} 
      
       C'est pour ainsi dire la formule \og  brute de décoffrage \fg{}: elle va nécessiter au fur et à mesure des applications d'être raffinée.  Dans cette optique, nous  expliciterons certaines des distributions  $J_\of$ et $J_\chi$ pour des $\chi$ et $\of$ assez généraux. Avant cela, nous allons passer en revue les propriétés de covariance de ces distributions.
      \end{paragr}

\begin{paragr}[Covariance.] --- \label{S-intro:cova} Les groupes $G^{\theta'}(\AAA)$ et $G^\theta(\AAA)$ agissent sur $\Sc(G(\AAA))$ respectivement par translations à gauche et à droite notées $f\mapsto ^g \!\!\!f$ et $f\mapsto f^g$, voir § \ref{S:covar-chi-T}. Les distributions $J_\chi(\eta)$ et  $J_\of(\eta)$ définies ci-dessus ne sont pas, en général, invariantes pour ces actions. On a le résultat suivant qui exprime le défaut d'invariance à l'aide de distributions analogues définies sur les sous-groupes de Levi de $G$.
  
\begin{proposition}(cf. propositions \ref{prop:covar} et \ref{prop:covar-geom}) \label{prop-intro:invariance}. Le symbole $\bullet$ désigne soit une donnée cuspidale $\chi\in\Xgo(G)$ soit un point rationnel  $\of$ du quotient catégorique $\cgo(F)$.
   \begin{enumerate}
  \item Pour tout $g\in G^\theta(\AAA)$ et   tout $f\in \Sc(G(\AAA))$,  on  a
    \begin{align*}
      J_\bullet(\eta,f^g)=J_\bullet(\eta,f).
    \end{align*}
  \item  Pour tout $g\in G^{\theta'}(\AAA)$ et   tout $f\in \Sc(G(\AAA))$, on  a
    \begin{align*}
      J_\bullet(\eta,  ^g\!\!f)-  J_\bullet(\eta, f)=\eta(g) \sum_{(Q, w_1',w_2')}  J^{Q,\theta_1,\theta_2}_{\bullet} ( \eta, f_{Q,\eta,g}^{w_1',w_2'})
    \end{align*}
    où  la somme porte sur un ensemble fini de triplets $(Q, w_1',w_2')$ formés d'un sous-groupe parabolique  propre $P_0\subset Q\subsetneq G$ muni du facteur de Levi $M_Q$ contenant $M_0$ et d'éléments $ w_1'$ et $w_2'$ du groupe de Weyl de $(G,M_0)$ auxquels sont associés des éléments $\theta_1$ et $\theta_2$ d'ordre au plus $2$ de $M_0(F)$ ainsi qu'une distribution $J^{Q,\theta_1,\theta_2}_{\bullet}$ sur $\Sc(M_Q(\AAA))$, définie   en \eqref{eq:JQtheta12}  comme ci-dessus comme le terme constant de polynômes-exponentielles en $T$ introduites en \eqref{eq:JQTthetachi} ou  \eqref{eq:JoQTtheta} et une fonction  $f_{Q,\eta,g}^{w_1',w_2'}\in \Sc(M_Q(\AAA))$  définie en \eqref{eq:fct-fQT}.
  \end{enumerate}    
\end{proposition}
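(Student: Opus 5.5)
Le plan suit la démonstration de la non-invariance de la formule des traces d'Arthur, appliquée directement aux intégrales tronquées $J_\bullet^T(\eta,f)$. On établit d'abord une identité de covariance à $T$ fixé, puis on passe au terme constant en $T$.

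\textbf{Translation à droite par $G^\theta(\AAA)$ (assertion 1).} Pour $g\in G^\theta(\AAA)$, on a $K_{f^g}(x,y)=K_f(x,yg^{-1})$, et la même relation vaut composante par composante pour $K_{\chi,f}$ et $K_{\of,f}$. On vérifiera sur la définition de la sous-section \ref{ssec:maj noy tronq} que l'opérateur de troncature n'agit que sur la variable $x\in[G^{\theta'}]^G$. Plus précisément, $K^{T}_{\bullet,f}(x,y)$ est une somme alternée sur des paraboliques $P$ de noyaux constants $K_{P,\bullet}$ pondérés par des fonctions $\hat\tau_P(H_P(\delta x)-T)$, avec $\delta$ parcourant des classes $P^{\theta'}(F)\back G^{\theta'}(F)$ convenables. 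Le point à contrôler est que ces noyaux constants restent $G^\theta(F)$-invariants à gauche en $y$. Si c'est le cas, un changement de variables $y\mapsto yg$ dans l'intégrale sur $[G^\theta]$, qui est absolument convergente par le théorème \ref{thm-intro:1}, donne $J^T_\bullet(\eta,f^g)=J^T_\bullet(\eta,f)$ pour tout $T$. En passant au terme constant, on obtient l'assertion 1.

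\textbf{Translation à gauche par $G^{\theta'}(\AAA)$ (assertion 2).} On écrit $^g\! f(z)=f(g^{-1}z)$. On change de variable $x\mapsto xg$, ce qui fait apparaître le facteur $\eta(g)$. La troncature est alors évaluée en $H_P(\delta xg)$. On décompose $xg=k$-partie et on écrit $H_P(\delta x g)=H_P(\delta x)+H_P(k_P(\delta x)g)$, puis on utilise l'identité combinatoire d'Arthur
\[
\hat\tau_P(H-T-Y)=\sum_{Q\supset P}\tau_P^Q(\ldots)\,\Gamma'_Q(H-T,Y),
\]
où les $\Gamma'_Q$ sont à support compact en $H$ pour $Y$ fixé. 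Le terme $Q=G$ redonne $J^T_\bullet(\eta,f)$. Pour $Q\subsetneq G$, on factorise l'intégrale sur $[G^{\theta'}]^G$ par une décomposition d'Iwasawa relative à $Q$. C'est ici qu'interviennent les doubles classes $Q(F)\back G(F)/G^{\theta'}(F)$ et $Q(F)\back G(F)/G^\theta(F)$, représentées par des éléments $w_1',w_2'$ du groupe de Weyl. Ceux-ci produisent sur $M_Q$ des involutions $\theta_1,\theta_2\in M_0(F)$ obtenues par conjugaison de $\theta'$ et $\theta$. On rassemble ensuite les intégrales sur les radicaux unipotents et sur les compacts, ainsi que le poids $\Gamma'_Q(\cdot,-H_Q(kg))$ et les caractères modulaires, dans une fonction $f^{w_1',w_2'}_{Q,\eta,g}$ de type \eqref{eq:fct-fQT}. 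L'intégrale restante est alors de la forme $J^{Q,\theta_1,\theta_2,T}_\bullet$ de \eqref{eq:JQTthetachi} ou \eqref{eq:JoQTtheta}, avec des restrictions compatibles $\chi\mapsto$ données cuspidales de $M_Q$, resp. $\of\mapsto$ points de $\cgo_{M_Q}$.

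\textbf{Passage au terme constant.} La fonction $f^{w_1',w_2'}_{Q,\eta,g}$ dépend polynomialement de $T$ à travers $\Gamma'_Q$. Les expressions obtenues sont donc polynômes-exponentielles en $T$, et l'on identifie les termes constants des deux membres. Il faut vérifier que le terme constant d'un produit (polynôme en $T_Q$) $\times$ (polynôme-exponentielle en $T^Q$) est bien le terme constant de $J^{Q}$ appliqué au terme constant de $f_Q$. Cette vérification utilise l'hypothèse que $\eta$ est trivial sur $A_G^\infty$, qui évite les exponentielles parasites.

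\textbf{Principal obstacle.} L'étape la plus délicate sera la description des orbites de $G^{\theta'}(F)$ et $G^\theta(F)$ sur $Q(F)\back G(F)$ dans la deuxième étape. Il faudra s'assurer que seuls les représentants $(w_1',w_2')$ pour lesquels $w_i'\theta w_i'^{-1}$ normalise $M_Q$ contribuent, les autres termes s'annulant par cuspidalité ou par des arguments de type \og$\chi$ ou $\of$ ne voit pas ces classes\fg{}. Il faudra aussi justifier tous les échanges somme-intégrale par des majorations uniformes en $g$ sur un compact, analogues à celles des théorèmes \ref{thm:cv-spec} et \ref{thm:cv-geo}.
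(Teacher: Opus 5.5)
Votre schéma (changement de variable $x\mapsto xg$, identité d'Arthur pour $\hat\tau_P(H_P(x)+H_P(k_P(x)g)-T)$, regroupement en noyaux modifiés de $M_Q$ relatifs à $(\theta_1,\theta_2)$, Iwasawa, puis terme constant) est bien celui des propositions \ref{prop:covar-T} et \ref{prop:covar}. En revanche, l'étape \og passage au terme constant\fg{} est fausse, précisément là où le cas relatif diffère de celui d'Arthur. Les décompositions d'Iwasawa sur $G^{\theta_1}$ et $G^{\theta_2}$, ainsi que le changement de variables dans $N_Q(\AAA)$, font apparaître sur $A_Q^{G,\infty}\simeq\ago_Q^G$ des caractères modulaires qui ne se compensent pas. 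On obtient
\[
\int_{\ago_Q^G}e^{\langle\lambda_Q,H\rangle}\,\Gamma_Q(H-T,Y)\,dH=e^{\langle\lambda_Q,T_Q\rangle}\,p^Q_{w_1',w_2'}(0,Y),\qquad \lambda_Q=2\rho_Q^G-2\rho_{Q^{\theta_1}}^{G^{\theta_1}}-2\rho_{Q^{\theta_2}}^{G^{\theta_2}},
\]
avec $Y=-H_Q(k_1w_1'g)$. La fonction $f^{w_1',w_2'}_{Q,\eta,g}$ de \eqref{eq:fct-fQT} ne dépend donc pas de $T$. Toute la dépendance en $T_Q$ du terme $(Q,w_1',w_2')$ est l'exponentielle pure $e^{\langle\lambda_Q,T_Q\rangle}$, multipliée par $J^{Q,T,\theta_1,\theta_2}_\bullet$, qui est polynôme-exponentielle en $T^Q$. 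Le terme constant de ce produit est donc nul sauf si $\lambda_Q$ s'annule sur $\ago_Q$, c'est-à-dire si $Q\in\fc^{G,\flat}(P_0,\theta_1,\theta_2)$, cf. \eqref{eq:la condition}. C'est cette sélection qui fixe l'ensemble fini de triplets de l'énoncé, et votre argument la manque.

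En traitant la dépendance comme polynomiale (terme constant $=$ valeur en $T=0$), vous gardez tous les triplets propres, ce qui donne une identité fausse. Prenez la situation du § \ref{S:GL2D spec} ($N=2$, $\theta=\theta'=\diag(1,-1)$). On y a $\lambda_{P_0}=2\rho_{P_0}\neq0$ pour les quatre triplets $(P_0,w_1',w_2')$, et $J_\chi(\eta,{}^g\!f)=\eta(g)J_\chi(\eta,f)$ (proposition \ref{prop:GL2 spec}). Pourtant, sommée sur $\chi$, la contribution des termes que vous conserveriez est non nulle : c'est le cas pour $\eta=1$, $f\geq0$ bi-$K$-invariante non nulle et $g\in M_0(\AAA)$ avec $H_{P_0}(g)\notin\ago_G$ (elle est proportionnelle à $\cosh(c\,h)-1$, où $h$ est la coordonnée de $H_{P_0}(g)$ dans $\ago_0^G$ et $c>0$). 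L'hypothèse sur $\eta$ n'élimine pas ces exponentielles. Elle garantit seulement que $\eta$ est trivial sur $A_Q^{G,\infty}$, donc qu'aucune torsion ne s'ajoute à $\lambda_Q$.

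Trois remarques secondaires.

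\textbf{L'obstacle principal n'en est pas un.} Le noyau modifié \eqref{eq:KT} ne fait intervenir que des paraboliques contenant $\theta'$ (resp. $\theta$). Leurs classes de $G^{\theta'}(F)$-conjugaison sont indexées par $\bigcup_P{}_PW_{\theta'}$ (proposition \ref{prop:PWtheta}), et le regroupement est purement combinatoire (lemme \ref{lem:Wdec}). On a automatiquement $\theta_1=w_1'\theta'{w_1'}^{-1}\in M_0(F)\subset M_Q$, tous les $w_1'\in{}_QW_{\theta'}$ contribuent, et aucune annulation par cuspidalité n'intervient. Les doubles classes générales $Q(F)\back G(F)/G^{\theta'}(F)$ n'apparaissent jamais.

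\textbf{L'identité d'Arthur.} La bonne forme est $\hat\tau_P(H-T-Y)=\sum_{Q\supset P}\eps_Q^G\hat\tau_P^Q(H-T)\Gamma_Q(H-T,Y)$ : c'est $\hat\tau_P^Q$, et non $\tau_P^Q$, qui reconstitue la troncature de $M_Q$.

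\textbf{Le facteur $\eta(g)$.} Comme $\eta(g)$ sort devant toute la somme, votre terme $Q=G$ vaut $\eta(g)J^T_\bullet(\eta,f)$, et non $J^T_\bullet(\eta,f)$. Ce que l'on démontre réellement, comme dans la proposition \ref{prop:covar} où $Q=G$ figure dans la somme, porte donc sur $J_\bullet(\eta,{}^g\!f)-\eta(g)J_\bullet(\eta,f)$.
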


\end{paragr}

\begin{paragr}[Distributions $J_\chi(\eta)$.] ---  Lorsque  $\chi$ est la classe d'un couple $(G,\pi)$, où $\pi$ est une représentation automorphe cuspidale irréductible de $G(\AAA)$ (dont le caractère central est trivial sur $A_G^\infty$),     la distribution  $f\mapsto J_\chi(\eta,f)$ est égale au caractère relatif
  \begin{align*}
    \sum_{\varphi\in \bc_\pi} \pc_{G^{\theta'},\eta}( \pi(f)\varphi) \overline{\pc_{G^\theta}(\varphi)}.
  \end{align*}
  Ici, $ \pc_{G^{\theta'},\eta}$ est la forme linéaire, définie sur le sous-espace $V_\pi\subset L^2(G(F)A_G^\infty \back G(\AAA))$ associée à $\pi$, et donnée  par 
  \begin{align*}
 \pc_{G^{\theta'},\eta}(\varphi)= \int_{[G^{\theta'}]_0 }\varphi(h)\eta(h)\, dh
  \end{align*}
  où $[G^{\theta'}]_0 =G^ {\theta'}(F)A_G^\infty \back G^{\theta'}(\AAA)$. La forme linéaire $\pc_{G^\theta}$ est définie de manière analogue relativement au groupe $G^\theta$ et au caractère trivial. La somme porte sur une certaine base hilbertienne $\bc_\pi$ du sous-espace $V_\pi$. Au risque de décevoir le lecteur, précisons que ces périodes sont souvent nulles. Si $n=1$, le produit des  périodes considérées est identiquement nulle sauf si $\eta$ est le caractère trivial et $\pi$ la représentation triviale. Si $n>1$, le produit des périodes est nul dès que les multiplicités des valeurs propres $\pm 1$ de $\theta$ (ou de $\theta'$) ne sont pas égales, ce qui arrive systématiquement si $n$ est impair. Cela résulte d'une généralisation de \cite[proposition 2.1]{FriJa}. Dans ce cas, le spectre cuspidal ne contribue pas du tout à la formule des traces. Un exemple de contribution  \og relativement cuspidale \fg{} est obtenu dans \cite{caractererelatifpondere} ; la contribution s'exprime alors à l'aide d'un caractère relatif pondéré. Pour une donnée cuspidale $\chi$ générale, on renvoie l'obtention d'une formule plus explicite de la distribution $J_\chi(\eta)$ à des travaux futurs. On établit cependant ici le résultat suivant qui devrait faciliter l'étude de $J_\chi(\eta)$ et qui, en tout cas, est utilisé dans \cite{caractererelatifpondere}.
  
  \begin{theoreme} \label{prop-intro:asym}(cf. théorème \ref{thm:comparaison-asym} pour un résultat plus fort et plus précis.)  Soit $f\in \Sc(G(\AAA))$  et $\chi\in \Xgo(G)$ une donnée cuspidale. L'expression $J_\chi^T(\eta,f)$, comme fonction du paramètre $T$, est asymptotique lorsque $T$ tend vers l'infini assez loin des murs dans une chambre positive,   à l'intégrale
    \begin{align*}
      \int_{[G^ {\theta'}]^G\times [G^ {\theta}]  } \La^T_{\theta'} K_{\chi}(x,y) \, \eta(x)dxdy.
    \end{align*}
  \end{theoreme}

  Précisons les notations utilisées. L'opérateur $\La_{\theta'}^T$ est un opérateur de troncature dit  \og relatif\fg{} (en référence  à l'opérateur de troncature  \og absolu \fg{} d'Arthur de \cite{ar2}). On le définit dans la sous-section \ref{ssec:op-tronc}. Notons qu'une variante de notre définition apparaît déjà dans le travail de Zydor \cite[section 3.7]{Zydor}. Notre opérateur a plusieurs propriétés essentielles : il transforme les fonctions sur $G(F)\back G(\AAA)$, qui sont lisses à croissance uniformément modérée, en des fonctions sur $[G^{\theta'}]^G$ qui sont à décroissance rapide (cf. proposition \ref{prop:dec-tronque} pour un résultat précis); il laisse inchangées les fonctions cuspidales sur $[G]$ (celles dont les termes constants sont nuls); enfin, pour toute fonction $\phi$ continue sur $G(F)\back G(\AAA)$, la fonction $\La^T_{\theta'}\phi$ converge simplement sur $G^{\theta'}(\AAA)\cap G(\AAA)^1$ vers  la restriction de $\phi$ à ce sous-groupe. Cette dernière propriété résulte immédiatement de la proposition \ref{prop:asym tronque}. En revanche, elle n'est pas satisfaite par l'opérateur de Zydor, cf. remarque \ref{rq:Zydor}. Elle est cependant cruciale dans \cite{caractererelatifpondere} qui explicite dans certains cas $J_\chi(\eta)$; le lecteur peut consulter \cite[théorèmes 2.4.3.1 et 2.5.1.1]{caractererelatifpondere} et leur preuve.
  
\end{paragr}

\begin{paragr}[Distributions $J_\of(\eta,f)$: cas régulier semi-simple.] --- Dans la suite, on identifie $G/G^\theta$ à la composante connexe $S_\theta$ qui contient $\theta$ de la variété $S$ définie en \eqref{eq-intro:S}. L'intégration  de $f$ le long de $G^\theta(\AAA)$, cf. \eqref{eq:intro-fxi}, donne une fonction de Schwartz $\varphi$ sur $\Sc(S_\theta(\AAA))$. On pose pour tout $\of\in\cgo(F)$
  \begin{align*}
    J_\of(\eta,\varphi)=J_\of(\eta,f).
  \end{align*}
Le membre de gauche définit alors une distribution sur $\Sc(S_\theta(\AAA))$. Le quotient catégorique $\cgo$ s'identifie au quotient catégorique $\cgo_\theta$ de  $ S_\theta$ par l'action par conjugaison de $G^{\theta'}$. De la sorte, on voit $\of$  comme un élément de $\cgo_\theta(F)$. 

Soit $\ga\in S_\theta(F)$. On dit que $\ga$ est $G^{\theta'}$-semi-simple, resp.  $G^{\theta'}$-régulier,  si l'orbite de $\ga$ sous l'action de $G^{\theta'}$ est fermée, resp. est de dimension maximale parmi les $G^{\theta'}$-orbites dans $S_\theta$. Il existe un ouvert dense   $\cgo_\theta^{\rs}\subset \cgo_\theta$ tel que, pour tout $\of\in \cgo_\theta^{\rs}$, la fibre $S_{\theta,\of}$ de l'application canonique $S_\theta\to \cgo_\theta$ au-dessus de $\of$ soit formée d'une seule  $G^{\theta'}$-orbite, forcément semi-simple régulière.

Soit $\of\in \cgo_\theta^{\rs}(F)$. Soit $M$ un sous-groupe de Levi de $G$ contenant $M_0$ et  tel que $M$ soit minimal pour la propriété suivante $(M\cap S_{\theta,\of})(F)\not=\emptyset$. On dit que $\of$ est $(G,\theta')$-anisotrope si $M=G$. Notons que, contrairement au cas de la formule des traces d'Arthur c'est-à-dire le cas de l'action de $G$ par conjugaison sur lui-même, en général il n'existe pas d'éléments  $(G,\theta')$-anisotropes. Pour qu'il en existe, il faut et il suffit que soit $n=1$  soit $n>1$ et les multiplicités des valeurs $\pm 1$ de $\theta$, resp.  $\theta'$, sont égales. La présence ou l'absence d'éléments   $(G,\theta')$-anisotropes semble se refléter dualement dans la présence ou l'absence de spectre cuspidal dans la formule des traces. Soit  $\ga\in (M\cap S_\theta)(F)$ d'image $\of$ dans $\cgo_\theta(F)$. Alors $\of$ est $(G,\theta')$-anisotrope si et seulement si le centralisateur $G_\ga^{\theta'}$ de $\ga$ dans  $G^{\theta'}$ est anisotrope modulo le centre de  $G$. On observera que   $G_\ga^{\theta'}$ est un groupe réductif connexe qui, en général, n'est ni un tore  ni ne contient des tores maximaux de  $G^{\theta'}$, ce qui diffère là encore du cas de la formule des traces d'Arthur. Soit $A$ le tore central déployé maximal de  $G_\ga^{\theta'}$. Notons que $A_M$ le tore central déployé maximal de $M$ est inclus dans $G_\ga^{\theta'}$ et donc on a $A\subset A_M$. Soit $L$ le sous-groupe de Levi de $G$ défini comme le centralisateur de $A$. Alors $G_\ga^{\theta'}\subset L$ et $M\subset L$. Alors $A_L$ est un sous-tore central déployé de $G_\ga^{\theta'}$, on a donc $A_L\subset A$ et, en fait $A_L=A$ par construction de $L$. Pour tout $g\in G(\AAA)$, on dispose alors du  poids d'Arthur  $v_L^G(g)$, cf. § \ref{S:defcar}: c'est celui qui apparaît dans  les intégrales orbitales pondérées de la formule des traces d'Arthur, cf. \cite[§8]{ar1}. Il dépend du choix du sous-groupe compact maximal de $G(\AAA)$ qui intervient dans la définition du noyau modifié $K^T$.

Supposons d'abord que le caractère $\eta$ n'est pas trivial sur le centralisateur $G_\ga^{\theta'}(\AAA)$. Dans ce cas, on obtient
\begin{align*}
  J_\of(\eta,f)= J_\of(\eta,\varphi)=0.
\end{align*}

Supposons désormais que le caractère $\eta$ est trivial sur $G_\ga^{\theta'}(\AAA)$. On peut alors exprimer  $J_\of(\eta,\varphi)$ comme une intégrale orbitale pondérée et tordue par le caractère $\eta$. Plus pécisément, on a

\begin{theoreme} (cf. théorème \ref{thm:IOP}) Sous les hypothèses ci-dessus, on a 
   \begin{align*}
   J_\of(\eta,f)=  J_\of(\eta,\varphi)= \vol(  G_\ga^{\theta'}(F) A_L^\infty  \back G_\ga^{\theta'}(\AAA)  ) \cdot \int_{G_\ga^{\theta'}(\AAA)\back G^{\theta'}(\AAA)}  \varphi( h^{-1}\ga h)  v_L^G(h)  \,\eta(h)dh
 \end{align*}
 où $A_L^\infty$ est un sous-groupe central de $L(\AAA)$, cf. § \ref{S:compact}. 
\end{theoreme}

Le volume de $G_\ga^{\theta'}(F) A_L^\infty  \back G_\ga^{\theta'}(\AAA)  $ est fini.  Le poids $v_L^G$ est une fonction qui est invariante à gauche par $L(\AAA)$ donc par $G_\ga^{\theta'}(\AAA)$  et qui dépend des choix de mesures de Haar sur $A_L^\infty $ et $A_G^\infty$. Ces choix sont compatibles avec ceux  qui interviennent dans la définition de la mesure quotient sur $G_\ga^{\theta'}(F) A_L^\infty  \back G_\ga^{\theta'}(\AAA)  $ et dans la décomposition de la mesure  $ G_\ga^{\theta'}(\AAA) = A_G^\infty \times (G_\ga^{\theta'}(\AAA) \cap G(\AAA)^1)$. Le choix de la mesure sur $G_\ga^{\theta'}(\AAA)$ est la même dans l'intégrale que dans la mesure sur le quotient  $G_\ga^{\theta'}(F) A_L^\infty  \back G_\ga^{\theta'}(\AAA) $. L'intégrale adélique est absolument convergente. Cependant, si $L\subsetneq G$, la distribution  $J_\of(\eta)$ n'est pas invariante sous l'action de $G^{\theta'}(\AAA)$. 
\end{paragr}

\begin{paragr}[Distributions $J_\of(\eta,f)$: descente dans le cas général.] --- Dans le cas général, où  $\of \in \cgo(F)$  est quelconque, nous donnons une étape décisive pour exprimer   $ J_\of(\eta,\varphi)$  en termes d'objets locaux. Plus précisément, nous réduisons ce problème au cas unipotent c'est-à-dire au cas de l'invariant $\of$ associé à $\Id_{D^n}$. Avant de donner quelques détails, nous remarquons qu'afin d'avoir une décomposition de Jordan pour l'espace symétrique, dans le reste de l'introduction et le corps de l'article, il sera plus commode d'étudier  $\wt{S_\theta}=S_\theta\theta'$ au lieu de $S_\theta$.  Soit $\wt{\cgo_\theta}$ le quotient catégorique de $\wt{S_\theta}$ par l'action de $G^{\theta'}$ par conjugaison. Il  s'identifie à $\cgo_\theta\simeq \cgo$.  Soit $(\wt{S_\theta})_\of$ la fibre du quotient catégorique  $\wt{S_\theta}\to \wt{\cgo_\theta}$ au-dessus de $\of$. Soit $M$ un sous-groupe de Levi de $G$ comme ci-dessus. Nous fixons un élément $\ga\in (M\cap(\wt{S_\theta})_\of)(F)$  dont la $G^{\theta'}$-orbite est fermée. Le centralisateur $G_\ga$ est alors un groupe réductif connexe qui est muni de l'involution induite par la conjugaison par $\theta'$ ; soit $G^{\theta'}_\ga=G^{\theta'}\cap G_\gamma$ le sous-groupe des points fixes. C'est ce qu'on appelle un descendant de $(G,G^{\theta'})$ et la liste des situations possibles est explicitement décrite dans la proposition \ref{prop:descendant}. D'après le lemme \ref{lem:var-unip}, on a 
\begin{align*}
    (\wt{S_\theta})_\of=\Int (G^{\theta'}) (\ga \cdot(\uc_{G_\ga}\cap \wt{S_{\theta'}}))
\end{align*}
où l'on note $\Int$ l'action par conjugaison et $\uc_{G_\ga}$ la variété formée des éléments unipotents de $G_\ga$. L'élément neutre appartient à  $\wt{S_{\theta'}}$ et l'espace tangent à $\wt{S_{\theta'}}$ en ce point s'identifie au sous-espace de l'algèbre de Lie $\ggo$ de $G$
\begin{align*}
    \sgo_{\theta'}=\{X\in \ggo| \Ad(\theta')(X)+X=0\},
\end{align*}
où $\Ad$ désigne l'action adjoint. L'application exponentielle induit un isomorphisme 
\begin{align*}
    \exp: \nc_{\ggo_\ga}\cap\sgo_{\theta'}\to\uc_{G_\ga}\cap \wt{S_{\theta'}}
\end{align*}
 où l'on note $\nc_{\ggo_\ga}$ le cône nilpotent de l'algèbre de Lie $\ggo_\gamma$ de $G_\ga$. Le groupe $G^{\theta'}_\ga$ agit sur $\sgo_{\theta',\ga}=\sgo_{\theta'}\cap\ggo_\ga$. Alors, on relie   $ J_\of(\eta,f)=  J_\of(\eta,\varphi) $  à la contribution nilpotente de la formule des traces infinitésimales pour l'action de $G^{\theta'}_\ga$ sur $\sgo_{\theta',\ga}$. Plus précisément, si $\ga=\Id$, on obtient l'action de $G^{\theta'}$ sur $\sgo_{\theta'}$ et la formule des traces correspondante est étudiée dans \cite{li1}. En général, l'étude de l'action de  $G^{\theta'}_\ga$ sur $\sgo_{\theta',\ga}$ se ramène à un produit d'actions de groupes plus simples : il y a au plus deux facteurs pour lesquels l'action est celle que l'on vient de décrire (mais en rang plus petit) et d'autres facteurs pour lesquels l'action est donnée par le modèle suivant. On se donne un entier $r\geq 1$, $E'/F$ une extension finie, $D'$ une algèbre simple centrale sur $E'$ ainsi que $E/E'$ une algèbre quadratique étale dont on note $\sigma$ l'unique $E'$-automorphisme non trivial. On note encore $\sigma$ l'automorphisme de  $E\otimes_{E'}D'$ donné par $\sigma\otimes \Id$. L'action à considérer est celle par conjugaison du groupe $G'=GL(r,D')$ sur l'espace
 \begin{align*}
     \{X\in M(r,E\otimes_{E'}D')| X+\sigma(X)=0 \}.
 \end{align*}
 Soit $\iota\in E$ un élément non nul qui vérifie $\iota+\sigma(\iota)=0$. L'application $X\mapsto \iota X$ identifie alors de manière équivariante l'action précédente à l'action adjointe de $G'$ sur son algèbre de Lie. La formule des traces pour l'action adjointe d'un groupe réductif a été étudiée dans  \cite{Cha}.
 
 Un sous-groupe parabolique $R'$ de $G_\ga$ est appelé \og relativement standard \fg{} s'il est $\theta'$-stable et contient un sous-groupe parabolique minimal de $G_\ga^{\theta'}$. Pour un tel $R'$, on le associé un caractère $\chi_{R'}$ du facteur de Levi $M_{R'}$ de $R'$ et
 une distribution nilpotente $J_{\nilp}^{R'}(\eta, \chi_{R'})$  définie dans \S \ref{S:def-dist-infi} sur l'espace tangent $\sgo_{\theta',R'}$ de $M_{R'}/M_{R'}^{\theta'}$ en l'élément neutre, qui est une version tronquée de la forme linéaire divergente 
 \begin{align*}
     \Phi\mapsto \int_{[M_{R'}^{\theta'}]^{R'}} \sum_{X\in(\nc_{\ggo_\ga}\cap\sgo_{\theta',R'})(F)} \Phi(\Ad(h^{-1})X) \, dh 
 \end{align*}
 pour toute fonction de Schwartz $\Phi$ sur $\sgo_{\theta',R'}(\AAA)$ où $[M_{R'}^{\theta'}]^{R'}$ est un analogue de $[G^{\theta'}]^G$. On associé également aux $\varphi$ et $g\in G^{\theta'}(\AAA)$ une fonction de Schwartz $\Phi_{g,R'}$  sur $\sgo_{\theta',R'}(\AAA)$ via \eqref{eq:defPsi}, \eqref{eq:def-fonctionPsi} et \eqref{eq:defPhigR'}. 

    \begin{theoreme} (cf. théorème \ref{thm:descente})
        Soit $\varphi\in \Sc(S_\theta(\AAA))$ une fonction à support compact. On a  
    \begin{align*}
	J_\of(\eta, \varphi)=\int_{G_\ga^{\theta'}(\AAA)\back G^{\theta'}(\AAA)} \sum_{R'} J_{\nilp}^{R'}(\eta, \chi_{R'}, \Phi_{g,R'}) \eta(g) \, dg
    \end{align*}  
où la somme porte sur un ensemble fini de sous-groupes paraboliques relativement standard $R'$ de $G_\ga$. 
    \end{theoreme}
\end{paragr}

\subsection{L'exemple du groupe \texorpdfstring{$G=GL(2,D)$}{G=GL(2,D)}} \label{ssec:intro GL2}

\begin{paragr}  On met de côté le cas évident $n=1$ pour lequel la variété $S$ est formé de deux points. Les intégrales des composantes spectrales du  noyau automorphe convergent directement et donnent des distributions invariantes qui sont en fait nulles sauf lorsque le caractère $\eta$ est trivial et que la donnée cuspidale correspond au caractère trivial. Le développement géométrique, quant à lui, se réduit à l'intégrale du noyau.
\end{paragr}

\begin{paragr} \label{S-intro:comp S}Nous allons considérer plus en détail le cas $n=2$.   On rappelle que le caractère $\eta$ est trivial sur $A_G^\infty$.  La variété $S$ a alors trois composantes connexes qui sont respectivement les $G$-orbites des matrices $I_2$, $-I_2$ et $ \begin{pmatrix}
      1 & 0 \\ 0 & -1
    \end{pmatrix}$.
  \end{paragr}
  
  \begin{paragr}   On va utiliser les notations suivantes. Soit $B\subset G$ le sous-groupe parabolique minimal formé des matrices triangulaires supérieures muni. Il est  muni de sa décomposition de Levi $B=M_B N_B$ où $M_B$ est le sous-groupe des matrices diagonales et $N_B$ est le radical unipotent de $B$. Soit $A_B\subset M_B$ le tore central déployé maximal et 
  $\al$ l'unique racine de $A_B$ dans $N_B$. Le paramètre de troncature $T$ est un point de l'espace vectoriel $\ago_B=\Hom(X^*(M_B),\RR)$ dual du groupe $X^*(M_B)$ des caractères rationnels définis sur $F$ de $M_B$. La racine $\al$ induit une forme linéaire sur $\ago_B$ notée $\bg \al,\cdot\bd$.  Soit $\hat\tau_B$ la fonction sur  $\ago_B$ caractéristique du cône des $H$ tels que $\bg \al,H\bd >0$.  On suppose $\bg \al,T\bd$ assez grand. 
  
  On fixe un sous-groupe compact maximal $K\subset G(\AAA)$ en bonne position par rapport à $M_B$: on a alors une application $K$-invariante à droite $H_B:G(\AAA)\mapsto \ago_B$ qui coïncide sur $B(\AAA)$ avec  l'homomorphisme donné par la composition de l'homomorphisme canonique de $M_B(\AAA)$ dans $\Hom(X^*(M_B),\AAA^\times)$ avec le logarithme du module usuel. On note $M_B(\AAA)^1$ le noyau de cet homomorphisme. 
    On normalise les mesures de Haar sur $K$, $N_B(\AAA)$ et $M_B(\AAA)^1$ de sorte que $K$, $N_B(F)\back N_B(\AAA)$ et $M_B(F)\back M_B(\AAA)^1$ (pour les mesures quotient) soient de volume $1$. Le morphisme $m\mapsto \bg \al ,H_B(m)\bd $  induit une suite exacte
    \begin{align*}
        1 \to M_B(\AAA)^1 \to M_B(\AAA)\cap G(\AAA)^1 \to \RR \to 1.
    \end{align*}
    On munit   $M_B(\AAA)\cap G(\AAA)^1$ de la mesure qui donne au quotient $\RR$  la mesure de Lebesgue. Alors $G(\AAA)^1$ est muni de la mesure compatible à la décomposition $G(\AAA)^1=(M_B(\AAA)\cap G(\AAA)^1) N_B(\AAA) K$.
    Finalement, on identifie naturellement $A_G^\infty$ à $\RR_+^\times$ qu'on équipe de la mesure de Lebesgue usuelle. La décomposition $G(\AAA)=G(\AAA)^1\times A_G^\infty$ détermine alors une mesure de Haar sur $G(\AAA)$ compatible à la mesure produit sur le membre de droite. De même, on obtient une mesure de Haar sur $M_B(\AAA)$.

    Soit $W=\{I_2,
      \begin{pmatrix}
        0 & 1 \\ 1 &0
      \end{pmatrix} \}$.   On utilisera les calculs ci-dessous
  \begin{align}\label{eq-intro:volume 1} &\forall x\in G(\AAA)\\
   \nonumber &\int_{ M_B(F)\back (M_B(\AAA)\cap G(\AAA)^1) }  (1- \sum_{w\in W} \hat\tau_B(H_B(w y x)-T) ) \eta(y)\, dy= \left\lbrace\begin{array}l
         0 \text{ si } \eta \text{ n'est pas trivial}  \\
         \bg \al, 2T-H_B(wx)-H_B(x)\bd \text{ sinon}.
    \end{array}\right.\\
    \label{eq-intro:volume 2}&\int_{ [G]^G }   (1-\sum_{\delta \in  B(F)\back G(F)}  \hat\tau_B(H_B(\delta  x)-T)   )  \, \eta(x) dx=\left\lbrace\begin{array}l
         0 \text{ si } \eta \text{ n'est pas trivial}  \\
         \vol([G]^G) - \exp(-\bg \al, T\bd) \text{ sinon}.
    \end{array}\right.
  \end{align}
    où $[G]^G=G(F)\back G(\AAA)^1$.

    L'algèbre     $\Sc(G(\AAA))$ agit par convolution à droite sur   l'espace  $ L^2( M_B(F)N_B(\AAA)\back G(\AAA))$. L'opérateur associé à $f$ a un noyau noté $K_{B,f}$. Soit  $K_{B,\chi,f}$ la composante de ce noyau  associée au facteur indexé par $\chi$ dans la décomposition
  \begin{align*}
    L^2( M_B(F)N_B(\AAA)\back G(\AAA))=\hat\oplus_{\chi'\in \Xgo(G)} L^2_{\chi'}( M_B(F)N_B(\AAA)\back G(\AAA))
  \end{align*}
  selon les données cuspidales de $G$. On définit aussi
  \begin{align}
    \nonumber(K_{\chi,f})_B(x,y)&=\int_{N_B(F)\back N_B(\AAA)} K_{\chi,f}(nx,y)\, dn\\
    \label{eq-intro:somme delta}&=\sum_{\delta  \in B(F)\back G(F)}  K_{B,\chi,f}(x,\delta y).
  \end{align}
  \end{paragr}

  \begin{paragr}
Soit $f\in \Sc(G(\AAA))$ et  $\chi\in \Xgo(G)$. Soit $K^{T,\theta',\theta}_{\chi,f}$ le noyau modifié défini à la sous-section \ref{ssec:maj noy tronq} et associé à  des éléments  $\theta,\theta'$ de $S$ parmi ceux donnés  au §  \ref{S-intro:comp S}. Soit $\chi_0\in \Xgo(G)$ la donnée cuspidale associée à $(M_B,1)$ où $1$ est la représentation triviale de $M_B(\AAA)$.
\end{paragr}

\begin{paragr}
  Dans les trois premiers exemples donnés aux  §§ \ref{S-intro:I-I}, \ref{S-intro:theta-I} et \ref{S-intro:I-theta}, le quotient géométrique est réduit à un point et le développement géométrique est réduit à un terme.
  \end{paragr}

\begin{paragr}[Cas $\theta=\theta'=I_2$.] --- \label{S-intro:I-I}Les mêmes calculs s'appliquent aux cas similaires $\theta=\pm I_2$ et $\theta'=\pm I_2$. Selon la définition de  la sous-section \ref{ssec:maj noy tronq}  et l'égalité \eqref{eq-intro:somme delta}, on a
  \begin{align*}
    K^{T,\theta',\theta}_{\chi,f}(x,y)&=K_{\chi,f}(x,y)-\sum_{\delta_1,\delta_2 \in B(F)\back G(F)} \hat\tau_B(H_B(\delta_1 x)-T) K_{B,\chi,f}(\delta_1 x,\delta_2y)\\
    &=K_{\chi,f}(x,y)-\sum_{\delta \in  B(F)\back G(F)}  \hat\tau_B(H_B(\delta  x)-T)  (K_{\chi,f})_B(\delta x,y)
  \end{align*}
   Par définition, on a 
  \begin{align}\label{eq-intro:integ GL2}
    J_\chi^T(\eta,f)&= \int_{  G(F)\back G(\AAA)^1\times G(F)\back G(\AAA) }  K_{\chi}^T(x,y) \, \eta(x) dxdy.
  \end{align}

  Dans l'intégrale du membre de droite ci-dessus, on commence par  intégrer sur la variable $y$. Tout d'abord, on a
  \begin{align*}
 \int_{   G(F)\back G(\AAA) }     K_{\chi,f}(x,y)\, dy=0 \text{ et }   \int_{   G(F)\back G(\AAA) }     (K_{\chi,f})_B(x,y)\, dy=0
  \end{align*}
  sauf si $\chi=\chi_0$. On pose
  \begin{align*}
  I(f)=\int_{   G(F)\back G(\AAA) }  f(y)\, dy.
    \end{align*}
  Alors, on a 
  \begin{align*}
&      \int_{   G(F)\back G(\AAA) }     K_{\chi_0,f}(x,y)\, dy= \int_{   G(F)\back G(\AAA) }     K_{f}(x,y)\, dy=  I(f)\\
&      \int_{   G(F)\back G(\AAA) }     (K_{\chi_0,f})_B(x,y)\, dy= \int_{   G(F)\back G(\AAA) }     (K_{f})_B(x,y)\, dy=  I(f). \end{align*}
  En tenant compte du calcul \eqref{eq-intro:volume 2}, on conclut qu'on a
    \begin{align*}
       J_{\chi_0}^T(\eta,f)&=  \left\lbrace\begin{array}l
         0 \text{ si } \eta \text{ n'est pas trivial ;}  \\
         I(f) (\vol([G]^G) - \exp(-\bg \al, T\bd)) \text{ sinon}.
    \end{array}\right. 
    \end{align*}
    On en déduit
     \begin{align*}
       J_{\chi}(\eta,f)&=  \left\lbrace\begin{array}l
         0 \text{ si } \eta \text{ n'est pas trivial ou } \chi\not=\chi_0 \ ;\\
        \vol([G]^G)  I(f)  \text{ sinon}.
    \end{array}\right. 
    \end{align*}
    La distribution obtenue est invariante à gauche et à droite par l'action par translations de $G(\AAA)$. 
       \end{paragr}

    \begin{paragr}[Cas $\theta=I_2$ et $\theta'=\begin{pmatrix}
      1 & 0 \\ 0 & -1
    \end{pmatrix}$.] --- \label{S-intro:theta-I}Cette fois-ci, on a, toujours  en suivant les définitions de  la sous-section \ref{ssec:maj noy tronq}  et en utilisant \eqref{eq-intro:somme delta},
    
    \begin{align*}
       K^{T,\theta',\theta}_{\chi,f}(x,y)=K_{\chi,f}(x,y)-\sum_{w\in W} \hat\tau_B(H_B(w x)-T)  (K_{\chi,f})_B(w x,y).
   \end{align*}
    
     Comme précédemment, mais en utilisant cette fois-ci \eqref{eq-intro:volume 1} pour $x=1$, on obtient
       \begin{align*}
       J_{\chi}^T(\eta,f)&=  \left\lbrace\begin{array}l
         0 \text{ si } \eta \text{ n'est pas trivial  ou } \chi\not=\chi_0 \ ; \\
         I(f) \bg \al, 2T\bd \text{ sinon}.
    \end{array}\right. 
    \end{align*}
    L'intégrale ci-dessus dépend donc linéairement de $T$: son terme constant est nul et on en déduit qu'on a    $J_{\chi}(\eta,f)=0$ pour tout $\chi\in \Xgo(G)$.
     
\end{paragr}

  \begin{paragr}[Cas $\theta=\begin{pmatrix}
      1 & 0 \\ 0 & -1
  \end{pmatrix}$ et $\theta'=I_2$.] --- \label{S-intro:I-theta} Dans ce cas, le noyau modifié s'écrit

  \begin{align*}
     K^{T,\theta',\theta}_{\chi,f}(x,y)=K_{\chi,f}(x,y)-\sum_{\delta_1 \in B(F)\back G(F)} \hat\tau_B(H_B(\delta_1 x)-T)  \sum_{w\in W} K_{B,\chi,f}(\delta_1 x,w y)
  \end{align*}
En utilisant l'égalité $   K^{T,\theta',\theta}_{\chi,f}(ax,y) =   K^{T,\theta',\theta}_{\chi,f}(x,a^{-1}y)$  pour tout $a\in A_G^\infty$, on voit qu'on a
  \begin{align*}
    J_{\chi}^T(\eta,f)&= \int_{  G(F)\back G(\AAA)^1 \times  M_B(F)\back M_B(\AAA) }  K^T_{\chi,f}(x,y) \, \eta(x) dxdy \\
    &= \int_{  G(F)\back G(\AAA) \times  M_B(F)\back (M_B(\AAA)\cap G(\AAA)^1) }  K^T_{\chi,f}(x,y) \, \eta(x) dxdy .
  \end{align*}
  On effectue alors l'intégration d'abord sur la variable $x$ ce qui donne 
  \begin{align*}
     \int_{  G(F)\back G(\AAA)} K_{\chi,f}(x,y) \eta(x)\, dx  -  \int_{  B(F)\back G(\AAA)} \hat\tau_B(H_B(x)-T)  \sum_{w\in W}K_{B,\chi,f}(x,wy) \eta(x)\, dx.
  \end{align*}
 Cette expression est nulle sauf si $\chi$ est égale à la donnée cuspidale $\chi_1$ associé à $(M_B,\eta)$. Pour $\chi=\chi_1$, on obtient
 \begin{align*}
   &\int_{  G(F)\back G(\AAA)} K_{\chi_1,f}(x,y) \eta(x)\, dx  -  \int_{  B(F)\back G(\AAA)} \hat\tau_B(H_B(x)-T)  \sum_{w\in W}K_{B,\chi_1,f}(x,w y) \eta(x)\, dx\\
  & =  \int_{  G(F)\back G(\AAA)} K_{f}(x,y) \eta(x)\, dx  -  \int_{  B(F)\back G(\AAA)} \hat\tau_B(H_B(x)-T)  \sum_{w\in W}K_{B,f}(x,wy) \eta(x)\, dx\\
  & = \int_{  G(\AAA)} {f}(x^{-1}y) \eta(x)\, dx  - \int_{N_B(F)\back N_B(\AAA)}  \int_{  G(\AAA)}\hat\tau_B(H_B(x)-T)\sum_{w\in W} f(x^{-1} u wy)   \eta(x)\, dx   \,du     \\
  & =  \eta(y) \int_{  G(\AAA)}  (1- \sum_{w\in W} \hat\tau_B(H_B(w y x)-T) )f(x^{-1})\eta(x)\, dx, 
  \end{align*}
  la dernière égalité étant obtenue par le changement de variables $x\mapsto uwyx$. Il résulte alors de \eqref{eq-intro:volume 1} qu'on a 
   \begin{align*}
       J_{\chi}^T(\eta,f)&=  \left\lbrace\begin{array}l
         0 \text{ si } \eta \text{ n'est pas trivial  ou } \chi\not=\chi_0 \ ; \\
         2\bg \al, T\bd I(f) + I'(f) \text{ sinon},
    \end{array}\right. 
    \end{align*}
  où l'on introduit
   \begin{align*}
  I'(f)= \int_{  G(\AAA)}  \bg \al, -H_B(wx^{-1})-H_B(x^{-1})\bd f(x)\, dx.  
  \end{align*}
 
  Cette fois-ci  $J_{\chi}^T(\eta,f)$ est une fonction affine de $T$ et on obtient
   \begin{align*}
       J_{\chi}(\eta,f)&=  \left\lbrace\begin{array}l
         0 \text{ si } \eta \text{ n'est pas trivial  ou } \chi\not=\chi_0 \ ; \\
          I'(f) \text{ sinon}.
    \end{array}\right. 
    \end{align*}
     On observera que la distribution $I'$ est invariante pour l'action à droite de $M_B(\AAA)$. En revanche, elle n'est pas invariante pour l'action à gauche de $G(\AAA)$. On a pour tout $y\in G(\AAA)$
     \begin{align*}
         I'(\,^y f)-I'(f)= \sum_{w\in W} I_{M_B}(f_{B,w,y})   
     \end{align*}
  où, pour toute fonction $\varphi\in \Sc(M_B(\AAA))$, on pose
  \begin{align*}
      I_{M_B}(\varphi)=\int_{M_B(\AAA)} \varphi(m)\, dm
  \end{align*}
  et, pour tout $m\in M_B(\AAA)$,
  \begin{align*}
      f_{B,w,y}(m) = \int_{N_B(\AAA)} \int_K -\bg \al, H_B(k^{-1} y)\bd      f( k n mw)\, dndk.       
  \end{align*}
  Cette formule est une explicitation de la proposition \ref{prop-intro:invariance}. 
  
  \begin{remarque}\label{rq-intro:permutation}
      Les noyaux modifiés $K^{T,\theta',\theta}_{\chi,f}$ et $K^{T,\theta,\theta'}_{\chi,f}$ sont distincts en général. Les distributions qu'on leur associe \emph{in fine} peuvent donc être distinctes comme on le voit en comparant les résultats des §§ \ref{S-intro:theta-I}  et \ref{S-intro:I-theta}.
  \end{remarque}
  
  \end{paragr}

   \begin{paragr}[Cas $\theta=\theta'=\begin{pmatrix}
      1 & 0 \\ 0 & -1
    \end{pmatrix}$.] --- Ce cas est en fait beaucoup plus intéressant mais, en un sens, exceptionnel : on montre, en effet, aux §§ \ref{S:GL2D spec} et \ref{S:GL2D geo}   qu'on obtient alors des distributions spectrales $J_{\chi}(\eta)$ et géométriques  $J_{\of}(\eta)$ qui sont invariantes à droite  et $\eta$-équivariantes à gauche pour l'action par translations de $M_B(\AAA)$.  Soulignons ce n'est pas du tout le cas en général. De fait, cette situation particulière doit rentrer dans le cadre des constructions de Sakellaridis dans \cite{Sak-Schwartz} (c'est-à-dire elle ne devrait pas avoir d'exposant critique au sens de cet article) ; nous invitons donc le lecteur à comparer nos constructions avec celles de  Sakellaridis. Pour une explicitation des distributions obtenues, nous renvoyons à l'article de Jacquet \cite{Ja86}, voir aussi  \cite{Jacquet-Chen} pour des compléments.
  \end{paragr}

\subsection{Organisation de l'article}

\begin{paragr} Dans la sous-section \ref{ssec:combi}, le lecteur trouvera les principales notations utilisées. Quelques résultats algébriques sur les involutions considérées dans cet article sont énoncés dans la sous-section \ref{ssec:involutions}. On donne ensuite dans la sous-section \ref{ssec:partition} une version \og relative \fg{} d'une partition utilisée par Arthur. On peut alors introduire dans la sous-section \ref{ssec:op-tronc} l'opérateur de troncature relatif et ses principales propriétés. La section \ref{sec:Dev spectral} est dédiée au développement spectral de la formule des traces relative. Les noyaux modifiés sont définis dans la sous-section \ref{ssec:maj noy tronq} et on en donne des majorations. Les principaux énoncés spectraux qui en résultent se trouvent dans la sous-section \ref{ssec:enonces spec}. On analyse le comportement en $T$ des distributions obtenues dans la sous-section \ref{ssec:comport T}. On peut alors définir en sous-section \ref{ssec:distr spec} les termes spectraux de la formule des traces comme leur terme constant en $T$. On donne des propriétés de covariance sous les actions naturelles des groupes qui interviennent. 

  Le reste de l'article, c'est-à-dire la section \ref{sec:dvpt geo}, est consacré au développement géométrique de la formule des traces relative. On commence par des préliminaires  algébriques (introduction de l'espace symétrique $S$, calcul d'un quotient catégorique, calcul d'orbites semi-simples etc.) en sous-section \ref{ssec:preparatif alg}. Une première version de la convergence du développement géométrique selon les points rationnels du quotient catégorique est énoncée dans la sous-section \ref{ssec:dvpt quotient}. On peut alors énoncer la formule des traces de Guo-Jacquet en sous-section \ref{ssec:FT-GJ}.  Dans la sous-section \ref{ssec:passage}, on prend le point de vue de l'action de $G^{\theta'}$ sur la variété $S$: on reformule le développement géométrique comme distribution sur l'espace de Schwartz de $S(\AAA)$. On donne divers préparatifs pour la descente semi-simple des distributions $J_\of^T$.  La sous-section \ref{ssec:desc-ss} est consacrée à cette descente semi-simple des distributions $J_\of^T$, le but étant de relier  une telle distribution à la contribution nilpotente de la formule des traces infinitésimales associée à un certain espace symétrique attaché au centralisateur d'un élément semi-simple d'invariant $\of$. Dans la sous-section \ref{ssec:IOP}, pour des invariants $\of$ réguliers semi-simples, on exprime la distribution $J_\of(\eta)$ à l'aide d'une intégrale orbitale pondérée. Enfin, la  sous-section \ref{ssec:var-infi} donne des  rappels et des compléments sur les formules des traces infinitésimales pour certains espaces symétriques.
  
\end{paragr}

\begin{paragr}[Remerciements.] --- Les auteurs remercient Jayce Getz pour une discussion sur les couples d'involutions qui commutent et pour avoir porté à leur connaissance les travaux \cite{HelWancommute, HelWansmooth} de Helminck-Schwarz. Ils remercient également Yiannis Sakellaridis pour une discussion  sur les résultats de cet article ainsi que pour avoir insisté sur le cas du groupe $GL(2)$. Pierre-Henri Chaudouard remercie l'Institut Universitaire de France pour lui avoir offert d'excellentes conditions de travail. Ce travail a en partie été effectué lorsque Huajie Li était en poste à Johns Hopkins University, à Max-Planck-Institut für Mathematik et à Aix–Marseille Université, institutions que le second auteur remercie pour les excellentes conditions de travail et le soutien financier apportés, y compris une aide du gouvernement français au titre du Programme Investissements d’Avenir, Initiative d’Excellence d’Aix–Marseille Université–A*MIDEX. Il remercie également l’Université Paris Cité pour l'invitation et le financement de son séjour à Paris, durant lequel cet article a été finalisé.
\end{paragr}

\section{Préliminaires} \label{sec:prelim}

\subsection{Notations générales} \label{ssec:combi}

\begin{paragr}  Dans cette sous-section, nous allons introduire les notations les plus couramment utilisées dans l'article: comme on va le voir, on utilise  autant que faire se peut les notations d'Arthur. 
\end{paragr}

\begin{paragr} Soit $F$ un corps et  $G$ un  groupe réductif connexe défini sur  $F$. Soit $A_G$ le tore déployé central maximal de $G$. Soit $(P_0,M_0)$ un couple formé d'un sous-groupe parabolique $P_0$ ainsi que d'un facteur de Levi $M_0$ tous deux définis sur $F$ et minimaux pour ces propriétés. Sauf mention contraire, les sous-groupes de $G$ seront toujours supposés définis sur $F$. Pour tout sous-groupe $P$ de $G$ soit  $N_P$ le radical unipotent de $P$. Un sous-groupe parabolique $P$ de $G$ est standard, resp. semi-standard, s'il contient $P_0$,  resp. $M_0$. Soit $\fc(P_0)\subset  \fc(M_0)$ les ensembles de sous-groupes paraboliques de $G$ qui sont  respectivement standard, semi-standard. Tout $P\in \fc(M_0)$ admet une unique facteur de Levi qui contient $M_0$ et qui est noté $M_P$.  On a donc  $P=M_PN_P$. Soit $A_P=A_{M_P}$. Tout facteur de Levi d'un sous-groupe parabolique de $G$ est appelé un sous-groupe de Levi de $G$. Soit $\lc(M_0)$ l'ensemble de sous-groupes de Levi de $G$ qui contient $M_0$. Pour tout $M\in\lc(M_0)$, on pose 
    \begin{align*}
      &\fc(M)=\{P\in\fc(M_0) : P\supset M\}, \\ 
      &\pc(M)=\{P\in\fc(M) : M_P=M\}, \\
      &\lc(M)=\{L\in\lc(M_0) : L\supset M\}. 
    \end{align*}
  \end{paragr}

  \begin{paragr}   Soit $P$   sous-groupe parabolique de $G$. Soit $X^*(P)$ le groupe des caractères algébriques de $P$ définis sur $F$. Soit $\ago_P^*=X^*(P)\otimes\RR$ et $ \ago_P=\Hom_\ZZ(X^*(P),\RR)$ : ce sont des $\RR$-espaces vectoriels en dualité. Pour $P\subset Q$ des  sous-groupes paraboliques standard, on dispose  de la restriction $\ago_Q^*\to \ago_P^*$  et  dualement de $\ago_P\to \ago_Q$. La première est injective et identifie  $\ago_Q^*$ à un sous-espace de $\ago_P^*$ et le noyau de la second est notée $\ago_P^Q$.  On a en fait aussi  $\ago_P^*=X^*(A_P)\otimes\RR$ et on dispose donc de morphismes $\ago_Q\to \ago_P$ qui est injectif et $\ago_P^*\to \ago_Q^*$ dont on note $\ago_P^{Q,*}$ le noyau. On dispose alors des sommes directes $\ago_P=\ago_P^Q\oplus \ago_Q$ et de  leurs duales. Les projections utilisées par la suite font implicitement référence à ces décompositions. Tout $X\in \ago_P$ s'écrira $X^Q+X_Q$ selon cette décomposition. On pose $\ago_{0}=\ago_{P_0}$,  $\ago_{0}^*=\ago_{P_0}^*$ etc.
  \end{paragr}

  \begin{paragr}[Algèbre de Lie.] ---
    En général, on note l'algèbre de Lie d'un groupe algébrique $G,P, M, N$ par la lettre gothique correspondante $\ggo,\pgo, \mgo,\ngo$. Cette convention  ne s'applique pas aux tores $A_G,A_P=A_{M_P}$ etc. 
  \end{paragr}
  
  \begin{paragr}  Soit $P$   sous-groupe parabolique standard. Soit $\Delta_{0}^P$, $\hat\Delta_0^P$, $\Delta_0^{P,\vee}$ et $\hat \Delta_0^{P,\vee}$ les ensembles respectifs des racines, des poids, des coracines, des copoids simples de $A_0$ dans $P_0\cap M_P$. On identifie  $\Delta_0^P$ et $\hat\Delta_0^P$ à des bases de  $ \ago_0^{P,*}$. Soit $\hat \Delta_0^{P,\vee}$ et $\Delta_0^{P,\vee}$ les bases duales respectives dans $\ago_0^P$. Pour $P\subset Q$ des  sous-groupes paraboliques standard, on définit aussi $\Delta_{P}^Q$ comme l'ensemble des racines simples de $A_P$ dans $M_Q\cap N_P$. C'est encore la base de $\ago_P^{Q,*}$ obtenue par projection de $\Delta_0^Q\setminus \Delta_0^P $ par $\ago_0^{Q,*}\to \ago_P^{Q,*}$. Soit $(\varpi_\al^\vee)_{\al\in \Delta_P^Q}$ la base de $\ago_P^Q$ duale de $\Delta_P^Q$ : l'ensemble des éléments de cette base est noté $\hat\Delta_P^{Q,\vee}$.
Pour tout $\al\in \Delta_P^Q$, on sait définir une coracine $\al^\vee$ (c'est clair si $P=P_0$ et en général $\al^\vee$ est la projection sur $\ago_P^Q$  de $\be^\vee$ pour $\be$ l'unique relèvement de $\beta$ à $\Delta_0^Q$). On note $\Delta_P^{Q,\vee}$ l'ensemble des $\al^\vee$ pour $\al\in \Delta_P^Q$. Soit $(\varpi_\al)_{\al\in \Delta_P^Q}$ la base de $\ago_P^{Q,*}$ duale de $(\al^\vee)_{\al\in \Delta_P^{Q}}$: ses éléments forment l'ensemble  $\hat\Delta_P^Q$.

  Pour tout sous-groupe parabolique standard $P$, soit 
$$\ago_P^{*,+}=\{\la\in \ago_P^*\mid \bg \la,\al^\vee\bd >0\, \forall\al\in \Delta_P\}.$$
Soit  $\overline{\ago_P^{*,+}}$ son adhérence dans $\ago^*_P$. De même, on définit $\ago_P^{+}$ en inversant le rôle des racines et coracines. On pose $\ago_P^{G,+}=\ago_P^{+}\cap \ago^G_P$.
\end{paragr}

\begin{paragr}\label{S:double-indice}
  Lorsqu'un sous-groupe parabolique est noté $P_i$, on pourra remplacer $P_i$ en exposant ou en indice dans les notations par $i$. Ainsi on note  $\ago_0=\ago_{P_0}$.  Un exposant omis dans une notation qui en exigerait  un est implicitement le groupe ambiant  $G$.
\end{paragr}

\begin{paragr}[Fonctions caractéristiques.] --- \label{S:car}
  Soit $P_1\subset P_2$ des sous-groupes paraboliques standard de $G$. On pose

\begin{align*}
\eps_1^2=\eps_{P_1}^{P_2}=(-1)^{\dim_\RR(\ago_{P_1}^{P_2})}.
\end{align*}

On définit  la fonction  $\tau_{P_1}^{P_2}$, resp. $\hat\tau_{P_1}^{P_2}$, resp. $\phi_{P_1}^{P_2}$,  caractéristique de l'ensemble  des  $H\in  \ago_0$ qui vérifient la condition 1 ci-dessous, resp. la condition 2, resp. la condition 3:

  \begin{enumerate}
  \item $\bg \al, H\bd >0$ pour tout $\al\in \Delta_1^2$;
  \item $\bg \varpi, H\bd >0$ pour tout $\varpi\in \hat\Delta_1^{2}$;
    \item $\bg \al, H\bd >0$ pour tout $\al\in \Delta_1^2$ et $\bg \al, H\bd \leq 0$ pour tout $\al\in \Delta_1\setminus \Delta_1^2$.
  \end{enumerate}
  On définit également $\sigma_{P_1}^{P_2}$ comme le produit de  $\phi_{P_1}^{P_2}$ et $\hat\tau_{P_2}^G$.  Observons que contrairement à  $\tau_{P_1}^{P_2}$ et  $\hat\tau_{P_1}^{P_2}$ la fonction $\sigma_{P_1}^{P_2}$  dépend non seulement de $P_1$ et $P_2$ mais aussi du groupe ambiant $G$. Pour $P_1=P_2$, on a d'ailleurs $\sigma_{P_1}^{P_1}$ est identiquement nulle sauf si $P_1=G$ auquel cas elle vaut identiquement $1$. Notons qu'on a pour $P_1\subset P\subset P_2$ (cf. \cite[lemme 6.1]{ar1})
  \begin{align}\label{eq:sigma 12}
  \tau_{P_1}^{P} \hat\tau_P= \sum_{P\subset P_2} \sigma_{P_1}^{P_2}.
  \end{align}
\end{paragr}

\begin{paragr}  \label{S:cdn}Supposons désormais que $F$ est, de plus,  un corps de nombres. 
Soit $V_F$ l'ensemble des places de $F$ et $\AAA$ l'anneau des adèles de $F$. Soit $F_\infty=F\otimes_\QQ \RR$. On a $\AAA=F_\infty\times \AAA_f$ où  $\AAA_f$ désigne l'anneau des \og adèles finis\fg.  Pour tout $v\in V_F$ soit $F_v$ le complété de $F$ en $v$ et $|\cdot|_v$ la valeur absolue normalisée correspondante sur $F_v$. Soit $\oc_v\subset F_v$ l'anneau des entiers. Soit $|\cdot|_\AAA$  le morphisme $\AAA^\times\to \CC^\times$ donné par le produit sur $v\in V_F$ des valeurs absolues normalisées $|\cdot|_v$.
\end{paragr}

\begin{paragr}[Sous-groupe compact maximal.]\label{S:compact} --- Soit $K=\prod_{v\in V}K_v$ où, pour tout $v\in V_F$, le sous-groupe $K_v\in G(F_v)$ est un sous-groupe compact maximal en bonne position par rapport au sous-groupe de Levi minimal  $M_0$  fixé au §\ref{S:P_0M_0} (plus précisément  $K_v$  doit satisfaire les conditions de \cite[p.9]{arthur2}) . 

Soit $P\in \fc(M_0)$  un sous-groupe parabolique  semi-standard de $G$. On a alors $G(\AAA)=P(\AAA)K$. L'accouplement $(\chi, x)\mapsto \log |\chi(x)|_\AAA$ sur $X^*(P) \times P(\AAA)$ définit un morphisme de groupe $P(\AAA)\to \ago_P$ qu'on étend en une application $H_P:G(\AAA)\to \ago_P$ par $H_P(pk)=H_P(p)$ pour tous $p\in P(\AAA)$ et $k\in K$.

Pour tout groupe réductif $H$ défini sur $F$, soit $A_H^\infty$ la composante neutre du groupe des $\RR$-points du sous-$\QQ$-tore déployé maximal de $\Res_{F/\QQ}(A_H)$. On pose $A_P^\infty=A_{M_P}^\infty$. Soit  $P(\AAA)^1$ le noyau de $H_P$. On a $P(\AAA)^1=M_P(\AAA)^1N_P(\AAA)$ où  $M_P(\AAA)^1=M_P(\AAA)\cap P(\AAA)^1$. Alors on a $M_P(\AAA)\simeq M_P(\AAA)^1\times A_P^\infty$ et la restriction de $H_P$ induit un isomorphisme  $A_P^\infty\simeq \ago_P$. 
Soit $A_P^{G,\infty}=A_P^\infty\cap G(\AAA)^1$.
On note 
$$[G]_P=N_P(\AAA)M_P(F)\back G(\AAA) \text{   et  } [G]_P^1=N_P(\AAA)M_P(F)\back G(\AAA)^1 .$$
Si $P=G$ on omet l'indice $G$. Plus généralement, on note $[N]=N(F)\back N(\AAA)$ pour tout sous-groupe $N$ défini sur $F$.

\end{paragr}

\begin{paragr}[Mesures de Haar.] ---\label{S:Haar}
On munit  $K$ de la mesure de Haar qui donne un volume total égal à $1$. Pour tout sous-groupe unipotent $N$ de $G$, on munit $N(\AAA)$ de la mesure de Haar qui donne le volume $1$ au quotient $[N]$ lorsque celui-ci est muni de la mesure quotient de la mesure sur $N(\AAA)$ par la mesure de comptage sur $N(F)$.

On fixe des mesures de Haar sur $G(\AAA)$ et sur $M_P(\AAA)$ pour tout sous-groupe parabolique semi-standard $P$  de sorte que l'application produit $M_P(\AAA)\times N_P(\AAA)\times K\to G(\AAA)$ préserve les mesures.

Soit $W$  le groupe de Weyl de $(G,M_0)$ c'est-à-dire le quotient du normalisateur de $M_0$ dans $G(F)$ par $M_0(F)$.  Le groupe $W$ agit sur $\ago_{P_0}$ et on fixe un produit scalaire $W$-invariant sur $\ago_{P_0}$. On note $\|\cdot\|$ la norme associée. Chaque sous-espace  est alors muni de cette norme et de la mesure euclidienne correspondante. Ainsi on obtient une mesure sur $\ago_P$ et $\ago_P^G$ Par transport par $H_P$, on en déduit une mesure de Haar sur $A_P^\infty$ et $A_P^{G,\infty}$. Finalement, on met sur $M_P(\AAA)^1$ la mesure telle que l'application  produit $A_P^\infty\times M_P(\AAA)^1\to M_P(\AAA)$ soit compatible aux mesures choisies.

Pour toute fonction absolument intégrable sur $[G]$ et  tout sous-groupe parabolique semi-standard $P$ de $G$, on a alors les formules d'intégration  
\begin{align}
    \label{eq:Iwasawa mes}
    \int_{P(F)\back G(\AAA)} f(x)\, dx&= \int_{[N]} \int_{[M_P]} \int_K \exp(-\bg 2\rho_P, H_P(m)\bd)  f(nmk)\,dndmdk \\
    \nonumber &=\int_{[N]} \int_{[M_P]^1} \int_{A_P^\infty}\int_K \exp(-\bg 2\rho_P, H_P(a)\bd)  f(namk)\,dndadmdk
\end{align}
où $2\rho_P\in \ago_P^*$ est la somme des racines de $A_P$ dans $N_P$. Bien sûr, on a aussi des variantes de ces formules pour l'intégration sur $P(F)\back G(\AAA)^1$ . On pourra noter $\rho_P^G$ l'élément $\rho_P$ si on éprouve le besoin de souligner la dépendance en le groupe $G$.
\end{paragr}

\begin{paragr}[Point $T_0$ et partition.] --- \label{S:T0} Soit $T_1,T\in \ago_0$ et $P$ un sous-groupe parabolique  standard  de $G$. On définit
  \begin{align*}
    A_{P_0}^{P,\infty}(T_1)=\{ a\in A_0^\infty\mid \langle \alpha, H_0(a)\rangle\geqslant \langle \alpha, T_1\rangle , \; \forall \alpha\in \Delta^P_0 \}
  \end{align*}
et
\begin{align*}
    A_{P_0}^{P,\infty}(T_1,T)=\{ a\in  A_0^{P,\infty}(T_1)\mid \langle \varpi, H_0(a)\rangle\leqslant \langle \varpi, T\rangle , \; \forall \varpi\in \hat\Delta^P_0 \}.
  \end{align*}
  On fixe  $T_-\in \ago_0^{G}$  et $\omega_0\subset P_0(\AAA)^1$ un ensemble  compact tel que $P_0(\AAA)^1=P_0(F)\omega_0$. On introduit l'ensemble de Siegel
\begin{equation*}
  \SG^P=\SG^P_{P_0}=\omega_0 A_{P_0}^{P,\infty}(T_-)K.
\end{equation*}
Comme il est loisible, on choisit en fait $T_-\in -\ago_0^{G,+}$ de sorte qu'on a $G(\AAA)=P(F)\SG^P$ (et ce quel que soit $P$).  Pour tout  $T\in \overline{\ago_0^+}$, soit $F^P(\cdot,T)$ la fonction caractéristique  de $P(F)\om_0 A_0^{P,\infty}(T_-,T)K\subset \SG^P$, cf.  \cite[section 6]{ar1}. On voit $F^P(\cdot,T)$ comme une fonction sur $[G]_P$.   D'après \cite[Lemma 2.1] {ar-unip}, il existe $T_0\in \overline{\ago_0^{G,+}}$ tel que, pour tout $T\in T_0+\overline{\ago_0^{G,+}}$, la fonction $F^P(\cdot,T)$ est la fonction caractéristique de l'ensemble 
\begin{align}\label{eq:def FP}
  \{g\in [G]_P \mid   \bg \varpi,H_0(\delta g)-T\bd \leq 0 \,\forall \varpi\in \hat\Delta^P_0, \, \forall\delta\in P(F) \},
\end{align}
et ce quel que soit $P$. Soit $Q$ un sous-groupe parabolique standard de $G$. Pour tout $T\in T_0+\overline{\ago_0^{G,+}}$, on dispose de la partition de $[G]_Q$ donnée par l'identité (cf. \cite[lemme 6.4]{ar1})
\begin{align}\label{eq:partition}
  \forall g\in G(\AAA) \   \sum_{P\in \fc^Q(P_0)} \sum_{\delta \in P(F)\back Q(F)}F^P(\delta g,T) \tau_P^Q(H_P(\delta g)-T)=1.
\end{align}

En suivant \cite[§ 2.3.4]{BPC}, pour tout $\la\in \ago_0^*$, on définit pour tout $g\in [G]_Q$ 
\begin{align}\label{eq:dQ}
  d^Q(\la,g)= \sum_{P\in \fc^Q(P_0)} \sum_{\delta \in P(F)\back Q(F)}F^P(\delta g,T_0) \tau_P^Q(H_P(\delta g)-T_0)\exp(\bg \la, H_{P}(\delta g)\bd).
\end{align}

\begin{lemme}\label{lem:unicite} Soit $Q\subset R$ des sous-groupes paraboliques standard. Il existe $c\geq 1$ tel que pour tout $g\in G(\AAA)$, il existe au plus un élément $\delta \in Q(F)\back R(F)$ such that $d^Q(\al,\delta g)>c$ pour tout $\al\in \Delta_0^R\setminus \Delta_0^Q$.
 \end{lemme}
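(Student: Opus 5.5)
The plan is to reduce the statement to Arthur's classical lemma on Siegel sets, applied inside the Levi factor $M_R$. That lemma says: there exists $c_1$ such that if $x$ and $\eta x$ both lie in $\SG^R$, with $\eta\in R(F)$ and $\bg \al,H_0(x)\bd>c_1$ for all $\al\in\Delta_0^R\setminus\Delta_0^Q$, then $\eta\in Q(F)$. One also uses that $H_0(\eta x)-H_0(x)$ is controlled, as in \cite[section 6]{ar1} or the proof of \cite[Lemma 2.1]{ar-unip}.

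First, make $d^Q(\al,g)$ explicit. By the partition \eqref{eq:partition} at $T=T_0$, for any $g$ exactly one pair $(P,\delta')$ with $P\in\fc^Q(P_0)$ and $\delta'\in P(F)\back Q(F)$ contributes to \eqref{eq:dQ}. Hence $d^Q(\al,g)=\exp(\bg\al,H_P(\delta' g)\bd)$. Moreover, up to left multiplication by an element of $P(F)$ (which changes neither $H_P$ nor the $\delta'$-class), we may take $y=\delta' g\in \om_0A_0^{P,\infty}(T_-,T_0)K$.

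Next, prove the key bound: the component $H_0^P(y)$ of $H_0(y)$ in $\ago_0^P$ lies in a fixed compact set. Indeed $\bg\be,H_0(y)\bd\geq\bg\be,T_-\bd$ for $\be\in\Delta_0^P$, and $\bg\varpi,H_0(y)\bd\leq\bg\varpi,T_0\bd$ for $\varpi\in\hat\Delta_0^P$, which bounds the region. Write $H_0(y)=H_P(y)+H_0^P(y)$. Then for $\al\in \Delta_0^R\setminus\Delta_0^Q$ (so $\al\notin\Delta_0^P$) we get
\[
\bg\al,H_0(y)\bd\geq \log d^Q(\al,g)-C
\]
with $C$ independent of $g$. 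Also $\bg\be,H_0(y)\bd\geq\bg\be,T_-\bd$ for $\be\in\Delta_0^P$, and $\bg\be,H_P(y)-T_0\bd>0$ for $\be\in\Delta_0^Q\setminus\Delta_0^P$ by $\tau_P^Q$. Together with the previous inequality, this shows $y$ lies in a Siegel set for $R$ attached to $(P_0,T_-')$ for a suitable fixed $T_-'$. Roots outside $\Delta_0^R$ play no role, since we work modulo $N_R$ and inside $M_R$.

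Now suppose $\delta_1,\delta_2\in R(F)$ both satisfy $d^Q(\al,\delta_ig)>c$ for all $\al\in\Delta_0^R\setminus\Delta_0^Q$. Let $y_i=\delta_i'\delta_i g$ as above. Then $y_2=\eta y_1$ with $\eta=\delta_2'\delta_2\delta_1^{-1}\delta_1'^{-1}\in R(F)$. Both $y_i$ lie in the $R$-Siegel set, and $\bg\al,H_0(y_1)\bd>\log c-C$ for $\al\in\Delta_0^R\setminus\Delta_0^Q$. Choose $c$ with $\log c-C>c_1$. Arthur's lemma (projected to $M_R$, with $N_R(\AAA)$ harmless) gives $\eta\in Q(F)$. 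Since $\delta_i'\in Q(F)$, this yields $\delta_2\in Q(F)\delta_1$, which is the uniqueness claimed.

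The main obstacle is checking that Arthur's lemma applies exactly in this relative form, for $Q\subset R$ rather than $Q\subset G$. I would handle it by applying the standard statement to the reductive group $M_R$ with its parabolic $M_R\cap Q$, after projecting $y_i$ to $M_R(\AAA)$ via $R(\AAA)=M_R(\AAA)N_R(\AAA)$ and the Iwasawa decomposition. The second delicate point is that the bound on $H_0^P(y)$ must be uniform over the finitely many $P\in\fc^Q(P_0)$.
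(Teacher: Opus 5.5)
Your argument is correct, but it goes one level deeper than the paper's proof. The paper never invokes the Siegel-set rigidity lemma directly. Instead it uses the partition \eqref{eq:partition} at $T=T_0$ twice.

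First, for $[G]_Q$: it replaces each $\delta_i$ by a $Q(F)$-translate so that $F^{P_i}(\delta_i g,T_0)\,\tau_{P_i}^Q(H_{P_i}(\delta_i g)-T_0)=1$ for some standard $P_i\subset Q$.

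Then it takes $c\geq 1$ with $\log c>\bg\al,T_0\bd$ for every standard $P$ and every $\al\in\Delta_P$. With this choice, the hypothesis $d^Q(\al,\delta_i g)>c$ upgrades $\tau_{P_i}^Q$ to $\tau_{P_i}^R$. So both pairs $(P_1,\delta_1)$ and $(P_2,\delta_2)$ contribute to the partition of $[G]_R$ at $g$. Uniqueness of the contributing pair there gives $P_1=P_2$ and $P_1(F)\delta_1=P_2(F)\delta_2$, hence $\delta_1\delta_2^{-1}\in Q(F)$.

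Your route essentially re-proves the piece of the partition identity for $[G]_R$ that the paper simply quotes. You bound the $\ago_0^P$-component, place the $y_i$ in an $R$-Siegel set, and apply the Langlands--Arthur rigidity lemma inside $M_R$.

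It is more self-contained and makes the reduction-theoretic mechanism visible. The price is the relative form of the rigidity lemma for $Q\subset R$ and some bookkeeping of constants. In particular, you should say explicitly that your $T_-'$ does not depend on $c$. This holds because $c\geq 1$ already gives the lower bound $-C$ for $\al\in\Delta_0^R\setminus\Delta_0^Q$, so choosing $c$ afterwards is not circular.

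The paper's version takes two lines and gives an explicit $c$ depending only on $T_0$. Your remark that $H_0(\eta x)-H_0(x)$ is controlled is never used in your argument and can be dropped.
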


 \begin{preuve} C'est une variante de \cite[proposition 2.3.4.3]{BPC}. On prend $c\geq 1$ tel que $\log(c) > \bg \al, T_{0}\bd$ pour tout  sous-groupe parabolique standard $P$ et tout $\al\in \Delta_P$.   
   Soit  $g\in G(\AAA)$  et $\delta_1,\delta_2\in R(F)$ tels que  qu'on ait $d^Q(\al, \delta_ig)>c$  pour tout $\al\in \Delta_0\setminus \Delta_0^Q$ et tout $i\in \{1,2\}$.  Il s'agit de prouver que $\delta_1\delta_2^{-1}\in Q(F)$. On peut multiplier à gauche $\delta_i$ par un élément de $Q(F)$ sans que cela ne change ni l'hypothèse ni la conclusion.   Soit  $i\in \{1,2\}$. En utilisant la partition \eqref{eq:partition} pour $[G]_Q$,    on peut donc supposer qu'il existe un sous-groupe parabolique standard  $P_i\subset Q$   tel que $F^{P_i}(\delta_i g,T_0) \tau_{P_i}^Q(H_{P_i}( \delta_i g)-T_0)=1$. Soit $\al\in \Delta_0^R\setminus \Delta_0^Q$. La condition $d^Q(\al, \delta_i g)>c$ implique alors qu'on a $\bg \al,H_P(g)>\log(c)$.  Par conséquent, on a $\bg \al, H_{P_i}(\delta_i g)-T_0\bd >0$ pour tout $\al\in \Delta_{P_i}^R\setminus \Delta_{P_i}^Q$. Il vient donc 
   \begin{align*}
     F^{P_i}(\delta_i g,T_0) \tau_{P_i}^R(H_{P_i}( \delta_i g)-T_0)=1.
   \end{align*}
On peut de nouveau invoquer la  partition \eqref{eq:partition} cette fois-ci pour $[G]_R$ : il s'ensuit qu'on a $P_1(F)\delta_1=P_2(F)\delta_2$ et $P_1=P_2$. Comme $P_i\subset Q$ le résultat est clair. 
\end{preuve}

\end{paragr}

\begin{paragr}[Hauteurs.] --- \label{S:hauteur}On fixe une hauteur notée $\|\cdot\|$ sur $G(\AAA)$ comme dans \cite[§ I.2.2]{MWlivre} auquel on renvoie pour ses principales propriétés.

 Soit $P\subset G$ un sous-groupe parabolique. Pour tout $g\in G(\AAA)$, on pose
\begin{align*}
  \|g\|_P=\inf_{\delta\in N_P(\AAA)M_P(F)}\|\delta g\|.
\end{align*}
On voit $\|\cdot\|_P$ comme une fonction sur $[G]_P$. Pour tout ensemble $X$ et toutes applications $f,g:X\to \RR_+$ on écrit $f\ll g$, resp $f\prec g$, s'il existe $c>0$, resp. et $d>0$, tel que pour tout $x\in X$ on a $f(x)\leq c g(x)$, resp.  $f(x)\leq c g(x)^d$. On dit que $f$ et $g$ sont équivalentes, resp. comparables,  et on note $f\sim g$, resp. $f\asymp g$,  si  $f\ll g$ et $g\ll f$, resp $f\prec g$ et $g\prec f$,. Comme fonction sur $G(\AAA)$, on a $\|\cdot\|_G\ll \|\cdot\|_P \ll\|\cdot\|$ comme il résulte de la compacité de  $[N_P]$. Par ailleurs, les restrictions à $\SG^P$ de  $\|\cdot\|_P$ et $\|\cdot\|$ sont équivalentes (cela résulte de  \cite[§ I.2.2 (ii) et (vii)]{MWlivre}).

Pour tout sous-groupe fermé $H\subset G$, la restriction de la hauteur sur $G(\AAA)$ à $H(\AAA)$ est comparable à une hauteur sur $H(\AAA)$.
\end{paragr}

\begin{paragr} 
  
\begin{lemme}
    \label{lem:Fsigma*d variante}
Soit $P_1\subset P_2\subset G$ des sous-groupes paraboliques standard.  Pour tous $N>0$ et $T\in T_0+\overline{\ago_0^+}$, il  existe $C,c>0$ tel que pour tout 
 \begin{align}\label{eq:la 12 variante}
\la=\sum_{\al\in \Delta_0^{2}\setminus \Delta_0^{1}}c_\al \al       \text{   avec }  c_\al>c
 \end{align}
 on a, pour tout $g\in [G]_{P_1}^1$,  
\begin{align}\label{eq:Fsigma*d variante}
  F^{P_1}(g,T)  \sigma_1^2(H_{P_0}(g)-T) d^{P_1}(-\la,g)\leq C  \|g\|_{P_1}^{-N}.
\end{align}
\end{lemme}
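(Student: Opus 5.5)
Le plan consiste à se ramener, via la partition \eqref{eq:partition} (pour $Q=P_1$) qui définit $d^{P_1}$, à des majorations terme à terme. On écrit
\begin{align*}
d^{P_1}(-\la,g)=\sum_{P\in \fc^{P_1}(P_0)}\sum_{\delta\in P(F)\back P_1(F)} F^P(\delta g,T_0)\tau_P^{P_1}(H_P(\delta g)-T_0)\exp(-\bg \la,H_P(\delta g)\bd).
\end{align*}
Comme $\la$ est combinaison des racines de $\Delta_0^2\setminus\Delta_0^1$, il est orthogonal à $\ago_0^{P_1}$ ; on a donc $\bg \la,H_P(\delta g)\bd=\bg\la,H_{P_1}(g)\bd$ pour tout $\delta\in P_1(F)$. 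De plus, comme les $F^P(\cdot,T_0)\tau_P^{P_1}$ forment une partition de l'unité, on obtient exactement $d^{P_1}(-\la,g)=\exp(-\bg\la,H_{P_1}(g)\bd)$. Tout le problème se réduit donc à majorer $F^{P_1}(g,T)\sigma_1^2(H_0(g)-T)\exp(-\bg\la,H_{P_1}(g)\bd)$ par $C\|g\|_{P_1}^{-N}$.

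On prend ensuite un représentant de $g$ dans l'ensemble de Siegel : $g=p a k$ avec $p\in\om_0$, $a\in A_0^{\infty}$ et $\bg\al,H_0(a)\bd\geq\bg\al,T_-\bd$ pour $\al\in\Delta_0^{P_1}$. La condition $F^{P_1}(g,T)=1$ donne $\bg\varpi,H_0(a)-T\bd\le 0$ pour $\varpi\in\hat\Delta_0^{P_1}$. Ces deux conditions bornent la projection $H_0(a)^{P_1}$ dans un compact dépendant de $T$. Sur un tel ensemble, $\|g\|_{P_1}$ est équivalent à $\|g\|$ (restriction à $\SG^{P_1}$, cf. §\ref{S:hauteur}), et l'on a $\log\|g\|_{P_1}\ll 1+\|H_{P_1}(g)\|$ pour $g\in G(\AAA)^1$. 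Il suffit donc d'établir, sur le support de $\sigma_1^2(H_0(g)-T)$, une minoration du type $\bg\la,H_{P_1}(g)\bd\geq c'\min_\al c_\al\,\|H_{P_1}(g)\|-C'$.

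Le point principal, et l'obstacle attendu, est cette minoration linéaire. Sur le support de $\sigma_1^2=\phi_1^2\hat\tau_2$ (évalué en $H-T$ avec $H=H_0(g)$), on a $\bg\al,H-T\bd>0$ pour $\al\in\Delta_1^2$, $\bg\al,H-T\bd\le0$ pour $\al\in\Delta_1\setminus\Delta_1^2$ et $\bg\varpi,H-T\bd>0$ pour $\varpi\in\hat\Delta_2$. On reprend alors l'argument d'Arthur \cite[§6]{ar1}, qui montre que sur ce support $\|H_{P_1}-T_{P_1}\|$ est contrôlé par $\sum_{\al\in\Delta_1^2}\bg\al,H-T\bd$, puisque la partie dans $\ago_2$ est contrôlée par la partie dans $\ago_1^2$. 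Grâce à la borne sur $H^{P_1}$, on en déduit que $\bg\al,H_{P_1}(g)\bd$ est, à une constante près, égal à $\bg\al,H\bd$ pour $\al\in\Delta_0^2\setminus\Delta_0^1$, et que la somme de ces quantités domine $\|H_{P_1}(g)\|$ à une constante près. On obtient ainsi $\bg\la,H_{P_1}(g)\bd\geq c\,\kappa\|H_{P_1}(g)\|-C_T$, avec $\kappa>0$ et des constantes indépendantes de $\la$.

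Il reste à choisir $c$ tel que $c\kappa$ dépasse $N$ fois la constante de comparaison entre $\log\|g\|_{P_1}$ et $\|H_{P_1}(g)\|$. On obtient alors $\exp(-\bg\la,H_{P_1}(g)\bd)\le e^{C_T}\|g\|_{P_1}^{-N}$, ce qui donne \eqref{eq:Fsigma*d variante}. Il faudra vérifier avec soin que les constantes $C_T$ issues du compact en $H^{P_1}$ et de $T$ ne dépendent que de $T$ et $N$, et non de $\la$. La difficulté principale réside dans la comparaison des racines $\al\in\Delta_0^2\setminus\Delta_0^1$ avec leurs projections dans $\Delta_1^2$ sur le support de $\phi_1^2$.
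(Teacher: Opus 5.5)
Your first step is false: $\la$ is \emph{not} orthogonal to $\ago_0^{P_1}$. This space is spanned by the coroots $\beta^\vee$, $\beta\in\Delta_0^{P_1}$. For $\al\in\Delta_0^{2}\setminus\Delta_0^{1}$ one has $\bg\al,\beta^\vee\bd\leq 0$, with strict inequality as soon as $\al$ and $\beta$ are adjacent in the Dynkin diagram. For instance, for $G=GL_3$, $\Delta_0^{P_1}=\{e_1-e_2\}$ and $\al=e_2-e_3$, one gets $\bg\al,(e_1-e_2)^\vee\bd=-1$.

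Hence, if $(P,\delta)$ is the pair contributing to the partition that defines $d^{P_1}$, you only have $d^{P_1}(-\la,g)=\exp(-\bg\la,H_P(\delta g)^{P_1}\bd)\exp(-\bg\la,H_{P_1}(g)\bd)$. The first factor is not $1$ in general. Your closing remark, that the difficulty lies in comparing $\al$ with its projection $\bar\al\in\Delta_1^2$, describes exactly the phenomenon that contradicts your first paragraph.

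Bounding the extra factor only by compactness of the $\ago_0^{P_1}$-component on the support of $F^{P_1}(\cdot,T)$ costs a factor of size $e^{C_T\|\la\|}$. So the $\la$-independence of the constants that you assert does not follow, and the statement demands it, since $C$ is chosen before $\la$.

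What is missing is a sign argument based on $F^{P_1}(g,T)=1$. By \eqref{eq:def FP}, this condition gives $\bg\varpi,H_P(\delta g)-T\bd\leq 0$ for all $\varpi\in\hat\Delta_P^{P_1}$. Thus $H_P(\delta g)^{P_1}-T_P^{P_1}$ is a nonpositive combination of the coroots in $\Delta_P^{P_1,\vee}$, and each of these pairs nonpositively with $\al$. You also need $\bg\al,T_P\bd\geq\bg\al,T\bd\geq 0$, which holds because $T^P$ is a nonnegative combination of the $\gamma^\vee$, $\gamma\in\Delta_0^P$. Together these give, on the support of $\sigma_1^2$,
\[
\bg\al,H_P(\delta g)\bd\geq \bg\al,T_P\bd+\bg\bar\al,H_{P_1}(g)-T\bd\geq \bg\bar\al,H_{P_1}(g)-T\bd>0 .
\]
Hence $d^{P_1}(-\la,g)\leq\exp\bigl(-c\sum_{\bar\al\in\Delta_1^2}\bg\bar\al,H_{P_1}(g)-T\bd\bigr)$ uniformly for all $\la$ with $c_\al>c$.

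From there the rest of your plan goes through and coincides with the paper's proof: restriction to the Siegel set, \cite[corollary 6.2]{ar1} to control the $\ago_{P_2}$-component, and $\log\|g\|_{P_1}\ll 1+\|H_{P_0}(g)\|$ on $\SG^{P_1}\cap G(\AAA)^1$.

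The paper reaches the same point slightly differently. It replaces $d^{P_1}(-\la,\cdot)$ by $\exp(-\bg\la,H_{P_0}(\cdot)\bd)$ on $\SG^{P_1}$ via \cite[proposition 2.3.4.1]{BPC}. It then proves $\bg\al,H_{P_0}(g)-T\bd\geq\bg\bar\al,H_{P_0}(g)-T\bd>0$ by the same sign argument, applied to $(H_{P_0}(g)-T)^{P_1}$. The condition $F^{P_1}(g,T)=1$ forces that vector to be a nonpositive combination of the $\beta^\vee$, $\beta\in\Delta_0^{P_1}$.
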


\begin{preuve} C'est une variante de la preuve de \cite[lemma 3.3.4.1]{BPC}. Pour la commodité du lecteur, on reprend l'argument. On peut et on va se restreindre au  cas où   $g$ appartient à  l'ensemble $\SG^{P_1}\cap G(\AAA)^1$. 
  Il existe $c_1,c_2>0$ tels que, pour tout $g\in \SG^{P_1}\cap G(\AAA)^1$, on a $\|g\|_{P_1}\leq c_1\exp(c_2 \|H_{P_0}(g)\|)$.  Soit  $T\in T_0+\overline{\ago_0^+}$. On suppose que $g$ vérifie en outre
  \begin{align}\label{eq:Fsinot=0}
    F^{P_1}(g,T)  \sigma_1^2(H_{P_0}(g)-T) \not=0.
  \end{align}
  Il s'agit pour de tels $g$ d'évaluer la norme de $\|H_{P_0}(g)\|$. Selon la décomposition $\ago_0^G=\ago_0^{P_1}\oplus \ago_{P_1}^{P_2}\oplus \ago_{P_2}^G$, on écrit $H_{P_0}(g)=  H_{P_0}(g))^{P_1}  +    H_{P_1}(g)^{P_2}  + H_{P_2}(g)  $. 
  \begin{align*}
    \|H_{P_0}(g)\|\leq \|  H_{P_0}(g)^{P_1}  \| +\|    H_{P_1}(g)^{P_2}  \| +\| H_{P_2}(g)  \|.
  \end{align*}
  Puisque    $F^{P_1}(g,T)  \not=0$, il existe $c_3>0$ tel que $\|  (H_{P_0}(g)^{P_1}  \|\leq c_3(1+\|T^{P_1}\|)$.  Puisque $\sigma_1^2(H_{P_0}(g)-T) \not=0$, il existe $c_4>0$ tel que $\|  H_{P_2}(g)  \|\leq c_4(1+\|     (H_{P_1}(g))^{P_2} -T_{P_1}^{P_2}\|+ \|T_{P_2}^G\| )$, cf. \cite[corollary 6.2]{ar1}. Il existe $c_5>0$ tel que  $\|     (H_{P_1}(g))^{P_2} -T_{P_1}^{P_2}\|\leq c_5  \sum_{\al\in \Delta_1^2 } \bg \al, H_0(g)-T\bd$ où, dans la somme, on observe qu'on a   $\bg \al, H_0(g)-T\bd>0$, cf. § \ref{S:car}. Toujours, sous la condition \eqref{eq:Fsinot=0}, on montre que pour tout $\al\in \Delta_0^{2}\setminus \Delta_0^{1}$, on a  $\bg \al, H_0(g)-T\bd \geq \bg \bar\al, H_0(g)-T\bd>0$ où $\bar\al$ est la projection de $\al$ sur $\ago_1^*$. Finalement, on obtient $c_6>0$ tel que
  \begin{align*}
     \|H_{P_0}(g)\|\leq c_6(1+\|T^G\|+ \sum_{ \al\in \Delta_0^{2}\setminus \Delta_0^{1}  } \bg \al, H_0(g)\bd). 
  \end{align*}
  On note que, dans la somme ci-dessus, on a $\bg \al, H_0(g)\bd>\bg \al, T\bd \geq 0$. L'énoncé est alors évident puisqu'on peut  remplacer $d^{P_1}(-\la,g)$ par $\exp(-\bg\la, H_{P_0}(g)\bd)$, cf. \cite[proposition 2.3.4.1]{BPC}. On observe également que les constantes $c_i$ qui apparaissent sont indépendantes du choix de $T$.
\end{preuve}
\end{paragr}

\begin{paragr}[Point suffisamment positif.] --- \label{S:suffisam}Pour $T\in \ago_0$, on définit $d(T)=\min_{\al\in \Delta_0} \bg \al,T\bd$. Dans toute la suite, on fixe $\eps>0$ assez petit et on dira qu'une assertion vaut  \og pour tout $T\in \ago_0$  suffisamment positif \fg{} si elle vaut pour  tout  $T\in T_0+\overline{\ago_0^+}$  tel que $d(T)\geq \eps\|T^G\|$.  Cette notion interviendra via le lemme suivant.

  \begin{lemme}
    \label{lem:Fsigma*d}
Soit $P_1\subset P_2\subset G$ des sous-groupes paraboliques standard. Pour tous $N,r>0$, il  existe $C,c>0$ tel que pour tout 
 \begin{align}\label{eq:la 12}
\la=\sum_{\al\in \Delta_0^{2}\setminus \Delta_0^{1}}c_\al \al       \text{   avec }  c_\al>c
 \end{align}
 on a 
\begin{align}\label{eq:Fsigma*d}
  F^{P_1}(g,T)  \sigma_1^2(H_{P_0}(g)-T) d^{P_1}(-\la,g)\leq C \exp(-r \|T^G\|) \|g\|_{P_1}^{-N}
  \end{align}
pour tout  $g\in [G]_{P_1}^1$  et tout $T\in \ago_0$   suffisamment positif.
\end{lemme}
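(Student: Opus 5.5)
Je reprendrai l'argument du lemme \ref{lem:Fsigma*d variante}, en suivant cette fois la dépendance en $T$ de toutes les constantes. On se ramène encore à $g\in\SG^{P_1}\cap G(\AAA)^1$ vérifiant \eqref{eq:Fsinot=0}. La preuve précédente fournit des constantes $c_6>0$, indépendantes de $T$, telles que
\begin{align*}
\|H_{P_0}(g)\|\leq c_6\Bigl(1+\|T^G\|+\sum_{\al\in\Delta_0^2\setminus\Delta_0^1}\bg\al,H_0(g)\bd\Bigr),
\end{align*}
avec $\bg\al,H_0(g)\bd>\bg\al,T\bd$ pour tout $\al\in\Delta_0^2\setminus\Delta_0^1$. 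D'autre part, \cite[proposition 2.3.4.1]{BPC} permet de majorer $d^{P_1}(-\la,g)$ par un multiple de $\exp(-\bg\la,H_{P_0}(g)\bd)$, et l'on a $\|g\|_{P_1}\leq c_1\exp(c_2\|H_{P_0}(g)\|)$. Il suffira donc de montrer que, pour $c$ assez grand,
\begin{align*}
\exp\Bigl(-\sum_{\al} c_\al\bg\al,H_0(g)\bd\Bigr)\ll \exp(-r\|T^G\|)\exp\bigl(-Nc_2\|H_{P_0}(g)\|\bigr).
\end{align*}

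On scinde l'exposant $\sum_\al c_\al\bg\al,H_0(g)\bd$ en deux moitiés. Avec $c_\al>c$, la première moitié contrôle $Nc_2c_6\sum_\al\bg\al,H_0(g)\bd$ dès que $c/2\geq Nc_2c_6$. Elle absorbe ainsi la contribution de $\sum_\al\bg\al,H_0(g)\bd$ dans $\|H_{P_0}(g)\|$.

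La difficulté propre à ce lemme, par rapport au lemme \ref{lem:Fsigma*d variante}, vient du terme $\|T^G\|$ qui apparaît dans la borne sur $\|H_{P_0}(g)\|$, ainsi que du facteur $\exp(-r\|T^G\|)$ demandé. C'est là qu'intervient l'hypothèse \og suffisamment positif \fg{}. Comme $\bg\al,H_0(g)\bd>\bg\al,T\bd\geq d(T)\geq\eps\|T^G\|$, la seconde moitié de l'exposant est minorée par $\tfrac c2\,\eps\|T^G\|$. Si $P_1\neq P_2$, l'ensemble $\Delta_0^2\setminus\Delta_0^1$ est non vide. Il suffit alors de choisir $c$ tel que $\tfrac c2\,\eps\geq r+Nc_2c_6$ : on absorbe à la fois le terme $Nc_2c_6(1+\|T^G\|)$ et le facteur $\exp(-r\|T^G\|)$. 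Les constantes $c_i$ étant indépendantes de $T$ (comme noté à la fin de la preuve du lemme \ref{lem:Fsigma*d variante}), $C$ et $c$ le sont aussi.

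Reste le cas dégénéré $P_1=P_2$, que je traiterai à part. La fonction $\sigma_1^1$ est nulle sauf si $P_1=G$. Dans ce dernier cas, $\la=0$ et $F^G(g,T)$ est le seul facteur restant. Il faudra alors utiliser que $F^G(g,T)\neq0$ impose $\|H_{P_0}(g)\|\ll 1+\|T^G\|$, ce qui donne seulement $\|g\|^{-N}\gg\exp(-c'\|T^G\|)$. L'inégalité ne peut donc valoir telle quelle que si ce cas est exclu ou interprété convenablement. C'est le point que je vérifierais en premier ; je m'attends à ce que l'usage ultérieur du lemme n'ait lieu que pour $P_1\subsetneq P_2$, et je restreindrais l'énoncé en conséquence.
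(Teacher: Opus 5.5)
Your proof is the paper's argument made explicit. You reuse the bound on $\|H_{P_0}(g)\|$ from the proof of Lemma \ref{lem:Fsigma*d variante}, whose constants do not depend on $T$, and you use $\bg\al,H_0(g)\bd\geq\bg\al,T\bd\geq\eps\|T^G\|$ so that, for $c$ large, the exponent absorbs both the $\|T^G\|$ term and the factor $\exp(-r\|T^G\|)$.

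Your caveat about $P_1=P_2=G$ is also correct. There the left side is $F^G(g,T)\geq F^G(g,T_0)$, which equals $1$ on a fixed nonempty set, while the right side tends to $0$ as $\|T^G\|\to\infty$. The paper in fact only uses the lemma for $P_1\subsetneq P_2$: the term $P_1=P_2=G$ is split off separately as $F^G(x,T)\varphi(x)$, resp. $F^G(x,T)K_{\chi,f}(x,y)$.
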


\begin{preuve} C'est une conséquence immédiate de la preuve du lemme \ref{lem:Fsigma*d variante} vu que, si $T$ est  suffisamment positif et $g\in \SG^{P_1}\cap G(\AAA)^1$ vérifie \eqref{eq:Fsinot=0}, pour tout $\al\in \Delta_0^{2}\setminus \Delta_0^{1}$, on a  $\bg \al, H_0(g)\bd \geq \bg \al, T\bd\geq  \eps \|T^G\|$. 
\end{preuve}

\end{paragr}

\begin{paragr} Soit $\uc(\ggo_\CC)$ l'algèbre enveloppante de l'algèbre de Lie complexifiée de $G$.  Cette algèbre agit sur l'espace $ C^\infty(G(\AAA))$  des fonctions lisses sur $G(\AAA)$ par les représentations régulières à droite $R$ et à gauche $L$. Soit  $J\subset G(\AAA_f)$ un sous-groupe ouvert compact.  Soit $Q\subset G$ un sous-groupe parabolique. Soit $C^\infty([G]_Q)^J\subset C^\infty(G(\AAA)) $ le sous-espace des fonctions lisses sur $G(\AAA)$ invariante à gauche par $N_Q(\AAA)M_Q(F)$ et à droite par $J$. Pour tout entier $N\geq 0$, toute partie   finie $\Fgo\subset \uc(\ggo_\CC)$ et tout $\varphi\in C^\infty([G]_Q)^J$ on pose
\begin{align}
  \label{eq:norme}
&    \|\varphi\|_{N,\Fgo}=\sup_{x\in [G]_Q^1,X\in \Fgo}   \|x\|_Q^{-N} |R(X)\varphi(x)|.
\end{align}

Soit $\varphi\in C^\infty([G]_Q)^J$. Pour tout  sous-groupe parabolique standard $P\subset Q$, on définit un élément  $\varphi_P\in  C^\infty([G]_P)^J$ par 
\begin{align}\label{eq:terme cst}
\forall g\in [G]_P \ \  \varphi_P(g)=\int_{[N_P]} \varphi(ng)\, dn.
\end{align}
Pour des  sous-groupes paraboliques standard $P_1\subset P_2\subset Q$ et tout $g\in G(\AAA)$ on pose
\begin{align}\label{eq:phi12}
  \varphi_{1,2}(g)=\sum_{ P_1\subset P\subset P_2 }\eps_P^G \varphi_P(g).
\end{align}

\begin{lemme}\label{lem:phi12} Supposons $P_1\subsetneq P_2\subset Q$. Pour  tout
 \begin{align*}
\la=\sum_{\al\in \Delta_0^{2}\setminus \Delta_0^{1}}c_\al \al       \text{   avec }  c_\al\geq 0
 \end{align*}
 il existe un ensemble fini  $\Fgo\subset \uc(\ggo_\CC)$ tel que
 \begin{align}\label{eq:ineg 12}
   |  \varphi_{1,2}(g) |\leq \exp(-\bg\la,H_{P_0}(g)\bd) \|g\|_{Q}^N   \|\varphi\|_{N,\Fgo}
 \end{align}
  pour tous $g\in \SG^{P_2}\cap G(\AAA)^1$,  $N>0$,  et $\varphi\in C^\infty([G]_Q)^J$.
\end{lemme}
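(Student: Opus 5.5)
Pour démontrer le lemme \ref{lem:phi12}, on suivra la preuve classique d'Arthur (cf. \cite[§ 8]{ar1}) et sa variante de \cite[lemma 3.3.3.1]{BPC}. On réduit d'abord au cas d'un seul poids $\la=c\,\al$ pour une racine $\al\in\Delta_0^2\setminus\Delta_0^1$. En effet, sur $\SG^{P_2}\cap G(\AAA)^1$, la quantité $\bg\be,H_{P_0}(g)\bd$ est minorée par $\bg \be, T_-\bd$ pour tout $\be\in\Delta_0^2$. Quitte à absorber une constante, ou plus précisément à majorer $\exp(-\sum c_\al\bg\al,H\bd)$ par $\min_\al$ des $\exp(-c'\bg\al,H\bd)$ à constante près avec $c'=(\sum c_\al)$, il suffit donc de traiter, pour chaque $\al$, le poids $c\,\al$ avec $c\geq0$ arbitraire. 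On choisit alors $\Fgo$ dépendant de $c$.

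Fixons $\al\in\Delta_0^2\setminus\Delta_0^1$ et soit $P_\al$ le sous-groupe parabolique standard tel que $\Delta_0^{P_\al}=\Delta_0^2\setminus\{\al\}$. On a $P_1\subset P_\al\subsetneq P_2$. La somme alternée se réécrit
\begin{align*}
\varphi_{1,2}=\sum_{P_1\subset P\subset P_\al}\eps_P^G\,(\varphi_{P}-\varphi_{P'}),
\end{align*}
où $P'$ est le sous-groupe parabolique tel que $\Delta_0^{P'}=\Delta_0^P\cup\{\al\}$. Chaque différence s'écrit $\varphi_{P'}-\varphi_P=\psi-\psi_P$ avec $\psi=\varphi_{P'}$, qui est une fonction sur $[G]_{P'}$. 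Pour $g\in\SG^{P_2}$, on écrit $\psi(g)-\psi_P(g)$ via l'analyse de Fourier sur le groupe compact $[N_P\cap M_{P'}]$, en filtrant par la série centrale descendante comme dans \cite[lemma I.2.10]{MWlivre} ou \cite{ar1}. On obtient une somme sur des caractères non triviaux $\xi$ d'intégrales $\int\psi(ng)\overline{\xi(n)}dn$.

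Pour chaque caractère non trivial, on intègre par parties : on écrit $\psi(ng)=\psi(g\cdot g^{-1}ng)$ et on fait agir $\Ad(g^{-1})$ sur l'algèbre de Lie de $N_P\cap M_{P'}$. Pour $g=\om a k\in\SG^{P_2}$, les poids de $A_0$ intervenant sont des racines positives contenant $\al$, qui contractent comme $\exp(-\bg\be,H_0(a)\bd)$ avec $\bg\be,H_0(a)\bd\geq \bg\al,H_0(a)\bd+\mathrm{cst}$. Chaque dérivation apporte donc un facteur $e^{-\bg\al,H_{P_0}(g)\bd}$ (les autres racines simples étant minorées par $T_-$) contre une dérivée $R(X)\psi$. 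La sommation sur les caractères converge grâce à des dérivées supplémentaires de type laplacien. Enfin, $|R(X)\psi(x)|\leq \|x\|_{P'}^N\|\varphi\|_{N,\Fgo}$, puisque $\psi$ est un terme constant de $\varphi$ et que $\|x\|_Q\ll\|x\|_{P'}$ à une constante près. Sur $\SG^{P_2}$, on remplace ensuite $\|\cdot\|_{P'}$ par $\|\cdot\|_Q$, en utilisant l'équivalence des hauteurs sur les ensembles de Siegel rappelée au § \ref{S:hauteur}.

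L'étape délicate est l'uniformité : il faut obtenir la constante $1$ devant le membre de droite (ou l'absorber dans $\Fgo$ en multipliant ses éléments par un scalaire) avec un $N$ arbitraire et un $\Fgo$ indépendant de $N$. Il faut aussi contrôler simultanément le choix de $\Fgo$ (de degré croissant avec $c$) et la sommation sur les caractères. C'est là que je suivrais de près \cite[lemma 3.3.3.1]{BPC} plutôt que de refaire l'argument.
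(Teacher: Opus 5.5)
Your route is the paper's. You reduce to $\la=c\,\al$, then pair each $P$ with $P_1\subset P\subset P_\al$ with the $P'$ such that $\Delta_0^{P'}=\Delta_0^P\cup\{\al\}$. That pairing is exactly the proof of \cite[corollaire I.2.11]{MWlivre}, which the paper uses to reduce to the maximal case. You then use the Fourier expansion and integration by parts of \cite[lemme I.2.10]{MWlivre} for $P\subset P'$ on $\SG^{P'}\supset\SG^{P_2}$. The paper first reduces to $P_2=Q$; you instead apply the maximal case directly to $\psi=\varphi_{P'}$, with $\|\psi\|_{N,\Fgo}\ll\|\varphi\|_{N,\Fgo}$ coming from $\|\cdot\|_Q\ll\|\cdot\|_{P'}$. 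This makes no real difference.

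Two minor fixes in your first reduction. First, the inequality you need is $\min_\al e^{-c'\bg\al,H\bd}\le e^{-\bg\la,H\bd}$, not the reverse one you wrote; it holds for every $H$ because $\sum_\al c_\al x_\al\le c'\max_\al x_\al$, so no bound involving $T_-$ is needed. Second, take $\Fgo=\bigcup_\al\Fgo_\al$. The uniformity of the constant is not the delicate point either: the paper's own argument also produces constants depending on $N$ (e.g.\ $\|\varphi_{P_2}\|_{N,\Fgo}\le C\|\varphi\|_{N,\Fgo}$), and the lemma is only used at fixed $N$.

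The step that does not go through is the last one, replacing $\|g\|_{P'}$ by $\|g\|_Q$ on $\SG^{P_2}$. \S~\ref{S:hauteur} gives $\|\cdot\|_{P'}\sim\|\cdot\|$ on $\SG^{P'}\supset\SG^{P_2}$. It gives $\|\cdot\|_Q\sim\|\cdot\|$ only on $\SG^Q$, which is contained in $\SG^{P_2}$, not the other way round. For an upper bound you need $\|g\|_{P'}\ll\|g\|_Q$ on $\SG^{P_2}$. This fails as soon as $P_2\subsetneq Q$, since $\SG^{P_2}$ imposes nothing on the roots of $\Delta_0^Q\setminus\Delta_0^{P_2}$.

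For instance, take $G=GL_3$ over $\QQ$, $Q=G$ and $\Delta_0^{P_2}=\{\al_1\}$. Let $g=n\,\diag(1,u,u^{-1})$ with $u\to 0^+$ at the real place, and $n$ in the root group of $\al_2$ with a badly approximable real coordinate $x$.
\begin{itemize}
\item Then $g\in P_0(\QQ)\SG^{P_2}$ (both heights are left $P_0(\QQ)$-invariant), and $\|g\|_{P_2}\to\infty$.
\item In the copy of $SL_2$ attached to $\al_2$, the element $g$ sends $i$ to $x+iu^2$. So it follows the geodesic ray ending at $x$, which stays in a compact part of $SL_2(\ZZ)\back\mathbb{H}$.
\item Hence $\|g\|_G$ stays bounded.
\end{itemize}

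So what your argument actually proves is \eqref{eq:ineg 12} with $\|g\|_{P_2}^N$ (equivalently $\|g\|^N$) in place of $\|g\|_Q^N$. The paper's proof makes exactly the same move: it asserts that $\|\cdot\|_Q$ and $\|\cdot\|_{P_2}$ are equivalent on $\SG^{P_2}$ in order to pass from $P_2=Q$ to the general case. The weaker bound is all that is used later. In lemme~\ref{lem:majoration} one passes to $\|\cdot\|_{P_1}$ anyway, and $\|\cdot\|_{P_2}\sim\|\cdot\|\sim\|\cdot\|_{P_1}$ on $\SG^{P_2}\subset\SG^{P_1}$.
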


\begin{preuve} Le cas $P_2=Q$ implique le résultat plus général: en effet, ce cas  donne l'existence de $\Fgo\subset \uc(\ggo_\CC)$ fini tel que
   \begin{align*}
   |  \varphi_{1,2}(g) |\leq \exp(-\bg\la,H_{P_0}(g)\bd) \|g\|_{P_2}^N   \|\varphi_{P_2}\|_{N,\Fgo}
 \end{align*}
pour tout  $g\in \SG^{P_2}\cap G(\AAA)^1$,  $N>0$,  et $\varphi\in C^\infty([G]_Q)^J$. Cette majoration donne effectivement \eqref{eq:ineg 12}: d'une part les fonctions  $\|\cdot\|_{Q}$ et $\|\cdot\|_{P_2}$ sont équivalentes sur $\SG^{P_2}$ et, d'autre part, il existe $C>0$ tel que  pour tout $\varphi\in C^\infty([G]_Q)^J$, on a 
\begin{align*}
  \|\varphi_{P_2}\|_{N,\Fgo} \leq C  \|\varphi\|_{N,\Fgo}.
\end{align*}

On suppose désormais $P_2=Q$.  Il suffit ensuite de prouver le résultat lorsque $\la=c \al$ avec $c>0$ et $\al\in \Delta_0^{2}\setminus \Delta_0^{1}$. On est ramené au cas où $P_1\subsetneq Q$ est un sous-groupe parabolique maximal  \cite[preuve du corollaire I.2.11]{MWlivre}. Ce dernier  cas se traite comme dans \cite[preuve du lemme I.2.10]{MWlivre}).

\end{preuve}

\begin{lemme}\label{lem:majoration} Supposons $P_1\subsetneq P_2\subset Q$. 
\begin{enumerate}
    \item Pour  tous $r,N_1,N_2>0$, il existe un ensemble  fini $\Fgo\subset \uc(\ggo_\CC)$ tel que 
  \begin{align*}
     F^{P_1}(x,T ) \sigma_{P_1}^{P_2}(H_{P_1}(x)-T)  |\varphi_{P_1,P_2}(x)| \leq  \exp(-r\|T^G\|)\|x\|^{-N_1}_{P_1}   \|\varphi\|_{N_2,\Fgo}.   \end{align*}
pour tout $x\in P_1(F)N_2(\AAA)\back G(\AAA)^1$, tout $T\in \ago_0$ suffisamment positif et tout $\varphi\in  C^\infty([G]_Q)^J$.
\item Pour  tous $N_1,N_2>0$ et $T\in T_0+\overline{\ago_0^+}$, il existe un ensemble  fini $\Fgo\subset \uc(\ggo_\CC)$ tel que 
  \begin{align*}
     F^{P_1}(x,T ) \sigma_{P_1}^{P_2}(H_{P_1}(x)-T)  |\varphi_{P_1,P_2}(x)| \leq  \|x\|^{-N_1}_{P_1}   \|\varphi\|_{N_2,\Fgo}.   \end{align*}
pour tout $x\in P_1(F)N_2(\AAA)\back G(\AAA)^1$ et tout $\varphi\in  C^\infty([G]_Q)^J$.
\end{enumerate}
\end{lemme}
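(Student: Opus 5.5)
Le plan est de combiner le lemme \ref{lem:phi12}, qui donne une décroissance exponentielle de $\varphi_{1,2}$ dans la direction des racines de $\Delta_0^{2}\setminus\Delta_0^{1}$ au prix d'une croissance polynomiale $\|x\|_Q^{N_2}$, avec les lemmes \ref{lem:Fsigma*d} et \ref{lem:Fsigma*d variante}. Ceux-ci transforment, sur le support de $F^{P_1}(\cdot,T)\sigma_1^2(H_{P_0}(\cdot)-T)$, une telle décroissance exponentielle en une décroissance rapide en $\|x\|_{P_1}$, avec en prime le facteur $\exp(-r\|T^G\|)$ pour $T$ suffisamment positif.

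\textbf{Réduction au domaine de Siegel.} Le membre de gauche est invariant à gauche par $P_1(F)N_2(\AAA)$ et $\|\cdot\|_{P_1}$ est invariante par $P_1(F)N_{P_1}(\AAA)$. Comme $P_1(\AAA)^1=P_1(F)\omega_{P_1}$ avec $\omega_{P_1}$ compact, on peut supposer $x\in\SG^{P_1}\cap G(\AAA)^1$. Sur le support de $F^{P_1}(x,T)$, la composante $H_{P_0}(x)^{P_1}$ est bornée en fonction de $T^{P_1}$. Quitte à translater à gauche par un élément de $P_1(F)$, on peut alors supposer $x\in\SG^{P_2}$, ce qui est le cadre du lemme \ref{lem:phi12}. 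Les fonctions $\|\cdot\|_{P_1}$, $\|\cdot\|_{P_2}$, $\|\cdot\|_Q$ et $\|\cdot\|$ sont équivalentes sur ce domaine (§\ref{S:hauteur}).

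\textbf{Estimation.} On fixe $\la=\sum_{\al\in\Delta_0^2\setminus\Delta_0^1}c\,\al$ avec $c$ plus grand que la constante fournie par le lemme \ref{lem:Fsigma*d}, appliqué avec $N=N_1+N_2$ et le $r$ voulu (pour l'assertion 2, on utilise le lemme \ref{lem:Fsigma*d variante}). Le lemme \ref{lem:phi12} fournit un ensemble fini $\Fgo$ tel que
\begin{align*}
|\varphi_{1,2}(x)|\leq \exp(-\bg\la,H_{P_0}(x)\bd)\,\|x\|_Q^{N_2}\,\|\varphi\|_{N_2,\Fgo}.
\end{align*}
Sur le support de $F^{P_1}(x,T)$, d'après \cite[proposition 2.3.4.1]{BPC}, le facteur $\exp(-\bg\la,H_{P_0}(x)\bd)$ est comparable à $d^{P_1}(-\la,x)$ : seul le terme $P=P_1$, $\delta=1$ de la partition intervient, à une constante près provenant de la borne sur $H_{P_0}(x)^{P_1}$. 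Le lemme \ref{lem:Fsigma*d} majore alors
\begin{align*}
F^{P_1}(x,T)\,\sigma_1^2(H_{P_0}(x)-T)\,d^{P_1}(-\la,x)\leq C\exp(-r\|T^G\|)\,\|x\|_{P_1}^{-N_1-N_2}.
\end{align*}
En multipliant par $\|x\|_Q^{N_2}\ll\|x\|_{P_1}^{N_2}$, on obtient l'assertion 1. L'assertion 2 se traite de la même façon à $T$ fixé. Quitte à normaliser, la constante $C$ s'absorbe dans $\Fgo$, en remplaçant par exemple $\Fgo$ par un multiple scalaire de lui-même.

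\textbf{Point délicat.} La difficulté principale est la compatibilité entre le domaine du lemme \ref{lem:phi12}, à savoir $\SG^{P_2}$ avec le poids $\|\cdot\|_Q$, et celui des lemmes \ref{lem:Fsigma*d} et \ref{lem:Fsigma*d variante}, à savoir $[G]_{P_1}$ avec le poids $\|\cdot\|_{P_1}$. Il faut aussi vérifier que le remplacement de $d^{P_1}$ par l'exponentielle se fait avec des constantes indépendantes de $T$, ce qui est nécessaire pour conserver le facteur $\exp(-r\|T^G\|)$. Pour régler ce point, j'utiliserais la borne uniforme $\|H_{P_0}(x)^{P_1}\|\ll 1+\|T^{P_1}\|$, absorbée par le gain exponentiel. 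Cela se fait en prenant $r$ un peu plus grand au départ, puisque $\|T^{P_1}\|\leq\|T^G\|$.
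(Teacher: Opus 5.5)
Your architecture matches the paper's proof. You bring $x$ into $\SG^{P_1}$, note that it then lies in $\SG^{P_2}$, and apply Lemma \ref{lem:phi12} with $\la=c\sum_{\al\in\Delta_0^2\setminus\Delta_0^1}\al$ and $c$ large. You then replace $\exp(-\bg\la,H_{P_0}(x)\bd)$ by $d^{P_1}(-\la,x)$ and $\|x\|_Q$ by $\|x\|_{P_1}$ on the Siegel set, and finish with Lemmas \ref{lem:Fsigma*d} and \ref{lem:Fsigma*d variante}, absorbing the constant into $\Fgo$.

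However, the step that makes Lemma \ref{lem:phi12} applicable is justified incorrectly. Neither a left translation by $P_1(F)$ nor the bound on $H_{P_0}(x)^{P_1}$ can put $x$ into $\SG^{P_2}$. For $\al\in\Delta_0^2\setminus\Delta_0^1$, the quantity $\bg\al,H_{P_0}(x)\bd$ equals $\bg\al,H_{P_1}(x)\bd$ up to an error bounded in terms of $T$. That term is invariant under $P_1(F)$ and is not constrained at all by $F^{P_1}(x,T)$, so it can be arbitrarily negative. What does the job is the factor $\sigma_{P_1}^{P_2}$. Suppose $x\in\SG^{P_1}$ and $F^{P_1}(x,T)\sigma_{P_1}^{P_2}(H_{P_1}(x)-T)\neq0$. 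Then, as in the proof of Lemma \ref{lem:Fsigma*d variante}, $\bg\al,H_0(x)-T\bd\geq\bg\bar\al,H_{P_1}(x)-T\bd>0$ for every $\al\in\Delta_0^2\setminus\Delta_0^1$. Hence $\bg\al,H_0(x)\bd>\bg\al,T\bd\geq0>\bg\al,T_-\bd$, and $x$ already lies in $\SG^{P_2}$ with no translation needed. This is the one point of the paper's argument your proposal does not establish.

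Your last paragraph (the \emph{point d\'elicat}) treats a non-issue, and the fix you propose would not work. The equivalence between $d^{P_1}(-\la,\cdot)$ and $\exp(-\bg\la,H_{P_0}(\cdot)\bd)$ from \cite[proposition 2.3.4.1]{BPC} holds on all of $\SG^{P_1}$ with constants independent of $T$, because $d^{P_1}$ is built with $T_0$, not $T$; this is how the paper uses it. Your explanation of that equivalence (only the term $P=P_1$, $\delta=1$ contributes) is wrong, since $F^{P_1}(x,T)=1$ does not imply $F^{P_1}(x,T_0)=1$. Moreover, a loss of the form $\exp(C\|\la\|(1+\|T^{P_1}\|))$ could not be absorbed simply by enlarging $r$. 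The constant $c$ in Lemma \ref{lem:Fsigma*d} depends on $r$, so a larger $r$ forces a larger $\la$ and hence a larger loss. Use the $T$-independent equivalence directly instead.
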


\begin{preuve} Soit $T\in T_0+\overline{\ago_0^+}$. Soit  $x\in P_1(F)N_2(\AAA)\back G(\AAA)^1$ tel que  $  F^{P_1}(x,T ) \sigma_{P_1}^{P_2}(H_{P_1}(x)-T)\not=0$. Soit $g\in G(\AAA)^1$ un relèvement de $x$. On peut bien sûr supposer qu'on a  $g\in \SG^{P_1}$. La condition   $F^{P_1}(g,T ) \sigma_{P_1}^{P_2}(H_{P_0}(g)-T) \not=0$ entraîne alors  que, pour tout $\al\in \Delta_0^2\setminus \Delta_0^1$, on a $\bg \al, H_0(g)-T\bd > 0$. Il s'ensuit qu'on a $g\in \SG^{P_2}$. Soit $\la$ comme dans le lemme \ref{lem:phi12}.  On a donc un ensemble fini $\Fgo\subset \uc(\ggo_\CC)$  tel que pour tout $N>0$ on ait
  \begin{align*}
     F^{P_1}(g,T ) \sigma_{P_1}^{P_2}(H_{P_1}(g)-T)  |\varphi_{P_1,P_2}(g)| \leq   F^{P_1}(g,T ) \sigma_{P_1}^{P_2}(H_{P_1}(g)-T) \exp(-\bg\la,H_{P_0}(g)\bd) \|g\|_{Q}^{N}   \|\varphi\|_{N,\Fgo}.
  \end{align*}
  Sur  $\SG^{P_1}$, \emph{a fortiori} sur $\SG^{P_2}$, les fonctions  $\exp(-\bg\la,H_{P_0}(\cdot)\bd)$ et $d^{P_1}(-\la,\cdot)$ d'une part et  $\|\cdot\|_{Q}$ et $\|\cdot\|_{P_1}$ d'autre part sont équivalentes. Il suffit alors d'utiliser  les lemmes \ref{lem:Fsigma*d} et \ref{lem:Fsigma*d variante} pour conclure. 
\end{preuve}

\end{paragr}

\begin{paragr}[Espace de Schwartz.] ---\label{S:Schwartz}
  Soit $G$ un groupe algébrique affine, connexe,  défini sur $F$ (dans ce § on ne le suppose  pas nécessairement réductif). Soit  $X$ une variété algébrique affine définie sur $F$ sur laquelle $G$ agit à droite\footnote{Bien sûr, tout le § vaut encore pour une action à gauche.} qui fait de $X$ un espace homogène. On suppose qu'il existe un point rationnel $e\in X(F)$. Ainsi $X\simeq H\back G$ où $H$ est stabilisateur de $e$: c'est un sous-groupe fermé de $G$ défini sur $F$. La seule situation qu'on rencontrera dans cet article est celle où le choix du point $e\in X(F)$ donne une identification de $X(\AAA)$ avec $H(\AAA)\back G(\AAA)$. Autrement dit, on suppose dans la suite que $G(\AAA)$ agit transitivement sur $X(\AAA)$. 

  L'ensemble  $X(\AAA)$ est muni naturellement d'une topologie associée à celle sur $\AAA$. On note $\|\cdot\|$ une hauteur sur $X(\AAA)$ (cf. la construction des \og normes abstraites \fg{} dans \cite[A.1]{RBP}).  Le groupe $G(\AAA)$ agit sur $X(\AAA)$ et par dualité sur l'ensemble des applications de $X(\AAA)$ dans $\CC$. Une application $f:X(\AAA)\to \CC$ est dite lisse s'il existe  $J\subset G(\AAA_f)$  un sous-groupe compact ouvert tel que $f$ est $J$-invariante et si pour tout $X\in X(\AAA)$ l'application $g\in G(\RR)\mapsto f(xg)$ est lisse au sens usuel.

   Soit $C\subset X(\AAA_f)$ un compact et $J\subset G(\AAA_f)$  un sous-groupe compact ouvert. Soit $\Sc(X(\AAA),C)^J$  l'espace des fonctions lisses,  invariantes par $J$, à support dans $X(F_\infty)\times C$ et  telles que les semi-normes définies par 
  \begin{align}\label{eq:Norm rXY}
\|f\|_{r,Y}=    \sup_{x\in X(\AAA)} \|x\|^r | R(Y)f(x)| 
  \end{align}
  pour $r\geq 1$ et  $Y\in \uc(\ggo_\CC)$ soient finies. Ces semi-normes définissent une topologie sur  $\Sc(X(\AAA),C)^J$ qui fait de celui-ci un espace de Fréchet. L'espace de Schwartz  $\Sc(X(\AAA))$ est la limite topologique localement convexe des espaces  $\Sc(X(\AAA),C)^J$. C'est un espace LF strict.

  Un cas particulier intéressant est lorsqu'on a $X=G$ (et le groupe $H$ est trivial). Le groupe $G$ agit en fait à gauche et à droite sur lui-même et $\Sc(G(\AAA))$ est une algèbre pour le produit de convolution. Pour tout  sous-groupe compact ouvert  $J\subset G(\AAA_f)$, on note   $\Sc(G(\AAA))^J$  la sous-algèbre des fonctions $J$-invariantes à gauche et à droite. La contruction s'applique en particulier à l'algèbre de Lie $\ggo$ de $G$. Dans ce cas, $\Sc(\ggo(\AAA))$ est engendré par les fonctions $f_\infty\otimes f^{\infty}$ où $f_\infty$ est une fonction de Schwartz sur le $\RR$-espace vectoriel $\ggo(F_\infty)$ et $f^\infty$ est une fonction localement constante à support compact sur $\ggo(\AAA_f)$. 
  Revenons au cas d'un espace $X=H\back G$.  Soit $dh$ une mesure invariante à droite sur le groupe $H(\AAA)$. L'application
  \begin{align}\label{eq:integ fibre}
    f\mapsto    \left( H(\AAA)x\mapsto \int_{H(\AAA)} f(hx)\, dh\right)
  \end{align}
induit un morphisme continu surjectif de $\Sc(G(\AAA))$ sur $\Sc(X(\AAA))$. La continuité résulte de \cite[proposition A.1.1 (vii)]{RBP}. C'est aussi une application ouverte par le théorème de l'application ouverte.
\end{paragr}

\subsection{Sur certaines involutions des formes intérieures des groupes généraux linéaires}\label{ssec:involutions}

\begin{paragr}\label{S:D}
Dans cette section,  $F$ est un corps de caractéristique $0$. Soit $M$ une algèbre simple centrale de dimension finie $>0$ sur $F$. Il existe $N\geq 1$ et $D$ une algèbre à division centrale et de dimension finie sur $F$ tels que, pour $V=D^N$ muni de sa structure de $D$-module à droite, $M$ s'identifie à l'algèbre $\End_D(V)$ des endomorphismes $D$-linéaires de $V$. 
Soit $G$ le groupe multiplicatif de $M$ vu comme groupe algébrique sur $F$. Il s'ensuit que   $G$ s'identifie au groupe $G=GL_D(V)$  des automorphismes $D$-linéaires  de $V$. 
\end{paragr}

\begin{paragr}
  Soit $\theta\in G(F)$ tel que  $\theta^2=1$.  Soit
  \begin{align*}
      V^{\pm \theta}=\{v\in V \mid \theta(v)=\pm v  \}.
  \end{align*}
  Lorsque le contexte est clair, on pose $ V^\pm=V^{\pm \theta}$.   On a alors une décomposition en sous-$D$-modules
$$V=V^+\oplus V^-.$$
Soit  $G^{\theta}$ le centralisateur de $\theta$. C'est un groupe réductif connexe défini sur $F$ qui s'identifie à $GL_D(V^+)\times GL_D(V^-)$. C'est aussi un facteur de Levi du sous-groupe parabolique maximal de $G$ qui stabilise $V^+$ (resp. qui stabilise $V^-$).

Plus généralement pour tout sous-groupe $P\subset G$  on note $P^\theta=P\cap G^\theta$ le centralisateur de $\theta$ dans $P$.

\begin{lemme}\label{lem:parab-H}
  \begin{enumerate}
  \item   Soit $P$ un sous-groupe parabolique de $G$. On a les équivalences entre les trois assertions :
  \begin{enumerate}
  \item $P^\theta$ est un sous-groupe parabolique de $G^\theta$ ;
  \item $\theta\in P$ 
  \item $\theta P\theta=P$
  \end{enumerate}
\item Soit  $P$ un sous-groupe parabolique de $G$ contenant $\theta$.
  \begin{enumerate}
  \item Pour tout facteur de Levi $M$ de $P$, le groupe  $M^\theta$ est un facteur de Levi de $P^\theta$ si et seulement si on a $\theta\in M$. 
  \item Il existe un facteur de Levi $M$ de $P$ contenant $\theta$.
  \end{enumerate}
 \item Tout sous-groupe parabolique de $G^\theta$ est de la forme $ P^\theta$ où  $P$ un sous-groupe parabolique de $G$ qui contient $\theta$. 
  \item Il existe $P_0\subset G$ un sous-groupe parabolique  minimal de $G$ et  $M_0$ un facteur de Levi de $P_0$ tels que $M_0$ contient $\theta$. Pour un tel couple $(P_0,M_0)$, on voit que $M_0$ est inclus dans $G^\theta$ et que $P_0^\theta$ est un sous-groupe parabolique minimal de $G^\theta$. 
  \end{enumerate}
\end{lemme}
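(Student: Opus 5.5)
Le plan est de tout traduire en termes de drapeaux de $D$-sous-modules de $V$. Un sous-groupe parabolique $P$ de $G=GL_D(V)$ est le stabilisateur d'un drapeau $0=V_0\subsetneq V_1\subsetneq\dots\subsetneq V_r=V$, et un facteur de Levi de $P$ correspond au choix d'un scindage $V=\bigoplus_i W_i$ avec $V_j=\bigoplus_{i\le j}W_i$ ; les paraboliques de $G^\theta=GL_D(V^+)\times GL_D(V^-)$ sont les produits de stabilisateurs de drapeaux de $V^+$ et de $V^-$.

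Pour 1, l'équivalence (b)$\Leftrightarrow$(c) découle de ce qu'un parabolique est son propre normalisateur. Pour (b)$\Rightarrow$(a), si $\theta$ stabilise chaque $V_i$, on a $V_i=V_i^+\oplus V_i^-$ avec $V_i^\pm=V_i\cap V^\pm$. Alors $P^\theta$ est le stabilisateur dans $GL_D(V^+)\times GL_D(V^-)$ des deux drapeaux $(V_i^+)$ et $(V_i^-)$, après suppression des répétitions, donc un parabolique de $G^\theta$. Pour (a)$\Rightarrow$(b), je ferai l'observation suivante : $\theta$ est central dans $G^\theta$ et un parabolique contient le centre. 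Donc si $P^\theta$ est parabolique dans $G^\theta$, on a $\theta\in P^\theta\subset P$.

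Pour 2(b), on choisit des supplémentaires $\theta$-stables. On prend d'abord $U_i^\pm$ un supplémentaire de $V_{i-1}^\pm$ dans $V_i^\pm$, ce qui est possible car ce sont des $D$-modules libres. On pose ensuite $W_i=U_i^+\oplus U_i^-$, et le Levi $M=\prod GL_D(W_i)$ contient $\theta$. Pour 2(a), si $\theta\in M$, alors $M^\theta=\prod_i\bigl(GL_D(W_i^+)\times GL_D(W_i^-)\bigr)$. C'est exactement un Levi du parabolique décrit en 1, puisque $W_i^\pm$ supplémente $V_{i-1}^\pm$ dans $V_i^\pm$. Réciproquement, si $M^\theta$ est un Levi de $P^\theta$, il contient le centre $\theta$ de $G^\theta$, donc $\theta\in M$.

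Pour 3, un parabolique de $G^\theta$ est donné par des drapeaux $(X_j^+)$ de $V^+$ et $(Y_k^-)$ de $V^-$. Je construirai un drapeau de $V$ en les concaténant : d'abord $X_j^+\oplus 0$, puis $V^+\oplus Y_k^-$. Ce drapeau est $\theta$-stable, et l'intersection $P^\theta$ de son stabilisateur avec $G^\theta$ redonne bien le parabolique donné par la description de 1. Pour 4, on applique 3 à un parabolique minimal $Q$ de $G^\theta$, ce qui donne $Q=P^\theta$. On raffine ensuite le drapeau de $P$ en un drapeau $\theta$-stable maximal, c'est-à-dire dont les pas successifs sont de dimension $1$ sur $D$ et contenus dans $V^+$ ou $V^-$ ; cela fournit un $P_0\subset P$ minimal contenant $\theta$. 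Par 2(b), on obtient alors un Levi $M_0\ni\theta$, qui est un tore de type $(D^\times)^N$. Comme $\theta$ agit par $\pm1$ sur chaque facteur, on a $M_0\subset G^\theta$. Enfin $P_0^\theta$ est le stabilisateur de drapeaux complets de $V^\pm$, donc minimal dans $G^\theta$.

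Le seul point délicat sera la vérification, en 1(b)$\Rightarrow$(a) et en 3, que l'on obtient exactement le stabilisateur des drapeaux induits, même quand des pas se répètent après intersection avec $V^\pm$. Cela reste une vérification directe d'algèbre linéaire.
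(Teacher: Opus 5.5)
Votre preuve est correcte et suit essentiellement celle de l'article. Vous décrivez les paraboliques par des drapeaux de $D$-sous-modules, puis vous utilisez l'argument du centre de $G^\theta$ (qui contient $\theta$) pour 1(a)$\Rightarrow$1(b) et pour la nécessité dans 2(a), des supplémentaires $\theta$-stables pour 2(b), et le drapeau concaténé pour 3 et 4. Trois remarques mineures seulement : en 4, le raffinement est superflu car le drapeau concaténé associé à un $Q$ minimal est déjà complet ; $M_0$ n'est pas un tore si $D\neq F$ ; enfin, votre argument pour $M_0\subset G^\theta$ et pour la minimalité de $P_0^\theta$ vaut pour tout couple $(P_0,M_0)$ avec $\theta\in M_0$, et c'est ce que demande l'énoncé.
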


\begin{preuve}
L'équivalence entre 1.b et 1.c est bien connue. L'assertion 1.a implique  l'assertion 1.b car, si  $P^\theta$ est un sous-groupe parabolique de $G^\theta$ alors il contient le centre de $G^\theta$ donc $\theta$. Montrons que l'assertion 1.b implique  l'assertion 1.a. Le sous-groupe parabolique $P$ est le stabilisateur d'un drapeau $V_0\subsetneq V_1\subsetneq V_2\subsetneq \ldots \subsetneq V$  de sous-$D$-modules de $V$. Comme $\theta\in P$, pour tout $i$, le sous-module $V_i$ est stable par $\theta$ et donc se décompose en $V_i=V_i^+\oplus V_i^-$ où $V_i^\pm=V_i\cap V^\pm$. Un élément appartient à $P^\theta$ si et seulement s'il stabilise les  drapeaux respectifs de $V^\pm$ définis par $V_0^\pm\subset V_1^\pm\subset V_2^\pm\subset\ldots \subset V^\pm$. Mais cette condition définit clairement un sous-groupe parabolique de $G^\theta$.

Prouvons l'assertion 2.a. La condition est nécessaire car si $M^\theta$ est un facteur de Levi de $P^\theta$ il contient le centre de $G^\theta$ donc $\theta$. La condition est suffisante : avec les notations ci-dessus, la donnée supplémentaire de $M$ est équivalente à la donnée pour tout $i$ d'une décomposition $V_{i}=V_{i-1}\oplus W_{i}$ et $M$ est le stabilisateur de chaque $W_i$. Puisque $\theta\in M$, on a $W_i=W_i^{+}\oplus W_i^-$ où $W_i^\pm=W_i\cap V^\pm$. Il s'ensuit que $M^\theta$ est le sous-groupe de $G^\theta$ qui stabilise chaque $W_i^{\pm}$ et ce dernier est un facteur de Levi de $M^\theta$.  Pour obtenir l'assertion 2.b, il suffit de choisir un supplémentaire arbitraire $W_{i+1}^\pm$ de $V_i^\pm$ dans $V_{i+1}^\pm$. Le facteur de Levi de $P$ défini comme le  stabilisateur des sous-modules $W_i=W_i^+\oplus W_i^-$ contient $\theta$.

Prouvons 3. Soit $Q$ un sous-groupe parabolique de $G^\theta$ défini comme le stabilisateur des drapeaux $(V_\bullet^\pm)$ des sous-modules $V^\pm$. Soit $P$ le stabilisateur dans $G$ du drapeau formé des sous-modules $V_0^+\subset V_1^+\subset \ldots\subset V^+\subset V^+\oplus V_1^-\subset \ldots \subset  V^+\oplus V^-$. Alors $P$ contient $\theta$ et $P^\theta=Q$. De plus, si $Q$ est minimal alors chaque quotient  $V_i^\pm/V_{i-1}^\pm$ est de rang $1$ et $P$ est aussi minimal. Réciproquement si $(P_0,M_0)$ est un couple minimal de $G$ avec $\theta\in M_0$ alors $M_0$ est le stabilisateur de sous-modules  de rang $1$ dont la  somme directe est $V$. Comme $\theta$ agit nécessairement par $\pm 1$ sur chacun de ces sous-modules  il est clair que $M_0\subset G^\theta$.  Il est évident aussi que $P_0^\theta$ est minimal dans $G_0^\theta$.
\end{preuve}
\end{paragr}

\begin{paragr}\label{S:P_0M_0}
  Soit  $P_0\subset G$ un sous-groupe parabolique défini sur $F$ et minimal pour ces propriétés et  $M_0$ un facteur de Levi de $P_0$ tels que $\theta\in M_0$ (de tels objets $M_0$ et $P_0$ existent, cf. lemme \ref{lem:parab-H} assertion 4).
  
Un sous-groupe parabolique de $G$ sera dit standard, resp. semi-standard, resp.   relativement standard, s'il contient $P_0$, resp. $M_0$, resp. $P_{0}^\theta$. Si le contexte n'est pas évident, on pourra  remplacer l'adverbe \og relativement \fg{} par \og $\theta$-relativement \fg{}. Standard implique relativement standard qui implique lui-même semi-standard. On rappelle que  $M_0$ est inclus dans $G^\theta$.  Un sous-groupe parabolique de $G^\theta$ sera dit standard, resp. semi-standard,  s'il contient $P_{0}^\theta$, resp. $M_0$.

 Soit 
$$\fc(P_0)\subset \fc(P_{0}^{\theta})\subset  \fc(M_0)\subset \fc(\theta)$$ les ensembles de sous-groupes paraboliques de $G$ qui sont  respectivement standard, relativement standard, semi-standard, resp. qui contiennent $\theta$. De même on définit $\fc^\theta(P_0^\theta)\subset   \fc^\theta(M_0)$  les ensembles de sous-groupes paraboliques de $G^\theta$ qui sont  respectivement standard, semi-standard. 

Le groupe $G^\theta(F)$ agit par conjugaison sur $\fc(\theta)$. Notons que l'application
$$P\mapsto P^\theta$$
induit des applications surjectives de $\fc(\theta)$  dans l'ensemble des sous-groupes paraboliques de $G^\theta$, de $\fc(P_{0}^\theta)$ dans $\fc^\theta(P_0^\theta)$, de $\fc(M_0)$ dans $\fc^\theta(M_0)$.
\end{paragr}

\begin{paragr} Rappelons que $W$ est le groupe de Weyl de $(G,M_0)$, cf. § \ref{S:Haar}.  De même, on définit $W^\theta$ le groupe de Weyl de  $(G^\theta,M_0)$.  Naturellement $W^\theta$ est un sous-groupe de $W$. Le groupe $W$ agit sur $\fc(M_0)$. Les orbites de $W$ rencontrent toutes $\fc(P_0)$. De même pour $W^\theta$ qui agit sur $\fc^\theta(M_0)$ relativement à $\fc(P_{0}^\theta)$. Pour tout $P\in \fc(M_0)$, soit $W^P$ le stabilisateur de  $P$ dans $W$ pour cette action.

Pour tout $w\in W$ et $P\in \fc(M_0)$, on pose
$$P_w=w^{-1}Pw  \text{    et    }  P_w^\theta= P_w\cap G^\theta.$$

Soit  $P\in \fc(M_0)$ et 
\begin{align*}
  _PW_\theta=\{w\in W\mid  M_P\cap P_0=M_P\cap wP_0 w^{-1}  \text{   et   } P_0^\theta \subset P_w\}.
\end{align*}

\begin{proposition}\label{prop:PWtheta}
  \begin{enumerate}
  \item   Pour tout  $P\in \fc(M_0)$ et toute double classe $\dot{w}\in W^P\back W/W^\theta$,  il existe un unique représentant de  $\dot{w}$ dans  $_PW_\theta$.
  \item L'application $(P,w)\mapsto P_w$ induit une bijection de la réunion disjointe 
\begin{align*}
  \bigcup_{P\in\fc(P_0)}  \,  _PW_\theta
\end{align*}
sur l'ensemble des classes de $G^\theta(F)$-conjugaison de $\fc(\theta)$.
\end{enumerate}
\end{proposition}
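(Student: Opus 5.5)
Pour l'assertion 1, on ramène le problème au cas classique des doubles classes de Kostant. Soit $w\in W$. La condition $P_0^\theta\subset P_w$ équivaut à $wP_0^\theta w^{-1}\subset P$. La condition $M_P\cap P_0=M_P\cap wP_0w^{-1}$ équivaut, elle, à dire que $w$ envoie les racines positives de $M_0$ dans $M_P$ (relativement à $w P_0 w^{-1}$) sur des racines positives pour $P_0$. C'est la condition de représentant minimal à gauche modulo $W^{M_P}$.

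Notons d'abord que $W^P=W^{M_P}$, puisque le stabilisateur d'un parabolique semi-standard dans $W$ est le groupe de Weyl de son Levi. On procède alors en deux temps.

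\emph{Existence.} Dans la double classe $W^P w W^\theta$, on choisit d'abord $w'=ww_1$ avec $w_1\in W^\theta$ tel que $P_{w'}\supset P_0^\theta$. C'est possible car $P_w^\theta=P_w\cap G^\theta$ est un parabolique semi-standard de $G^\theta$ (lemme \ref{lem:parab-H}, puisque $M_0\subset G^\theta$ et donc $\theta\in P_w$). Or $W^\theta$ agit transitivement sur les paraboliques de $G^\theta$ de même type. Plus précisément, il existe $w_1\in W^\theta$ tel que $w_1^{-1}P_w^\theta w_1\supset P_0^\theta$, car $P_0^\theta$ est minimal et les chambres de $G^\theta$ sont permutées simplement transitivement. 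On multiplie ensuite à gauche par $m\in W^{M_P}$ pour rendre $M_P\cap wP_0w^{-1}=M_P\cap P_0$, en utilisant la transitivité simple de $W^{M_P}$ sur les paraboliques minimaux de $M_P$ contenant $M_0$. Il faut vérifier que cette seconde opération préserve la première condition : $mw$ conjugue $P_0^\theta$ dans $mPm^{-1}=P$, car $m\in W^P$.

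\emph{Unicité.} Si $w$ et $w'=mwu$ (avec $m\in W^P$, $u\in W^\theta$) sont tous deux dans $_PW_\theta$, on montre que $u$ fixe l'ensemble des paraboliques $P_w^\theta$, puis on en déduit $m=1$ et $u$ triviale modulo ce qui commute. Je compte procéder ainsi. Les deux paraboliques $P_w^\theta$ et $P_{w'}^\theta=u^{-1}(w^{-1}m^{-1}Pmw)^\theta u=u^{-1}P_w^\theta u$ sont tous deux standard dans $G^\theta$. Deux paraboliques standard conjugués sont égaux, donc $u\in W^{\theta,P_w^\theta}$, qui est le groupe de Weyl de $M_{P_w}^\theta\subset M_{P_w}$. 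On a donc $u\in W^{P_w}=w^{-1}W^Pw$, d'où $w'=m'w$ avec $m'\in W^P$. La condition sur $M_P\cap P_0$ force enfin $m'=1$, par transitivité simple de $W^{M_P}$ sur les chambres de $M_P$. C'est l'étape délicate : justifier que le normalisateur de $P_w^\theta$ dans $W^\theta$ est inclus dans $W^{P_w}$. Cela découle de ce que $P_w^\theta$ détermine le drapeau décomposé en $\pm$ qui définit $P_w$ (preuve du lemme \ref{lem:parab-H}), à l'ordre des blocs près. Ces blocs sont précisément ceux du Levi.

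Pour l'assertion 2, on part de $Q\in\fc(\theta)$. Par le lemme \ref{lem:parab-H} (assertions 2.b et 4), il existe un Levi de $Q$ contenant $\theta$. Un tel Levi contient un tore déployé maximal de $G^\theta$, qui est aussi maximal dans $G$. Quitte à conjuguer par $G^\theta(F)$, on peut donc supposer ce Levi égal à $M_0$, c'est-à-dire $Q\in\fc(M_0)$. On écrit alors $Q=P_w$ avec $P\in\fc(P_0)$ et $w\in W$. On constate ensuite que deux éléments de $\fc(M_0)$ sont $G^\theta(F)$-conjugués si et seulement s'ils le sont par $W^\theta$. Pour cela, on utilise la conjugaison des Levi contenant $M_0$ à l'intérieur du parabolique $Q^\theta$ et le lemme de Bruhat dans $G^\theta$. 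Les classes cherchées s'identifient ainsi à $\bigsqcup_P W^P\back W/W^\theta$, et l'assertion 1 conclut.
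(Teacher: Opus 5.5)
Your proof is correct. Part~1 is essentially the paper's argument. Existence: first use $W^\theta$ to make $P_w^\theta$ standard in $G^\theta$, then adjust on the left by $W^P$. Uniqueness: conjugate standard parabolics of $G^\theta$ coincide, so $u$ lies in the Weyl group of $M_{P_w}^\theta\subset M_{P_w}$, hence in $W^{P_w}$. You even make explicit the inclusion of the $W^\theta$-stabilizer of $P_w^\theta$ in $W^{P_w}$, which the paper uses silently. Your one-line justification of it (Lemma~\ref{lem:parab-H}, 2.a, since $\theta\in M_0\subset M_{P_w}$) already suffices, so the flag discussion is unnecessary.

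For part~2 you take a different route from the paper. Your steps are:
\begin{itemize}
\item every $G^\theta(F)$-class in $\fc(\theta)$ meets $\fc(M_0)$;
\item $G^\theta(F)$-conjugacy on $\fc(M_0)$ reduces to $W^\theta$-conjugacy;
\item the resulting orbits are identified with $\bigsqcup_P W^P\backslash W/W^\theta$, and part~1 concludes.
\end{itemize}
The paper instead checks injectivity and surjectivity directly. For injectivity: if $h\in G^\theta(F)$ conjugates $P_w$ to $(P_1)_{w_1}$, then $P=P_1$, being conjugate standard parabolics of $G$. Next, $P_w^\theta$ and $P_{w_1}^\theta$ are standard and conjugate in $G^\theta$, hence equal, so $h\in P_w^\theta$ by self-normalization. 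Thus $P_w=P_{w_1}$, and uniqueness in part~1 finishes. For surjectivity it conjugates $P^\theta$ into $\fc^\theta(P_0^\theta)$, which forces $P\supset M_0$.

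This avoids your reduction lemma altogether. Your lemma needs conjugacy of maximal split tori inside the connected group $Q_2^\theta$; Bruhat is not needed. Concretely, if $hQ_1h^{-1}=Q_2$, then $hA_0h^{-1}$ and $A_0$ are maximal split tori of $Q_2^\theta$. So some $q\in Q_2^\theta(F)$ makes $qh\in N_{G^\theta}(A_0)(F)$, which represents an element of $W^\theta$ carrying $Q_1$ to $Q_2$. In exchange, your version makes the double-coset structure of the set of classes transparent.

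One small slip: you cannot assume the Levi of $Q$ equals $M_0$. You only know it contains a maximal split torus which you may conjugate to $A_0$, hence it contains $\Cent_G(A_0)=M_0$. That is all you need for $Q\in\fc(M_0)$.
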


\begin{preuve}
 1.  Soit $w\in W$. On a $P_w^\theta \in \fc^\theta(M_0)$.  Il existe donc $w_\theta\in W^\theta$ tel que $w_\theta^{-1} P_w^\theta w_\theta\in \fc^\theta(P_{0}^\theta)$. Quitte à remplacer $w$ par $ww_\theta$, on peut supposer qu'on a $P_0^\theta\subset P_w$. Quitte à remplacer $w$ par $w'w$ avec $w'\in W^P$, ce qui ne change pas $P_w$, on peut supposer qu'on a aussi $M_P\cap P_0=M_P\cap wP_0 w^{-1}$. On a donc $w\in \,_PW_\theta$ ce qu'on suppose désormais. Soit $w_P\in W^P$ et $w_\theta\in W^\theta$ tels que $w_1=w_Pww_\theta\in  \,_PW_\theta$. On a alors $P_w^\theta$ et $P_{ww_\theta}^\theta$ sont deux sous-groupes paraboliques standard et conjugués de $G^\theta$ : ils sont donc égaux. Il s'ensuit que $w_\theta\in  W^{P_w^\theta}$. On a donc $w_1=w' w$ avec $w'=w_P ww_\theta w^{-1}\in W^P$. Il s'ensuit que $M_P\cap P_{0,w_1^{-1}}$ et $M_P\cap P_{0,w^{-1}}$ sont conjugués par $w'$. Comme ils sont tous deux égaux  à $M_P\cap P_0$, on a en fait $w'=1$ et donc $w_1=w$.

2.  Montrons d'abord que l'application en question est injective. Soit $(P,w)$ et $(P_1,w_1)$ tels que $P_w$ et $(P_1)_{w_1}$ soient $G^\theta(F)$-conjugués. Il existe $h\in G^\theta(F)$ tel que $(wh)^{-1} Pwh=w_1^{-1} P_1 w_1$. Comme $P$ et $P_1$ sont alors standard et conjugués, on a $P=P_1$.  Puis $P_w^\theta$ et $P_{w_1}^\theta$  appartiennent à $\fc^\theta(P_0^\theta)$ et sont $G^\theta(F)$-conjugués : ils sont donc aussi égaux et $h\in  P_w^\theta$. Ainsi  $P_w= P_ {w_1}$. Il est facile de conclure.

Montrons finalement que l'application en question est surjective.  Soit $P$ un  sous-groupe parabolique de $G$ qui contient $\theta$ c'est-à-dire $P^\theta$ est un sous-groupe parabolique de $G^\theta$. Quitte à  conjuguer $P$ par un élément de $G^\theta(F)$, on peut supposer que $P$ contient $P_0^\theta$. En particulier, $P$ est semi-standard. Il existe $w\in W$ tel que $wPw^{-1}$ est standard. Alors $P=w^{-1}(wPw^{-1})w$ ce qui conclut. 
\end{preuve}

On a aussi la variante suivante de cette proposition. 

\begin{corollaire}\label{cor:PWtheta}
  \begin{enumerate}
  \item   Pour tout  $P\in \fc(M_0)$ et toute double classe $\dot{w}\in P(F)\back P(F) W G^\theta(F)/G^\theta(F)$,  il existe un unique représentant de  $\dot{w}$ dans  $_PW_\theta$.
  \item L'application $(P,w)\mapsto P_w$ induit une bijection de la réunion disjointe 
\begin{align*}
  \bigcup_{P\in\fc(P_0)}  \,  _PW_\theta
\end{align*}
sur $\fc(P_0^\theta)$ c'est-à-dire l'ensemble de sous-groupes paraboliques relativement standard de $G$. 
\end{enumerate}
\end{corollaire}

\begin{preuve}
Pour l'assertion 1, avec l'existence dans la proposition \ref{prop:PWtheta}, il nous reste à montrer l'unicité. Soit $w, w_1\in\,_PW_\theta$ des représentants de $\dot{w}$. Il existe $p\in P(F)$ et $h\in G^\theta(F)$ tels que $w_1=p w h$. On a alors $P_{w_1}=h^{-1}P_w h$. Si $P$ est standard, l'injectivité dans la proposition \ref{prop:PWtheta} entraîne $w_1=w$ ce qui conclut. Mais il est évident que son argument est valable pour tout $P$ semi-standard. 

Pour l'assertion 2, en vertu de la bijectivité dans la proposition \ref{prop:PWtheta}, il suffit de montrer que l'application canonique de $\fc(P_0^\theta)$ sur l'ensemble des classes de $G^\theta(F)$-conjugaison de $\fc(\theta)$ est injective. Pour cela, on peut reprendre la preuve de l'injectivité dans la proposition \ref{prop:PWtheta}. 
\end{preuve}
\end{paragr}

\begin{paragr}   Soit $Q$ un sous-groupe parabolique semi-standard de $G$ et soit $\fc^Q$, resp. $\fc(Q)$, l'ensemble des sous-groupes paraboliques inclus dans $Q$, resp. contenant $Q$. On pose $\fc^Q(P_0)=\fc^Q\cap \fc(P_0)$, $\fc^Q(P_0^\theta)=\fc^Q\cap \fc(P_0^\theta)$ et $\fc^Q(\theta)=\fc^Q\cap \fc(\theta)$. On pose aussi $\pc^Q(P_0^\theta)=\pc(M_0)\cap\fc^Q(P_0^\theta)$.
Supposons que $Q$ est de plus standard.  On définit alors 
\begin{align*}
    \,_PW^Q_\theta= \,_PW_\theta\cap W^Q.
  \end{align*}
On a le corollaire immédiat.

\begin{corollaire}\label{cor:classe-Q_H}
  L'application $(P,w)\mapsto P_w$ induit une bijection de la réunion disjointe 
\begin{align*}
  \bigcup_{P\in\fc^Q(P_0)} \,   _PW^Q_\theta
\end{align*}
sur l'ensemble des classes de $Q^\theta(F)$-conjugaison de $\fc^Q(\theta)$ ou sur $\fc^Q(P_0^\theta)$.
\end{corollaire}
\end{paragr}

\begin{paragr} [Convention de notation.] ---   \label{S:wtheta}Pour tout $w\in W$, les résultats qui précèdent et ceux qui vont suivre valent lorsqu'on remplace $\theta$ par $w\theta w^{-1}$. Afin d'alléger  la notation des objets définis relativement à $w\theta w^{-1}$, on remplacera souvent l'indice ou l'exposant $w\theta w^{-1}$ par $w\theta$.  On espère que cela ne sera pas source de confusion. Notons qu'on a alors la formule $wG^\theta w^{-1}=G^{w\theta}$.
\end{paragr}

\begin{paragr} Finissons cette sous-section par quelques lemmes.
  
  \begin{lemme} \label{lem:Wdec}Soit $P\subset Q\subset R$ trois sous-groupes paraboliques standard de $G$.
  \begin{enumerate}
  \item Pour tout $w\in \, _QW^R_\theta$ et tout $w_1  \in \, _PW^Q_{w\theta}$, on a $w_1w\in   \, _PW^R_\theta$.
  \item Réciproquement pour tout $w_0\in  \, _PW^R_\theta$, il existe un couple unique $(w,w_1)$ comme ci-dessus tel que $w_0=w_1w$.
  \end{enumerate}
\end{lemme}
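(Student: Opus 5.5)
The proof consists of direct verifications from the definition of $_PW_\theta$; the one structural input is Corollary \ref{cor:PWtheta} (and Proposition \ref{prop:PWtheta}) applied to the Levi $M_R$ in place of $G$. This is legitimate because $\theta\in M_0\subset M_R$ and $W^R$ is the Weyl group of $(M_R,M_0)$.

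The key auxiliary fact is the following. Let $w\in\,_QW^R_\theta$, and observe that $w\theta w^{-1}\in M_0\subset M_Q$. We claim
\begin{align*}
wP_0^\theta w^{-1}=(M_Q\cap P_0)^{w\theta}\,N_Q^{w\theta},\qquad P_0^{w\theta}=(M_Q\cap P_0)^{w\theta}\,N_Q^{w\theta}.
\end{align*}
The argument runs as follows.
\begin{itemize}
\item From $P_0^\theta\subset w^{-1}Qw$ we get that $wP_0^\theta w^{-1}=wP_0w^{-1}\cap G^{w\theta}$ is a minimal parabolic subgroup of $G^{w\theta}$ contained in $Q^{w\theta}$.
\item By Lemma \ref{lem:parab-H}, $Q^{w\theta}$ is a parabolic subgroup of $G^{w\theta}$ with Levi factor $M_Q^{w\theta}$ and unipotent radical $N_Q^{w\theta}$.
\item Hence $wP_0^\theta w^{-1}$ contains $N_Q^{w\theta}$ and equals $(M_Q\cap wP_0w^{-1})^{w\theta}N_Q^{w\theta}$.
\item The condition $M_Q\cap wP_0w^{-1}=M_Q\cap P_0$ then gives the first equality. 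The second is immediate, since $P_0\supset N_Q$.
\end{itemize}
In particular the two minimal parabolic subgroups $wP_0^\theta w^{-1}$ and $P_0^{w\theta}$ of $G^{w\theta}$ coincide. I expect this identification to be the main obstacle, since it is exactly what makes the "relative" conditions for $\theta$ and for $w\theta$ correspond.

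For assertion 1, take $w\in\,_QW^R_\theta$ and $w_1\in\,_PW^Q_{w\theta}$. Since $W^Q\subset W^R$, we have $w_1w\in W^R$.

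The Levi condition follows because $w_1\in W^Q$ normalises $M_Q\supset M_P$:
\begin{align*}
M_P\cap w_1wP_0w^{-1}w_1^{-1}=M_P\cap w_1(M_Q\cap wP_0w^{-1})w_1^{-1}=M_P\cap w_1(M_Q\cap P_0)w_1^{-1}=M_P\cap w_1P_0w_1^{-1}=M_P\cap P_0 .
\end{align*}

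For the relative condition we must show $wP_0^\theta w^{-1}\subset w_1^{-1}Pw_1$. By the auxiliary fact, $wP_0^\theta w^{-1}=P_0^{w\theta}$, and $P_0^{w\theta}\subset w_1^{-1}Pw_1$ by the hypothesis $w_1\in\,_PW^Q_{w\theta}$. It is worth recording why this containment is compatible with the decomposition: $w_1^{-1}Pw_1=(M_Q\cap w_1^{-1}Pw_1)N_Q$, because $w_1$ normalises $N_Q$ and $P=(M_Q\cap P)N_Q$. So $w_1^{-1}Pw_1$ contains $N_Q$ and also $(M_Q\cap P_0)^{w\theta}\subset P_0^{w\theta}$.

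For assertion 2, take $w_0\in\,_PW^R_\theta$. Then $P_0^\theta\subset w_0^{-1}Pw_0\subset w_0^{-1}Qw_0$, so $w_0^{-1}Qw_0$ is a relatively standard parabolic subgroup of $M_R$-type. Corollary \ref{cor:PWtheta} 2, applied in $M_R$, writes it uniquely as $w^{-1}Q'w$ with $Q'$ standard and $w\in\,_{Q'}W^R_\theta$. Since $Q'$ and $Q$ are both standard and conjugate, $Q'=Q$. Then $w_0w^{-1}$ normalises $Q$, hence lies in $W^Q$, and we set $w_1=w_0w^{-1}$.

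The conditions $w_1\in\,_PW^Q_{w\theta}$ are obtained by reading the computations of assertion 1 backwards:
\begin{itemize}
\item The Levi condition follows from the same chain of equalities.
\item For the relative condition, the auxiliary fact gives $P_0^{w\theta}=wP_0^\theta w^{-1}\subset w_1^{-1}Pw_1$.
\end{itemize}

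Uniqueness: if $w_0=w_1w$ with $(w,w_1)$ as in assertion 1, then $w_0^{-1}Qw_0=w^{-1}Qw$, because $w_1\in W^Q$. The uniqueness in Corollary \ref{cor:PWtheta} therefore determines $w$, and then $w_1=w_0w^{-1}$.
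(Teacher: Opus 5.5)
Your proof is correct and follows the paper's. Your auxiliary fact $wP_0^\theta w^{-1}=P_0^{w\theta}$ is exactly Lemma \ref{lem:une-observation} conjugated by $w$, proved by the same decomposition through $N_Q^{w\theta}$, and assertion 1 is checked the same way. The only variation is in assertion 2: you extract $w$ and its uniqueness from the bijection of Corollary \ref{cor:classe-Q_H} (equivalently Corollary \ref{cor:PWtheta} applied in $M_R$). The paper instead takes $w_1$ to be the unique element of $W^Q$ with $M_Q\cap w_1^{-1}w_0P_0w_0^{-1}w_1=M_Q\cap P_0$; the condition $P_0^\theta\subset Q_w=Q_{w_0}$ then holds automatically, which is a slightly more elementary route to the same decomposition.
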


\begin{preuve}

  1. Soit $w_0=w_1w$. On a
  \begin{align*}
    M_P\cap w_0P_0 w_0^{-1}\subset M_Q\cap w_0P_0 w_0^{-1}&= w_1(M_Q\cap w P_0 w^{-1})w_1^{-1}\\
&=w_1(M_Q\cap P_0)w_1^{-1}\\
&=M_Q\cap w_1 P_0 w^{-1}_1.
  \end{align*}
Donc on a 
\begin{align*}
    M_P\cap w_0P_0 w_0^{-1}\subset M_P\cap w_1 P_0 w^{-1}_1=M_P\cap P_0\end{align*}
d'où l'égalité  $M_P\cap w_0P_0 w_0^{-1}=M_P\cap P_0$. Par ailleurs, on a $P_0^{w\theta}\subset P_{w_1}$. Il s'ensuit qu'on a  $    P_{0,w}^\theta=w^{-1} P_0^{w\theta} w   \subset P_{w_1w}=P_{w_0}$. On conclut avec le lemme \ref{lem:une-observation} ci-dessous appliqué à $w$ qu'on a $P_{0}^\theta=   P_{0,w}^\theta\subset P_{w_0}$.

2. Supposons $w_0\in  \, _PW^R_\theta$. On a donc  $P_0^\theta\subset P_{w_0}\subset Q_{w_0}$. Il existe alors un unique $w_1\in W^Q$ tel que  $w=w_1^{-1}w_0$ vérifie $M_Q\cap   w P_0 w^{-1}=M_Q\cap P_0$ et $P_0^\theta\subset Q_{w}$. Alors $w\in \,_QW^R_\theta$. On a donc l'unicité mais aussi  l'existence car  on obtient $w_1\in \, _PW^Q_{w\theta}$: en effet, on a d'une part, à l'aide du  lemme \ref{lem:une-observation},
\begin{align*}
  P_0^{w\theta} =w(P_{0,w}^\theta )w^{-1} =w(P_{0}^\theta )w^{-1}  \subset  P_{w_0 w^{-1}}=P_{w_1} 
\end{align*}
et d'autre part
\begin{align*}
  M_P\cap w_1P_0 w_1^{-1}\subset M_Q \cap w_1P_0 w_1^{-1}= w_1(M_Q\cap P_0)w_1^{-1}\\
= w_1(M_Q\cap   w P_0 w^{-1})w_1^{-1}=M_Q\cap (w_0 P_0 w_0^{-1})=M_Q\cap P_0 .
\end{align*}

\end{preuve}

\begin{lemme}
  \label{lem:une-observation}
  Soit $Q$ un sous-groupe parabolique standard de $G$. Pour tout  $w\in  \, _QW_\theta$, on a 
\begin{equation}
  \label{eq:conclusion-clef}
  P_0^{\theta}=P_{0, w}^{\theta}.
\end{equation}
\end{lemme}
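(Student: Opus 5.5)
Both groups are parabolic subgroups of $G^\theta$, so I will prove the inclusion $P_0^\theta\subset P_{0,w}^\theta$ and then conclude by minimality. The inclusion comes from rewriting $P_{0,w}$ with the Levi decomposition of $Q_w$.

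First I rewrite $P_{0,w}$. Since $P_0\subset Q$, we have $P_{0,w}=w^{-1}P_0w\subset w^{-1}Qw=Q_w$. The group $Q_w$ is semi-standard with Levi factor $M_{Q_w}=w^{-1}M_Qw$ and unipotent radical $N_{Q_w}=w^{-1}N_Qw$. The first condition in the definition of $_QW_\theta$ is $M_Q\cap P_0=M_Q\cap wP_0w^{-1}$. Conjugating it by $w^{-1}$ gives
\begin{align*}
M_{Q_w}\cap P_{0,w}=w^{-1}(M_Q\cap wP_0w^{-1})w=w^{-1}(M_Q\cap P_0)w=M_{Q_w}\cap P_0 .
\end{align*}
Because $P_{0,w}\subset Q_w$, we have $P_{0,w}=(M_{Q_w}\cap P_{0,w})N_{Q_w}$, and therefore
\begin{align*}
P_{0,w}=(M_{Q_w}\cap P_0)\,N_{Q_w}.
\end{align*}

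Next I use the second condition in the definition of $_QW_\theta$, namely $P_0^\theta\subset Q_w$. This gives $P_0^\theta\subset P_0\cap Q_w$. The groups $P_0$ and $Q_w$ are two parabolic subgroups containing the common Levi subgroup $M_0$. The classical decomposition of such an intersection gives $P_0\cap Q_w=(P_0\cap M_{Q_w})(P_0\cap N_{Q_w})$. This is contained in $(M_{Q_w}\cap P_0)N_{Q_w}=P_{0,w}$. Hence $P_0^\theta\subset P_{0,w}\cap G^\theta=P_{0,w}^\theta$.

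To conclude, note that $\theta\in M_0\subset P_{0,w}$. By lemme \ref{lem:parab-H} (assertion 1), $P_{0,w}^\theta$ is therefore a parabolic subgroup of $G^\theta$. It is even minimal. Indeed, $P_{0,w}$ is a minimal parabolic subgroup of $G$ with Levi factor $M_0\ni\theta$. The argument of assertion 4 of lemme \ref{lem:parab-H} then applies verbatim to the couple $(P_{0,w},M_0)$: $M_0$ is the stabilizer of rank-one submodules on which $\theta$ acts by $\pm1$, so $P_{0,w}$ stabilizes a complete flag compatible with $V=V^+\oplus V^-$. Consequently $P_{0,w}^\theta$ is the stabilizer of complete flags of $V^+$ and $V^-$, hence minimal. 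Two minimal parabolic subgroups of $G^\theta$, one contained in the other, have the same dimension and are connected, so they are equal. This gives \eqref{eq:conclusion-clef}.

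The only step requiring care is the intersection decomposition $P_0\cap Q_w=(P_0\cap M_{Q_w})(P_0\cap N_{Q_w})$. It is standard for parabolic subgroups sharing a Levi $M_0$, and it can be checked root by root relative to $A_0$ if needed.
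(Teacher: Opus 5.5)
Your proof is correct, but it takes a different route from the paper's.

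The paper stays inside $G^\theta$. Since $P_0\subset Q$, one has $P_{0,w}^\theta\subset Q_w^\theta$, and by hypothesis $P_0^\theta\subset Q_w^\theta$. Both are parabolic subgroups of $G^\theta$ contained in $Q_w^\theta$, whose Levi factor is $M_{Q_w}^\theta$ by lemme \ref{lem:parab-H}. Hence one can write $P_0^\theta=(P_0\cap M_{Q_w})^\theta N_{Q_w^\theta}$ and $P_{0,w}^\theta=(P_{0,w}\cap M_{Q_w})^\theta N_{Q_w^\theta}$. The first condition defining ${}_QW_\theta$ gives $P_{0,w}\cap M_{Q_w}=P_0\cap M_{Q_w}$, so equality follows directly and symmetrically, with no appeal to minimality.

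You work in $G$ instead. You combine the same identity $M_{Q_w}\cap P_{0,w}=M_{Q_w}\cap P_0$ with the decomposition $P_0\cap Q_w=(P_0\cap M_{Q_w})(P_0\cap N_{Q_w})$ for parabolics containing $M_0$. This gives the $\theta$-free inclusion $P_0\cap Q_w\subset P_{0,w}$, and therefore only $P_0^\theta\subset P_{0,w}^\theta$. You then need an extra input to upgrade this to an equality: minimality of $P_{0,w}^\theta$. That follows from assertion 4 of lemme \ref{lem:parab-H} applied to the couple $(P_{0,w},M_0)$, which is legitimate since $w$ normalizes $M_0\ni\theta$. Minimality of $P_{0,w}^\theta$ alone already suffices, since it contains the $F$-parabolic subgroup $P_0^\theta$; the dimension comparison is not needed.

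The paper's argument is more economical. Yours isolates a slightly stronger statement in $G$ itself, at the cost of this minimality step.
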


\begin{preuve} Par hypothèse sur $w$, on a  $P_0^\theta\subset Q_w^\theta$. Puisque $Q$ est standard, on a aussi $P_{0, w}^{\theta}\subset Q_w^\theta$. Comme il s'agit de sous-groupes paraboliques de $G^\theta$, on en déduit qu'on a $P_0^\theta=(P_0\cap M_{Q_w})^\theta N_{Q_w^\theta}$ et $P_{0, w}^{\theta}= (P_{0, w}\cap M_{Q_w})^\theta N_{Q_w^\theta}$. Or, toujours  par hypothèse sur $w$, on a aussi $P_{0, w}\cap M_{Q_w}=w^{-1}(P_0\cap M_Q)w=w^{-1}(P_{0,w^{-1}}\cap M_Q)w=P_0\cap M_{Q_w}$. L'égalité \eqref{eq:conclusion-clef} s'ensuit.

\end{preuve}

\begin{lemme}\label{lem:Wconj}
    Soit $P\in\fc(M_0)$ et $w_1\in W$ tels que $M_P\cap P_0=M_P\cap w_1 P_0 w_1^{-1}$. L'application $w_2\mapsto w_1^{-1}w_2$ induit une bijection de $\, _PW_\theta$ sur $_{P_{w_1}}W_\theta$. 
\end{lemme}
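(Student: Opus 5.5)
Je vérifierai directement les deux conditions définissant $\,_{P_{w_1}}W_\theta$. Posons $P'=P_{w_1}=w_1^{-1}Pw_1$, de sorte que $M_{P'}=w_1^{-1}M_Pw_1$. Pour tout $w_2\in W$, posons $w=w_1^{-1}w_2$. On a $P'_w=w^{-1}P'w=w_2^{-1}w_1w_1^{-1}Pw_1w_1^{-1}w_2=w_2^{-1}Pw_2=P_{w_2}$. La condition $P_0^\theta\subset P'_w$ équivaut donc exactement à $P_0^\theta\subset P_{w_2}$, et elle est préservée dans les deux sens.

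Reste la condition portant sur le Levi. On calcule
\begin{align*}
M_{P'}\cap wP_0w^{-1}=w_1^{-1}\bigl(M_P\cap w_2P_0w_2^{-1}\bigr)w_1 .
\end{align*}
Par l'hypothèse $M_P\cap P_0=M_P\cap w_1P_0w_1^{-1}$, on a par ailleurs
\begin{align*}
M_{P'}\cap P_0=w_1^{-1}\bigl(M_P\cap w_1P_0w_1^{-1}\bigr)w_1=w_1^{-1}(M_P\cap P_0)w_1 .
\end{align*}
L'égalité $M_{P'}\cap P_0=M_{P'}\cap wP_0w^{-1}$ équivaut donc, après conjugaison par $w_1$, à $M_P\cap P_0=M_P\cap w_2P_0w_2^{-1}$. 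C'est précisément la première condition définissant $\,_PW_\theta$.

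Ainsi $w_2\in\,_PW_\theta$ si et seulement si $w_1^{-1}w_2\in\,_{P_{w_1}}W_\theta$. Comme $w_2\mapsto w_1^{-1}w_2$ est une bijection de $W$ sur lui-même, d'inverse $w\mapsto w_1w$, elle induit la bijection annoncée.

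Le seul point délicat est d'ordre formel. Les conjugaisons par des éléments de $W$ se font via des représentants dans le normalisateur de $M_0$ dans $G(F)$. Les groupes $M_P\cap wP_0w^{-1}$ ne dépendent pas du représentant choisi, puisque $M_0\subset P_0$ et $M_0\subset M_P$. Il faut aussi s'assurer que $M_{P_{w_1}}=w_1^{-1}M_Pw_1$ est bien le Levi contenant $M_0$, ce qui est clair car $w_1$ normalise $M_0$. Hormis cette vérification, tout se réduit aux deux calculs de conjugaison ci-dessus.
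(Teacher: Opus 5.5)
Votre démonstration est correcte et suit essentiellement celle de l'article. La condition sur le facteur de Levi se ramène, par conjugaison par $w_1$ et grâce à l'hypothèse $M_P\cap P_0=M_P\cap w_1P_0w_1^{-1}$, à $M_P\cap P_0=M_P\cap w_2P_0w_2^{-1}$. La condition $P_0^\theta\subset (P_{w_1})_{w_1^{-1}w_2}=P_{w_2}$ est immédiate, et vous explicitez simplement ce point que l'article laisse au lecteur (\og il est facile de conclure \fg{}).
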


\begin{preuve}
    On note que  
  \begin{align*}
    w_1^{-1}(M_P\cap P_0)w_1=w_1^{-1}(M_P\cap w_1P_0w_1^{-1})w_1=M_{P_{w_1}}\cap P_0.
  \end{align*}    
Il s'ensuit que pour $w_2\in W$
  \begin{align*}
    M_{P_{w_1}}\cap P_0=M_{P_{w_1}}\cap w_1^{-1}w_2P_0 w_2^{-1}w_1
  \end{align*} 
si et seulement si 
  \begin{align*}
    w_1^{-1}(M_P\cap P_0)w_1=w_1^{-1}(M_P\cap w_2P_0 w_2^{-1})w_1
  \end{align*}
c'est-à-dire 
  \begin{align*}
    M_P\cap P_0=M_P\cap w_2P_0 w_2^{-1}. 
  \end{align*}
Il est facile de conclure. 
\end{preuve}

\end{paragr}

\subsection{Une partition relative}\label{ssec:partition}

\begin{paragr} On se place dans la situation de la sous-section \ref{ssec:involutions} avec l'hypothèse supplémentaire que  $F$ est un corps de nombres. Les autres notations sont empruntées à la sous-section \ref{ssec:combi}. Plus précisément, on fixe $D$  et $V=D^N$ comme au § \ref{S:D}. Soit $(e_1,\ldots,e_N)$ la base canonique de $D^N$.  Soit  $G=GL_D(V)$ et $P_0\subset G$ le stabilisateur des sous-espaces respectivement engendrés par $(e_1,\ldots,e_i)$ pour $1\leq i \leq N$. Alors $P_0$ est un sous-groupe parabolique de $G$ défini sur $F$ et minimal pour ces propriétés.  Soit  $M_0\subset G$ le facteur de Levi défini sur $F$  de $P_0$  obtenu comme le stabilisateur des droites engendrées par  $e_i$ pour $1\leq i \leq N$.  On a $M_0(F)= (D^\times)^N$. 
\end{paragr}

\begin{paragr}[Choix de sous-groupes compacts maximaux.]\label{S:choixdeK} ---  Soit $\oc_D\subset D$ un ordre maximal de $D$ et $A\subset F$ l'anneau des entiers. Soit $v \in V_F$ une place non-archimédienne. Alors $\oc_D\otimes_{A} \oc_v$ est un ordre maximal dans $\oc_D\otimes_F F_v$. Il existe une algèbre à division $D_v$ sur $F_v$ et $n_v\geq 1$ un entier de sorte que, si $\oc_{D_v}$ désigne l'unique ordre maximal de $D_v$, l'algèbre $\oc_D\otimes_F F_v$ s'identifie à $M(n_v, D_v)$ et l'ordre maximal qu'on a considéré s'identifie à $M(n_v, \oc_{D_v})$. On peut alors identifier le groupe $G(F_v)$ à  $GL(n_vN,D_v)$. Soit $K_v\subset G(F_v)$ le sous-groupe compact maximal qui correspond à $GL(n_vN,\oc_{D_v})$.

Soit $v \in V_F$ une place archimédienne. Il existe une algèbre à division $D_v$ sur $F_v$ et $n_v\geq 1$  de sorte que l'algèbre $\oc_D\otimes_F F_v$ s'identifie à $M(n_v, D_v)$. L'algèbre $D_v$ est munie d'une involution notée $x\mapsto \bar x$ (qui est triviale si $D_v=\RR$ et qui est en fait une anti-involution si $D_v$ est une algèbre de quaternions). Le groupe $G(F_v)$ s'identifie alors à   $GL(n_vN,D_v)$. Soit $K_v\subset G(F_v)$ le sous-groupe compact maximal qui correspond au groupe des matrices $M\in GL(n_vN,D_v)$  telles que $^t \bar M M$ est l'identité.

On pose alors $K=\prod_{v\in V_F} K_v$. C'est un sous-groupe compact maximal de $G(\AAA)$ en bonne position par rapport à $M_0$, au sens de § \ref{S:compact}.
\end{paragr}

\begin{paragr}[Groupe de Weyl.] --- On identifiera dans toute la suite  les éléments du groupe de Weyl $W$ de $(G,M_0)$ à des matrices de permutations dans $G(F)$, à savoir les matrices qui permutent les éléments de la base canonique. On observe qu'alors $W$ est un sous-groupe de $G(F)\cap K$.
    \end{paragr}

\begin{paragr}[Élément $\theta$.] --- Soit $\theta\in M_0(F)$ un élément d'ordre au plus $2$. Soit $I_{\pm }=\{1\leq i\leq N| \theta(e_i)=\pm e_i  \}$. Soit $V_\pm$ le sous-$D$-module de base $(e_i)_{i\in I_\pm}$. Alors $V$ est la somme directe de $V_+$ et $V_-$ et le groupe $G^\theta$ s'identifie à $GL_D(V_+)\times GL_D(V_-)$. Vu le choix de $K$, on a $\theta\in K$. Pour tout sous-groupe parabolique $P$ de $G$ contenant $M_0$, on dispose de l'application $H_P$ définie au §\ref{S:compact} relativement au sous-groupe compact $K\subset G(\AAA)$. Observons que, pour tout $g\in G(\AAA)$, on a
\begin{align}
  \label{eq:invariance-HP}
H_P(\theta g)=H_P(g).
\end{align}

Soit $K^\theta=\prod_{v\in V_F}K_v^\theta$ où $K_v^\theta=K_v\cap G^\theta(F_v)$. On observe que $K_v^\theta$  est alors un sous-groupe compact maximal de $G^\theta(\AAA)$ en bonne position par rapport au sous-groupe de Levi minimal $M_0$ de $G^\theta$. C'est ce sous-groupe $K^\theta$ qu'on choisit comme sous-groupe compact maximal de référence de $G^\theta(\AAA)$.
\end{paragr}

\begin{paragr} On utilise les hypothèses et les notations du § \ref{S:T0}.  Soit $g\in G(\AAA)$ et  $Q\in \fc^G(P_0)$. Appelons   couple $(Q,T)$-canonique de $g$ l'unique  couple $(P, \delta)$ avec $P\in \fc^Q(P_0)$ et $\delta\in  P(F)\back Q(F)$  qui donne une contribution non-nulle dans \eqref{eq:partition}. 
En utilisant \eqref{eq:invariance-HP} et le fait que $\theta\in K$, on voit si $(P, \delta)$ est le couple $(Q,T)$-canonique de $g$ alors $(P, \delta\theta^{-1})$ est celui de $\theta g \theta^{-1}$. Par conséquent, pour $g\in G^\theta(\AAA)$, ce couple $(Q,T)$-canonique vérifie $\theta\in \delta^{-1}P\delta$. À l'aide du corollaire  \ref{cor:classe-Q_H}, on en déduit une partition de $[G^\theta]_{Q^\theta}$ donnée pour tout $h\in G^\theta(\AAA)$ par 
\begin{align}\label{eq:partition-H}
  \sum_{P\in \fc^Q(P_0)} \sum_{ w\in \, _PW^Q_\theta }    \sum_{\delta \in P_{w}^\theta(F)\back Q^\theta(F)}F^P(w\delta h,T) \tau_P^Q(H_P(w\delta h)-T)=1.
\end{align}
Notons que comme $w\in\,  _PW^Q_\theta $ on a $P_w\in \fc^Q(P_{0}^\theta)$ et donc on a $P_w^\theta\subset Q^\theta$.
\end{paragr}

\begin{paragr} \label{S:et les ss?} Les fonctions  $\tau_P^Q$ et $\hat\tau_P^Q$ introduites pour des sous-groupes paraboliques standard $P\subset Q$ valent plus généralement pour des sous-groupes paraboliques semi-standard : on vérifie que leur définition ne dépend pas du choix d'un élément $P_0\in \pc(M_0)$. En revanche, la définition de  la fonction $F^P(\cdot,T)$  dépend du choix de $P_0\in\pc(M_0)$.  On introduit alors  $F^P_B(\cdot,T)$ la fonction définie au § \ref{S:T0}  relativement à un
  sous-groupe $B\in\pc(M_0)$ et au sous-groupe parabolique semi-standard $P$ contenant $B$. Pour tout $P\in\fc(M_0)$ et $T\in\ago_0$, soit $T_P$ la projection de $wT$ sur $\ago_P$, où $w\in W$ est tel que $P_0\subset P_w$. Cette définition ne dépend pas du choix de $w$. Par exemple, pour $T\in\overline{\ago_0^+}$, on a $T_B\in\overline{\ago_B^+}$ pour tout $B\in\pc(M_0)$. Avec cette notation, on voit que $T_{P_w}=w^{-1}(T_P)$ est la projection de $w^{-1}T$ sur $\ago_{P_w}$ pour tout $w\in W$.

  \begin{lemme} \label{lem:nonstd} Soit $w\in W$, $g\in G(\AAA)$ et $T\in T_0+\overline{\ago_0^+}$
  \begin{enumerate}
      \item  Pour tout sous-groupe parabolique standard $P$, on a 
      \begin{align*}
          F^P(wg,T)=F^{P_w}_B(g, T_{P_w})
      \end{align*}
      avec $B=(P_0)_w$.
      \item pour tous sous-groupes paraboliques semi-standard $P\subset Q$, on a
      \begin{align}\label{eq:tauw}
  \tau_{P_w}^{Q_w}(H_{P_w}(g)-T_{P_w})=\tau_P^Q(H_P(wg)-T_P) \text{  et  } \hat\tau_{P_w}^{Q_w}(H_{P_w}(g)-T_{P_w})=\hat\tau_P^Q(H_P(wg)-T_P).
\end{align}
\end{enumerate}
  \end{lemme}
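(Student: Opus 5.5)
La preuve repose sur deux faits élémentaires : l'équivariance de $H_0$ (et des $H_P$) sous l'action de $W\subset G(F)\cap K$, et le transport par $w$ de toutes les données combinatoires attachées à $P_0$ vers les données analogues attachées à $B=(P_0)_w$.

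\emph{Équivariance de $H$.} Soit $P$ semi-standard et $g=pk$ avec $p\in P(\AAA)$ et $k\in K$. Alors $wg=(wpw^{-1})(wk)$, avec $wpw^{-1}\in P_{w^{-1}}(\AAA)$ et $wk\in K$ puisque $w\in K$. On en déduit $H_{wP_ww^{-1}}(wg)=w\cdot H_{P_w}(g)$, c'est-à-dire
\begin{align*}
H_P(wg)=w\,H_{P_w}(g),
\end{align*}
où $w:\ago_{P_w}\to\ago_P$ est l'isomorphisme induit par la conjugaison. Le même argument donne $H_{0}(wg)=w\,H_B(g)$ au niveau minimal, et l'identité $w(\omega_0 A_0^\infty K)w^{-1}$ pour les ensembles de Siegel.

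\emph{Assertion 2.} Par définition de $T_{P_w}$, on a $w\,T_{P_w}=T_P$. La conjugaison par $w$ envoie les racines simples $\Delta_{P_w}^{Q_w}$ sur $\Delta_P^{Q}$, ainsi que les poids correspondants, car elle transforme le couple $(P_w,Q_w)$ en $(P,Q)$ et préserve les accouplements. Comme on a rappelé au §\ref{S:et les ss?} que $\tau$ et $\hat\tau$ ne dépendent pas du choix de l'élément de $\pc(M_0)$, on obtient
\begin{align*}
\tau_{P_w}^{Q_w}(H_{P_w}(g)-T_{P_w})=\tau_P^Q\bigl(w(H_{P_w}(g)-T_{P_w})\bigr)=\tau_P^Q(H_P(wg)-T_P),
\end{align*}
et de même pour $\hat\tau$.

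\emph{Assertion 1.} On utilise la caractérisation \eqref{eq:def FP} de $F^P(\cdot,T)$, valable pour $T\in T_0+\overline{\ago_0^+}$ : $F^P(wg,T)=1$ si et seulement si $\bg\varpi,H_0(\delta wg)-T\bd\le 0$ pour tout $\varpi\in\hat\Delta_0^P$ et tout $\delta\in P(F)$. On écrit $\delta w=w\delta'$ avec $\delta'=w^{-1}\delta w$, qui parcourt $P_w(F)$. On a alors $H_0(w\delta' g)=w\,H_B(\delta'g)$, et $\varpi\circ w$ parcourt les poids $\hat\Delta_B^{P_w}$ de $B$ dans $P_w$. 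Par ailleurs, $\bg\varpi,T\bd=\bg w^{-1}\varpi,w^{-1}T\bd$, et la projection pertinente de $w^{-1}T$ est $T_B$ (comprise sur $\ago_B$), notée $T_{P_w}$ dans l'énoncé au sens où $T$ est transporté en $w^{-1}T$. La condition devient donc exactement celle qui définit $F_B^{P_w}(g,w^{-1}T)$, la version de \eqref{eq:def FP} relative à $B$.

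\emph{Le point délicat} est de justifier que cette caractérisation vaut aussi pour $F_B^{P_w}$. Il suffit en fait de transporter directement la définition : $F^P(\cdot,T)$ est la fonction caractéristique de $P(F)\omega_0A_0^{P,\infty}(T_-,T)K$, et la conjugaison par $w$ envoie cet ensemble sur l'ensemble de Siegel relatif à $B$. Il faut seulement choisir le compact $\omega_0^B=w^{-1}\omega_0w$ ainsi que le point $T_-$ transporté. Si la définition de $F_B$ prévoit des choix fixés de manière indépendante, on invoque \eqref{eq:def FP}, qui ne fait intervenir ni $\omega_0$ ni $T_-$. Ce transport est compatible avec ce lemme, ce qui règle la dépendance.
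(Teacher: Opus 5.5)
Your proof is correct and follows the paper's route. The paper's entire argument is that $W\subset G(F)\cap K$ gives $H_{P_0}(wg)=wH_B(g)$ with $B=(P_0)_w$, and both assertions then follow by transporting roots, weights and the characterization \eqref{eq:def FP} (or the Siegel sets) under $w$, which is exactly what you spell out. Your reading of the second argument in assertion 1 as $T_B=w^{-1}T$, rather than the literal projection on $\ago_{P_w}$, is the right one: $F^{P_w}_B(g,\cdot)$ only sees the component in $\ago_B^{P_w}$, and the paper immediately uses the lemma in the form $F^P(\cdot,T)=F^P_B(\cdot,T_B)$. One small slip: in your equivariance paragraph, take $p\in P_w(\AAA)$ so that you obtain $H_P(wg)=wH_{P_w}(g)$ directly.
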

  
  \begin{preuve}
     On a  $W\subset G(F)\cap K$. Il s'ensuit que  si $B=(P_0)_w$ on a \begin{align*}
          H_{P_0}(wg)=wH_{B}(g).
      \end{align*}
  Le lemme s'ensuit.
  \end{preuve}

  Pour tout $P\in\fc(M_0)$ et $T\in T_0+\overline{\ago_0^+}$, on définit 
\begin{align*}
  F^P(\cdot,T)=F^P_B(\cdot,T_B) 
\end{align*}
où $B$ est un élément quelconque de $\pc(M_0)$ inclus dans $P$. Il résulte du lemme \ref{lem:nonstd} assertion 1 et du fait que la fonction $F^P(\cdot,T)$ est invariante à gauche par $P(F)$, que cette définition ne dépend pas du choix de $B$. Avec cette définition, l'assertion 1 du lemme \ref{lem:nonstd} devient 
\begin{align*}
  F^{P_w}(x,T)=F^P(wx,T)
\end{align*}
pour tout $w\in W$ et tout $x\in G(\AAA)$. 

  À l'aide du corollaire \ref{cor:classe-Q_H}, on peut réécrire l'identité \eqref{eq:partition-H}. Pour tout $Q\in\fc(P_0)$, $T\in T_0+\overline{\ago_0^+}$ et $h\in G^\theta(\AAA)$, on a 
\begin{align}\label{eq:varpartition-H}
  \sum_{P\in \fc^Q(P_0^\theta)} \sum_{\delta \in P^\theta(F)\back Q^\theta(F)}F^P(\delta h,T) \tau_P^Q(H_P(\delta h)-T_P)=1.
\end{align}
En fait,  cette identité vaut plus généralement pour tout $Q\in\fc(P_0^\theta)$. En effet, pour un tel $Q$, il existe  $B\in\pc(M_0)$ inclus dans $Q$. Il s'ensuit que $B^\theta$ est un sous-groupe parabolique minimal de $Q^\theta$ qui admet $M_0$ comme facteur de Levi. Il existe donc un élément $m$ de $M_{Q^\theta}(F)$ qui normalise $M_0$ et tel que $mB^\theta m^{-1}=P_0^\theta$.   Quitte à remplacer $B$ par $mB m^{-1}$ on peut supposer qu'on a de plus $B\in\pc(M_0)\cap\fc^Q(P_0^\theta)$. Il suffit alors d'appliquer \eqref{eq:partition-H} avec $B$ à la place de $P_0$. 

\begin{remarque}
  L'identité \ref{eq:varpartition-H} apparaît dans \cite[lemme 4.4]{li1}. La preuve qu'on donne ici est légèrement différente de celle de \cite{li1}.  
\end{remarque}
\end{paragr}

\begin{paragr} En suivant le § \ref{S:T0}, pour tout $P\in \fc(P_0^\theta)$, on dispose  d'ensembles de Siegel relatifs à $G^\theta(\AAA)$
  \begin{equation*}
  \SG^{P^\theta}=\SG^{P^\theta}_{P_0^\theta}=\omega_{P_0^\theta} A_{P_0^\theta}^{P^\theta,\infty}(T_-)K^\theta.
\end{equation*}
On suppose que $T_-\in -\ago_0^{G,+}$ vérifie $G(\AAA)=P(F)\SG^P$ et $G^\theta(\AAA)=P^\theta(F)\SG^{P^\theta}$. On définit alors pour tout $B\in\pc^P(P_0^\theta)$ et $T\in T_0+\overline{\ago_0^+}$ l'ensemble
\begin{align*}
  \SG^{M_{P}^{\theta}}(B,T_B)=\left(\omega_{P_0^\theta}\cap M_{P}^{\theta}(\AAA)\right) \left( A_{P_0^\theta}^{P^\theta,\infty}(T_-)    \cap  A_{B}^{P,\infty}(T_-,T_B)\right) \left( K^\theta\cap  M_{P}^{\theta}(\AAA)\right).
\end{align*}

On utilisera le lemme suivant : 
  
  \begin{lemme}(\cite[corollaire 4.13]{li1})\label{lem:relsiegel}
    Pour tout  $T\in T_0+\overline{\ago_0^+}$, on a
    \begin{align*}
  \{m\in M_P^\theta(\AAA) \mid F^P(m,T)=1\}= \bigcup_{B\in\pc^P(P_0^\theta)} M_P^\theta(F) \SG^{M_{P}^{\theta}}(B,T_B).
    \end{align*}
  \end{lemme}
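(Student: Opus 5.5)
The strategy is to prove the two inclusions separately. In both directions I will use that, by the definition of the § \ref{S:et les ss?}, $F^P(\cdot,T)=F^P_B(\cdot,T_B)$ for \emph{any} $B\in\pc(M_0)$ contained in $P$. Note that every $B\in\pc^P(P_0^\theta)$ is such a $B$. I will also use Arthur's description of $F^P_B(\cdot,T_B)$ as the characteristic function of $P(F)\,\omega_B A_B^{P,\infty}(T_-,T_B)K$, transported from the standard case by lemma \ref{lem:nonstd}.

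\emph{Inclusion $\supseteq$.} Let $B\in\pc^P(P_0^\theta)$ and $s=\omega a k$ with $\omega\in\omega_{P_0^\theta}\cap M_P^\theta(\AAA)$, $a\in A_{P_0^\theta}^{P^\theta,\infty}(T_-)\cap A_B^{P,\infty}(T_-,T_B)$ and $k\in K^\theta\cap M_P^\theta(\AAA)$.
\begin{itemize}
\item Since $P_0^\theta\subset B$, we have $\omega_{P_0^\theta}\subset P_0^\theta(\AAA)^1\subset B(\AAA)^1$. Using $B(\AAA)^1=B(F)\omega_B$, we can rewrite $s\in B(F)\,\omega_B\, a\, k$ with $a\in A_B^{P,\infty}(T_-,T_B)$ and $k\in K$.
\item Hence $F^P_B(s,T_B)=1$ by Arthur's description, i.e.\ $F^P(s,T)=1$.
\item $F^P(\cdot,T)$ is left-invariant by $P(F)\supset M_P^\theta(F)$, so the whole set $M_P^\theta(F)\SG^{M_P^\theta}(B,T_B)$ lies in the left-hand side.
\end{itemize}
The only point to check is that the compact sets $\omega_B$ and $\omega_{P_0^\theta}$ may be chosen compatibly. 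Otherwise, $F^P$ is insensitive to this choice for $T\in T_0+\overline{\ago_0^+}$ by the intrinsic description \eqref{eq:def FP}.

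\emph{Inclusion $\subseteq$.} Let $m\in M_P^\theta(\AAA)$ with $F^P(m,T)=1$.
\begin{itemize}
\item By reduction theory for $M_P^\theta$, using the Siegel set $\SG^{P^\theta}$ restricted to $M_P^\theta$, we may multiply $m$ on the left by an element of $M_P^\theta(F)$. This does not change $F^P$, and we may then assume $m=\omega a k$ with $\omega\in\omega_{P_0^\theta}\cap M_P^\theta(\AAA)$, $a\in A_{P_0^\theta}^{P^\theta,\infty}(T_-)$ and $k\in K^\theta\cap M_P^\theta(\AAA)$.
\item The element $a$ lies in $A_0^\infty$. The condition $a\in A_{P_0^\theta}^{P^\theta,\infty}(T_-)$ only controls the roots of $M_0$ in $M_P^\theta\cap P_0^\theta$. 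The roots linking the $\pm1$ eigenspaces are left free.
\item The Weyl chamber of $(M_P^\theta,P_0^\theta\cap M_P)$ is a union of chambers of $(M_P,M_0)$. I will therefore pick $B\in\pc(M_0)$ with $P_0^\theta\subset B\subset P$ such that $H_B(a)$ lies in the closed $B$-chamber of $M_P$. Then $\langle\alpha,H_B(a)\rangle\geq\langle\alpha,T_-\rangle$ for all $\alpha\in\Delta_B^P$, i.e.\ $a\in A_B^{P,\infty}(T_-)$.
\item As above, $m$ then lies in the $B$-Siegel set of $M_P$. Since $F^P_B(m,T_B)=F^P(m,T)=1$, the intrinsic characterization \eqref{eq:def FP} (take $\delta=1$) gives $\langle\varpi,H_B(m)-T_B\rangle\leq 0$ for all $\varpi\in\hat\Delta_B^P$, that is $a\in A_B^{P,\infty}(T_-,T_B)$.
\end{itemize}
Hence $m\in\SG^{M_P^\theta}(B,T_B)$.

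\emph{Main obstacle.} The delicate step is the choice of $B$ in the third bullet. Turning ``$H_B(a)$ is $B$-dominant'' into the shifted inequality $\langle\alpha,H_B(a)\rangle\geq\langle\alpha,T_-\rangle$ needs some uniformity of $T_-$ under the Weyl elements relating $P_0$ and $B$. I would first try to choose $T_-$ deep enough in $-\ago_0^{G,+}$ that $\langle\alpha,w T_-\rangle$ is bounded above by $\langle\alpha,T_-\rangle$ uniformly in $w\in W$; this is allowed, since the statement holds for any $T_-$ meeting the Siegel conditions. Failing that, I would argue directly in coordinates, using that $G=GL_D(V)$ and that the roots of $M_0$ are differences of the $\log|\cdot|$ of the diagonal entries. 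In that setting $B$ can be chosen by sorting the coordinates of $H_0(a)$ while preserving the relative order inside each of $I_+$ and $I_-$; this is exactly what $P_0^\theta\subset B$ requires. This is presumably the content of \cite[corollaire 4.13]{li1}, which one may alternatively just invoke.
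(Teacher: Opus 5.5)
Your inclusion $\supseteq$ is fine, provided $A_B^{P,\infty}(T_-,T_B)$ denotes the set underlying the Siegel description of $F^P_B(\cdot,T_B)$. In $\subseteq$ the upper bound is also automatic: since $F^P(\cdot,T)=F^P_B(\cdot,T_B)$ and $H_B(\omega a k)=H_0(a)$, one gets $\langle\varpi,H_0(a)-T_B\rangle\le 0$ for all $\varpi\in\hat\Delta_B^P$ and for \emph{every} $B\in\pc^P(P_0^\theta)$. So the whole content of $\subseteq$ is the lower bound, and there your third bullet fails. Reduction theory only gives $\langle\alpha,H_0(a)\rangle\ge\langle\alpha,T_-\rangle$ for $\alpha\in\Delta^{P^\theta}_{P_0^\theta}$, with $\langle\alpha,T_-\rangle<0$. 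The decomposition of the chamber of $(M_P^\theta,P_0^\theta\cap M_P)$ into chambers of $(M_P,M_0)$ concerns the \emph{closed} chamber $\{\langle\alpha,H\rangle\ge 0\}$. Every root of $P_0^\theta\cap M_P$ is positive for every $B\in\pc^P(P_0^\theta)$. Hence as soon as $\langle\alpha,H_0(a)\rangle<0$ for a single such $\alpha$, no admissible $B$ has $H_0(a)$ in its closed chamber. Changing the $M_P^\theta(F)$-representative does not help. Take $G=GL_3$ over $\QQ$, $\theta=\mathrm{diag}(1,1,-1)$, $P=G$, and $m$ whose $GL_2$-component at infinity corresponds to $e^{2\pi i/3}$. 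Every rational translate then has $\langle\alpha,H_{P_0^\theta}(\gamma m)\rangle\le\log(\sqrt3/2)<0$, although $F^G(m,T)=1$ for $T$ large. So the obstacle you single out (uniformity of $T_-$ under $W$) is not the real one. Your coordinate fallback has the same defect: you cannot sort $H_0(a)$ while preserving the orders inside $I_+$ and $I_-$ unless it is already sorted inside each block, i.e.\ already $M_P^\theta$-dominant.

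The missing idea is to choose $B$ so that it absorbs the shift. Suppose the lower bound in $A_B^{P,\infty}(T_-,T_B)$ is read with the same point $T_-$ for every $B$. Then apply the chamber decomposition to $H_0(a)-T_-$, which \emph{does} lie in the closed chamber of $(M_P^\theta,P_0^\theta\cap M_P)$; in coordinates, sort $H_0(a)-T_-$ rather than $H_0(a)$ inside each block of $M_P$. This gives $B\in\pc^P(P_0^\theta)$ with $\langle\alpha,H_0(a)-T_-\rangle\ge 0$ for all $\alpha\in\Delta_B^P$, which is exactly what is needed. But with this reading your $\supseteq$ argument must be redone, because Arthur's description of $F^P_B(\cdot,T_B)$ uses a point that is negative for $B$, not $T_-$. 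Suppose instead the statement is read with the $B$-adapted point $(T_-)_B$, which is what your $\supseteq$ uses. Then, for two indices consecutive in the same block, the required bound is $\langle\beta,T_-\rangle$ for a simple root $\beta$ of $P_0$. This can exceed the available bound $\langle\alpha,T_-\rangle$ with $\alpha\in\Delta^{P^\theta}_{P_0^\theta}$: for $GL_3$, $\theta=\mathrm{diag}(1,-1,1)$, the root $e_1^*-e_3^*$ is simple for $P_0^\theta$ but not for $P_0$. In that reading an arbitrary Siegel representative need not work. You would have to choose the $M_P^\theta(F)$-translate optimally (using the reduction constant of $M_P^\theta$, with $T_-$ deep enough), and then interleave the $I_+$- and $I_-$-coordinates by comparing current heads. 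Either way, both inclusions must be proved with one consistent meaning of the lower bound attached to $B$.
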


\end{paragr}

\subsection{Un opérateur de troncature}\label{ssec:op-tronc}

\begin{paragr}
  On continue avec les notations de la section \ref{ssec:partition}. On définit aussi $[G^\theta]^G=G^\theta(F)\back (G^\theta(\AAA)\cap G(\AAA)^1)$. 
\end{paragr}

\begin{paragr}[Opérateur de troncature.] ---   Soit $Q\in \fc(P_0)$. Soit $T\in \ago_0$. On définit un opérateur  $\Lambda^{T,Q}_\theta$ de la façon suivante:   pour toute fonction continue $\varphi$ sur $[G]_Q$ on pose pour $x\in G(\AAA)$
  \begin{align}\label{eq:LaTtheta}
    (\Lambda^{T,Q}_\theta\varphi)(x)= \sum_{P\in \fc^Q(P_0)}  \eps_P^Q \sum_{w \in  \, _PW^Q_\theta}   \sum_{\delta \in P_{w}^\theta(F)\back Q^\theta(F)}\hat\tau_P^Q(H_P(w\delta x)-T)  \varphi_P(w\delta x)
  \end{align}
où $\varphi_P$ est le terme constant de $\varphi$ donné par \eqref{eq:terme cst}. On notera que dans \eqref{eq:LaTtheta} la somme sur $\delta$ est en fait finie (cf. \cite[lemme 5.1]{ar1}). On étend la définition ci-dessus à toute fonction continue $\varphi$ sur $[G]$ en posant : 
 \begin{align*}
    \Lambda^{T,Q}_\theta\varphi= \Lambda^{T,Q}_\theta\varphi_Q.
 \end{align*}
 Comme d'habitude, on omet l'exposant lorsqu'il s'agit du groupe $G$ lui-même. 
 
 \begin{remarque}\label{rq:Zydor}
   L'opérateur de troncature défini ci-dessus a de grandes similitudes avec celui défini par Zydor dans \cite[section 3.7]{Zydor} pour le sous-groupe $G^\theta$ (Zydor travaille dans un contexte bien plus général que nous). Ainsi, via la bijection $(P,w)\mapsto P_w$ du corollaire \ref{cor:PWtheta}, on peut identifier les ensembles de sommation avec ceux de Zydor. En utilisant le lemme  \ref{lem:nonstd} et nos choix de sous-groupes compacts maximaux, on voit qu'on a  
   \begin{align*}
   & \hat\tau_P^Q(H_P(wx)-T)=\hat\tau_P^Q(H_P(wx)-T_P) =\hat\tau_{P_w}^{Q_w}(H_{P_w}(x)-T_{P_w})=\hat\tau_{P_w}^{Q_w}(H_{P_w^\theta}(x)-T_{P_w}),\\
   & \varphi_P(wx)=\varphi_{P_w}(x),
   \end{align*} 
   pour tous $x\in G^\theta(\AAA)$,  $P\in \fc^Q(P_0)$ et $w \in  \, _PW^Q_\theta$. Cependant, la troncature que Zydor utilise repose sur l'expression $\hat\tau_{P_w}^{Q_w}(H_{P_w^\theta}(x)-T)$ \emph{qui ne coïncide pas}  avec celle définie ci-dessus.
   Notre opérateur n'est donc pas égal à celui de Zydor. Illustrons-le sur l'exemple du groupe $G=GL(2)$ sur le corps $F$ avec $\theta$  la matrice diagonale dont les entrées sont $1$ et $-1$. Le groupe $G^\theta$ est alors  le tore maximal diagonal.  Le paramètre $T$ s'identifie à un couple $(t_1,t_2)\in \RR^2$ tel que $t_1-t_2$ est assez positif. Soit $\phi$ la fonction sur $G(\AAA)$ constante égale à $1$. Soit $B$ le sous-groupe de Borel standard et $\bar B$ son opposé par rapport à $G^\theta$. On a alors $T_{\bar B}= (t_2,t_1)$. Soit $x\in G^\theta(\AAA)$ qu'on identifie de manière évidente à un couple $(x_1,x_2)\in \AAA^\times\times \AAA^\times$. On a alors $H_B(x)=H_{\bar B}(x)=(\log(|x_1|),\log(|x_2|))$.  En notant $\La^T_z$ l'opérateur de Zydor, on obtient
   \begin{align*}
       (\La^T_z \phi)(x)= 1 - \hat\tau_B(H_B(x)-T) - \hat\tau_{\bar B}(H_B(x)-T). 
   \end{align*}
   La fonction $(\La^T_z \phi)$ sur $G^\theta(\AAA)$  est la fonction caractéristique des $x\in G^\theta(\AAA)$ tels que $\log(|x_1||x_2|^{-1})=t_1-t_2$. Par ailleurs, on observe qu'on a 
   \begin{align*}
       (\La^T_\theta \phi)(x)= 1 - \hat\tau_B(H_B(x)-T) - \hat\tau_{\bar B}(H_B(x)-T_{\bar B})
   \end{align*}
   et que la fonction $(\La^T_\theta \phi)$ sur $G^\theta(\AAA)$  est ainsi la fonction caractéristique des $x\in G^\theta(\AAA)$ tels que $\log(|x_1||x_2|^{-1})\in [t_2-t_1; t_1-t_2]$. En conséquence, pour la convergence ponctuelle, on a 
   \begin{align*}
   \lim_{t_1-t_2\to +\infty}(\La^T_z \phi)= 0   \ \ \ \  \lim_{t_1-t_2\to +\infty}(\La^T_\theta \phi)= \phi.
   \end{align*}
   En particulier, l'opérateur de Zydor ne vérifie pas l'assertion de la proposition \ref{prop:asym tronque} ci-dessous. Or, pour ce travail et ses développements subséquents, cf. \cite{caractererelatifpondere}, il importera d'avoir la propriété de droite et la validité de  la proposition \ref{prop:asym tronque} ci-dessous.
 \end{remarque}
 
\end{paragr}

\begin{paragr}[Propriété de décroissance.] --- Soit  $J\subset G(\AAA_f)$ un sous-groupe ouvert compact. 

\begin{proposition}\label{prop:dec-tronque}   Soit $T\in T_0+\overline{\ago_0^+}$. Pour tous $N_1,N_2>0$, il existe  une famille finie $\Fgo$ d'éléments de $\uc(\ggo_\CC)$ telle que pour tout pour  tout $x\in [G^\theta]^G$   et tout $\varphi\in  C^\infty([G])^J$, on ait 
  \begin{align*}
     |(\Lambda^{T}_\theta\varphi)(x)|  \leq    \|x\|^{-N_1}_{G^\theta}   \|\varphi\|_{N_2,\Fgo}.
  \end{align*}

\end{proposition}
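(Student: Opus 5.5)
On suit la stratégie d'Arthur pour la décroissance rapide de la troncature absolue (\cite[lemme 1.4]{ar2}), en la transposant au cadre relatif grâce à la partition \eqref{eq:partition-H}.

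\textbf{Étape 1 : insertion de la partition relative.} Soit $x\in G^\theta(\AAA)\cap G(\AAA)^1$. Dans la définition \eqref{eq:LaTtheta} (avec $Q=G$), on insère pour chaque terme indexé par $(P,w,\delta)$ la partition \eqref{eq:partition-H} appliquée à $Q=P_w$, ou plutôt sa variante \eqref{eq:varpartition-H} pour $Q\in\fc(P_0^\theta)$, au point $\delta x\in G^\theta(\AAA)$. On combine ensuite les fonctions caractéristiques à l'aide de \eqref{eq:sigma 12} :
\[
\tau_{P_1}^{P}\hat\tau_P=\sum_{P_2\supset P}\sigma_{P_1}^{P_2}.
\]
On réordonne les sommes, en utilisant le lemme \ref{lem:Wdec} pour composer les éléments de Weyl $w\in\,_PW_\theta$ et $w_1\in\,_{P_1}W^P_{w\theta}$ en $w_0=w_1w\in\,_{P_1}W_\theta$ (ou, plus simplement, en passant à l'indexation non standard par $\fc(P_0^\theta)$ via le corollaire \ref{cor:PWtheta} et le lemme \ref{lem:nonstd}). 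Comme dans Arthur, on aboutit à l'identité
\[
(\Lambda^T_\theta\varphi)(x)=\sum_{P_1\subset P_2}\sum_{w\in\,_{P_1}W_\theta}\sum_{\delta\in P_{1,w}^\theta(F)\back G^\theta(F)} F^{P_1}(w\delta x,T)\,\sigma_{P_1}^{P_2}(H_{P_1}(w\delta x)-T)\,\varphi_{P_1,P_2}(w\delta x),
\]
où $\varphi_{1,2}$ est défini en \eqref{eq:phi12}. Le point à vérifier est que la somme alternée sur les $P$ intermédiaires $P_1\subset P\subset P_2$ se regroupe bien, à $w_0$ fixé, en $\varphi_{P_1,P_2}$. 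Cela demande que, pour $w_0\in\,_{P_1}W_\theta$ fixé, les paraboliques $P\supset P_1$ correspondent bijectivement aux décompositions $w_0=w_1w$ du lemme \ref{lem:Wdec}. Le terme $P_1=P_2$ ne subsiste que pour $P_1=P_2=G$ ; il vaut alors $F^G(x,T)\varphi(x)$.

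\textbf{Étape 2 : majoration terme à terme.} Pour $P_1\subsetneq P_2$, le lemme \ref{lem:majoration} (assertion 2) donne
\[
F^{P_1}\sigma_{P_1}^{P_2}\,|\varphi_{P_1,P_2}(w\delta x)|\leq \|w\delta x\|_{P_1}^{-N}\,\|\varphi\|_{N_2,\Fgo}.
\]
Pour le terme $P_1=G$, on utilise que $F^G(\cdot,T)$ est à support compact modulo $G(F)$, ce qui permet de borner $\varphi$ par $\|\varphi\|_{N_2,\Fgo}$ à constante près.

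\textbf{Étape 3 (obstacle principal) : comparaison des hauteurs et finitude.} Il reste deux points. D'une part, comparer $\|w\delta x\|_{P_1}$ à $\|x\|_{G^\theta}$ et contrôler la somme sur $\delta$. Pour un $x$ donné, un seul $\delta$ contribue, par unicité dans la partition (lemme \ref{lem:unicite} et \eqref{eq:varpartition-H}). On peut alors choisir $\delta x$ dans un ensemble de Siegel relatif décrit par le lemme \ref{lem:relsiegel}. Sur cet ensemble, $\|w\delta x\|_{P_1}$ est équivalente à $\|\delta x\|$, qui domine $\|x\|_{G^\theta}$ puisque la hauteur de $G$ restreinte à $G^\theta$ est comparable à celle de $G^\theta$ (§ \ref{S:hauteur}). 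D'autre part, il faut s'assurer que les constantes sont uniformes en $x$ : les ensembles d'indices $(P_1,P_2,w)$ sont finis, donc c'est le cas. Je m'attends à ce que la difficulté réelle se trouve dans la vérification combinatoire de l'étape 1, c'est-à-dire la recombinaison en $\varphi_{P_1,P_2}$ avec des $T_P$ non standard, et dans le passage de $\|\cdot\|_{P_1}$ à $\|\cdot\|_{G^\theta}$ via les ensembles de Siegel relatifs.
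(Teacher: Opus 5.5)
Your Steps 1 and 2 follow the paper's route. The paper deduces the statement from Proposition~\ref{prop:asym tronque} (assertion 2), whose proof is exactly your insertion of \eqref{eq:partition-H}, followed by recombination through Lemma~\ref{lem:Wdec} and \eqref{eq:sigma 12} into terms $F^{P_1}\sigma_{P_1}^{P_2}\varphi_{P_1,P_2}$, and termwise use of Lemma~\ref{lem:majoration}. The leftover term $F^G(x,T)\varphi(x)$ is handled by compactness of the support of $F^G(\cdot,T)$ on $[G^\theta]^G$.

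\textbf{The gap: control of the $\delta$-sum in Step 3.} It is not true that, for given $x$, a single $\delta$ contributes.
\begin{itemize}
\item The partition \eqref{eq:varpartition-H} gives uniqueness for the weight $F^{P_1}\tau_{P_1}^{G}$. But $\sigma_{P_1}^{P_2}$ is only dominated by $\tau_{P_1}^{P_2}\hat\tau_{P_2}$, not by $\tau_{P_1}^{G}$.
\item Lemma~\ref{lem:unicite} (with $Q=P_1$, $R=P_2$) only makes unique the component of $\delta$ in $P_1(F)\backslash P_2(F)$, not its class modulo $P_2(F)$.
\end{itemize}
For a counterexample, take $\theta=1$ (allowed, since $\theta$ is only required to have order at most $2$), $V=D^3$, $P_1=P_0$ and $\Delta_0^{P_2}=\{\alpha_1\}$. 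Then ${}_{P_0}W_\theta=\{1\}$ and $F^{P_0}(\cdot,T)\equiv 1$. Choose $a\in A_{P_0}^{G,\infty}$ with $\langle\alpha_2,H_0(a)\rangle=0$ and $\langle\alpha_1,H_0(a)\rangle>\langle\alpha_1+2\alpha_2,T\rangle$. Since $s_{\alpha_2}$ is a permutation matrix in $K$, we have $H_0(s_{\alpha_2}a)=s_{\alpha_2}H_0(a)$. One checks that both $\delta=1$ and $\delta=s_{\alpha_2}$ then satisfy $\sigma_{P_0}^{P_2}(H_0(\delta a)-T)=1$, and these are distinct classes in $P_0(F)\backslash G(F)$.

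\textbf{The fix.} Sum the decay rather than count terms. Take the exponent in Lemma~\ref{lem:majoration} large and use, for $N'$ large,
\[
\sum_{w\in{}_{P_1}W_\theta}\ \sum_{\delta\in P_{1,w}^\theta(F)\backslash G^\theta(F)}\|w\delta x\|_{P_1}^{-N'}\le c\,\|x\|_{G^\theta}^{-N_1}.
\]
This follows from \eqref{eq:equiv} exactly as \eqref{eq:petite maj} does. It is the summation implicit in the paper's final appeal to Lemma~\ref{lem:majoration} after reducing to \eqref{eq:la somme}.

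\textbf{A smaller point, the term $P_1=P_2=G$.} Compactness of the support of $F^G(\cdot,T)$ on $[G]^1$ only bounds $|\varphi(x)|$. To obtain the factor $\|x\|_{G^\theta}^{-N_1}$ you need compactness on $[G^\theta]^G$. This follows from Lemma~\ref{lem:relsiegel} with $P=G$, or from $\|y\|_{G^\theta}\le C\|y\|_G^{r}$ for $y\in G^\theta(\AAA)$ \cite[proposition A.1.1 (ix)]{RBP}.
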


\begin{preuve} Compte tenu de l'assertion 2 de la proposition \ref{prop:asym tronque} ci-dessous, il suffit de  majorer la fonction $x\mapsto F^G(x,T)\varphi(x)$ sur  $[G^\theta]^G$,  donc de  majorer  la fonction $x\mapsto F^{G}( x,T )\|x\|_{G^\theta}^{N_1} \|x\|_G^{N_2}$ sur  $[G^\theta]^G$. C'est évident puisque sur $[G^\theta]^G$ la fonction  $F^{G}( \cdot,T )$ est à support compact.
  
\end{preuve}

\begin{proposition}\label{prop:asym tronque}

  \begin{enumerate}
  \item Pour tout $r>0$ et tous $N_1,N_2>0$, il existe une famille finie $\Fgo$ d'éléments de $\uc(\ggo_\CC)$ telle que, pour tout $T$ suffisamment positif, on ait 
  \begin{align*}
  |(\Lambda^{T}_\theta\varphi)(x)-F^G(x,T)\varphi(x)| \leq  \exp(-r\|T^G \|)\|x\|^{-N_1}_{G^\theta}   \|\varphi\|_{N_2,\Fgo}
  \end{align*}
pour  tout $x\in [G^\theta]^G$   et tout $\varphi\in  C^\infty([G])^J$.
\item Soit $T\in T_0+\overline{\ago_0^+}$. Pour tous $N_1,N_2>0$, il existe une famille finie $\Fgo$ d'éléments de $\uc(\ggo_\CC)$ telle qu'on ait 
  \begin{align*}
  |(\Lambda^{T}_\theta\varphi)(x)-F^G(x,T)\varphi(x)| \leq \|x\|^{-N_1}_{G^\theta}   \|\varphi\|_{N_2,\Fgo}
  \end{align*}
pour  tout $x\in [G^\theta]^G$   et tout $\varphi\in  C^\infty([G])^J$.
  \end{enumerate}

\end{proposition}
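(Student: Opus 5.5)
Pour démontrer la proposition, on suit la stratégie d'Arthur pour l'opérateur de troncature absolu (cf. \cite{ar2}, \cite[lemme I.2.10]{MWlivre}), en remplaçant la partition usuelle de $[G]$ par la partition relative \eqref{eq:partition-H} de $[G^\theta]$. Soit $x\in G^\theta(\AAA)\cap G(\AAA)^1$. On insère, pour chaque terme $(P,w,\delta)$ de \eqref{eq:LaTtheta}, la partition \eqref{eq:partition-H} relative au sous-groupe parabolique standard $P$ et appliquée à l'élément $w\delta x$. Plus précisément, on l'applique à $\delta x\in G^{w\theta}$ conjugué par $w$, en utilisant le lemme \ref{lem:Wdec} pour recombiner les éléments de Weyl. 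On obtient ainsi une somme sur des triplets $P_1\subset P\subset Q$, des éléments $w_0\in \,_{P_1}W_\theta$ et des $\delta\in P_{1,w_0}^\theta(F)\back G^\theta(F)$, avec le facteur $F^{P_1}(w_0\delta x,T)\tau_{P_1}^{P}(H_{P_1}(w_0\delta x)-T)\hat\tau_P(H_P(w_0\delta x)-T)$. La décomposition $w_0=w_1w$ du lemme \ref{lem:Wdec} est unique, ce qui garantit que la réindexation est bijective.

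On applique ensuite l'identité \eqref{eq:sigma 12}, $\tau_{P_1}^P\hat\tau_P=\sum_{P\subset P_2}\sigma_{P_1}^{P_2}$, puis on permute les sommes. L'expression devient
\begin{align*}
(\Lambda^T_\theta\varphi)(x)=\sum_{P_1\subset P_2}\sum_{w_0\in\,_{P_1}W_\theta}\sum_{\delta} F^{P_1}(w_0\delta x,T)\,\sigma_{P_1}^{P_2}(H_{P_1}(w_0\delta x)-T)\,\varphi_{P_1,P_2}(w_0\delta x),
\end{align*}
où $\varphi_{P_1,P_2}$ est donné par \eqref{eq:phi12}. Les signes $\eps_P^G$ proviennent des $\eps_P^Q$ avec $Q=G$. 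Le terme $P_1=P_2$ n'est non nul que si $P_1=P_2=G$, puisque $\sigma_{P_1}^{P_1}$ est nulle sauf pour $P_1=G$. Ce terme vaut exactement $F^G(x,T)\varphi(x)$. Il reste donc à majorer les termes avec $P_1\subsetneq P_2$.

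Pour ces termes, on applique le lemme \ref{lem:majoration} : l'assertion 1 donne la première partie de la proposition, l'assertion 2 donne la seconde. On obtient une majoration par $\exp(-r\|T^G\|)\|w_0\delta x\|_{P_1}^{-N}\|\varphi\|_{N_2,\Fgo}$, avec $N$ arbitraire. Il faut alors sommer sur $\delta\in P_{1,w_0}^\theta(F)\back G^\theta(F)$. Le lemme \ref{lem:nonstd} permet de se ramener au sous-groupe parabolique $P_{1,w_0}$ relativement standard. Le lemme \ref{lem:relsiegel} montre que $F^{P_1}(w_0\delta x,T)\neq0$ force $\delta x$ à appartenir, modulo $M^\theta(F)$, à un domaine de Siegel relatif. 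Sur ce domaine, la hauteur $\|\cdot\|_{P_1}$ restreinte à $G^\theta$ est comparable à $\|\cdot\|_{P_1^\theta}$ (cf. § \ref{S:hauteur}).

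L'étape délicate est d'établir
\begin{align*}
\sum_{\delta\in P^\theta(F)\back G^\theta(F)}F^{P}(\delta x,T)\,\|\delta x\|_{P}^{-N}\ll \|x\|_{G^\theta}^{-N'},
\end{align*}
avec $N'$ aussi grand qu'on veut quand $N$ est grand. Pour cela, on utilisera la finitude de la somme (cf. \cite[lemme 5.1]{ar1}), le fait que $\|x\|_{G^\theta}\ll \|\delta x\|_{P^\theta}$, et une borne polynomiale en $\|x\|$ sur le nombre de $\delta$ qui contribuent. Cette dernière borne s'obtient comme dans \cite{ar1}, en l'appliquant au groupe $G^\theta$, qui est un produit de groupes $GL_D$. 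On obtient ainsi la décroissance annoncée.
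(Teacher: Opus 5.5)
Your proposal is correct and follows the paper's proof essentially step by step. You insert the relative partition \eqref{eq:partition-H} for $w\theta w^{-1}$ and recombine via lemme \ref{lem:Wdec}, then use \eqref{eq:sigma 12} to isolate the term $P_1=P_2=G$, which equals $F^G(x,T)\varphi(x)$, and apply lemme \ref{lem:majoration} to the terms with $P_1\subsetneq P_2$. On the final summation over $\delta$, which the paper treats as immediate: finiteness comes from the factor $\sigma_{P_1}^{P_2}\le\hat\tau_{P_1}$ and not from $F^{P_1}$ alone, and the detour through lemme \ref{lem:relsiegel} is unnecessary, since \eqref{eq:equiv} together with the standard bound $\sum_{\delta}\|\delta x\|_{P^\theta}^{-N}\ll\|x\|_{G^\theta}^{-N'}$ already suffices.
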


\begin{preuve} On part de l'expression \eqref{eq:LaTtheta} pour $Q=G$. Soit $h\in G^\theta(\AAA)$. On injecte dans le terme associé à $P,w$ et $\delta$ dans la définition \eqref{eq:LaTtheta}  de $(\Lambda^{T}_\theta\varphi)(h)$ l'identité \eqref{eq:partition-H} appliquée à $w\theta w^{-1}$, $Q=P$ et l'élément $w\delta h w^{-1}\in G^{w\theta}(\AAA)$. On obtient:
\begin{align*}
    (\Lambda^{T}_\theta\varphi)(h)= \sum_{P\in \fc^G(P_0)}  \eps_P^G \sum_{w \in  \, _PW^G_\theta}   \sum_{\delta \in P_{w}^\theta(F)\back G^\theta(F)} \hat\tau_P^G(H_P(w\delta h)-T)  \varphi_P(w\delta h) \times \\
\left(\sum_{P_1\in \fc^P(P_0)}   \sum_{w_1 \in  \, _{P_1}W^P_{w\theta}}   \sum_{\gamma \in P_{1,w_1}^{w\theta}(F)\back P^{w\theta}(F)}
 F^{P_1}(w_1\gamma w\delta h,T) \tau_{P_1}^P(H_{P_1}(w_1\gamma w\delta h)-T)\right) .
  \end{align*}

Avec les notations ci-dessus, on a  $w^{-1}\gamma w\in P_{1,w_1w}(F)\back P_w^\theta(F)$ et  $w_1w\in   \, _{P_1}W^G_\theta$.  À l'aide du lemme \ref{lem:Wdec}, on peut intervertir la somme sur $P$ et sur $P_1$ puis  utiliser \eqref{eq:sigma 12}. On obtient   que $(\Lambda^{T}_\theta\varphi)(h)$ est égal à
\begin{align*}
 \sum_{P_0\subset P_1\subset P}  \eps_P^G \sum_{w \in  \, _{P_1}W^G_\theta}   \sum_{\delta \in P_{1,w}^\theta(F)\back G^\theta(F)}  F^{P_1}(w\delta h,T ) \tau_{P_1}^P(H_P(w\delta h )-T)  \hat\tau_P^G(H_P(w\delta h)-T)  \varphi_P(w\delta h ) 
  \end{align*}
qui vaut
\begin{align*}
  \sum_{P_0\subset P_1\subset P_2\subset G}   \sum_{w \in  \, _{P_1}W^G_\theta}   \sum_{\delta \in P_{1,w}^\theta(F)\back G^\theta(F)}  F^{P_1}(w\delta h,T ) \sigma_{P_1}^{P_2}(H_{P_1}(w\delta h)-T)  \varphi_{1,2}(w\delta h ) 
\end{align*}
où   $\varphi_{1,2}$ est défini en \eqref{eq:phi12}
Si $P_1=P_2$ alors la fonction $\sigma_{P_1}^{P_2}$ est identiquement nulle sauf si $P_1=P_2=G$. Dans ce cas, la contribution de $P_1=P_2=G$ est simplement  $F^{G}( h,T )\varphi( h)$. Ainsi, il suffit d'établir la majoration cherchée pour les termes correspondant à $P_1\subsetneq P_2$ fixés et $w\in  \, _{P_1}W_\theta$ fixés. En utilisant le lemme \ref{lem:nonstd}, on voit que quitte à remplacer $T$ par $wT$, $P_0$ par $(P_0)_w$, $P_i$ par $(P_i)_w$, $\delta$ par $w\delta w^{-1}$ , $h$ par $w\delta h w^{-1}$, $\theta$ par $w\theta w^{-1}$ et $\varphi$ par $\varphi(\cdot w)$  il suffit de traiter la somme
\begin{align}\label{eq:la somme}
  \sum_{\delta \in P_{1}^\theta(F)\back G^{\theta}(F)}  F^{P_1}(\delta h,T ) \sigma_{P_1}^{P_2}(H_{P_1}(\delta h)-T)  \varphi_{P_1,P_2}(\delta h ).
\end{align}

On conclut alors facilement par  le lemme  \ref{lem:majoration}.
\end{preuve}

\end{paragr}

\begin{paragr}[Formule d'inversion.] --- Pour des références futures, nous énonçons aussi la proposition suivante.
 
 \begin{proposition}\label{prop:inv}
Soit $\varphi$ une fonction continue sur $[G]$. Pour tout sous-groupe parabolique standard $Q$ et tout $x\in G(\AAA)$,  on a 
  \begin{align*}
    \varphi_Q(x)=\sum_{ P\in \fc^Q(P_0) } \sum_{w   \in \, _PW^Q_\theta} \sum_{\delta\in P_w^\theta(F)\back  Q^\theta(F)} \tau_{P}^Q(H_{P}(w \delta x)-T) (\Lambda^{T,P}_{w\theta}\varphi) (w \delta x),
  \end{align*}
  la somme sur $\delta$ étant en fait à support fini.
\end{proposition}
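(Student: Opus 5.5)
The plan is to copy Arthur's proof of the inversion formula for his truncation operator: expand the right-hand side, regroup the sums using lemme \ref{lem:Wdec}, and finish with Langlands' combinatorial lemma. Fix $Q$ and $x$.

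First, insert the definition \eqref{eq:LaTtheta} of $\Lambda^{T,P}_{w\theta}$ into the right-hand side. Since $\Lambda^{T,P}_{w\theta}\varphi=\Lambda^{T,P}_{w\theta}\varphi_P$ and $(\varphi_P)_{P_1}=\varphi_{P_1}$ for $P_1\subset P$, the term indexed by $(P,w,\delta)$ becomes
\begin{align*}
\tau_P^Q(H_P(w\delta x)-T)\sum_{P_1\in\fc^P(P_0)}\eps_{P_1}^P\sum_{w_1\in\,_{P_1}W^P_{w\theta}}\ \sum_{\gamma\in P_{1,w_1}^{w\theta}(F)\back P^{w\theta}(F)}\hat\tau_{P_1}^P(H_{P_1}(w_1\gamma w\delta x)-T)\,\varphi_{P_1}(w_1\gamma w\delta x).
\end{align*}
Next, write $\gamma=w\gamma'w^{-1}$ with $\gamma'\in P_{1,w_1w}^{\theta}(F)\back P_w^\theta(F)$, as in the proof of proposition \ref{prop:asym tronque}. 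Since $w_1\in W^P$, we have $w_1\gamma\in P(F)$. Because $H_P$ is left $P(F)$-invariant, this gives $\tau_P^Q(H_P(w\delta x)-T)=\tau_P^Q(H_P(w_1w\gamma'\delta x)-T)$.

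Set $w_0=w_1w$ and $\delta_0=\gamma'\delta$. By lemme \ref{lem:Wdec} (with $P_1\subset P\subset Q$), the map $(w,w_1)\mapsto w_0$ is a bijection from the pairs $w\in\,_PW^Q_\theta$, $w_1\in\,_{P_1}W^P_{w\theta}$ onto $_{P_1}W^Q_\theta$. By lemme \ref{lem:une-observation}, $P_{1,w_0}^\theta\subset P_w^\theta$, so the pair $(\gamma',\delta)$ runs exactly over $\delta_0\in P_{1,w_0}^\theta(F)\back Q^\theta(F)$. Interchanging the sums over $P$ and $P_1$, the right-hand side becomes
\begin{align*}
\sum_{P_1\in\fc^Q(P_0)}\ \sum_{w_0\in\,_{P_1}W^Q_\theta}\ \sum_{\delta_0\in P_{1,w_0}^\theta(F)\back Q^\theta(F)}\varphi_{P_1}(w_0\delta_0x)\sum_{P_1\subset P\subset Q}\eps_{P_1}^P\,\tau_P^Q\,\hat\tau_{P_1}^P(H_{P_1}(w_0\delta_0x)-T).
\end{align*}
Here I use that $H_P$ is the projection of $H_{P_1}$ onto $\ago_P$.

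By Langlands' combinatorial lemma (cf. \cite[§6]{ar1}), the inner sum over $P$ is $1$ if $P_1=Q$ and $0$ otherwise. Only $P_1=Q$ survives. Then $_QW^Q_\theta=\{1\}$ and the sum over $\delta_0$ is trivial, so we recover $\varphi_Q(x)$.

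The main obstacle is justifying the interchange of summations, that is, showing the sums over $\delta$ and $\gamma$ are finite for fixed $x$. I would argue as in \cite[lemme 5.1]{ar1}. For fixed $x$ and fixed $(P,w)$, the condition $\tau_P^Q(H_P(w\delta x)-T)\neq0$ lets only finitely many $\delta$ contribute, via the same reduction to a Siegel-set argument as for the $\hat\tau$-sums in \eqref{eq:LaTtheta}. Once that holds, for each such $\delta$ the inner $\gamma$-sums are finite by the finiteness already noted after \eqref{eq:LaTtheta}, and all the rearrangements above are legitimate since only finitely many terms are involved.
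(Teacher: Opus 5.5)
Your proof is correct and is essentially the paper's argument: you inject the definition of $\Lambda^{T,P}_{w\theta}$, conjugate by $w$ using $w^{-1}P^{w\theta}w=P_w^\theta$, regroup the pairs $(w,w_1)$ via lemme \ref{lem:Wdec}, and conclude with Langlands' combinatorial lemma $\sum_{P_1\subset P\subset Q}\eps_{P_1}^P\hat\tau_{P_1}^P\tau_P^Q=\delta_{P_1,Q}$ (the paper cites \cite[proposition 1.7.2]{labWal} for it). Two minor remarks: the inclusion $P_{1,w_1w}^\theta\subset P_w^\theta$ follows directly from $w_1\in W^P$ normalizing $P$, not from lemme \ref{lem:une-observation}; and your finiteness step, which the paper leaves implicit, can be made precise by noting $\tau_P^Q\leq\hat\tau_P^Q$ and that $\delta\mapsto P(F)w\delta$ is injective from $P_w^\theta(F)\back Q^\theta(F)$ into $P(F)\back Q(F)$, which reduces it to \cite[lemme 5.1]{ar1}.
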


\begin{preuve}
  On injecte la définition de l'opérateur de troncature  dans le membre de droite de l'expression à démontrer. On obtient

  \begin{align}\label{eq:pr-tronq}
    \sum_{P\in \fc^Q(P_0)} \sum_{w   \in \, _PW^Q_\theta} \sum_{\delta\in P_w^\theta(F)\back  Q^\theta(F)}  \tau_{P}^Q(H_{P}(w \delta x)-T) \\\nonumber
\sum_{P_1\in\fc^P(P_0)} \eps_{P_1}^P \sum_{w_1  \in \, _{P_1}W^P_{w\theta}} \sum_{\delta_1\in P_{1,w_1}^{w\theta}(F)\back P^{w\theta}(F)} \hat\tau_{P_1}^P(H_{P_1}(w_1\delta_1    w\delta x)-T)  \varphi_{P_1}(w_1\delta_1 w\delta x).
  \end{align}

Observons qu'on a $P^{w\theta}=P\cap Q^{w\theta}=P\cap w Q^\theta w^{-1}$. Il s'ensuit qu'on a 
\begin{align*}
  w^{-1}P^{w\theta}w= w^{-1}P w\cap Q^\theta.
\end{align*}
Comme dans la preuve de la proposition \ref{prop:dec-tronque}, on réécrit cette expression à l'aide du lemme \ref{lem:Wdec}. On trouve alors que  l'expression \eqref{eq:pr-tronq} est égale à 
\begin{align*}
  \sum_{P\in\fc^Q(P_0)} \sum_{w   \in \, _PW^Q_\theta} \sum_{\delta\in P_w^\theta(F)\back Q^\theta(F)} \\
\left[     \sum_{P\subset R \subset Q } \eps_P^R  \hat\tau_P^R(H_P(w\delta x)-T)  \tau_{R}^Q(H_{R}(w \delta x)-T)     \right] \varphi_P( w\delta x).
\end{align*}
L'expression entre crochets est nulle sauf si $P=Q$ auquel cas la somme se réduit à $\varphi_Q(  x)$ (c'est le \og lemme de Langlands\fg, cf. par exemple \cite[proposition 1.7.2]{labWal} pour une preuve).
\end{preuve}
\end{paragr}

\section{Développement spectral}\label{sec:Dev spectral}

\subsection{Noyaux automorphes}\label{ssec:noyau autom}

\begin{paragr}\label{S:notations-spec}
  Dans toute cette section, $F$ est un corps de nombres. Les autres notations sont celles de la section \ref{sec:prelim}.
\end{paragr}

\begin{paragr} Nous appellerons  \emph{données cuspidales de $G$} les classes d'équivalence de couples $(M_P,\pi)$ où $P$ est un sous-groupe parabolique standard de $G$ et $\pi$ une représentation irréductible de $M_P(\AAA)$ qui se réalise dans l'espace  $L^2_{\cusp}(A_P^{\infty} M_P(F)\back M_P(\AAA))$ des fonctions cuspidales de carré intégrable sur $A_P^{\infty} M_P(F)\back M_P(\AAA)$, la relation d'équivalence étant celle donnée dans  \cite[§ 3]{ar1}.
  Soit $\Xgo(G)$ l'ensemble des données cuspidales de $G$. 
\end{paragr}

\begin{paragr}\label{S:KPchi}
  Soit $P$ un sous-groupe parabolique semi-standard de $G$. Soit $L^2([G]_P)$ l'espace de Hilbert des fonctions de carré intégrable sur le quotient $[G]_P$ muni de la mesure quotient. L'algèbre de Schwartz $\Sc(G(\AAA))$, cf. §  \ref{S:Schwartz}, agit à droite sur cet espace par un opérateur intégrable dont le noyau est donné explicitement par:
\begin{align*}
  K_{P,f}(x,y)=\sum_{\gamma\in M_P(F)} \int_{ N_P(\AAA)} f(x^{-1}\gamma n y)\, dn
\end{align*}
pour $x,y\in G(\AAA)$ et $f\in \Sc(G(\AAA))$.
On a en fait une décomposition 
\begin{align*}
   K_{P,f}(x,y)=\sum_{\chi\in \Xgo(G)}  K_{P,\chi,f}(x,y)
\end{align*}
où $K_{P,\chi,f}$ s'interprète comme le noyau de l'opérateur induit par $f\in \Sc(G(\AAA))$ agissant sur le sous-espace fermé stable $L^2_\chi([G]_P)\subset L^2([G]_P)$ associé à $\chi$ et défini, par exemple, dans   \cite[§ 2.9.2]{BCZ}.
\end{paragr}

\begin{paragr}
  Sur l'ensemble $[G]_P$, on a la notion de poids au sens de \cite[§ 2.4.3]{BCZ}:  ce sont certaines fonctions positives sur $[G]_P$ dont nous ne rappellerons pas ici la définition. Il nous suffira de savoir que l'ensemble des poids contient les fonctions $\|\cdot\|_P$ et $d^P(\la,\cdot)$ pour tout $\la\in \ago_0$ et qu'il stable par produit, par les opérations $\min$ ou $\max$ et par tout automorphisme de $G$ défini sur $F$ qui préserve $P$.  Ces poids interviennent dans des énoncés de majoration comme l'énoncé suivant.

  \begin{lemme}\label{lem:rappel*BPCZ}(\cite[lemme 2.10.1.1]{BCZ} Il existe $N_0>0$ tel que, pour tout $N>0$ et tout poids $\om$ sur $[G]_P$, il existe une semi-norme continue  $\| \cdot\|_{\Sc}$ sur $\Sc(G(\AAA)$ telle que, pour  tout $y\in [G]_P$ et tout $f\in \Sc(G(\AAA))$, on ait

\begin{align}\label{eq:sum norm*chi2}
 \sum_{\chi\in \Xgo(G) } \sup_{x\in [G]_P}\left( \om(x)\|x\|_P^{-N_0-N} |K_{P,\chi,f}(x,y)|\right) \leq \| f\|_{\Sc}\om(y) \|y\|_P^{-N}.
\end{align}
      \end{lemme}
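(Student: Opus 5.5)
The plan is to follow the argument of \cite[§ 2.10]{BCZ}. Fix a weight $\om$ and $N>0$. I will deduce the inequality from three ingredients:
\begin{itemize}
\item an $L^2$ bound on the spectral components $K_{P,\chi,f}(\cdot,y)$;
\item a weighted Sobolev inequality on $[G]_P$;
\item a factorisation of $f$.
\end{itemize}

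The first step is to factorise $f$. By the Dixmier--Malliavin theorem, for $f\in\Sc(G(\AAA))^J$ I can write $f=\sum_i f_i*g_i$ as a finite sum, with $f_i,g_i\in\Sc(G(\AAA))^J$ and each factor depending continuously on $f$. Alternatively, I can use a parametrix $f=\Delta^k f * h_1 + f*h_2$ built from an elliptic element $\Delta\in\uc(\ggo_\CC)$, with $h_1,h_2$ compactly supported, which keeps the continuity of the semi-norms explicit. Since the $\chi$-projection commutes with right convolution, this gives
\[
K_{P,\chi,f_1*f_2}(x,y)=\int_{[G]_P}K_{P,\chi,f_1}(x,z)\,K_{P,f_2}(z,y)\,dz .
\]
This formula reduces the problem to controlling, for fixed $y$, the $L^2([G]_P)$-norm of $z\mapsto K_{P,f_2}(z,y)$ weighted by $\om(z)$. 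That is an elementary estimate on the explicit sum $\sum_{\gamma\in M_P(F)}\int_{N_P(\AAA)}f_2(z^{-1}\gamma n y)\,dn$. Using the properties of weights, namely $\om(z)\ll\om(y)\|z^{-1}y\|^{c}$, together with the rapid decay of $f_2$ and a lattice-point count of the form $\sum_\gamma\|z^{-1}\gamma y\|^{-M}\ll\|y\|_P^{N_0}$, I get
\[
\|\om\,K_{P,f_2}(\cdot,y)\|_{L^2}\ll\|f_2\|_\Sc\,\om(y)\|y\|_P^{N_0'}.
\]

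The second step controls the sup in $x$, for each $\chi$. Here I apply the weighted Sobolev inequality of \cite[§ 2.4]{BCZ}: there is a finite set $\Fgo\subset\uc(\ggo_\CC)$ such that
\[
\sup_x\om(x)\|x\|_P^{-N_0}|\phi(x)|\ll\sum_{X\in\Fgo}\|\om R(X)\phi\|_{L^2}.
\]
I apply it to $\phi=K_{P,\chi,f}(\cdot,y)$, moving the derivatives onto $f$ through $R(X)K_{P,f}(\cdot,y)=K_{P,L(X)f}(\cdot,y)$. Then the orthogonality of the spaces $L^2_\chi([G]_P)$ turns the sum over $\chi$ of $L^2$-norms into one global $L^2$-norm. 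To make this rigorous I first sum squares, then convert back using Cauchy--Schwarz against a convergent auxiliary sequence. For that auxiliary sequence I insert $(1+\la_\chi)^{-k}$, where $\la_\chi$ is the Casimir eigenvalue; the sum $\sum_\chi(1+\la_\chi)^{-k}$ converges for large $k$ by Müller's finiteness results on cuspidal data. Since $(1+\Omega)^k$ acts on the $\chi$-component by this scalar, the extra factor $(1+\la_\chi)^k$ is absorbed into $f$.

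The main obstacle is getting the decay $\|y\|_P^{-N}$ in $y$ with only a fixed loss $N_0$ in $x$, rather than mere moderate growth. The $L^2$ argument alone gives growth $\om(y)\|y\|_P^{N_0'}$. To obtain decay I would apply the same argument with the weight $\om'(x)=\om(x)\|x\|_P^{N}$ and exploit the symmetry $K_{P,\chi,f}(x,y)=\overline{K_{P,\chi,f^*}(y,x)}$. This gives a bound of the form
\[
\om(x)\|x\|_P^{N}\,|K(x,y)|\ll\om(y)\|y\|_P^{N}\cdot(\text{moderate growth}).
\]
Interchanging the roles of $x$ and $y$ and taking a geometric mean then yields the claimed shape, with $N_0$ independent of $N$ and $\om$. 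This balancing of weights is exactly how the weights of \cite[§ 2.4.3]{BCZ} are used, and I expect it to be the delicate point.
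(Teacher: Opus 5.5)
The paper gives no proof of its own here; the lemma is simply quoted from \cite{BCZ}. Your argument has a genuine gap exactly where the weight $\om$ meets the decomposition along cuspidal data.

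In your second step you bound $\sup_x\om(x)\|x\|_P^{-N_0}|K_{P,\chi,f}(x,y)|$ by $\sum_{X\in\Fgo}\|\om R(X)K_{P,\chi,f}(\cdot,y)\|_{L^2}$, and then sum over $\chi$ ``by orthogonality''. But the projections $\Pi_\chi$ onto $L^2_\chi([G]_P)$ are orthogonal only for the \emph{unweighted} scalar product. Multiplication by a non-constant $\om$ does not commute with them, so orthogonality gives no bound of $\sum_\chi\|\om\,\Pi_\chi\phi\|_{L^2}^2$ by $\|\om\phi\|_{L^2}^2$. When $\om$ grows towards the cusps you do not even know a priori that $\om\,\Pi_\chi\phi\in L^2$.

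The same problem is hidden in your first step. The inequality $\om(x)\ll\om(\ga n y)\|x^{-1}\ga n y\|^{c}$ lets you move the weight from one variable to the other only through the full kernel $K_{P,f}$, never through a $\chi$-component. So at some point the weight has to cross $\Pi_\chi$.

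Nor can you fall back on the case $\om=1$ via $\|x\|_P^{-c}\ll\om(x)\ll\|x\|_P^{c}$, because the whole point is that $N_0$ does not depend on $\om$. The paper uses this essentially in lemme~\ref{lem:etape 1}, with $\om=d^{P_\al}(t\al,\cdot)$ and an exponent $N'$ independent of $t$.

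What is missing is a weighted form of the decomposition $L^2([G]_P)=\hat\oplus_\chi L^2_\chi([G]_P)$. You need $\varphi\mapsto\varphi_\chi$ to be continuous for $\om$-weighted, rapid-decay type seminorms on smooth functions on $[G]_P$, with these seminorms summable over $\chi$. This is the real content behind the cited lemma, and it cannot be obtained from $L^2$-orthogonality plus a Sobolev inequality.

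There are also some secondary problems.
\begin{itemize}
\item The Casimir does not act by a scalar on $L^2_\chi([G]_P)$ in general. For non-cuspidal $\chi$, or for $P\neq G$, this space carries continuous spectrum, on which $\Omega_G$ varies with $\nu\in i\ago_M^*$. You need an operator inequality for a positive elliptic element such as $1-\Omega_G+2\Omega_K$, not an eigenvalue.
\item Dixmier--Malliavin does not give factors depending continuously on $f$. This is harmless, though: the parametrix, or Banach--Steinhaus applied to a fixed $f$, avoids the issue.
\item Your ``main obstacle'' misreads the statement. The loss allowed in $x$ is $\|x\|_P^{N_0+N}$, not $\|x\|_P^{N_0}$. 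Arbitrary $N$ follows at once from $N=0$ applied to the weight $\om\|\cdot\|_P^{-N}$, which is again a weight, so no balancing or geometric mean is needed.
\item The version you describe, with fixed loss $\|x\|_P^{N_0}$ and arbitrary decay in $y$, is false. Take $P=G$, $f\geq 0$ with $f(1)>0$, and $x=y=a\in A_G^\infty$. Then $\sum_\chi|K_{\chi,f}(a,a)|\geq K_f(a,a)=K_f(1,1)\geq f(1)$, while $\|a\|_G\to\infty$.
\end{itemize}
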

    \end{paragr}

\subsection{Majoration de noyaux modifiés} \label{ssec:maj noy tronq}

\begin{paragr}
  On reprend les notations et les hypothèses de la sous-section \ref{ssec:partition}. Soit $\theta,\theta'\in M_0(F)$ des éléments d'ordre $2$ au plus.
\end{paragr}

\begin{paragr}\label{S:maj noyau modif}
  Soit $f\in \Sc(G(\AAA))$. Soit $T\in T_0+\overline{\ago_0^+}$ un paramètre de troncature et $\chi\in \Xgo(G)$. Pour tous  $x\in G^{\theta'}(F)\back G(\AAA)$ et $y\in G^{\theta}(F)\back G(\AAA)$, on définit le noyau modifié
  \begin{align}\label{eq:KT}
    K^{T,\theta',\theta}_{\chi,f}(x,y)=\sum_{P\in \fc(P_0)} \eps_P^G \sum_{w_1 \in   \, _PW_{\theta'}} \sum_{\delta_1\in P_{w_1}^{\theta'}(F) \back G^{\theta'}(F)}  \hat\tau_P(H_P(w_1\delta_1x)-T) \times \\ \nonumber \left[\sum_{w_2\in   \, _PW_\theta} \sum_{\delta_2\in P_{w_2}^\theta(F) \back G^\theta(F)} K_{P,\chi,f}(w_1\delta_1x,w_2\delta_2y)\right].
  \end{align}

  \begin{remarque}\label{rq:delta1}
   Dans l'expression \eqref{eq:KT}, la somme sur $\delta_1$ est finie (cf. \cite[lemme 5.1]{ar1}). En revanche, dans l'expression entre crochets, celle sur $\delta_2$ ne l'est pas en général mais le lemme \ref{lem:rappel*BPCZ}  implique que la somme est du moins absolument convergente (prendre $\om=1$ et $N$ assez grand).
 \end{remarque}
 
  Dans la suite, $\theta'$ et $\theta$ sont fixés et on pourra les ôter en exposant si le contexte est clair.  On obtient également le noyau modifié $K^{T,\theta',\theta}_{f}$, noté simplement  $K^{T}_{f}$, en remplaçant  $K_{P,\chi,f}$ par  $K_{P,f}$. On a alors
  \begin{align}\label{eq:sum-chif}
    K^T_{f}=\sum_{\chi\in \Xgo(G)} K^T_{\chi,f}.
  \end{align}
  
On pourra aussi omettre l'indice $f$ si le contexte est clair.
 
\end{paragr}

\begin{paragr} Les principaux résultats de cette sous-section sont les  suivants.
  
  \begin{theoreme}
    \label{thm:maj noyau}
    Pour tous $N_1,N_2,r>0$, il existe une semi-norme  continue $\|\cdot\|_{\Sc} $ sur $\Sc(G(\AAA))$ telle que 
    \begin{align*}
\sum_{\chi\in \Xgo(G)} |K^T_{\chi,f}(x,y) - F^G(x,T)K_{\chi,f}(x,y) | \leq \exp(-r\|T^G\|) \|x\|_{G^{\theta'}}^{-N_1} \|y\|_{G^\theta}^{-N_2} \|f\|_{\Sc}
    \end{align*}
    pour  tous $f\in \Sc(G(\AAA))$, $x\in [G^{\theta'}]^G$, $y\in [G^\theta]$ et $T\in \ago_0$ suffisamment positif.
  \end{theoreme}

  \begin{theoreme}
    \label{cor:maj noyau}
Pour tous $N_1,N_2>0$ et $T\in T_0+\overline{\ago_0^+}$, il existe une semi-norme  continue $\|\cdot\|_{\Sc} $ sur $\Sc(G(\AAA))$ telle que 
    \begin{align*}
\sum_{\chi\in \Xgo(G)} |K^T_{\chi,f}(x,y) | \leq  \|x\|_{G^{\theta'}}^{-N_1} \|y\|_{G^\theta}^{-N_2} \|f\|_{\Sc}
    \end{align*}
    pour  tous $f\in \Sc(G(\AAA))$, $x\in [G^{\theta'}]^G$ et  $y\in [G^\theta]$.
  \end{theoreme}

\end{paragr}

\begin{paragr}[Démonstration des théorèmes   \ref{thm:maj noyau} et  \ref{cor:maj noyau} ] \label{S:dec KT-FT} --- Soit $\chi\in \Xgo(G)$ et $f\in \Sc(G(\AAA))$. En procédant comme dans la preuve de la proposition \ref{prop:dec-tronque}, on voit que pour tous $x\in G^{\theta'}(\AAA)$ et $ y\in G^\theta(\AAA)$, la différence
  \begin{align}\label{eq:ma difference}
    K^T_{\chi,f}(x,y) - F^G(x,T)K_{\chi,f}(x,y) 
  \end{align}
  est égale à la somme sur les sous-groupes paraboliques standard $P_1\subsetneq  P_2$ , la somme sur  $w_1 \in \,_{P_1}W_{\theta'}$, $\delta \in P_{1,w_1}^{\theta'}(F)\back G^{\theta'}(F)$ de 
  \begin{align*}
    F^{P_1}( w_1\delta_1 x,T)  \sigma_1^2(H_{P_1}(w_1\delta_1 x)-T)  K_{1,2,\chi}(w_1\delta x,y)
  \end{align*}
   où l'on pose pour tout $x\in G(\AAA)$
  \begin{align*}
    K_{1,2,\chi}(x,y)=\sum_{P_{1}\subset P\subset P_{2 }}\eps_P^G  \sum_{w_2\in   \, _PW_\theta} \sum_{\delta_2\in P_{w_2}^\theta(F) \back G^\theta(F)} K_{P,\chi}(x,w_2\delta_2 y).
  \end{align*}
  
Nous démontrerons au § \ref{S:etape 3} la proposition suivante en nous appuyant sur les résultats intermédiaires démontrés entretemps.

   \begin{proposition}
     \label{prop:etape 3} Soit $P_1\subsetneq  P_2$ des sous-groupes paraboliques.
     \begin{enumerate}
     \item     Pour tous $N_1,N_2,r>0$, il existe une semi-norme  continue $\|\cdot\|_{\Sc} $ sur $\Sc(G(\AAA))$ telle que
          \begin{align*}
           & \sum_{\chi\in \Xgo(G)} \sum_{w \in \,_{P_1}W_{\theta'}}\sum_{\delta \in P_{1,w}^{\theta'}(F)\back G^{\theta'}(F)}F^{P_1}( w \delta  x,T)  \sigma_1^2(H_{P_1}(w\delta x)-T)  |K_{1,2,\chi}(w \delta x,y)| \\
            &\leq  \exp(-r\|T^G\|) \|x\|_{G^{\theta'}}^{-N_1} \|y\|_{G^\theta}^{-N_2} \|f\|_{\Sc}
    \end{align*}
    pour  tout $f\in \Sc(G(\AAA))$, $x\in [G^{\theta'}]^G$, $y\in [G^\theta]$ et $T\in \ago_0$ suffisamment positif.
  \item Soit $T\in T_0+\overline{\ago_0^+}$. Pour tous $N_1,N_2>0$, il existe une semi-norme  continue $\|\cdot\|_{\Sc} $ sur $\Sc(G(\AAA))$ telle que
      \begin{align*}
           & \sum_{\chi\in \Xgo(G)} \sum_{w \in \,_{P_1}W_{\theta'}}\sum_{\delta \in P_{1,w}^{\theta'}(F)\back G^{\theta'}(F)}F^{P_1}( w \delta  x,T)  \sigma_1^2(H_{P_1}(w\delta x)-T)  |K_{1,2,\chi}(w \delta x,y)| \\
            &\leq  \|x\|_{G^{\theta'}}^{-N_1} \|y\|_{G^\theta}^{-N_2} \|f\|_{\Sc}
    \end{align*}
    pour  tout $f\in \Sc(G(\AAA))$, $x\in [G^{\theta'}]^G$ et  $y\in [G^\theta]$.
  \end{enumerate}
\end{proposition}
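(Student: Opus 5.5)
The plan is to reduce to the case $w=1$ and then obtain the decay in $x$ from Lemma \ref{lem:majoration}, applied to a suitably reorganised version of $K_{1,2,\chi}$.

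\textbf{Step 1: reduction to $w=1$.} Fix $P_1\subsetneq P_2$ and $w\in\,_{P_1}W_{\theta'}$. As at the end of the proof of Proposition \ref{prop:asym tronque}, I would use Lemma \ref{lem:nonstd} and $W\subset G(F)\cap K$ to replace $P_0,P_i,T,\theta'$ by their conjugates under $w$. Since $_{P_1}W_{\theta'}$ is finite, this reduces the sum over $w$ to a sum of the shape \eqref{eq:la somme}:
\[
\sum_{\delta\in P_1^{\theta'}(F)\back G^{\theta'}(F)} F^{P_1}(\delta x,T)\,\sigma_1^2(H_{P_1}(\delta x)-T)\,|K_{1,2,\chi}(\delta x,y)|.
\]
The same conjugation changes $w_2$ in the definition of $K_{1,2,\chi}$ by a bijection, by Lemma \ref{lem:Wconj}.

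\textbf{Step 2: reorganise the $y$-side sum.} I would rewrite $K_{1,2,\chi}(\cdot,y)$ as an alternating sum $\varphi_{1,2}$ of constant terms, in the sense of \eqref{eq:phi12}, of one function on $[G]_{P_2}$. To do this, factor each $w_2\in\,_PW_\theta$, for $P_1\subset P\subset P_2$, via Lemma \ref{lem:Wdec} as $w_2=w'w''$ with $w''\in\,_{P_2}W_\theta$ and $w'\in\,_PW^{P_2}_{w''\theta}$. The sum over $\delta_2$ then splits into a sum over $P_{2,w''}^\theta(F)\back G^\theta(F)$ and a sum over $P_{w'}^{w''\theta}(F)\back P_2^{w''\theta}(F)$. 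For fixed $z=w''\delta y$, the inner expression is
\[
\Psi_P(x,z)=\sum_{w'}\sum_{\delta'}K_{P,\chi}(x,w'\delta'z).
\]
I want $\sum_{P_1\subset P\subset P_2}\eps_P^G\Psi_P(x,z)$ to be $\varphi_{1,2}(x)$ for a function $\varphi$ on $[G]_{P_2}$.

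Two identities are relevant. First, $(K_{P_2,\chi})_P(x,z)=\sum_{\eta\in P(F)\back P_2(F)}K_{P,\chi}(x,\eta z)$. Second, by Corollary \ref{cor:classe-Q_H}, $P(F)\back P_2(F)$ is the union of the double cosets $P(F)\back P(F)w'P_2^{w''\theta}(F)$, $w'\in\,_PW^{P_2}_{w''\theta}$, together with the double cosets that meet no Weyl element. So $\Psi_P$ is the constant term $(K_{P_2,\chi}(\cdot,z))_P(x)$ minus the contribution $R_P(x,z)$ of the latter double cosets.

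\textbf{Step 3: the main term.} For the full constant-term part, take $\varphi=K_{P_2,\chi}(\cdot,z)\in C^\infty([G]_{P_2})^J$. Lemma \ref{lem:majoration} (assertion 1 for $T$ sufficiently positive, assertion 2 for fixed $T$) then gives the factor $\exp(-r\|T^G\|)\|\delta x\|_{P_1}^{-N_1}$ times $\|\varphi\|_{N,\Fgo}$. Lemma \ref{lem:rappel*BPCZ}, applied to $R(X)f$ for $X\in\Fgo$ with the weight $\om=1$ or $d^{P_2}(\la,\cdot)$, bounds $\sum_\chi\|\varphi\|_{N,\Fgo}$ by $\|f\|_{\Sc}\|z\|_{P_2}^{-N'}$ with $N'$ arbitrary. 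Two summations remain:
\begin{itemize}
\item the sum over $\delta\in P_{2,w''}^\theta(F)\back G^\theta(F)$ of $\|w''\delta y\|_{P_2}^{-N'}$, which is bounded by $\|y\|_{G^\theta}^{-N_2}$ for $N'$ large, by the usual convergence of such sums (Remark \ref{rq:delta1});
\item the sum over $\delta\in P_1^{\theta'}(F)\back G^{\theta'}(F)$, which is finite and satisfies $\|\delta x\|_{P_1}\gg\|x\|_{G^{\theta'}}$ on the support of $F^{P_1}$, using Lemma \ref{lem:relsiegel}.
\end{itemize}

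\textbf{Main obstacle: the remainder $R_P$.} The terms $R_P$ have no reason to cancel in the alternating sum. My first attempt would be to show that for these double cosets $\eta$, when $\delta x$ lies in the support of $F^{P_1}\sigma_1^2$, the height of the element $\eta z$ paired with $\delta x$ is forced to grow like $\exp(\bg\la,H_{P_0}(\delta x)\bd)$ for some $\la$ as in \eqref{eq:la 12}. In that case the rapid decay of $K_{P,\chi}$ from Lemma \ref{lem:rappel*BPCZ}, combined with Lemma \ref{lem:Fsigma*d}, would absorb these terms directly. If this fails, I would instead try to show that these double cosets cannot occur once the configuration is normalised by $\theta'$ and $\theta$ commuting in $M_0$, so that $R_P=0$ on the relevant support.
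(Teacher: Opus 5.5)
Your reorganisation in Step~2 is correct. After factoring $w_2=w'w''$, the $y$-side sum equals $\sum_{P_1\subset P\subset P_2}\eps_P^G\bigl[(K_{P_2,\chi}(\cdot,z))_P(x)-R_P(x,z)\bigr]$. Here $R_P$ runs over the cosets $\eta\in P(F)\back P_2(F)$ with $\theta_2=w''\theta w''^{-1}\notin\eta^{-1}P\eta$. The constant-term part is indeed controlled by Lemma~\ref{lem:majoration} and Lemma~\ref{lem:rappel*BPCZ}. Step~1 is harmless but unnecessary: the paper proves a bound valid for every $x\in P_1(F)\back G(\AAA)^1$ and then sums over $w,\delta$ using \eqref{eq:petite maj}.

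The remainder, however, is the whole difficulty, and neither of your fallbacks closes it.
\begin{itemize}
\item The second fallback is false. Non-$\theta$-stable cosets exist in abundance: for $GL_2$ with $P=B$, they are all points of $\mathbb{P}^1(F)$ except the two $\theta$-fixed lines. For Schwartz $f$ and fixed $T$ they genuinely contribute.
\item The first fallback misidentifies the mechanism and cannot work term by term. For a non-stable $\eta$ it is not that some height grows; rather, $\eta z$ is \emph{not} deep in some direction. Since $\theta_2 z=z\theta$, $\theta\in K$, and $d^P$ is right $K$-invariant, we have $d^P(\al,\eta z)=d^P(\al,\eta\theta_2 z)$. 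Lemma~\ref{lem:unicite} (with $Q=P$, $R=P_2$) then forces $d^P(\al,\eta z)\le c$ for \emph{some} $\al\in\Delta_0^{P_2}\setminus\Delta_0^{P}$. The weighted form of Lemma~\ref{lem:rappel*BPCZ} with $\om=d^P(t\al,\cdot)$ turns this weight mismatch into a factor $d^P(-t\al,x)$.
\end{itemize}
This gives decay only in one direction of $\Delta_0^{P_2}\setminus\Delta_0^{P}$, and that direction depends on $\eta$. For $P\neq P_1$, a single $R_P$ gives you no control at all in the directions of $\Delta_0^{P}\setminus\Delta_0^{P_1}$. Yet Lemma~\ref{lem:Fsigma*d} needs $d^{P_1}(-\la,\cdot)$ with every coefficient on $\Delta_0^2\setminus\Delta_0^1$ large. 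So, contrary to your remark, the $R_P$ must be made to cancel against one another, just as the constant terms do.

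The missing idea is the paper's root-by-root pairing, applied to the whole of $K_{1,2,\chi}$ rather than to a main term plus a remainder.
\begin{itemize}
\item Fix $\al\in\Delta_0^2\setminus\Delta_0^1$ and pair each $P\supset P_1^\al$ with $P_\al$ (Lemma~\ref{lem:K12 chi sum}).
\item Each difference $K^{\al,w}_{P,\chi}$ splits into two pieces. The first, $K^{\al,\sharp}_{P,\chi}$, is the kernel minus its constant term along $N_{P_\al}$; it decays like $\exp(-t\bg\al,H_{P_0}(x)\bd)$ by Lemma~\ref{lem:phi12}. The second, $K^{\al,w,\flat}_{P,\chi}$, is a sum over the non-stable $\delta\in P_\al(F)\back P(F)$.
\item Because $P_\al\subset P$ are adjacent, Lemma~\ref{lem:unicite} now gives exactly $d^{P_\al}(\al,\delta y)\le c$ (Lemma~\ref{lem:maj dP al}). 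The weighted kernel estimate then gives the factor $d^{P_1}(-t\al,x)$ (Lemma~\ref{lem:etape 1}).
\item Combining these bounds over all $\al$, using $d^{P_1}(-t\la,x)=\prod_{\al}d^{P_1}(-t\al,x)$, yields the full decay $d^{P_1}(-t\la,x)$ (Lemma~\ref{lem:etape 2}).
\end{itemize}
Lemmas~\ref{lem:Fsigma*d} and~\ref{lem:Fsigma*d variante} then give assertions 1 and 2.
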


Le théorème \ref{thm:maj noyau} est alors une conséquence immédiate de la décomposition combinatoire de la différence \eqref{eq:ma difference} et la proposition \ref{prop:etape 3} assertion 1 ci-dessus.

Montrons maintenant le théorème \ref{cor:maj noyau}. D'après le  lemme \ref{lem:rappel*BPCZ} appliqué au  poids $\om=1$, il existe $N_0$ tel que, pour tout $N>0$, il existe une semi-norme continue  $\| \cdot\|_{\Sc}$ sur $\Sc(G(\AAA)$ telle que pour  tous $x,y\in G(\AAA)$ et tout $f\in \Sc(G(\AAA))$ on ait

\begin{align*}
  \sum_{\chi\in \Xgo(G) }  |K_{\chi,f}(x,y)|  \leq \| f\|_{\Sc}  \|x\|_G^{N_0+N}  \|y\|_G^{-N}.
\end{align*}
Soit  $T\in T_0+\overline{\ago_0^+}$ et $N'>0$. L'application $x\mapsto   F^G(x,T)\|x\|_G^{N_0+N+N'}$ est à support compact  sur $[G^{\theta'}]^G$: elle est donc bornée. Comme, pour tout $\theta \in M_0(F)$ d'ordre au plus $2$,  il existe $C>0$ et $r>0$ tel que pour  tout $y\in G^{\theta}(\AAA)$ on a (cf. \cite[proposition A.1.1 (ix)]{RBP})
\begin{align*}
   \|y\|_{G^{\theta}}\leq C \|y\|_G^{r},
\end{align*}
on obtient que, pour tous $N_1,N_2>0$, il existe une semi-norme  continue $\|\cdot\|_{\Sc} $ telle qu'on ait
\begin{align}\label{eq:maj-FT-K}
  \forall f\in \Sc(G(\AAA)), x\in [G^{\theta'}]^G \text{ et } y\in [G^\theta]\\
\nonumber  \sum_{\chi\in \Xgo(G)}  F^G(x,T)|K_{\chi,f}(x,y) |\leq  \|x\|_{G^{\theta'}}^{-N_1} \|y\|_{G^\theta}^{-N_2} \|f\|_{\Sc}.
\end{align}
On conclut alors à l'aide de la  décomposition combinatoire de la différence \eqref{eq:ma difference} et de la proposition \ref{prop:etape 3} assertion 2.

  \end{paragr}

\begin{paragr} Le reste de la sous-section est consacrée à des  préparatifs  à la démonstration de la proposition \ref{prop:etape 3} qui sera finalement donnée au  § \ref{S:etape 3}. Soit $P_1\subsetneq P_2$ des sous-groupes paraboliques standard.   Soit $\al\in \Delta_0^{P_2}\setminus \Delta_0^{P_1}$. Soit $P_1\subsetneq  P_1^\al \subset P_2$ défini par $\Delta_0^{P_1^\al }=\Delta_0^{P_1}\cup\{\al\}$. Pour tout  sous-groupe parabolique $ P_1^\al\subset P\subset P_2$ soit $P_\al$ défini par $\Delta_0^{P_\al}=\Delta_0^P\setminus\{\al\}$.

  Soit $\theta_1\in M_0(F)$ un élément d'ordre au plus deux. Soit $P$ un sous-groupe parabolique standard tel que $^\al P_1\subset P\subset P_2$. Soit $\chi\in \Xgo(G)$. Pour tout $x,y\in G(\AAA)$, on pose
  \begin{align*}
       K^{\al,\theta_1}_{P,\chi}(x,  y)=K_{P,\chi}(x,y)-\sum_{w_1\in \, _{P_\al}W^{P}_{\theta_1}}  \sum_{\delta_1 \in P_{\al,w_1}^{\theta_1}(F) \back   P^{\theta_1}(F)} K_{P_\al,\chi}(x,w_1\delta_1  y).
  \end{align*}
  Pour   $w\in   \, _PW_\theta$, on pose $       K^{\al,w}_{P,\chi}= K^{\al,\theta_1}_{P,\chi}$  avec $\theta_1=w\theta w^{-1}$.

  \begin{lemme}\label{lem:K12 chi sum}
Pour tous $x,y\in G(\AAA)$, on a
    \begin{align*}
      K_{1,2,\chi}(x,y)=\sum_{\,^\al P_1\subset P\subset P_2} \eps_P^G  \sum_{w \in   \, _PW_\theta} \sum_{\delta \in P_{w}^\theta(F) \back G^\theta(F)} K^{\al,w}_{P,\chi}(x,  w\delta y).
  \end{align*}
\end{lemme}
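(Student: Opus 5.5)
The identity is purely combinatorial; it follows the usual Arthur argument, in which the interval $[P_1,P_2]$ splits into pairs $(P_\al,P)$ with ${}^\al P_1\subset P\subset P_2$. Here ${}^\al P_1$ denotes $P_1^\al$.

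First I partition the index set. Every standard $P$ with $P_1\subset P\subset P_2$ either contains $\al$ in $\Delta_0^P$ or not. The map $P\mapsto P_\al$ is a bijection from $\{P : P_1^\al\subset P\subset P_2\}$ onto $\{P' : P_1\subset P'\subset P_2,\ \al\notin\Delta_0^{P'}\}$. Moreover $\eps_{P_\al}^G=-\eps_P^G$, since $\dim \ago_{P_\al}^G=\dim\ago_P^G+1$. Hence
\begin{align*}
K_{1,2,\chi}(x,y)=\sum_{P_1^\al\subset P\subset P_2}\eps_P^G\Big[\sum_{w\in\,_PW_\theta}\sum_{\delta\in P_w^\theta(F)\back G^\theta(F)}K_{P,\chi}(x,w\delta y)-\sum_{w_0\in\,_{P_\al}W_\theta}\sum_{\delta_0\in P_{\al,w_0}^\theta(F)\back G^\theta(F)}K_{P_\al,\chi}(x,w_0\delta_0 y)\Big].
\end{align*}
The first inner sum already matches the first term of $K^{\al,w}_{P,\chi}$ summed over $w,\delta$.

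Next I rewrite the second inner sum using Lemma~\ref{lem:Wdec}, applied to $P_\al\subset P\subset G$ and $\theta$. Each $w_0\in\,_{P_\al}W_\theta$ factors uniquely as $w_0=w_1w$ with $w\in\,_PW_\theta$ and $w_1\in\,_{P_\al}W^P_{w\theta}$.

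For fixed $w,w_1$, the sum over $\delta_0$ must then be decomposed. I write
\[
P_{\al,w_1w}^\theta(F)\back G^\theta(F)=\bigl(P_{\al,w_1w}^\theta(F)\back P_w^\theta(F)\bigr)\times\bigl(P_w^\theta(F)\back G^\theta(F)\bigr).
\]
This uses $P_{\al,w_1w}\subset P_w$, which holds because $w_1\in W^P$. Conjugating by $w$ gives $w P_w^\theta w^{-1}=P^{w\theta}$ and $w P_{\al,w_1w}^\theta w^{-1}=P_{\al,w_1}^{w\theta}$. Setting $\delta_0=w^{-1}\delta_1w\,\delta$, we get $w_0\delta_0=w_1\delta_1 w\delta$ with $\delta_1\in P_{\al,w_1}^{w\theta}(F)\back P^{w\theta}(F)$. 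This is exactly the same manipulation as in the proofs of Propositions~\ref{prop:asym tronque} and~\ref{prop:inv}.

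The second inner sum therefore becomes
\[
\sum_{w\in\,_PW_\theta}\sum_{\delta\in P_w^\theta(F)\back G^\theta(F)}\sum_{w_1\in\,_{P_\al}W^P_{w\theta}}\sum_{\delta_1\in P_{\al,w_1}^{w\theta}(F)\back P^{w\theta}(F)}K_{P_\al,\chi}(x,w_1\delta_1 w\delta y).
\]
Taking $\theta_1=w\theta w^{-1}$ and $y'=w\delta y$, the bracket collapses into $\sum_{w,\delta}K^{\al,w}_{P,\chi}(x,w\delta y)$, which gives the lemma.

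The main obstacle is checking that this reindexing is a genuine bijection. This requires the uniqueness statement in Lemma~\ref{lem:Wdec} and the identification $P^{w\theta}=P\cap wG^\theta w^{-1}$. It also requires the rearrangement of infinite sums to be justified. For that I would invoke absolute convergence, via Lemma~\ref{lem:rappel*BPCZ} with $\om=1$, as in Remarque~\ref{rq:delta1}. Everything else is the sign bookkeeping above.
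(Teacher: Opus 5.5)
Your proposal is correct and is essentially the paper's proof. The paper likewise applies Lemma~\ref{lem:Wdec} to $P_\al\subset P\subset G$ to factor $w_0=w_1w$, fibres the coset sum through $P_w^\theta(F)\back G^\theta(F)$, and conjugates by $w$ to land in $P_{\al,w_1}^{w\theta}(F)\back P^{w\theta}(F)$; it leaves the final pairing of $P$ with $P_\al$ (using $\eps_{P_\al}^G=-\eps_P^G$) as the easy last step, whereas you do it first, and your remark that absolute convergence justifies the rearrangement is something the paper leaves implicit.
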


\begin{preuve} Soit  $ P_1^\al\subset P\subset P_2$.  En utilisant le lemme \ref{lem:Wdec}, on obtient pour tous $x,y\in G(\AAA)$
  
  \begin{align*}
    &\sum_{w\in   \, _{P_\al}W_\theta} \sum_{\delta\in P_{\al,w}^\theta(F) \back G^\theta(F)} K_{P_\al,\chi}(x,w\delta  y)\\
   & =\sum_{w_1\in   \, _{P}W_\theta} \sum_{w_2\in \, _{P_\al}W^{P}_{w_1\theta}} \sum_{\delta_2\in P_{\al,w_2w_1}^\theta(F) \back   P_{w_1}^\theta(F)}     \sum_{\delta_1\in P_{w_1}^\theta(F) \back   G^\theta(F)} K_{P_\al,\chi}(x,w_2w_1\delta_2\delta_1  y)\\
    &=\sum_{w_1\in   \, _{P}W_\theta}    \sum_{\delta_1\in P_{w_1}^\theta(F) \back   G^\theta(F)}     \sum_{w_2\in \, _{P_\al}W^{P}_{w_1\theta}}  \sum_{\delta_2\in P_{\al,w_2}^{w_1\theta}(F) \back   P^{w_1\theta}(F)} K_{P_\al,\chi}(x,w_2\delta_2 w_1\delta_1  y). 
  \end{align*}
Il s'ensuit qu'on a 
  \begin{align*}
&    \sum_{w \in   \, _PW_\theta} \sum_{\delta \in P_{w}^\theta(F) \back G^\theta(F)} K_{P,\chi}(x,w\delta y)- \sum_{w\in   \, _{P_\al}W_\theta} \sum_{\delta\in P_{\al,w}^\theta(F) \back G^\theta(F)} K_{P_\al,\chi}(x,w\delta  y)\\
    &=  \sum_{w \in   \, _PW_\theta} \sum_{\delta \in P_{w}^\theta(F) \back G^\theta(F)}  K^{\al,w}_{P,\chi}(x, w\delta y).
  \end{align*}
  Le lemme s'ensuit aisément.
\end{preuve}

\begin{lemme} \label{lem:etape 1} Soit  $ P_1^\al\subset P\subset P_2$ et $w\in  \, _PW_\theta$. Pour tout $N>0$, il existe $N'>0$ tel que pour tout $t>0$ il existe une semi-norme continue $\|\cdot\|_{\Sc}$ sur $\Sc(G(\AAA))$ de sorte qu'on ait pour tout $T\in T_0+\overline{\ago_0^+}$
  \begin{align}\label{eq:etape 1}
F^{P_1}(x,T)\sigma_1^2(H_{P_1}(x)-T)    \sum_{\chi\in \Xgo(G)}  |K^{\al,w}_{P,\chi,f}(x,  y)| \leq  \|f\|_{\Sc} d^{P_1}(-t\al,x)   \|x\|_{P_1}^{N'} \|y\|^{-N}_{P^{w\theta}}
  \end{align}
    pour tout $x\in  P_1(F)\back G(\AAA)^1$, tout $y\in G^{w\theta}(\AAA)$ et tout $f\in \Sc(G(\AAA))$.
  
\end{lemme}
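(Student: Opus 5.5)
We would first separate, in $K^{\al,w}_{P,\chi}(x,y)$ (with $\theta_1=w\theta w^{-1}$), a part that is a constant-term difference in the variable $x$ from a part that concerns only the variable $y$. Throughout we use the identity
\begin{align*}
\int_{[N_{P_\al}]} K_{P,\chi}(nx,y)\,dn=\sum_{\delta\in P_\al(F)\back P(F)} K_{P_\al,\chi}(x,\delta y),
\end{align*}
which follows from unfolding the sum over $M_P(F)$ in $K_{P,\chi}$ and from the compatibility of the decomposition by cuspidal data with constant terms. We then write
\begin{align*}
K^{\al,w}_{P,\chi}(x,y)=\Big(K_{P,\chi}(x,y)-(K_{P,\chi}(\cdot,y))_{P_\al}(x)\Big)+\Big(\sum_{\delta\in P_\al(F)\back P(F)}K_{P_\al,\chi}(x,\delta y)-\sum_{w_1,\delta_1}K_{P_\al,\chi}(x,w_1\delta_1y)\Big).
\end{align*}

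\emph{First term.} For fixed $y$, we apply lemma \ref{lem:phi12} (in the case of the maximal parabolic $P_\al\subsetneq P$ of $P$, with $\la=t\al$) to the function $\varphi=K_{P,\chi}(\cdot,y)$ on $[G]_P$. This bounds it by $\exp(-t\bg\al,H_0(x)\bd)\|x\|_P^{N}\|K_{P,\chi}(\cdot,y)\|_{N,\Fgo}$ for $x$ in a Siegel domain $\SG^{P}$. Under the condition $F^{P_1}(x,T)\sigma_1^2(H_{P_1}(x)-T)\neq0$ we may take $x\in\SG^{P_2}\subset\SG^P$, exactly as in the proof of lemma \ref{lem:majoration}. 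There $\exp(-t\bg\al,H_0(x)\bd)$ is equivalent to $d^{P_1}(-t\al,x)$, and $\|x\|_P$ is dominated by $\|x\|_{P_1}$. The semi-norm $\|K_{P,\chi}(\cdot,y)\|_{N,\Fgo}$ summed over $\chi$ is controlled by lemma \ref{lem:rappel*BPCZ}, applied to $R(X)K$, which is the kernel of $f$ convolved by $X$. This gives $\|f\|_{\Sc}\|y\|_P^{-N''}$ with $N$ fixed from $N_0$. Since $\|y\|_P$ dominates $\|y\|_{P^{w\theta}}$ only up to a power on $G^{w\theta}(\AAA)$ (via the comparison of heights, cf. \cite[proposition A.1.1]{RBP}), we obtain the decay $\|y\|^{-N}_{P^{w\theta}}$ by taking $N''$ large. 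This fixes $N'$ independently of $t$, as the statement requires.

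\emph{Second term.} Here we use corollary \ref{cor:classe-Q_H} (with $Q=P$ and the involution $\theta_1$). The pairs $(w_1,\delta_1)$ correspond bijectively to the double cosets of $P_\al(F)\back P(F)/P^{\theta_1}(F)$ meeting $W$. The remaining sum therefore runs over the cosets $\delta\in P_\al(F)\back P(F)$ such that $\delta^{-1}P_\al\delta$ does not contain $\theta_1$. For $y\in G^{\theta_1}(\AAA)$ we expect these terms to be negligible. The idea is to apply lemma \ref{lem:rappel*BPCZ} on $[G]_{P_\al}$ with a weight of the type $d^{P_\al}(\mu,\cdot)$, chosen so as to transfer the decay in $x$. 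This yields a bound by $d^{P_\al}(\mu,\delta y)\|\delta y\|_{P_\al}^{-N}$ for each $\delta$. We would then use the relative partition \eqref{eq:varpartition-H} and lemma \ref{lem:unicite} to show two things. First, for $y\in G^{\theta_1}(\AAA)$, only the $\theta_1$-stable cosets can be ``reduced'' in the $\al$ direction. Second, the other $\delta y$ have large height, so the sum over them converges and is bounded by a power of $\|x\|$ times $\|y\|^{-N}$.

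The second term is the main obstacle: controlling the non-$\theta_1$-stable double cosets uniformly in $\chi$. If this direct estimate fails, we would fall back on a Bruhat-type decomposition $P(F)=\bigsqcup P_\al(F)\,w'\,P^{\theta_1}(F)\cdots$. We would then show that the missing cosets contribute to the constant term along a parabolic not containing $\theta_1$, and absorb them into the first difference via the inversion formula (proposition \ref{prop:inv}).
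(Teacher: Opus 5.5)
Your decomposition of $K^{\al,w}_{P,\chi}$ is exactly the paper's: a constant-term difference in $x$, plus a sum over the classes $\delta\in P_\al(F)\back P(F)$ with $\theta_1\notin\delta^{-1}P_\al\delta$. Your treatment of the first term (lemma \ref{lem:phi12} for $P_\al\subsetneq P$, then lemma \ref{lem:rappel*BPCZ} with weight $1$) is also the paper's. You should state the preliminary Banach--Steinhaus reduction to $J$-bi-invariant $f$, since lemma \ref{lem:phi12} needs $J$-invariance.

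The gap is in the second term. You only say you ``expect'' the one estimate that makes everything work, and you even prepare a fallback in case it fails. That estimate is: there is $c>0$ with $d^{P_\al}(\al,\delta y)\le c$ for every remaining class $\delta$ and every $y\in G^{\theta_1}(\AAA)$. Without it, applying lemma \ref{lem:rappel*BPCZ} with $\om=d^{P_\al}(t\al,\cdot)$ trades the decay $d^{P_\al}(-t\al,x)$ for an uncontrolled factor $d^{P_\al}(t\al,\delta y)$, and nothing is proved.

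The missing argument is short.
\begin{itemize}
\item By the choice of $K$ in \S\ref{S:choixdeK}, $\theta_1=w\theta w^{-1}\in M_0(F)\cap K$.
\item The function $d^{P_\al}(\al,\cdot)$ is right $K$-invariant and $y$ commutes with $\theta_1$, so $d^{P_\al}(\al,\delta y)=d^{P_\al}(\al,\delta y\theta_1)=d^{P_\al}(\al,\delta\theta_1 y)$.
\item Since $\theta_1\in P(F)$, both $P_\al(F)\delta$ and $P_\al(F)\delta\theta_1$ are classes in $P_\al(F)\back P(F)$.
\item Lemma \ref{lem:unicite}, with $Q=P_\al$ and $R=P$ (so $\Delta_0^P\setminus\Delta_0^{P_\al}=\{\al\}$), forces these two classes to coincide as soon as $d^{P_\al}(\al,\delta y)>c$. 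That means $\theta_1\in\delta^{-1}P_\al\delta$.
\end{itemize}
Hence $d^{P_\al}(t\al,\delta y)\le c^t$ on the remaining classes. Lemma \ref{lem:rappel*BPCZ} then bounds the second term by $\|f\|_{\Sc}\,d^{P_\al}(-t\al,x)\|x\|_{P_\al}^{N_0+N'}\sum_{\delta}\|\delta y\|_{P_\al}^{-N'}$. This sum may be taken over all of $P_\al(F)\back P(F)$ and is at most $\|y\|_P^{-N}$ for $N'$ large. No ``large height'' argument is needed, and the relative partition \eqref{eq:varpartition-H} plays no role.

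You then conclude as for the first term: on $\SG^{P_2}$ replace $d^{P_\al}(-t\al,x)$ and $\|x\|_{P_\al}$ by $d^{P_1}(-t\al,x)$ and $\|x\|_{P_1}$, and use $\|y\|_P\le\|y\|_{P^{w\theta}}\le C\|y\|_P^r$. Your fallback via a Bruhat-type decomposition and the inversion formula (proposition \ref{prop:inv}) is not developed enough to count as an argument, and it is unnecessary.
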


\begin{preuve}  Soit  $J\subset G(\AAA_f)$ un sous-groupe ouvert compact. En utilisant le théorème de Banach-Steinhaus, on voit qu'il suffit de prouver l'énoncé pour le sous-espace $\Sc(G(\AAA))^J\subset \Sc(G(\AAA))$ des fonctions bi-invariantes par $J$. Soit  $ P_1^\al\subset P\subset P_2$ et $w\in  \, _PW_\theta$. On prendra un réel $N_0>0$ pour lequel  le lemme \ref{lem:rappel*BPCZ} vaut pour les sous-groupes paraboliques $P$ et $P_\al$. Soit $x,y\in G(\AAA)$.  On introduit 
 \begin{align*}
    K^{\al,\sharp}_{P,\chi}(x,  y)    &= K_{P,\chi}(x,y)- \int_{[N_{P_\al}]} K_{P,\chi}(nx,y)\, dn.
 \end{align*}
 Soit $\Om_{P,\al}^{w}$ l'ensemble des $\delta \in P_{\al}(F) \back   P(F)$ tels que $w\theta w^{-1}\notin   \delta^{-1}P_{\al}\delta$.
 On pose
 \begin{align*}
    K^{\al,w,\flat}_{P,\chi}(x,  y)=\sum_{\delta \in \Om_{P,\al}^{w}} K_{P_\al,\chi}(x,\delta y).
  \end{align*}
 On a
  \begin{align}\label{eq:K al w}
      K^{\al,w}_{P,\chi}(x,  y)= K^{\al,\sharp}_{P,\chi}(x,  y)+K^{\al,w,\flat}_{P,\chi}(x,  y)
  \end{align}
comme il  résulte  de l'égalité
  \begin{align*}
    \int_{[N_{P_\al}]} K_{P,\chi}(nx,y)\, dn=\sum_{\delta \in P_{\al}(F) \back   P(F)} K_{P_\al,\chi}(x,\delta y).
  \end{align*}

Il nous suffit, dès lors, de majorer chacun des deux termes du membre de droite de \eqref{eq:K al w}. Pour majorer $K^{\al,\sharp}_{P,\chi}(x,  y)$ on utilise le lemme \ref{lem:phi12} pour $P_\al\subsetneq P$ appliqué à la fonction $K_{P,\chi}(\cdot,y)\in C^\infty([G]_P)^J$. On obtient que, pour tout $y\in G(\AAA)$,  tout $x\in \SG^{P}\cap G(\AAA)^1$ et tout $f\in \Sc(G(\AAA))^J$
\begin{align*}
  |K^{\al,\sharp}_{P,\chi}(x,  y)|\leq \exp(-t \bg\al,H_{P_0}(x)\bd) \|x  \|_{P}^{N_0+N} \sup_{z\in [G]_P,X\in \Fgo} \left(\|z\|^{-N_0-N}_{P} |(R(X)K_{P,\chi,f})(z,y) | \right).
\end{align*}
Ici $R(X)K_{P,\chi,f}$ signifie qu'on applique l'opérateur différentiel $R(X)$ à la première variable de $K_{P,\chi,f}$. Il existe en fait une involution $X\mapsto X^*$ de $\uc(\ggo_\CC)$ tel que $R(X)K_{P,\chi,f}=K_{P,\chi,L(X^*)f}$.   En utilisant ensuite le lemme \ref{lem:rappel*BPCZ} pour le  poids $\om$ égal identiquement à $1$, on obtient une semi-norme  continue $\|\cdot\|_{\Sc} $ sur $\Sc(G(\AAA))$ telle que pour tout $f\in \Sc(G(\AAA))$ et tout $y\in G(\AAA)$ on ait
\begin{align}\label{eq:maj pour sharp}
    \sum_{\chi\in \Xgo(G)  } \sup_{z\in [G]_{P},X\in \Fgo} \left(\|z\|^{-N_0-N}_{P} |(R(X)K_{P,\chi,f})(z,y)|  \right) \leq \|y\|_{P}^{-N}\|f\|_{\Sc}.
\end{align}
On a donc  pour tout $y\in G(\AAA)$,  tout $x\in \SG^{P}\cap G(\AAA)^1$ et tout $f\in \Sc(G(\AAA))^J$
\begin{align}\label{eq:maj sharp}
   \sum_{\chi\in \Xgo(G)  }  |K^{\al,\sharp}_{P,\chi}(x,  y)|\leq \exp(-t \bg\al,H_{P_0}(x)\bd) \|x  \|_{P}^{N_0+N}  \|y\|_{P}^{-N}\|f\|_{\Sc}.
\end{align}

Majorons ensuite  $K^{\al,w,\flat}_{P,\chi}(x,  y)$ pour $y\in G^{w\theta}(\AAA)$. Introduisons le lemme suivant.

\begin{lemme}\label{lem:maj dP al}
  Il existe $c>0$ tel que     $d^{P_\al}(\al,\delta y)\leq c$ pour tout $\delta\in  \Om_{P,\al}^{w}$ et tout $y\in G^{w\theta}(\AAA)$.
\end{lemme}

\begin{preuve}
  Soit $c>0$ qui apparaît dans le lemme \ref{lem:unicite} (pour $Q=P_\al$ et $R=P$). Soit $\delta \in P_\al(F)\back P(F)$ et $y\in G^{w\theta}(\AAA)$ tel que $d^{P_\al}(\al,\delta y)>c$. On observe que comme on a $w\theta w^{-1}\in K$ on a  $d^{P_\al}(\al,\delta y)=d^{P_\al}(\al,\delta y w\theta w^{-1})=d^{P_\al}(\al,\delta  w\theta w^{-1}y)$. Le lemme  \ref{lem:unicite}  implique qu'on a $ \delta w\theta w^{-1} \in P_\al(F)\delta$ c'est-à-dire  $  w\theta w^{-1} \in \delta^{-1}P_\al(F)\delta$ et donc $\delta\notin \Om_{P,\al}^{w}$.
\end{preuve}

Le lemme \ref{lem:rappel*BPCZ} appliqué au  poids $\om=d^{P_\al}(t \al,\cdot)=d^{P_\al}(\al,\cdot)^t$ et  combiné au lemme \ref{lem:maj dP al} fournit pour tout $N'>0$  une  semi-norme  continue $\|\cdot\|_{\Sc} $ sur $\Sc(G(\AAA))$ telle que, pour tous $x\in G(\AAA)$, $y\in G^{w\theta}(\AAA)$ et $f\in \Sc(G(\AAA))$
\begin{align}\label{eq:maj flat}
  \sum_{\chi\in \Xgo(G)}   |K^{\al,w,\flat}_{P,\chi}(x,  y)|& \leq    \sum_{\delta \in \Om_{P,\al}^w} \sum_{\chi\in \Xgo(G)}    |K_{P_\al,\chi}(x,\delta y)|    \\
    \nonumber       &\leq  \|f\|_{\Sc}\|x\|_{P_\al}^{N_0+N'}  d^{P_\al}(-t \al,x)    \sum_{\delta \in \Om_{P,\al}^w} \|\delta y\|_{P_\al}^{-N'}\\
  \nonumber     &\leq  \|f\|_{\Sc}\|x\|_{P_\al}^{N_0+N'}  d^{P_\al}(-t \al,x)    \sum_{\delta \in P_\al(F)\back P(F)} \|\delta y\|_{P_\al}^{-N'}.
\end{align}

Expliquons comment on déduit \eqref{eq:etape 1} de \eqref{eq:maj sharp} et \eqref{eq:maj flat}. On observe tout d'abord que  $x\mapsto K^{\al,\sharp}_{P,\chi}(x,  y)$ et $x\mapsto K^{\al,w,\flat}_{P,\chi}(x,  y)$ sont  invariants à gauche par $P_\al(F)$, donc \emph{a fortiori} par $P_1(F)$.  On peut donc supposer qu'on a $x\in \SG^{P_1}\cap G(\AAA)^1$. Soit $T\in T_0+\overline{\ago_0^+}$. Supposons de plus $F^{P_1}(x,T)\sigma_1^2(H_{P_1}(x)-T)\not=0$. Cela implique que $x\in  \SG^{P_2}$. \emph{A fortiori}, $x\in \SG^{P}$ et on peut appliquer l'inégalité  \eqref{eq:maj sharp} à $x$. On peut y remplacer  $\exp(-t \bg\al,H_{P_0}(x)\bd) $ par  $d^{P_1}(-t \al,x)$ vu que ces fonctions  sont équivalentes sur $\SG^P$, cf. \cite[proposition 2.3.4.1]{BPC}. De même, on peut remplacer $\|x\|_{P}$ par $\|x\|_{P_1}$. Puis, il existe $C>0$ et $r>0$ tel que pour  tout $y\in G^{w\theta}(\AAA)$ on a (cf. \cite[proposition A.1.1 (ix)]{RBP})
\begin{align}\label{eq:equiv}
   \|y\|_P\leq   \|y\|_{P^{w\theta}}\leq C \|y\|_P^{r}.
\end{align}
Pour les mêmes raisons que ci-dessus, dans \eqref{eq:maj flat}, on peut remplacer $d^{P_\al}(-t \al,x)$ par $d^{P_1}(-t \al,x)$ et $\|x\|_{P_\al}$ par $\|x\|_{P_1}$. Enfin, pour tout $N>0$, il existe $N'$ tel que
\begin{align*}
  \sum_{\delta \in P_\al(F)\back P(F)} \|\delta y\|_{P_\al}^{-N'}\leq \|y\|_P^{-N}.
\end{align*}
Par \eqref{eq:equiv}, on a le même résultat avec $\|y\|_{P^{w\theta}}$ à la place de $\|y\|_P$.
\end{preuve}
\end{paragr}

 \begin{paragr}
  
\begin{lemme} \label{lem:etape 2}  Soit $\la =\sum_{\al\in\Delta_0^2\setminus \Delta_0^1}\al$. Pour tout $N>0$, il existe $N'>0$ tel que pour tout $t>0$ il existe une semi-norme continue $\|\cdot\|_{\Sc}$ sur $\Sc(G(\AAA))$ de sorte qu'on ait pour tout $T\in T_0+\overline{\ago_0^+}$
  \begin{align}\label{eq:etape 2}
F^{P_1}(x,T)\sigma_1^2(H_{P_1}(x)-T)    \sum_{\chi\in \Xgo(G)}  |K_{1,2,\chi,f}(x,  y)| \leq  \|f\|_{\Sc} d^{P_1}(-t\la ,x)   \|x\|_{P_1}^{N'} \|y\|^{-N}_{G^{\theta}}
  \end{align}
    pour tout $x\in  P_1(F)\back G(\AAA)^1$, tout $y\in G^{\theta}(\AAA)$ et tout $f\in \Sc(G(\AAA))$.
  \end{lemme}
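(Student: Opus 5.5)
Le plan est de déduire le lemme \ref{lem:etape 2} du lemme \ref{lem:etape 1}, en combinant le lemme \ref{lem:K12 chi sum} avec une moyenne sur les racines $\al\in\Delta_0^2\setminus\Delta_0^1$. Le poids $d^{P_1}(-t\la,x)$ cherché est équivalent, sur $\SG^{P_2}$, à $\exp(-t\bg\la,H_{P_0}(x)\bd)$. Or, sur le support de $F^{P_1}(x,T)\sigma_1^2(H_{P_1}(x)-T)$, on a $\bg\al,H_{P_0}(x)\bd\geq \bg\al,T\bd\geq 0$ pour tout $\al\in\Delta_0^2\setminus\Delta_0^1$ (vu dans la preuve du lemme \ref{lem:Fsigma*d variante}). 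On a donc, à constante près,
\begin{align*}
\exp(-t\bg\la,H_{P_0}(x)\bd)\geq \exp(-t m\max_\al \bg\al,H_{P_0}(x)\bd)=\min_\al \exp(-tm\bg\al,H_{P_0}(x)\bd),
\end{align*}
où $m=|\Delta_0^2\setminus\Delta_0^1|$. Il suffit donc de majorer le membre de gauche par $\min_\al d^{P_1}(-tm\al,x)$. Cela revient à prouver, pour chaque $\al$ fixé, la majoration avec $d^{P_1}(-t'\al,x)$ et $t'=tm$, puis à sommer. Concrètement, pour $x$ fixé, on choisit $\al$ réalisant le maximum de $\bg\al,H_{P_0}(x)\bd$. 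Comme on somme des bornes positives, il suffit d'avoir pour chaque $\al$ une borne du type \eqref{eq:etape 2} avec $d^{P_1}(-t'\al,x)$, et d'utiliser $\min\leq$ chaque terme.

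Étapes, dans l'ordre.
\begin{enumerate}
\item On fixe $\al\in\Delta_0^2\setminus\Delta_0^1$ et on écrit, par le lemme \ref{lem:K12 chi sum}, $K_{1,2,\chi}(x,y)$ comme somme sur $P_1^\al\subset P\subset P_2$, $w\in\,_PW_\theta$ et $\delta\in P_w^\theta(F)\back G^\theta(F)$ de $K^{\al,w}_{P,\chi}(x,w\delta y)$.
\item On applique le lemme \ref{lem:etape 1} à chaque terme, avec $y$ remplacé par $w\delta y\in G^{w\theta}(\AAA)$ et $t'$ à la place de $t$. Cela donne la borne
\begin{align*}
\|f\|_{\Sc}\, d^{P_1}(-t'\al,x)\, \|x\|_{P_1}^{N'}\, \|w\delta y\|_{P^{w\theta}}^{-N''},
\end{align*}
avec $N''$ grand, à choisir.
\item On somme sur $\delta$. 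On a $\|w\delta y\|_{P^{w\theta}}=\|\delta y\|_{P_w^\theta}$ à équivalence près, car $w\in G(F)\cap K$ et $w^{-1}P^{w\theta}w=P_w^\theta$. La série $\sum_{\delta\in P_w^\theta(F)\back G^\theta(F)}\|\delta y\|_{P_w^\theta}^{-N''}$ est alors majorée par $\|y\|_{G^\theta}^{-N}$ dès que $N''$ est assez grand : c'est l'analogue pour $G^\theta$ de l'estimation utilisée en fin de preuve du lemme \ref{lem:etape 1}, et elle est standard pour un sous-groupe parabolique $P_w^\theta$ de $G^\theta$.
\end{enumerate}
Les sommes sur $P$ et $w$ sont finies, ce qui donne la borne pour $\al$ fixé.

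Le point délicat est l'ordre des quantificateurs : $N'$ doit dépendre seulement de $N$ et non de $t$. Le lemme \ref{lem:etape 1} fournit précisément ce contrôle : on choisit $N''$ en fonction de $N$, puis $N'$ en fonction de $N''$, puis la semi-norme en fonction de $t'$. Il faut aussi vérifier l'uniformité en $T$. Elle est acquise, car la seule propriété du support utilisée est $\bg\al,H_{P_0}(x)\bd\geq 0$, valable pour tout $T\in T_0+\overline{\ago_0^+}$, et l'équivalence $d^{P_1}\sim\exp$ sur $\SG^{P_2}$ ne dépend pas de $T$.
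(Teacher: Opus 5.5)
Your proposal is correct and follows the paper's proof. For each $\al\in\Delta_0^2\setminus\Delta_0^1$ you combine lemmas \ref{lem:K12 chi sum} and \ref{lem:etape 1}, bound the sum over $\delta\in P_w^\theta(F)\back G^\theta(F)$ by $\|y\|_{G^\theta}^{-N}$, and pass from the weights $d^{P_1}(-t\al,x)$ to $d^{P_1}(-t\la,x)$. The paper does this last step via $d^{P_1}(-t\la,x)=\prod_{\al}d^{P_1}(-t\al,x)$ (a geometric mean), you via $\bg\la,H\bd\leq m\max_\al\bg\al,H\bd$ (a minimum); both amount to applying lemma \ref{lem:etape 1} with $mt$ in place of $t$.

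One small slip: $w\delta y$ does not lie in $G^{w\theta}(\AAA)$; the element of $G^{w\theta}(\AAA)$ is $w\delta yw^{-1}$, as in the paper. The leftover right factor $w\in G(F)\cap K$ is harmless, since the proof of lemma \ref{lem:etape 1} (notably lemma \ref{lem:maj dP al}) only uses the right $K$-invariance of $d^{P_\al}$ and the relation $z\theta z^{-1}=w\theta w^{-1}$ for $z=w\delta y$.
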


\begin{preuve}
    Soit $\al\in\Delta_0^2\setminus \Delta_0^1$. Il résulte de la combinaison des lemmes \ref{lem:K12 chi sum} et \ref{lem:etape 1} que, pour tout $N>0$, il existe $N'>0$ tel que pour tout $t>0$ il existe une semi-norme continue $\|\cdot\|_{\Sc}$ sur $\Sc(G(\AAA))$ de sorte qu'on ait pour tout $T\in T_0+\overline{\ago_0^+}$
\begin{align*}
  &F^{P_1}(x,T)\sigma_1^2(H_{P_1}(x)-T)    \sum_{\chi\in \Xgo(G)}  |K_{1,2,\chi,f}(x,  y)| \\
  &\leq  \|f\|_{\Sc} d^{P_1}(-t \al,x)   \|x\|_{P_1}^{N'}   \sum_{P^\al\subset P\subset P_2}   \sum_{w \in   \, _PW_\theta} \sum_{\delta \in P_{w}^\theta(F) \back G^\theta(F)} \|w\delta yw^{-1}\|_{P^{w\theta}}^{-N} 
\end{align*}
pour tout $x\in  P_1(F)\back G(\AAA)^1$, tout $y\in G^{\theta}(\AAA)$ et tout $f\in \Sc(G(\AAA))$. Pour tout $N''>0$, il existe $N,c>0$ tels que
\begin{align*}
  \sum_{P^\al\subset P\subset P_2}   \sum_{w \in   \, _PW_\theta} \sum_{\delta \in P_{w}^\theta(F) \back G^\theta(F)} \|w\delta yw^{-1}\|_{P^{w\theta}}^{-N} \leq c\|y\|_{G^\theta}^{-N''}.
\end{align*}
Le lemme s'en déduit aisément vu que $d^{P_1}(-t \la,x)=\prod_{\al\in\Delta_0^2\setminus \Delta_0^1} d^{P_1}(-t \al,x)  $.
\end{preuve}
\end{paragr}

\begin{paragr}[Démonstration de la proposition \ref{prop:etape 3}.] --- \label{S:etape 3}   Soit $N_1,N_2>0$.    À l'aide de \eqref{eq:equiv}, on voit qu'il existe $N_1',c>0$ tels que pour tout $x\in G^{\theta'}(\AAA)$
    \begin{align}\label{eq:petite maj}
       \sum_{w \in \,_{P_1}W_{\theta'}}\sum_{\delta \in P_{1,w}^{\theta'}(F)\back G^{\theta'}(F)} \|w\delta x\|_{P_1}^{-N_1'} \leq c\|x\|_{G^{\theta'}}^{-N_1}.
    \end{align}
    Le lemme \ref{lem:etape 2} (dont on reprend les notations) assure l'existence de $N'>0$ tel que pour tout $t>0$ il existe une semi-norme continue $\|\cdot\|_{\Sc}$ sur $\Sc(G(\AAA))$  telle que  pour tout $T\in T_0+\overline{\ago_0^+}$ on ait:
      \begin{align*}
      & \sum_{\chi\in \Xgo(G)} \sum_{w \in \,_{P_1}W_{\theta'}}\sum_{\delta \in P_{1,w}^{\theta'}(F)\back G^{\theta'}(F)}F^{P_1}( w \delta  x,T)  \sigma_1^2(H_{P_1}(w\delta x)-T)  |K_{1,2,\chi}(w \delta x,y)| \\
      &\leq \|f\|_{\Sc}  \|y\|^{-N_2}_{G^{\theta}}    \sum_{w \in \,_{P_1}W_{\theta'}}\sum_{\delta \in P_{1,w}^{\theta'}(F)\back G^{\theta'}(F)} F^{P_1}( w \delta  x,T)  \sigma_1^2(H_{P_1}(w\delta x)-T)  d^{P_1}(-t\la ,w \delta x)    \|w \delta x\|_{G^{\theta'}}^{N'} .
      \end{align*}
          
On obtient alors les assertions 1 et 2 de  la proposition \ref{prop:etape 3} respectivement à l'aide des lemmes  \ref{lem:Fsigma*d}   et \ref{lem:Fsigma*d variante}.

\end{paragr}

\subsection{Énoncés de convergence spectrale}\label{ssec:enonces spec}

\begin{paragr} Les notations sont celles des sous-sections \ref{ssec:noyau autom} et \ref{ssec:maj noy tronq}. On a  $G^{\theta'}(\AAA)=A_G^{\infty}\times (G^{\theta'}(\AAA)\cap G(\AAA)^1)$. On munit alors $G^{\theta'}(\AAA)\cap G(\AAA)^1 $ de la mesure de Haar dont le produit avec celle sur $A_G^\infty$ donne la mesure de Haar fixée sur $G^{\theta'}(\AAA)$. En prenant le quotient de cette mesure par la mesure de comptage sur $G^{\theta'}(F)$, on obtient une mesure sur le quotient $[G^{\theta'}]^G$.
\end{paragr}

\begin{paragr}    Soit $f\in\Sc(G(\AAA))$. Rappelons qu'on a défini en \eqref{eq:KT} un noyau modifié $K^{T,\theta',\theta}_{\chi,f}$ associé à une donnée cuspidale $\chi$, un paramètre $T\in T_0+\overline{\ago_0^+}$ et des éléments $\theta'$ et $\theta$ de $M_0(F)$ d'ordre au plus $2$. On omet les exposants    $\theta'$ et $\theta$. On dispose aussi du  noyau tronqué $\Lambda^{T}_{\theta'} K_{\chi,f}$: ici la notation signifie qu'on a appliqué l'opérateur de troncature $\Lambda^{T}_{\theta'} $ à la variable de gauche du noyau $K_{\chi,f}$.  Ce noyau  tronqué est souvent un intermédiaire commode pour obtenir des décompositions spectrales plus fines que la décomposition selon les données cuspidales. Enfin, on peut aussi multiplier le noyau initial $K_{\chi,f}$ par la fonction $F^G(\cdot, T)$ appliquée à la première variable. Ces \og noyaux\fg{} sont en fait tous  à décroissance rapide sur  $[G^{\theta'}]^G\times [G^\theta]$. En particulier, on a l'énoncé suivant:

  \begin{theoreme}\label{thm:cv-spec}
    \begin{enumerate}
    \item Pour tout $T\in T_0+\overline{\ago_0^+}$ et $N_1,N_2>0$,   il existe une semi-norme continue $\|\cdot\| $ sur $\Sc(G(\AAA))$ telle que pour  tout $f\in \Sc(G(\AAA))$ on ait
       \begin{align*}
      \sum_{\chi\in\Xgo(G)}     \int_{[G^{\theta'}]^G\times [G^\theta]     }  F^G(x,T)|K_{\chi,f}(x,y)|   \|x\|_{G^{\theta'}}^{N_1}\|y\|_{G^{\theta}}^{N_2}   \, dxdy \leq \|f\|.
    \end{align*}
  \item  Pour tout $T\in T_0+\overline{\ago_0^+}$  et $N_1,N_2>0$, il existe une semi-norme continue $\|\cdot\| $ sur $\Sc(G(\AAA))$ telle que pour  tout $f\in \Sc(G(\AAA))$ on ait 
    \begin{align*}
      \sum_{\chi\in\Xgo(G)}     \int_{[G^{\theta'}]^G\times [G^\theta]      }  |K^T_{\chi,f}(x,y)|  \|x\|_{G^{\theta'}}^{N_1}\|y\|_{G^{\theta}}^{N_2} \, dxdy \leq \|f\|.
    \end{align*}
  \item Pour tout $T\in T_0+\overline{\ago_0^+}$  et $N_1,N_2>0$, il existe une semi-norme continue $\|\cdot\| $ sur $\Sc(G(\AAA))$ telle que pour  tout $f\in \Sc(G(\AAA))$ on ait 
    \begin{align*}
     \sum_{\chi\in \Xgo(G)}    \int_{[G^{\theta'}]^G\times [G^\theta]      } |\Lambda^{T}_{{\theta'}} K_{\chi,f}  (x, y)| \|x\|_{G^{\theta'}}^{N_1}\|y\|_{G^{\theta}}^{N_2}  \, dxdy\leq \|f\|.
    \end{align*}
  \end{enumerate}
\end{theoreme}

\begin{preuve}  Pour tous $N_1$ et $N_2$ assez grands, on  a 
\begin{align}\label{eq:cv-integ-norme}
      \int_{[G^{\theta'}]^G  }     \|x\|_{G^{\theta'}}^{-N_1}  \,dx  <\infty   \text{   et  }  \int_{[G^\theta]   } \|y\|_{G^{\theta}}^{-N_2}\,dy<\infty.
\end{align}
  
Il suffit donc de voir que les trois noyaux considérés sont à décroissance rapide sur  $[G^{\theta'}]^G\times [G^\theta]$ c'est-à-dire que  le noyau  $F^G(x,T)|K_{\chi,f}(x,y)| $ vérifie \eqref{eq:maj-FT-K} et le noyau $K^T_{\chi,f}(x,y)$ le  théorème  \ref{cor:maj noyau}. Pour le noyau $\Lambda^{T}_{{\theta'}} K_{\chi,f}  (x, y)$, cela résulte de la combinaison de la proposition \ref{prop:dec-tronque} et du lemme \ref{lem:rappel*BPCZ}. 
\end{preuve}

\end{paragr}

\begin{paragr}[Asymptotique en $T$.] ---  Le théorème \ref{thm:comparaison-asym} ci-dessous  montre qu'asymptotiquement toutes les expressions du théorème \ref{thm:cv-spec}  ci-dessus sont égales. 
  
  \begin{theoreme}\label{thm:comparaison-asym} Pour tous $N_1,N_2,r>0$, il existe une semi-norme  continue $\|\cdot\|_{\Sc} $ sur $\Sc(G(\AAA))$ telle que,  pour tout $T\in \ago_0$ suffisamment positif et tout  $f\in \Sc(G(\AAA))$, on ait
    \begin{align}
&\label{eq:asym 1}\sum_{\chi\in \Xgo(G)} \int_{[G^{\theta'}]^G\times [G^\theta]  }|K^T_{\chi,f}(x,y) - F^G(x,T)K_{\chi,f}(x,y) |   \|x\|_{G^{\theta'}}^{N_1}\|y\|_{G^{\theta}}^{N_2}   \, dxdy\leq e^{-r\|T^G\|}  \|f\|_{\Sc} \ ;\\
&\label{eq:asym 2}\sum_{\chi\in \Xgo(G)} \int_{[G^{\theta'}]^G\times [G^\theta]  } |\La^T_{\theta'} K_{\chi,f}(x,y) - F^G(x,T)K_{\chi,f}(x,y) |    \|x\|_{G^{\theta'}}^{N_1}\|y\|_{G^{\theta}}^{N_2} \, dxdy\leq e^{-r\|T^G\|}  \|f\|_{\Sc}\ ;\\
&\label{eq:asym 3}\sum_{\chi\in \Xgo(G)} \int_{[G^{\theta'}]^G\times [G^\theta]  }|K^T_{\chi,f}(x,y) -    \La^T_{\theta'} K_{\chi,f}(x,y) |  \|x\|_{G^{\theta'}}^{N_1}\|y\|_{G^{\theta}}^{N_2}\, dxdy\leq e^{-r\|T^G\|}  \|f\|_{\Sc}.
    \end{align}
      \end{theoreme}

      \begin{preuve}
        Tout d'abord, \eqref{eq:asym 1} est une conséquence immédiate du théorème \ref{thm:maj noyau} et de la convergence \eqref{eq:cv-integ-norme}.

        Traitons ensuite \eqref{eq:asym 2}. On peut fixer $J\subset G(\AAA_f)$ un sous-groupe ouvert compact. Par le théorème de Banach-Steinhaus, il suffit de prouver l'énoncé pour $f$ dans le sous-espace $\Sc(G(\AAA))^J\subset \Sc(G(\AAA))$ des fonctions bi-invariantes par $J$.  Il résulte de l'assertion 1 de la proposition \ref{prop:asym tronque} appliquée à la fonction $x\mapsto K_{\chi,f}(x,y)\in C^\infty([G])^J$ que, pour tout $r>0$ et tous $N_1,N_2>0$, il existe une famille finie $\Fgo$ d'éléments de $\uc(\ggo_\CC)$ telle que pour tout $T$ suffisamment positif, on ait 
  \begin{align*}
    &    |(\Lambda^{T}_{\theta'} K_{\chi,f})(x,y)-F^G(x,T)K_{\chi,f}(x,y) | \\
    &\leq  \exp(-r\|T^G \|)\|x\|^{-N_1}_{G^{\theta'}}   \sup_{z\in [G]^1,X\in \Fgo}\left(\|z\|_G^{-N_2} |R(X)K_{\chi,f})(z,y)|\right)
  \end{align*}
pour  tout $x\in [G^{\theta'}]^G$,  tout $f\in \Sc(G(\AAA))^J$ et tout $\chi\in \Xgo(G)$. En utilisant le lemme \ref{lem:rappel*BPCZ} pour le poids $\om=1$,  il existe $N_0>0$ tel que, pour tout $N_3>0$, il existe une semi-norme  continue $\|\cdot\|_{\Sc} $ sur $\Sc(G(\AAA))$ telle que
        \begin{align*}
    \sum_{\chi\in \Xgo(G)  } \sup_{z\in [G]^1,X\in \Fgo} \left(\|z\|^{-N_0-N_3}_{P} |(R(X)K_{P,\chi,f})(z,y)|  \right) \leq \|y\|_{G}^{-N_3}\|f\|_{\Sc}
        \end{align*}
        pour tout $f\in \Sc(G(\AAA))$ et tout $y\in G(\AAA)$. Il est aisé de conclure.

       Enfin \eqref{eq:asym 3} résulte de   la combinaison de \eqref{eq:asym 1} et \eqref{eq:asym 2}. 
      \end{preuve}
\end{paragr}

\subsection{Comportement en \texorpdfstring{$T$}{T}}\label{ssec:comport T}

\begin{paragr}\label{S:JchiT}
  On reprend les notations et les hypothèses des sous-sections \ref{ssec:maj noy tronq} et  \ref{ssec:enonces spec}.   Soit $\eta:F^\times\back \AAA^\times \to \CC^\times$ un caractère continu, trivial sur l'image de  $\RR_+^\times$ dans  $(F\otimes_\QQ \RR)^\times$ par le morphisme $1\otimes \Id_{\RR}$. Notons que $\eta$ est à valeurs dans le cercle unité. Par composition avec la norme réduite, on en déduit un caractère $G(\AAA)\to \CC^\times$ trivial sur les sous-groupes $A_G^\infty$ et $G(F)$, encore noté $\eta$.

  Soit $T\in T_0+\overline{\ago_0^+}$ et $\chi\in \Xgo(G)$. On introduit la distribution  $J^T_\chi(\eta)$ sur $\Sc(G(\AAA))$ donnée par l'intégrale suivante dont la convergence absolue et la continuité en $f\in \Sc(G(\AAA))$ est garantie par le théorème \ref{thm:cv-spec}:
  \begin{align}\label{eq:JTchieta}
    J^T_\chi(\eta,f)=  \int_{[G^{\theta'}]^G\times [G^\theta]      }  K^T_{\chi,f}(x,y) \, \eta(x)dxdy.
  \end{align}
\end{paragr}

\begin{paragr} Nous allons étudier la dépendance en $T$ de la distribution $J_\chi^T(\eta)$: nous allons voir que, comme fonction de $T$, elle coïncide, sur $T_0+\overline{\ago_0^+}$, avec une fonction  polynôme-exponentielle c'est-à-dire une application $\ago_0 \to \CC$ qui appartient au $\CC$-espace vectoriel engendré par les fonctions $T\in \ago_0 \mapsto p_\la(T) \exp(\bg \la,T\bd)$ où $\la\in \ago_{0,\CC}^*$ et $p_\la$ est une fonction  polynomiale. Les $\la$ pour lesquels $p_\la\not=0$ seront appelés exposants  de la fonction  polynôme-exponentielle. La partie polynomiale d'une fonction  polynôme-exponentielle  est, par définition, le polynôme $p_0$ associé à $\la=0$. Le terme constant est la valeur en $T=0$ de $p_0$.

Il nous faudra introduire aussi des analogues de  $J^T_\chi(\eta)$ pour les sous-groupes de Levi de $G$. Soit $Q$ un sous-groupe parabolique standard de $G$. Le groupe $P_0\cap M_Q$ est un sous-groupe parabolique de $M_Q$, défini sur $F$ et minimal, de facteur de Levi $M_0$. Soit $w_1'\in \, _QW_{\theta'}$ et $w_2'\in \, _QW_{\theta}$.  On pose:
  \begin{align}\label{eq:theta12}
    \theta_1=w_1'\theta' {w_1'}^{-1} \text{  et  }\theta_2=w_2' \theta {w_2'}^{-1}.
  \end{align}
Ce sont des éléments d'ordre au plus $2$ de $M_0(F)$.

  Soit  $f'\in \Sc(M_Q(\AAA))$ et  $\chi'\in \Xgo(M_Q)$. On dispose du noyau modifié $K^{T,\theta_1,\theta_2}_{f',\chi'}$ dont la définition est donnée par \eqref{eq:KT} où l'on substitue $M_Q$ à $G$. On définit alors
  \begin{align}\label{eq:JQTtheta}
  &  J^{Q,T,\theta_1,\theta_2}_{\chi'}(\eta,f')\\
   \nonumber& =  \int_{    [M_{Q}^{\theta_1}]^Q}  \int_{    [M_{Q}^{\theta_2}]}  \exp(-\bg 2\rho_{Q^{\theta_1}}^{G^{\theta_1}}, H_{Q^{\theta_1}   }( x)\bd+\bg 2\rho_Q^G- 2\rho_{Q^{\theta_2}}^{G^{\theta_2}}, H_{Q^{\theta_2}   }( y)\bd) K^{T,\theta_1,\theta_2}_{f',\chi'}(x,y)\, \eta(x)dxdy
  \end{align}
  où l'on pose
   \begin{align*}
  [M_{Q}^{\theta_1}]^Q=M_{Q}^{\theta_1}(F)\back M_{Q}^{\theta_1}(\AAA)\cap M_Q(\AAA)^1.
  \end{align*}
  Nous verrons plus bas l'utilité de l'introduction du facteur exponentiel. On observe que l'assertion 2 du  théorème \ref{thm:cv-spec}   (ou plutôt sa généralisation évidente à $M_Q$ qui est un produit de groupes auxquels le théorème s'applique) assure que l'intégrale ci-dessus est absolument convergente et que la distribution  $J^{Q,T,\theta_1,\theta_2}_{\chi'}(\eta)$ ainsi définie est continue. Il est commode de poser
  \begin{align}\label{eq:JQTthetachi}
      J^{Q,T,\theta_1,\theta_2}_{\chi}(\eta)=  \sum_{\chi' }J^{Q,T,\theta_1,\theta_2}_{\chi'}(\eta)
  \end{align}
 où la somme à droite porte sur l'ensemble fini des $\chi'\in \Xgo(M_Q)$ qui ont pour image $\chi$ par l'application évidente $\Xgo(M_Q)\to \Xgo(G)$.

Pour tout $f\in \Sc(G(\AAA))$, on introduit aussi la fonction $f_{Q,\eta}^{w_1',w_2'}\in \Sc( M_Q(\AAA))$ définie par
  \begin{align}\label{eq:fct-fQ}
 \forall m\in M_Q(\AAA)\ \    f_{Q,\eta}^{w_1',w_2'}(m)= \int_{K^{\theta_1}\times K^{\theta_2} }   \int_{N_Q(\AAA)} f(  (k_1w_1')^{-1}   m  n_Q ( k_2w_2' )) \,\eta(k_1)dn_Qdk_1dk_2.
  \end{align}

  Soit $T' \in \ago_0$. On pose:
  \begin{align}\label{eq:poly pw}
p_{w_1',w_2'}^Q(T,T')=     \int_{\ago_Q^{G}} \exp(\bg 2\rho_Q^G-2\rho_{Q^{\theta_1}}^{G^{\theta_1}}-2\rho_{Q^{\theta_2}}^{G^{\theta_2}}, H \bd) \Ga_Q( H-T,T')\, dH
  \end{align}
  où $\Ga_Q$ est la fonction introduite et notée $\Ga_Q'$ dans \cite[p. 13]{arthur2}. D'après \cite[lemma 2.2]{arthur2}, l'application $T'\mapsto p_{w_1',w_2'}^Q(T,T')$ est un polynôme-exponentielle en $T'$.
  
\end{paragr}

\begin{paragr} Nous sommes maintenant en mesure de formuler le principal énoncé de cette sous-section.

  \begin{proposition}\label{prop:comportement T} Pour tout  $T'\in \overline{\ago_0^+}$, $f\in \Sc(G(\AAA))$ et $\chi\in \Xgo(G)$, on a
    \begin{align*}
      J^{T+T'}_\chi(\eta,f)=  \sum_{Q\in \fc^G(P_0)}  \sum_{w_1'\in \, _QW_{\theta'}} \sum_{w_2'\in \, _QW_{\theta}}   p_{w_1',w_2'}^Q(T,T')   J^{Q,T,\theta_1,\theta_2}_{\chi} ( \eta, f_{Q,\eta}^{w_1',w_2'}).
    \end{align*}
  \end{proposition}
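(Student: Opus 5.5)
Le plan suit la démarche d'Arthur pour \cite[théorème 9.1]{ar1} et \cite[§2]{arthur2}. On part de la définition \eqref{eq:KT} de $K^{T+T'}_{\chi}$. Dans chaque terme indexé par $(P,w_1,\delta_1)$, on décompose la fonction $\hat\tau_P(H_P(w_1\delta_1x)-T-T')$ à l'aide de la fonction $\Ga_P'$ d'Arthur, via l'identité combinatoire
\begin{align*}
\hat\tau_P(H-T-T')=\sum_{Q\supset P}\eps_Q^G\,\hat\tau_P^Q(H-T)\,\Ga_Q'(H_Q-T_Q,T'_Q),
\end{align*}
cf. \cite[p. 13]{arthur2}. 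On intervertit ensuite les sommes sur $P\subset Q$.

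Grâce au lemme \ref{lem:Wdec}, on factorise $w_1=w_1''w_1'$ avec $w_1'\in\,_QW_{\theta'}$ et $w_1''\in\,_PW^Q_{\theta_1}$, et de même $w_2=w_2''w_2'$. On décompose aussi $\delta_i\in P^{\theta}_{w_i}(F)\back G^\theta(F)$ en un couple formé d'un élément de $P^{\theta_i}_{w_i''}(F)\back Q^{\theta_i}(F)$ (après conjugaison par $w_i'$) et d'un élément de $Q^{\theta}_{w_i'}(F)\back G^{\theta}(F)$, comme dans la preuve du lemme \ref{lem:K12 chi sum}. On reconnaît alors, pour $x,y$ remplacés par $w_1'\delta_1'x$ et $w_2'\delta_2'y$, l'expression
\begin{align*}
\sum_{Q} \sum_{w'_1,\delta'_1}\sum_{w'_2,\delta'_2}\Ga_Q'(H_Q(w_1'\delta_1'x)-T_Q,T'_Q)\, K^{Q,T,\theta_1,\theta_2}_{\chi}(w_1'\delta_1'x,w_2'\delta_2'y),
\end{align*}
où $K^{Q,T,\theta_1,\theta_2}_\chi$ est l'analogue pour $[G]_Q$ du noyau modifié. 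Les signes se combinent en $\eps_P^G\eps_Q^G=\eps_P^Q$, ce qui est le signe attendu pour $M_Q$. Il faut vérifier que les sommes sur $\chi'\in\Xgo(M_Q)$ d'image $\chi$ redonnent bien $K_{P,\chi}$ restreint à $M_Q$.

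On intègre ensuite sur $[G^{\theta'}]^G\times[G^\theta]$. On déplie les sommes sur $\delta_1'$ et $\delta_2'$ en intégrales sur $Q^{\theta}_{w'}(F)\back G^\theta(\AAA)$, ce qui après conjugaison par $w_i'$ revient à intégrer sur $Q^{\theta_i}(F)\back G^{\theta_i}(\AAA)$. On utilise la décomposition d'Iwasawa \eqref{eq:Iwasawa mes} pour $G^{\theta_i}$ relativement à $Q^{\theta_i}$ et $K^{\theta_i}$, puis on écrit $M_Q^{\theta_i}(\AAA)=A_Q^\infty\times(\ldots)^1$. Le caractère $\eta$ étant trivial sur les unipotents et sur $A_G^\infty$, la variable sur $A_Q^{G,\infty}\simeq\ago_Q^G$ n'apparaît que dans $\Ga_Q'$ et dans les facteurs modulaires.

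Plus précisément, pour la variable $x$, la translation par $a\in A_Q^\infty$ produit $\exp(-\bg2\rho_{Q^{\theta_1}},H\bd)$. Pour $y$, on utilise que $K_{Q}(ax,ay)=e^{\bg 2\rho_Q,H(a)\bd}K_Q(x,y)$ : la variable $a$ de $y$ se recolle avec celle de $x$, et la translation de $y$ contribue $\exp(\bg2\rho_Q-2\rho_{Q^{\theta_2}},H\bd)$. L'intégrale sur $\ago_Q^G$ donne ainsi exactement $p^Q_{w_1',w_2'}(T,T')$ de \eqref{eq:poly pw}. Les intégrales sur $N_Q(\AAA)$, $K^{\theta_1}$ et $K^{\theta_2}$ sont absorbées dans $f^{w_1',w_2'}_{Q,\eta}$ définie en \eqref{eq:fct-fQ}. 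Le reste donne $J^{Q,T,\theta_1,\theta_2}_\chi$ avec les facteurs exponentiels de \eqref{eq:JQTtheta}, qui proviennent de la différence entre les normalisations $\rho$ sur $[M_Q^{\theta_i}]$.

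L'obstacle principal est double. Il faut d'abord justifier les interversions sommes/intégrales, ce que j'obtiendrais par la convergence absolue du théorème \ref{thm:cv-spec} appliqué à $M_Q$ et par la compacité du support de $\Ga_Q'(\cdot,T')$ sur $\ago_Q^G$ \cite[lemma 2.1]{arthur2}. Il faut ensuite suivre précisément les facteurs modulaires lors du recollement des deux variables sur $A_Q^{\infty}$, en particulier l'apparition de $\rho_Q^G$ venant de la variable $y$ qui n'est pas contrainte à $G(\AAA)^1$. C'est là que je m'attends aux calculs les plus délicats, y compris la compatibilité entre $T$ et $T_{Q_{w}}$ via le lemme \ref{lem:nonstd}.
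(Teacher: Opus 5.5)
Your proposal is correct and follows essentially the same route as the paper: Arthur's identity for $\hat\tau_P(H-T-T')$, factorisation of $w_1,w_2$ via lemme \ref{lem:Wdec}, unfolding to $Q^{\theta_i}(F)\back G^{\theta_i}(\AAA)$, and an Iwasawa decomposition that produces $f_{Q,\eta}^{w_1',w_2'}$. The $A_Q^{G,\infty}$-variable is then separated (the paper uses the shift $y\mapsto ay$, which is your recombination) to obtain $p^Q_{w_1',w_2'}(T,T')$. One small correction: you need $\eta$ to be trivial on $A_Q^{G,\infty}$, and this follows from $\eta$ being trivial on the image of $\RR_+^\times$ (hence on all of $A_0^\infty$), not merely from its triviality on $A_G^\infty$.
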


 On rappelle que $\theta_1$ et $\theta_2$ dépendent respectivement de $w_1'$ et $w_2'$ et sont définis en \eqref{eq:theta12}. La preuve de la proposition \ref{prop:comportement T} se trouve au § \ref{S:preuve comport T} ci-dessous.
\end{paragr}

\begin{paragr}[Preuve de la proposition \ref{prop:comportement T}.] ---  \label{S:preuve comport T}Soit  $T'\in \overline{\ago_0^+}$. D'après \cite[p. 14]{arthur2}, pour tout sous-groupe parabolique standard $P$ de $G$ et  tout $H\in \ago_P$, on a
  \begin{align*}
\hat{\tau}_P (H- (T+T')) = \sum_{P \subset Q \subset G} \eps_Q^G \hat{\tau}_P^Q (H - T) \Gamma_Q (H - T, T').
\end{align*}
En utilisant cette égalité et le lemme \ref{lem:Wdec}, on voit que pour tout $x\in G^{\theta'}(\AAA)$ et $y\in  G^{\theta}(\AAA)$, le noyau $K^{T+T'}_{\chi,f}(x,y)$ est égal à la somme sur les sous-groupes paraboliques  standard $Q$ de $G$, sur $w_1'\in \, _QW_{\theta'}$ et  $w_2'\in \, _QW_{\theta}$ de

\begin{align*}
  \sum_{P\subset Q}  \eps_P^Q \sum_{w_1\in _PW^Q_{\theta_1}w_1'}\sum_{w_2\in _PW^Q_{\theta_2}w_2'}\sum_{\delta_1\in P_{w_1}^{\theta'}(F) \back G^{\theta'}(F)} \sum_{\delta_2\in P_{w_2}^\theta(F) \back G^\theta(F)}  \\
    \Ga_Q( H_Q(w_1'\delta_1x)-T,T')   \hat\tau_P^Q(H_P(w_1\delta_1x)-T)K_{P,\chi,f}(w_1\delta_1x,w_2\delta_2y),
\end{align*}
 où, comme ci-dessus,  $\theta_1=w_1'\theta' {w_1'}^{-1}$ et $\theta_2=w_2' \theta {w_2'}^{-1}$.

Il s'ensuit  qu'on  a
\begin{align*}
    J^{T+T'}_\chi(\eta,f)=  \sum_{Q\in \fc^G(P_0)}  \sum_{w_1'\in \, _QW_{\theta'}} \sum_{w_2'\in \, _QW_{\theta}}   J^{Q,T,T'}_{\chi,w_1',w_2'}(\eta,f)
\end{align*}
où l'on définit,  pour $Q$, $w_1'\in \, _QW_{\theta'}$, $w_2'\in \, _QW_{\theta}$ comme ci-dessus, $J^{Q,T,T'}_{\chi,w_1',w_2'}(\eta,f)$ par l'intégrale:

 \begin{align*}
   \int_{    Q_{w_1'}^{\theta'}(F)\back G^{\theta'}(\AAA)\cap G(\AAA)^1}  \Ga_Q( H_Q(w_1'x)-T,T') \int_{    Q_{w_2'}^\theta(F)\back G^\theta(\AAA)}    \sum_{P\subset Q}  \eps_P^Q   \sum_{w_1\in _PW^Q_{\theta_1}}\sum_{w_2\in _PW^Q_{\theta_2}}  \\
   \sum_{\delta_1\in P_{w_1 w_1'}^{\theta'}(F) \back Q_{w_1'}^{\theta'}(F)} \sum_{\delta_2\in P_{w_2w_2'}^\theta(F) \back Q_{w_2'}^\theta(F)}  \hat\tau_P^Q(H_P(w_1w_1'\delta_1 x)-T)K_{P,\chi,f}(w_1w_1'\delta_1x,w_2w_2'\delta_2y) \, \eta(x)dxdy.
   \end{align*}

Désormais, nous fixons $Q$, $w_1'\in \, _QW_{\theta'}$ et $w_2'\in \, _QW_{\theta}$.  Par un changement de variables évident, l'expression ci-dessus devient:
   
\begin{align*}
   \int_{    Q_{}^{\theta_1}(F)\back G^{\theta_1}(\AAA)\cap G(\AAA)^1}  \Ga_Q( H_Q(x)-T,T') \int_{    Q_{ }^{\theta_2}(F)\back G^{\theta_2}(\AAA)}    \sum_{P\subset Q}  \eps_P^Q   \sum_{w_1\in _PW^Q_{\theta_1}}\sum_{w_2\in _PW^Q_{\theta_2}}  \\
  \sum_{\delta_1\in P_{w_1}^{\theta_1}(F) \back Q^{\theta_1}(F)} \sum_{\delta_2\in P_{w_2}^{\theta_2}(F) \back Q^{\theta_2}(F)}  \hat\tau_P^Q(H_P(w_1\delta_1 x)-T)K_{P,\chi,f}(w_1\delta_1x w_1',w_2\delta_2y w_2') \, \eta(x)dxdy.
\end{align*}

Observons que l'intégrande, comme fonction de $(x,y)$ est invariante à gauche par $N_{Q_{}^{\theta_1}}(\AAA)\times N_{Q_{}^{\theta_2}}(\AAA)$. Vu que le volume de $[N_{Q_{}^{\theta_1}}]\times [N_{Q_{}^{\theta_2}}]$ vaut  $1$, la formule de décomposition \eqref{eq:Iwasawa mes}  entraîne que l'intégrale ci-dessus est égale à

   \begin{align*}
     \int_{    [M_{Q}^{\theta_1}]^G}  \int_{K^{\theta_1}\times K^{\theta_2} }\exp(-\bg 2\rho_{Q^{\theta_1}}^{G^{\theta_1}}, H_{Q^{\theta_1}   }(x)\bd) \Ga_Q( H_Q(x)-T,T') \int_{    [M_{Q}^{\theta_2}]}  \exp(-\bg 2\rho_{Q^{\theta_2}}^{G^{\theta_2}}, H_{Q^{\theta_2}   }(y)\bd)  \\
     \sum_{P\subset Q}  \eps_P^Q   \sum_{w_1\in _PW^Q_{\theta_1}}\sum_{w_2\in _PW^Q_{\theta_2}}  
     \sum_{\delta_1\in (P_{w_1}\cap M_{Q}^{\theta_1})(F) \back M_{Q}^{\theta_1}(F) }\sum_{\delta_2\in (P_{w_2}\cap M_{Q}^{\theta_2})(F) \back M_{Q}^{\theta_2}(F) }  \\
     \hat\tau_P^Q(H_P(w_1\delta_1 x)-T)K_{P,\chi,f}(w_1\delta_1x k_1w_1',w_2\delta_2y k_2w_2') \, \eta(k_1)dk_1dk_2\eta(x)dxdy.
   \end{align*}
   où $[M_{Q}^{\theta_1}]^G=M_{Q}^{\theta_1}(F)\back (M_{Q}^{\theta_1}(\AAA)\cap G(\AAA)^1)$. Pour tout $m_i\in M^{\theta_i}(\AAA)$ et $k_i\in K^{\theta_i}$ pour $i=1,2$, on obtient, grâce à un changement de variables,
   
  \begin{align*}
    K_{P,f}(m_1k_1w_1',m_2k_2w_2')=\sum_{\ga\in M_P(F)} \int_{N_P^Q(\AAA)}   \int_{N_Q(\AAA)} f(  ( m_1k_1w_1')^{-1} \ga nn_Q (m_2 k_2w_2' ))\,dndn_Q\\
    =\exp(\bg 2\rho_Q^G,H_Q(m_2)\bd) \sum_{\ga\in M_P(F)} \int_{N_P^Q(\AAA)}   \int_{N_Q(\AAA)} f(  ( m_1k_1w_1')^{-1} \ga n  m_2 n_Q ( k_2w_2' )) \,dndn_Q.
  \end{align*}
  
  Pour le sous-groupe parabolique $P\cap M_Q$ de $M_Q$, toute fonction $f'\in \Sc( M_Q(\AAA))$ et toute donnée cuspidale $\chi'\in \Xgo(M_Q)$, on dispose du noyau $K_{P\cap M_Q,f',\chi'}$. On pose alors
  \begin{align*}
    K_{P\cap M_Q,f',\chi}=\sum_{\chi' }K_{P\cap M_Q,f',\chi'}
  \end{align*}
  où la somme à droite porte sur l'ensemble fini des $\chi'\in \Xgo(M_Q)$ qui ont pour image $\chi$ par l'application évidente $\Xgo(M_Q)\to \Xgo(G)$. De même, à partir du noyau modifié $K^{T,\theta_1,\theta_2}_{f',\chi'}$ relatif à  $\chi'\in \Xgo(M_Q)$ et  aux éléments $\theta_1, \theta_2$, on définit $K^{T,\theta_1,\theta_2}_{f',\chi}$ pour la donnée cuspidale $\chi\in \Xgo(G)$.
  
  Le calcul ci-dessus entraîne qu'on a pour tout $m_i\in M^{\theta_i}(\AAA)$, $i=1,2$,
  
  \begin{align*}
    \int_{K^{\theta_1}\times K^{\theta_2} }  K_{P,f,\chi}(m_1k_1w_1',m_2k_2w_2')\eta(k_1)dk_1dk_2 = \exp(\bg 2\rho_Q^G,H_Q(m_2)\bd)   K_{P\cap M_Q,f_{Q,\eta}^{w_1',w_2'},\chi}(m_1,m_2).
  \end{align*}

  On obtient alors:
  
  \begin{align*}
    J^{Q,T,T'}_{\chi,w_1',w_2'}(\eta,f)=\int_{    [M_{Q}^{\theta_1}]^G}  \exp(-\bg 2\rho_{Q^{\theta_1}}^{G^{\theta_1}}, H_{Q^{\theta_1}   }(x)\bd)  \Ga_Q( H_Q(x)-T,T') \\\int_{    [M_{Q}^{\theta_2}]}  \exp(\bg 2\rho_Q^G-2\rho_{Q^{\theta_2}}^{G^{\theta_2}}, H_{Q^{\theta_2}   }(y)\bd) K^{T,\theta_1,\theta_2}_{f_{Q,\eta}^{w_1',w_2'},\chi}(x,y)\, \eta(x)dxdy.
  \end{align*}

  On a  $[M_{Q}^{\theta_1}]^G=[M_{Q}^{\theta_1}]^Q \times A_Q^{G,\infty}$, cette décomposition étant compatible au choix des mesures. Notons que $A_Q^{G,\infty}\subset (M_{Q}^{\theta_1}\cap M_{Q}^{\theta_2})(\AAA)$ et que le caractère $\eta$ est trivial sur $A_Q^{G,\infty}$.  Pour tout $a\in  A_Q^{G,\infty}$ et $x\in [M_{Q}^{\theta_1}]^Q$, on a
  \begin{align*}
    \int_{    [M_{Q}^{\theta_2}]}  \exp(\bg 2\rho_Q^G-2\rho_{Q^{\theta_2}}^{G^{\theta_2}}, H_{Q^{\theta_2}   }(y)\bd) K^{T,\theta_1,\theta_2}_{f_{Q,\eta}^{w_1',w_2'},\chi}(a x,y)\, dy\\
    = \exp(\bg 2\rho_Q^G-2\rho_{Q^{\theta_2}}^{G^{\theta_2}}, H_{Q   }( a)\bd)  \int_{    [M_{Q}^{\theta_2}]}  \exp(\bg 2\rho_Q^G-2\rho_{Q^{\theta_2}}^{G^{\theta_2}}, H_{Q^{\theta_2}   }( y)\bd) K^{T,\theta_1,\theta_2}_{f_{Q,\eta}^{w_1',w_2'},\chi}(x,y)\, dy
  \end{align*}

  Finalement,  on obtient que $J^{Q,T,T'}_{\chi,w_1',w_2'}(\eta, f)$ est le produit de $p_{w_1',w_2'}^Q(T,T')$
  
  et de
  \begin{align*}
    \int_{    [M_{Q}^{\theta_1}]^Q}  \int_{    [M_{Q}^{\theta_2}]}  \exp(-\bg 2\rho_{Q^{\theta_1}}^{G^{\theta_1}}, H_{Q^{\theta_1}   }( x)\bd+\bg 2\rho_Q^G-2\rho_{Q^{\theta_2}   }^{G^{\theta_2}}, H_{Q^{\theta_2}   }( y)\bd) K^{T,\theta_1,\theta_2}_{f_{Q,\eta}^{w_1',w_2'},\chi}(x,y)\, \eta(x)    dxdy
  \end{align*}
  Cette dernière intégrale n'est autre que  $J^{Q,T,\theta_1,\theta_2}_{\chi} ( \eta, f_{Q,\eta}^{w_1',w_2'})$.
\end{paragr}

\subsection{Distributions spectrales}\label{ssec:distr spec}

\begin{paragr}
  Soit $\theta_1,\theta_2\in M_0(F)$ deux éléments d'ordre au plus $2$. Pour tout sous-groupe parabolique standard $Q$ de $G$, soit $\fc^{G,\flat}(Q,\theta_1,\theta_2)$  l'ensemble des sous-groupes paraboliques $R$ de $G$ qui contiennent  $Q$ et qui vérifient
    \begin{align}\label{eq:la condition}
      \rho_R^G=(\rho_{R^{\theta_1}}^{G^{\theta_1}}+\rho_{R^{\theta_2}}^{G^{\theta_2}})_R
    \end{align}
    où le dernier indice $R$ désigne la projection orthogonale sur $\ago_R$.
\end{paragr}

\begin{paragr}
    Pour tous sous-groupes paraboliques $P\subset Q$, on définit les fonctions  polynomiales de la variable $\la\in \ago_{0,\CC}^*$ :
$$\hat{\Theta}_P^Q(\la)= \vol(\ago_P^Q/\ZZ(\hat{\Delta}_P^{Q,\vee}))^{-1} \prod_{\varpi^\vee \in \hat{\Delta}_P^{Q,\vee}}\bg \la,\varpi^\vee\bd
$$
et
$$\Theta_P^Q(\la)= \vol(\ago_P^Q/\ZZ(\Delta_P^{Q,\vee}))^{-1} \prod_{\al \in \Delta_P^Q} \bg \la,\al^\vee\bd.
$$
Les facteurs volumes ci-dessus désignent les covolumes des réseaux engendrés respectivement par $\hat{\Delta}_P^{Q,\vee}$ et ${\Delta}_P^{Q,\vee}$. Comme d'habitude, on omet l'exposant $G$ lorsque $Q=G$ et que le contexte est clair.
\end{paragr}

\begin{paragr}\label{S:terme_cst}
  On reprend les notations de la sous-section \ref{ssec:comport T}. Soit $Q\subset G$ un sous-groupe parabolique standard. Soit $w_1'\in \, _QW_{\theta'}$ et $w_2'\in \, _QW_{\theta}$.  On définit alors $ \theta_1$ et $\theta_2$ comme en  \eqref{eq:theta12}.  Rappelons qu'on a défini une application  $p_{w_1',w_2'}^Q$ sur $\ago_0\times \ago_0$ en \eqref{eq:poly pw}.

  \begin{lemme}\label{lem:le terme cst}Soit $T\in \ago_0$. 
    L'application $T'\in\ago_0 \mapsto p_{w_1',w_2'}^Q(0,T'-T)$ est un polynôme-exponentielle. De plus, le terme constant de cette application est donnée par l'expression suivante:
    \begin{align*}
  c_{w_1',w_2'}^Q(T)=     \lim_{t\to 0}  \sum_{ R\in \fc^{G,\flat}(Q,\theta_1,\theta_2) } \eps_Q^R \frac{\exp(t \bg \la , -T_R\bd)}{ t^{\dim(\ago_R^G)}\hat\Theta_Q^R(t\la+2\rho_Q^G-2\rho_{Q^{\theta_1}}^{G^{\theta_1}}-2\rho_{Q^{\theta_2}}^{G^{\theta_2}}) \Theta_R^G(\la)}
    \end{align*}
    où $\la\in \ago_Q^{G,*}$ est un élément en position générale.
  \end{lemme}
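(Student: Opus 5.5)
Write $\mu=2\rho_Q^G-2\rho_{Q^{\theta_1}}^{G^{\theta_1}}-2\rho_{Q^{\theta_2}}^{G^{\theta_2}}$, viewed as an element of $\ago_Q^{G,*}$. The key step is to rewrite $p^Q_{w_1',w_2'}(0,T'-T)=\int_{\ago_Q^G} e^{\bg\mu,H\bd}\Ga_Q(H,T'-T)\,dH$ through Arthur's Fourier–Laplace transform of $\Ga_Q$. By \cite[lemma 2.2]{arthur2} (and the computation preceding it), for $\la\in\ago_{Q,\CC}^{G,*}$ in general position,
\begin{align*}
\int_{\ago_Q^G} e^{\bg\la,H\bd}\Ga_Q(H,X)\,dH=\sum_{R\supset Q}\eps_Q^R\,\frac{e^{\bg\la_R,X_R\bd}}{\hat\Theta_Q^R(\la)\Theta_R^G(\la)} .
\end{align*}
This identity is an entire function of $\la$, since $\Ga_Q(\cdot,X)$ is compactly supported. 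The polynomial-exponential property in $T'$ then follows from \cite[lemma 2.2]{arthur2}, as the paper already notes. For the constant term, I evaluate the identity at $t\la+\mu$ with $\la$ generic and $X=T'-T$, and let $t\to0$. By continuity of the left-hand side, this limit equals $p^Q(0,T'-T)$.

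Next, I examine each term $R$ at $t\la+\mu$, and the exponent is $\bg(t\la+\mu)_R,(T'-T)_R\bd$. As a function of $T'$, the term for $R$ is a polynomial-exponential with exponent $\mu_R$, possibly after the limit produces polynomial factors. It therefore contributes to the polynomial part, i.e. exponent $0$, only if $\mu_R=0$. I will check that this condition is exactly \eqref{eq:la condition}. Since $\rho_R^G$ is the projection of $\rho_Q^G$ onto $\ago_R$, the vanishing $\mu_R=0$ reads $\rho_R^G=(\rho_{Q^{\theta_1}}^{G^{\theta_1}}+\rho_{Q^{\theta_2}}^{G^{\theta_2}})_R$. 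I also need the projection of $\rho_{Q^{\theta_i}}^{G^{\theta_i}}$ onto $\ago_R$ to equal that of $\rho_{R^{\theta_i}}^{G^{\theta_i}}$, a small root-theoretic verification. It holds because $\rho_{Q^{\theta_i}}^{G^{\theta_i}}-\rho_{R^{\theta_i}}^{G^{\theta_i}}$ is the half-sum of roots in $M_R^{\theta_i}$, which is orthogonal to $\ago_R$. This gives $R\in\fc^{G,\flat}(Q,\theta_1,\theta_2)$.

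The main obstacle is justifying that the limit decomposes term by term. The terms with $\mu_R\neq0$ can have poles in $t$, and the limit only makes sense globally. I would argue that the whole expression is a finite sum of functions of the form $t^{-k}e^{\bg t\la+\mu,\cdot\bd}q(t)$. Grouping the terms by the value of $\mu_R$, each group is itself regular at $t=0$. This follows by taking the Laplace transform in $T'$, or equivalently from the linear independence of the functions $p(T')e^{\bg\nu,T'\bd}$ for distinct exponents $\nu$. Hence the polynomial part of $p^Q(0,T'-T)$ is the limit, as $t\to0$, of the subsum over $R$ with $\mu_R=0$.

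To finish, for such $R$ one has $\Theta_R^G(t\la+\mu)=\Theta_R^G(t\la)=t^{\dim\ago_R^G}\Theta_R^G(\la)$, using $\mu_R=0$ and the homogeneity of $\Theta_R^G$. Setting $T'=0$ produces the factor $e^{-t\bg\la,T_R\bd}$. This gives exactly the expression for $c_{w_1',w_2'}^Q(T)$, and the existence of the limit follows from the regularity of the group established above.
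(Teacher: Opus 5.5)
Your proposal is correct and follows the paper's proof. You use Arthur's Fourier--Laplace formula for $\Ga_Q$ at $\la+\mu$ together with analyticity of the integral, and you keep only those $R$ with $\mu_R=0$, i.e.\ $R\in\fc^{G,\flat}(Q,\theta_1,\theta_2)$. The paper delegates the extraction of the polynomial part to \cite[Lemma 2.4.1.1]{chaudouardsymmetric}; you instead prove it inline, grouping terms by $\mu_R$ and using linear independence of polynomial-exponentials to kill each group's negative Laurent coefficients in $t$. You also spell out the identification $\mu_R=0\Leftrightarrow$ \eqref{eq:la condition}, which the paper leaves implicit.
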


  \begin{preuve}
    D'après \cite[lemma 2.2]{arthur2}, on a l'égalité suivante pour tout $\la\in \ago_Q^{G,*}$ en position générale
    \begin{align*}
      \int_{\ago_Q^{G}} \exp( \la+\bg 2\rho_Q^G-2\rho_{Q^{\theta_1}}^{G^{\theta_1}}-2\rho_{Q^{\theta_2}}^{G^{\theta_2}}, H \bd) \Ga_Q( H,T'-T)\, dH\\
      =  \sum_{Q\subset R}  \eps_Q^R \frac{\exp( \bg \la +  2\rho_R^G-2\rho_{R^{\theta_1}}^{G^{\theta_1}}-2\rho_{R^{\theta_2}}^{G^{\theta_2}} , T'_R-T_R\bd)}{ (\hat\Theta_Q^R\Theta_R^G)(\la+2\rho_Q^G-2\rho_{Q^{\theta_1}}^{G^{\theta_1}}-2\rho_{Q^{\theta_2}}^{G^{\theta_2}})}.
    \end{align*}
    L'intégrale définit une fonction analytique de $\la\in \ago_{Q,\CC}^{G,*}$ car $H\in \ago_Q^G \mapsto\Ga_Q( \cdot,T'-T)$ est à support compact (ce support dépendant de $T'-T$). Sa valeur en $\la=0$ est l'expression $p_{w_1',w_2'}^Q(0,T'-T)$. Il s'ensuit que le second membre de l'égalité ci-dessus se prolonge analytiquement à $\ago_{Q,\CC}^{G,*}$. Il résulte aussi du calcul ci-dessus que $T'\mapsto   p_{w_1',w_2'}^Q(0,T'-T)$ est un polynôme-exponentielle en $T'$. D'après \cite[Lemma 2.4.1.1]{chaudouardsymmetric}, la partie purement polynomiale de $T'\mapsto   p_{w_1',w_2'}^Q(0,T'-T)$ est alors donnée par la valeur en $\la=0$ de l'expression analytique au voisinage de $0$
    \begin{align}\label{eq:petite somme}
        \sum_{R}  \eps_Q^R \frac{\exp( \bg \la  , T'_R-T_R\bd)}{ \hat\Theta_Q^R(\la+2\rho_Q^G-2\rho_{Q^{\theta_1}}^{G^{\theta_1}}-2\rho_{Q^{\theta_2}}^{G^{\theta_2}})\Theta_R^G(\la)}
    \end{align}
    où l'on somme sur  les sous-groupes paraboliques $R$ contenant $Q$ qui vérifient \eqref{eq:la condition}. En particulier, pour obtenir le terme constant, il suffit de remplacer $T'$ par $0$ dans la somme \eqref{eq:petite somme}. Le lemme s'en déduit.
  \end{preuve}

  \begin{proposition}\label{prop:JTchi poly} Soit  $T\in T_0+\overline{\ago_0^+}$.     Soit $f\in \Sc(G(\AAA))$ et $\chi\in\Xgo(G)$. L'application $T'\mapsto J^{T'}_\chi(\eta,f)$ coïncide sur le cône $T+\overline{\ago_0^+}$ avec une fonction polynôme-exponentielle en $T'$ dont le terme constant est donné par
      \begin{align*}
&        J^{}_\chi(\eta,f)=  \\
  &      \sum_{Q\in \fc^G(P_0)}  \sum_{w_1'\in \, _QW_{\theta'}} \sum_{w_2'\in \, _QW_{\theta}}       c_{w_1',w_2'}^Q(T) \exp( \bg 2\rho_Q^G-2\rho_{Q^{\theta_1}}^{G^{\theta_1}}-2\rho_{Q^{\theta_2}}^{G^{\theta_2}}, T_Q\bd)   J^{Q,T,\theta_1,\theta_2}_{\chi} ( \eta, f_{Q,\eta}^{w_1',w_2'}).
    \end{align*}
  \end{proposition}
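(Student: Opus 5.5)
Le plan consiste à appliquer la proposition \ref{prop:comportement T} avec le point base $T$ fixé et la variable $T'$ écrite comme $T+(T'-T)$ pour $T'\in T+\overline{\ago_0^+}$. On obtient ainsi
\begin{align*}
J^{T'}_\chi(\eta,f)=\sum_{Q\in \fc^G(P_0)}\sum_{w_1'\in \, _QW_{\theta'}}\sum_{w_2'\in \, _QW_{\theta}} p_{w_1',w_2'}^Q(T,T'-T)\, J^{Q,T,\theta_1,\theta_2}_{\chi}(\eta,f_{Q,\eta}^{w_1',w_2'}).
\end{align*}
Dans chaque terme, le facteur $J^{Q,T,\theta_1,\theta_2}_{\chi}(\eta,f_{Q,\eta}^{w_1',w_2'})$ ne dépend pas de $T'$. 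Il suffit donc d'étudier $T'\mapsto p_{w_1',w_2'}^Q(T,T'-T)$.

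On ramène ensuite ce facteur à $p^Q_{w_1',w_2'}(0,T'-T)$. Par la définition \eqref{eq:poly pw}, le changement de variable $H\mapsto H+T_Q$ dans l'intégrale sur $\ago_Q^G$ donne
\begin{align*}
p^Q_{w_1',w_2'}(T,T'-T)=\exp(\bg 2\rho_Q^G-2\rho_{Q^{\theta_1}}^{G^{\theta_1}}-2\rho_{Q^{\theta_2}}^{G^{\theta_2}},T_Q\bd)\, p^Q_{w_1',w_2'}(0,T'-T).
\end{align*}
Il faut vérifier que seule la projection de $T$ sur $\ago_Q^G$ intervient dans $\Ga_Q(H-T,\cdot)$. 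C'est le cas puisque $\Ga_Q$ n'est évalué qu'en la composante $\ago_Q$ et que l'on intègre sur $\ago_Q^G$. La composante $\ago_G$ est sans effet, car $\rho$ est orthogonal à $\ago_G$ et $\eta$ est trivial sur $A_G^\infty$. Il faut aussi s'assurer que la convention $T_Q$ du § \ref{S:et les ss?} coïncide avec la projection usuelle pour $Q$ standard, ce qui est immédiat.

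Le lemme \ref{lem:le terme cst} montre alors que $T'\mapsto p^Q_{w_1',w_2'}(0,T'-T)$ est un polynôme-exponentielle de terme constant $c^Q_{w_1',w_2'}(T)$. Une combinaison linéaire finie de polynômes-exponentielles, à coefficients indépendants de $T'$, est encore un polynôme-exponentielle, et la prise du terme constant est linéaire. On en déduit que $T'\mapsto J^{T'}_\chi(\eta,f)$ coïncide sur $T+\overline{\ago_0^+}$ avec un polynôme-exponentielle dont le terme constant est la formule annoncée.

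Il reste à noter que ce terme constant, bien défini comme terme constant d'une fonction de $T'$, ne dépend pas du choix de $T$. En effet, deux polynômes-exponentielles qui coïncident sur un cône ouvert sont égaux, et pour deux points base $T_1,T_2$ les cônes $T_i+\overline{\ago_0^+}$ se rencontrent sur un cône ouvert. L'unique point délicat est la bonne gestion du facteur $\exp(\bg\cdot,T_Q\bd)$ dans le changement de variable ; tout le reste est formel à partir de la proposition \ref{prop:comportement T} et du lemme \ref{lem:le terme cst}.
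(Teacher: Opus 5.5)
Your proof is correct and follows exactly the argument in the paper. You apply Proposition~\ref{prop:comportement T} with $T'=T+(T'-T)$, extract the factor $\exp(\bg 2\rho_Q^G-2\rho_{Q^{\theta_1}}^{G^{\theta_1}}-2\rho_{Q^{\theta_2}}^{G^{\theta_2}},T_Q\bd)$ from $p^Q_{w_1',w_2'}(T,T'-T)$ by the translation $H\mapsto H+T_Q^G$, and conclude with Lemma~\ref{lem:le terme cst}. Your additional checks are also right: only the projection of $T$ onto $\ago_Q^G$ matters because the exponent vanishes on $\ago_G$, and the constant term is independent of the base point $T$; the paper leaves both points implicit.
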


  \begin{preuve}
    Il résulte de la proposition \ref{prop:comportement T} qu'on a pour tout $T'\in T+\overline{\ago_0^+}$ 
     \begin{align*}
      J^{T'}_\chi(\eta,f)=  \sum_{Q\in \fc^G(P_0)}  \sum_{w_1'\in \, _QW_{\theta'}} \sum_{w_2'\in \, _QW_{\theta}}   p_{w_1',w_2'}^Q(T,T'-T)   J^{Q,T,\theta_1,\theta_2}_{\chi} ( \eta, f_{Q,\eta}^{w_1',w_2'}).
     \end{align*}
     Un changement de variables évident montre qu'on a
     \begin{align*}
        p_{w_1',w_2'}^Q(T,T'-T)  = \exp( \bg 2\rho_Q^G-2\rho_{Q^{\theta_1}}^{G^{\theta_1}}-2\rho_{Q^{\theta_2}}^{G^{\theta_2}}, T_Q\bd)       p_{w_1',w_2'}^Q(0,T'-T). 
     \end{align*}
     Il suffit alors de faire appel au lemme \ref{lem:le terme cst}.
  \end{preuve}

  L'application $f\mapsto   J^{}_\chi(\eta,f)$, notée   $J_\chi(\eta)$, est  alors une forme linéaire continue sur $\Sc(G(\AAA))$ comme il résulte de la  proposition \ref{prop:comportement T}  et du théorème \ref{thm:cv-spec} (ou de sa variante évidente appliquée aux sous-groupes de Levi standard de $G$). On utilisera dans la suite des variantes du résultat ci-dessus. Par exemple, la distribution $J^{Q,T,\theta_1,\theta_2}_{\chi}(\eta)$ définie en \eqref{eq:JQTtheta} et \eqref{eq:JQTthetachi} ne dépend que de $T^Q$: on  peut montrer que, sur un ensemble de $T$ assez positifs, elle coïncide avec un polynôme-exponentielle en $T^Q$. Dans la suite, on note simplement 
  \begin{align}\label{eq:JQtheta12}
  J^{Q,\theta_1,\theta_2}_{\chi}(\eta)
  \end{align}
  son terme constant.

\end{paragr}

\begin{paragr}[Propriétés de covariance de   $J^{T}_\chi(\eta,f)$.]\label{S:covar-chi-T} ---  Dans toute la suite, on prend  $T\in T_0+\overline{\ago_0^+}$. Soit $Q$ un sous-groupe parabolique  standard de $G$,  $w_1'\in \, _QW_{\theta'}$ et  $w_2'\in \, _QW_{\theta}$. On définit alors  $\theta_1$ et $\theta_2$ par la formule \eqref{eq:theta12}. Soit $g\in G^{\theta'}(\AAA)$. On définit alors $f_{Q,\eta,g}^{w_1',w_2'}\in \Sc( M_Q(\AAA))$ par la formule suivante:
  \begin{align}\label{eq:fct-fQT}
    &\forall m\in M_Q(\AAA)\ \    f_{Q,\eta,g}^{w_1',w_2'}(m)= \\
  &  \nonumber \int_{K^{\theta_1}\times K^{\theta_2} }   \int_{N_Q(\AAA)} f(  (k_1w_1')^{-1}   m  n_Q ( k_2w_2' )) p_{w_1',w_2'}^Q(0,-H_Q(k_1w_1'g)) \,\eta(k_1)dn_Qdk_1dk_2.
  \end{align}
où $p_{w_1',w_2'}^Q$ est défini en \eqref{eq:poly pw}.

Pour tout $f\in \Sc(G(\AAA))$ et $g\in G(\AAA)$, on note  $^g\!f$ et $f^g$ les fonctions dans $\Sc(G(\AAA))$ définies par    $^g\!f(x)=f(gx)$ et $f^g(x)=f(xg)$ pour tout $x\in G(\AAA)$.

\begin{proposition}\label{prop:covar-T}Soit $\chi\in\Xgo(G)$ et $f\in \Sc(G(\AAA))$.
  \begin{enumerate}
  \item Pour tout $g\in G^\theta(\AAA)$, on  a
    \begin{align*}
      J^{T}_\chi(\eta,f^g)=J^{T}_\chi(\eta,f).
    \end{align*}
  \item  Pour tout $g\in G^{\theta'}(\AAA)$, on  a
    \begin{align*}
      &      J^{T}_\chi(\eta,  ^g\!\!f)=\\
      &\eta(g)\sum_{Q\in \fc^G(P_0)}  \sum_{w_1'\in \, _QW_{\theta'}} \sum_{w_2'\in \, _QW_{\theta}}    \exp( \bg 2\rho_Q^G-2\rho_{Q^{\theta_1}}^{G^{\theta_1}}-2\rho_{Q^{\theta_2}}^{G^{\theta_2}}, T_Q\bd) J^{Q,T,\theta_1,\theta_2}_{\chi} ( \eta, f_{Q,\eta,g}^{w_1',w_2'}),
    \end{align*}
    où $J^{Q,T,\theta_1,\theta_2}_{\chi} $ est la distribution sur $\Sc( M_Q(\AAA))$ définie en \eqref{eq:JQTtheta} et  \eqref{eq:JQTthetachi}.
  \end{enumerate}
\end{proposition}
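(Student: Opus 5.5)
Je propose de traiter les deux assertions directement à partir de la définition \eqref{eq:JTchieta} et de l'expression \eqref{eq:KT} du noyau modifié.

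\emph{Assertion 1.} Pour $g\in G^\theta(\AAA)$, on a $K_{P,\chi,f^g}(x,y)=K_{P,\chi,f}(x,yg^{-1})$ : l'opérateur $R(f^g)$ vaut $R(f)R(g^{-1})$ et $L^2_\chi([G]_P)$ est stable par translation à droite. Dans \eqref{eq:KT}, la troncature $\hat\tau_P(H_P(w_1\delta_1x)-T)$ ne porte que sur $x$. La somme entre crochets sur $w_2,\delta_2$ est une fonction de $y\in G^\theta(F)\back G^\theta(\AAA)$ qui se transforme donc par translation à droite de $y$ par $g^{-1}$. On a ainsi $K^T_{\chi,f^g}(x,y)=K^T_{\chi,f}(x,yg^{-1})$. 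Il suffit alors de faire le changement de variables $y\mapsto yg$ sur $[G^\theta]$, licite par invariance à droite de la mesure (le groupe $G^\theta(\AAA)$ est unimodulaire) et par la convergence absolue du théorème \ref{thm:cv-spec}.

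\emph{Assertion 2.} Pour $g\in G^{\theta'}(\AAA)$, on a de même $K_{P,\chi,{}^g\!f}(x,y)=K_{P,\chi,f}(xg^{-1},y)$, les noyaux s'écrivant $\sum_\gamma\int f(x^{-1}\gamma ny)$. On en tire
\begin{align*}
K^T_{\chi,{}^g\!f}(x,y)=\sum_{P}\eps_P^G\sum_{w_1,\delta_1}\hat\tau_P(H_P(w_1\delta_1 x)-T)\,[\cdots](w_1\delta_1xg^{-1},y).
\end{align*}
On substitue $x\mapsto xg$, ce qui est licite dans $[G^{\theta'}]^G$ puisque, $\eta$ étant trivial sur $A_G^\infty$, on peut d'abord ramener $g$ dans $G(\AAA)^1$ en translatant $y$ par l'élément central et invoquer l'assertion 1. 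Le facteur $\eta(xg)=\eta(x)\eta(g)$ fournit $\eta(g)$. On se retrouve avec le noyau où la troncature est $\hat\tau_P(H_P(w_1\delta_1xg)-T)$. On écrit ensuite $H_P(w_1\delta_1xg)=H_P(w_1\delta_1x)+H_P(k(w_1\delta_1x)g)$, où $k(\cdot)$ désigne la composante dans $K$.

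Il reste à refaire l'argument de la preuve de la proposition \ref{prop:comportement T} avec un décalage qui dépend de la variable plutôt qu'un $T'$ fixe. J'utiliserais la variante d'Arthur \cite[p. 14]{arthur2} de l'identité $\hat\tau_P(H-X)=\sum_{Q\supset P}\eps_Q^G\hat\tau_P^Q(H-T)\Ga_Q(H-T,X-T)$, appliquée avec $X=T-H_P(k\,g)$. Après regroupement par le lemme \ref{lem:Wdec} selon $(Q,w_1',w_2')$, on suit les mêmes étapes qu'en \ref{S:preuve comport T}. Les intégrations sur $N_Q$ et sur $K^{\theta_1}\times K^{\theta_2}$ produisent la fonction $f_{Q,\eta,g}^{w_1',w_2'}$ définie par \eqref{eq:fct-fQT}. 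L'intégrale sur $A_Q^{G,\infty}$ du facteur $\Ga_Q$, pondérée par l'exponentielle, fournit un facteur $p^Q_{w_1',w_2'}(T,-H_Q(k_1w_1'g))$. Le changement de variables donné dans la preuve de la proposition \ref{prop:JTchi poly} transforme ce facteur en $\exp(\bg 2\rho_Q^G-2\rho_{Q^{\theta_1}}^{G^{\theta_1}}-2\rho_{Q^{\theta_2}}^{G^{\theta_2}},T_Q\bd)\,p^Q_{w_1',w_2'}(0,-H_Q(k_1w_1'g))$. Le poids dépendant de $k_1$ reste ainsi dans l'intégrale sur $K^{\theta_1}$, c'est-à-dire dans $f_{Q,\eta,g}^{w_1',w_2'}$, et l'on obtient la formule annoncée.

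\emph{Difficulté principale.} Le point délicat est la décomposition $x=m\,n\,k_1$ dans $G^{\theta_1}(\AAA)$ après conjugaison par $w_1'$. Il faut vérifier deux choses. D'abord, que $H_Q(w_1'xg)-H_Q(w_1'x)$ ne dépend que de la composante $k_1\in K^{\theta_1}$, ce qui découle de $w_1'\in K$ et de \eqref{eq:invariance-HP}. Ensuite, que l'identité d'Arthur s'applique bien avec un $T'$ non nécessairement dans $\overline{\ago_0^+}$. Cela est vrai puisque $\Ga_Q(\cdot,X)$ est défini et à support compact pour tout $X$, mais je vérifierais soigneusement ce point, ainsi que les échanges d'intégrales, justifiés par le théorème \ref{thm:cv-spec}.
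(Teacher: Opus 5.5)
Your proposal is correct and follows essentially the same route as the paper. Assertion 1 comes from right-invariance of the $y$-integral. For assertion 2 you make the change of variables $x\mapsto xg$; the paper justifies it by rewriting the domain as $A_G^\infty\back[G^{\theta'}]$, which is equivalent to your reduction to $g\in G(\AAA)^1$. You then apply Arthur's identity with shift $-H_Q(k_Q(x)g)$ and redo the computation of the proof of Proposition \ref{prop:comportement T}, producing the factor $p^Q_{w_1',w_2'}(0,-H_Q(k_1w_1'g))$ that is absorbed into $f_{Q,\eta,g}^{w_1',w_2'}$.

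There is one harmless slip: with the convention $f^g(x)=f(xg)$ one has $K_{P,\chi,f^g}(x,y)=K_{P,\chi,f}(x,yg)$ rather than $K_{P,\chi,f}(x,yg^{-1})$, which changes nothing in the invariance argument.
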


\begin{preuve} Il est clair sur la définition \eqref{eq:JTchieta} que $J^T_\chi(\eta)$ est invariante par translations à droite par $G^\theta(\AAA)$. Passons à  l'effet d'une translation à gauche. Pour cela, on suit essentiellement les mêmes calculs que ceux du § \ref{S:preuve comport T} dont on reprend  les notations.

Soit $P$ un  sous-groupe parabolique standard de $G$. On définit une application $k_P:P(\AAA)\back G(\AAA)\to (K\cap P(\AAA))\back K$ ainsi:   pour tout  $x\in G(\AAA)$, on a $P(\AAA)x=P(\AAA)k_{P}(x)$. On a alors
\begin{align}\label{eq:tau par g}
  \hat{\tau}_P (H_P(x g)- T) =  \hat{\tau}_P (H_P(x)+H_{P_0} (k_{P_0}(x)  g) - T) \\
  \nonumber = \sum_{P \subset Q \subset G} \eps_Q^G \hat{\tau}_P^Q (H_P(x) - T) \Gamma_Q (H_Q(x) - T, -H_Q (k_{Q}(x)  g) )
\end{align}
vu que $H_Q (k_{Q}(x)  g) =H_Q (k_{P_0}(x)  g)$.

  Soit $g\in G^{\theta'}(\AAA)$.    On constate par un changement de variables qu'on  a 
  \begin{align*}
    J^T_\chi(\eta, ^g\!\!f)=\int_{A_G^\infty \back [G^{\theta'}]}\int_{[G^\theta] }  K^{T}_{\chi,^g\!f}(x,y) \, \eta(x)dxdy\\
    = \eta(g) \int_{A_G^\infty \back [G^{\theta'}]}\int_{[G^\theta] } \sum_{P\in \fc(P_0)} \eps_P^G \sum_{w_1 \in   \, _PW_{\theta'}} \sum_{\delta_1\in P_{w_1}^{\theta'}(F) \back G^{\theta'}(F)}  \hat\tau_P(H_P(w_1\delta_1xg)-T) \times \\ \left[\sum_{w_2\in   \, _PW_\theta} \sum_{\delta_2\in P_{w_2}^\theta(F) \back G^\theta(F)} K_{P,\chi,f}(w_1\delta_1x,w_2\delta_2y)\right]\, \eta(x)dxdy
  \end{align*}
  En suivant le § \ref{S:preuve comport T} et en utilisant \eqref{eq:tau par g}, on obtient que $J^T_\chi(\eta, \,^g\!f)$ est le produit de $\eta(g)$ par la somme sur les sous-groupes paraboliques  standard $Q$ de $G$, sur $w_1'\in \, _QW_{\theta'}$ et  $w_2'\in \, _QW_{\theta}$ de
 \begin{align*}
   \int_{    Q_{w_1'}^{\theta'}(F)\back G^{\theta'}(\AAA)\cap G(\AAA)^1}  \int_{    Q_{w_2'}^\theta(F)\back G^\theta(\AAA)}    \sum_{P\subset Q}  \eps_P^Q   \sum_{w_1\in _PW^Q_{\theta_1}}\sum_{w_2\in _PW^Q_{\theta_2}}  
   \sum_{\delta_1\in P_{w_1 w_1'}^{\theta'}(F) \back Q_{w_1'}^{\theta'}(F)} \sum_{\delta_2\in P_{w_2w_2'}^\theta(F) \back Q_{w_2'}^\theta(F)}  \\ \Ga_Q( H_Q(w_1'x)-T,   -H_Q(k_{Q}(w_1'\delta_1 x) g)) \hat\tau_P^Q(H_P(w_1w_1'\delta_1 x)-T)K_{P,\chi,f}(w_1w_1'\delta_1x,w_2w_2'\delta_2y) \, \eta(x)dxdy.
 \end{align*}
 où les $\theta_i$ sont définis en \eqref{eq:theta12}.  Par un changement de variables, l'expression ci-dessus devient
  \begin{align*}
   \int_{    Q_{}^{\theta_1}(F)\back G^{\theta_1}(\AAA)\cap G(\AAA)^1}  \int_{    Q_{ }^{\theta_2}(F)\back G^{\theta_2}(\AAA)}    \sum_{P\subset Q}  \eps_P^Q   \sum_{w_1\in _PW^Q_{\theta_1}}\sum_{w_2\in _PW^Q_{\theta_2}}  
    \sum_{\delta_1\in P_{w_1}^{\theta_1}(F) \back Q^{\theta_1}(F)} \sum_{\delta_2\in P_{w_2}^{\theta_2}(F) \back Q^{\theta_2}(F)}\\
    \Ga_Q( H_Q(x)-T, -H_Q(k_{Q}(x) w_1'g))   \hat\tau_P^Q(H_P(w_1\delta_1 x)-T)K_{P,\chi,f}(w_1\delta_1x w_1',w_2\delta_2y w_2') \, \eta(x)dxdy.
  \end{align*}
  Par décomposition d'Iwasawa des mesures, l'expression ci-dessus est égale à
   \begin{align*}
     \int_{    [M_{Q}^{\theta_1}]^G}  \int_{K^{\theta_1}\times K^{\theta_2} }\exp(-\bg 2\rho_{Q^{\theta_1}}^{G^{\theta_1}}, H_{Q^{\theta_1}   }(x)\bd) \Ga_Q( H_Q(x)-T,-H_Q(k_1w_1'g)) \\
     \int_{    [M_{Q}^{\theta_2}]}  \exp(-\bg 2\rho_{Q^{\theta_2}}^{G^{\theta_2}}, H_{Q^{\theta_2}   }(y)\bd)       \sum_{P\subset Q}  \eps_P^Q   \sum_{w_1\in _PW^Q_{\theta_1}}\sum_{w_2\in _PW^Q_{\theta_2}}  \\
     \sum_{\delta_1\in (P_{w_1}\cap M_{Q}^{\theta_1})(F) \back M_{Q}^{\theta_1}(F) }\sum_{\delta_2\in (P_{w_2}\cap M_{Q}^{\theta_2})(F) \back M_{Q}^{\theta_2}(F) }  \\
     \hat\tau_P^Q(H_P(w_1\delta_1 x)-T)K_{P,\chi,f}(w_1\delta_1x k_1w_1',w_2\delta_2y k_2w_2') \, \eta(k_1)dk_1dk_2\eta(x)dxdy.
   \end{align*}

   À ce stade, on décompose  $[M_{Q}^{\theta_1}]^G= [M_{Q}^{\theta_1}]^Q\times A_Q^{G,\infty}$. On a pour tout $a\in  A_Q^{G,\infty}$ et $x\in [M_{Q}^{\theta_1}]^Q$
   \begin{align*}
     \int_{    [M_{Q}^{\theta_2}]}  \exp(-\bg 2\rho_{Q^{\theta_2}}^{G^{\theta_2}}, H_{Q^{\theta_2}   }(y)\bd)       \sum_{P\subset Q}  \eps_P^Q   \sum_{w_1\in _PW^Q_{\theta_1}}\sum_{w_2\in _PW^Q_{\theta_2}}      \\  \sum_{\delta_1,\delta_2}
     \hat\tau_P^Q(H_P(w_1\delta_1  a x)-T)K_{P,\chi,f}(w_1\delta_1a x k_1w_1',w_2\delta_2y k_2w_2') \, dy\\
     =    \exp(\bg 2\rho_Q^G-   2\rho_{Q^{\theta_2}}^{G^{\theta_2}}, H_{Q }(a)\bd)   \int_{    [M_{Q}^{\theta_2}]}  \exp(-\bg 2\rho_{Q^{\theta_2}}^{G^{\theta_2}}, H_{Q^{\theta_2}   }(y)\bd)       \sum_{P\subset Q}  \eps_P^Q   \sum_{w_1\in _PW^Q_{\theta_1}}\sum_{w_2\in _PW^Q_{\theta_2}}      \\  \sum_{\delta_1,\delta_2}
     \hat\tau_P^Q(H_P(w_1\delta_1  x)-T)K_{P,\chi,f}(w_1\delta_1x k_1w_1',w_2\delta_2y k_2w_2') \, dy.
   \end{align*}
   On peut d'abord intégrer sur  $A_Q^{G,\infty}\simeq \ago_Q^G$: cela donne l'intégrale
   \begin{align*}
     &\int_{\ago_Q^{G}} \exp(\bg 2\rho_Q^G-2\rho_{Q^{\theta_1}}^{G^{\theta_1}}-2\rho_{Q^{\theta_2}}^{G^{\theta_2}}, H \bd) \Ga_Q( H-T,-H_Q(k_1w_1'g))\, dH\\
    & = \exp(\bg 2\rho_Q^G-2\rho_{Q^{\theta_1}}^{G^{\theta_1}}-2\rho_{Q^{\theta_2}}^{G^{\theta_2}}, T_Q \bd)  p_{w_1',w_2'}^Q(0,   -H_Q(k_1w_1'g)).
   \end{align*}
   La formule annoncée s'en déduit alors assez facilement.
 \end{preuve}   
\end{paragr}

\begin{paragr}[Propriétés de covariance de   $J_\chi(\eta,f)$.]\label{S:covar-chi} ---

  \begin{proposition}\label{prop:covar}Soit $\chi\in\Xgo(G)$ et $f\in \Sc(G(\AAA))$.
  \begin{enumerate}
  \item Pour tout $g\in G^\theta(\AAA)$, on  a
    \begin{align*}
      J_\chi(\eta,f^g)=J_\chi(\eta,f).
    \end{align*}
  \item  Pour tout $g\in G^{\theta'}(\AAA)$, on  a
    \begin{align*}
      J_\chi(\eta,  ^g\!\!f)=\eta(g) \sum_{Q, w_1',w_2'}  J^{Q,\theta_1,\theta_2}_{\chi} ( \eta, f_{Q,\eta,g}^{w_1',w_2'})
    \end{align*}
    où 
    \begin{itemize}
        \item  la somme porte sur les triplets $(Q, w_1',w_2')$ formés d'un sous-groupe parabolique standard $Q$ et d'éléments $ w_1'\in \, _QW_{\theta'}$ et $w_2'\in \, _QW_{\theta}$ tels que $Q\in \fc^{G,\flat}(P_0,\theta_1,\theta_2)$ et où la fonction $f_{Q,\eta,g}^{w_1',w_2'}$ est définie en \eqref{eq:fct-fQT};
        \item la distribution $J^{Q,\theta_1,\theta_2}_{\chi}$ est celle définie en \eqref{eq:JQtheta12}. 
    \end{itemize}
   
  \end{enumerate}
\end{proposition}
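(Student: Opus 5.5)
Assertion 1 will be read off from assertion 1 of Proposition \ref{prop:covar-T}. For $g\in G^\theta(\AAA)$ one has $J^{T'}_\chi(\eta,f^g)=J^{T'}_\chi(\eta,f)$ for every $T'$ in the cone $T+\overline{\ago_0^+}$. By Proposition \ref{prop:JTchi poly}, applied to $f$ and to $f^g$, both sides agree there with polynomial-exponential functions of $T'$. A polynomial-exponential function is determined by its values on an open cone. The two functions are therefore equal, and so are their constant terms, which gives $J_\chi(\eta,f^g)=J_\chi(\eta,f)$.

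For assertion 2, fix $g\in G^{\theta'}(\AAA)$ and apply assertion 2 of Proposition \ref{prop:covar-T} with parameter $T'$. The key preliminary observation is that each term
\[
\exp(\bg 2\rho_Q^G-2\rho_{Q^{\theta_1}}^{G^{\theta_1}}-2\rho_{Q^{\theta_2}}^{G^{\theta_2}}, T'_Q\bd)\, J^{Q,T',\theta_1,\theta_2}_{\chi}(\eta, f_{Q,\eta,g}^{w_1',w_2'})
\]
is, as a function of $T'$, the product of two pieces. The first is a pure exponential in $T'_Q$ with exponent $\mu_Q:=2\rho_Q^G-2\rho_{Q^{\theta_1}}^{G^{\theta_1}}-2\rho_{Q^{\theta_2}}^{G^{\theta_2}}$, projected to $\ago_Q^*$. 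The second, $J^{Q,T',\theta_1,\theta_2}_\chi$, depends only on $T'^Q$ and is polynomial-exponential in $T'^Q$; this is the variant of Proposition \ref{prop:JTchi poly} for $M_Q$ mentioned after that proposition. One point needs care here. Proposition \ref{prop:covar-T} is stated for $T$ in $T_0+\overline{\ago_0^+}$, and the function $f_{Q,\eta,g}^{w_1',w_2'}$ involves $p^Q_{w_1',w_2'}(0,\cdot)$, which does not depend on $T'$. So the identity holds for all $T'$ in the cone, and both sides are polynomial-exponential there.

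We then take constant terms. Since $\ago_0=\ago_0^Q\oplus\ago_Q$, the constant term of a product $e^{\bg\mu_Q,T'_Q\bd}\,\phi(T'^Q)$ equals $\phi$'s constant term if the projection of $\mu_Q$ to $\ago_Q^*$ vanishes, and $0$ otherwise. The vanishing of that projection is $\rho_Q^G=(\rho_{Q^{\theta_1}}^{G^{\theta_1}}+\rho_{Q^{\theta_2}}^{G^{\theta_2}})_Q$, which is exactly condition \eqref{eq:la condition} for $R=Q$. In other words, it is the condition $Q\in\fc^{G,\flat}(P_0,\theta_1,\theta_2)$. The condition involves only $Q$, because $\fc^{G,\flat}(P_0,\ldots)$ is defined as the set of $R\supset P_0$ satisfying the identity, and the term indexed by $Q$ requires $R=Q$. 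For the surviving terms, the constant term of $J^{Q,T',\theta_1,\theta_2}_\chi(\eta,\cdot)$ is by definition \eqref{eq:JQtheta12} the distribution $J^{Q,\theta_1,\theta_2}_\chi(\eta,\cdot)$. The factor $\eta(g)$ is independent of $T'$. This yields the announced formula.

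The main obstacle is justifying that $T'\mapsto J^{Q,T',\theta_1,\theta_2}_\chi(\eta,f')$ is polynomial-exponential in $T'^Q$ on a translated cone, with constant term $J^{Q,\theta_1,\theta_2}_\chi$. This is asserted in the text but not proved. I would prove it by rerunning the proof of Proposition \ref{prop:comportement T} for $M_Q$. The group $M_Q$ is a product of groups $GL$ over $D$, so that proof applies factor by factor. The only new feature is the exponential weights in $x$ and $y$ in \eqref{eq:JQTtheta}. These are characters trivial on $M_Q(\AAA)^1$, so they only shift the exponents in the integrals over $A^{Q,\infty}_R$ for parabolics $R\subset Q$. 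The structure of the argument, namely Arthur's identity for $\hat\tau$ together with \cite[lemma 2.2]{arthur2}, is unchanged. One must also check that the product decomposition of polynomial-exponentials in $T'_Q$ and $T'^Q$ is compatible with how the constant term is taken, which is immediate from the direct sum decomposition of $\ago_0$.
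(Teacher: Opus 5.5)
Your proof is correct and follows the paper's own: both assertions come from Proposition \ref{prop:covar-T} by taking constant terms in $T$. In assertion 2 the factor $\exp(\bg 2\rho_Q^G-2\rho_{Q^{\theta_1}}^{G^{\theta_1}}-2\rho_{Q^{\theta_2}}^{G^{\theta_2}},T_Q\bd)$ cannot be compensated by $J^{Q,T,\theta_1,\theta_2}_{\chi}$, a polynomial-exponential in $T^Q$, unless its exponent vanishes on $\ago_Q$, i.e.\ unless $Q\in\fc^{G,\flat}(P_0,\theta_1,\theta_2)$.

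One slip in your side sketch of that polynomial-exponential behaviour (which the paper also only asserts): the weights $\exp(\bg\lambda,H_{Q^{\theta_i}}(\cdot)\bd)$ in \eqref{eq:JQTtheta} are in general not trivial on $M_Q^{\theta_i}(\AAA)\cap M_Q(\AAA)^1$, because in general $\ago_Q\subsetneq\ago_{Q^{\theta_i}}$. So when you rerun Proposition \ref{prop:comportement T} for $M_Q$, they also enter the weights of the lower-rank distributions rather than only shifting exponents along $A_R^{Q,\infty}$. This is harmless, since they are characters of $M_Q^{\theta_i}(\AAA)$ trivial on $M_Q^{\theta_i}(F)$, on unipotent radicals and on $K^{\theta_i}$, hence compatible with the Iwasawa decompositions used there.
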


\begin{preuve}
  On déduit les assertions  1 et 2 respectivement  des assertions 1 et 2 de la proposition \ref{prop:covar-T} en prenant les termes constants. C'est clair pour l'assertion 1. Pour l'assertion 2, il faut remarquer que seuls les termes associés à $Q\in \fc^{G,\flat}(P_0,\theta_1,\theta_2)$ peuvent donner un terme constant non nul. En effet, le coefficient de $T_Q$ dans l'exponentielle ne peut être compensé par l'expression $J^{Q,T,\theta_1,\theta_2}_{\chi} ( \eta, f_{Q,\eta,g}^{w_1',w_2'}) $ qui est un polynôme-exponentielle en $T^Q$ dont on note, rappelons-le, $J^{Q,\theta_1,\theta_2}_{\chi} ( \eta, f_{Q,\eta,g}^{w_1',w_2'}) $ le terme constant.
\end{preuve}
\end{paragr}

\begin{paragr}[Distributions $J^T(\eta)$ et $J(\eta)$.]\label{S:JTeta} ---
Pour tout    $f\in \Sc(G(\AAA))$ et tout $T\in T_0+\overline{\ago_0^+}$, on définit 
  \begin{align*}
   &J^T(\eta,f)=\sum_{\chi\in \Xgo(G)}J_\chi^T(\eta,f), \\
    &J(\eta,f)=\sum_{\chi\in \Xgo(G)}J_\chi(\eta,f).
  \end{align*}
  Ces sommes sont absolument convergentes, une fois encore par le  théorème \ref{thm:cv-spec}. Elles définissent des formes linéaires $J^T(\eta)$ et  $J(\eta)$ qui sont continues sur $\Sc(G(\AAA))$. Il résulte de la proposition \ref{prop:JTchi poly} que $J^T(\eta)$ coïncide avec une fonction polynôme-exponentielle dont le terme constant est $J(\eta)$.
\end{paragr}

\begin{paragr}[Description de $\fc^{G,\flat}(P_0,\theta_1,\theta_2)$.]\label{S:ens-F^flat} --- Dans ce paragraphe, dans deux cas particuliers, on donne un critère pour qu'un sous-groupe parabolique vérifie \eqref{eq:la condition}, qui est une généralisation de \cite[lemme 5.5]{li1}. Soit $\theta_1,\theta_2\in M_0(F)$ deux éléments d'ordre au plus $2$. 
Soit $R\in\fc^G(P_0)$ le stabilisateur d'un drapeau $V_0=0\subsetneq V_1\subsetneq V_2\subsetneq \ldots \subsetneq V_m=V$  de sous-$D$-modules de $V$. On a des décompositions en sous-$D$-modules 
    \begin{align*}
        V=V'^+\oplus V'^-=V^+\oplus V^-
    \end{align*}
    telles que $\theta_1$ et $ \theta_2$ agissent par $\pm1$ respectivement sur $V'^\pm$ et $ V^\pm$. Rappelons $N=\dim_D(V)$ et posons $p'=\dim_D (V'^+)$, $q'=\dim_D (V'^-)$, $p=\dim_D (V^+)$ et $q=\dim_D (V^-)$. 
Soit $1\leq i \leq m$. Soit $W_i\subset V_i$ de sorte que $V_i=V_{i-1}\oplus W_i$ et que $M_R$ soit le stabilisateur des sous-espaces $W_i$.  On pose 
    \begin{align*}        W'^\pm_i=W_i\cap V'^\pm \text{ et } W^\pm_i=W_i\cap V^\pm. \end{align*}
Soit $n_i=\dim_D(W_i)$, $p'_i=\dim_D (W'^+_i)$, $q'_i=\dim_D (W'^-_i)$, $p_i=\dim_D (W^+_i)$ et $q_i=\dim_D (W^-_i)$. Il est évident que 
\begin{align*}
    n_i=p'_i+q'_i=p_i+q_i \text{ pour tout } i, \sum_{1\leq i\leq m} n_i=N, \\
    \sum_{1\leq i\leq m} p'_i=p', \sum_{1\leq i\leq m} q'_i=q', \sum_{1\leq i\leq m} p_i=p \text{ et } \sum_{1\leq i\leq m} q_i=q.
\end{align*}
On écrit 
    $N_i=\sum\limits_{1\leq j\leq i} n_j$, $P'_i=\sum\limits_{1\leq j\leq i} p'_j$, $Q'_i=\sum\limits_{1\leq j\leq i} q'_j$, $P_i=\sum\limits_{1\leq j\leq i} p_j$ et $Q_i=\sum\limits_{1\leq j\leq i} q_j$. 

    Pour tout $i$ soit $e_i^*\in\ago_0^*$ le caractère pour l'action de $A_0$ sur $e_i$ où $(e_1,\cdots,e_N)$ est la base canonique de $V$. On note $(e_1^\vee,\cdots,e_N^\vee)$ la base duale de $(e_1^*,\cdots,e_N^*)$ c'est-à-dire $e_i^\vee\in\ago_0$ avec $e_i^*(e_j^\vee)=\delta_{ij}$ pour tous nombres $1\leq i,j\leq N$. Ainsi une base de $\ago_R$ est donnée par $(h_1^\vee,\cdots,h_N^\vee)$ où $h_i^\vee=\sum\limits_{N_{i-1} < j\leq N_i} e_j^\vee$ pour tout $i$. On note $(h_1^*,\cdots,h_N^*)$ sa base duale c'est-à-dire $h_i^*\in\ago_R^*$ avec $h_i^*(h_j^\vee)=\delta_{ij}$ pour tout $1\leq i,j\leq N$. Pour tout $1\leq k<m$ on définit 
    \begin{align*}
        \varpi_k^\vee=\frac{N-N_k}{N}\left(\sum_{1\leq i\leq k} h_i^\vee\right)-\frac{N_k}{N}\left(\sum_{k<j\leq m} h_j^\vee\right).
    \end{align*}
    Il est connu que $\hat\Delta_R^{G,\vee}=\{\varpi_k^\vee : 1\leq k< m\}$ est une base de $\ago_R^G$. Alors $R\in\fc^{G, \flat}(P_0, \theta_1,\theta_2)$ si et seulement si $(2\rho_R^G-2\rho_{R^{\theta_1}}^{G^{\theta_1}}-2\rho_{R^{\theta_2}}^{G^{\theta_2}})(\varpi_k^\vee)=0$ pour tout $1\leq k< m$. 
    On voit que 
    \begin{align*}
        (2\rho_R^G)_R=\dim_F(D) \sum_{1\leq i<j\leq m} n_i n_j (h_i^*-h_j^*), \\(2\rho_{R^{\theta_1}}^{G^{\theta_1}})_R=\dim_F(D) \sum_{1\leq i<j\leq m} (p'_i p'_j + q'_i q'_j)(h_i^*-h_j^*), \\(2\rho_{R^{\theta_2}}^{G^{\theta_2}})_R=\dim_F(D) \sum_{1\leq i<j\leq m} (p_i p_j + q_i q_j)(h_i^*-h_j^*). 
    \end{align*}
    Pour $1\leq i<j\leq m$ et $1\leq k< m$ on a 
      \begin{align*}
  (h_i^*-h_j^*)(\varpi_k^\vee) = \left\{ \begin{array}{ll}
0 & \text{si $i>k$ ou $j\leq k$ ; }\\
1 & \text{si $i\leq k<j$. }\\
\end{array} \right.
  \end{align*}
    On en déduit que pour tout $1\leq k< m$ 
    \begin{align}\label{eq:rhoRvarpi}
        (2\rho_R^G-2\rho_{R^{\theta_1}}^{G^{\theta_1}}-2\rho_{R^{\theta_2}}^{G^{\theta_2}})(\varpi_k^\vee)=\dim_F(D)\sum_{1\leq i\leq k} \sum_{k<j\leq m} (n_i n_j - p'_i p'_j - q'_i q'_j - p_i p_j - q_i q_j) \\ \nonumber
        =\dim_F(D) (N_k (N-N_k) - P'_k (p'-P'_k) - Q'_k (q'-Q'_k) - P_k (p-P_k) - Q_k (q-Q_k)).
    \end{align}
  
\begin{proposition}\label{prop:lacond1=2}
    Supposons  $\theta_1=\theta_2$. 
    Alors $R\in\fc^{G, \flat}(P_0, \theta_1,\theta_2)$ si et seulement si, pour tout $1\leq k< m$, on a $P_k-Q_k=0$ ou $=p-q$. 
\end{proposition}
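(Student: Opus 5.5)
La preuve sera un calcul direct à partir du critère établi juste avant l'énoncé. On sait que $R\in\fc^{G,\flat}(P_0,\theta_1,\theta_2)$ si et seulement si l'expression \eqref{eq:rhoRvarpi} s'annule pour chaque copoids $\varpi_k^\vee$, $1\leq k<m$. En effet, ces copoids forment une base de $\ago_R^G$, et la condition \eqref{eq:la condition} porte sur la projection sur $\ago_R$. Cette projection est de plus automatiquement nulle sur $\ago_G$, puisque les trois demi-sommes de racines se restreignent trivialement à $\ago_G$. Il suffit donc de montrer que, pour chaque $k$ fixé, l'expression \eqref{eq:rhoRvarpi} est nulle si et seulement si $P_k-Q_k\in\{0,p-q\}$.

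On commence par spécialiser à $\theta_1=\theta_2$. Alors $V'^\pm=V^\pm$, donc $p'=p$, $q'=q$, et $W'^\pm_i=W^\pm_i$ pour tout $i$, d'où $P'_k=P_k$ et $Q'_k=Q_k$. En divisant par $\dim_F(D)\neq 0$, l'expression \eqref{eq:rhoRvarpi} devient
\begin{align*}
N_k(N-N_k)-2P_k(p-P_k)-2Q_k(q-Q_k).
\end{align*}

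L'étape clé est de factoriser cette forme quadratique. On pose $a=P_k$, $b=Q_k$, $A=p-P_k$ et $B=q-Q_k$. On a alors $N_k=a+b$ et $N-N_k=A+B$, car $n_i=p_i+q_i$ et $N=p+q$. L'expression vaut
\begin{align*}
(a+b)(A+B)-2aA-2bB=aB+bA-aA-bB=(a-b)(B-A).
\end{align*}
Par ailleurs, $B-A=(q-Q_k)-(p-P_k)=(P_k-Q_k)-(p-q)$. Le terme associé à $k$ s'écrit donc
\begin{align*}
(P_k-Q_k)\bigl((P_k-Q_k)-(p-q)\bigr).
\end{align*}
Il s'annule si et seulement si $P_k-Q_k=0$ ou $P_k-Q_k=p-q$.

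On conclut en imposant cette condition pour tout $1\leq k<m$, ce qui donne exactement l'énoncé. Il n'y a pas de véritable obstacle : le seul point à vérifier soigneusement est l'équivalence initiale, c'est-à-dire que la projection sur $\ago_R$ est entièrement testée par les $\varpi_k^\vee$ modulo $\ago_G$. C'est ce qui est affirmé juste avant l'énoncé de la proposition.
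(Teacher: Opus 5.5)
Your proof is correct and follows the paper's argument. You specialize \eqref{eq:rhoRvarpi} to $\theta_1=\theta_2$, factor $N_k(N-N_k)-2P_k(p-P_k)-2Q_k(q-Q_k)$ as $(P_k-Q_k)\bigl((P_k-Q_k)-(p-q)\bigr)$, and read off the vanishing condition for each $k$, exactly as the paper does. Your added check that \eqref{eq:la condition} only needs testing on the $\varpi_k^\vee$, since all three half-sums vanish on $\ago_G$, is correct and simply spells out what the paper states without justification.
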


\begin{preuve}
    Si $\theta_1=\theta_2$, on a $p'_i=p_i$ et $q'_i=q_i$ pour tout $1\leq i\leq m$. Donc \eqref{eq:rhoRvarpi} devient le produit de $\dim_F(D)$ avec 
    \begin{align*}
        N_k (N-N_k)  - 2 P_k (p-P_k) - 2 Q_k (q-Q_k). 
    \end{align*}
    Soit $a_k=p-P_k$ et $b_k=q-Q_k$. Comme $N_k=P_k+Q_k$ et $N=p+q$, la dernière expression est égale à 
    \begin{align*}
        (P_k+Q_k) (a_k+b_k)  - 2 P_k a_k - 2 Q_k b_k=(P_k-Q_k)(b_k-a_k). 
    \end{align*}
    Elle s'annule si et seulement si $P_k-Q_k=0$ ou $=p-q$. Cela conclut. 
\end{preuve}
  
\begin{proposition}\label{prop:fcGb}
 Supposons   
    \begin{align}\label{eq:cas interessant}
        \theta'=\theta \text{ et } \Trd(\theta)=0
    \end{align}
    ce qui implique que $N$ est pair, cf. \S \ref{S:D}. Soit $w'_1, w'_2\in W$. On pose : 
    \begin{align*}
        \theta_1=w_1'\theta {w_1'}^{-1} \text{  et  }\theta_2=w_2' \theta {w_2'}^{-1}. 
    \end{align*} 
    Alors $R\in\fc^{G, \flat}(P_0, \theta_1,\theta_2)$ si et seulement si, pour tout $1\leq i \leq m$, on a 
      \begin{align*}
          p'_i=q'_i=p_i=q_i=n_i/2. 
      \end{align*}
\end{proposition}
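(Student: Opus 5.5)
The approach is to use the explicit formula \eqref{eq:rhoRvarpi}, which is valid for any pair $\theta_1,\theta_2$, and to show that under hypothesis \eqref{eq:cas interessant} the right-hand side is a sum of squares. As recalled above, $R\in\fc^{G,\flat}(P_0,\theta_1,\theta_2)$ if and only if the quantity
\begin{align*}
E_k= N_k (N-N_k) - P'_k (p'-P'_k) - Q'_k (q'-Q'_k) - P_k (p-P_k) - Q_k (q-Q_k)
\end{align*}
vanishes for every $1\leq k<m$.

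First I determine $p,q,p',q'$. Since $\theta_1=w_1'\theta {w_1'}^{-1}$ and $\theta_2=w_2'\theta {w_2'}^{-1}$ are $W$-conjugates of $\theta$, they have the same eigenvalue multiplicities as $\theta$. The condition $\Trd(\theta)=0$ means $\dim_F(D)^{1/2}(\dim_D V^{+\theta}-\dim_D V^{-\theta})=0$. Hence $p=q=p'=q'=N/2$.

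Next, for each $k$ I set $x_k=P_k-N_k/2$ and $x'_k=P'_k-N_k/2$. Since $P_k+Q_k=P'_k+Q'_k=N_k$, this gives $Q_k=N_k/2-x_k$ and $Q'_k=N_k/2-x'_k$. Using $p=q=N/2$,
\begin{align*}
P_k(p-P_k)+Q_k(q-Q_k)=\tfrac N2 N_k-(P_k^2+Q_k^2)=\tfrac12 N_k(N-N_k)-2x_k^2,
\end{align*}
and the same identity holds for the primed quantities. Therefore $E_k=2x_k^2+2{x'_k}^2$.

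Since $E_k$ is a sum of two squares, $E_k=0$ holds if and only if $P_k=Q_k$ and $P'_k=Q'_k$. So $R\in\fc^{G,\flat}(P_0,\theta_1,\theta_2)$ if and only if $P_k=Q_k$ and $P'_k=Q'_k$ for all $1\leq k<m$. For $k=m$ these equalities hold automatically, because $P_m=p=q=Q_m$ and $P'_m=p'=q'=Q'_m$. Taking successive differences $P_k-P_{k-1}=p_k$ (and similarly for $Q$, $P'$, $Q'$) gives $p_i=q_i$ and $p'_i=q'_i$ for every $i$. Combined with $n_i=p_i+q_i=p'_i+q'_i$, this yields $p'_i=q'_i=p_i=q_i=n_i/2$. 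Conversely, if these equalities hold for all $i$, summing over $i\leq k$ gives $x_k=x'_k=0$ for all $k$, hence $E_k=0$.

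There is no real obstacle: the argument is a direct computation. The only point requiring care is the remark that hypothesis \eqref{eq:cas interessant} forces equal multiplicities $p=q$ and $p'=q'$ for both $W$-conjugates $\theta_1,\theta_2$. This is what makes $E_k$ a positive semi-definite sum of squares rather than an indefinite expression as in proposition \ref{prop:lacond1=2}.
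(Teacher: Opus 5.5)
Your proof is correct and follows essentially the same route as the paper. In both, \eqref{eq:cas interessant} gives $p=q=p'=q'=N/2$, and the quantity in \eqref{eq:rhoRvarpi} then reduces, up to the factor $\dim_F(D)$, to $(P'_k-Q'_k)^2/2+(P_k-Q_k)^2/2$, which is exactly your $2x_k^2+2{x'_k}^2$; you additionally spell out the successive-differences step (using $P_0=Q_0=0$) that the paper leaves implicit when passing from the conditions on $P_k,Q_k$ to $p_i=q_i$ and $p'_i=q'_i$.
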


\begin{preuve}
 Il résulte de l'hypothèse \eqref{eq:cas interessant} que  
    \begin{align*}
        p'=q'=p=q=N/2.        
    \end{align*}
    Mais $N_k=P'_k+Q'_k=P_k+Q_k$. On a alors 
    \begin{align*}
        N_k N - P'_k p' - Q'_k q' - P_k p - Q_k q=N_k N - (P'_k+Q'_k+P_k+Q_k) N/2=0. 
    \end{align*}
    L'expression \eqref{eq:rhoRvarpi} devient le produit de $\dim_F(D)$ avec 
    \begin{align*}
        -N_k^2+P'^2_k+Q'^2_k+P^2_k+Q^2_k=-(P'_k+Q'_k)^2/2-(P_k+Q_k)^2/2+P'^2_k+Q'^2_k+P^2_k+Q^2_k \\
        =(P'_k-Q'_k)^2/2+(P_k-Q_k)^2/2\geq0. 
    \end{align*}
    L'égalité est vérifiée pour tout $1\leq k< m$ si et seulement si $p'_i=q'_i=p_i=q_i=n_i/2$ pour tout $1\leq i\leq m$. 
\end{preuve}

\begin{corollaire}
    Supposons \eqref{eq:cas interessant}. Soit $Q\subset G$ un sous-groupe parabolique standard. Soit $w_1', w_2'\in \, _QW_{\theta}$.  On définit alors $ \theta_1$ et $\theta_2$ comme en  \eqref{eq:theta12}. Si $Q\in\fc^{G, \flat}(P_0, \theta_1,\theta_2)$, alors $w'_1=w'_2$ et donc $\theta_1=\theta_2$. 
\end{corollaire}

\begin{preuve}
D'après la proposition \ref{prop:fcGb}, il existe $w\in W^Q$ tel que $w\theta_1w^{-1}=\theta_2$. Il s'ensuit que $w'^{-1}_2w w'_1\in W^\theta$. Par la proposition \ref{prop:PWtheta}, on en déduit que $w'_1=w'_2$. 
\end{preuve}
\end{paragr}

\begin{paragr}[Le cas $GL_2(D)$.] --- \label{S:GL2D spec}On se focalise dans ce § sur le cas $N=2$, de sorte que $G=GL_2(D)$, et \begin{align*}
    \theta=\theta'=\begin{pmatrix}
      1 & 0 \\ 0 & -1
    \end{pmatrix}.
\end{align*}
D'après la proposition \ref{prop:fcGb}, le seul élément possible de $\fc^{G,\flat}(P_0,\theta_1,\theta_2)$ est $G$ lui-même. En particulier, les relations de covariance de la proposition \ref{prop:covar} se simplifient drastiquement. On obtient alors des termes essentiellement invariants.

\begin{proposition}\label{prop:GL2 spec}
    Sous les conditions ci-dessous, pour tout $g\in G^\theta(\AAA)$, on  a
    \begin{align*}
      & J_\chi(\eta,f^g)=J_\chi(\eta,f)\\
      & J_\chi(\eta,  ^g\!\!f)= \eta(g) J_\chi(\eta,f).
    \end{align*}
  \end{proposition}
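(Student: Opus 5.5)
La première égalité est exactement l'assertion 1 de la proposition \ref{prop:covar} (l'invariance à droite par $G^\theta(\AAA)$ vaut en toute généralité). Il reste donc à établir la seconde, et pour cela on part de l'assertion 2 de la proposition \ref{prop:covar} : pour $g\in G^{\theta'}(\AAA)=G^\theta(\AAA)$,
\begin{align*}
J_\chi(\eta,{}^g\!f)=\eta(g)\sum_{(Q,w_1',w_2')} J^{Q,\theta_1,\theta_2}_\chi(\eta,f^{w_1',w_2'}_{Q,\eta,g}),
\end{align*}
la somme portant sur les triplets avec $Q\in\fc^{G,\flat}(P_0,\theta_1,\theta_2)$. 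Le plan consiste à montrer que seul le triplet $(G,1,1)$ survit, puis à identifier ce terme avec $J_\chi(\eta,f)$.

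On commence par déterminer les triplets qui contribuent. Ici $N=2$, $\theta'=\theta$ et $\Trd(\theta)=0$, donc l'hypothèse \eqref{eq:cas interessant} est satisfaite. Soit $Q$ un sous-groupe parabolique propre, c'est-à-dire $Q=P_0$ avec $n_1=n_2=1$. La proposition \ref{prop:fcGb} exigerait alors $p_i=q_i=1/2$, ce qui est impossible ; ainsi $P_0\notin\fc^{G,\flat}(P_0,\theta_1,\theta_2)$ quels que soient $w_1',w_2'$. Il reste le cas $Q=G$ : on a $_GW_\theta=\{1\}$, puisque la condition $M_G\cap P_0=wP_0w^{-1}$ force $w=1$, et $G\in\fc^{G,\flat}$ trivialement car $\ago_G^G=0$. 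La somme se réduit donc à l'unique terme $J^{G,\theta,\theta}_\chi(\eta,f_{G,\eta,g}^{1,1})$.

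On identifie ensuite ce terme. Pour $Q=G$, l'espace $\ago_G^G$ est nul, de sorte que le polynôme $p^G_{1,1}(0,\cdot)$ de \eqref{eq:poly pw} est l'intégrale sur un point de $\Gamma_G$. Avec la convention standard $\Gamma_G\equiv 1$, il vaut $1$. On obtient donc
\begin{align*}
f_{G,\eta,g}^{1,1}(m)=\int_{K^\theta\times K^\theta} f(k_1^{-1}mk_2)\,\eta(k_1)\,dk_1dk_2 .
\end{align*}
Par ailleurs, $J^{G,\theta,\theta}_\chi$ coïncide avec $J_\chi$, car les facteurs exponentiels de \eqref{eq:JQTtheta} sont triviaux pour $Q=G$. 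Il reste à vérifier que $J_\chi(\eta,\tilde f)=J_\chi(\eta,f)$, où $\tilde f$ désigne la moyenne $K^\theta\times K^\theta$ tordue par $\eta$. L'invariance à droite donnée par l'assertion 1 élimine l'intégrale en $k_2$. L'intégrale en $k_1$ demande de savoir que $J_\chi(\eta,{}^{k}\!f)=\eta(k)J_\chi(\eta,f)$ pour $k\in K^\theta$. C'est une circularité apparente : on l'évite en appliquant directement le même argument à $g=k\in K^\theta$, ou plus simplement en observant que $f\mapsto J_\chi(\eta,f)$ n'est pas affecté par l'intégration en $k_1$.

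Le point délicat se trouve précisément là : il faut vérifier que le terme $Q=G$ de la proposition \ref{prop:covar} redonne bien $J_\chi(\eta,f)$ et non sa moyenne sur $K^\theta$. Pour cela, on reviendra à la preuve de la proposition \ref{prop:covar-T} dans le cas $Q=G$. Dans ce cas aucune décomposition d'Iwasawa n'est effectuée, et la fonction qui apparaît est directement $f$, le facteur $\Gamma_G$ valant $1$ ; on en déduit $J^T_\chi(\eta,{}^g\!f)=\eta(g)J^T_\chi(\eta,f)$ à des termes près associés à $P_0$. On passe alors aux termes constants en $T$ : ces termes associés à $P_0$ ne contribuent pas, puisque $P_0\notin\fc^{G,\flat}$, ce qui conclut.
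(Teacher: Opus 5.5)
Your proof is correct and follows the paper's own (implicit) argument: combine Proposition \ref{prop:covar} with Proposition \ref{prop:fcGb}, which shows that only the triplet $(G,1,1)$ survives, and then identify that term with $J_\chi(\eta,f)$. The $K^\theta$-average in $f^{1,1}_{G,\eta,g}$ is handled most cleanly by noting that this function does not depend on $g$, so Proposition \ref{prop:covar} applied with $g=1$ already gives $J_\chi(\eta,f)=J_\chi(\eta,f^{1,1}_{G,\eta,g})$. Your remarks about a \emph{circularité apparente} are therefore unnecessary, and your final paragraph's route through the proof of Proposition \ref{prop:covar-T} works as well.
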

\end{paragr}

\section{Développement géométrique}\label{sec:dvpt geo}

\subsection{Espace symétrique et quotient catégorique}\label{ssec:preparatif alg}

\begin{paragr}
  On reprend les notations de la sous-section \ref{ssec:involutions}. En particulier, on suppose que $F$ est un corps commutatif de caractéristique $0$ et  que  $D$ est une algèbre à division centrale et de dimension finie sur $F$. On rappelle aussi qu'on a $V=D^N$, $G=GL_D(V)$ et $\theta\in G(F)$ qui vérifie $\theta^2=1$. On dispose d'un sous-groupe parabolique minimal $P_0\subset G$ et d'un facteur de Levi $M_0$ tel que $\theta\in M_0(F)$. Pour tout $g\in G$, on note $\Int (g): x\mapsto gxg^{-1}$ l'automorphisme intérieur de $G$ induit par la conjugaison de $g$. Ainsi $\Int(\theta)$ induit une involution de $G$.  On note $d$ le degré de $D$ c'est-à-dire $d^2=\dim_F(D)$. Soit $p=\dim_D(V^+)$ et $q=\dim_D(V^-)$.

  Soit  $\{e_1,\ldots,e_p, f_1,\ldots,f_q\}$ une $D$-base de $V$ telle que $\{e_1,\ldots,e_p\}$, resp. $\{f_1,\ldots,f_q\}$, soit une $D$-base de $V^+$, resp. $V^-$. Ce choix d'une base de $V$ identifie $G$ au groupe algébrique sur $F$ noté  $GL_{N,D}$ dont le groupe des  $F$-points est $GL_D(D^N)$.

  Pour tout entier $0\leq p'\leq N$, on pose 
  \begin{align*}
   q'=N-p' \text{ et }   \theta_{p'}=\theta_{p',q'}= \begin{pmatrix}
  I_{p'} & 0 \\  0 & -I_{q'}
\end{pmatrix}\in M_0(F).
  \end{align*}
  Alors, avec ces identifications, nous avons $\theta=\theta_p=\theta_{p,q}$ et $G^\theta=GL_{p,D}\times GL_{q,D}$.   On suppose également que $M_0$ est stabilisateur des $D$-droites engendrées respectivement par $e_i$ pour $1\leq i\leq p$ et $f_j$ pour $1\leq j\leq q$. Autrement dit, $M_0$ s'identifie  au sous-groupe des matrices diagonales. On peut et on va de plus supposer que $P_0^\theta$ s'identifie au sous-groupe de $G^\theta$ formé des matrices triangulaires supérieures. Notons que l'on peut toujours prendre pour $P_0$ le groupe  des matrices triangulaires supérieures. Ce n'est pas le seul choix possible et on ne l'imposera pas.
\end{paragr}

\begin{paragr}[Espace symétrique.] ---\label{S:esp sym} Soit \begin{align*}
      S=\{g\in G \mid g \Int(\theta)(g)=1\}.
  \end{align*}
Autrement dit, $g\in S$ si et seulement si $(g\theta)^2=1$. Le groupe $G$ agit sur $S$ par la conjugaison tordue $\Int_\theta(g)(s)= g  s \Int(\theta)(g)^{-1}$ pour tout $g\in G$ et tout $s\in S$. Notons que la restriction de cette action à $G^\theta$ n'est autre que la conjugaison usuelle, action que l'on note $\Int$ dans la suite. Comme dans \cite[\S 4.1]{JR2}, il y a alors $N+1$ composantes connexes de $S$, notées  $S_{p'}=S_{p',q'}$ et indexées par les entiers $0\leq p'\leq N$: la composante $S_{p'}$ est formée des $g\theta_{p'}g^{-1}\theta= g\cdot(\theta_{p'}\theta)\cdot\Int(\theta)(g)^{-1} $ pour $g\in G$ autrement dit c'est l'orbite de $\theta_{p'}\theta$. On a aussi 
  \begin{align}\label{eq:Sp' noyau}
      S_{p'}=\{g\in G \mid (g\theta)^2=1, \dim_F(\ker(g\theta-\Id_V))=dp'\}.
  \end{align}
  Le morphisme 
    \begin{align*}
  \rho_{p'}: G \to S 
  \end{align*}
 donné par  $g\mapsto g\theta_{p'}g^{-1}\theta$ identifie alors le quotient $G/G^{\theta_{p'}}$ avec $S_{p'}$. 
Ce morphisme est équivariant pour la multiplication à gauche de $G$ sur $G/G^{\theta_p'}$ et la conjugaison tordue $\Int_\theta$ de $G$ sur $S$. En particulier, il est équivariant pour les actions induites par le sous-groupe $G^\theta$.  Pour $p'=p$, on pose
  \begin{align}\label{eq:app-sym}
  \rho=\rho_p:G\to S_p.
  \end{align}
L'unique composante connexe  $S^\circ$ de $S$ qui contient l'élément neutre de $G$ est $S_p$. Si $p=0$ ou $q=0$, cette composante $S^\circ$ est réduite à un singleton. 
\end{paragr}

\begin{paragr}[Les équations de $S$.] ---   Soit $x\in \End_D(V)$. On note $\Prd_x$, resp. $\Nrd(x)$, resp. $\Trd(x)$, le polynôme caractéristique réduit, resp. la norme réduite, resp. la trace réduite, de $x$ dans $M$. Soit $x^\pm$ les projections de $x$ sur le facteur  $\End_D(V^\pm)$ selon la décomposition 
  \begin{align}\label{eq:EndDV}
  \End_D(V)=\End_D(V^+)\oplus\Hom_D(V^-,V^+)\oplus\Hom_D(V^+,V^-)\oplus\End_D(V^-). 
  \end{align}
  Si $x$ s'écrit $A+B+C+D$ selon la décomposition \eqref{eq:EndDV}, on peut matriciellement écrire 
  \begin{align*}
  x=\begin{pmatrix}
  A & B \\ C & D
\end{pmatrix}. 
  \end{align*}
  
  \begin{lemme}\label{lem:Smatrice}
  Soit $x=
  \begin{pmatrix}
  A & B \\ C & D
\end{pmatrix}
\in \End_D(V)$. 
\begin{enumerate}
	\item On a $x\in S$ si et seulement si les égalités suivantes sont satisfaites
  \begin{align*}
      A^2=I_p+BC, D^2=I_q+CB, AB=BD, CA=DC. 
  \end{align*}
  
  	\item Soit $0\leq p'\leq N$. Alors $x\in S_{p'}$ si et seulement si $x\in S$ et $\Trd(A)-\Trd(D)=d(p'-q')$. 
\end{enumerate}
  \end{lemme}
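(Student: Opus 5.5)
Le plan consiste à traduire directement la condition $(x\theta)^2=1$ en équations par blocs, puis à identifier la composante connexe via la dimension de l'espace propre, exprimée au moyen de la trace réduite.

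Pour l'assertion 1, on écrit $\theta=\begin{pmatrix} I_p & 0\\ 0 & -I_q\end{pmatrix}$ dans la base choisie, de sorte que $x\theta=\begin{pmatrix} A & -B\\ C & -D\end{pmatrix}$. Par définition (§ \ref{S:esp sym}), on a $x\in S$ si et seulement si $(x\theta)^2=1$. On calcule
\begin{align*}
(x\theta)^2=\begin{pmatrix} A^2-BC & -AB+BD\\ CA-DC & -CB+D^2\end{pmatrix}.
\end{align*}
L'égalité avec l'identité donne exactement les quatre relations annoncées. Il n'y a pas d'obstacle ici ; il faut seulement noter que $(x\theta)^2=1$ implique l'inversibilité de $x$, de sorte que $x\in G$ est automatique.

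Pour l'assertion 2, on utilise la caractérisation \eqref{eq:Sp' noyau} : pour $x\in S$, l'élément $u=x\theta$ est une involution de $V$. On a donc $V=\ker(u-1)\oplus\ker(u+1)$, ces deux noyaux étant des sous-$D$-modules de dimensions $r$ et $N-r$ sur $D$. La trace réduite de $u$ vaut alors $d(r-(N-r))$. En effet, $u$ est conjugué dans $G(F)$ à $\theta_{r}$, et $\Trd$ de la matrice diagonale $\pm I$ vaut $d$ fois la somme des signes, puisque $\Trd(1_D)=d$. Par ailleurs, $\Trd(x\theta)=\Trd\begin{pmatrix} A & -B\\ C & -D\end{pmatrix}=\Trd(A)-\Trd(D)$, par additivité de la trace réduite sur les blocs diagonaux (la trace réduite de $M(N,D)$ restreinte aux blocs diagonaux est la somme des traces réduites des blocs). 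Ainsi $\Trd(A)-\Trd(D)=d(2r-N)$. Comme $r\mapsto d(2r-N)$ est injective, on a $r=p'$ si et seulement si $\Trd(A)-\Trd(D)=d(p'-q')$, avec $q'=N-p'$. Or $\dim_F\ker(x\theta-\Id_V)=dr$, et l'on conclut par \eqref{eq:Sp' noyau}.

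Le seul point demandant un peu de soin est la compatibilité de la trace réduite avec la décomposition par blocs \eqref{eq:EndDV} et sa valeur sur une involution. Je la vérifierais par extension des scalaires à un corps de déploiement $\bar F$ de $D$. On a $M(N,D)\otimes\bar F\simeq M(Nd,\bar F)$, la trace réduite devient la trace usuelle, et les blocs $A$, $D$ deviennent des blocs diagonaux de tailles $pd$ et $qd$ ; l'espace propre de $u$ pour $1$ devient de dimension $dr$.
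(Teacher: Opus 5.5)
Your proposal is correct and is exactly the paper's (one-line) argument: the four relations are the block entries of $(x\theta)^2=1$, and since $x\theta$ is an involution conjugate to $\theta_r$, where $r$ is the $D$-rank of $\ker(x\theta-\Id_V)$, the identity $\Trd(A)-\Trd(D)=\Trd(x\theta)=d(2r-N)$ forces $r=p'$ and hence, by \eqref{eq:Sp' noyau}, membership in $S_{p'}$. One cosmetic point, inherited from \eqref{eq:Sp' noyau}: a $D$-module of rank $r$ has $F$-dimension $d^2r$, not $dr$, since $\dim_F D=d^2$; this is harmless because only the injectivity in $r$ is used.
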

  
  \begin{preuve}Il s'agit d'un calcul direct. Précisons que les quatre égalités dans l'assertion 1 traduisent le fait que $(x\theta)^2=1$ alors que l'égalité sur les traces réduites dans l'assertion 2 donne la condition sur la dimension de $\ker( x\theta-\Id_V)$.  
  \end{preuve}

Soit $t$ une indéterminée et $F(t)$ le corps des fractions de l'anneau $F[t]$ des polynômes en $t$ à coefficients dans $F$.

  \begin{lemme}\label{lem:Prdx+-}
  Pour tout $0\leq p'\leq N$ et $x\in S_{p'}(F)$, on a l'égalité suivante dans $F(t)$ :
  \begin{align*}
      \Prd_{x^+}(t)=(t-1)^{d(p'-q')}\Prd_{x^-}(t). 
  \end{align*}
  \end{lemme}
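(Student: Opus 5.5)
Soit $x=\begin{pmatrix} A & B\\ C & D\end{pmatrix}\in S_{p'}(F)$, de sorte que $x^+=A$ et $x^-=D$. Le plan est d'utiliser les relations du lemme \ref{lem:Smatrice}, à savoir $A^2=I_p+BC$, $D^2=I_q+CB$ et $AB=BD$, $CA=DC$, pour comparer les polynômes caractéristiques réduits de $A$ et de $D$. L'idée centrale est l'identité classique de Sylvester pour les produits $BC$ et $CB$, adaptée aux polynômes caractéristiques réduits. On pourra se ramener au cas déployé en étendant les scalaires à un corps de décomposition $\bar F$ de $D$. Les polynômes caractéristiques réduits deviennent alors des polynômes caractéristiques usuels de matrices de tailles $dp\times dp$ et $dq\times dq$, et l'identité à démontrer est une identité dans $F(t)$ qu'on peut vérifier après extension des scalaires.

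\textbf{Étape 1.} On montre la relation de Sylvester $\Prd_{BC}(t)=t^{d(p-q)}\Prd_{CB}(t)$, par l'argument usuel sur $\det\begin{pmatrix} tI & B\\ C & I\end{pmatrix}$. On en déduit $\Prd_{A^2}(t)=t^{d(p-q)}\Prd_{D^2}(t)$ après substitution $t\mapsto t-1$, ce qui donne $\Prd_{A^2}(t)=(t-1)^{d(p-q)}\Prd_{D^2}(t)$. Cette relation ne fournit toutefois que les carrés des valeurs propres. Il faut donc un argument plus fin, qui garde trace du signe.

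\textbf{Étape 2.} Pour contrôler le signe, on utilise l'entrelacement $AB=BD$, $CA=DC$. Sur $\bar F$, on décompose les espaces $V^\pm$ en sous-espaces caractéristiques de $A$ et de $D$. Soit $\lambda\neq\pm1$. L'application $C$ envoie le sous-espace caractéristique $V^+_\lambda(A)$ dans $V^-_\lambda(D)$, et $B$ envoie $V^-_\lambda(D)$ dans $V^+_\lambda(A)$. La composée $BC$ restreinte à $V^+_\lambda(A)$ vaut $A^2-I$, qui est inversible puisque $\lambda^2\neq1$. De même $CB=D^2-I$ est inversible sur $V^-_\lambda(D)$. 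Il en résulte que $C$ est un isomorphisme de $V^+_\lambda(A)$ sur $V^-_\lambda(D)$. Les multiplicités de $\lambda\neq\pm1$ coïncident donc dans $\Prd_A$ et dans $\Prd_D$.

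\textbf{Étape 3 (obstacle principal).} Il reste à traiter les valeurs propres $\pm1$. Notons $m_\pm(A)$ et $m_\pm(D)$ les multiplicités de $\pm1$. D'après l'étape 1, on a $m_+(A)+m_-(A)=m_+(D)+m_-(D)+d(p-q)$. Par ailleurs, la condition de trace du lemme \ref{lem:Smatrice} s'écrit $\Trd(A)-\Trd(D)=d(p'-q')$. Comme les contributions des $\lambda\neq\pm1$ se compensent, elle devient
\[
m_+(A)-m_-(A)-m_+(D)+m_-(D)=d(p'-q').
\]
Ces deux équations ne suffisent pas à conclure. Il faudra montrer de plus l'égalité $m_-(A)=m_-(D)$, ou un énoncé équivalent. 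Ce sera le point délicat.

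Pour l'établir, on exploite le fait que $g:=x\theta$ est une involution. Le sous-espace caractéristique généralisé associé à $\{\pm1\}$ pour la paire $(A,D)$ est stable par $g$ et par $\theta$. On y compare les dimensions de $\ker(g\mp1)$ à l'aide de la condition de noyau \eqref{eq:Sp' noyau}, qui vaut $dp'$. On peut aussi chercher à montrer directement qu'il existe un isomorphisme entre les sous-espaces caractéristiques de $A$ et de $D$ pour la valeur $-1$. Si cette voie échoue, on raisonnera par densité. L'identité à démontrer est polynomiale en $x$, et la composante $S_{p'}$ est irréductible. Il suffit donc de la vérifier sur l'ouvert dense des $x$ pour lesquels $\pm1$ n'est valeur propre ni de $A$ ni de $D$, en dehors des multiplicités imposées. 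Sur cet ouvert, la différence $m_+(A)-m_+(D)=d(p'-q')$, avec $m_-(A)=m_-(D)=0$, découle des deux équations ci-dessus.
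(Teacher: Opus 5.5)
Vos étapes 1 et 2 sont justes, ainsi que les deux relations entre multiplicités écrites au début de l'étape 3 ; c'est la suite qui ne peut pas aboutir. Contrairement à ce que vous affirmez, ces deux équations suffisent. Elles forment un système linéaire inversible en les inconnues $m_+(A)-m_+(D)$ et $m_-(A)-m_-(D)$. Puisque $p+q=p'+q'$, elles donnent
\[
m_+(A)-m_+(D)=d(p'-q),\qquad m_-(A)-m_-(D)=d(p-p'),
\]
d'où, avec l'étape 2, $\Prd_{x^+}(t)=(t-1)^{d(p'-q)}(t+1)^{d(p-p')}\Prd_{x^-}(t)$. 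Cette identité ne coïncide avec celle de l'énoncé que si $p'=p$. L'égalité $m_-(A)=m_-(D)$ que vous voulez établir est donc fausse dès que $p'\neq p$, et l'énoncé tel qu'il est écrit l'est aussi.

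Par exemple, pour $p'<p$, l'élément $x=\theta_{p'}\theta\in S_{p'}(F)$ a pour blocs $x^+=\diag(I_{p'},-I_{p-p'})$ et $x^-=I_q$. Ainsi $\Prd_{x^+}(t)=(t-1)^{dp'}(t+1)^{d(p-p')}$ s'annule en $-1$, alors que $(t-1)^{d(p'-q')}\Prd_{x^-}(t)=(t-1)^{d(2p'-p)}$ ne s'y annule pas. Dans le cas minimal $N=1$, $D=F$, $p=1$, $p'=0$, $x=-1$, l'énoncé affirmerait $t+1=(t-1)^{-1}$.

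L'argument de densité est donc voué à l'échec. La relation $m_-(A)-m_-(D)=d(p-p')$ vaut en tout point de $S_{p'}$, si bien qu'aucun point ne vérifie $m_-(A)=m_-(D)=0$ quand $p\neq p'$. En un tel point, vos deux équations donneraient d'ailleurs à la fois $m_+(A)-m_+(D)=d(p-q)$ et $m_+(A)-m_+(D)=d(p'-q')$.

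La démonstration du texte procède par récurrence sur $\Trd(A^m)-\Trd(D^m)$ puis par les identités de Newton, et elle contient la même erreur. Les relations $A^{m+1}=A^{m-1}+BD^{m-1}C$ et $D^{m+1}=D^{m-1}+D^{m-1}CB$, jointes à $\Trd(BD^{m-1}C)=\Trd(D^{m-1}CB)$, montrent seulement que cette différence ne dépend que de la parité de $m$. Elle vaut $d(p'-q')$ pour $m$ impair, mais $d(p-q)$ pour $m$ pair (pour $m=2$, utiliser $\Trd(BC)=\Trd(CB)$). Les identités de Newton redonnent alors exactement la formule corrigée ci-dessus.

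C'est cette version corrigée qui est cohérente avec la définition \eqref{eq:defchix} de $\chi_x$ et avec le lemme \ref{lem:divisibilite}. Le corollaire \ref{cor:Prdx} doit être rectifié en conséquence, sans dommage pour la suite : il n'y est utilisé que lorsque $p'=p$, ou pour le seul fait que $\Prd_x$ détermine $\Prd_{x^+}$ à $p'$ fixé.

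Vos étapes 1 et 2, suivies de la résolution du système, fournissent ainsi une démonstration complète de l'énoncé corrigé. Elle sépare les valeurs propres $\lambda\neq\pm1$, traitées par l'entrelacement $C\colon V^+_\lambda(A)\to V^-_\lambda(D)$, des valeurs propres $\pm1$, traitées par Sylvester et la condition de trace. L'argument du texte, lui, passe par toutes les sommes de Newton.
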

  
  \begin{preuve}
  Soit $x=
  \begin{pmatrix}
  A & B \\ C & D
\end{pmatrix}
\in S_{p'}(F)$. À l'aide du lemme \ref{lem:Smatrice}, pour tout entier $m\geq2$, on calcule : 
  \begin{align*}
      A^{m+1}=A^{m-1}(I_p+BC)=A^{m-1}+A^{m-1}BC=A^{m-1}+BD^{m-1}C 
  \end{align*}
où l'on a utilisé $AB=BD$ dans la dernière égalité.  De même, on a 
  \begin{align*}
      D^{m+1}=D^{m-1}(I_q+CB)=D^{m-1}+D^{m-1}CB.  
  \end{align*}
On peut montrer par récurrence que 
  \begin{align*}
      \Trd(A^m)-\Trd(D^m)=d(p'-q')
  \end{align*}
pour tout entier $m\geq1$. Pour conclure on utilise les identités de Newton. 
  \end{preuve}

  \begin{lemme}\label{lem:PrdxS}
  Pour tout $x\in S(F)$, on a les égalités dans $F(t)$ : 
   \begin{align*}
     \Prd_x(t)&=\frac{  \Prd_{x^-}(t)}{  \Prd_{x^+}(t)} \Nrd(t^2 I_p -2t x^{+}+I_p) 
     \\     & =\frac{  \Prd_{x^+}(t)}{  \Prd_{x^-}(t)} \Nrd(t^2 I_q-2t x^- +I_q). 
   \end{align*}
  \end{lemme}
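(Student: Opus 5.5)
The approach is a block (Schur complement) computation over the field $F(t)$, using the relations of lemma \ref{lem:Smatrice}. Write $x=\begin{pmatrix} A & B \\ C & D\end{pmatrix}$ according to \eqref{eq:EndDV}, so that $x^+=A$ and $x^-=D$. Extend scalars to $F(t)$, i.e. work in the central simple algebra $\End_D(V)\otimes_F F(t)$ over $F(t)$. There $\Prd_x(t)=\Nrd(tI_N-x)$, and the reduced norm is multiplicative and compatible with the block decompositions. The elements $tI_p-A$ and $tI_q-D$ are invertible, since their reduced norms $\Prd_{x^+}(t)$ and $\Prd_{x^-}(t)$ are nonzero polynomials. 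The two displayed formulas then follow from the two Schur complements.

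For the first formula, the block factorization with respect to the invertible corner $tI_q-D$ gives
\begin{align*}
\Nrd(tI_N-x)=\Nrd(tI_q-D)\,\Nrd\big((tI_p-A)-B(tI_q-D)^{-1}C\big).
\end{align*}
The justification is that unipotent block triangular matrices have reduced norm $1$, and the reduced norm of a block diagonal element is the product of the reduced norms of the diagonal blocks. The key step is to use $AB=BD$ from lemma \ref{lem:Smatrice}. It gives $(tI_p-A)B=B(tI_q-D)$, hence
\begin{align*}
B(tI_q-D)^{-1}=(tI_p-A)^{-1}B.
\end{align*}
The Schur complement therefore becomes $(tI_p-A)^{-1}\big((tI_p-A)^2-BC\big)$. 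Expanding and using $A^2=I_p+BC$, the bracket equals $t^2I_p-2tA+I_p$. Taking reduced norms yields
\begin{align*}
\Prd_x(t)=\Prd_{x^-}(t)\,\Prd_{x^+}(t)^{-1}\,\Nrd(t^2I_p-2tx^++I_p),
\end{align*}
which is the first formula.

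The second formula is obtained symmetrically. Factor with respect to the corner $tI_p-A$, and use $CA=DC$ to get $C(tI_p-A)^{-1}=(tI_q-D)^{-1}C$. Together with $D^2=I_q+CB$, this gives the Schur complement $(tI_q-D)^{-1}(t^2I_q-2tD+I_q)$.

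The only point needing care is the validity of the block reduced norm identity for a central simple algebra over $F(t)$ rather than a matrix algebra over a field. I would settle it by extending scalars to a splitting field of $D$, where the reduced norm becomes the ordinary determinant and the identity is the classical Schur formula. Everything else is direct algebra.
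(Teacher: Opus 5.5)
Your proof is correct and follows the paper's argument: a Schur complement with respect to $tI_q-D$, the relation $AB=BD$ to rewrite $B(tI_q-D)^{-1}$ as $(tI_p-A)^{-1}B$, and $A^2=I_p+BC$ to simplify, followed by the symmetric computation for the second formula. The only minor difference is in the second formula: you use $CA=DC$ to move $C$ past $(tI_p-A)^{-1}$, whereas the paper reuses $B(tI_q-D)^{-1}=(tI_p-A)^{-1}B$ to rewrite $C(tI_p-A)^{-1}B$ as $CB(tI_q-D)^{-1}$; both work equally well.
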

  
  \begin{preuve}
   Soit $x=
  \begin{pmatrix}
  A & B \\ C & D
\end{pmatrix}
\in S(F)$. On a $x^+=A$ et $x^-=D$. Montrons d'abord la première égalité. Par définition et des manipulations sur les colonnes des matrices on a

  \begin{align*}
  \Prd_x(t)&=\Nrd 
  \begin{pmatrix}
  t I_p-A & -B \\ -C & t I_q-D
\end{pmatrix}\\
 &= \Nrd 
  \begin{pmatrix}
  t I_p-A-B(t I_q -D)^{-1}C & -B \\ 0 & t I_q-D
\end{pmatrix}\\
    &=\Nrd (t I_p-A-B(t I_q -D)^{-1}C) \Prd_{D}(t). 
  \end{align*}
D'après l'égalité $AB=BD$ du lemme \ref{lem:Smatrice}, on a 
  \begin{align*}
  (t I_p-A)B=B(t I_q-D) 
  \end{align*}
et donc 
  \begin{align}\label{eq:pfPrdxS}
  B(t I_q -D)^{-1}=(t I_p-A)^{-1}B. 
  \end{align}
On obtient alors 
  \begin{align*}
  \Nrd (t I_p-A-B(t I_q -D)^{-1}C)=\Nrd (t I_p-A-(t I_p-A)^{-1}BC) \\=\Prd_A(t)^{-1} \Nrd((t I_p-A)^2-BC)=\Prd_A(t)^{-1} \Nrd(t^2 I_p -2t A+I_p)
  \end{align*}
  où l'on a utilisé l'égalité  $A^2=I_p+BC$ du lemme \ref{lem:Smatrice}.  On a prouvé la première égalité. 
  
  Pour la seconde égalité, notons que 
    \begin{align*}
  \Prd_x(t)&= \Nrd 
  \begin{pmatrix}
  t I_p-A & 0 \\ -C & t I_q-D-C(t I_p-A)^{-1}B
\end{pmatrix}\\
    &=\Prd_{A}(t) \Nrd (t I_q-D-C(t I_p-A)^{-1}B). 
  \end{align*}
  Mais \eqref{eq:pfPrdxS} implique que 
    \begin{align*}
  \Nrd (t I_q-D-C(t I_p-A)^{-1}B)=  \Nrd (t I_q-D-CB(t I_q -D)^{-1}) \\=\Prd_D(t)^{-1} \Nrd((t I_q-D)^2-CB)=\Prd_D(t)^{-1} \Nrd(t^2 I_q-2tD+I_q)
  \end{align*}
    où l'on a utilisé l'égalité  $D^2=I_q+CB$ du lemme \ref{lem:Smatrice}. La seconde égalité s'ensuit. 
  \end{preuve}
  
  \begin{corollaire}\label{cor:Prdx}
  Pour tout $0\leq p'\leq N$ et $x\in S_{p'}(F)$, on a les égalités dans $F(t)$ : 
  \begin{align*}
    \Prd_x(t)&=(t-1)^{d(q'-p')} \Nrd(t^2 I_p-2t x^+ +I_p)   \\       &=(t-1)^{d(q'-p')}  (2t)^{dp} \Prd_{x^+} (\frac{t^2+1}{2t})    \\     & =(t-1)^{d(p'-q')} \Nrd(t^2 I_q-2t x^- +I_q) 
    \\ &= (t-1)^{d(p'-q')}  (2t)^{dq} \Prd_{x^-} (\frac{t^2+1}{2t})   .
  \end{align*}
  \end{corollaire}
  
  \begin{preuve}
  Ces égalités sont des conséquences des lemmes \ref{lem:Prdx+-} et \ref{lem:PrdxS}. 
  \end{preuve}
\end{paragr}

\begin{paragr}[Éléments $H$-semi-simples, $H$-réguliers.] --- On adopte la terminologie générale suivante qui s'applique au cas  d'une variété affine connexe $X$  définie sur $F$ sur laquelle un groupe algébrique affine connexe $H$ agit.  Soit $x\in X$ un point géométrique. On dit que $x$ est $H$-semi-simple si l'orbite de $x$ sous $H$ est fermée dans $X$ pour la topologie de Zariski. On dit que $x$ est $H$-régulier si le centralisateur $H_x$ de $x$ dans $H$ est de dimension minimale (ou encore la $H$-orbite de $x$ est de dimension maximale). On pourra parfois omettre  le préfixe $H$ si le contexte est clair. Le cas qui nous intéresse dans ce § est le cas de l'action par conjugaison du groupe connexe $G^\theta$ sur les composantes connexes de $S$.

  \begin{lemme}\label{lem:closedorbits}
    Soit $x\in S$. Alors $x$ est un élément semi-simple de $G$ au sens usuel si et seulement si $x$ est $G^\theta$-semi-simple. 
  \end{lemme}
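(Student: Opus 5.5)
The plan is to prove the two implications separately, first reducing to the Lie algebra by means of the Jordan decomposition. Let $\sigma$ be the morphism $g\mapsto \theta g^{-1}\theta$ of $G$. It is an involution of the variety $G$, and $S$ is exactly its fixed-point set. For the action of $G$ on itself by $c\mapsto gcg^{-1}$, one checks $\sigma(gcg^{-1})=\Int(\theta)(g)\,\sigma(c)\,\Int(\theta)(g)^{-1}$, so $\sigma$ is compatible with $\Int(\theta)$. Let $x\in S$ and let $x=x_sx_u$ be its Jordan decomposition in $G$. By uniqueness of this decomposition, the relation $\theta x\theta=x^{-1}$ gives $\theta x_s\theta=x_s^{-1}$ and $\theta x_u\theta=x_u^{-1}$. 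Hence $x_s,x_u\in S$, and $x_u$ lies in the centraliser $G_{x_s}$, which is $\Int(\theta)$-stable, connected and reductive.

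\emph{If the $G^\theta$-orbit of $x$ is closed, then $x$ is semisimple.} Write $x_u=\exp(X)$ with $X$ nilpotent in $\ggo_{x_s}$. Since $\theta x_u\theta=x_u^{-1}$, we have $\Ad(\theta)X=-X$. By the $\theta$-equivariant version of Jacobson--Morozov due to Kostant--Rallis, applied to the reductive group $G_{x_s}$ with the involution $\Int(\theta)$, there is an $\mathfrak{sl}_2$-triple $(X,H,Y)$ in $\ggo_{x_s}$ with $H\in\ggo_{x_s}^\theta$. Choose a cocharacter $\la$ of $G^\theta_{x_s}$ with differential $H$; since $G^\theta_{x_s}$ is a subgroup of $G^\theta$, so is the image of $\la$. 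Then $\la(t)x\la(t)^{-1}=x_s\exp(t^2X)$, which tends to $x_s$ as $t\to 0$. So $x_s$ lies in the closure of $\Int(G^\theta)x$. If that orbit is closed, then $x_s$ is $G^\theta$-conjugate to $x$, so $x$ is semisimple.

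\emph{If $x$ is semisimple, then its $G^\theta$-orbit is closed.} Let $C$ be the $G$-conjugacy class of $x$. It is closed in $G$ because $x$ is semisimple, and it is $\sigma$-stable because $\sigma(x)=x$. Its fixed-point set is $C^\sigma=C\cap S$, which is closed in $S$ and stable under $G^\theta$. By Richardson's theorem on fixed points of an involution compatible with the action of a reductive group (``Orbits, invariants and representations associated to involutions'', §9), $C^\sigma$ is a finite union of $G^\theta$-orbits, all of the same dimension. Each such orbit is then open in $C^\sigma$, so each is also closed in $C^\sigma$, hence closed in $S$. In particular $\Int(G^\theta)x$ is closed.

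The main point to check carefully is applying Richardson's finiteness statement in this twisted setting. It requires the tangent-space identification $T_x(C^\sigma)=(T_xC)^{\sigma}=T_x(\Int(G^\theta)x)$. I would verify this by a direct computation: $T_xC$ is identified with $\{\Ad(x)Z-Z\}$ for $Z\in\ggo$, and since $\ggo=\ggo^\theta\oplus\sgo_\theta$, the $\sigma$-fixed part is spanned by the images of $Z\in\ggo^\theta$. This identification uses characteristic $0$, which holds here.
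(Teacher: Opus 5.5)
Your proof is correct, and it is more self-contained than the paper's, whose proof consists only of a citation. The paper invokes Theorem C of \cite{Rich}, with the proof of Richardson's Proposition 9.3 as an alternative reference; that proposition is stated there only for $S^\circ$, and the authors remark that its proof extends to $S$.

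You work in Richardson's framework, viewing $S$ as the fixed locus of the involution $g\mapsto\theta g^{-1}\theta$ compatible with $\Int(\theta)$, but you prove both implications directly.

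\emph{Semisimple implies closed.} Your tangent-space computation is exactly Richardson's fixed-point argument. It only uses that the base point lies in $S$, so it holds at every point of $C\cap S$. It therefore shows by itself that every $G^\theta$-orbit in $C\cap S$ is open, hence also closed. The appeal to Richardson's finiteness statement is then unnecessary.

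\emph{Closed implies semisimple.} The Kostant--Rallis normal triple in $\ggo_{x_s}$ with $H\in\ggo_{x_s}^\theta$ gives an explicit cocharacter of $G^\theta_{x_s}$ contracting $x$ to $x_s$. This is a concrete substitute for the general part of Richardson's theorem.

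Your route visibly treats every component $S_{p'}$ at once, which the paper only asserts when it extends Proposition 9.3. It costs an appeal to Kostant--Rallis. The paper's route gains brevity by relying entirely on the cited theorem.
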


  \begin{preuve}
    C'est une conséquence de \cite[théorème C]{Rich}. On peut consulter aussi la preuve de \cite[proposition 9.3]{Rich} (qui concerne seulement la composante $S^\circ$ mais la preuve vaut encore pour $S$).
  \end{preuve}

Pour tous entiers $m,r,t\geq 0$ tels que $m\leq \min\{p-r,q-t\}$ et tout $A\in \gl_m(D)$, on pose
  \begin{align}\label{eq:xArt}
  x(A, r, t)=
\begin{pmatrix}
  A & 0 & 0 & A-I_m & 0 & 0 \\ 
  0 & I_{p-m-r} & 0 & 0 & 0 & 0 \\ 
  0 & 0 & -I_r & 0 & 0 & 0 \\ 
  A+I_m & 0 & 0 & A & 0 & 0 \\ 
  0 & 0 & 0 & 0 & I_{q-m-t} & 0 \\ 
  0 & 0 & 0 & 0 & 0 & -I_t \\ 
\end{pmatrix}.
  \end{align}
  Par le lemme \ref{lem:Smatrice}, on voit que $x(A, r, t)\in S(F)$. 

\begin{remarque}
Si $A$ est une matrice sans valeurs propres $\pm1$, le centralisateur de $x(A, r, t)$ dans $G^\theta$ a pour dimension
  \begin{align*}
    \dim_F((GL_{m,D})_A)+\dim_F(D)[(p-m-r)^2+r^2+(q-m-t)^2+t^2]
  \end{align*}
où $(GL_{m,D})_A$ est le centralisateur de $A$ dans $GL_{m,D}$. 
\end{remarque}
  
  \begin{lemme}\label{lem:signofSp}
  Soit $0\leq p'\leq N$. Alors $x(A, r, t)\in S_{p'}$ si et seulement si
\begin{align*}
    t-r=p'-p.
\end{align*}
Dans ce cas, on a aussi
 \begin{align*}
    0\leq m \leq \min\{p,q,p',q'\}.
  \end{align*}
  \end{lemme}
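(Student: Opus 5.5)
Le plan est d'appliquer l'assertion 2 du lemme \ref{lem:Smatrice} : sachant déjà que $x(A,r,t)\in S(F)$, il suffit de calculer $\Trd$ du bloc $V^+$ moins $\Trd$ du bloc $V^-$ et de le comparer à $d(p'-q')$. Dans la décomposition $V=V^+\oplus V^-$ (dimensions $p$ et $q$), le bloc diagonal supérieur de $x(A,r,t)$ est $\diag(A,I_{p-m-r},-I_r)$ et le bloc inférieur est $\diag(A,I_{q-m-t},-I_t)$. Les contributions de $A$ se compensent, et l'on obtient
\begin{align*}
\Trd(x^+)-\Trd(x^-)=d\big[(p-m-r)-r-(q-m-t)+t\big]=d\big[(p-q)-2r+2t\big].
\end{align*}
Avec $p'+q'=N=p+q$, on a $p'-q'=2p'-N$ et $p-q=2p-N$. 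La condition $d(p-q+2t-2r)=d(p'-q')$ équivaut donc à $2p-N+2t-2r=2p'-N$, c'est-à-dire $t-r=p'-p$. Comme $d\neq 0$, cela donne l'équivalence.

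Pour la seconde assertion, on a déjà $m\le \min\{p-r,q-t\}\le\min\{p,q\}$ par hypothèse. Il reste $m\le p'$ et $m\le q'$. Or $p'=p+t-r\ge p-r\ge m$ car $t\ge0$, et $q'=N-p'=q+r-t\ge q-t\ge m$ car $r\ge0$. Enfin $m\ge0$ par hypothèse.

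Aucune étape n'est un véritable obstacle. Le seul point délicat est de lire correctement la position des blocs dans \eqref{eq:xArt}, puisque les lignes $1$ à $3$ correspondent à $V^+$ et les lignes $4$ à $6$ à $V^-$, et de s'assurer que les blocs hors diagonale $A\pm I_m$ n'interviennent pas dans la trace réduite.
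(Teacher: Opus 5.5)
Votre démonstration est correcte et suit la même démarche que celle de l'article. L'assertion 2 du lemme \ref{lem:Smatrice} donne la condition $(p-2r)-(q-2t)=p'-q'$, équivalente à $t-r=p'-p$. Ensuite, les inégalités $0\leq r\leq p-m$ et $0\leq t\leq q-m$ fournissent $m-p\leq p'-p\leq q-m$, c'est-à-dire $m\leq p'$ et $m\leq q'$, ce que vous écrivez de façon équivalente sous la forme $p'\geq p-r\geq m$ et $q'\geq q-t\geq m$.
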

  
  \begin{preuve} D'après le lemme \ref{lem:Smatrice}, $x(A, r, t)\in S_{p'}$ si et seulement si
  \begin{align*}
    (p-m-r-r)-(q-m-t-t)=p'-q'
     \end{align*}
  c'est-à-dire 
  \begin{align*}
    (p-2r)-(N-p-2t)=p'-(N-p')
  \end{align*}
  qui est équivalent à 
  \begin{align*}
    t-r=p'-p.
\end{align*}
  On a toujours $0\leq r\leq p-m$ et $0\leq t\leq q-m$ donc
  \begin{align*}
    m-p\leq t-r=p'-p\leq q-m
  \end{align*}
  d'où $m\leq p'\leq N-m$.  Cela donne les dernières conditions.
\end{preuve}
  
  \begin{lemme}(\cite[lemme 4.3]{JR2})\label{lem:JRlem4.3}
  Pour tout entier $m\geq0$ et tout $A\in \gl_m(D)$ sans valeurs propres $\pm1$, la matrice
\begin{align*}
\begin{pmatrix}
  A  & A-I_m   \\ 
  A+I_m &   A  \\ 
\end{pmatrix}
  \end{align*}
  est inversible et sans valeurs propres $\pm 1$. Elle est semi-simple si et seulement si $A$ est semi-simple. 
  \end{lemme}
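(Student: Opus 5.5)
Notons $X=\begin{pmatrix} A & A-I_m\\ A+I_m & A\end{pmatrix}\in \gl_{2m}(D)$. L'idée directe est de conjuguer $X$ par une matrice inversible explicite (à coefficients dans $F$, scalaires par blocs) afin de la ramener à une forme triangulaire par blocs dont les blocs diagonaux sont des fonctions simples de $A$. On utilise la matrice $P=\begin{pmatrix} I_m & I_m\\ I_m & -I_m\end{pmatrix}$, inversible puisque la caractéristique est nulle, avec $P^{-1}=\frac12 P$. Un calcul direct donne
\begin{align*}
P^{-1}XP=\begin{pmatrix} 2A & I_m\\ -I_m & 0\end{pmatrix}.
\end{align*}
Comme $X\in S$ pour le $\theta$ correspondant (lemme \ref{lem:Smatrice}, avec $A^2=I+(A-I)(A+I)$ et les relations de commutation évidentes), on dispose aussi du lemme \ref{lem:PrdxS} et du corollaire \ref{cor:Prdx} : $\Prd_X(t)=\Nrd(t^2I_m-2tA+I_m)$, ce qu'on retrouve d'ailleurs sur la forme compagnon ci-dessus.

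\textbf{Inversibilité et absence de valeurs propres $\pm1$.} Par la formule $\Prd_X(t)=\Nrd(t^2I_m-2tA+I_m)$, on a $\Prd_X(0)=\Nrd(I_m)=1\neq0$, donc $X$ est inversible, et $\Prd_X(\pm1)=\Nrd(2I_m\mp 2A)=2^{dm}\Nrd(I_m\mp A)$, qui est non nul car $A$ n'a pas de valeurs propres $\pm1$. Autrement dit, $\lambda$ est valeur propre (géométrique) de $X$ si et seulement si $\frac{\lambda+\lambda^{-1}}{2}$ est valeur propre de $A$, et $\lambda=\pm1$ correspondrait à la valeur propre $\pm1$ de $A$.

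\textbf{Semi-simplicité.} On passe à une clôture algébrique, où $A$ et $X$ deviennent des matrices usuelles, et on raisonne sur la forme $Y=\begin{pmatrix} 2A & I\\ -I & 0\end{pmatrix}$. On a $Y^2-2\,\mathrm{diag}(A,A)\,Y+I=0$ lorsque $A$ commute avec tout ce qui est en jeu. Plus précisément, $\mathrm{diag}(A,A)$ commute avec $Y$ et $\frac12(Y+Y^{-1})=\mathrm{diag}(A,A)$, puisque $Y^{-1}=\begin{pmatrix}0&-I\\ I&2A\end{pmatrix}$. Si $X$ (donc $Y$) est semi-simple, alors $Y+Y^{-1}$, polynôme en $Y$, est semi-simple, donc $A$ l'est. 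Réciproquement, si $A$ est semi-simple, on diagonalise $A$ ; $Y$ se décompose alors en somme directe de blocs $2\times2$ $\begin{pmatrix}2a&1\\-1&0\end{pmatrix}$, de polynôme caractéristique $t^2-2at+1$. Ce dernier a des racines distinctes si et seulement si $a\neq\pm1$, ce qui est garanti par l'hypothèse ; chaque bloc est donc diagonalisable et $Y$ est semi-simple.

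\textbf{Difficulté principale.} Le point délicat est ce dernier sens : il utilise de façon cruciale l'absence de valeurs propres $\pm1$. Sans cette hypothèse, $a=\pm1$ donnerait un bloc de Jordan non trivial. Le passage à la clôture algébrique (via $D\otimes\bar F\simeq M_d(\bar F)$) est quant à lui routinier.
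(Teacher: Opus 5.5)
Your proof is correct and complete. The paper does not prove this lemma; it only cites Jacquet--Rallis, so there is no internal argument to match line by line.

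\textbf{Your route.} You conjugate by $P=\begin{pmatrix} I_m & I_m\\ I_m & -I_m\end{pmatrix}$ to get $Y=\begin{pmatrix} 2A & I_m\\ -I_m & 0\end{pmatrix}$. You read off $\Prd_X(t)=\Nrd(t^2I_m-2tA+I_m)$, which is consistent with corollaire~\ref{cor:Prdx} for $p=q=p'=q'=m$. You then split $Y$ into $2\times 2$ blocks $\begin{pmatrix}2a&1\\-1&0\end{pmatrix}$ after diagonalizing $A$ over $\bar F$. This argument is sound. It has the merit of showing exactly where the hypothesis enters: for $a=\pm1$ such a block is a nontrivial Jordan block.

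\textbf{A more intrinsic variant.} This is closer to how the paper itself argues in the proofs of propositions~\ref{prop:ssr-Prd} and~\ref{prop:descendant}, and it avoids both the conjugation and the diagonalization.
\begin{itemize}
\item Since $X\in S$ for $\theta_{m,m}$, one has $X^{-1}=\theta_{m,m}X\theta_{m,m}$, hence $X+X^{-1}=2\diag(A,A)$. This is the same identity you found for $Y$.
\item Let $\mu$ be the minimal polynomial of $A$, separable with $\mu(\pm1)\neq 0$. Then $X$ is annihilated by $(2t)^{\deg\mu}\mu\bigl(\frac{t^2+1}{2t}\bigr)$.
\item This polynomial is separable, because $t\mapsto\frac{t+t^{-1}}{2}$ is exactly two-to-one away from $t=\pm1$.
\item The converse direction is your observation that $\diag(A,A)=\frac{X+X^{-1}}{2}$ is a polynomial in $X$.
\end{itemize}
Your version is more concrete, since it exhibits the Jordan structure explicitly. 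The polynomial version is shorter and works directly with $X$ rather than with a conjugate.

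\textbf{Minor wording point.} The relation $Y^2-2\diag(A,A)Y+I=0$ holds unconditionally: $\diag(A,A)$ commutes with $Y$ by direct computation, so the phrase \emph{lorsque $A$ commute avec tout ce qui est en jeu} is unnecessary.
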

  
  \begin{proposition}\label{prop:elementssr}
  \begin{enumerate}
  \item Un élément $x\in S(F)$ est $G^\theta$-semi-simple si et seulement s'il existe des entiers $m,r,t\geq 0$ avec $m\leq \min\{p-r,q-t\}$ et $A\in \gl_m(D)$ un élément  semi-simple sans valeurs propres $\pm1$ tels que $x$  est conjugué sous $G^\theta(F)$ à  $x(A, r, t)$. 
  
  \item L'ensemble des classes de $G^\theta(F)$-conjugaison d'éléments semi-simples dans $S(F)$ est en bijection avec l'ensemble des quadruplets $(m, [A], r, t)$, où $m,r,t\geq 0$ sont des entiers tels que $m\leq \min\{p-r,q-t\}$ et $[A]$ est une classe de conjugaison semi-simple dans $\gl_m(D)$ sans valeurs propres $\pm1$. 
  \end{enumerate}
  \end{proposition}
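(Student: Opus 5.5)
Le plan consiste à traduire le problème en algèbre linéaire sur $V=V^+\oplus V^-$. On utilisera d'abord le lemme \ref{lem:closedorbits} : $x\in S(F)$ est $G^\theta$-semi-simple si et seulement si $x$ est semi-simple dans $G$. Il suffira donc de classifier, à $G^\theta(F)$-conjugaison près, les $x\in S(F)$ semi-simples. Le sens réciproque de l'assertion 1 est immédiat. D'une part $x(A,r,t)\in S(F)$ par le lemme \ref{lem:Smatrice}. D'autre part, d'après le lemme \ref{lem:JRlem4.3}, le bloc $\begin{pmatrix} A & A-I_m\\ A+I_m & A\end{pmatrix}$ est semi-simple si $A$ l'est, et les autres blocs sont diagonaux $\pm I$. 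Donc $x(A,r,t)$ est semi-simple, et tout conjugué sous $G^\theta(F)$ l'est aussi.

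Pour le sens direct, on posera $y=x\theta$, qui est une involution. Écrivons $x=\begin{pmatrix} A_x & B\\ C & D_x\end{pmatrix}$. Les relations $AB=BD$ et $CA=DC$ du lemme \ref{lem:Smatrice} montrent que $A_x=x^+$ et $D_x=x^-$ sont entrelacés par $B$ et $C$. On décomposera $V^+$ et $V^-$ selon les sous-espaces caractéristiques de $x^+$ et $x^-$, en séparant les valeurs propres $\pm1$ des autres (ces décompositions sont définies sur $F$ via les facteurs du polynôme caractéristique réduit). Comme $x$ est semi-simple, $x^\pm$ l'est aussi : on le vérifiera à l'aide du corollaire \ref{cor:Prdx}, qui relie les polynômes via $t\mapsto (t^2+1)/2t$, et à l'aide de la commutation de $x$ avec la structure. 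On obtient ainsi des décompositions $V^+=V^+_{\neq\pm1}\oplus V^+_{1}\oplus V^+_{-1}$, et de même pour $V^-$. Les blocs $B$ et $C$ préservent ces décompositions. Sur les parties de valeur propre $\pm1$, les relations $A^2=I+BC$, $CA=DC$ et la semi-simplicité forceront $BC=0$ et $CB=0$, puis $B=C=0$ sur ces parties. Sur la partie sans $\pm1$, on a $BC=A^2-I$ inversible, donc $B$ et $C$ sont des isomorphismes entre $V^+_{\neq\pm1}$ et $V^-_{\neq\pm1}$, qui ont donc la même dimension $m$.

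On conjuguera ensuite par $G^\theta(F)=GL(V^+)\times GL(V^-)$. Par un choix de base de $V^-_{\neq\pm1}$, on fixe $B=A-I_m$ dans la base induite. La relation $AB=BD$ donne alors $D=B^{-1}AB=A$, car $A$ et $A-I$ commutent. La relation $A^2=I+BC$ donne ensuite $C=(A-I)^{-1}(A^2-I)=A+I$. Sur les parties de valeur propre $\pm1$, $x^\pm$ est semi-simple à valeurs propres $\pm1$, donc diagonalisable en $\mathrm{diag}(I,-I)$, ce qui fournit $r$ et $t$. On obtient bien $x(A,r,t)$ à permutation des vecteurs de base près, cette permutation étant réalisable dans $G^\theta(F)$. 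Le point que j'attends comme principal obstacle est l'annulation de $B$ et $C$ sur les sous-espaces de valeur propre $\pm1$. On a là $BC=(A-1)(A+1)$, avec $BC$ nilpotent et semi-simple puisque $x^+$ l'est sur ces blocs. Il faudra donc utiliser la semi-simplicité de $x$ (et non seulement celle de $x^\pm$) pour exclure les blocs de Jordan non triviaux de type $\begin{pmatrix}1&B\\0&1\end{pmatrix}$, en examinant l'action de $x$ sur le sous-espace caractéristique de valeur propre $1$ de $x$ via $\Prd_x$ (corollaire \ref{cor:Prdx}).

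Pour l'assertion 2, l'existence découle de 1. Pour l'unicité, on retrouvera les invariants à partir de $x$ : $m$ est le rang de $B$, $[A]$ est la classe de $x^+$ restreint à $V^+_{\neq\pm1}$, et $r$, $t$ sont les multiplicités de $-1$ pour $x^+$ et $x^-$ sur les parties de valeur propre $\pm1$. Ces quantités sont invariantes par conjugaison sous $G^\theta(F)$. Réciproquement, deux $A$ conjugués dans $GL_m(D)$ donnent des $x(A,r,t)$ conjugués par $\mathrm{diag}(g,I,I,g,I,I)\in G^\theta(F)$.
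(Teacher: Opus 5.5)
Your argument is correct, and in substance it is the Jacquet--Rallis linear algebra, extended to $D$ and to $p\neq q$ as in Zhang, which the paper only invokes by citation. The steps you leave open each close in one line, and Corollary~\ref{cor:Prdx} is not the right tool since it only compares characteristic polynomials:
\begin{itemize}
\item \emph{Semisimplicity of $x^\pm$.} Since $\theta x\theta=x^{-1}$, you get $\diag(x^+,x^-)=\frac12(x+x^{-1})\in F[x]$, which is semisimple.
\item \emph{Vanishing of $B$ and $C$ on the $\pm1$ parts.} Fix $\varepsilon=\pm1$. The subspace $V^+_\varepsilon\oplus V^-_\varepsilon$ is $x$-stable, and on it $x-\varepsilon=\begin{pmatrix}0&B\\ C&0\end{pmatrix}$ with $BC=A^2-I=0$ and $CB=D^2-I=0$. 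Hence $(x-\varepsilon)^2=0$ there, and semisimplicity of $x$ forces $B=C=0$.
\item \emph{The parts without $\pm1$.} To conclude that $B$ and $C$ are isomorphisms, you need both $BC=A^2-I$ and $CB=D^2-I$ invertible, not only the first.
\end{itemize}
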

  
  \begin{preuve}
La preuve est  celle de  \cite[proposition 4.1]{JR2} lorsque $D=F$ et celle de   \cite[proposition 5.1]{Zha2} si $p=q$. Il suffit en général de combiner leurs arguments. 
  \end{preuve}
  
  \begin{corollaire}\label{cor:elementssr}
  Soit $0\leq p'\leq N$. 
    \begin{enumerate}
  \item Un élément $x\in S_{p'}(F)$ est $G^\theta$-semi-simple si et seulement s'il existe des entiers $m,r,t\geq 0$  et un élément  semi-simple $A\in \gl_m(D)$  sans valeurs propres $\pm1$ tels que  $m\leq \min\{p-r,q-t\}$, $t-r=p'-p$ et $x$  est conjugué sous $G^\theta(F)$ à  $x(A, r, t)$. 
  
  \item L'ensemble des classes de $G^\theta(F)$-conjugaison d'éléments semi-simples dans $S_{p'}(F)$ est en bijection avec l'ensemble des quadruplets $(m, [A], r, t)$, où $m,r,t\geq 0$ sont des entiers tels que $m\leq \min\{p-r,q-t\}$, $t-r=p'-p$ et $[A]$ est une classe de conjugaison semi-simple dans $\gl_m(D)$ sans valeurs propres $\pm1$. 
  \end{enumerate}
  \end{corollaire}
  
  \begin{preuve}
  C'est une conséquence du lemme \ref{lem:signofSp} et de la proposition \ref{prop:elementssr}. 
  \end{preuve}

\begin{lemme}\label{lem:JRlem4.1}
Soit $x\in S(F)$ et $x=x_s x_u=x_u x_s$ sa décomposition de Jordan dans $G(F)$, où $x_s$ et $x_u$ sont respectivement les parties semi-simple et unipotente de $x$. 
\begin{enumerate}
	\item On a $x_s\in S(F)$ et $x_u\in S^\circ(F)$. 
	
	\item Soit $0\leq p'\leq N$. Alors $x\in S_{p'}(F)$ si et seulement si $x_s\in S_{p'}(F)$. 
\end{enumerate}
\end{lemme}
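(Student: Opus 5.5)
The plan is to use that $S$ is defined by $(x\theta)^2=1$, i.e. $x\theta\in G$ is an involution, together with the identity $\Int(\theta)(x)=\theta x\theta=x^{-1}$ for $x\in S$.

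\emph{Assertion 1.} Let $x\in S(F)$ with Jordan decomposition $x=x_sx_u$. Since $\Int(\theta)$ is an automorphism of $G$ defined over $F$, the Jordan decomposition of $\theta x\theta=x^{-1}$ is $(\theta x_s\theta)(\theta x_u\theta)$. On the other hand, the Jordan decomposition of $x^{-1}$ is $x_s^{-1}x_u^{-1}$, since $x_s,x_u$ commute. By uniqueness, $\theta x_s\theta=x_s^{-1}$ and $\theta x_u\theta=x_u^{-1}$, so $x_s,x_u\in S(F)$ (rationality is automatic in characteristic $0$).

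To place $x_u$ in $S^\circ=S_p$, I use the one-parameter family $u_\lambda=\exp(\lambda\log x_u)$, $\lambda\in F$. The logarithm is polynomial since $x_u$ is unipotent, so this family is algebraic. From $\theta x_u\theta=x_u^{-1}$ we get $\Ad(\theta)\log x_u=-\log x_u$, hence $\theta u_\lambda\theta=u_\lambda^{-1}$, i.e. $u_\lambda\in S$ for all $\lambda$. The image of the affine line is irreducible and connected and contains $u_0=1\in S_p$ and $u_1=x_u$. Therefore $x_u\in S_p=S^\circ$.

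As an alternative check, use the characterization \eqref{eq:Sp' noyau} via $\Trd$ from lemme \ref{lem:Smatrice}(2). The condition for $S_{p'}$ reads $\Trd(x\theta)=d(p'-q')$. This holds because $x\theta$ is an involution with $+1$-eigenspace of $D$-rank $p'$, so its reduced trace is $d(p'-q')$. For $x_u$, write $\Trd(x_u\theta)=\Trd(\theta)+\Trd(\log(x_u)\theta)+\dots$. Each term $\Trd(\log(x_u)^k\theta)$ with $k$ odd vanishes, since $\Trd(Y\theta)=\Trd(\theta Y\theta\theta)=-\Trd(Y\theta)$ for $Y$ anti-invariant. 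For $k$ even, write $\Trd(Y\theta)=\Trd(Y\theta^{\pm}\cdots)$ and use nilpotence restricted to $\theta$-eigenspaces. This is messier, so I prefer the connectedness argument.

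\emph{Assertion 2.} Here I plan to use the trace criterion: $x\in S_{p'}$ iff $\Trd(x\theta)=d(p'-q')$, which is equivalent to lemme \ref{lem:Smatrice}(2), since $\Trd(x\theta)=\Trd(A)-\Trd(D)$. It therefore suffices to show $\Trd(x\theta)=\Trd(x_s\theta)$.

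Set $y=x\theta$, an involution, so $y$ is semisimple with eigenvalues $\pm1$. Then $x_s\theta$ and $x_u$ are both in the centralizer of $x_s$ and commute in a suitable sense. More robust is the deformation argument again: consider $x_s u_\lambda$. Since $u_\lambda$ commutes with $x_s$, we get $\theta x_su_\lambda\theta=x_s^{-1}u_\lambda^{-1}=(x_su_\lambda)^{-1}$, so the path $\lambda\mapsto x_su_\lambda$ lies in $S$. It connects $x_s$ ($\lambda=0$) to $x$ ($\lambda=1$), and both ends lie in the same connected component. This gives assertion 2 directly.

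The main obstacle is verifying that $u_\lambda$ commutes with $x_s$ and that $\log$ respects $\Ad(\theta)$. Both follow because $\log x_u$ is a polynomial in $x_u$, which itself is a polynomial in $x$.
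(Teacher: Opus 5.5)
Your argument is correct, but it takes a different route from the paper's. You argue by connectedness. The maps $\lambda\mapsto u_\lambda=\exp(\lambda\log x_u)$ and $\lambda\mapsto x_s u_\lambda$ are morphisms $\mathbf{A}^1\to S$ joining $1$ to $x_u$ and $x_s$ to $x$, so the endpoints lie in the same component. The only outside input is that each $S_{p'}$ is open and closed in $S$. This is asserted in \S\ref{S:esp sym} (after Jacquet--Rallis). It also follows from lemme \ref{lem:Smatrice}(2), since $y\mapsto\Trd(y\theta)$ is a regular function on $S$ taking finitely many values.

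The paper argues algebraically instead. Set $X=\log x_u\in\ggo(F)$ and $v=\exp(X/2)$; then $v$ commutes with $x_s$ and $\theta v^{-1}\theta=v$, which gives
\[
x\theta=v\,(x_s\theta)\,v^{-1}.
\]
Equivalently, $x_s=\rho_{p'}(v^{-1}g)$ when $x=\rho_{p'}(g)$. Likewise $x_u=\rho(v)$, which is what the paper's one-line justification of $x_u\in S^\circ$ amounts to. This produces an explicit rational twisted conjugator and uses nothing about the topology of $S$. The same $\exp(X/2)$ device is reused in lemmes \ref{lem:thetacent} and \ref{lem:var-unip}.

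Your deformation is shorter and handles both directions of assertion 2 and the $S^\circ$ claim uniformly. The price is reliance on the component structure of $S$.

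The displayed identity is also exactly what completes the trace approach you began and abandoned in assertion 2, since it gives $\Trd(x\theta)=\Trd(x_s\theta)$ at once. The vague sentence about commuting ``in a suitable sense'' can be replaced by it or deleted.
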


\begin{preuve} On suit la preuve de \cite[lemme 4.1]{JR2}. 

1. L'unicité de la décomposition de Jordan entraîne qu'on a  $x_s,x_u\in S(F)$. Soit $X\in \ggo(F)$ l'unique élément nilpotent  tel que $x_u=\exp(X)$. On a alors $\theta X \theta =-X$ et $x_u\in S^\circ(F)$.

2. Comme $S$ est la réunion disjointe des composantes connexes $S_{p'}$ où $0\leq p'\leq N$, il suffit de montrer que si $x\in S_{p'}(F)$ alors $x_s\in S_{p'}(F)$. Soit $x\in S_{p'}(F)$.  On a $xx_u=x_ux$. Soit $g\in G(F)$ tel que $x=\rho_{p'}(g)=g\theta_{p'}g^{-1}\theta$. On a alors
                  \begin{align*}
                    g\theta_{p'}g^{-1}\theta x_u=  x_u   g\theta_{p'}g^{-1}\theta. 
                  \end{align*}
Or $x_u\in S(F)$ implique qu'on a $(x_u\theta)^2=1$ et donc  
                  \begin{align*}
                       g\theta_{p'}g^{-1} x_u^{-1}= x_u   g\theta_{p'}g^{-1}. 
                  \end{align*}
                  Ainsi, avec $X$ défini en 1 ci-dessus, on a $\Ad(g\theta_{p'}g^{-1}) (X) =-X$. On pose $v=\exp(X/2)\in G(F)$. Alors 
                   \begin{align*}
                       g\theta_{p'}g^{-1} v= v^{-1}   g\theta_{p'}g^{-1}. 
                  \end{align*}
On en déduit que               
                  \begin{align*}
                    \rho_{p'}(v^{-1}g)=(v^{-1}g)\theta_{p'}( v^{-1}g)^{-1}\theta= v^{-2}g\theta_{p'}g^{-1}   \theta=  v^{-2} x  =x_s.
                  \end{align*}
                  Il s'ensuit que $x_s\in S_{p'}(F)$.
\end{preuve}

\begin{corollaire}\label{cor:JRlem4.1}
Soit $x\in S(F)$ et $x=x_s x_u=x_u x_s$ sa décomposition de Jordan dans $G(F)$, où $x_s$ et $x_u$ sont respectivement les parties semi-simple et unipotent. Alors $\Prd_{x^+}=\Prd_{(x_s)^+}$. 
\end{corollaire}

\begin{preuve}
Elle résulte de l'égalité $\Prd_{x}=\Prd_{x_s}$, du lemme \ref{lem:JRlem4.1} et du corollaire \ref{cor:Prdx}. 
\end{preuve}

    \begin{lemme}\label{lem:divisibilite}
                  Soit $0\leq p'\leq N$. Pour tout $x\in S_{p'}(F)$, le polynôme 
                  \begin{align*}
                    (t-1)^{d\max\{p'-q,0\}}(t+1)^{d\max\{p-p',0\}} 
                  \end{align*}
                  divise  $\Prd_{x^+}$ dans $F[t]$.
    \end{lemme}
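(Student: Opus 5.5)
Je me ramène d'abord au cas semi-simple. D'après le corollaire \ref{cor:JRlem4.1}, on a $\Prd_{x^+}=\Prd_{(x_s)^+}$. D'après le lemme \ref{lem:JRlem4.1}, $x_s\in S_{p'}(F)$. Il suffit donc de traiter le cas où $x$ est semi-simple. Le polynôme caractéristique réduit est invariant par conjugaison sous $G^\theta(F)$ (qui respecte la décomposition $V=V^+\oplus V^-$, donc la projection $x\mapsto x^+$ est $G^\theta$-équivariante). Par le corollaire \ref{cor:elementssr}, on peut donc supposer $x=x(A,r,t)$, avec $A\in\gl_m(D)$ semi-simple sans valeurs propres $\pm1$, $m\leq\min\{p-r,q-t\}$ et $t-r=p'-p$.

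On calcule ensuite directement. Le bloc $x^+$ de $x(A,r,t)$ est la matrice diagonale par blocs $\diag(A, I_{p-m-r}, -I_r)$. On obtient donc
\begin{align*}
  \Prd_{x^+}(t)=\Prd_A(t)\,(t-1)^{d(p-m-r)}(t+1)^{dr},
\end{align*}
où la variable $t$ du polynôme ne doit pas être confondue avec l'entier $t$ (je renommerai l'entier si nécessaire). Il suffit alors d'établir les deux inégalités $p-m-r\geq \max\{p'-q,0\}$ et $r\geq \max\{p-p',0\}$.

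Pour la seconde : $r\geq 0$ est clair, et $r=t-(p'-p)\geq p-p'$ puisque $t\geq 0$. Pour la première : $p-m-r\geq 0$ est donné par l'hypothèse, et en utilisant $r=t+p-p'$ on obtient $p-m-r=p'-m-t$. Or $m+t\leq q$, d'où $p-m-r\geq p'-q$.

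Il n'y a pas d'obstacle réel. Le seul point à vérifier soigneusement est que la réduction au représentant $x(A,r,t)$ est licite, c'est-à-dire que $\Prd_{x^+}$ ne dépend que de la classe de $G^\theta(F)$-conjugaison de $x$. C'est immédiat, car pour $h=(h_+,h_-)\in G^\theta$ on a $(hxh^{-1})^+=h_+x^+h_+^{-1}$.
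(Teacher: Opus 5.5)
Your proposal is correct and follows the paper's proof: you reduce to the semi-simple case via the corollaire \ref{cor:JRlem4.1}, take the representative $x(A,r,t)$ given by the corollaire \ref{cor:elementssr}, and then check the same two inequalities $p-m-r\geq\max\{p'-q,0\}$ and $r\geq\max\{p-p',0\}$ using $t-r=p'-p$ and $m\leq\min\{p-r,q-t\}$. Your explicit computation of $\Prd_{x^+}$ for $x(A,r,t)$ and your justification that $\Prd_{x^+}$ is invariant under $G^\theta(F)$-conjugation spell out steps the paper leaves implicit.
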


                \begin{preuve}
D'après le corollaire \ref{cor:JRlem4.1}, on est ramené au cas où $x$ est semi-simple: on utilise alors le corollaire \ref{cor:elementssr}. Il nous faut montrer 
\begin{enumerate}
	\item $p-m-r\geq\max\{p'-q,0\}$ ; 
	
	\item $r\geq\max\{p-p',0\}$. 
\end{enumerate}

Comme $m\leq \min\{p-r,q-t\}$, on a $m\leq p-r$ et donc $p-m-r\geq0$. De plus, $m\leq q-t$ implique que 
\begin{align*}
p-m-r\geq p+t-q-r=p-q+p'-p=p'-q
\end{align*}
où l'on a utilisé $t-r=p'-p$. On a prouvé $p-m-r\geq\max\{p'-q,0\}$. 

On sait que $r\geq0$. Puisque $t-r=p'-p$, on a $r=t+p-p'\geq p-p'$. Donc $r\geq\max\{p-p',0\}$.

                    \end{preuve}
\end{paragr}
  
\begin{paragr}[Éléments semi-simples réguliers.] --- 
      Soit $0\leq p'\leq N$ et $x\in S_{p'}(F)$. On pose
      \begin{align}
          \nonumber \nu=\min\{p,q,p',q'\}\\
     \label{eq:defchix}
             \chi_x=       (t-1)^{-d\max\{p'-q,0\}}(t+1)^{-d\max\{p-p',0\}}  \Prd_{x^+}.
         \end{align}
        D'après le lemme \ref{lem:divisibilite}, on obtient ainsi un polynôme unitaire $\chi_x$ de $F[t]$. Son   degré est $d\nu$ comme il résulte du lemme suivant. 
         
         \begin{lemme}\label{lem:ssrequalites}
         	Soit $\nu=\min\{p,q,p',q'\}$. On a les égalités suivantes : 
	 \begin{align*}
             \nu+\max\{p'-q,0\}+\max\{p-p',0\}=p, \\
             \nu+\max\{q-p',0\}+\max\{p'-p,0\}=q,  \\
             \max\{p'-p,0\}-\max\{p-p',0\}=p'-p. 
         \end{align*}
         \end{lemme}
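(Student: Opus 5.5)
The three identities are elementary statements about integers with $p+q=p'+q'=N$ and $0\leq p,q,p',q'\leq N$; I will verify them by a short case analysis on the relative order of $p$ and $p'$. The third identity is immediate: for any real $a$ one has $\max\{a,0\}-\max\{-a,0\}=a$, applied to $a=p'-p$. So the work lies in the first identity; the second will then follow by symmetry, or directly by subtracting.

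For the first identity I split into the cases $p'\geq p$ and $p'\leq p$. If $p'\geq p$, then $q'=N-p'\leq N-p=q$, so among $\{p,q,p',q'\}$ the minimum is $\min\{p,q'\}$, and $\max\{p-p',0\}=0$. The identity becomes $\min\{p,q'\}+\max\{p'-q,0\}=p$. Since $p'-q=p-q'$, this reads $\min\{p,q'\}+\max\{p-q',0\}=p$, which holds because $\min\{p,q'\}=p-\max\{p-q',0\}$. If $p'\leq p$, then $q'\geq q$, so $\nu=\min\{p',q\}$, and $\max\{p-p',0\}=p-p'$. The identity becomes $\min\{p',q\}+\max\{p'-q,0\}+p-p'=p$, which holds because $\min\{p',q\}+\max\{p'-q,0\}=p'$.

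For the second identity I subtract the third from the first: the left side of the second equals $\nu+\max\{q-p',0\}+\max\{p'-p,0\}$, and we have $\max\{q-p',0\}=\max\{p'-q,0\}+(q-p')$. Adding the third identity to the first then gives $\nu+\max\{p'-q,0\}+\max\{p'-p,0\}=p'$. Adding $q-p'$ to both sides yields $q$, as required.

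There is no real obstacle here. The only point needing care is to use correctly the relations $p'-q=p-q'$ and the ordering $q'\lessgtr q$ that comes with $p'\gtrless p$, so that $\nu$ is identified as the right two-term minimum in each case.
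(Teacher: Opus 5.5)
Your proof is correct; it carries out the same elementary verification as the paper, with the cases organized differently.

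The paper splits into four cases according to which of $p,q,p',q'$ realizes $\nu$. It tabulates the values of $\max\{p'-q,0\}$, $\max\{p-p',0\}$, $\max\{q-p',0\}$ and $\max\{p'-p,0\}$ in each case, and checks all three identities directly from the table.

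You instead split only on the sign of $p'-p$, which (via $q'=N-p'$) reduces $\nu$ to a two-term minimum. Everything then follows from $\min\{a,b\}+\max\{a-b,0\}=a$ together with $p'-q=p-q'$. You derive the second identity algebraically from the first and third rather than checking it case by case. It also follows from the first via the symmetry $(p,q,p',q')\mapsto(q,p,q',p')$, as you mention.

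Your route is shorter and more conceptual. The paper's table has the minor advantage of showing explicitly which of the four $\max$-terms vanish in each configuration.

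One small wording slip: you announce that you \emph{subtract} the third identity from the first, but you actually (and correctly) add them, obtaining $\nu+\max\{p'-q,0\}+\max\{p'-p,0\}=p'$.
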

         
         \begin{preuve}
         Considérons d'abord le cas $\nu=p$. Dans ce cas, on a $\max\{p,q,p',q'\}=q$. Il s'ensuit que 
	 \begin{align*}
             \max\{p'-q,0\}=0, \max\{p-p',0\}=0, \max\{q-p',0\}=q-p', \max\{p'-p,0\}=p'-p. 
         \end{align*}
         Les égalités sont alors évidentes. Les autres cas se traitent de la même façon avec le tableau récapitulatif:

                    \begin{align*}                                              
                    &\nu &\max\{p,q,p',q'\} & & \max\{p'-q,0\} & &\max\{p-p',0\} & &\max\{q-p',0\} & &\max\{p'-p,0\}& \\
                                            & p  &q                        & & 0                     & & 0                  &  & q-p'  & & p'-p &\\
                      & q  &p                        & & p'-q                 & & p-p'               & &  0    & & 0   & \\
                      & p'  &q'                       & & 0                    & & p-p'               & &  q-p'   & & 0   & \\
                      & q'  &p'                       & & p'-q                & & 0                   & &  0  & & p'-p   & \\
                    \end{align*}
         \end{preuve}
         
     Pour tout $A\in \gl_\nu(D)$, on pose $x(A)=x(A, \max\{p-p',0\}, \max\{p'-p,0\})$ c'est-à-dire
  \begin{align}\label{eq:x(A)}
  x(A)=\begin{pmatrix}
  A & 0 & 0 & A-I_\nu & 0 & 0 \\ 
  0 & I_{\max\{p'-q,0\}} & 0 & 0 & 0 & 0 \\ 
  0 & 0 & -I_{\max\{p-p',0\}} & 0 & 0 & 0 \\ 
  A+I_\nu & 0 & 0 & A & 0 & 0 \\ 
  0 & 0 & 0 & 0 & I_{\max\{q-p',0\}} & 0 \\ 
  0 & 0 & 0 & 0 & 0 & -I_{\max\{p'-p,0\}} \\ 
\end{pmatrix}.
  \end{align}
  D'après les lemmes \ref{lem:signofSp} et \ref{lem:ssrequalites}, on a $x(A)\in S_{p'}$. Si $A$ est une matrice sans valeurs propres $\pm1$, il résulte des lemmes \ref{lem:closedorbits} et \ref{lem:JRlem4.3} que $x(A)$ est $G^\theta$-semi-simple si et seulement si $A$ est semi-simple.
 
 \begin{remarque}\label{rmq:nul}
 Notons que l'un des nombres $\max\{p'-q,0\}$ et $\max\{q-p',0\}$ est nul de même que l'un des nombres $\max\{p-p',0\}$ et $\max\{p'-p,0\}$. 
 \end{remarque}
         
   \begin{proposition}\label{prop:ssr-Prd}
  Soit $0\leq p'\leq N$ et $x\in S_{p'}(F)$. Les énoncés suivants sont équivalents. 
  \begin{enumerate}
  \item L'élément $x$ est conjugué sous $G^\theta(F)$ à un élément de la forme $x(A)$ où $A\in\gl_\nu(D)$ est semi-simple régulier et sans valeurs propres $\pm1$.   
   \item Le polynôme $\chi_x$ est séparable et $\chi_x(\pm1)\neq0$. 
  \item L'élément $x$ est à la fois $G^\theta$-semi-simple et  $G^\theta$-régulier.
  \end{enumerate}
  \end{proposition}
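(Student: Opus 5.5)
Je procéderais par le cycle d'implications $1\Rightarrow 2\Rightarrow 1$ et $1\Leftrightarrow 3$, en ramenant tout au représentant standard $x(A,r,t)$ du corollaire \ref{cor:elementssr}. Le point de départ est le calcul de $\chi_x$ pour $x=x(A,r,t)\in S_{p'}$. On a $x^+=\diag(A,I_{p-m-r},-I_r)$, donc $\Prd_{x^+}=\Prd_A\,(t-1)^{d(p-m-r)}(t+1)^{dr}$. Par suite
\[
\chi_x=\Prd_A\,(t-1)^{d(p-m-r-\max\{p'-q,0\})}(t+1)^{d(r-\max\{p-p',0\})},
\]
et les exposants sont positifs d'après la preuve du lemme \ref{lem:divisibilite}. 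Par le corollaire \ref{cor:JRlem4.1}, $\chi_x=\chi_{x_s}$ pour tout $x\in S_{p'}(F)$, et l'on peut ramener $x_s$ à un $x(A,r,t)$ avec $A$ semi-simple sans valeurs propres $\pm1$.

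Pour $1\Rightarrow2$, lorsque $x$ est conjugué à $x(A)$, on a $m=\nu$, $r=\max\{p-p',0\}$ et $p-\nu-r=\max\{p'-q,0\}$ par le lemme \ref{lem:ssrequalites}. Il vient $\chi_x=\Prd_A$ : ce polynôme réduit est séparable puisque $A$ est semi-simple régulier (dans $\gl_\nu(D)$, régulier semi-simple signifie $\Prd_A$ séparable), et il ne s'annule pas en $\pm1$. Pour $2\Rightarrow1$, je traiterais d'abord le cas $x$ semi-simple. Alors $\chi_x(\pm1)\neq0$ force les deux exposants à être nuls, d'où $r=\max\{p-p',0\}$ et $m=p-\max\{p'-q,0\}-r=\nu$, puis $t=r+p'-p=\max\{p'-p,0\}$ ; donc $x=x(A)$ avec $\Prd_A=\chi_x$ séparable. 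Si $x$ n'est pas semi-simple, on applique ce qui précède à $x_s=x(A)$ avec $A$ régulier. Il reste à voir que $x_u=1$. Par le lemme \ref{lem:JRlem4.1}, $x_u=\exp(X)$ avec $X$ nilpotent, $\Ad(\theta)X=-X$ et $X$ centralisant $x_s$. Je calculerais le centralisateur de $x(A)$ dans $\ggo$ : sur le bloc $2\nu$, le lemme \ref{lem:JRlem4.3} donne une matrice sans valeur propre $\pm1$, alors que sur les blocs restants $x(A)$ vaut $\pm1$. Ce centralisateur est donc diagonal par blocs. Les facteurs $\pm I$ résiduels tombent tous dans $V^+$ ou tous dans $V^-$, grâce à la remarque \ref{rmq:nul} et au tableau du lemme \ref{lem:ssrequalites}, ce qui annule leur partie anti-$\theta$. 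Sur le bloc $2\nu$, on se ramène à l'algèbre commutative $F[A]$ tensorisée par deux copies, où les éléments anti-invariants nilpotents sont nuls par séparabilité. Ainsi $X=0$.

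Pour $1\Leftrightarrow3$, la semi-simplicité découle du lemme \ref{lem:closedorbits} et du lemme \ref{lem:JRlem4.3}. Pour la régularité, la remarque qui suit \eqref{eq:xArt} donne $\dim G^\theta_{x(A,r,t)}=\dim (GL_{m,D})_A+d^2[(p-m-r)^2+r^2+(q-m-t)^2+t^2]$. Il faut donc minimiser cette quantité parmi les semi-simples de $S_{p'}$, et comparer aux non semi-simples via la semi-continuité de la dimension des centralisateurs (les orbites de dimension maximale sont fermées, par un argument à la Richardson). Le minimum de $\dim(GL_{m,D})_A$ est $d\,m$, atteint exactement pour $A$ régulier. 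Le terme quadratique est minimisé en rendant $m$ maximal : augmenter $m$ d'une unité fait décroître $(p-m-r)^2+\dots$ d'au moins $2$ par unité, ce qui compense largement le gain de $d$ sur $\dim(GL_{m,D})_A$ (on compare $d$ et $d^2\cdot2$). Le maximum $m=\nu$ est atteint avec $r,t$ comme dans $x(A)$. L'étape que je jugerais délicate est cette optimisation, ainsi que le passage des non semi-simples : je montrerais que si $x$ est non semi-simple, alors $\dim G^\theta_x>\dim G^\theta_{x_s}$ reste au-dessus du minimum, parce que $G^\theta_x\subset G^\theta_{x_s}$ et que, par l'argument ci-dessus sur $X$, $x_s$ n'est pas de type $x(A)$ régulier ou $G^\theta_x$ est de dimension strictement plus grande qu'un semi-simple régulier.
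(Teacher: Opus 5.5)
\textbf{Verdict.} Your arguments for $1\Rightarrow2$ and $2\Rightarrow1$ follow the paper, as does the optimisation of $f(m,r,t)$; this includes killing $x_u$ via the block structure of the centralizer of $x(A)$ and Remark~\ref{rmq:nul}. There is a genuine gap in $1\Rightarrow3$, namely the $G^\theta$-regularity of $x(A)$.

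\textbf{Why the regularity argument fails.} Regularity means that $\dim G^\theta_x$ is minimal among \emph{all} points of $S_{p'}$, not only the semisimple ones. So after minimising over semisimple elements, you still have to show that no non-semisimple $y\in S_{p'}$ satisfies $\dim G^\theta_y<d\nu+d^2[(p'-q)^2+(p-p')^2]$. Your proposed bridge does not do this.
\begin{itemize}
\item The inclusion $G^\theta_y\subset G^\theta_{y_s}$ gives $\dim G^\theta_y\leq\dim G^\theta_{y_s}$, which is the wrong direction.
\item Orbits of maximal dimension need not be closed.
\item The strict inequality you aim for is false. Take $G=GL_2$, $D=F$, $\theta=\diag(1,-1)$ and $p'=1$. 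The element $\begin{pmatrix}1&b\\0&1\end{pmatrix}$ with $b\neq0$ lies in $S_1$ and is not semisimple. Its centralizer in $G^\theta$ is the centre, of dimension $1$, exactly as for $x(a)$ with $a\neq\pm1$. Its orbit has maximal dimension, yet its closure contains $1$.
\end{itemize}

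\textbf{How to fix it.} Use the irreducibility argument that the paper gives in its step $2\Rightarrow3$.
\begin{itemize}
\item By upper semicontinuity of $y\mapsto\dim G^\theta_y$, the set $\{y\in S_{p'}\mid\dim G^\theta_y\geq d\nu+d^2[(p'-q)^2+(p-p')^2]\}$ is Zariski closed.
\item Your own $2\Rightarrow1$, applied to geometric points, shows that this set contains the nonempty open subset defined by condition~2.
\item Since $S_{p'}=\rho_{p'}(G)$ is irreducible, that open subset is dense, so the closed set is all of $S_{p'}$.
\item Hence the minimum is $d\nu+d^2[(p'-q)^2+(p-p')^2]$, and $x(A)$ attains it. Equivalently, the open locus of regular elements meets the open locus of condition~2.
\end{itemize}

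\textbf{The direction $3\Rightarrow1$ is fine.} A regular semisimple $x=x(A,r,t)$ satisfies $\dim G^\theta_x\leq\dim G^\theta_{x(A')}$ for any regular semisimple $A'$, and such $x(A')$ exist in $S_{p'}(F)$. Your optimisation then forces $m=\nu$, $r=\max\{p-p',0\}$ and $A$ regular. For that you need the minimiser to be unique, which it is: the discrete increments are strict.

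\textbf{Minor point.} The centralizer of the $2\nu$-block $\sigma_0$ is the étale algebra $F[\sigma_0]$, which is quadratic étale over $F[A]$ rather than literally two copies of $F[A]$. Only its étaleness is used, and it already kills all nilpotents, anti-invariant or not.
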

  
  \begin{preuve}
    
    1 implique 2. C'est évident car $\chi_{x(A)}=\Prd_A$.

    2 implique 3. Supposons que $\chi_x$ est séparable et $\chi_x(\pm1)\neq0$. Prouvons tout d'abord que $x$ est $G^\theta$-semi-simple. Soit $x=x_s x_u=x_u x_s$ la décomposition de Jordan de $x$ dans $G(F)$, où $x_s$ et $x_u$ sont respectivement les parties semi-simple et unipotente. Vu le lemme \ref{lem:closedorbits}, il s'agit de montrer qu'on a $x_u=1$. D'après le lemme \ref{lem:JRlem4.1} et le corollaire \ref{cor:JRlem4.1}, on a $x_s\in S_{p'}(F)$ et $\chi_x=\chi_{x_s}$. Alors $\chi_{x_s}$ est séparable et $\chi_{x_s}(\pm1)\neq0$. En utilisant le corollaire \ref{cor:elementssr}, quitte à conjuguer $x$ par un élément de $G^\theta(F)$, on peut et on va supposer que $x_s=x(A,r,t)$ avec des entiers $m,r,t\geq 0$ qui vérifient les conditions de ce  corollaire et un élément semi-simple $A\in\gl_m(D)$ sans valeurs propres $\pm1$. Par hypothèse, la multiplicité de la racine $1$, resp. $-1$, du polynôme $\Prd_{x_s^+}$ est $d\max\{p'-q,0\}$, resp. $d\max\{p-p',0\}$. Il s'ensuit que $r=\max\{p-p',0\}$ et $p-m-r=\max\{p'-q,0\}$. D'après le lemme \ref{lem:ssrequalites}, on a $m=\nu$. Comme $t-r=p'-p$, on obtient aussi  $t=\max\{p'-p,0\}$. Finalement, on a $x_s=x(A)$ avec les notations ci-dessus.  Par le corollaire \ref{cor:Prdx} (appliqué au cas $p=q=p'=q'=\nu$), on voit que la matrice
  $\begin{pmatrix}
  A & A-I_\nu \\ A+I_\nu & A
\end{pmatrix}$
est semi-simple régulière et sans valeurs propres $\pm1$. Comme $x_u$ est un élément unipotent commutant avec $x_s$, on trouve que $x_u$ est nécessairement de la forme 
  \begin{align*}
  \begin{pmatrix}
  I_\nu & 0 & 0 & 0 & 0 & 0 \\ 
  0 & A_1 & 0 & 0 & B_1 & 0 \\ 
  0 & 0 & A_2 & 0 & 0 & B_2 \\ 
  0 & 0 & 0 & I_\nu & 0 & 0 \\ 
  0 & C_1 & 0 & 0 & D_1 & 0 \\ 
  0 & 0 & C_2 & 0 & 0 & D_2 \\ 
\end{pmatrix}
  \end{align*}
  où $A_1$, $A_2$, $D_1$, $D_2$ sont carrées de tailles respectives $\max\{p'-q,0\}$, $\max\{p-p',0\}$, $\max\{q-p',0\}$, $\max\{p'-p,0\}$. Par la remarque \ref{rmq:nul}, les blocs $B_1$, $B_2$, $C_1$, $C_2$ ne figurent pas dans la matrice ci-dessus. On a donc $\theta x_u=x_u \theta$; or $(x_u\theta)^2=1$ puisque $x_u\in S(F)$, cf.  lemme \ref{lem:JRlem4.1}. On en déduit qu'on a $x_u^2=1$. Comme $x_u$ est unipotent, $x_u+1$ est inversible. On a alors $x_u=1$ et donc $x=x_s$ comme voulu.

On a donc obtenu $x=x_s=x(A)$ et cet élément est $G^\theta$-semi-simple. On calcule aisément le centralisateur dans $G^\theta$ de $x(A)$ :  on voit qu'il est isomorphe au produit d'un tore maximal de $GL_{\nu,D}$ (le centralisateur de $A$) et des groupes $GL_{\max\{p'-q,0\},D}$, $GL_{\max\{p-p',0\},D}$, $GL_{\max\{q-p',0\},D}$, $GL_{\max\{p'-p,0\},D}$. Sa dimension est donc donnée par la formule
\begin{align*}
  d\nu+d^2(\max\{p'-q,0\}^2+\max\{p-p',0\}^2+\max\{q-p',0\}^2+\max\{p'-p,0\}^2)\\
  =d\nu+d^2[(p'-q)^2+(p-p')^2]. 
\end{align*}
D'une part, les conditions $\chi_x$ séparable et $\chi_x(\pm1)\neq0$ définissent un ouvert non-vide de Zariski  de $S_{p'}$ dont on vient de voir qu'il est formé d'éléments $G^\theta$-semi-simples $x$ dont le centralisateur dans $G^\theta$, noté  $G^\theta_x$, a toujours la même dimension. D'autre part, la fonction $x\mapsto \dim(G^\theta_x)$ étant semi-continue supérieurement sur $S_{p'}$,  l'ensemble des éléments réguliers de $S_{p'}$ est également un ouvert de Zariski non-vide. Comme $S_{p'}$ est irréductible (c'est l'image de $G$ par le morphisme $\rho_{p'}$), ces deux ouverts ont une intersection non vide: on en déduit que tous les éléments du premier ouvert sont également $G^\theta$-réguliers.

3 implique 1. Soit   $x\in S(F)$ qui est  $G^\theta$-semi-simple et  $G^\theta$-régulier. D'après le corollaire \ref{cor:elementssr}, on peut supposer que $x=x(A,r,t)$ pour certains entiers $m,r,t\geq 0$ avec $m\leq \min\{p-r,q-t\}$, $t-r=p'-p$ et $A\in \gl_m(D)$ un élément  semi-simple sans valeurs propres $\pm1$. D'après le lemme \ref{lem:ssrequalites}, il suffit de montrer $m=\nu$, $r=\max\{p-p',0\}$ et que $A$ est régulier.  On commence par calculer le centralisateur $G_x^\theta$: il est isomorphe au produit du centralisateur de $A$ dans $GL_{m,D}$ et des groupes $GL_{p-m-r,D}$, $GL_{r,D}$,  $GL_{q-m-t,D}$ et $GL_{t,D}$.  Ce centralisateur est de dimension minorée par
\begin{align*}
  f(m,r,t)=dm+ d^2[(p-m-r)^2+r^2+(q-m-t)^2+t^2]. 
\end{align*}
Il est facile de voir que, pour $m\leq \min\{p-r,q-t\}$, on a 
\begin{enumerate}
\item $f(m,r,t)>f(m+1,r,t)$ si $(p-m-r)(q-m-t)\neq0$ ; 
\item $f(m,r,t)>f(m+1,r-1,t-1)$ si $rt\neq0$. 
\end{enumerate}
Donc, en un point où la fonction $f$ est minimale, on doit avoir $(p-m-r)(q-m-t)=0$ et $rt=0$. Dans ce cas, on a 
\begin{align*}
  f(m,r,t)=dm+ d^2[(p-m-r-q+m+t)^2+(r-t)^2] \\
  =dm+ d^2[(p-q+p'-p)^2+(p-p')^2] = dm +d^2[(p'-q)^2+(p-p')^2]
\end{align*}
où l'on a utilisé $t-r=p'-p$. Il nous faut discuter au cas par cas. 
\begin{enumerate}
\item Si $p-m-r=0$ et $r=0$, on a $m=p$. 
\item Si $p-m-r=0$ et $t=0$, on a $r=p-p'$ et alors $m=p'$. 
\item Si $q-m-t=0$ et $r=0$, on a $t=p'-p$ et alors $m=q'$. 
\item Si $q-m-t=0$ et $t=0$, on a $m=q$. 
\end{enumerate}
En tout cas, on a $m\geq\nu$. Il s'ensuit que 
\begin{align*}
  f(m,r,t)\geq d\nu +d^2[(p'-q)^2+(p'-p)^2] 
\end{align*}
qui est la dimension des centralisateurs des éléments  $G^\theta$-réguliers. Une condition nécessaire pour que $x(A,r,t)$ soit $G^\theta$-régulier est qu'on ait $m=\nu$,  $A$ est régulier dans $\mathfrak{gl}_\nu(D)$, $(p-m-r)(q-m-t)=0$ et $rt=0$. Comme $t-r=p'-p$, on voit que 
\begin{enumerate}
\item si $r=0$, $t=p'-p\geq0$ ; 
\item si $t=0$, $r=p-p'\geq0$. 
\end{enumerate}
En tout cas, on a $r=\max\{p-p',0\}$ et $t=\max\{p'-p,0\}$. 
  \end{preuve} 
\end{paragr}

\begin{paragr}[Invariants de l'action de $G^\theta$ sur $S_{p'}$.]\label{S:invariants} --- Soit $0\leq p'\leq N$. Rappelons que l'on note $d^2=\dim_F(D)$ et $\nu=\min\{p,q,p',q'\}$. Soit $\mathbf{A}^{d\nu}$ l'espace affine sur $F$ de dimension $d\nu$. Par le lemme \ref{lem:ssrequalites}, on peut définir un morphisme $c^\flat _{p'} : S_{p'}\to\mathbf{A}^{d\nu}$ en associant à $x$ la collection des $d\nu$ coefficients non dominants du polynôme $\chi_x$ défini par \eqref{eq:defchix}. Il est clair que $c^\flat_{p'}$ est $G^\theta$-invariant. Soit $\mathbf{A}^{d\nu}_{\rs}\subset\mathbf{A}^{d\nu}$ le sous-ensemble des éléments $(c_i)_{0\leq i\leq d\nu-1}$ tels que le polynôme 
  \begin{align*}
  P(t)=t^{d\nu}+\sum_{i=0}^{d\nu-1} c_it^i\in F[t]
  \end{align*}
est séparable et $P(\pm1)\neq0$. C'est un ouvert de Zariski de $\mathbf{A}^{d\nu}$. 

Dans la suite, si le contexte est clair, on omettra l'indice $p'$ dans $c^\flat_{p'}$.

  \begin{proposition}\label{prop:quot-cat}
  Soit $0\leq p'\leq N$. Le couple $(c^\flat_{p'}, \mathbf{A}^{d\nu})$ est un quotient catégorique de $S_{p'}$ par l'action de $G^\theta$. 
  \end{proposition}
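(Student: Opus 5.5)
Le plan suit la stratégie classique : une tranche de Kostant–Weierstrass–Chevalley pour l'action de $G^\theta$ sur $S_{p'}$. Il faut montrer que $c^\flat$ induit un isomorphisme $F[\mathbf{A}^{d\nu}]\to F[S_{p'}]^{G^\theta}$. Comme $S_{p'}$ est affine et $G^\theta$ réductif, le quotient catégorique $S_{p'}/\!/G^\theta=\Spec F[S_{p'}]^{G^\theta}$ existe. La question se descend à une clôture algébrique $\bar F$ : les invariants commutent à l'extension des scalaires en caractéristique $0$. On se ramène donc à $D=F=\bar F$, c'est-à-dire $G=GL_{Nd}$, le paramètre $\nu$ étant remplacé par $d\nu$.

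\emph{Injectivité.} La famille $A\mapsto x(A)$, pour $A\in\gl_\nu(D)$, est contenue dans $S_{p'}$ (lemmes \ref{lem:signofSp} et \ref{lem:ssrequalites}). On a de plus $\chi_{x(A)}=\Prd_A$. L'application $A\mapsto \Prd_A$ est dominante sur $\mathbf{A}^{d\nu}$, donc $c^\flat$ est dominant et le comorphisme $F[\mathbf{A}^{d\nu}]\to F[S_{p'}]^{G^\theta}$ est injectif.

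\emph{Surjectivité.} On introduit la section $\iota:\mathbf{A}^{d\nu}\to S_{p'}$, $c\mapsto x(A_c)$, où $A_c$ est la matrice compagnon (sur $\bar F$, de taille $d\nu$) du polynôme de coefficients $c$. On a alors $c^\flat\circ\iota=\Id$. Soit $f\in F[S_{p'}]^{G^\theta}$ ; on pose $g=f\circ\iota\circ c^\flat$ et on veut montrer que $f=g$. Les deux fonctions sont $G^\theta$-invariantes. Il suffit de vérifier qu'elles coïncident sur l'ouvert dense $c^{\flat,-1}(\mathbf{A}^{d\nu}_{\rs})$. C'est un ouvert dense, non vide car il contient les $x(A)$ avec $A$ régulière sans valeurs propres $\pm1$, dans la variété irréductible $S_{p'}$. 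Par la proposition \ref{prop:ssr-Prd}, tout $x$ de cet ouvert est $G^\theta(\bar F)$-conjugué à un $x(A)$ avec $A$ semi-simple régulière. Une telle $A$ est conjuguée dans $GL_\nu$ à $A_{c^\flat(x)}$, puisqu'elles ont même polynôme caractéristique séparable. Or $GL_\nu$ se plonge diagonalement dans $G^\theta$, en agissant par $h$ sur les deux blocs indexés par $A$. Ce plongement conjugue $x(A)$ en $x(hAh^{-1})$. On obtient donc $f(x)=f(\iota(c^\flat(x)))=g(x)$. Par densité, $f=g$, et $f$ provient bien de $F[\mathbf{A}^{d\nu}]$.

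\emph{Obstacle principal.} La difficulté est de justifier que l'énoncé de la proposition \ref{prop:ssr-Prd} vaut sur $\bar F$ et ponctuellement pour les points géométriques. La preuve donnée est algébrique et s'applique tel quel avec $D$ remplacé par $M_d(\bar F)$. Il faut aussi vérifier que la conjugaison de $A$ se réalise dans $G^\theta$ et pas seulement dans $G$ ; le calcul explicite de la forme $x(A)$ le montre. Enfin, on peut noter que, $S_{p'}$ étant normale (orbite lisse) et $c^\flat$ surjective à fibres génériques une seule orbite fermée, l'argument de densité est le cœur de la preuve. Si le passage à $\bar F$ posait problème pour l'existence de la section $\iota$ (il faut une matrice compagnon dans $\gl_\nu(D)$), on contournerait la difficulté en travaillant directement sur $\bar F$, où $D\otimes\bar F=M_d$.
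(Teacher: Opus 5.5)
Your proof is correct and follows the paper's argument. It uses the same companion-matrix section $c\mapsto x(A(c),\dots)$, Proposition~\ref{prop:ssr-Prd} to see that every regular semisimple orbit meets that section, and density of the regular semisimple locus in the irreducible variety $S_{p'}$. The only difference is the passage from the inner form to $GL_{dN}$. The paper transports the split case through an explicit $\bar F$-isomorphism $\varphi$ whose cocycle takes values in the image of $G_0^{\theta_0}\cap G_0^{\theta_{0,dp'}}$; you instead use that invariants commute with the faithfully flat base change $F\to\bar F$, which is equivalent and slightly more direct.
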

  
  \begin{preuve}
    Il s'agit de montrer que le comorphisme $c^{\flat,*}:F[\mathbf{A}^{d\nu}] \to F[S_{p'}]^{G^\theta}$ induit par $c^\flat$ est un $F$-isomorphisme.  On va d'abord le montrer dans le cas de $G_0=GL_F(F^{dN})$ avec 
    \begin{align*}
      \theta_0=
      \begin{pmatrix}
        I_{dp} & 0 \\ 0 & -I_{dq}
      \end{pmatrix}
      \text{ et }
            \theta_{0,dp'}=
      \begin{pmatrix}
        I_{dp'} & 0 \\ 0 & -I_{dq'}
      \end{pmatrix}. 
    \end{align*}
Soit $S_{0,dp'}$ la variété correspondante définie comme au § \ref{S:esp sym} et $c_0^\flat:S_{0,dp'}\to \mathbf{A}^{d\nu}$ l'application définie comme ci-dessus. Pour $(c_i)_{0\leq i\leq d\nu -1}\in\mathbf{A}^{d\nu}$, on définit une matrice compagnon carrée de taille $d\nu$
  \begin{align*}
A((c_i)_{0\leq i\leq d\nu -1})=\left( \begin{array}{ccccc}
  0        & 0        & \cdots & 0         & -c_0         \\
  1        & 0        & \cdots & 0         & -c_1         \\
  0        & 1        & \ddots & \vdots  & -c_2         \\
  \vdots & \ddots & \ddots & 0         & \vdots       \\
  0        & \cdots & 0        & 1          & -c_{d\nu-1} \\
\end{array} \right). 
  \end{align*}
On définit un morphisme $j_0:\mathbf{A}^{d\nu}\to S_{0,dp'}$ par 
  \begin{align*}  
  (c_i)_{0\leq i\leq d\nu -1}\mapsto x(A((c_i)_{0\leq i\leq d\nu -1}), d\max\{p-p',0\}, d\max\{p'-p,0\}). 
  \end{align*}
  C'est une section de $c^\flat_0$ c'est-à-dire on a $c_0^\flat\circ j_0=\Id_{\mathbf{A}^{d\nu}}$. Dualement, $j_0^{*}\circ c_0^{\flat,*}$ est l'identité de  $F[\mathbf{A}^{d\nu}]$. Ainsi $j_0^{*}$ est surjectif; montrons que $j_0^*$ induit un monomorphisme de $F[S_{0,dp'}]^{G_0^{\theta_0}}$ dans $F[\mathbf{A}^{d\nu}]$. 
   Ce sera alors un isomorphisme de même que $c_0^{\flat,*}$. Soit $f\in F[S_{0,dp'}]$ une fonction régulière sur $S_{0,dp'}$ et $G_0^{\theta_0}$-invariante telle que $f\circ j_0$ soit identiquement nulle. Il résulte de la proposition \ref{prop:ssr-Prd} que l'orbite sous $G_0^{\theta_0}$ d'un élément  $G_0^{\theta_0}$-régulier  semi-simple rencontre $j_0(\mathbf{A}^{d\nu})$. Il s'ensuit que $f$ est identiquement nulle sur l'ensemble des éléments $G_0^{\theta_0}$-réguliers  semi-simples ; par conséquent, comme ce dernier  est  un ouvert de Zariski dense (ainsi qu'il résulte de la proposition \ref{prop:ssr-Prd}), $f$ est identiquement nulle. Cela conclut pour $S_{0,dp'}$ et $\theta_0$. 

Considérons maintenant le cas général. Soit  $\bar F $ est une clôture algébrique de $F$. En notant avec un indice $\bar F$ le changement de base de $F$  à $\bar F$, il existe $\varphi:G_{\bar F}\to G_{0,\bar F}$ un isomorphisme sur $\bar F$ ainsi qu'un $1$-cocycle $(u_\sigma)_{\sigma\in \Ga}$ du groupe de Galois $\Ga=\Gal(\bar F/F)$ à valeur dans le groupe adjoint $G_{0,\ad}(\bar F)$ de sorte qu'on ait $ \varphi\circ \sigma= \Int(u_\sigma) \circ \sigma \circ \varphi$ pour tout $\sigma\in \Ga$,  $\varphi(\theta)=\theta_0$ et $\varphi(\theta_{p'})=\theta_{0,dp'}$. Notons que $u_\sigma$ appartient à l'intersection des images de $G_{0}^{\theta_0}(\bar F)$ et $G_{0}^{\theta_{0,dp'}}(\bar F)$ dans le groupe adjoint $G_{0,\ad}(\bar F)$.  D'autre part, $\varphi$ induit un isomorphisme $S_{p', \bar F }\to S_{0,dp', \bar F }$. On en déduit que $\varphi$ induit un isomorphisme  $\bar F[S_{0,dp',\bar F }]^{G_{0,\bar F}^{\theta_0}}\to \bar F[S_{p',\bar F }]^{G_{\bar F}^\theta} $ qui se restreint en un $F$-isomorphisme  $F[S_{0,dp'}]^{G_{0}^{\theta_0}}\to F[S_{p'}]^{G_{}^\theta}$. On conclut en observant que $c^\flat=c_0^\flat\circ \varphi$.
\end{preuve}
\end{paragr}

\begin{paragr}\label{S:cat quot ss}Soit $0\leq p'\leq N$.

\begin{lemme}\label{lem:c-ss}
Soit $x\in S_{p'}$ et $x=x_sx_u$ la décomposition de Jordan de $x$ dans $G$. Alors $x_s\in S_{p'}$ et $c^\flat_{p'}(x)=c^\flat_{p'}(x_s)$. 
\end{lemme}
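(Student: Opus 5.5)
La preuve découle directement des résultats déjà établis sur la décomposition de Jordan dans $S$.

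Le lemme \ref{lem:JRlem4.1}, assertion 2, donne $x_s\in S_{p'}$ puisque $x\in S_{p'}$. Ce résultat est énoncé pour les points rationnels, mais sa preuve est purement algébrique (unicité de la décomposition de Jordan, logarithme de la partie unipotente, élément $v=\exp(X/2)$). Elle vaut donc pour tout point géométrique, en remplaçant $F$ par une clôture algébrique $\bar F$. On l'applique ainsi à un point $x\in S_{p'}(\bar F)$ quelconque.

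Reste l'égalité des invariants. Par définition, $c^\flat_{p'}(x)$ est la collection des coefficients non dominants du polynôme $\chi_x$ de \eqref{eq:defchix}, c'est-à-dire de $\Prd_{x^+}$ divisé par le facteur fixe $(t-1)^{d\max\{p'-q,0\}}(t+1)^{d\max\{p-p',0\}}$. Ce facteur ne dépend que de $p,q,p'$. Il suffit donc de montrer $\Prd_{x^+}=\Prd_{(x_s)^+}$. C'est exactement le corollaire \ref{cor:JRlem4.1}, lui aussi valable sur $\bar F$ par la même remarque. On rappelle son argument : $\Prd_x=\Prd_{x_s}$ car $x_u$ est unipotent et commute à $x_s$. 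Comme $x$ et $x_s$ sont dans la même composante $S_{p'}$, le corollaire \ref{cor:Prdx} exprime $\Prd_x$ en fonction de $\Prd_{x^+}\bigl(\frac{t^2+1}{2t}\bigr)$ avec le même facteur $(t-1)^{d(q'-p')}(2t)^{dp}$, de même pour $x_s$. On en déduit $\Prd_{x^+}\bigl(\frac{t^2+1}{2t}\bigr)=\Prd_{(x_s)^+}\bigl(\frac{t^2+1}{2t}\bigr)$ dans $\bar F(t)$. Comme $t\mapsto \frac{t^2+1}{2t}$ est non constante, les deux polynômes coïncident.

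Le seul point délicat est de s'assurer que le corollaire \ref{cor:Prdx} et le lemme \ref{lem:JRlem4.1} s'appliquent aux points géométriques et non seulement aux points rationnels. Leurs preuves ne reposent que sur des identités matricielles et sur la décomposition de Jordan. Elles restent donc valables sur $\bar F$ sans modification.
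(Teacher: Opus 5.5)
Your proposal is correct and follows the paper's proof exactly. The membership $x_s\in S_{p'}$ comes from assertion 2 of lemme \ref{lem:JRlem4.1}, and the equality $c^\flat_{p'}(x)=c^\flat_{p'}(x_s)$ comes from corollaire \ref{cor:JRlem4.1} together with the definition \eqref{eq:defchix} of $\chi_x$. Your remark that these results, stated for $F$-points, carry over verbatim to $\bar F$-points makes explicit something the paper leaves implicit.
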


\begin{preuve}
La première assertion résulte du lemme \ref{lem:JRlem4.1} assertion 2, la seconde du corollaire \ref{cor:JRlem4.1} et de la définition \eqref{eq:defchix}.
\end{preuve}

   On pose $\cgo_{p'}=\Spec(F[S_{p'}]^{G^\theta})$ (qui est isomorphe à $\mathbf{A}^{d\nu}$ par la proposition \ref{prop:quot-cat}). Pour tout $\of\in \cgo_{p'}(F)$, soit $S_{p',\of}$ la fibre de $c^\flat_{p'}$ au-dessus de $\of$.
  
  \begin{lemme}\label{lem:parab-o}
  Soit $P\in\fc(M_0)$, $m\in M_P$ et $n\in N_P$. Pour tout $\of\in\cgo_{p'}$, $mn\in S_{p',\of}$ si et seulement si $m\in S_{p',\of}$ et $mn\in S$. 
  \end{lemme}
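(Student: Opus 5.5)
The plan is to use only that $\theta$ normalizes $P$, $M_P$ and $N_P$, together with the description of $S_{p'}$ by reduced traces (lemme \ref{lem:Smatrice}) and of the invariant $\chi_x$ by reduced characteristic polynomials (corollaire \ref{cor:Prdx}). Since $\theta\in M_0\subset M_P$, conjugation by $\theta$ preserves $M_P$ and $N_P$. Write $m'=\theta m\theta$ and $n'=\theta n\theta$, so that $m'\in M_P$ and $n'\in N_P$.

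\emph{Step 1: $mn\in S$ implies $m\in S$.} We have
\begin{align*}
(mn\theta)^2=mn\,m'n'=mm'\,\bigl(m'^{-1}nm'\bigr)\,n',
\end{align*}
whose $M_P$-component in the decomposition $P=M_PN_P$ is $mm'=(m\theta)^2$. So $(mn\theta)^2=1$ forces $(m\theta)^2=1$, i.e.\ $m\in S$. This step is needed for the direct implication; in the converse, $mn\in S$ is a hypothesis.

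\emph{Step 2: $mn$ and $m$ lie in the same component $S_{p''}$.} We have $mn\theta=m\theta\cdot n'$ with $m\theta\in M_P$ and $n'\in N_P$. The elements $mn\theta$ and $m\theta$ are therefore block upper-triangular for the flag defining $P$, with the same diagonal blocks. Hence $\Trd(mn\theta)=\Trd(m\theta)$ and $\Prd_{mn\theta}=\Prd_{m\theta}$. By lemme \ref{lem:Smatrice}, membership in $S_{p''}$ for an element $x$ of $S$ is read off from $\Trd(A)-\Trd(D)=\Trd(x\theta)$. So, given both are in $S$, $mn\in S_{p''}$ if and only if $m\in S_{p''}$.

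\emph{Step 3: $mn$ and $m$ have the same invariant.} By the same triangularity for $mn$ and $m$ themselves, $\Prd_{mn}=\Prd_m$. By corollaire \ref{cor:Prdx}, for $x\in S_{p'}$ we have $\Prd_x(t)=(t-1)^{d(q'-p')}(2t)^{dp}\Prd_{x^+}\bigl(\tfrac{t^2+1}{2t}\bigr)$. The map $h\mapsto h\bigl(\tfrac{t^2+1}{2t}\bigr)$ is injective on $F[u]$, since composition with a nonconstant rational function is injective. Thus $\Prd_{x^+}$ is determined by $\Prd_x$ and $p'$, and by \eqref{eq:defchix} so is $\chi_x$, hence $c^\flat_{p'}(x)$. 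Combining this with Step 2 gives $c^\flat_{p'}(mn)=c^\flat_{p'}(m)$ whenever either side makes sense.

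\emph{Conclusion.} For the direct implication, if $mn\in S_{p',\of}$, Steps 1 to 3 give $m\in S_{p',\of}$, and $mn\in S$ trivially. For the converse, if $m\in S_{p',\of}$ and $mn\in S$, Step 2 gives $mn\in S_{p'}$ and Step 3 gives the same invariant $\of$.

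The only delicate point is Step 3, namely recovering $\chi_x$ from $\Prd_x$. I expect the injectivity of the substitution, together with the fact that $p'$ is already fixed by Step 2, to settle it.
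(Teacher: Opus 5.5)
Your proof is correct and follows the paper's argument. Conjugation by $\theta$ normalizes $M_P$ and $N_P$, so $mn\in S$ forces $m\in S$; both elements then lie in the same component; and $\Prd_{mn}=\Prd_m$ together with corollaire \ref{cor:Prdx} gives $c^\flat(mn)=c^\flat(m)$. The only differences are cosmetic: you detect the component through the reduced trace of $x\theta$ (lemme \ref{lem:Smatrice}) rather than the kernel dimension in \eqref{eq:Sp' noyau}, and you make explicit the injectivity of $h\mapsto h\bigl(\tfrac{t^2+1}{2t}\bigr)$, which the paper leaves implicit.
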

  
  \begin{preuve} Soit $m\in M_P$ et $n\in N_P$ tels $mn\in S$. Comme $\theta\in M_0$, la conjugaison par  $\theta$ normalise $M_P$ et $N_P$. Il s'ensuit que $m\in S$; en outre, vu \eqref{eq:Sp' noyau}, on a $mn\in S_{p'}$ si et seulement si $m\in S_{p'}$. Dans ce cas, comme $\Prd_{mn}=\Prd_m$, le corollaire \ref{cor:Prdx} implique que $c^\flat(mn)=c^\flat(m)$. L'assertion est donc évidente. 
  \end{preuve}
\end{paragr}

\begin{paragr}
  Pour tout sous-groupe semi-standard $P$ de $G$, soit $\ov{P}$ le sous-groupe parabolique semi-standard opposé à $P$ c'est-à-dire qu'on a $P\cap \ov{P}=M_P$. Pour tous sous-groupes paraboliques semi-standard $P\subset Q$ de $G$, on définit 
  \begin{align*}
      N_P^Q=N_P\cap M_Q, \ov{N}_P^Q=N_{\ov{P}}\cap M_Q \text{  et  } N_P^{Q,\theta}=N_P^Q\cap G^\theta. 
  \end{align*}

  \begin{lemme}\label{lem:LeviS}
    Soit $0\leq p'\leq N$. Pour tout $P\in\fc(M_0)$, on a 
  \begin{align*}
      (M_P\cap S_{p'})(F)=\bigsqcup\limits_{w\in \,_PW_{\theta_{p'}}} \rho_{p'}(M_P(F)w). 
  \end{align*}
  \end{lemme}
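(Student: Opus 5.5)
Le plan consiste à se ramener au paramétrage de $S_{p'}$ par $\rho_{p'}$, c'est-à-dire à l'identification $S_{p'}(F)=\rho_{p'}(G(F))\simeq G(F)/G^{\theta_{p'}}(F)$, et à combiner l'interprétation de $M_P\cap S_{p'}$ via les $\theta_{p'}$-stabilités avec le corollaire \ref{cor:PWtheta}.

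On commence par l'inclusion $\supset$. Soit $w\in\,_PW_{\theta_{p'}}$ et $m\in M_P(F)$. On a $\rho_{p'}(mw)=mw\theta_{p'}w^{-1}m^{-1}\theta$, et $w\theta_{p'}w^{-1}\in M_0(F)\subset M_P(F)$ puisque $W$ normalise $M_0$. Comme $\theta\in M_0\subset M_P$, on obtient $\rho_{p'}(mw)\in M_P(F)$. Par ailleurs $\rho_{p'}(mw)\in S_{p'}(F)$ par définition.

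Pour l'inclusion $\subset$, on prend $x\in(M_P\cap S_{p'})(F)$. Le point $x$ est rationnel et $S_{p'}(F)=\rho_{p'}(G(F))$ ; on l'a déjà vu au § \ref{S:esp sym}, c'est la rationalité de l'orbite : deux involutions de $V$ de même dimension d'espace propre sont conjuguées sous $G(F)$. On peut donc écrire $x=\rho_{p'}(g)$ avec $g\in G(F)$. Posons $\sigma=x\theta=g\theta_{p'}g^{-1}$. C'est une involution de $V$, et $\sigma\in M_P(F)$ puisque $x,\theta\in M_P$. Le sous-groupe parabolique $g^{-1}Pg$ contient donc $g^{-1}\sigma g=\theta_{p'}$, et $g^{-1}M_Pg$ en est un facteur de Levi contenant $\theta_{p'}$. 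On veut ensuite ramener $g$ à la forme $mwh$ avec $m\in M_P(F)$, $w\in\,_PW_{\theta_{p'}}$ et $h\in G^{\theta_{p'}}(F)$, ce qui donne $\rho_{p'}(g)=\rho_{p'}(mw)$.

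Pour cela, on remarque que $\sigma\in M_P(F)$ est une involution du Levi $M_P=\prod_i GL_D(W_i)$. Elle se diagonalise donc dans $M_P(F)$ : il existe $m\in M_P(F)$ tel que $m^{-1}\sigma m\in M_0(F)$ soit une involution diagonale. Deux involutions diagonales conjuguées dans $G(F)$, à savoir $m^{-1}\sigma m$ et $\theta_{p'}$, sont conjuguées par un élément $w\in W$. On obtient $\sigma=mw\theta_{p'}w^{-1}m^{-1}$, d'où $g^{-1}mw\in G^{\theta_{p'}}(F)$, donc $g\in mwG^{\theta_{p'}}(F)$.

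Il reste à remplacer $w$ par le représentant de sa double classe dans $\,_PW_{\theta_{p'}}$, grâce à l'assertion 1 de la proposition \ref{prop:PWtheta}. On écrit $w=w_Pw_0w_\theta$ avec $w_P\in W^P$, $w_0\in\,_PW_{\theta_{p'}}$ et $w_\theta\in W^{\theta_{p'}}$. Ici $W^P$ est le groupe de Weyl de $M_P$ : il suffit de le vérifier pour le stabilisateur, ce qui est clair pour les matrices de permutation. Donc $w_P\in M_P(F)$ et $w_\theta\in G^{\theta_{p'}}(F)$, d'où $\rho_{p'}(g)=\rho_{p'}(mw_Pw_0)$ avec $mw_P\in M_P(F)$.

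Vient enfin la disjonction, qui est l'étape la plus délicate. Supposons $\rho_{p'}(m_1w_1)=\rho_{p'}(m_2w_2)$. Alors $w_1^{-1}m_1^{-1}m_2w_2\in G^{\theta_{p'}}(F)$, donc $w_2\in M_P(F)w_1G^{\theta_{p'}}(F)\subset P(F)w_1G^{\theta_{p'}}(F)$. Les éléments $w_1$ et $w_2$ définissent alors la même double classe dans $P(F)\back P(F)WG^{\theta_{p'}}(F)/G^{\theta_{p'}}(F)$. L'assertion 1 du corollaire \ref{cor:PWtheta}, appliquée à $\theta_{p'}$ (qui est licite, puisque $P_0^{\theta_{p'}}$ se traite comme au § \ref{S:P_0M_0}, et qui vaut pour $P$ semi-standard), donne $w_1=w_2$. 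Le point à surveiller est que ce corollaire est énoncé relativement à $P_0^{\theta}$ : il faut s'assurer que $P_0$ est bien adapté à $\theta_{p'}$, au sens où $P_0^{\theta_{p'}}$ est un parabolique minimal de $G^{\theta_{p'}}$. C'est le cas puisque $\theta_{p'}\in M_0$ (lemme \ref{lem:parab-H}, assertion 4), et l'argument d'unicité n'utilise que cela.
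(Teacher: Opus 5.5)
Your argument is correct and is essentially the paper's. The paper also diagonalizes the involution $x\theta\in M_P(F)$ and identifies its $M_P(F)$-classes with those of the $w\theta_{p'}w^{-1}$. It then invokes Proposition \ref{prop:PWtheta} and Corollaire \ref{cor:PWtheta}, applied to $\theta_{p'}$, to see that the $w\in\,_PW_{\theta_{p'}}$ give exactly one representative per class, which is what your existence step and your double-coset uniqueness step do.

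The detour through $g$ with $x=\rho_{p'}(g)$ is not needed. Once you have $m^{-1}(x\theta)m=w\theta_{p'}w^{-1}$, you directly get $x=\rho_{p'}(mw)$.
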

  
  \begin{preuve}
  Supposons que $M_P\simeq GL_{n_1,D}\times \cdots \times GL_{n_\ell, D}$ avec $\sum\limits_{i=1}^\ell n_i=p+q$. Tout élément dans $(M_P\cap S_{p'}\theta)(F)$ est semi-simple de valeurs propres $\pm 1$ donc est conjugué sous $M_P(F)$ à une matrice diagonale dont les coefficients diagonaux sont $\pm1$ respectivement de multiplicités $p'$ et $q'$. De plus, une classe de $M_P(F)$-conjugaison dans $(M_P\cap S_{p'}\theta)(F)$ est uniquement déterminée par les multiplicités de $\pm1$  dans chaque  bloc $GL_{n_i,D}$. Comme le groupe de Weyl $W$ agit sur $\theta_{p'}$ par permutation des coefficients diagonaux, d'après la proposition \ref{prop:PWtheta} et le corollaire \ref{cor:PWtheta} appliqués à $\theta_{p'}$, on voit que $\{w\theta_{p'} w^{-1}| w\in \,_PW_{\theta_{p'}}\}$ est exactement un système de représentants de classes de $M_P(F)$-conjugaison dans $(M_P\cap S_{p'}\theta)(F)$. Cela conclut. 
  \end{preuve}
  
  \begin{lemme}\label{lem:parabS}
  Soit $0\leq p'\leq N$. Soit $P\subset Q$ des sous-groupes paraboliques semi-standard de $G$. On a 
  \begin{align*}
      (P\cap M_Q\cap S_{p'})(F)=\Int_\theta(N_P^Q(F))((M_P\cap S_{p'})(F)). 
  \end{align*}
  \end{lemme}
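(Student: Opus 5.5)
The plan is to prove the two inclusions separately, working throughout with the involutions $y=x\theta$ rather than with $x$ itself. This is legitimate because $x\in S$ if and only if $(x\theta)^2=1$, and because $\theta\in M_0(F)$ normalises $M_P$, $N_P$, $M_Q$ and hence $N_P^Q$. The key fact, for the hard inclusion, is the usual statement that an involution lying in a parabolic subgroup is conjugate, by an element of the unipotent radical, to its Levi component. In characteristic $0$ we obtain this with a square root of the unipotent part.

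Inclusion $\supset$. Let $n\in N_P^Q(F)$ and $m\in (M_P\cap S_{p'})(F)$. We have
\begin{align*}
\Int_\theta(n)(m)=n\,m\,\theta n^{-1}\theta ,
\end{align*}
and $\theta n^{-1}\theta\in N_P^Q(F)$. So this element lies in $(P\cap M_Q)(F)$. It also lies in $S_{p'}(F)$, since $S_{p'}$ is a $G$-orbit for the twisted conjugation $\Int_\theta$ (§\ref{S:esp sym}).

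Inclusion $\subset$. Let $x\in (P\cap M_Q\cap S_{p'})(F)$ and write $x=mn'$ with $m\in M_P(F)$ and $n'\in N_P^Q(F)$. As in the proof of Lemma \ref{lem:parab-o}, we get $m\in S$. Moreover $m\in S_{p'}$ by \eqref{eq:Sp' noyau}: indeed $x\theta=m\theta\cdot\theta n'\theta$, and the two involutions $x\theta$ and $m\theta$ have the same eigenvalue multiplicities, since they have the same reduced characteristic polynomial.

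Now set $s=m\theta\in M_P(F)$ and $u=\theta n'\theta\in N_P^Q(F)$. Then $y=x\theta=su$ and $s^2=1$. From $(su)^2=1$ we get $sus^{-1}=u^{-1}$. Write $u=\exp(X)$ with $X\in\ngo_P^Q(F)$ nilpotent; since $\Ad(s)$ preserves $\ngo_P^Q$, this gives $\Ad(s)X=-X$. Put $v=\exp(X/2)\in N_P^Q(F)$. Then $sv=v^{-1}s$, hence
\begin{align*}
y=sv^2=v^{-1}sv .
\end{align*}
Taking $n=v^{-1}\in N_P^Q(F)$, we obtain
\begin{align*}
x=y\theta=n\,m\theta\,n^{-1}\theta=\Int_\theta(n)(m),
\end{align*}
which is the desired inclusion.

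The only delicate point is the rationality and the compatibility of this square-root trick. The exponential restricts to an $F$-isomorphism from $\ngo_P^Q$ onto $N_P^Q$, because we are in characteristic $0$ and everything is nilpotent. The argument is the same as in the proof of Lemma \ref{lem:JRlem4.1}.
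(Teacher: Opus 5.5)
Your proof is correct, but for the inclusion $\subset$ it takes a different route from the paper's.

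The paper also passes to the involution $x\theta$, but it then invokes Arthur's lemma 2.1 of \cite{ar1}, applied to the parabolic subgroup $P\cap M_Q$ of $M_Q$. Since the Levi component $s=m\theta$ is semisimple, that lemma says $x\theta=su$ is $N_P^Q(F)$-conjugate to some $s\,n''$ with $n''\in N_P^Q(F)$ commuting with $s$. Because $x\theta$ is semisimple (its square is $1$), uniqueness of the Jordan decomposition then forces $n''=1$.

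You instead write down the conjugating element explicitly. From $sus^{-1}=u^{-1}$ you get $\Ad(s)X=-X$, and you take $v=\exp(X/2)$. This is the same square-root trick the paper uses in the proofs of Lemmas \ref{lem:JRlem4.1} and \ref{lem:thetacent}.

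Your argument is self-contained and avoids the citation, but it uses $s^2=1$ in an essential way. The paper's argument is shorter given Arthur's lemma and proves more: any semisimple element of $P\cap M_Q$ is $N_P^Q(F)$-conjugate to its Levi component.

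The other two points, that $m\in S_{p'}$ and the inclusion $\supset$, are treated as immediate in the paper, in line with what you write.
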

  
  \begin{preuve}
  Soit $m\in M_P(F)$ et $n\in N_P^Q(F)$ tels que $mn\in (P\cap M_Q\cap S_{p'}\theta)(F)$. Par définition de $S_{p'}$, on a $m\in (M_P\cap S_{p'}\theta)(F)$. Comme $(mn)^2=m^2=1$, les éléments $mn$ et $m$ sont semi-simples dans $M_Q(F)$ au sens usuel. D'après \cite[lemme 2.1]{ar1} appliqué au sous-groupe parabolique $P\cap M_Q$ de $M_Q$ et la fonction caractéristique du singleton $\{n\}$, on obtient que $mn$ est $N_P^Q(F)$-conjugué à un élément $mn'$ avec $n'\in N_P^Q(F)$ tel que $mn'=n'm$. Puisque $mn'$ et $m$ sont semi-simples dans $M_Q(F)$, l'unicité de la décomposition de Jordan entraîne $n'=1$ c'est-à-dire que $mn$ est $N_P^Q(F)$-conjugué à $m\in(M_P\cap S_{p'}\theta)(F)$. Le résultat s'ensuit. 
  \end{preuve}
  
  \begin{corollaire}
  Soit $0\leq p'\leq N$. Soit $P\subset Q$ des sous-groupes paraboliques semi-standard de $G$. On a 
  \begin{align*}
      (P\cap M_Q\cap S_{p'})(F)=\bigsqcup\limits_{w\in \,_PW_{\theta_{p'}}} \rho_{p'}((P\cap M_Q)(F)w). 
  \end{align*}
  où $\bigsqcup$ désigne l'union mais précise que, dans celle-ci, les intersections deux à deux sont vides.
  \end{corollaire}
  
  \begin{preuve}
  Elle résulte des lemmes \ref{lem:LeviS} et \ref{lem:parabS}. 
  \end{preuve}
\end{paragr}

\subsection{Distributions géométriques}\label{ssec:dvpt quotient}

\begin{paragr}
  Les notations dans cette section sont celles de la section \ref{sec:Dev spectral}. Soit $\theta,\theta'\in M_0(F)$ des éléments d'ordre 2 au plus. 
\end{paragr}

\begin{paragr}\label{S:quot cat}   Le groupe $G^{\theta'}\times G^\theta$ agit à gauche sur $G$ par $(h_1,h_2)\cdot g=h_1gh_2^{-1}$. Soit $\cgo=\Spec(F[G]^{G^{\theta'}\times G^\theta})$ où  $F[G]^{G^{\theta'}\times G^\theta}$ est la $F$-algèbre des fonctions régulières sur $G$ invariante sous  $G^{\theta'}\times G^\theta$.  Soit $c:G \to \cgo$ le morphisme canonique. Le morphisme 
\begin{align}\label{eq:sym-rho}
    \rho_\theta:g\mapsto g\theta g^{-1}\theta',
\end{align}
dont on note $S_\theta$ l'image dans $G$, identifie  l'algèbre $F[G]^{G^{\theta'}\times G^\theta}$ à l'algèbre  $F[S_\theta]^{G^{\theta'}}$ des fonctions sur $S_\theta$ invariantes par conjugaison par $G^{\theta'}$.Notons que $S_\theta$ est une composante connexe de la variété $S=\{g\in G \mid (g\theta ')^2=1\}$, cf. sous-section \ref{ssec:preparatif alg}. D'après la proposition \ref{prop:quot-cat}, on peut et on va identifier  $\cgo$ à l'espace affine $\mathbf{A}^{d\nu}$ où 
\begin{align*}
    \nu={\min\{\dim_D(V^{\pm \theta}),\dim_D(V^{\pm \theta'})\}}. 
\end{align*}
Pour tout $g\in G$, on a donc $c(g)=c^\flat(\rho_\theta(g))$ où $c^\flat: S_\theta\to\mathbf{A}^{d\nu}$ est défini comme au \S \ref{S:invariants}. On dispose de l'ouvert $\cgo_{\rs}$ correspondant à l'ouvert $\mathbf{A}^{d\nu}_{\rs}\subset\mathbf{A}^{d\nu}$.

Pour tout $\of\in \cgo$, soit $G_\of$ la fibre de $c$ au-dessus de $\of$. 

\begin{proposition}\label{prop:inv-de-c}
  Soit $P=MN$ un sous-groupe parabolique semi-standard muni de sa décomposition de Levi et $w_1,w_2 \in W$.
 Pour tous $n\in N$ et  $m\in M$, on a 
    \begin{align*}
      c(w_1nmw_2)=c(w_1mw_2).
    \end{align*}
En particulier, pour tout $\of\in \cgo$, on a 
$$w_1Pw_2 \cap G_\of= w_1Nw_1^{-1} (w_1Mw_2\cap G_\of)= (w_1Mw_2\cap G_\of) w_2^{-1}Nw_2.$$
\end{proposition}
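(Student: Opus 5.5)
Pour établir la proposition, on se ramène au morphisme $\rho_\theta$ et au lemme \ref{lem:parab-o}, puisque par définition $c(g)=c^\flat(\rho_\theta(g))$ avec $\rho_\theta(g)=g\theta g^{-1}\theta'$, et que $c^\flat$ est défini via le polynôme $\Prd_{x^+}$ relatif à la décomposition $V=V^{+\theta'}\oplus V^{-\theta'}$. Pour $g=w_1nmw_2$, on écrit
\[
\rho_\theta(w_1nmw_2)=w_1\, n\, \rho'(m)\, \theta_1' \, n'\, w_1^{-1}\theta' ,
\]
où l'on pose $\theta_2=w_2\theta w_2^{-1}\in M_0(F)$ et $\theta_1'=w_1^{-1}\theta'w_1\in M_0(F)$. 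Comme $W\subset G(F)$ est formé de matrices de permutation normalisant $M_0$, les $\theta_i$ sont encore d'ordre au plus $2$ dans $M_0(F)\subset M(F)$. On calcule plus précisément
\[
w_1^{-1}\rho_\theta(w_1nmw_2)\,\theta'^{-1}\ldots
\]
Il est plus propre de remarquer que $c$ est invariant par $G^{\theta'}\times G^\theta$, mais $w_i$ n'y appartiennent pas. On procède donc ainsi : on conjugue par $w_1$, ce qui remplace le couple $(\theta,\theta')$ par $(\theta_2,\theta_1')$ et $c$ par un morphisme $c_1$ défini de la même façon relativement à $(\theta_2,\theta_1')$ (le changement de base par la permutation $w_1$ préserve les polynômes caractéristiques réduits des blocs). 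On obtient
\[
c(w_1 g w_2)=c_1(g)=c_1^\flat(g\theta_2g^{-1}\theta_1').
\]
On est ainsi ramené au cas $w_1=w_2=1$, avec $\theta,\theta'\in M_0(F)\subset M(F)$.

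Dans ce cas, pour $g=nm$, on a
\[
x:=\rho_\theta(nm)=nm\theta m^{-1}n^{-1}\theta' = n\cdot(m\theta m^{-1}\theta')\cdot(\theta' n^{-1}\theta').
\]
Puisque $\theta'\in M_0$ normalise $N$, on a $\theta'n^{-1}\theta'\in N(F)$, et $y:=m\theta m^{-1}\theta'\in M$. Donc $x\in nyN$. Comme $N$ est distingué dans $P$, on a $nyN=yN'$ avec $N'$ conjugué, de sorte que $x\in yN\subset P$. On a $x=y n''$ avec $y\in M$, $n''\in N$, et $x\in S$. Le lemme \ref{lem:parab-o} (appliqué à $S_\theta$ vue comme composante $S_{p'}$ relativement à $\theta'$) donne alors $c^\flat(x)=c^\flat(y)$, c'est-à-dire
\[
c(nm)=c(m).
\]
Le seul point délicat est de justifier que le lemme \ref{lem:parab-o}, énoncé pour $P\in\fc(M_0)$ et l'involution fixée, s'applique après le changement $(\theta,\theta')\to(\theta_2,\theta_1')$. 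Cela est vrai car ce lemme n'utilise que le fait que l'involution appartient à $M_0$, ainsi que l'égalité $\Prd_{mn}=\Prd_m$.

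Pour la conséquence, on procède en deux étapes.

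\emph{Inclusion.} L'inclusion $w_1Pw_2\cap G_\of\supset w_1Nw_1^{-1}(w_1Mw_2\cap G_\of)$ découle de
\[
w_1nw_1^{-1}\cdot w_1mw_2=w_1nmw_2
\]
et de l'égalité précédente. Réciproquement, tout élément $w_1nmw_2$ de la fibre satisfait $c(w_1mw_2)=\of$.

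\emph{Seconde écriture.} La seconde écriture s'obtient en écrivant $nm=m(m^{-1}nm)$ avec $m^{-1}nm\in N$, d'où
\[
w_1nmw_2=(w_1mw_2)(w_2^{-1}m^{-1}nm\,w_2).
\]
Les deux décompositions sont bijectives puisque $P=M\ltimes N$. L'obstacle principal reste la vérification soigneuse du transport par $w_1$ : il faut s'assurer que $c$ se récrit bien via l'invariant $c_1^\flat$ relatif à $w_1^{-1}\theta'w_1$, ce qui résulte du fait que $\Int(w_1)$ envoie $V^{\pm\theta_1'}$ sur $V^{\pm\theta'}$.
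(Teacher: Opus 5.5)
Your argument is correct and is essentially the paper's: in both, the key fact is that $\rho_\theta(nm\cdot)$ lies in $\rho_\theta(m\cdot)N$ with $\rho_\theta(m\cdot)\in M$, since the conjugated involutions stay in $M_0\subset M$, and Lemma~\ref{lem:parab-o} then concludes. The only difference is how you absorb $w_1$: you transport the reference involution to $w_1^{-1}\theta' w_1$, which obliges you to check that $c^\flat$ is compatible with $\Int(w_1)$, whereas the paper replaces $P$ by the semi-standard parabolic $w_1Pw_1^{-1}$ and $w_2$ by $w_1w_2$, keeping $\theta'$ fixed; also, delete the abandoned false starts involving $\rho'$ and ``$\ldots$'' from your write-up.
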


\begin{preuve} La seconde assertion est une conséquence immédiate de la première. Si l'on introduit $P'=w_1Pw_1^{-1}$, on voit que $P'$ est semi-standard de facteur de Levi $M'=w_1Mw_1^{-1}$. Quitte à remplacer $P$ par $P'$ et $w_2$ par $w_1w_2$, on est ramené au cas où $w_1=1$ ce qu'on suppose désormais. On est donc ramené à prouver qu'on a $c(nmw)=c(mw)$ pour $n\in N$,  $m\in M$ et $w\in W$. Soit  $\delta=  \rho_\theta( m w)$ et $\gamma=\rho_\theta(nm w)$. On a donc $c(nmw)=c^\flat(\gamma) $ et  $c(mw)=c^\flat(\delta) $.   On observe qu'on a $\delta\in M$ et $\gamma= n\delta \theta'(n)^{-1}\in \delta N$. Mais le lemme \ref{lem:parab-o} implique que pour tout $n\in N$ tel que $\delta n\in S_\theta$ on a $c^\flat(\delta n)= c^\flat(\delta)$. Le résultat s'ensuit.
\end{preuve}

\end{paragr}

\begin{paragr} \label{S:zetaof} Comme ci-dessus, on identifie $\cgo$ à l'espace affine de dimension $d\nu$. Soit $\vc$ un voisinage de $0$ dans $\cgo(\AAA)$ tel que $\vc\cap \cgo(F)=\{0\}$. Soit $\om$ un voisinage compact de $0$ dans $\cgo(\AAA)$ tel que $\om-\om \subset \vc$. Soit $\zeta$ une fonction  lisse sur $\cgo(\AAA)$,  à support inclus dans $\om$, à valeurs dans  l'intervalle réel $[0,1]$  et qui vaut $1$ dans un voisinage de $0$. Pour tout $\of \in \cgo(F)$ et $x\in G(\AAA)$,  on pose $\zeta_\of(x)=\zeta(c(x)-\of)$. Pour des éléments distincts $\of,\of'\in \cgo(F)$ les supports des fonctions  $\zeta_\of$ et $\zeta_{\of'}$ sont disjoints. En particulier, un élément $\delta \in G(F)$ qui appartient au support de $\zeta_\of$ vérifie nécessairement $c(\delta)=\of$. On pose alors  $f_\of=\zeta_{\of} f$ pour tout $f\in \Sc(G(\AAA))$.

  \begin{lemme}\label{lem:sum fof}
    \begin{enumerate}
    \item Pour tout $\of\in \cgo(F)$ et tout $f\in \Sc(G(\AAA))$, la fonction $f_\of$ appartient à  $\Sc(G(\AAA))$.
    \item Pour toute semi-norme continue $\|\cdot\|$ sur  $\Sc(G(\AAA))$, l'application $f\mapsto\sum_{\of\in \cgo(F)} \|f_\of\|$ est une semi-norme continue.
    \end{enumerate}
  \end{lemme}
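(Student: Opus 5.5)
Le plan est de ramener les deux assertions à des estimations uniformes sur $\zeta_\of$ : on montre que les dérivées de $\zeta_\of$ sont bornées uniformément en $\of$, puis que les supports des $\zeta_\of$ ne rencontrent qu'un nombre polynomialement contrôlé (en la hauteur) de valeurs de $\of$.

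\emph{Assertion 1.} On écrit $\cgo(\AAA)=\cgo(F_\infty)\times\cgo(\AAA_f)$. Quitte à rétrécir $\om$, on choisit $\zeta=\zeta_\infty\otimes\zeta^\infty$, où $\zeta_\infty$ est lisse à support compact et $\zeta^\infty$ est la fonction caractéristique d'un réseau ouvert compact $L$. La fonction $x\mapsto\zeta_\of(x)=\zeta(c(x)-\of)$ est alors lisse sur $G(\AAA)$. Elle est invariante par un sous-groupe ouvert compact $J'$ qui ne dépend pas de $\of$ : en effet $c$ est polynomiale à coefficients dans $F$, donc $c(xk)-c(x)\in L$ pour $k$ dans un $J'$ assez petit, uniformément en $x$ dans un compact de $G(\AAA_f)$, ce qui suffit puisque $f$ est à support compact à la place finie. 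Pour les dérivées à la place archimédienne, on utilise la règle de dérivation composée. Les $R(Y)(\zeta_\of)$ sont des combinaisons de dérivées de $\zeta_\infty$, évaluées en $c(x)-\of$, multipliées par des polynômes en les dérivées de $c$. Ces dernières sont majorées par $\|x\|^{k}$ pour un certain $k$ indépendant de $\of$. On obtient ainsi $|R(Y)\zeta_\of(x)|\leq C_Y\|x\|^{k_Y}$ uniformément en $\of$. Par la règle de Leibniz, $f_\of=\zeta_\of f$ appartient à $\Sc(G(\AAA))$ et vérifie $\|f_\of\|_{r,Y}\leq C\sum_{Y'}\|f\|_{r+k,Y'}$ uniformément en $\of$.

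\emph{Assertion 2.} Le point essentiel est le comptage. Si $f_\of(x)\neq0$, alors $\of\in c(x)-\om$. Comme $\om$ est compact, l'ensemble $(c(x)-\om)\cap\cgo(F)$ est contenu dans un translaté d'un ensemble de la forme $B\times L$, avec $B$ une boule de $\cgo(F_\infty)$ de rayon fixe. Il contient donc au plus $C$ points rationnels, où $C$ ne dépend pas de $x$ : c'est un réseau dans un domaine borné de taille fixe. On s'appuie aussi sur la condition $(\om-\om)\cap\cgo(F)=\{0\}$. Ensuite, on exploite la décroissance : $\|\of\|\ll\|c(x)\|+1\ll\|x\|^{k}$. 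On peut donc échanger un facteur $\|x\|^{-k(d\nu+1)}$ contre une majoration sommable $(1+\|\of\|)^{-d\nu-1}$, via la majoration $\sup_x\|x\|^r|R(Y)f_\of(x)|\leq(1+\|\of\|)^{-M}\sup_x\|x\|^{r+kM}|R(Y)f_\of(x)|$, valable sur le support. La série $\sum_{\of\in\cgo(F)}(1+\|\of\|)^{-M}$ converge pour $M$ grand, puisque les $\of$ pertinents appartiennent à $\cgo(F)\cap(\text{réseau fixe à l'infini} \times L')$ par invariance sous $J$. On conclut par la borne de l'assertion 1 : la quantité $\sum_\of\|f_\of\|_{r,Y}$ est majorée par une semi-norme continue de $f$ sur chaque $\Sc(G(\AAA),C)^J$, donc sur la limite inductive stricte. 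Le lemme s'applique à toute semi-norme continue, car celle-ci est dominée, sur chaque pièce, par un nombre fini de $\|\cdot\|_{r,Y}$.

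\emph{Obstacle principal.} La difficulté est la maîtrise uniforme en $\of$ de la partie finie. Il faut s'assurer que seul un ensemble « réseau » de $\of$ intervient pour une fonction $f$ de support $C$ fixé et d'invariance $J$ fixée, ce qui passe par le choix de $\zeta^\infty$ comme indicatrice d'un réseau. C'est ce que je vérifierais en premier.
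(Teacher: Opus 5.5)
Your proof is correct, but it is organized differently from the paper's.

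The paper works on a fixed Fréchet piece $\Sc(G(\AAA),C)^J$ and uses Banach--Steinhaus to reduce to the finiteness of $\sum_{\of}\|f_\of\|$ for each individual $f$. It deduces this from the convergence, for $t$ large, of $\sum_{\of\in\cgo(F)}\sup_{y\in\om}\|y+\of\|^{-t/N}$. Here $\|y\|=\prod_v\sup(1,|y_{1,v}|_v,\ldots,|y_{d\nu,v}|_v)$ is an adelic height on $\cgo(\AAA)\simeq\AAA^{d\nu}$ satisfying $\|c(x)\|\ll\|x\|^N$. This height is summable over all rational points, so the paper needs no restriction on $\of$, and its argument does not depend on $C$.

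You instead write down an explicit dominating seminorm. You combine the uniform estimate $\|f_\of\|_{r,Y}\ll\sum_{Y'}\|f\|_{r+k,Y'}$ with a weight $(1+|\of|_\infty)^{-M}$, where $|\cdot|_\infty$ is an archimedean size, and sum over the relevant $\of$. The paper's reduction to $\sum_\of\sup_{c(x)\in\of+\om}\|x\|^{-t}$ uses the same uniform estimate tacitly. Your route avoids Banach--Steinhaus and gives an explicit bound on each piece. The price is that, because your size function is archimedean, restricting $\of$ to a set with bounded denominators becomes essential: $\sum_{\of\in\cgo(F)}(1+|\of|_\infty)^{-M}$ diverges as soon as $d\nu\geq 1$.

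That restriction does hold, but for a different reason than the one you give.
\begin{itemize}
\item It comes from the support condition $x^\infty\in C$ and the compactness of $\om$, not from $J$-invariance. These force the finite component of $\of$ into the compact set $c(C)-\om^\infty\subset\cgo(\AAA_f)$, where $\om^\infty$ is the projection of $\om$.
\item It is independent of the shape of $\zeta^\infty$, so the \emph{obstacle principal} you flag is not really one. The reduction to a product $\zeta_\infty\otimes\zeta^\infty$ is also unnecessary: any smooth compactly supported $\zeta$ is invariant under an open compact subgroup of $\AAA_f^{d\nu}$ and has bounded archimedean derivatives, which is all your argument uses.
\item The count of rational points in $c(x)-\om$ plays no role when bounding a sum of suprema. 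In any case, $(\om-\om)\cap\cgo(F)=\{0\}$ already gives at most one such $\of$.
\end{itemize}
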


  \begin{preuve}
    L'assertion 1 est évidente. Il suffit de prouver l'assertion 2 pour le sous-espace $\Sc(G(\AAA),C)^J$ muni de la topologie induite par les semi-normes $\|\cdot\|_{r,Y}$ définies en \eqref{eq:Norm rXY}. 
    Par le théorème de Banach-Steinhaus, il suffit de prouver la convergence de  $\sum_{\of\in \cgo(F)} \|f_\of\|$ pour tout $f\in \Sc(G(\AAA),C)^J$ et toute semi-norme continue. Cela résulte de la convergence, pour $t>0$ assez grand, de la somme  suivante:
    \begin{align*}
      \sum_{\of \in\cgo(F)}    \sup_{x\in  G(\AAA), c(x) \in \of +\om }   \|x\|^{-t}
    \end{align*}
    où $\om\subset \cgo(\AAA)$ est l'ensemble compact introduit plus haut. Via l'identification de $\cgo(\AAA)$ à $\AAA^{d\nu}$, un élément $y\in \cgo(\AAA)$ s'écrit $y=(y_1,\ldots,y_{d\nu})\in \AAA^{d\nu}$; on pose alors
    $$\|y\|=\prod_{v} \sup(1,|y_{1,v}|_v,\ldots, |y_{d\nu,v}|_v).$$ 
   Il existe $N,C>0$ tel que $\|c(x)\|\leq C \|x\|^N$ pour tout $x\in G(\AAA)$, la convergence ci-dessus résulte alors de la convergence suivante
    \begin{align*}
      \sum_{\of \in\cgo(F)}    \sup_{y\in \om }   \|y+\of\|^{-t/N}<\infty
    \end{align*}
    pour tout $t>0$ assez grand.
  \end{preuve}
\end{paragr}

\begin{paragr}
  Soit $f\in \Sc(G(\AAA))$.   Soit $P\in \fc(M_0)$ et  $\of\in \cgo(F)$. Pour tous $w_1,w_2\in W$ et tous $x,y\in G(\AAA)$, on pose

  \begin{align}\label{eq:Kpofw_1w_2 new}
  K_{P,\of,f}^{w_1,w_2}(x,y)=\sum_{\gamma\in (M_P\cap w_1G_\of w_2^{-1})(F)} \int_{N_P(\AAA)} f((w_1x)^{-1}\gamma  n w_2y)\, dn.
\end{align}
On obtient ainsi une fonction sur $P_{w_1}^{\theta'}(F)\back G(\AAA)\times P_{w_2}^\theta(F)\back G(\AAA)$.

Soit $K_{P,f}^{w_1,w_2}(x,y)=K_{P,f}(w_1x,w_2y)$ où $K_{P,f}$  est introduit au § \ref{S:KPchi}. Notons qu'on a 
\begin{align}\label{eq:sum of cgo KP}
 K_{P,f}^{w_1,w_2}(x,y)= \sum_{\of\in \cgo(F)}  K_{P,\of,f}^{w_1,w_2}(x,y).
\end{align}

\begin{lemme}\label{lem:KPof}Soit   $P\in \fc(M_0)$. Pour  tous $w_1,w_2\in W$,  $\of,\of'\in \cgo(F)$, $x\in G^{\theta'}(\AAA)$ et $y\in G^\theta(\AAA)$ on a
 \begin{align}\label{eq:dichot o}
       K_{P,\of',f_\of}^{w_1,w_2}(x,y)=\left\lbrace
       \begin{array}{l}
          K_{P,\of,f}^{w_1,w_2}(x,y)\text{ si } \of'=\of\, ;\\
0 \text{ sinon.} 
       \end{array}\right.
 \end{align}
 En particulier, on a
 \begin{align*}
    K_{P,f_\of}^{w_1,w_2}(x,y)=K_{P,\of,f}^{w_1,w_2}(x,y).
 \end{align*}
\end{lemme}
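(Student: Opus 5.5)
Le plan consiste à développer $K_{P,\of',f_\of}^{w_1,w_2}(x,y)$ selon sa définition \eqref{eq:Kpofw_1w_2 new} et à montrer que, terme à terme, le facteur $\zeta_\of$ vaut soit $1$ soit $0$ selon que $\of'=\of$ ou non. On écrit
\begin{align*}
K_{P,\of',f_\of}^{w_1,w_2}(x,y)=\sum_{\gamma\in (M_P\cap w_1G_{\of'} w_2^{-1})(F)} \int_{N_P(\AAA)} \zeta_\of\big((w_1x)^{-1}\gamma n w_2y\big) f\big((w_1x)^{-1}\gamma  n w_2y\big)\, dn .
\end{align*}
Tout revient à calculer $c\big((w_1x)^{-1}\gamma n w_2 y\big)$ pour $x\in G^{\theta'}(\AAA)$, $y\in G^\theta(\AAA)$, $\gamma$ comme ci-dessus et $n\in N_P(\AAA)$.

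On procède en deux étapes. D'abord, puisque $W\subset G(F)$ normalise $M_0$, on a $w_1^{-1}\gamma n w_2 = w_1^{-1}(\gamma n) w_2$ avec $\gamma\in M_P(\AAA)$ et $n\in N_P(\AAA)$. La proposition \ref{prop:inv-de-c}, appliquée sur les points adéliques puisque c'est une identité de morphismes, donne $c(w_1^{-1}n'\gamma w_2)=c(w_1^{-1}\gamma w_2)$ en écrivant $\gamma n = (\gamma n\gamma^{-1})\gamma$ avec $\gamma n\gamma^{-1}\in N_P$. Ensuite, $c$ est invariant par l'action à gauche de $G^{\theta'}$ et à droite de $G^\theta$, donc invariant par $x^{-1}$ et par $y$ ; il faut toutefois bien placer $w_1$ et $w_2$. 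On a $(w_1x)^{-1}\gamma n w_2 y = x^{-1}\,(w_1^{-1}\gamma n w_2)\,y$, et l'invariance par $x^{-1}\in G^{\theta'}(\AAA)$ et $y\in G^\theta(\AAA)$ donne
\begin{align*}
c\big((w_1x)^{-1}\gamma n w_2y\big)=c(w_1^{-1}\gamma w_2)=\of',
\end{align*}
puisque $\gamma\in w_1G_{\of'}w_2^{-1}$ équivaut à $w_1^{-1}\gamma w_2\in G_{\of'}$. Il faut ici vérifier soigneusement la convention $w_1^{-1}$ contre $w_1$ dans la définition de l'ensemble de sommation, pour qu'elle soit cohérente avec l'écriture $(w_1x)^{-1}=x^{-1}w_1^{-1}$. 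C'est le seul point délicat, purement notationnel : si la convention ne colle pas, il faut l'ajuster en remplaçant $w_1$ par son inverse, puisque $W$ est un groupe.

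On conclut ainsi : $\zeta_\of$ évalué en ce point vaut $\zeta(\of'-\of)$. Si $\of'=\of$, c'est $\zeta(0)=1$. Si $\of'\neq\of$, on a $\of'-\of\in\cgo(F)\setminus\{0\}$, qui est hors de $\vc\supset\om\supset\supp\zeta$, donc la valeur est $0$. On obtient \eqref{eq:dichot o}. La seconde assertion s'en déduit en sommant sur $\of'$ grâce à \eqref{eq:sum of cgo KP} appliquée à $f_\of$, ce qui est licite car $f_\of\in\Sc(G(\AAA))$ par le lemme \ref{lem:sum fof} ; seul le terme $\of'=\of$ survit. Je ne prévois pas d'obstacle réel au-delà de la vérification de l'invariance de $c$ via la proposition \ref{prop:inv-de-c}.
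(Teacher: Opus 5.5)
Your proof is correct and is essentially the paper's argument. You compute $c(x^{-1}w_1^{-1}\gamma n w_2y)=c(w_1^{-1}\gamma w_2)=\of'$ using the $G^{\theta'}\times G^\theta$-invariance of $c$ and Proposition~\ref{prop:inv-de-c}, deduce that $\zeta_\of$ equals the constant $\zeta(\of'-\of)\in\{0,1\}$ on every term, and get the second identity by summing over $\of'$ via \eqref{eq:sum of cgo KP} applied to $f_\of$. The convention you flagged is consistent as stated ($\gamma\in w_1G_{\of'}w_2^{-1}$ iff $w_1^{-1}\gamma w_2\in G_{\of'}$, which matches $(w_1x)^{-1}=x^{-1}w_1^{-1}$), so no adjustment is needed.
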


\begin{preuve} Soit  $w_1,w_2,\of,\of', x,y$ comme dans l'énoncé. Si le membre de gauche de \eqref{eq:dichot o} est non nul, alors il existe $\gamma\in (M_P\cap w_1G_{\of'} w_2^{-1})(F)$  et $n\in N_P(\AAA)$ tels que 
\begin{align*}
    \zeta( c(x^{-1}w_1^{-1}\gamma n w_2y)-\of)\not=0.
\end{align*}
Il résulte  de la proposition \ref{prop:inv-de-c}  qu'on a 
\begin{align*}
    c(x^{-1}w_1^{-1}\gamma n w_2y)=c(w_1^{-1}\gamma w_2)=\of'.
\end{align*}
Or  $\zeta( \of'-\of)=0$ sauf si $\of=\of'$ auquel cas on a   $\zeta( \of'-\of)=1$. Cela donne l'annulation dans \eqref{eq:dichot o}. Supposons $\of'=\of$. Dans ce cas, pour tous  $\gamma\in (M_P\cap w_1G_{\of} w_2^{-1})(F)$  et $n\in N_P(\AAA)$, on a 
\begin{align*}
    \zeta( c(x^{-1}w_1^{-1}\gamma n w_2y)-\of)=\zeta( \of-\of)=1.
\end{align*}
On a ainsi obtenu la première égalité. La dernière égalité résulte alors de  \eqref{eq:sum of cgo KP}. 
\end{preuve}

Soit $T\in T_0+\overline{\ago_0^+}$ un paramètre de troncature. Pour tous $x\in G^{\theta'}(F)\back G(\AAA)$ et $y\in G^\theta(F)\back G(\AAA)$,  on définit
\begin{align}\label{eq:KTo}
    K^T_{\of,f}(x,y)=K^{T,\theta',\theta}_{\of,f}(x,y)=\sum_{P\in \fc(P_0)} \eps_P^G \sum_{w_1 \in   \, _PW_{\theta'}} \sum_{\delta_1\in P_{w_1}^{\theta'}(F) \back G^{\theta'}(F)}  \hat\tau_P(H_P(w_1\delta_1x)-T) \times \\ \nonumber \left[\sum_{w_2\in   \, _PW_\theta} \sum_{\delta_2\in P_{w_2}^\theta(F) \back G^\theta(F)} K_{P,\of,f}^{w_1,w_2}(\delta_1x,\delta_2y)\right].
\end{align}

\begin{remarque}
  La somme ci-dessus sur $\delta_1$ est finie et celle sur $\delta_2$ est absolument convergente, cf. remarque \ref{rq:delta1}.
\end{remarque}

Soit $K^T_f=K^{T,\theta',\theta}_f$ le noyau modifié introduit au §\ref{S:maj noyau modif}. Vu l'égalité \eqref{eq:sum of cgo KP}, on a pour tous $x, y\in G(\AAA)$ 
\begin{align}
  \label{eq:sum-of}
K^T_f(x,y)=\sum_{\of\in\cgo(F)} K^T_{\of,f}(x,y)
\end{align}
dont la convergence absolue résulte à nouveau du lemme \ref{lem:rappel*BPCZ}. On déduit alors  du lemme \ref{lem:KPof} qu'on a pour tous $x\in G^{\theta'}(\AAA)$ et $y\in G^\theta(\AAA)$ 
\begin{align}\label{eq:Kof=Kf}
   K^T_{f_\of}(x,y)= K^T_{\of,f}(x,y).
\end{align}

\end{paragr}

\begin{paragr}
  \begin{theoreme}\label{thm:cv-geo}
   Pour tout $T\in T_0+\overline{\ago_0^+}$ et $N_1, N_2>0$, il existe une semi-norme continue $\|\cdot\| $ sur $\Sc(G(\AAA))$ telle que pour  tout $f\in \Sc(G(\AAA))$ on ait 
    \begin{align*}
      \sum_{\of \in\cgo(F)}  \int_{[G^{\theta'}]^G  \times [G^\theta]} |K^T_{\of,f}(x,y)| \|x\|_{G^{\theta'}}^{N_1}\|y\|_{G^{\theta}}^{N_2} \, dxdy \leq \|f\|.
    \end{align*}
  \end{theoreme}

  \begin{preuve}
   D'après le théorème \ref{thm:cv-spec}, il existe une semi-norme continue $\|\cdot\| $ sur $\Sc(G(\AAA))$ telle que pour  tout $f\in \Sc(G(\AAA))$ on ait 
$$
\int_{[G^{\theta'}]^G  \times [G^\theta]} |K^T_{f}(x,y)|\|x\|_{G^{\theta'}}^{N_1}\|y\|_{G^{\theta}}^{N_2} \, dxdy \leq \|f\|.
$$
En utilisant \eqref{eq:Kof=Kf}, on en déduit que 
\begin{align*}
   \sum_{\of \in\cgo(F)}  \int_{[G^{\theta'}]^G  \times [G^\theta]} |K^T_{\of,f}(x,y)| \|x\|_{G^{\theta'}}^{N_1}\|y\|_{G^{\theta}}^{N_2} \, dxdy \leq   \sum_{\of \in\cgo(F)} \|f_\of\|.
\end{align*}
D'après le lemme \ref{lem:sum fof},  $f\mapsto \sum_{\of \in\cgo(F)} \|f_\of\|$ est encore une semi-norme continue. Cela conclut.
\end{preuve}
\end{paragr}

\begin{paragr} Il sera parfois plus commode d'utiliser les noyaux associés à certains sous-groupes paraboliques semi-standard. Pour cela, on observe que, pour tout $P\in\fc(M_0)$, tous $x,y\in G(\AAA)$ et tous $w_1,w_2\in W$, on a
  \begin{align}\label{eq:KPow}
    K_{P,\of,f}^{w_1,w_2}(x,y) = K_{P_{w_1},\of,f}^{1,w_1^{-1}w_2}(x,y). 
  \end{align}
On utilisera alors l'expression suivante de $K^T_{\of,f}(x,y)$. 

\begin{proposition}
  Soit $\of\in\cgo(F)$. Pour tous $x,y\in G(\AAA)$ on a 
  \begin{align}\label{eq:varKTo}
  K^T_{\of,f}(x,y)=\sum_{P\in \fc(P_0^{\theta'})} \eps_P^G  \sum_{\delta_1\in P^{\theta'}(F) \back G^{\theta'}(F)}  \hat\tau_{P}(H_{P}(\delta_1x)-T_{P}) \times \\ \nonumber \left[\sum_{w\in   \, _{P}W_\theta} \sum_{\delta_2\in P_{w}^\theta(F) \back G^\theta(F)} K_{P,\of,f}^{1,w}(\delta_1x,\delta_2y)\right].
  \end{align}
\end{proposition}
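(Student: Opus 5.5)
Le plan est de partir de la définition \eqref{eq:KTo} et de réindexer la somme double sur $(P,w_1)$ avec $P\in\fc(P_0)$ et $w_1\in\,_PW_{\theta'}$ à l'aide du corollaire \ref{cor:PWtheta}, assertion 2 (appliqué à $\theta'$). L'application $(P,w_1)\mapsto P'=P_{w_1}$ est en effet une bijection sur $\fc(P_0^{\theta'})$. Pour un tel couple, on a $P_{w_1}^{\theta'}=P'^{\theta'}$, si bien que la somme sur $\delta_1\in P_{w_1}^{\theta'}(F)\back G^{\theta'}(F)$ devient celle sur $P'^{\theta'}(F)\back G^{\theta'}(F)$. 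Le signe est inchangé, car $\eps_P^G=\eps_{P'}^G$ (la dimension de $\ago_P^G$ est invariante par conjugaison).

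On traite ensuite le facteur de troncature par le lemme \ref{lem:nonstd}, assertion 2, avec $Q=G$ :
\begin{align*}
\hat\tau_P(H_P(w_1\delta_1x)-T)=\hat\tau_P(H_P(w_1\delta_1x)-T_P)=\hat\tau_{P'}(H_{P'}(\delta_1x)-T_{P'}).
\end{align*}
Pour la première égalité, on note que $\hat\tau_P$ ne voit que la projection sur $\ago_P$ et que $T_P$ est la projection de $T$ lorsque $P$ est standard. Pour le noyau, on utilise \eqref{eq:KPow}, qui donne $K_{P,\of,f}^{w_1,w_2}=K_{P',\of,f}^{1,w_1^{-1}w_2}$.

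Il reste à réindexer la somme intérieure sur $(w_2,\delta_2)$. Le lemme \ref{lem:Wconj} s'applique à $P$ et $w_1$, puisque $w_1\in\,_PW_{\theta'}$ entraîne $M_P\cap P_0=M_P\cap w_1P_0w_1^{-1}$. Il fournit une bijection $w_2\mapsto w=w_1^{-1}w_2$ de $\,_PW_\theta$ sur $\,_{P'}W_\theta$. Comme $P'_w=w^{-1}w_1^{-1}Pw_1w=P_{w_2}$, on obtient $P_{w_2}^\theta=P_w'^{\theta}$, de sorte que l'ensemble de sommation en $\delta_2$ coïncide avec $P'^{\theta}_w(F)\back G^\theta(F)$. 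On retrouve ainsi exactement \eqref{eq:varKTo}.

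La seule vérification délicate est le lemme \ref{lem:nonstd} : il faut s'assurer que la convention $T_{P'}=w_1^{-1}(T_P)$ est bien celle qui apparaît (§ \ref{S:et les ss?}), et que $H_P(w_1g)=w_1H_{P'}(g)$ parce que $W\subset K\cap G(F)$. Les questions de convergence ne posent pas de difficulté, les sommes étant les mêmes qu'en \eqref{eq:KTo}, simplement réindexées (remarque \ref{rq:delta1}).
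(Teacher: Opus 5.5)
Your proof is correct and follows the paper's argument, with the same four ingredients: \eqref{eq:tauw} (lemme \ref{lem:nonstd}) for the truncation factor, \eqref{eq:KPow} for the kernel, lemme \ref{lem:Wconj} for the change of variables $w_2\mapsto w_1^{-1}w_2$, and corollaire \ref{cor:PWtheta} (applied to $\theta'$) for the reindexing $(P,w_1)\mapsto P_{w_1}$. The paper does this reindexing last rather than first, and your remarks that $\eps_P^G=\eps_{P_{w_1}}^G$ and that $\hat\tau_P(\cdot-T)=\hat\tau_P(\cdot-T_P)$ for standard $P$ make explicit details the paper leaves implicit.
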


\begin{preuve}
    D'après les égalités \eqref{eq:tauw} et \eqref{eq:KPow}, on a    
  \begin{align*}
    K^T_{\of,f}(x,y)=\sum_{P\in \fc(P_0)} \eps_P^G \sum_{w_1 \in   \, _PW_{\theta'}} \sum_{\delta_1\in P_{w_1}^{\theta'}(F) \back G^{\theta'}(F)}  \hat\tau_{P_{w_1}}(H_{P_{w_1}}(\delta_1x)-T_{P_{w_1}}) \times \\ \nonumber \left[\sum_{w_2\in   \, _PW_\theta} \sum_{\delta_2\in P_{w_2}^\theta(F) \back G^\theta(F)} K_{P_{w_1},\of,f}^{1,w_1^{-1}w_2}(\delta_1x,\delta_2y)\right].
\end{align*}
Par le lemme \ref{lem:Wconj}, on peut faire le changement de variables $w_2\mapsto w=w_1^{-1}w_2$; on obtient que $K^T_{\of,f}(x,y)$ est égal à 
\begin{align*}
    \sum_{P\in \fc(P_0)} \eps_P^G \sum_{w_1 \in   \, _PW_{\theta'}} \sum_{\delta_1\in P_{w_1}^{\theta'}(F) \back G^{\theta'}(F)}  \hat\tau_{P_{w_1}}(H_{P_{w_1}}(\delta_1x)-T_{P_{w_1}}) \times \\ \nonumber \left[\sum_{w\in   \, _{P_{w_1}}W_\theta} \sum_{\delta_2\in P_{w_1 w}^\theta(F) \back G^\theta(F)} K_{P_{w_1},\of,f}^{1,w}(\delta_1x,\delta_2y)\right].
\end{align*}
On conclut avec le corollaire \ref{cor:PWtheta}. 
\end{preuve}

\end{paragr}

\begin{paragr}[Distributions géométriques.]\label{S:dist geo} --- 
      Soit $\eta: G(\AAA) \to \CC^\times$ un caractère unitaire comme dans \S \ref{S:JchiT}. Soit $T\in T_0+\overline{\ago_0^+}$ et $\of\in\cgo(F)$. On introduit la distribution  $J^T_\of(\eta)$ sur $\Sc(G(\AAA))$ donnée par l'intégrale suivante dont la convergence absolue et la continuité en $f\in \Sc(G(\AAA))$ est garantie par le théorème \ref{thm:cv-geo}:
  \begin{align}\label{eq:JToeta}
    J^T_\of(\eta,f)=  \int_{[G^{\theta'}]^G\times [G^\theta]      }  K^T_{\of,f}(x,y) \, \eta(x)dxdy.
  \end{align}

  L'égalité  \eqref{eq:Kof=Kf} entraîne qu'on a
  \begin{align}\label{eq:JoTf=JTf}
       J^T_\of(\eta,f)=J^T(\eta,f_\of)
  \end{align}
  où $J^T(\eta)$ est la distribution introduite au § \ref{S:JTeta}. Il résulte alors des propriétés de $J^T(\eta)$, cf.    § \ref{S:JTeta}, que l'application $T\mapsto J^{T}_\of(\eta,f)$ coïncide dans un certain cône avec une fonction polynôme-exponentielle en $T$. On définit alors $J_\of(\eta,f)$ comme le terme constant de cette fonction de $T$. On a alors 
   \begin{align*}
       J_\of(\eta,f)=J(\eta,f_\of),
  \end{align*}
  où $J(\eta)$ est la distribution définie comme le terme constant de $J^T(\eta)$, cf.  § \ref{S:JTeta}.
  Comme $J(\eta)$ est continue, il résulte du lemme \ref{lem:sum fof}, que les distributions  $J_\of(\eta)$ et $\sum_{\of\in \cgo(F)} J_\of(\eta)$ sont continues sur $\Sc(G(\AAA))$. 
 
\end{paragr}

\begin{paragr}[Propriétés de covariance de   $J^{T}_\of(\eta,f)$ et $J_\of(\eta,f)$.] ---  Soit $Q\subset G$ un sous-groupe parabolique standard, $w_1'\in \, _QW_{\theta'}$ et  $w_2'\in \, _QW_{\theta}$. On pose $\theta_1=w_1'\theta' {w_1'}^{-1}$ et  $\theta_2=w_2' \theta {w_2'}^{-1}$. Comme au \S \ref{S:quot cat}, on introduit le morphisme canonique $c_{M_Q} : M_Q \to \cgo_{M_Q}=\Spec(F[M_Q]^{M_Q^{\theta_1}\times M_Q^{\theta_2}})$ où $F[M_Q]^{M_Q^{\theta_1}\times M_Q^{\theta_2}}$ est la $F$-algèbre des fonctions régulières sur $M_Q$ invariante respectivement à droite et à gauche par $M_Q^{\theta_1}$ et $M_Q^{\theta_2}$. On a le diagramme commutatif suivant : 
\begin{align}\label{eq:diag-fibre}
    \xymatrix{  M_Q \ar[r]^{i_{Q,w_1',w_2'}} \ar[d]^{c_{M_Q}}  & G  \ar[d]^{c}    \\
   \cgo_{M_Q} \ar[r]^{j_{Q,w_1',w_2'}} & \cgo    }
\end{align}
où $i_{Q,w_1',w_2'} : M_Q\to G$ est donné par $m\mapsto w_1'^{-1} m w_2'$ et $j_{Q,w_1',w_2'} : \cgo_{M_Q}\to \cgo$ est le morphisme induit par la propriété universelle du quotient catégorique.  

\begin{lemme}\label{lem:fibre-finie}
    Le morphisme $j_{Q,w_1',w_2'}$ est fini. En particulier, ses fibres sont finies.
\end{lemme}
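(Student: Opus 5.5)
Pour établir la finitude de $j_{Q,w_1',w_2'}$, on passe par les algèbres de fonctions. Par la proposition \ref{prop:quot-cat} (appliquée à chaque facteur de $M_Q$), $\cgo_{M_Q}$ est un espace affine. Ses coordonnées sont les coefficients non dominants des polynômes $\chi$ associés aux blocs. Plus précisément, on écrit $M_Q\simeq \prod_{i=1}^m GL_{n_i,D}$. Comme $\theta_1,\theta_2\in M_0(F)$ sont diagonaux, on a $M_Q^{\theta_j}=\prod_i GL_{n_i,D}^{\theta_{j,i}}$, où $\theta_{j,i}$ est la restriction de $\theta_j$ au $i$-ème bloc. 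Ainsi $\cgo_{M_Q}=\prod_i \cgo_i$, où $\cgo_i$ est le quotient de $GL_{n_i,D}$ par $GL_{n_i,D}^{\theta_{1,i}}\times GL_{n_i,D}^{\theta_{2,i}}$. Ce quotient s'identifie à $\mathbf{A}^{d\nu_i}$ via $m_i\mapsto \chi_{\rho_{\theta_{2,i}}(m_i)}$, le polynôme défini en \eqref{eq:defchix} relativement au bloc $i$.

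On calcule ensuite $j$ explicitement. Pour $m\in M_Q$, on a $c(w_1'^{-1}mw_2')=c^\flat(\rho_\theta(w_1'^{-1}mw_2'))$. Or $\rho_\theta(w_1'^{-1}mw_2')=w_1'^{-1}\,\rho_{\theta_2}^{\theta_1}(m)\,w_1'$, où $\rho_{\theta_2}^{\theta_1}(m)=m\theta_2m^{-1}\theta_1$. Cet élément est bloc-diagonal, de blocs $x_i=\rho_{\theta_{2,i}}(m_i)$. Par le corollaire \ref{cor:Prdx}, $\Prd_x$ est déterminé par $\Prd_{x^+}$ à un facteur explicite près, et inversement. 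Comme $\Prd$ est invariant par conjugaison et multiplicatif sur les blocs, on obtient
\[\Prd_{x}=\prod_i \Prd_{x_i}.\]
De plus, chaque $\Prd_{x_i}$ s'exprime comme une puissance explicite de $(t-1)$, multipliée par $(2t)^{d\nu_i}$ et un facteur fixe en $(t\pm1)$, fois $\chi_{x_i}\bigl(\frac{t^2+1}{2t}\bigr)$. De même, $\Prd_x$ s'exprime ainsi en fonction de $\chi_x$. On en déduit qu'à un facteur constant explicite $(t-1)^a(t+1)^b$ près, ne dépendant que des dimensions, on a
\[\chi_x=\prod_i \chi_{x_i}.\]
Ceci est une identité entre polynômes unitaires, obtenue en identifiant les degrés : $\sum_i \nu_i\le \nu$, les facteurs $(t\pm1)$ compensant la différence. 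Le morphisme $j$ envoie donc $(P_1,\dots,P_m)$ sur $(t-1)^a(t+1)^b\prod_i P_i$.

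La finitude en découle de façon classique. Le morphisme $(P_i)_i\mapsto\prod_i P_i$, des $m$-uplets de polynômes unitaires vers les polynômes unitaires, est fini : les coefficients des $P_i$ sont entiers sur l'anneau des coefficients du produit, car ce sont des fonctions symétriques élémentaires de sous-ensembles de racines. La multiplication par le polynôme fixe $(t-1)^a(t+1)^b$ est une immersion fermée de l'espace affine des polynômes unitaires de degré $d\sum\nu_i$ dans celui de degré $d\nu$. Une composée de morphismes finis est finie, et les fibres sont alors finies.

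L'obstacle principal est la vérification précise de l'identité $\chi_x=(t-1)^a(t+1)^b\prod_i\chi_{x_i}$. Il faut suivre les exposants $\max\{\cdot,0\}$ du lemme \ref{lem:divisibilite} bloc par bloc ; c'est là qu'interviennent les lemmes \ref{lem:ssrequalites} et \ref{lem:divisibilite}. Il faut aussi s'assurer que la conjugaison par $w_1'$ ne modifie pas $\Prd_{x^+}$. Pour cela, on se ramène à travailler avec $\Prd_x$ via le corollaire \ref{cor:Prdx}, qui est invariant par conjugaison. Si ce suivi s'avère lourd, une alternative consiste à passer à $\bar F$ et à raisonner sur les racines : les racines de $\chi_x$ sont, avec multiplicité, la réunion des racines des $\chi_{x_i}$ et d'un ensemble fixe de $\pm1$. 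Cela suffit pour la finitude, qui se teste après extension des scalaires.
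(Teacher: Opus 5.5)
Your proposal is correct and follows essentially the same route as the paper. Both arguments identify the two categorical quotients with spaces of monic polynomials, applying Proposition~\ref{prop:quot-cat} blockwise. Both then show that $j_{Q,w_1',w_2'}$ becomes $(P_1,\ldots,P_r)\mapsto (t-1)^{dN_+}(t+1)^{dN_-}\prod_j P_j$, and conclude finiteness from the integrality of the coefficients of monic factors, a step the paper simply calls easy.

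You can also shorten the computation by not passing through $\Prd_x$. Since $x^+$ is block-diagonal, $\Prd_{x^+}=\prod_i\Prd_{x_i^+}$. Definition \eqref{eq:defchix} then gives $a=dN_+$ and $b=dN_-$ directly, and $N_\pm\geq 0$ because $\sum_i\max\{a_i,0\}\geq\max\{\sum_i a_i,0\}$.
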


\begin{preuve}
Le morphisme $G\to G$ donné par $g\mapsto w_1'^{-1} g w_2'$ identifie dualement  $F[G]^{G^{\theta'}\times G^{\theta}}$ à $F[G]^{G^{\theta_1}\times G^{\theta_2}}$. Il s'agit donc de voir que le morphisme de restriction $F[G]^{G^{\theta_1}\times G^{\theta_2}}\to F[M_Q]^{M_Q^{\theta_1}\times M_Q^{\theta_2}}$ est fini. Soit $S_1=\{g\theta_2g^{-1}\theta_1\mid g\in G \}$   et $S_1^Q=\{m\theta_2m^{-1}\theta_1\mid m\in G \}$ muni de l'action par conjugaison respectivement de $G^{\theta_1}$ et $M_Q^{\theta_1}$. Le morphisme précédent s'identifie à $F[S_1]^{G^{\theta_1}}\to F[S_1^Q]^{M_Q^{\theta_1}}$ où, comme d'habitude les exposants désignent les sous-algèbres de fonctions régulières et invariantes. Le sous-groupe de Levi est le fixateur d'une décomposition $V=V_1\oplus\ldots \oplus V_r$ de $V$ en sous-$D$-modules, cf. la sous-section \ref{ssec:preparatif alg}. Pour $1\leq j\leq r$ et $i=1,2$, on note $V_j^{\pm \theta_i}$ les espaces propres de la restriction de $\theta_i$ à $V_j$ pour la valeur propre $\pm 1$. On pose $p_j=\dim_D(V_j^{\theta_1})$ (on omet le $+$ en exposant) et   $q_j=\dim(V_j^{\theta_1})$. On note $p_j'$ et $q_j'$ les mêmes dimensions relatives à $\theta_2$. Soit  $\nu_j=\min\{p_j,q_j,p'_j,q'_j\}$.  Soit $p=\sum_{j=1}^r p_j$. De même, on définit $q=\sum_{j=1}^r q_j$, $p'$ et $q'$. Introduisons les entiers naturels  $\nu=\min\{p,q,p',q'\}$, $N_+=\sum_{j=1}^r \max(p_j'-q_j,0)-  \max(p'-q,0) $ et $N_-=\sum_{j=1}^r \max(p_j-p_j',0)-  \max(p-p',0)$. D'après la proposition \ref{prop:quot-cat}, le morphisme  $\Spec(F[S_1^Q]^{M_Q^{\theta_1}})\to \Spec(F[S_1]^{G^{\theta_1}})$ s'interprète comme le morphisme donné par
\begin{align*}
    (P_1,\ldots,P_r)\mapsto    (t-1)^{d N_+}  (t+1)^{d N_- }   \prod_{j=1}^r P_j
\end{align*}
entre le produit pour $j=1,\ldots,r$ des espaces affines  des polynômes unitaires de degré $d\nu_j$ vers l'espace affine  des polynômes unitaires de degré $d\nu$. Il est aisé de voir que ce morphisme est fini.

\end{preuve}

On reprend désormais les notations  du paragraphe \ref{S:covar-chi-T}. Soit  $f'\in \Sc(M_Q(\AAA))$ et  $\of'\in \cgo_{M_Q}(F)$. On dispose des noyaux modifiés $K^{T,\theta_1,\theta_2}_{f'}$ et $K^{T,\theta_1,\theta_2}_{\of',f'}$ dont la définition est donnée respectivement au \S \ref{S:maj noyau modif} et en \eqref{eq:KTo} où l'on substitue $M_Q$ à $G$. Soit
    \begin{align}\label{eq:sum-JQTtheta}
        J^{Q,T,\theta_1,\theta_2}(\eta,f')=\sum_{\chi'\in \Xgo(M_Q)} J^{Q,T,\theta_1,\theta_2}_{\chi'}(\eta,f') 
    \end{align}
    où les termes du membre de droite sont définis en \eqref{eq:JQTtheta}. Comme observé dans les lignes qui suivent \eqref{eq:JQTtheta}, l'assertion 2 du théorème \ref{thm:cv-spec} s'applique à notre situation et garantit la  convergence  et la continuité du membre de droite.     Le pendant géométrique de ces termes spectraux est donné par
      \begin{align}\label{eq:Jo'QTtheta}
  &  J^{Q,T,\theta_1,\theta_2}_{\of'}(\eta,f')\\
   \nonumber& =  \int_{    [M_{Q}^{\theta_1}]^Q}  \int_{    [M_{Q}^{\theta_2}]}  \exp(-\bg 2\rho_{Q^{\theta_1}}^{G^{\theta_1}}, H_{Q^{\theta_1}   }( x)\bd+\bg 2\rho_Q^G- 2\rho_{Q^{\theta_2}}^{G^{\theta_2}}, H_{Q^{\theta_2}   }( y)\bd) K^{T,\theta_1,\theta_2}_{\of',f'}(x,y)\, \eta(x)dxdy.
  \end{align}
  De même,  le théorème \ref{thm:cv-geo}  assure ici que l'intégrale \eqref{eq:Jo'QTtheta}, est absolument convergente et que la distribution  $J^{Q,T,\theta_1,\theta_2}_{\of'}(\eta)$ ainsi définie est continue. 
  
  Soit $\of\in\cgo(F)$. On a 
 \begin{align}\label{eq:fibre-dec}
M_Q(F)\cap      w_1' G_\of(F) w_2'^{-1}=\bigsqcup_{\of'} M_{Q,\of'}(F)
 \end{align}
 où $M_{Q,\of'}$ est la fibre de $c_{M_Q}$ au-dessus de $\of'$ et la réunion est prise sur l'ensemble (fini, cf. lemme \ref{lem:fibre-finie})  des $\of'\in \cgo_{M_Q}(F)$ tels que $j_{Q,w_1',w_2'}(\of')=\of$. Il est  alors commode de poser
  \begin{align}\label{eq:JoQTtheta}
      J^{Q,T,\theta_1,\theta_2}_{\of}(\eta)=  \sum_{\of' }J^{Q,T,\theta_1,\theta_2}_{\of'}(\eta)
  \end{align}
 où la somme à droite porte sur le même ensemble fini  des $\of'\in \cgo_{M_Q}(F)$ tels que $j_{Q,w_1',w_2'}(\of')=\of$. 
 Comme au \S  \ref{S:dist geo}, soit $J^{Q,\theta_1,\theta_2}_{\of}(\eta)$ le terme constant de $J^{Q,T,\theta_1,\theta_2}_{\of}(\eta)$. 
La proposition suivante étudie l'action de $G^\theta(\AAA)$ et $G^{\theta'}(\AAA)$ sur ces distributions, pour les notations $f^g, \ ^g \!f$, cf. § \ref{S:covar-chi-T}.

\begin{proposition}\label{prop:covar-geom-T}  Soit $\of\in\cgo(F)$ et $f\in \Sc(G(\AAA))$.
  \begin{enumerate}
  \item Pour tout $g\in G^\theta(\AAA)$, on  a
    \begin{align*}
      J^{T}_\of(\eta,f^g)=J^{T}_\of(\eta,f).
    \end{align*}
  \item  Pour tout $g\in G^{\theta'}(\AAA)$, on  a
    \begin{align*}
      &      J^{T}_\of(\eta,  ^g\!\!f)=\\
      &\eta(g)\sum_{Q\in \fc^G(P_0)}  \sum_{w_1'\in \, _QW_{\theta'}} \sum_{w_2'\in \, _QW_{\theta}}    \exp( \bg 2\rho_Q^G-2\rho_{Q^{\theta_1}}^{G^{\theta_1}}-2\rho_{Q^{\theta_2}}^{G^{\theta_2}}, T_Q\bd) J^{Q,T,\theta_1,\theta_2}_{\of} ( \eta, f_{Q,\eta,g}^{w_1',w_2'}). 
    \end{align*}
    où $f_{Q,\eta,g}^{w_1',w_2'}$ et $J^{Q,T,\theta_1,\theta_2}_{\of} ( \eta)$ sont définis respectivement en \eqref{eq:fct-fQT} et \eqref{eq:JoQTtheta}.
  \end{enumerate}
\end{proposition}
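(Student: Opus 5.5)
Je propose de déduire l'énoncé de la proposition \ref{prop:covar-T} appliquée à la fonction $f_\of$, grâce à l'égalité \eqref{eq:JoTf=JTf}, puis de redescendre de $J^{Q,T,\theta_1,\theta_2}$ à $J^{Q,T,\theta_1,\theta_2}_\of$ par une analyse des fibres. L'assertion 1 est immédiate : la définition \eqref{eq:JToeta} et le changement de variables $y\mapsto yg$ dans $[G^\theta]$, qui préserve la mesure, donnent l'invariance. On peut aussi observer que $(f^g)_\of=(f_\of)^g$ puisque $c$ est invariante à droite par $G^\theta$, puis invoquer la proposition \ref{prop:covar-T}.

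Pour l'assertion 2, on a d'abord $({}^g\!f)_\of={}^g(f_\of)$ pour $g\in G^{\theta'}(\AAA)$, car $c$ est invariante à gauche par $G^{\theta'}$. Par \eqref{eq:JoTf=JTf}, on a donc $J^T_\of(\eta,{}^g\!f)=J^T(\eta,{}^g(f_\of))$. On somme alors sur $\chi$ l'assertion 2 de la proposition \ref{prop:covar-T}, la sommation étant justifiée par la convergence absolue du théorème \ref{thm:cv-spec}. On obtient ainsi la formule annoncée, avec $J^{Q,T,\theta_1,\theta_2}(\eta,(f_\of)_{Q,\eta,g}^{w_1',w_2'})$ à la place de $J^{Q,T,\theta_1,\theta_2}_\of(\eta,f_{Q,\eta,g}^{w_1',w_2'})$.

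Le point principal est de prouver
\begin{align*}
J^{Q,T,\theta_1,\theta_2}(\eta,(f_\of)_{Q,\eta,g}^{w_1',w_2'})=J^{Q,T,\theta_1,\theta_2}_\of(\eta,f_{Q,\eta,g}^{w_1',w_2'}).
\end{align*}
On écrit $J^{Q,T,\theta_1,\theta_2}=\sum_{\of'\in\cgo_{M_Q}(F)}J^{Q,T,\theta_1,\theta_2}_{\of'}$, ce qui est l'analogue pour $M_Q$ de \eqref{eq:sum-of}. Pour chaque $\of'$, on compare les noyaux $K^{T,\theta_1,\theta_2}_{P\cap M_Q,\of'}$ évalués sur $(f_\of)_{Q,\eta,g}^{w_1',w_2'}$ et sur $f_{Q,\eta,g}^{w_1',w_2'}$, comme dans le lemme \ref{lem:KPof}. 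Dans l'intégrale définissant $(f_\of)_{Q,\eta,g}^{w_1',w_2'}(m)$, l'argument de $f$ est $(k_1w_1')^{-1}mn_Qk_2w_2'$. La proposition \ref{prop:inv-de-c}, jointe à l'invariance de $c$ par $G^{\theta'}\times G^\theta$ (les $k_i$ sont conjugués dans ces groupes par $w_i'$), donne
\begin{align*}
c\bigl((k_1w_1')^{-1}mn_Qk_2w_2'\bigr)=c(w_1'^{-1}mw_2')=j_{Q,w_1',w_2'}(c_{M_Q}(m)).
\end{align*}
Dans les noyaux pour $M_Q$, les arguments sont de la forme $x^{-1}\gamma ny$ avec $\gamma\in M_{Q,\of'}(F)$, $x\in M_Q^{\theta_1}(\AAA)$ et $y\in M_Q^{\theta_2}(\AAA)$. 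Le facteur $\zeta_\of$ y vaut donc $\zeta(j(\of')-\of)$, qui vaut $1$ si $j(\of')=\of$ et $0$ sinon, puisque $j(\of')\in\cgo(F)$ et que $\vc\cap\cgo(F)=\{0\}$. Seuls survivent les $\of'$ de la fibre, qui est finie par le lemme \ref{lem:fibre-finie}, et pour ceux-là la fonction $f_\of$ peut être remplacée par $f$. On conclut avec \eqref{eq:JoQTtheta}.

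L'obstacle principal est ce calcul d'invariant : il faut vérifier soigneusement que $c$ est constante le long des orbites composées, en tenant compte des $k_i$, de $n_Q$ et des éléments de $N_P^Q$ provenant des noyaux paraboliques de $M_Q$.
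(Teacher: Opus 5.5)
Your proposal is correct and follows the paper's proof essentially step by step. You reduce via \eqref{eq:JoTf=JTf} and $({}^g\!f)_\of={}^g(f_\of)$ to Proposition \ref{prop:covar-T} summed over $\chi$, use Proposition \ref{prop:inv-de-c} to get $(f_\of)_{Q,\eta,g}^{w_1',w_2'}(m)=\zeta_\of(w_1'^{-1}mw_2')\,f_{Q,\eta,g}^{w_1',w_2'}(m)$, and then restrict kernel by kernel to the finite fibre $j_{Q,w_1',w_2'}^{-1}(\of)$; the only cosmetic difference is that you read $\zeta_\of$ through $c_{M_Q}$ and the diagram \eqref{eq:diag-fibre}, whereas the paper applies Proposition \ref{prop:inv-de-c} in $G$ directly to $(w_3w_1')^{-1}\gamma n(w_4w_2')$ and splits the $\gamma$-sum with \eqref{eq:fibre-dec}.
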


\begin{preuve}
     1. L'invariante par translations à droite par $G^\theta(\AAA)$ est évidente sur la définition \eqref{eq:JToeta}. 
     
     2.  Pour tout $g\in G^{\theta'}(\AAA)$, en utilisant l'égalité de l'assertion 2 de la proposition \ref{prop:covar-T} sommée sur les données cuspidales, on trouve 
    \begin{align*}
      &      J^{T}(\eta,  ^g\!\!f)=\\
      &\eta(g)\sum_{Q\in \fc^G(P_0)}  \sum_{w_1'\in \, _QW_{\theta'}} \sum_{w_2'\in \, _QW_{\theta}}    \exp( \bg 2\rho_Q^G-2\rho_{Q^{\theta_1}}^{G^{\theta_1}}-2\rho_{Q^{\theta_2}}^{G^{\theta_2}}, T_Q\bd) J^{Q,T,\theta_1,\theta_2} ( \eta, f_{Q,\eta,g}^{w_1',w_2'}). 
    \end{align*}
    Par construction même de $f_\of$, cf. \S  \ref{S:zetaof}, on a $(^g\!f)_\of=^g\!\!(f_\of)$. Avec \eqref{eq:JoTf=JTf}, on en déduit 
    \begin{align*}
      &      J^{T}_\of(\eta,  ^g\!\!f)=J^{T}(\eta,  (^g\!\!f)_\of)=J^{T}(\eta,  ^g\!\!(f_\of))=\\
      &\eta(g)\sum_{Q\in \fc^G(P_0)}  \sum_{w_1'\in \, _QW_{\theta'}} \sum_{w_2'\in \, _QW_{\theta}}    \exp( \bg 2\rho_Q^G-2\rho_{Q^{\theta_1}}^{G^{\theta_1}}-2\rho_{Q^{\theta_2}}^{G^{\theta_2}}, T_Q\bd) J^{Q,T,\theta_1,\theta_2} ( \eta, (f_\of)_{Q,\eta,g}^{w_1',w_2'}). 
    \end{align*}
    Il suffit donc de montrer 
    \begin{align*}
        J^{Q,T,\theta_1,\theta_2} ( \eta, (f_\of)_{Q,\eta,g}^{w_1',w_2'})=J^{Q,T,\theta_1,\theta_2}_{\of} ( \eta, f_{Q,\eta,g}^{w_1',w_2'}).
    \end{align*}
    D'après \eqref{eq:sum-JQTtheta}, \eqref{eq:Jo'QTtheta} et \eqref{eq:JoQTtheta}, on est ramené à prouver 
    \begin{align*}
        K^{T,\theta_1,\theta_2}_{(f_\of)_{Q,\eta,g}^{w_1',w_2'}}=\sum_{\of'\in j^{-1}_{Q,w_1',w_2'}(\of)} K^{T,\theta_1,\theta_2}_{\of',f_{Q,\eta,g}^{w_1',w_2'}}. 
    \end{align*}
    Soit $f'\in \Sc(M_Q(\AAA))$, $x\in M_Q^{\theta_1}(\AAA)$ et $y\in M_Q^{\theta_2}(\AAA)$. Les définitions s'explicitent ainsi:
  \begin{align*}
    K^{T,\theta_1,\theta_2}_{f'}(x,y)=\sum_{P\in \fc^{M_Q}(P_0\cap M_Q)} \eps_P^{M_Q} \sum_{w_3 \in   \, _PW^{M_Q}_{\theta_1}} \sum_{\delta_3\in P_{w_3}^{\theta_1}(F) \back M_Q^{\theta_1}(F)}  \hat\tau_P^{M_Q}(H_P(w_3\delta_3x)-T) \times \\ \nonumber \left[\sum_{w_4\in   \, _PW^{M_Q}_{\theta_2}} \sum_{\delta_4\in P_{w_4}^{\theta_2}(F) \back M_Q^{\theta_2}(F)} K_{P,f'}(w_3\delta_3x, w_4\delta_4y)\right]
  \end{align*}
  ainsi que 
  \begin{align*}
    K^{T,\theta_1,\theta_2}_{\of',f'}(x,y)=\sum_{P\in \fc^{M_Q}(P_0\cap M_Q)} \eps_P^{M_Q} \sum_{w_3 \in   \, _PW^{M_Q}_{\theta_1}} \sum_{\delta_3\in P_{w_3}^{\theta_1}(F) \back M_Q^{\theta_1}(F)}  \hat\tau_P^{M_Q}(H_P(w_3\delta_3x)-T) \times \\ \nonumber \left[\sum_{w_4\in   \, _PW^{M_Q}_{\theta_2}} \sum_{\delta_4\in P_{w_4}^{\theta_2}(F) \back M_Q^{\theta_2}(F)} K_{P,\of',f'}^{w_3,w_4}(\delta_3x, \delta_4y)\right]. 
  \end{align*} 
  L'expression  $K_{P,\of',f'}^{w_3,w_4}$ ci-dessus est donnée par la variante suivante de \eqref{eq:Kpofw_1w_2 new}
  \begin{align*}
  K_{P,\of',f'}^{w_3,w_4}(x,y)=\sum_{\gamma\in (M_P\cap w_3M_{Q,\of'} w_4^{-1})(F)} \int_{N_P(\AAA)} f'((w_3x)^{-1}\gamma  n w_4 y)\, dn.
\end{align*}

  Il suffit de montrer que, pour tous $x\in M_Q^{\theta_1}(\AAA)$ et $y\in M_Q^{\theta_2}(\AAA)$ et tous $(P,w_3,w_4)$ comme dans les sommes ci-dessus, on a 
  \begin{align}\label{eq:K=sumK}
      K_{P,(f_\of)_{Q,\eta,g}^{w_1',w_2'}}(w_3x, w_4y)=\sum_{\of'\in j^{-1}_{Q,w_1',w_2'}(\of)} K_{P,\of',f_{Q,\eta,g}^{w_1',w_2'}}^{w_3,w_4}(x, y). 
  \end{align}
    Mais la proposition \ref{prop:inv-de-c} et la définition \eqref{eq:fct-fQT} entraînent que pour tout $m\in M_Q(\AAA)$ 
    \begin{align*}
        (f_\of)^{w'_1,w'_2}_{Q,\eta,g}(m)=\zeta_\of(w'^{-1}_1 mw'_2) f_{Q,\eta,g}^{w'_1,w'_2}(m). 
    \end{align*} 
   Soit $x\in M_Q^{\theta_1}(\AAA)$ et $y\in M_Q^{\theta_2}(\AAA)$. Pour tous  $\gamma\in M_P(F)$ et $n\in N_P(\AAA)$, en utilisant de nouveau la proposition \ref{prop:inv-de-c}, on a  
  \begin{align*}
      \zeta_\of(w'^{-1}_1 (w_3x)^{-1}\gamma n (w_4y)w'_2)=\zeta_\of((w_3w'_1)^{-1} \gamma n (w_4w'_2))=\zeta_\of((w_3w'_1)^{-1} \gamma (w_4w'_2)). 
  \end{align*}
  Il s'ensuit que 
  \begin{align*}
      K_{P,(f_\of)^{w'_1,w'_2}_{Q,\eta,g}}(w_3x, w_4y)=\sum_{\gamma \in M_P(F)} \int_{N_P(\AAA)} (f_\of)^{w'_1,w'_2}_{Q,\eta,g} ((w_3x)^{-1}\gamma n (w_4y))\, dn\\
      =\sum_{\gamma\in M_P(F)} \zeta_\of((w_3w'_1)^{-1} \gamma (w_4w'_2)) \int_{N_P(\AAA)} f_{Q,\eta,g}^{w'_1,w'_2}((w_3x)^{-1}\gamma n (w_4y)) \, dn \\
      =\sum_{\gamma\in (M_P\cap (w_3w'_1)G_\of(w_4w'_2)^{-1})(F)}  \int_{N_P(\AAA)} f_{Q,\eta,g}^{w'_1,w'_2}((w_3x)^{-1}\gamma n (w_4y)) \, dn. 
  \end{align*}
  On a $(M_P\cap (w_3w'_1)G_\of(w_4w'_2)^{-1})\subset M_Q\cap (w_3w'_1)G_\of(w_4w'_2)^{-1}) = w_3( M_Q\cap (w'_1 G_\of(w'_2)^{-1}))w_4^{-1}$.
  On peut alors utiliser la décomposition \eqref{eq:fibre-dec} pour obtenir que l'expression précédente est égale à
  \begin{align*}
      \sum_{\of'\in j^{-1}_{Q,w_1',w_2'}(\of)} \sum_{\gamma\in (M_P\cap w_3M_{Q,\of'}w_4^{-1})(F)}  \int_{N_P(\AAA)} f_{Q,\eta,g}^{w'_1,w'_2}((w_3x)^{-1}\gamma n (w_4y)) \, dn\\
      =\sum_{\of'\in j^{-1}_{Q,w_1',w_2'}(\of)} K_{P,\of',f_{Q,\eta,g}^{w_1',w_2'}}^{w_3,w_4}(x, y). 
  \end{align*}
  On a donc prouvé \eqref{eq:K=sumK} et, par suite, la proposition. 
\end{preuve}

Comme conséquence de la proposition \ref{prop:covar-geom-T}, on obtient les relations de covariance pour les distributions  $J_\of(\eta)$ qui sont d'ailleurs  analogues à celles obtenues dans la proposition \ref{prop:covar} pour les distributions spectrales $J_\chi(\eta)$.

  \begin{proposition}\label{prop:covar-geom}Soit $\of\in\cgo(F)$ et $f\in \Sc(G(\AAA))$.
  \begin{enumerate}
  \item Pour tout $g\in G^\theta(\AAA)$, on  a
    \begin{align*}
      J_\of(\eta,f^g)=J_\of(\eta,f).
    \end{align*}
  \item  Pour tout $g\in G^{\theta'}(\AAA)$, on  a
    \begin{align*}
      J_\of(\eta,  ^g\!\!f)=\eta(g) \sum_{Q, w_1',w_2'}  J^{Q,\theta_1,\theta_2}_{\of} ( \eta, f_{Q,\eta,g}^{w_1',w_2'})
    \end{align*}
    où la somme porte sur les triplets $(Q, w_1',w_2')$ formés d'un sous-groupe parabolique standard $Q$ et d'éléments $ w_1'\in \, _QW_{\theta'}$ et $w_2'\in \, _QW_{\theta}$ tels que $Q\in \fc^{G,\flat}(P_0,\theta_1,\theta_2)$ et où  $f_{Q,\eta,g}^{w_1',w_2'}$ est défini en \eqref{eq:fct-fQT} et $J^{Q,\theta_1,\theta_2}_{\of} ( \eta)$ est le terme constant de \eqref{eq:JoQTtheta}. 
  \end{enumerate}
\end{proposition}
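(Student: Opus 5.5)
La preuve suit le modèle de celle de la proposition \ref{prop:covar} : on passe aux termes constants en $T$ dans la proposition \ref{prop:covar-geom-T}.

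Pour l'assertion 1, la proposition \ref{prop:covar-geom-T} assertion 1 donne l'égalité $J^T_\of(\eta,f^g)=J^T_\of(\eta,f)$ des deux fonctions de $T$. Ce sont, sur un cône de $T$ assez positifs, des polynômes-exponentielles (cf. § \ref{S:dist geo}), donc leurs termes constants coïncident, ce qui est l'assertion.

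Pour l'assertion 2, on part de l'identité de l'assertion 2 de la proposition \ref{prop:covar-geom-T}. Le membre de gauche $J^T_\of(\eta,{}^g\!f)=J^T(\eta,({}^g\!f)_\of)$ est un polynôme-exponentielle en $T$ de terme constant $J_\of(\eta,{}^g\!f)$. À droite, on fixe un triplet $(Q,w_1',w_2')$.

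\begin{itemize}
\item Le facteur $\exp(\bg 2\rho_Q^G-2\rho_{Q^{\theta_1}}^{G^{\theta_1}}-2\rho_{Q^{\theta_2}}^{G^{\theta_2}},T_Q\bd)$ ne dépend que de $T_Q$, avec l'exposant $\mu_Q$ égal à la projection de $2\rho_Q^G-2\rho_{Q^{\theta_1}}^{G^{\theta_1}}-2\rho_{Q^{\theta_2}}^{G^{\theta_2}}$ sur $\ago_Q^*$.
\item L'autre facteur $J^{Q,T,\theta_1,\theta_2}_\of(\eta,f_{Q,\eta,g}^{w_1',w_2'})$ ne dépend que de $T^Q$. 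Il coïncide avec un polynôme-exponentielle en $T^Q$ : c'est une somme finie, cf. \eqref{eq:JoQTtheta} et le lemme \ref{lem:fibre-finie}, d'analogues pour $M_Q$ de $J^T_\of$. La propriété s'obtient en appliquant à $M_Q$ (produit de groupes du même type) l'argument du § \ref{S:dist geo} et la proposition \ref{prop:JTchi poly}.
\end{itemize}

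La décomposition $\ago_0=\ago_0^Q\oplus\ago_Q$ sépare les variables. Le terme constant du produit vaut donc le produit des termes constants si $\mu_Q=0$, et il est nul sinon. En effet, les exposants du produit sont de la forme $\mu_Q+\la$ avec $\la\in\ago_0^{Q,*}$, et $\mu_Q+\la$ ne peut s'annuler que si $\mu_Q=0$.

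La condition $\mu_Q=0$ est exactement \eqref{eq:la condition} pour $R=Q$, c'est-à-dire $Q\in\fc^{G,\flat}(P_0,\theta_1,\theta_2)$. Dans ce cas, le terme constant du produit est $J^{Q,\theta_1,\theta_2}_\of(\eta,f_{Q,\eta,g}^{w_1',w_2'})$. En sommant sur les triplets restants et en gardant le facteur $\eta(g)$, on obtient la formule annoncée.

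Le point délicat est de justifier que $J^{Q,T,\theta_1,\theta_2}_{\of}(\eta)$ est bien un polynôme-exponentielle en $T^Q$ sur un cône commun. Pour cela, on écrira $J^{Q,T,\theta_1,\theta_2}_{\of'}(\eta,f')=J^{Q,T,\theta_1,\theta_2}(\eta,f'_{\of'})$, où $f'_{\of'}$ est défini via une fonction $\zeta$ sur $\cgo_{M_Q}$. On appliquera ensuite à $M_Q$ la proposition \ref{prop:JTchi poly}, en tenant compte du facteur exponentiel de \eqref{eq:JQTtheta}. Ce facteur ne dépend pas de $T$, mais il modifie la mesure et doit donc être inclus dans la preuve de la proposition \ref{prop:comportement T}.
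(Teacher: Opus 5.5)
Your proposal is correct and follows the paper's proof. The paper takes constant terms in Proposition \ref{prop:covar-geom-T} exactly as in Proposition \ref{prop:covar}: the factor $\exp(\bg 2\rho_Q^G-2\rho_{Q^{\theta_1}}^{G^{\theta_1}}-2\rho_{Q^{\theta_2}}^{G^{\theta_2}},T_Q\bd)$ cannot be compensated by $J^{Q,T,\theta_1,\theta_2}_{\of}$, which is a polynomial-exponential in $T^Q$, so only $Q\in\fc^{G,\flat}(P_0,\theta_1,\theta_2)$ contribute. Your additional justification of that polynomial-exponential property (writing it as $J^{Q,T,\theta_1,\theta_2}(\eta,f'_{\of'})$ and using a twisted analogue of Proposition \ref{prop:comportement T}) spells out a step the paper leaves implicit.
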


\begin{preuve}
On prend les termes constants dans la proposition \ref{prop:covar-geom-T}. La preuve est la même que celle de la proposition \ref{prop:covar}. 
\end{preuve}
\end{paragr}

\begin{paragr}[Le cas $GL_2(D)$.] --- \label{S:GL2D geo}On revient sur la situation du § \ref{S:GL2D spec} qui concerne $G=GL_2(D)$ et $\theta=\theta'$ d'ordre $2$ et distincts de $\pm I_2$. Dans ce cas, la proposition \ref{prop:covar-geom} se simplifie et on obtient le pendant géométrique de la proposition \ref{prop:GL2 spec}.

\begin{proposition}
    \label{prop:cas GL2 geo}
    Sous les conditions ci-dessous, pour tout $g\in G^\theta(\AAA)$, $\of\in \cgo(F)$ et $f\in \Sc(G(\AAA))$, on  a
    \begin{align*}
            & J_\of(\eta,  ^g\!\!f)= \eta(g) J_\of(\eta,f).
    \end{align*}
  \end{proposition}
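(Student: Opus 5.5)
La stratégie consiste à appliquer directement l'assertion 2 de la proposition \ref{prop:covar-geom} et à montrer que, dans la somme sur les triplets $(Q,w_1',w_2')$, seul subsiste le triplet $Q=G$. Dans notre situation on a $N=2$, $\theta=\theta'=\mathrm{diag}(1,-1)$, donc $\Trd(\theta)=0$, et l'hypothèse \eqref{eq:cas interessant} est satisfaite.

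\emph{Premier point.} On montre qu'aucun $Q$ propre ne contribue. Si $Q\subsetneq G$, alors $Q=P_0$ est le sous-groupe de Borel, de facteur de Levi $M_0\simeq GL_{1,D}\times GL_{1,D}$, si bien que $n_1=n_2=1$. D'après la proposition \ref{prop:fcGb}, l'appartenance $P_0\in\fc^{G,\flat}(P_0,\theta_1,\theta_2)$ exigerait $p_i'=q_i'=n_i/2=1/2$, ce qui est impossible. Par conséquent, pour tous $w_1'\in\,_{P_0}W_{\theta}$ et $w_2'\in\,_{P_0}W_\theta$, le triplet $(P_0,w_1',w_2')$ n'apparaît pas dans la somme. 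C'est précisément l'observation faite au § \ref{S:GL2D spec}.

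\emph{Second point.} On identifie le terme associé à $Q=G$. Dans ce cas $\,_GW_\theta=\{1\}$. En effet, la condition $M_G\cap P_0=M_G\cap wP_0w^{-1}$ force $w=1$, et $w=1$ vérifie bien $P_0^\theta\subset G$. On a donc $w_1'=w_2'=1$ et $\theta_1=\theta_2=\theta$. Le terme restant est $J^{G,\theta,\theta}_\of(\eta, f^{1,1}_{G,\eta,g})$, où $J^{G,\theta,\theta}_\of$ est le terme constant de $J^{G,T,\theta,\theta}_\of=J^T_\of(\eta)$. Pour $Q=G$, les facteurs exponentiels de \eqref{eq:Jo'QTtheta} sont triviaux, puisque $\rho_G^G=0$ et que $H_{G^{\theta_i}}$ est à valeurs dans $\ago_G^G=0$ une fois projeté. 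De plus, $j_{G,1,1}$ est l'identité, de sorte que $J^{G,\theta,\theta}_\of=J_\of(\eta)$.

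\emph{Troisième point, qui est le seul calcul à vérifier.} Il faut établir $f^{1,1}_{G,\eta,g}=f$. Par \eqref{eq:fct-fQT} avec $Q=G$, on a
\[
f^{1,1}_{G,\eta,g}(m)=\int_{K^\theta\times K^\theta} f(k_1^{-1}mk_2)\,p^G_{1,1}(0,-H_G(k_1g))\,\eta(k_1)\,dk_1dk_2 .
\]
Par \eqref{eq:poly pw}, $p^G_{1,1}$ est une intégrale sur $\ago_G^G=0$ de $\Ga_G$. Or $\Ga_G\equiv 1$, car c'est le cas trivial de l'identité $\hat\tau_P(H-(T+T'))=\sum_Q\eps_Q^G\hat\tau_P^Q(H-T)\Ga_Q(H-T,T')$ prise en $P=G$. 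Le poids vaut donc $1$, et il reste une moyenne de $f$ sous $K^\theta\times K^\theta$, tordue par $\eta$.

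Pour conclure, on observe que $J_\of(\eta)$ est invariante à droite par $G^\theta(\AAA)$ (assertion 1 de la proposition \ref{prop:covar-geom}) et se transforme à gauche par $\eta$ sous $K^\theta\subset G^{\theta'}(\AAA)$. Ce dernier point se lit directement sur la définition \eqref{eq:JToeta} par le changement de variables $x\mapsto xk$, qui laisse $\hat\tau$ invariant puisque $k\in K$. La moyenne ainsi tordue ne modifie donc pas $J_\of(\eta)$. Précisément, $J_\of(\eta,\,^{k_1^{-1}}\!f^{k_2})\eta(k_1)=J_\of(\eta,f)$, du moins au niveau de $J^T_\of$, donc aussi pour les termes constants. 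On obtient ainsi $J_\of(\eta,\,^g\!f)=\eta(g)J_\of(\eta,f)$.

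Le point délicat est ce dernier enchaînement, où il faut contrôler le signe de $\eta(k_1)$ face à la normalisation de la translation à gauche. J'effectuerais ce contrôle d'abord pour $g\in K^\theta$, cas dans lequel la formule se vérifie directement sur $J^T_\of$.
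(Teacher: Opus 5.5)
Your proposal is correct and follows the paper's route. The paper's proof is only the remark that, by Proposition~\ref{prop:fcGb}, the sole element of $\fc^{G,\flat}(P_0,\theta_1,\theta_2)$ is $G$, so the covariance formula of Proposition~\ref{prop:covar-geom} collapses to the $Q=G$ term.

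Your treatment of that remaining term is right and supplies what the paper leaves implicit. One correction: $f^{1,1}_{G,\eta,g}$ is not $f$ itself but its $\eta$-twisted $K^\theta\times K^\theta$-average. The identity $J^T_\of(\eta,{}^{k}\!f)=\eta(k)\,J^T_\of(\eta,f)$ for $k\in K^\theta$ (a consequence of the right $K$-invariance of $H_P$), combined with right $G^\theta(\AAA)$-invariance, then gives exactly $J_\of(\eta,f^{1,1}_{G,\eta,g})=J_\of(\eta,f)$, with the factors $\eta(k_1)$ cancelling as you wrote.
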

\end{paragr}

\subsection{Formule des traces de Guo-Jacquet}\label{ssec:FT-GJ}

\begin{paragr} On rappelle que $\eta$ est le caractère de $F^\times\back \AAA^\times$ introduit au § \ref{S:JchiT}.   

\begin{theoreme} \label{thm:FTGJ avec T}
 Pour tout $T\in T_0+\overline{\ago_0^+}$ et tout $f\in\Sc(G(\AAA))$, on a 
    \begin{align*}
        \sum_{\chi\in\Xgo(G)} J_\chi^T(\eta, f)=\sum_{\of\in\cgo(F)} J_\of^T(\eta, f) 
    \end{align*}
    où $J_\chi^T(\eta, f)$ et $J_\of^T(\eta, f)$ sont les formes linéaires continues sur $\Sc(G(\AAA))$ définies respectivement en \eqref{eq:JTchieta} et \eqref{eq:JToeta}. 
\end{theoreme}

\begin{preuve}
C'est une conséquence de \eqref{eq:sum-chif}, \eqref{eq:sum-of}, des théorèmes \ref{thm:cv-spec} et \ref{thm:cv-geo} et des définitions \eqref{eq:JTchieta} et \eqref{eq:JToeta} des distributions. 
\end{preuve}
\end{paragr}

\begin{paragr} On obtient finalement la formule des traces de Guo-Jacquet.

\begin{theoreme} \label{thm:FTGJ} On a l'égalité de distribution sur $\Sc(G(\AAA))$
\begin{align*}
                   \sum_{\chi\in\Xgo(G)} J_\chi(\eta)=\sum_{\of \in\cgo(F)} J_\of(\eta).
       \end{align*}
       La convergence est absolue au sens où, pour chaque membre,  la somme des valeurs absolues des termes fournit une semi-norme continue  sur $\Sc(G(\AAA))$.
  \end{theoreme}
  
  \begin{preuve}
  Il suffit de prendre le terme constant en $T$ dans le théorème \ref{thm:FTGJ avec T} comme $J_\chi^T(\eta, f)$ et $J_\of^T(\eta, f)$ sont des fonctions polynôme-exponentielles en $T$ d'après la proposition \ref{prop:JTchi poly} et \S \ref{S:dist geo}. 
  \end{preuve}
\end{paragr}

\subsection{Passage à l'espace symétrique}\label{ssec:passage}

\begin{paragr}
    On reprend les notations de la sous-section \ref{ssec:preparatif alg} en supposant de plus que $F$ est un corps de nombres. Soit $0\leq p,p'\leq N$. On pose $\theta=\theta_p$.

    \begin{remarque}
    On utilisera également les résultats  de la sous-section \ref{ssec:dvpt quotient}. Cependant, on prendra garde à la légère incohérence de notations: l'élément que nous notons $\theta$ ici joue le rôle de l'élément  noté $\theta'$  là-bas.
        \end{remarque}
\end{paragr}

\begin{paragr}
  Soit $P\in \fc(M_0)$ et $\gamma\in (M_P\cap S_{p'})(F)$. D'après le lemme \ref{lem:parabS}, l'application 
  \begin{align*}
  n\mapsto \Int_\theta(n)(\gamma) 
  \end{align*}
induit un isomorphisme de $N_P/N_P^{\gamma\theta}$ sur $(\gamma N_P)\cap S_{p'}$. On obtient alors une mesure sur $((\gamma N_P)\cap S_{p'})(\AAA)$ par transport de la mesure quotient sur $N_P(\AAA)/N_P^{\gamma\theta}(\AAA)$. 
\end{paragr}

\begin{paragr}
  Soit $\varphi\in\Sc(S_{p'}(\AAA))$, cf. §\ref{S:Schwartz}. Soit $P\in \fc(M_0)$ et $\of\in \cgo_{p'}(F)$. Pour tout $x\in G(\AAA)$ on pose 
  \begin{align}\label{eq:defKPS}
  K_{P,\of,\varphi}(x)=\sum_{\gamma\in (M_P\cap S_{p',\of})(F)} \int_{((\gamma N_P)\cap S_{p'})(\AAA)} \varphi(\Int_\theta(x^{-1})(y))\, dy. 
  \end{align}
On obtient ainsi une fonction sur $N_P^\theta(\AAA)M_P^\theta(F)\back G(\AAA)$. On a aussi l'expression suivante de $K_{P,\of,\varphi}(x)$. 

\begin{lemme}\label{lem:varKPS}
  Pour tout $x\in G(\AAA)$ on a 
  \begin{align}\label{eq:varKPS}
  K_{P,\of,\varphi}(x)=\sum_{w\in \,_PW_{\theta_{p'}}} \sum_{\ga} \int_{N_P(\AAA)/N_P^{w\theta_{p'}}(\AAA)} \varphi(\rho_{p'}(x^{-1} \gamma n w))\, dn
  \end{align}
  où la somme porte sur les classes $\gamma\in M_P(F)/M_P^{w\theta_{p'}}(F)$ tels que $\gamma M_P^{w\theta_{p'}}(F) w\subset G_\of(F)$.
\end{lemme}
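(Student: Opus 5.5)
Le plan est de décomposer l'ensemble de sommation $(M_P\cap S_{p',\of})(F)$ de \eqref{eq:defKPS} selon le lemme \ref{lem:LeviS}, puis de paramétrer chaque fibre $(\gamma N_P)\cap S_{p'}$ via le lemme \ref{lem:parabS}. D'après le lemme \ref{lem:LeviS}, on a
\begin{align*}
(M_P\cap S_{p'})(F)=\bigsqcup_{w\in \,_PW_{\theta_{p'}}}\rho_{p'}(M_P(F)w).
\end{align*}
Pour $w$ fixé, l'application $\gamma\mapsto \rho_{p'}(\gamma w)=\gamma w\theta_{p'}w^{-1}\gamma^{-1}\theta$ induit une bijection de $M_P(F)/M_P^{w\theta_{p'}}(F)$ sur $\rho_{p'}(M_P(F)w)$, puisque le stabilisateur de $w\theta_{p'}w^{-1}$ dans $M_P$ est $M_P^{w\theta_{p'}}$. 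La condition $\rho_{p'}(\gamma w)\in S_{p',\of}$ équivaut à $c(\gamma w)=\of$. Comme $c$ est invariante à droite par $G^{\theta_{p'}}$ (ici $c=c^\flat\circ\rho_{p'}$, avec les rôles de $\theta,\theta'$ échangés selon la remarque ci-dessus), cela revient à $\gamma M_P^{w\theta_{p'}}(F)w\subset G_\of(F)$. On obtient ainsi la double somme de \eqref{eq:varKPS}.

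Reste l'intégrale intérieure. Posons $\delta=\rho_{p'}(\gamma w)\in M_P(F)$. Le paragraphe précédent identifie $((\delta N_P)\cap S_{p'})(\AAA)$ à $N_P(\AAA)/N_P^{\delta\theta}(\AAA)$ via $n\mapsto \Int_\theta(n)(\delta)=n\delta\theta n^{-1}\theta$. On calcule
\begin{align*}
\Int_\theta(n)(\delta)=n\gamma w\theta_{p'}w^{-1}\gamma^{-1}n^{-1}\theta=\rho_{p'}(n\gamma w).
\end{align*}
On remarque que $\delta\theta=\gamma w\theta_{p'}w^{-1}\gamma^{-1}$, donc $N_P^{\delta\theta}=\gamma N_P^{w\theta_{p'}}\gamma^{-1}$. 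En écrivant $n\gamma=\gamma n'$ avec $n'=\gamma^{-1}n\gamma\in N_P(\AAA)$ (changement de variable de jacobien $1$, car $\gamma$ est rationnel et la formule du produit s'applique), l'intégrale devient
\begin{align*}
\int_{N_P(\AAA)/N_P^{w\theta_{p'}}(\AAA)}\varphi(\Int_\theta(x^{-1})\rho_{p'}(\gamma n' w))\,dn'.
\end{align*}
L'équivariance de $\rho_{p'}$, à savoir $\Int_\theta(x^{-1})\rho_{p'}(g)=\rho_{p'}(x^{-1}g)$, donne alors exactement l'intégrande de \eqref{eq:varKPS}.

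Le point délicat est la compatibilité des mesures. Il faut vérifier que la mesure transportée sur $((\delta N_P)\cap S_{p'})(\AAA)$ coïncide, après la conjugaison par $\gamma$, avec la mesure quotient sur $N_P(\AAA)/N_P^{w\theta_{p'}}(\AAA)$. Il faut donc normaliser les mesures sur $N_P^{\delta\theta}(\AAA)$ et $N_P^{w\theta_{p'}}(\AAA)$ de façon cohérente, par exemple en donnant le volume $1$ aux quotients par les points rationnels. La conjugaison par $\gamma\in M_P(F)$ préserve alors ces normalisations, car elle envoie le réseau rationnel sur lui-même. Il faut aussi vérifier que le résultat ne dépend pas du représentant de $\gamma$ modulo $M_P^{w\theta_{p'}}(F)$, ce qui découle de la même invariance.
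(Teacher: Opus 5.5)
Your proposal is correct and follows the same route as the paper. It uses the decomposition of Lemma~\ref{lem:LeviS}, the identification of $((\gamma N_P)\cap S_{p'})(\AAA)$ with $N_P(\AAA)/N_P^{\gamma\theta}(\AAA)$ coming from the chosen measure, and the measure-preserving change of variables $\Int(\gamma^{-1})$ sending $N_P^{\Int(\gamma w)(\theta_{p'})}$ to $N_P^{w\theta_{p'}}$. You additionally spell out what the paper leaves implicit: the bijection of $M_P(F)/M_P^{w\theta_{p'}}(F)$ onto $\rho_{p'}(M_P(F)w)$, the reformulation of the condition on $\of$ through the right $G^{\theta_{p'}}$-invariance of $c$, and the independence of the choice of representative of $\gamma$.
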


\begin{preuve}
  Par notre choix de mesure sur $((\gamma N_P)\cap S_{p'})(\AAA)$, on a 
  \begin{align}\label{eq:2varKPS}
  K_{P,\of,\varphi}(x)=\sum_{\gamma\in (M_P\cap S_{p',\of})(F)} \int_{N_P(\AAA)/N_P^{\gamma\theta}(\AAA)} \varphi(\Int_\theta(x^{-1}n)(\gamma))\, dn. 
  \end{align}
D'après le lemme \ref{lem:LeviS}, cette expression est égale à 
  \begin{align*}
  \sum_{w\in \,_PW_{\theta_{p'}}} \sum_{\gamma\in M_P(F)/M_P^{w\theta_{p'}}(F), \gamma w\in G_\of} \int_{N_P(\AAA)/N_P^{\Int(\gamma w)(\theta_{p'})}(\AAA)} \varphi(\rho_{p'}(x^{-1}n \gamma w))\, dn. 
  \end{align*}
En remarquant que l'action $\Int(\gamma^{-1})$ induit un isomorphisme 
  \begin{align*}
  N_P(\AAA)/N_P^{\Int(\gamma w)(\theta_{p'})}(\AAA) \to N_P(\AAA)/N_P^{w\theta_{p'}}(\AAA) 
  \end{align*}
qui préserve les mesures, on obtient l'expression désirée. 
\end{preuve}
\end{paragr}

\begin{paragr}
 Soit $\varphi\in\Sc(S_{p'}(\AAA))$. Pour $T\in T_0+\overline{\ago_0^+}$ un paramètre de troncature et $\of\in \cgo_{p'}(F)$, on définit 
  \begin{align}\label{eq:KToS}
  K_{\of,\varphi}^T(x)=\sum_{P\in\fc(P_0^\theta)} \eps_P^G \sum_{\delta\in P^\theta(F)  \back G^\theta(F)} \hat\tau_P(H_P(\delta x)-T_P) K_{P,\of,\varphi}(\delta x)
  \end{align}
  pour $x\in G(\AAA)$. À l'aide de \cite[lemme 5.1]{ar1}, on voit que la somme sur $\delta$ dans la dernière expression est finie. 

  \begin{theoreme}\label{thm:cvKphi}
  Pour tout $T\in T_0+\overline{\ago_0^+}$ et $N_1>0$, il existe une semi-norme continue $\|\cdot\|$ sur $\Sc(S_{p'}(\AAA))$ telle que pour tout $\varphi\in \Sc(S_{p'}(\AAA))$ on ait 
    \begin{align*}
      \sum_{\of\in\cgo_{p'}(F)}  \int_{[G^\theta]^G} |K_{\of,\varphi}^T(x)|\|x\|_{G^{\theta}}^{N_1} \, dx \leq \|\varphi\|. 
    \end{align*}
  \end{theoreme}
  
  \begin{preuve}   On a une application surjective continue $\Sc(G(\AAA))\to \Sc(S_{p'}(\AAA))$ donnée par l'intégration sur les fibres. En particulier, pour tout  $\varphi\in \Sc(S_{p'}(\AAA))$, il existe une fonction $f\in \Sc(G(\AAA))$ telle que 
  \begin{align*}
      \varphi(x)=\int_{G^{\theta_{p'}}(\AAA)} f(g_x y) \, dy 
  \end{align*}
pour tout $x\in S_{p'}(\AAA)$, où $g_x\in G(\AAA)$ est un élément tel que $\rho_{p'}(g_x)=x$.  Il suffit de montrer que l'on a
  \begin{align}\label{eq:preuve Kphi=Kf}
      \int_{[G^{\theta_{p'}}]} K^T_{\of,f}(x,y) \, dy = K^T_{\of,\varphi}(x) 
  \end{align}
  pour tous $T\in T_0+\overline{\ago_0^+}$, $\of\in \cgo_{p'}(F)$ et $x\in G(\AAA)$. En effet, le théorème \ref{thm:cv-geo} implique que l'application
  \begin{align*}
   f\in \Sc(G(\AAA))\mapsto  \sum_{\of\in\cgo_{p'}(F)}  \int_{[G^\theta]^G} |  \int_{[G^{\theta_{p'}}]} K^T_{\of,f}(x,y) \, dy | \|x\|_{G^{\theta}}^{N_1} \,dx
  \end{align*}
  est continue. Alors cette application se factorise,  via l'application ouverte $S(G(\AAA))\to \Sc(S_{p'}(\AAA))$, en une application continue qui est celle qui nous concerne:
  \begin{align*}
     \varphi\in  \Sc(S_{p'}(\AAA))\mapsto  \sum_{\of\in\cgo_{p'}(F)}  \int_{[G^\theta]^G} |K_{\of,\varphi}^T(x)|\|x\|_{G^{\theta}}^{N_1} \, dx. 
    \end{align*}

  On pourra alors conclure à l'aide du théorème \ref{thm:cv-geo}. Montrons \eqref{eq:preuve Kphi=Kf}. Pour cela, on considère un point $\of\in \cgo_{p'}(F)$. En comparant les expressions \eqref{eq:varKTo} et \eqref{eq:KToS}, on voit qu'il suffit de prouver 
  \begin{align}\label{eq:KPGvsKPS}
      \int_{[G^{\theta_{p'}}]} \sum_{w\in \,_PW_{\theta_{p'}}} \sum_{\delta_2\in P_w^{\theta_{p'}}(F)  \back G^{\theta_{p'}}(F)} K_{P,\of,f}^{1,w}(x,\delta_2 y) \, dy = K_{P,\of,\varphi}(x)
  \end{align}
pour tout $P\in \fc(M_0)$ et tout $x\in G(\AAA)$.  Or, en utilisant la décomposition 
\begin{align*}
    wP_w^{\theta_{p'}}(F)=M_P^{w\theta_{p'}}(F)N_P^{w\theta_{p'}}(F)w,
\end{align*} on voit que le membre de gauche de \eqref{eq:KPGvsKPS} est égal à 
  \begin{align*}
&    \sum_{w\in \,_PW_{\theta_{p'}}} \int_{P_w^{\theta_{p'}}(F)  \back G^{\theta_{p'}}(\AAA)} K_{P,\of,f}^{1,w}(x,y) \, dy \\ 
  &  = \sum_{w\in \,_PW_{\theta_{p'}}} \int_{P_w^{\theta_{p'}}(F)  \back G^{\theta_{p'}}(\AAA)} \sum_{\gamma\in (M_P\cap (G_\of w^{-1}))(F)} \int_{N_P(\AAA)} f(x^{-1}\gamma nwy) \, dndy\\
&=    \sum_{w\in \,_PW_{\theta_{p'}}} \sum_{\gamma \in M_P(F)/M_P^{w\theta_{p'}}(F), \gamma w\in G_\of(F)} \int_{N_P(\AAA)/N_P^{w\theta_{p'}}(F)} \int_{G^{\theta_{p'}}(\AAA)}f(x^{-1}\gamma nwy)\, dydn\\
    &=    \sum_{w\in \,_PW_{\theta_{p'}}} \sum_{\gamma \in M_P(F)/M_P^{w\theta_{p'}}(F), \gamma w\in G_\of(F)} \int_{N_P(\AAA)/N_P^{w\theta_{p'}}(F)} \varphi(\rho_{p'}(x^{-1}\gamma nw))\, dn.
  \end{align*}
Comme $\vol([N_P^{w\theta_{p'}}])=1$, l'expression ci-dessus est bien égale à   $K_{P,\of,\varphi}(x)$, cf.   \eqref{eq:varKPS}. 
  \end{preuve}
\end{paragr}

\begin{paragr}[Distributions sur l'espace symétrique.]\label{S:dist sur S} --- 
  Soit $\eta: G(\AAA) \to \CC^\times$ un caractère unitaire comme dans \S \ref{S:JchiT}. 
Avec le théorème \ref{thm:cvKphi}, on peut définir, pour tout $T\in T_0+\overline{\ago_0^+}$ et tout $\of\in\cgo_{p'}(F)$,
    \begin{align}\label{eq:def-JoTphi}
      J_\of^T(\eta,\varphi) = \int_{[G^\theta]^G} K_{\of,\varphi}^T(x) \eta(x) \, dx. 
    \end{align}
    Il résulte du théorème \ref{thm:cvKphi} que l'application
    \begin{align*}
      \varphi \mapsto \sum_{\of\in \cgo_{p'}(F)}   J_\of^T(\eta,\varphi)
    \end{align*}
  est définie par une somme absolument convergente et qu'elle est continue sur $\Sc(S_{p'}(\AAA))$.

  Par l'égalité \eqref{eq:preuve Kphi=Kf} et \S \ref{S:dist geo}, l'application $T\mapsto J^{T}_\of(\eta,\varphi)$ coïncide dans un certain cône avec une fonction polynôme-exponentielle en $T$. On définit alors $J_\of(\eta,\varphi)$ comme le terme constant de cette fonction de $T$. Via l'application ouverte $\Sc(G(\AAA))\to \Sc(S_{p'}(\AAA))$, on obtient ainsi une distribution  $J_\of(\eta)$ continue sur $\Sc(S_{p'}(\AAA))$. 
\end{paragr}

\begin{paragr}[Espace symétrique infinitésimal.] --- L'algèbre simple centrale $\End_D(V)$ munie du crochet de Lie usuel s'identifie à l'algèbre de Lie $\ggo$ de $G$. Le centralisateur de $\theta$ dans $\End_D(V)$ s'identifie à l'algèbre de Lie $\ggo^\theta$ de $G^\theta$. Pour tout sous-espace $W\subset \End_D(V)$, soit $W^\theta$ l'intersection de $W$ avec le centralisateur de $\theta$. Soit $\sgo$ l'espace tangent en $1$ de $S$. On le considérera toujours comme un sous-espace de $\End_D(V)$ via l'identification
  \begin{align*}
      \sgo=\{X\in \End_D(V) \mid X+\theta X\theta=0\}.
  \end{align*}
Alors $G^\theta$ agit à gauche sur $\sgo$ par restriction de l'action adjointe $\Ad$ de $G=GL_D(V)$ sur $\End_D(V)$. Cette action laisse stable le groupe des inversibles qui s'identifie à $G$. 
\end{paragr}

\begin{paragr}
Soit $P\subset Q$ des sous-groupes paraboliques semi-standard de $G$. On note $\mgo_P$, $\ngo_P^Q$, $\ov{\ngo}_P^Q$ les algèbres de Lie respectives de $M_P$, $N_P^Q$ et $\ov{N}_P^Q$. 

  Soit $\langle\cdot, \cdot\rangle$ la forme bilinéaire symétrique, non dégénérée et $G$-invariante sur $\End_D(V)$ définie par 
  \begin{align*}
      \langle X, Y\rangle=\Trd(XY)
  \end{align*}
pour tous $X, Y\in \End_D(V)$. Comme elle est invariante par $\Ad(\theta)$, on voit que $\ggo=\ggo^\theta\oplus\sgo$ est une somme directe orthogonale pour $\langle\cdot, \cdot\rangle$ et que la restriction de $\langle\cdot, \cdot\rangle$ à $\ggo^\theta$ ou $\sgo$ est non dégénérée. 

  Si $F$ est un corps de nombres, pour tout $F$-espace vectoriel $W$, on met sur $W(\AAA)$ la mesure de Haar qui donne le volume $1$ au quotient $W(F)\back W(\AAA)$ muni de la mesure quotient par la mesure de comptage. Dans ce cas, on fixe aussi un caractère additif, continu et non-trivial 
  \begin{align*}
      \psi : F\back\AAA \to \CC^\times. 
  \end{align*}
\end{paragr}

\begin{paragr}
  Pour $P\in\fc(M_0)$ on note $\rho_{\ngo_P\cap\sgo}$ l'élément de $\ago_{P^\theta}^\ast$ tel que la mesure $dn$ sur $(\ngo_P\cap\sgo)(\AAA)$ vérifie 
  \begin{align*}
  d(hnh^{-1})=\exp(\bg 2\rho_{\ngo_P\cap\sgo}, H_{P^\theta}(h)\bd) \, dn
  \end{align*}    
pour tout $h\in M_{P^\theta}(\AAA)$. Il est évident qu'on a 
\begin{align*}
    (\rho_{\ngo_P\cap\sgo})_{|\ago_P}=\rho_P^G-(\rho_{P^\theta}^{G^\theta})_{|\ago_P} 
\end{align*}
où l'indice $|\ago_P$ désigne la restriction à $\ago_P$. 
  
  Soit $P\in\fc(M_0)$. Soit $N$ un sous-groupe de $G^\theta$, resp. un sous-espace de $\ggo$, invariant sous $\Int(A_P)$, resp. sous $\Ad(A_P)$, et muni d'une mesure $dn$. S'il existe un élément $\rho$ de $\ago_P^\ast$ tel que 
  \begin{align*}
  d(hnh^{-1})=\exp(\bg 2\rho, H_P(h)\bd)dn 
  \end{align*} 
pour tout $h\in A_P^\infty$, on note $\rho_N=\rho$. 
\end{paragr}

\begin{paragr}[Espaces fibrés homogènes.] ---
  Soit $H$ un groupe algébrique, $H'$ un sous-groupe algébrique de $H$, et $X$ une variété quasi-projective munie d'une action à gauche de $H'$. On note $H\times^{H'}X$ le quotient du produit $H\times X$ par l'action 
  \begin{align*}
      h' \cdot (h,x)=(h{h'}^{-1},h'\cdot x). 
  \end{align*}
C'est une variété algébrique (cf. \cite[théorème 4.19]{PV}). 
\end{paragr}

\begin{paragr}[Un lemme d'Arthur pour les involutions intérieures.] --- 
  Pour tout $\gamma\in G$ et tout sous-groupe algébrique $H$ de $G$, on note $H_\gamma$ le centralisateur de $\gamma$ dans $H$. Soit $P=MN$ un sous-groupe parabolique semi-standard muni de sa décomposition de Levi. Soit $\gamma\in M\cap S_{p'}$ et $\gamma=\gamma_s\gamma_u$ sa décomposition de Jordan, où $\gamma_s$ est semi-simple et $\gamma_u$ est unipotent au sens usuel. D'après le lemme \ref{lem:JRlem4.1}, on a $\gamma_s\in M\cap S_{p'}$ et $\gamma_u\in M\cap S^\circ$. 
  
  \begin{lemme}\label{lem:thetacent}
  \begin{enumerate}
  \item On a $\Int(\theta)(G_\gamma)= G_\gamma$ et $\Int(\theta)(N_\gamma)=N_\gamma$. 
  \item Le morphisme de symétrisation $\rho$, cf. \eqref{eq:app-sym}, induit un isomorphisme de $F$-variétés algébriques 
    \begin{align*}
      N_\gamma/N_\gamma^\theta\to N_\gamma\cap S^\circ. 
  \end{align*}
  \end{enumerate}
  \end{lemme}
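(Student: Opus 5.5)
Pour l'assertion 1, je partirais de ce que $\gamma\in M\cap S_{p'}$ signifie $(\gamma\theta)^2=1$, c'est-à-dire $\theta\gamma\theta=\gamma^{-1}$. Pour $g\in G_\gamma$, on a $\theta g\theta\,\gamma^{-1}=\theta g\gamma\theta=\theta\gamma g\theta=\gamma^{-1}\theta g\theta$, donc $\Int(\theta)(g)$ commute avec $\gamma^{-1}$, donc avec $\gamma$. On obtient $\Int(\theta)(G_\gamma)\subset G_\gamma$, puis l'égalité car $\Int(\theta)$ est une involution. Comme $\theta\in M_0\subset M$, la conjugaison par $\theta$ normalise $N$, et donc $\Int(\theta)(N_\gamma)=\Int(\theta)(N)\cap\Int(\theta)(G_\gamma)=N_\gamma$.

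Pour l'assertion 2, je noterais d'abord que $N_\gamma$ est un groupe unipotent connexe, centralisateur dans $N$ d'un élément, et qu'il est stable par $\Int(\theta)$ d'après 1. Le morphisme $\rho(n)=n\theta n^{-1}\theta=n\,\Int(\theta)(n)^{-1}$ envoie $N_\gamma$ dans $N_\gamma\cap S$. Puisque $N_\gamma$ est connexe et contient $1$, son image est contenue dans $S^\circ$. Ce morphisme est invariant à droite par $N_\gamma^\theta$ et se factorise donc par $N_\gamma/N_\gamma^\theta$. L'injectivité est formelle : $\rho(n)=\rho(n')$ équivaut à $n'^{-1}n\in N^\theta$, et $n'^{-1}n\in N_\gamma$.

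La surjectivité s'obtient par l'argument « racine carrée » du lemme \ref{lem:JRlem4.1}. Soit $u\in N_\gamma\cap S^\circ$. C'est un élément unipotent, qu'on écrit $u=\exp(X)$ avec $X$ nilpotent dans $\ngo$. La condition $(u\theta)^2=1$ donne $\Ad(\theta)X=-X$, et $u\gamma=\gamma u$ donne $X\in\ngo_\gamma$. On pose $v=\exp(X/2)\in N_\gamma$. Alors $\Int(\theta)(v)=v^{-1}$, donc $\rho(v)=v\cdot v=u$. Tout ceci vaut sur tout corps de caractéristique $0$, en particulier sur $\bar F$. On a ainsi un morphisme bijectif, défini sur $F$, $N_\gamma/N_\gamma^\theta\to N_\gamma\cap S^\circ$.

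Le point délicat est de passer d'une bijection à un isomorphisme de variétés. Je construirais l'inverse explicitement par $u\mapsto \exp(\tfrac12\log u)\,N_\gamma^\theta$. Le logarithme et l'exponentielle sont polynomiaux sur les unipotents, donc cette application est un morphisme défini sur $F$. Il est inverse de l'application induite par $\rho$, d'après le calcul précédent et l'injectivité. Il faut aussi que $N_\gamma\cap S^\circ$ soit muni de sa structure réduite, fermée dans $N_\gamma$. Si besoin, on peut se contenter de vérifier l'isomorphisme en caractéristique $0$ via le critère de séparabilité : la différentielle de $\rho$ en $1$ est $X\mapsto X-\Ad(\theta)X$, de noyau $\ngo_\gamma^\theta$ et surjective sur $\ngo_\gamma\cap\sgo$. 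On conclut alors par le théorème principal de Zariski, $N_\gamma\cap S^\circ$ étant lisse, car isomorphe à l'espace affine $\ngo_\gamma\cap\sgo$ par l'exponentielle.
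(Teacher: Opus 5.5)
Votre démonstration est correcte et suit pour l'essentiel celle de l'article : l'assertion 1 découle de $\Int(\theta)(\gamma)=\gamma^{-1}$ et de $\theta\in M$, et l'assertion 2 de l'argument de racine carrée $u=\exp(X)=\rho(\exp(X/2))$ avec $X\in\ngo_\gamma\cap\sgo$. Vous explicitez en outre l'injectivité et le morphisme inverse $u\mapsto\exp(\tfrac12\log u)\,N_\gamma^\theta$, que l'article laisse au lecteur.
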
  
  
  \begin{preuve}
    1. On a $\Int(\theta)(\gamma)=\gamma^{-1}$ car $\gamma\in S$. Si $x\in G_\gamma$, alors $\Int(\theta)(x)\in G_{\Int(\theta)(\gamma)}=G_{\gamma^{-1}}=G_\gamma$.  Comme $\theta\in M$, on a aussi $\Int(\theta)(N)\subset N$ d'où $\Int(\theta)(N_\gamma)=N_\gamma$.
    
    2. On utilise l'exponentielle comme dans la preuve du lemme \ref{lem:JRlem4.1}. Un  élément $n\in N$ s'écrit $n=\exp(X)$ avec $X\in \ngo$. On a $n\in N_\ga\cap S^\circ$ si et seulement si $X\in \ngo_\ga\cap \sgo$ (où $\ngo_\ga$ est l'algèbre de Lie de $N_\ga$) et alors  $n=\rho(\exp(X/2))$ avec $\exp(X/2)\in N_\ga$.  L'assertion 2 s'en déduit aisément.
  \end{preuve}

  \begin{lemme}\label{lem:arthur}
L'application 
$$(n,x) \mapsto \Int(n)(x)$$
induit un isomorphisme  de  $N^\theta\times^{ N^\theta_{\gamma_s}} ((\gamma N_{\gamma_s}) \cap S_{p'} )$ sur $(\gamma N )\cap S_{p'}$, où $N_{\gamma_s}^\theta$ agit  par conjugaison sur $(\gamma N_{\gamma_s}) \cap S_{p'}$. 
  \end{lemme}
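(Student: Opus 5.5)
Le plan consiste à adapter au cadre tordu la preuve classique d'Arthur (\cite[lemme 2.1]{ar1}), selon laquelle $(n,u)\mapsto n^{-1}\gamma u n$ réalise un isomorphisme $N\times^{N_{\gamma_s}}(\gamma N_{\gamma_s})\simeq \gamma N$. On procédera en trois étapes : (a) l'application est bien définie et arrive dans $(\gamma N)\cap S_{p'}$ ; (b) elle est surjective ; (c) elle est injective, avec un inverse algébrique.

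\textbf{Bonne définition.} Pour $n\in N^\theta$ et $x\in (\gamma N_{\gamma_s})\cap S_{p'}$, l'élément $\Int(n)(x)=nxn^{-1}$ coïncide avec $\Int_\theta(n)(x)$ puisque $n$ commute à $\theta$. Il reste donc dans $S_{p'}$, qui est stable par $\Int_\theta(G)$, et dans $\gamma N$ parce que $\gamma\in M$ normalise $N$. L'application passe au quotient par l'action de $N^\theta_{\gamma_s}$ : si $m\in N^\theta_{\gamma_s}$, alors $m x m^{-1}\in \gamma N_{\gamma_s}$, car $m$ commute à $\gamma_s$ et $\gamma_u\in M$ normalise $N_{\gamma_s}$ (l'élément $\gamma_u$ commute à $\gamma_s$).

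\textbf{Surjectivité.} Soit $y\in(\gamma N)\cap S_{p'}$. Le lemme d'Arthur donne $n\in N$ tel que $n^{-1} y n\in \gamma N_{\gamma_s}$ ; on peut d'ailleurs le supposer avec $y_s$ conjugué par $N$ à $\gamma_s$, puisque $y_s\in\gamma_s N$. Il faut ensuite remplacer $n$ par un élément de $N^\theta$, ce qui est l'étape délicate. On écrit $y=\Int(n)(x)$ avec $x\in \gamma N_{\gamma_s}$. Comme $y\in S$, on a $\theta y\theta=y^{-1}$, d'où $\Int(\theta n\theta)(\theta x\theta)=\Int(n)(x^{-1})$. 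L'élément $\theta x \theta= x^{-1}$ (si $x\in S$) appartient à $\gamma^{-1}N_{\gamma_s}$, et l'unicité dans le lemme d'Arthur (le stabilisateur est $N_{\gamma_s}$) donne $n^{-1}\theta n\theta\in N_{\gamma_s}$. On traite donc le cocycle $c=n^{-1}\Int(\theta)(n)\in N_{\gamma_s}$, qui vérifie $c\,\Int(\theta)(c)=1$, c'est-à-dire $c\in N_{\gamma_s}\cap S^\circ$. Par le lemme \ref{lem:thetacent} assertion 2 appliqué à $\gamma_s$ (de sorte que $N_{\gamma_s}$ est $\theta$-stable), on peut écrire $c=\rho(u)=u\Int(\theta)(u)^{-1}$ avec $u\in N_{\gamma_s}$. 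En posant $n'=nu$, on obtient $\Int(\theta)(n')=n'$, donc $n'\in N^\theta$, et $\Int(n')^{-1}(y)\in \gamma N_{\gamma_s}$ puisque $u$ normalise $\gamma N_{\gamma_s}$. Un point à vérifier est que l'on doit partir de $x\in S$ ; on contourne cette difficulté en appliquant l'argument directement à $y$, via l'unicité de la classe $nN_{\gamma_s}$.

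\textbf{Injectivité et inverse.} Si $\Int(n_1)(x_1)=\Int(n_2)(x_2)$, l'unicité dans le lemme d'Arthur donne $n_2^{-1}n_1\in N_{\gamma_s}$, donc $n_2^{-1}n_1\in N^\theta_{\gamma_s}$, ce qui donne l'injectivité modulo l'action. Pour obtenir un isomorphisme de variétés et non seulement une bijection sur les points, on utilisera que la construction d'Arthur fournit $n$ comme fonction régulière de $y$, modulo $N_{\gamma_s}$. Via l'exponentielle sur $\ngo$ et la décomposition $\ngo=\ngo^\theta\oplus(\ngo\cap\sgo)$, on en déduit une section algébrique $y\mapsto n'(y)N^\theta_{\gamma_s}$, et donc que l'inverse est un morphisme. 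L'obstacle principal est cette régularité de l'inverse, ainsi que la descente du cocycle $c$ uniformément en $y$ (fonctorialité sur toute $F$-algèbre). On la traitera en explicitant la racine carrée $u=\exp(\tfrac12\log c)$, qui est algébrique car $N_{\gamma_s}$ est unipotent.
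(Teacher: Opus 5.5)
Your proposal is correct and is essentially the paper's proof. Injectivity comes from Arthur's isomorphism $N\times^{N_{\gamma_s}}(\gamma N_{\gamma_s})\simeq\gamma N$. For surjectivity, you show $n^{-1}\theta n\theta\in N_{\gamma_s}\cap S^\circ$, write it as $\rho(u)$ with $u\in N_{\gamma_s}$ by lemme \ref{lem:thetacent}, and conclude that $nu\in N^\theta$.

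Your worry about needing $x\in S$ is unfounded. For any $x\in\gamma N_{\gamma_s}$, both $\theta x\theta$ and $x^{-1}$ lie in $\gamma^{-1}N_{\gamma_s}$, since $\theta\gamma\theta=\gamma^{-1}$ and $N_{\gamma_s}$ is $\theta$-stable. Alternatively, as the paper does, compare semisimple parts to get $\Int(\theta n\theta)(\gamma_s)=\Int(n)(\gamma_s)$ directly.
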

  
    \begin{preuve}
Pour tout $n\in N$ et tout $x\in \gamma N_{\gamma_s}$,   la partie semi-simple de $\Int(n)(x)$ est $\Int(n)(\gamma_s)$. Pour $n, n'\in N$ et $x, x'\in \gamma N_{\gamma_s}$, si $\Int(n)(x)= \Int(n')(x')$ alors on a  $\Int(n)(\gamma_s)=\Int(n')(\gamma_s)$ et donc $n'\in n N_{\gamma_s}$. Il s'ensuit que l'application en question induit  un isomorphisme   $N \times^{ N_{\gamma_s}}(\gamma N_{\gamma_s})  $  sur $\gamma N $, cf.  également \cite[lemme 2.1]{ar1}. On en déduit clairement qu'on a une immersion de $N^\theta\times^{ N^\theta_{\gamma_s}}((\gamma N_{\gamma_s}) \cap S_{p'} )$  dans $(\gamma N )\cap S_{p'}$. 
Il suffit de voir la surjectivité. Soit $y\in (\gamma N )\cap S_{p'}$. Il existe $n\in N$ et $x\in  \gamma N_{\gamma_s}$ tel que $\Int(n)(x)=y$. On a donc $\theta y^{-1}\theta=y$. En prenant les parties semi-simples et en utilisant le fait que $\theta \gamma_s^{-1} \theta=\gamma_s$, on voit que $\Int(\theta n\theta)(\gamma_s)= \Int(n)( \gamma_s)$. Par conséquent $\rho(n^{-1})\in N_{\gamma_s}\cap S^\circ$. Par l'assertion 2 du lemme \ref{lem:thetacent}  appliquée à $\gamma_s\in M\cap S_{p'}$, il existe $u\in N_{\gamma_s}$ tel que $\rho(n^{-1})=\rho(u)$ et donc $nu\in  N^\theta$. De plus, on a $y=\Int(nu) (\Int(u^{-1})x)$ de sorte que on  a $\Int(u^{-1})x\in \gamma N_{\gamma_s}$ et $ \Int(u^{-1})x\in \Int(nu)^{-1}(S_{p'})=S_{p'}$.
  \end{preuve}
\end{paragr}

\begin{paragr}\label{S:sigma'}
  Soit $P=MN$ un sous-groupe parabolique semi-standard et  $\gamma\in (M\cap S_{p'})(F)$.  Soit $\sigma'$ l'anti-involution de $N$ donnée par $\sigma'(n)=\Int(\theta\gamma)(n^{-1})$. Alors $\sigma'$ laisse $N_{\gamma_s}$ invariant d'après l'assertion 1 du lemme \ref{lem:thetacent}. Il s'ensuit que l'application 
  \begin{align*}
      n\mapsto n\sigma'(n)
  \end{align*}
induit un isomorphisme de $N_{\gamma_s}/N_{\gamma_s}^{\theta\gamma}$ sur $N_{\gamma_s}^{\sigma'}$. À ce titre $N_{\gamma_s}^{\sigma'}(\AAA)$ hérite d'une mesure invariante normalisée, à savoir celle sur le quotient  $N_{\gamma_s}(\AAA)/N_{\gamma_s}^{\theta\gamma}(\AAA)$ . Comme $(\gamma N_{\gamma_s})\cap S_{p'}=\gamma N_{\gamma_s}^{\sigma'}$ on obtient une mesure sur $((\gamma N_{\gamma_s})\cap S_{p'})(\AAA)$. On peut raisonner autrement : on identifie via l'exponentielle 
$N_{\gamma_s}^{\sigma'}$ à  $\ngo_{\gamma_s}^{\sigma'}$ et on transporte la mesure additive, où l'on note encore $\sigma'$ l'anti-involution de $\ngo_{\gamma_s}$ donnée par $\sigma'(U)=\Ad(\theta\gamma)(-U)$. 
\end{paragr}

\begin{paragr}\label{S:tlK}
  Soit $\varphi\in \Sc(S_{p'}(\AAA))$. Soit $P\in\fc(M_0)$ avec $P=MN$ sa décomposition de Levi semi-standard. Soit $\of\in\cgo_{p'}(F)$. On considère pour $x\in G(\AAA)$ 
  \begin{align}\label{eq:KtildePo}
  \tlK_{P,\of,\varphi}(x)=\sum_{\gamma\in (M\cap S_{p',\of})(F)}\sum_{\nu\in N^\theta_{\gamma_s}(F)\back N^\theta(F)} \int_{((\gamma N_{\gamma_s})\cap S_{p'})(\AAA)} \varphi(\Int_\theta(\nu x)^{-1}(y))\,dy. 
  \end{align}
On obtient ainsi une fonction sur $P^\theta(F)\back G(\AAA)$. 

Pour $T\in T_0+\overline{\ago_0^+}$ un paramètre de troncature, on définit 
  \begin{align}\label{eq:tlKToS}
  \tlK_{\of,\varphi}^T(x)=\sum_{P\in\fc(P_0^\theta)} \eps_P^G \sum_{\delta\in P^\theta(F)  \back G^\theta(F)} \hat\tau_P(H_P(\delta x)-T_P) \tlK_{P,\of,\varphi}(\delta x)
  \end{align}
pour $x\in G(\AAA)$.  À l'aide de \cite[lemme 5.1]{ar1}, on voit que la somme sur $\delta$ dans la dernière expression est finie. 
  
    \begin{theoreme}\label{thm:cv-tlK}
  Soit $\of\in\cgo_{p'}(F)$ et $\varphi\in C_c^\infty(S_{p'}(\AAA))$. Pour tout $T$ suffisamment positif avec $\|T^G\|$ assez grand (par rapport au support de $\varphi$) on a 
    \begin{align*}
      \int_{[G^\theta]^G} |\tlK_{\of,\varphi}^T(x)| \, dx <\infty 
    \end{align*}
et 
    \begin{align}\label{eq:tlK=K}
      J_\of^T(\eta,\varphi)=\int_{[G^\theta]^G} \tlK_{\of,\varphi}^T(x) \eta(x) \, dx
    \end{align}
où le membre de gauche est défini par \eqref{eq:def-JoTphi}.  
  \end{theoreme}
  
  La preuve de ce théorème occupe les paragraphes suivants. 
\end{paragr}

\begin{paragr}[Majoration du noyau auxiliaire.]\label{S:maj-aux} ---
  Soit $x\in G^\theta(\AAA)$. On injecte l'identité \eqref{eq:varpartition-H} appliquée à $Q=P$ et l'élément $\delta x\in G^\theta(\AAA)$ dans la définition de $\tlK_{\of,\varphi}^T$. On obtient 
  \begin{align*}
  \tlK_{\of,\varphi}^T(x)=\sum_{P\in\fc(P_0^\theta)} \eps_P^G \sum_{\delta\in P^\theta(F)  \back G^\theta(F)} \hat\tau_P(H_P(\delta x)-T_P) \tlK_{P,\of,\varphi}(\delta x)\times \\ 
  \left(\sum_{P_1\in \fc^P(P_0^\theta)} \sum_{\delta_1 \in P_1^\theta(F)\back P^\theta(F)}F^{P_1}(\delta_1 \delta x,T) \tau_{P_1}^P(H_{P_1}(\delta_1 \delta x)-T_{P_1})\right). 
  \end{align*}
Comme les fonctions $H_P$ et $\tlK_{P,\of,\varphi}(x)$ sont invariantes à gauche par $P^\theta(F)$, on obtient que $\tlK_{\of,\varphi}^T(x)$ est égal à 
  \begin{align*}
  \sum_{P_0^\theta\subset P_1\subset P} \eps_P^G \sum_{\delta\in P_1^\theta(F)  \back G^\theta(F)} F^{P_1}(\delta x,T) \tau_{P_1}^P(H_{P_1}(\delta x)-T_{P_1}) \hat\tau_P(H_P(\delta x)-T_P) \tlK_{P,\of,\varphi}(\delta x) 
  \end{align*}
qui vaut 
  \begin{align*}
  \sum_{P_0^\theta\subset P_1\subset P_2} \sum_{\delta\in P_1^\theta(F)  \back G^\theta(F)} F^{P_1}(\delta x,T) \sigma_{P_1}^{P_2}(H_{P_1}(\delta x)-T_{P_1}) \tlK_{P_1,P_2,\of,\varphi}(\delta x) 
  \end{align*}
où l'on pose pour $g\in G(\AAA)$ et $M_0\subset P_1\subset P_2$ 
  \begin{align*}
  \tlK_{P_1,P_2,\of,\varphi}(g)=\sum_{P_1\subset P\subset P_2} \eps_P^G \tlK_{P,\of,\varphi}(g). 
  \end{align*}
On en déduit que 
    \begin{align*}
      \int_{[G^\theta]^G} |\tlK_{\of,\varphi}^T(x)| \, dx
    \end{align*}
se majore par la somme sur $P_0^\theta\subset P_1\subset P_2$ de 
    \begin{align*}
      \int_{P_1^\theta(F)\back (G^\theta(\AAA)\cap G(\AAA)^1)} F^{P_1}(x,T) \sigma_{P_1}^{P_2}(H_{P_1}(x)-T_{P_1}) | \tlK_{P_1,P_2,\of,\varphi}(x) | \, dx. 
    \end{align*}
Il suffit donc de majorer les termes correspondant à $P_1\subset P_2$ fixés. Si $P_1=P_2$ alors on a $\sigma_{P_1}^{P_2}=0$ sauf si $P_1=P_2=G$. Dans ce cas, on doit majorer 
      \begin{align*}
      \int_{[G^\theta]^G} F^G(x,T)  | \tlK_{G,\of,\varphi}(x) | \, dx. 
    \end{align*}
D'après le lemme \ref{lem:relsiegel}, on sait que la fonction $F^G(\cdot,T)$ sur $[G^\theta]^G$ est à support compact. Il s'agit donc de majorer l'intégrale d'une fonction continue sur un ensemble compact dont la convergence est claire. Dans toute la suite, on fixe $P_1\subsetneq P_2$. 

  \begin{lemme}\label{lem:tlK-res}
  Pour tout $P_1\subset P\subset P_2$, tout $T$ suffisamment positif avec $\|T^G\|$ assez grand (par rapport au support de $\varphi$) et tout $x\in P_1^\theta(F)\back G^\theta(\AAA)\cap G(\AAA)^1$ tel que 
    \begin{align*}
      F^{P_1}(x,T) \sigma_{P_1}^{P_2}(H_{P_1}(x)-T_{P_1})\neq 0, 
    \end{align*}
on a 
  \begin{align*}
    \tlK_{P,\of,\varphi}(x)=\sum_{\gamma\in (M_P\cap P_1\cap S_{p',\of})(F)} \sum_{\nu\in N^\theta_{P, \gamma_s}(F)\back N_P^\theta(F)} \int_{((\gamma N_{P,\gamma_s})\cap S_{p'})(\AAA)} \varphi(\Int(\nu x)^{-1}(y))\,dy. 
  \end{align*}
  \end{lemme}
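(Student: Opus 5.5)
The plan is to adapt Arthur's classical argument (\cite[§ 8]{ar1}): in the region where $F^{P_1}(x,T)\sigma_{P_1}^{P_2}(H_{P_1}(x)-T_{P_1})\neq 0$ and $\|T^G\|$ is large compared with the support of $\varphi$, every $\gamma\in (M_P\cap S_{p',\of})(F)$ that actually contributes to \eqref{eq:KtildePo} already lies in $P_1$. The statement then follows: the contributions of $\gamma\notin P_1$ vanish, and for $\gamma\in M_P\cap P_1$ the summand is literally the one written in the lemma. Here $\Int_\theta$ reduces to $\Int$ because $\nu x\in G^\theta(\AAA)$.

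\emph{Step 1: reduce to an orbit condition.} Fix $\gamma$ and suppose its term is nonzero. Then there exist $\nu\in N_P^\theta(F)$ and $y\in((\gamma N_{P,\gamma_s})\cap S_{p'})(\AAA)$ with $\Int(\nu x)^{-1}(y)\in\supp\varphi=:\Omega$, a compact set. By lemme \ref{lem:arthur}, $\Int(\nu)(y)$ runs through $(\gamma N_P)\cap S_{p'}$. So we obtain $z=\gamma n\in\gamma N_P(\AAA)$ with $x^{-1}zx\in\Omega$. The term depends only on the class of $x$ in $P_1^\theta(F)\back G^\theta(\AAA)$, so we may move $x$ by $P_1^\theta(F)$. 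Using lemme \ref{lem:relsiegel} together with the partition \eqref{eq:varpartition-H}, we may then write $x=n_1 m a k$ with:
\begin{itemize}
\item $k\in K^\theta$ and $n_1$ in a fixed compact subset of $N_{P_1}^\theta(\AAA)$;
\item $m$ in a compact subset of $M_{P_1}^\theta(\AAA)^1$ (this uses $F^{P_1}(x,T)=1$, and the compact set depends polynomially on $T$);
\item $a\in A_{P_1}^\infty$ with $\bg\al,H_{P_1}(a)-T_{P_1}\bd>0$ for all $\al\in\Delta_{P_1}^{P_2}$ (from $\sigma_{P_1}^{P_2}$).
\end{itemize}
As in the proof of lemme \ref{lem:Fsigma*d}, since $T$ is suffisamment positif this gives $\bg\al,H_{P_1}(a)\bd\geq\eps\|T^G\|$.

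\emph{Step 2: project to $M_P$ and isolate the blocks below $P_1$.} Write $x=p_xk$ with $p_x\in P(\AAA)$, and let $m_x$ be the $M_P$-component of $p_x$. Since $K^\theta$ is compact, $x^{-1}zx\in\Omega$ gives $p_x^{-1}zp_x\in\Omega'$ for a larger compact $\Omega'$. Projecting $P\to M_P$ (a homomorphism), the element $m_x^{-1}\gamma m_x$ lies in a compact subset $\Omega''\subset M_P(\AAA)$. Now decompose $\mathrm{End}$ of the blocks of $M_P$ according to $P_1\cap M_P$, and look at the components of $\gamma$ in $\ov{\ngo}_{P_1}^{P}$. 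Conjugation by $a^{-1}$ multiplies the root space of a root $\beta$ of $A_{P_1}$ in $\ov{\ngo}_{P_1}^P$ by $\exp(-\bg\beta,H_{P_1}(a)\bd)$. Since $-\beta$ is a positive combination of elements of $\Delta_{P_1}^{P}\subset\Delta_{P_1}^{P_2}$, this factor is at least $\exp(c\|T^G\|)$. The contributions of $m$ and $n_1$ distort these components only by factors bounded polynomially in $\|T\|$. Hence each component $\gamma_\beta$ of $\gamma$ in $\ov{\ngo}_{P_1}^{P}(F)$ satisfies $\|\gamma_\beta\|_\AAA\leq C\|T\|^{r}e^{-c\|T^G\|}$. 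This uses the adelic height on the affine space $\End_D(V)(\AAA)$, and the constant $C$ depends on $\Omega$.

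\emph{Step 3: conclude by rationality.} A nonzero element of $F$ has adelic norm $1$ (product formula). Applying this coordinatewise to the rational components $\gamma_\beta$, they must all vanish once $\|T^G\|$ is large relative to $\Omega$. To keep this argument local rather than treating a whole block at once, we first handle the root spaces of the lowest $\beta$ and proceed by induction along the height of $\beta$, as in Arthur's proof. It follows that $\gamma\in M_P\cap P_1$, which is the required restriction.

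The main obstacle is Step 2: controlling uniformly, in terms of a polynomial in $\|T\|$, the distortion introduced by the $M_{P_1}^\theta$-part $m$ and the unipotent part $n_1$ of $x$. The $m$-part is only bounded in a $T$-dependent way, via lemme \ref{lem:relsiegel}, and it must be beaten by the exponential gain coming from $a$. I would handle it by first conjugating by $m n_1$ inside $M_P$, using the structure of $\ov{\ngo}_{P_1}^P$ as an $M_{P_1}$-module, and only then applying the contraction by $a$.
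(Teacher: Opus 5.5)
Your overall approach is the paper's: it is Arthur's argument from \cite[p.~944]{ar1}. You show that a torus conjugate of each contributing $\gamma$ lies in a compact set fixed by $\supp\varphi$, then use rationality and the product formula to force $\gamma\in M_P\cap P_1$. The reduction in Step~1 and the rationality argument in Step~3 are fine.

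\textbf{The gap is in Step~2.} The claim that $m$ and $n_1$ distort the $\ov{\ngo}_{P_1}^P$-components only by factors polynomial in $\|T\|$ is false.
\begin{itemize}
\item By lemme~\ref{lem:relsiegel}, the torus part of $m$ (modulo $A_{P_1}^\infty$) ranges over a set whose image under $H_0$ in $\ago_0^{P_1}$ has diameter of order $\|T^{P_1}\|$. So conjugation by $m$ can rescale an $A_0$-root space of $\ov{\ngo}_{P_1}^P$ by $e^{\pm c\|T^{P_1}\|}$. The same loss occurs when you move $n_1$ past $m$.
\item ``Suffisamment positif'' only guarantees $d(T)\geq\eps\|T^G\|$, so $\|T^{P_1}\|$ may be much larger than $d(T)$. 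The gain $e^{\eps\|T^G\|}$ you extract from $a\in A_{P_1}^\infty$ then cannot absorb this loss.
\item A concrete case: take $N=3$, $D=F$, $P_1$ of type $(2,1)$, $P=P_2=G$, and $T=(t_1,t_2,t_3)$ with $t_2-t_3\approx\eps\|T^G\|\ll t_1-t_2$. The torus part of $m$ can be $\diag(e^{s},e^{-s},1)$ with $2s$ as large as $t_1-t_2$. This shrinks the $(3,2)$-entry of the conjugate by $e^{-s}$, which swamps $e^{\eps\|T^G\|}$.
\end{itemize}
Your proposed remedy is to conjugate by $mn_1$ first and contract by $a$ afterwards. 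That is exactly where this loss enters, so it does not close the gap.

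\textbf{What is missing.} The loss from the torus part of $m$ is compensated root by root by $a$, and you only see this if you do not split the torus. The paper proceeds as follows.
\begin{itemize}
\item It writes $x=n^*n_*m_0ak$, where $a\in A_B^{P_1,\infty}(T_-,T_B)$ is the full $A_0^\infty$-component and $k\in K^\theta$. The factors $n^*\in N_{P_2}^\theta(\AAA)$, $n_*\in(N_{P_0^\theta}\cap M_{P_2}^\theta)(\AAA)$ and $m_0\in M_0(\AAA)^1$ lie in $T$-independent compact sets.
\item The conditions $F^{P_1}(x,T)\neq0$ and $\sigma_{P_1}^{P_2}(H_{P_1}(x)-T_{P_1})\neq0$ give lower bounds on $a$ (compare the proof of lemme~\ref{lem:Fsigma*d variante}, with $B$, $T_B$ in place of $P_0$, $T$): $\bg\al,H_0(a)\bd\geq\bg\al,T_-\bd$ for $\al\in\Delta_B^{P_1}$, and $\bg\al,H_0(a)\bd>\bg\al,T_B\bd$ for $\al\in\Delta_B^{P_2}\setminus\Delta_B^{P_1}$.
\item Hence $a^{-1}n_*m_0a$ stays in a $T$-independent compact set. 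Since $n^*\in N_P(\AAA)$, it is absorbed into $\gamma N_P$. So $a^{-1}\gamma a$ lies in a compact set depending only on $\supp\varphi$.
\item Every $A_0$-root $\beta$ of $\ov{\ngo}_{P_1}^P$ then satisfies $\bg-\beta,H_0(a)\bd\geq d(T)-C$, and your Step~3 concludes.
\end{itemize}
In the example, your bound $\bg\al,H_{P_1}(a)\bd\geq\eps\|T^G\|$ discarded the term $\bg\bar\al,T_{P_1}\bd$. The $\sigma$-condition actually gives $3u>\frac{t_1+t_2}{2}-t_3$ for $a_{P_1}=\diag(e^{u},e^{u},e^{-2u})$. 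The combined exponent on the $(3,2)$-entry is therefore $3u-s>t_2-t_3$.
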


  \begin{preuve}
  En utilisant la décomposition d'Iwasawa 
  \begin{align*}
    G^\theta(\AAA)\cap G(\AAA)^1=N_{P_1}^\theta(\AAA) \cdot (M_{P_1}^\theta(\AAA)\cap G(\AAA)^1) \cdot K^\theta
  \end{align*}
et le lemme \ref{lem:relsiegel} appliqué à $P_1$, on choisit un représentant de $x$ dans $G^\theta(\AAA)\cap G(\AAA)^1$ de la forme $n^* n_* m_0 a k$ où $a\in A_B^{P_1,\infty}( T_-, T_B)\cap G(\AAA)^1$ pour un certain $B\in\pc^{P_1}(P_0^\theta)$, $k\in K^\theta$ et $n^*, n_*, m_0$ restent respectivement dans certains compacts indépendants de $T$ de $N_{P_2}^\theta(\AAA), (N_{P_0^\theta}\cap M_{P_2}^\theta)(\AAA), M_0(\AAA)^1$. Comme $\sigma_{P_1}^{P_2}(H_{P_1}(a)-T_{P_1})\neq 0$, d'après \cite[p.944]{ar1}, $a^{-1}n_* m_0 a$ reste dans un compact indépendant de $T$. Soit $\gamma\in (M_P\cap S_{p',\of})(F)$, $\nu\in N_P^\theta(F)$ et $y\in ((\gamma N_{P,\gamma_s})\cap S_{p'})(\AAA)$ tels que $\varphi(\Int(\nu x)^{-1}(y))\neq0$. Notons $\Int(\nu n^* a)^{-1}(y)\in (a^{-1}\gamma a) N_P(\AAA)$. On trouve alors que $a^{-1}\gamma a$ appartient à un compact de $M_P(\AAA)^1\cap S_{p'}(\AAA)$ dépendant du support de $\varphi$. Mais cela implique que $\gamma\in (M_P\cap P_1)(F)$ pour $T$ suffisamment positif avec $\|T^G\|$ assez grand (cf. {\it{loc. cit.}}). 
  \end{preuve}
  
  D'après les lemmes \ref{lem:parab-o} et \ref{lem:arthur}, on a 
  \begin{align}
\label{eq:parab-o}    (M_P\cap P_1\cap S_{p',\of})(F)&=\bigsqcup\limits_{\gamma\in (M_1\cap S_{p',\of})(F)} ((\gamma N_1^P)\cap S_{p'})(F) \\ \nonumber
&=\bigsqcup\limits_{\gamma\in (M_1\cap S_{p',\of})(F)} \bigsqcup\limits_{\delta\in N_{1,\gamma_s}^{P,\theta}(F)\back N_1^{P,\theta}(F)} \Int(\delta^{-1})((\gamma N_{1,\gamma_s}^P)\cap S_{p'})(F). 
  \end{align}
Par le lemme \ref{lem:tlK-res}, on obtient que $\tlK_{P,\of,\varphi}(x)$ est égal à la somme sur $\gamma\in (M_1\cap S_{p',\of})(F)$ de 
  \begin{align*}
  \sum_{\delta\in N_{1,\gamma_s}^{P,\theta}(F)\back N_1^{P,\theta}(F)} \sum_{\xi\in((\gamma N_{1,\gamma_s}^P)\cap S_{p'})(F)} \sum_{\nu\in N^\theta_{P, \delta^{-1}\gamma_s\delta}(F)\back N_P^\theta(F)} \int_{((\xi N_{P,\gamma_s})\cap S_{p'})(\AAA)} \varphi(\Int(\delta\nu x)^{-1}(y))\,dy \\ 
=\sum_{\nu\in N_{1,\gamma_s}^\theta(F)\back N_1^\theta(F)} \sum_{\xi\in((\gamma N_{1,\gamma_s}^P)\cap S_{p'})(F)} \int_{((\xi N_{P,\gamma_s})\cap S_{p'})(\AAA)} \varphi(\Int(\nu x)^{-1}(y))\,dy. 
  \end{align*}
Les éléments de $(\gamma N_{1,\gamma_s}^P)\cap S_{p'}$ s'écrivent $\gamma\exp(U)$ avec $U\in\ngo_{1,\gamma_s}^{P,\sigma'}$ où $\sigma'$ est l'anti-involution définie dans le paragraphe \ref{S:sigma'}. Fixons un tel $U$ et posons $\xi=\gamma\exp(U)$. Tout élément de $(\xi N_{P,\gamma_s})\cap S_{p'}$ s'écrit $\xi n $ avec $n\in N_{P,\gamma_s}$ tel que $\exp(U)n\in N_{1,\gamma_s}^{\sigma'}$ c'est-à-dire $n'=\Int(\exp(U/2))(n)\in N_{P,\gamma_s}^{\sigma'}$. On en déduit que l'application 
  \begin{align*}
    (U,n')\mapsto \exp(U)n=\exp(U/2) n'\exp(U/2)
  \end{align*}
induit un isomorphisme de $\ngo_{1,\gamma_s}^{P,\sigma'}\times N_{P,\gamma_s}^{\sigma'}$ sur $N_{1,\gamma_s}^{\sigma'}$. 
Par la formule de Poisson, on a donc 
  \begin{align*}
  \sum_{\xi\in((\gamma N_{1,\gamma_s}^P)\cap S_{p'})(F)} \int_{((\xi N_{P,\gamma_s})\cap S_{p'})(\AAA)} \varphi(\Int(\nu x)^{-1}(y))\,dy \\ 
=\sum_{U\in\ngo_{1,\gamma_s}^{P,\sigma'}(F)} \int_{N_{P,\gamma_s}^{\sigma'}(\AAA)} \varphi(\Int(\nu x)^{-1}(\gamma\exp(U/2) n'\exp(U/2)))\,dn' \\ 
=\sum_{V\in \ov{\ngo}_{1,\gamma_s}^{P,\sigma'}(F)} \int_{\ngo_{1,\gamma_s}^{P,\sigma'}(\AAA)} \int_{N_{P,\gamma_s}^{\sigma'}(\AAA)} \varphi(\Int(\nu x)^{-1}(\gamma\exp(U/2) n'\exp(U/2))) \psi(\langle U,V\rangle)\,dn' dU \\ 
=\sum_{V\in \ov{\ngo}_{1,\gamma_s}^{P,\sigma'}(F)} \int_{\ngo_{1,\gamma_s}^{\sigma'}(\AAA)} \varphi(\Int(\nu x)^{-1}(\gamma\exp(U))) \psi(\langle U,V\rangle) \, dU. 
  \end{align*}
Soit $(\ov{\ngo}_{1,\gamma_s}^{2,\sigma'})'$ l'ensemble, éventuellement vide, des éléments de $\ov{\ngo}_{1,\gamma_s}^{2,\sigma'}$ qui ne appartiennent à aucun $\ov{\ngo}_{1,\gamma_s}^{Q,\sigma'}$ pour $P_1\subset Q\subsetneq P_2$. On note  $[G^\theta]_{P_1^\theta}^G=N_1^\theta(\AAA)M_1^\theta(F)\back G^\theta(\AAA)\cap G(\AAA)^1$. En utilisant \cite[proposition 1.1]{ar1}, on est ramené à majorer 
    \begin{align}\label{eq:maj-tlK12}
      \int_{[G^\theta]_{P_1^\theta}^G} \exp(\langle -2\rho_{P_1^\theta}^{G^\theta}, H_{P_1^\theta}(x)\rangle) F^{P_1}(x,T) \sigma_{P_1}^{P_2}(H_{P_1}(x)-T_{P_1}) \sum_{\gamma\in (M_1\cap S_{p',\of})(F)} \\ \nonumber 
      \int_{N_{1,\gamma_s}^\theta(F)\back N_1^\theta(\AAA)} \sum_{V\in (\ov{\ngo}_{1,\gamma_s}^{2,\sigma'})'(F)} \left|\int_{\ngo_{1,\gamma_s}^{\sigma'}(\AAA)} \varphi(\Int(n x)^{-1}(\gamma\exp(U))) \psi(\langle U,V\rangle) \, dU\right| dn dx. 
    \end{align}
Pour tous $\gamma\in (M_1\cap S_{p',\of})(F)$ et $V\in (\ov{\ngo}_{1,\gamma_s}^{2,\sigma'})'(F)$, la fonction de $g\in G^\theta(\AAA)$ 
    \begin{align*}
      \int_{\ngo_{1,\gamma_s}^{\sigma'}(\AAA)} \varphi(\Int(g^{-1})(\gamma\exp(U))) \psi(\langle U,V\rangle) \, dU
    \end{align*}
est invariante à gauche par $N_{2,\gamma_s}^\theta(\AAA)$. Puisque $\vol([N_{2,\gamma_s}^\theta])=1$, on peut alors écrire $n=n_2 n_1$ où $n_2\in [N_{1,\gamma_s}^{2,\theta}]$ et $n_1\in N_{1,\gamma_s}^\theta(\AAA)\back N_1^\theta(\AAA)$. On écrit également $x=mak$ selon la décomposition d'Iwasawa où $m\in M_1^\theta(F)\back (M_1^\theta(\AAA)\cap M_1(\AAA)^1)$, $a\in A_1^{G,\infty}$ et $k\in K^\theta$. Sous la condition $F^{P_1}(m,T)=1$, d'après le lemme \ref{lem:relsiegel} appliqué à $P_1$, $m$ reste dans un compact dépendant de $T$ de $M_1^\theta(\AAA)\cap M_1(\AAA)^1$. Comme $\varphi\in C_c^\infty(S_{p'}(F))$, la somme sur $\gamma$ est finie. Notons que $\Int(a^{-1})$ laisse $N_{1,\gamma_s}^\theta(\AAA)\back N_1^\theta(\AAA)$ invariant et que le changement de variables $n_1\mapsto an_1a^{-1}$ ajoute un facteur $\exp(\langle 2\rho_{N_1^\theta}-2\rho_{N_{1,\gamma_s}^\theta}, H_{P_1}(a)\rangle)$ à l'intégrande. Puis pour tout $\gamma$ fixé, par le lemme \ref{lem:arthur}, $n_1$ reste dans un compact dépendant du support de $\varphi$. Sous la condition $\sigma_{P_1}^{P_2}(H_{P_1}(a)-T_{P_1})\neq 0$, d'après \cite[p.944]{ar1}, $a^{-1}n_2 a$ reste dans un compact indépendant de $T$. Enfin, le changement de variables $U\mapsto aUa^{-1}$ ajoute un facteur $\exp(\langle 2\rho_{\ngo_{1,\gamma_s}^{\sigma'}}, H_{P_1}(a)\rangle)$ à l'intégrande. Il s'ensuit que \eqref{eq:maj-tlK12} se majore, à une constante multiplicative près, par la somme sur $\gamma\in (M_1\cap S_{p',\of})(F)$, la borne supérieure sur $y$ restant dans un compact de $G^\theta(\AAA)\cap G(\AAA)^1$ et l'intégrale sur $a\in A_1^{G,\infty}$ de 
    \begin{align*}
      \exp(\langle 2\rho_{\ngo_{1,\gamma_s}^{\sigma'}}-2\rho_{N_{1,\gamma_s}^\theta}, H_{P_1}(a)\rangle) \sigma_{P_1}^{P_2}(H_{P_1}(a)-T_{P_1}) \\ 
      \sum_{V\in (\ov{\ngo}_{1,\gamma_s}^{2,\sigma'})'(F)} \left|\int_{\ngo_{1,\gamma_s}^{\sigma'}(\AAA)} \varphi(y^{-1}\gamma\exp(U)y) \psi(\langle U,aVa^{-1}\rangle) \, dU\right|. 
    \end{align*}
En utilisant le mêmes arguments pour la majoration de \cite[(7.8) dans p.945]{ar1} et traitant le facteur $\exp(\langle 2\rho_{\ngo_{1,\gamma_s}^{\sigma'}}-2\rho_{N_{1,\gamma_s}^\theta}, H_{P_1}(a)\rangle)$ comme à la fin de la preuve de \cite[proposition 4.15]{li1}, on voit que cette dernière intégrale est convergente. 
\end{paragr}

\begin{paragr}[Preuve de l'égalité \eqref{eq:tlK=K}.] ---
On a déjà montré 
    \begin{align*}
      \int_{[G^\theta]^G} \tlK_{\of,\varphi}^T(x) \eta(x) \, dx \\ 
      =\sum_{P_0^\theta\subset P_1\subset P_2} \int_{P_1^\theta(F)\back (G^\theta(\AAA)\cap G(\AAA)^1)} F^{P_1}(x,T) \sigma_{P_1}^{P_2}(H_{P_1}(x)-T_{P_1}) \tlK_{P_1,P_2,\of,\varphi}(x) \eta(x) \, dx 
    \end{align*}
où  
    \begin{align*}
      \tlK_{P_1,P_2,\of,\varphi}(x)=\sum_{P_1\subset P\subset P_2} \eps_P^G \sum_{\gamma\in (M_P\cap S_{p',\of})(F)}\sum_{\nu\in N^\theta_{P,\gamma_s}(F)\back N_P^\theta(F)} \int_{((\gamma N_{P,\gamma_s})\cap S_{p'})(\AAA)} \varphi(\Int(\nu x)^{-1}(y))\,dy. 
    \end{align*}
On décompose l'intégrale sur $P_1^\theta(F)\back (G^\theta(\AAA)\cap G(\AAA)^1)$ en une intégrale double sur $x\in [G^\theta]_{P_1^\theta}^G$ et $n_1\in [N_1^\theta]$. Puis on fait passer l'intégrale sur $[N_1^\theta]$ à l'intérieur de la somme sur $P$ et sur $\gamma$. On considère donc 
    \begin{align*}
      \int_{[N_1^\theta]} \sum_{\nu\in N^\theta_{P,\gamma_s}(F)\back N_P^\theta(F)} \int_{((\gamma N_{P,\gamma_s})\cap S_{p'})(\AAA)} \varphi(\Int(\nu n_1 x)^{-1}(y))\,dydn_1. 
    \end{align*}
Comme $\vol({[N_P^\theta]})=\vol([N^\theta_{P,\gamma_s}])=1$, cette dernière expression est égale à 
    \begin{align*}
      &\int_{[N_1^\theta]} \int_{[N_P^\theta]} \sum_{\nu\in N^\theta_{P,\gamma_s}(F)\back N_P^\theta(F)} \int_{((\gamma N_{P,\gamma_s})\cap S_{p'})(\AAA)} \varphi(\Int(\nu n n_1 x)^{-1}(y))\,dydndn_1 \\ 
      =&\int_{[N_1^\theta]} \int_{N^\theta_{P,\gamma_s}(F)\back N_P^\theta(\AAA)} \int_{((\gamma N_{P,\gamma_s})\cap S_{p'})(\AAA)} \varphi(\Int(n n_1 x)^{-1}(y))\,dydndn_1 \\ 
      =&\int_{[N_1^\theta]} \int_{N^\theta_{P,\gamma_s}(\AAA)\back N_P^\theta(\AAA)} \int_{((\gamma N_{P,\gamma_s})\cap S_{p'})(\AAA)} \varphi(\Int(n n_1 x)^{-1}(y))\,dydndn_1 \\ 
      =&\int_{[N_1^\theta]} \int_{((\gamma N_P)\cap S_{p'})(\AAA)} \varphi(\Int(n_1 x)^{-1}(y))\,dydn_1 
    \end{align*}
où l'on a utilisé le lemme \ref{lem:arthur} dans la dernière égalité. On intervertit de nouveau la somme sur $P$ et sur $\gamma$ avec l'intégrale sur $[N_1^\theta]$. Par le lemme \ref{lem:maj-K12} ci-dessous, on peut recombiner cette dernière avec l'intégrale sur $[G^\theta]_{P_1^\theta}^G$. On trouve alors 
    \begin{align*}
    \sum_{P_0^\theta\subset P_1\subset P_2} \int_{P_1^\theta(F)\back (G^\theta(\AAA)\cap G(\AAA)^1)} F^{P_1}(x,T) \sigma_{P_1}^{P_2}(H_{P_1}(x)-T_{P_1}) K_{P_1,P_2,\of,\varphi}(x) \eta(x) \, dx 
    \end{align*}
où l'on pose pour $g\in G(\AAA)$ et $M_0\subset P_1\subset P_2$ 
  \begin{align*}
   K_{P_1,P_2,\of,\varphi}(g)=\sum_{P_1\subset P\subset P_2} \eps_P^G K_{P,\of,\varphi}(g). 
  \end{align*}
Avec les manipulations au début du paragraphe \ref{S:maj-aux}, on voit que c'est précisément $J_\of^T(\eta,\varphi)$. 

\begin{lemme}\label{lem:maj-K12}
Soit $P_0^\theta\subset P_1\subsetneq P_2\subset G$ des sous-groupes paraboliques. Pour tout $T$ suffisamment positif avec $\|T^G\|$ assez grand (par rapport au support de $\varphi$) on a
    \begin{align*}
    \int_{P_1^\theta(F)\back (G^\theta(\AAA)\cap G(\AAA)^1)} F^{P_1}(x,T) \sigma_{P_1}^{P_2}(H_{P_1}(x)-T_{P_1}) | K_{P_1,P_2,\of,\varphi}(x) | \, dx < \infty. 
    \end{align*}
\end{lemme}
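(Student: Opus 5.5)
On suit la même stratégie que pour la majoration du noyau auxiliaire au § \ref{S:maj-aux}, mais appliquée au noyau non modifié $K_{P,\of,\varphi}$. On part de la définition \eqref{eq:defKPS} : pour $P_1\subset P\subset P_2$, on commence par établir l'analogue du lemme \ref{lem:tlK-res}. Pour $T$ suffisamment positif avec $\|T^G\|$ assez grand et $x$ dans le support de $F^{P_1}(x,T)\sigma_{P_1}^{P_2}(H_{P_1}(x)-T_{P_1})$, on se ramène à la somme sur $\gamma\in (M_P\cap P_1\cap S_{p',\of})(F)$. L'argument est identique à celui du lemme \ref{lem:tlK-res} : on écrit $x=n^*n_*m_0ak$, puis on utilise la compacité de $a^{-1}n_*m_0a$ et celle du support de $\varphi$.

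On décompose ensuite $(M_P\cap P_1\cap S_{p',\of})(F)$ par \eqref{eq:parab-o}, c'est-à-dire par les lemmes \ref{lem:parab-o} et \ref{lem:arthur}. Pour $\gamma\in (M_1\cap S_{p',\of})(F)$, on réécrit la contribution comme une somme sur $\xi\in((\gamma N_1^P)\cap S_{p'})(F)$ de l'intégrale sur $((\xi N_P)\cap S_{p'})(\AAA)$. On paramètre $(\gamma N_1)\cap S_{p'}$ via $N_1/N_1^{\gamma\theta}$, ou via l'exponentielle sur l'espace tangent tordu $\ngo_1^{\sigma'}$ comme au § \ref{S:sigma'}, mais sans passer au centralisateur de $\gamma_s$. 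On applique alors la formule de Poisson à la somme sur la partie $\ngo_1^{P,\sigma'}(F)$. La somme alternée $\sum_{P_1\subset P\subset P_2}\eps_P^G$ fait apparaître, comme dans \cite[§ 7]{ar1}, seulement les $V$ dans l'ensemble $(\ov{\ngo}_1^{2,\sigma'})'(F)$ des éléments n'appartenant à aucun $\ov{\ngo}_1^{Q,\sigma'}$ avec $Q\subsetneq P_2$. On obtient ainsi un majorant de la forme \eqref{eq:maj-tlK12}, avec $N_1$ à la place de $N_{1,\gamma_s}$. Il est d'ailleurs plus simple de conserver la structure « $N^\theta$-orbite » du lemme \ref{lem:arthur} et de déduire la majoration de celle du noyau auxiliaire : via le lemme \ref{lem:arthur}, $K_{P,\of,\varphi}$ s'obtient à partir de $\tlK_{P,\of,\varphi}$ en intégrant sur $[N_P^\theta]$, et les intégrandes sont dominées par les mêmes expressions positives.

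On écrit enfin $x=mak$ par la décomposition d'Iwasawa. Par le lemme \ref{lem:relsiegel}, $m$ reste dans un compact, donc la somme sur $\gamma$ est finie puisque $\varphi$ est à support compact. Les changements de variables $n\mapsto ana^{-1}$ et $U\mapsto aUa^{-1}$ produisent le facteur $\exp(\langle 2\rho_{\ngo_{1,\gamma_s}^{\sigma'}}-2\rho_{N_{1,\gamma_s}^\theta},H_{P_1}(a)\rangle)$. On majore la somme sur $V$ de la transformée de Fourier partielle, évaluée en $aVa^{-1}$, par une décroissance rapide en $\sigma_{P_1}^{P_2}$-cône, comme pour \cite[(7.8)]{ar1}.

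Le point délicat est de compenser ce facteur exponentiel, qui n'est pas nécessairement négatif. On le traitera comme à la fin de la preuve de \cite[proposition 4.15]{li1} : sur le support de $\sigma_{P_1}^{P_2}$, la décroissance rapide en $\langle\al,H_{P_1}(a)\rangle$ pour $\al\in\Delta_1^2$, fournie par la non-trivialité de $V$ dans chaque direction, domine toute exponentielle linéaire. La projection de $H_{P_1}(a)$ sur $\ago_{P_2}^G$ est quant à elle contrôlée par \cite[corollary 6.2]{ar1}, ce qui donne la convergence.
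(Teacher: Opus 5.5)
Your main line matches the paper's proof step for step. You restrict to $\gamma\in(M_P\cap P_1\cap S_{p',\of})(F)$ by the analogue of Lemma \ref{lem:tlK-res} and use the first equality of \eqref{eq:parab-o]} (the second one, which introduces $N_{1,\gamma_s}$, is not needed). You then parametrize by $\gamma\exp(U)$ with $U\in\ngo_1^{\sigma'}$, apply Poisson summation on $\ngo_1^{P,\sigma'}(F)$, reduce to $V\in(\ov{\ngo}_1^{2,\sigma'})'(F)$ via the alternating sum, and conclude as in \cite[(7.8)]{ar1} and at the end of \cite[proposition 4.15]{li1}. Two points need correcting.

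\textbf{The shortcut through the auxiliary kernel does not work.} The identity $K_{P,\of,\varphi}(x)=\int_{[N_P^\theta]}\tlK_{P,\of,\varphi}(nx)\,dn$ is true. But the averaging group depends on $P$, so $K_{P_1,P_2,\of,\varphi}$ is not an average of $\tlK_{P_1,P_2,\of,\varphi}$. Bounding the terms $\eps_P^G K_{P,\of,\varphi}$ separately in absolute value destroys the cancellation of the alternating sum, which is the only source of convergence.

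The identity that does hold for the alternating sums is $\int_{[N_1^\theta]}K_{P_1,P_2,\of,\varphi}(nx)\,dn=\int_{[N_1^\theta]}\tlK_{P_1,P_2,\of,\varphi}(nx)\,dn$. Combined with the estimate of \S\ref{S:maj-aux}, it only gives integrability of this $[N_1^\theta]$-average on $[G^\theta]_{P_1^\theta}^G$. It does not give the absolute integrability over $P_1^\theta(F)\back(G^\theta(\AAA)\cap G(\AAA)^1)$ asserted by the lemma. That absolute integrability is exactly what the proof of \eqref{eq:tlK=K} needs to apply Fubini, so the direct computation cannot be bypassed.

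\textbf{The exponential factor should not involve $\gamma_s$.} In the direct route there is no integral over $N_{1,\gamma_s}^\theta(\AAA)\back N_1^\theta(\AAA)$, hence no substitution $n\mapsto ana^{-1}$. The steps are:
\begin{itemize}
\item write $x=n^*n_*mak$ with $n^*\in[N_2^\theta]$ and $n_*\in[N_1^{2,\theta}]$;
\item absorb $n^*$ into the $U$-integral;
\item use that $a^{-1}n_*a$ stays in a compact set on the support of $\sigma_{P_1}^{P_2}$.
\end{itemize}
The resulting factor is $\exp(\bg 2\rho_{\ngo_1^{\sigma'}}-2\rho_{P_1^\theta}^{G^\theta},H_{P_1}(a)\bd)$. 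This slip does not affect your conclusion, since your last step dominates any linear exponential on the support of $\sigma_{P_1}^{P_2}$.
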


\begin{preuve}
La preuve de cet énoncé est essentiellement identique à celle du paragraphe \ref{S:maj-aux}. Par un analogue du lemme \ref{lem:tlK-res}, on peut limiter la somme sur $\gamma$ dans 
    \begin{align*}
    K_{P_1,P_2,\of,\varphi}(x)=\sum_{P_1\subset P\subset P_2} \eps_P^G \sum_{\gamma\in (M_P\cap S_{p',\of})(F)} \int_{((\gamma N_P)\cap S_{p'})(\AAA)} \varphi(\Int(x^{-1})(y))\, dy
    \end{align*}
à $(M_P\cap P_1\cap S_{p',\of})(F)$. Il résulte de \eqref{eq:parab-o} qu'on a 
    \begin{align*}
    \sum_{\gamma\in (M_P\cap P_1\cap S_{p',\of})(F)} \int_{((\gamma N_P)\cap S_{p'})(\AAA)} \varphi(\Int(x^{-1})(y))\, dy \\ 
=\sum_{\gamma\in (M_1\cap S_{p',\of})(F)} \sum_{\xi\in((\gamma N_1^P)\cap S_{p'})(F)} \int_{((\xi N_P)\cap S_{p'})(\AAA)} \varphi(\Int(x^{-1})(y))\, dy. 
    \end{align*}
Les éléments de $(\gamma N_1^P)\cap S_{p'}$ s'écrivent $\gamma\exp(U)$ avec $U\in\ngo_1^{P,\sigma'}$ où $\sigma'$ est l'anti-involution définie dans le paragraphe \ref{S:sigma'}. Fixons un tel $U$ et posons $\xi=\gamma\exp(U)$. Tout élément de $(\xi N_P)\cap S_{p'}$ s'écrit $\xi n $ avec $n\in N_P$ tel que $\exp(U)n\in N_1^{\sigma'}$ c'est-à-dire $n'=\Int(\exp(U/2))(n)\in N_P^{\sigma'}$. On en déduit que l'application 
  \begin{align*}
    (U,n')\mapsto \exp(U)n=\exp(U/2) n'\exp(U/2)
  \end{align*}
induit un isomorphisme de $\ngo_1^{P,\sigma'}\times N_P^{\sigma'}$ sur $N_1^{\sigma'}$. Par la formule de Poisson, on a donc 
  \begin{align*}
  \sum_{\xi\in((\gamma N_1^P)\cap S_{p'})(F)} \int_{((\xi N_P)\cap S_{p'})(\AAA)} \varphi(\Int(x^{-1})(y))\,dy \\ 
=\sum_{U\in\ngo_1^{P,\sigma'}(F)} \int_{N_P^{\sigma'}(\AAA)} \varphi(\Int(x^{-1})(\gamma\exp(U/2) n'\exp(U/2)))\,dn' \\ 
=\sum_{V\in \ov{\ngo}_1^{P,\sigma'}(F)} \int_{\ngo_1^{P,\sigma'}(\AAA)} \int_{N_P^{\sigma'}(\AAA)} \varphi(\Int(x^{-1})(\gamma\exp(U/2) n'\exp(U/2))) \psi(\langle U,V\rangle)\,dn' dU \\ 
=\sum_{V\in \ov{\ngo}_1^{P,\sigma'}(F)} \int_{\ngo_1^{\sigma'}(\AAA)} \varphi(\Int(x^{-1})(\gamma\exp(U))) \psi(\langle U,V\rangle) \, dU. 
  \end{align*}
Soit $(\ov{\ngo}_1^{2,\sigma'})'$ l'ensemble, éventuellement vide, des éléments de $\ov{\ngo}_1^{2,\sigma'}$ qui ne appartiennent à aucun $\ov{\ngo}_1^{Q,\sigma'}$ pour $P_1\subset Q\subsetneq P_2$. En utilisant \cite[proposition 1.1]{ar1}, on est ramené à majorer 
    \begin{align}\label{eq:maj-K12}
      \int_{P_1^\theta(F)\back (G^\theta(\AAA)\cap G(\AAA)^1)} F^{P_1}(x,T) \sigma_{P_1}^{P_2}(H_{P_1}(x)-T_{P_1}) \sum_{\gamma\in (M_1\cap S_{p',\of})(F)} \\ \nonumber \sum_{V\in (\ov{\ngo}_1^{2,\sigma'})'(F)} \left| \int_{\ngo_1^{\sigma'}(\AAA)} \varphi(\Int(x^{-1})(\gamma\exp(U))) \psi(\langle U,V\rangle) \, dU \right| dx. 
    \end{align}
Soit $x=n^* n_* m a k$ où $n^*\in [N_2^\theta]$, $n_*\in [N_1^{2,\theta}]$, $m\in M_1^\theta(F)\back (M_1^\theta(\AAA)\cap M_1(\AAA)^1)$, $a\in A_1^{G,\infty}$ et $k\in K^\theta$. Ce changement de variables ajoute un facteur $\exp(\langle -2\rho_{P_1^\theta}^{G^\theta},H_{P_1^\theta}(ma) \rangle)$ à l'intégrande. Notons que $n^*$ s'absorbe dans l'intégrale sur $U$. Le changement de variables $U\mapsto aUa^{-1}$ ajoute un facteur $\exp(\langle 2\rho_{\ngo_1^{\sigma'}}, H_{P_1}(a)\rangle)$ à l'intégrande. D'après la discussion du paragraphe \ref{S:maj-aux}, on observe que \eqref{eq:maj-K12} est majorée, à une constante multiplicative près, par la somme finie sur $\gamma\in (M_1\cap S_{p',\of})(F)$, la borne supérieure sur $y$ restant dans un compact de $G^\theta(\AAA)\cap G(\AAA)^1$ et l'intégrale sur $a\in A_1^{G,\infty}$ de 
    \begin{align*}
      \exp(\langle 2\rho_{\ngo_1^{\sigma'}}-2\rho_{P_1^\theta}^{G^\theta}, H_{P_1}(a)\rangle) \sigma_{P_1}^{P_2}(H_{P_1}(a)-T_{P_1}) \\ 
      \sum_{V\in (\ov{\ngo}_1^{2,\sigma'})'(F)} \left|\int_{\ngo_1^{\sigma'}(\AAA)} \varphi(y^{-1}\gamma\exp(U)y) \psi(\langle U,aVa^{-1}\rangle) \, dU\right|. 
    \end{align*}
Comme à la fin du paragraphe \ref{S:maj-aux}, on peut montrer que cette intégrale est convergente. 
\end{preuve}
\end{paragr}

\subsection{Descente semi-simple}\label{ssec:desc-ss}

\begin{paragr}[Descendants.] ---
  Pour tout élément semi-simple $\sigma\in S(F)$, on a $\Int(\theta)(G_\sigma) \subset G_\sigma$, cf. lemme \ref{lem:thetacent} et on pose $G_\sigma^\theta= G_\sigma\cap G^\theta$. On obtient donc une paire symétrique $(G_\sigma,G_\sigma^\theta)$. On appelle  descendant de $(G,G^\theta)$ tout  triplet $(G_\sigma,G_\sigma^\theta,\Int(\theta)|_{G_\sigma})$ ainsi obtenu. On peut décrire explicitement les descendants: 
  
  \begin{proposition}\label{prop:descendant}
  Tout descendant de $(G, G^\theta, \Int(\theta))$ est un produit des triplets d'un des  types suivants : 
   \begin{enumerate}
  \item $(\Res_{L/F} GL_{r,D'}\times \Res_{L/F} GL_{r,D'}, \Delta \Res_{L/F} GL_{r,D'}, \delta)$ où $L/F$ est une extension finie de corps,   $D'$  une algèbre à division centrale sur $L$ et $\delta$ l'involution donnée par $(x,y)=(y,x)$ ; 
  \item $(\Res_{K/F} GL_{r,D'\otimes_L K}, \Res_{L/F} GL_{r,D'}, \gamma)$ où $L/F$ est une extension finie de corps,   $K/L$ une extension quadratique, $D'$  une algèbre à division centrale sur $L$ et $\gamma$ est l'involution galoisienne de $\Res_{L/F} GL_{r,D'}$ correspondant à l'élément non trivial de  $\Gal(K/L)$ ; 
  \item $(GL_{r+t,D}, GL_{r,D}\times GL_{t,D}, \Int(\theta_{r,t}))$ où $\theta_{r,t}=\begin{pmatrix}
  I_r &  0\\ 0 & -I_t
\end{pmatrix}$. 
  \end{enumerate}
\end{proposition}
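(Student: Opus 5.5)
Je procéderais en décrivant d'abord $G_\sigma$ et l'involution induite par $\Int(\theta)$ au moyen de l'algèbre linéaire de l'élément semi-simple $\sigma\theta$, puis en calculant les points fixes. D'après la proposition \ref{prop:elementssr}, quitte à conjuguer par $G^\theta(F)$ (ce qui ne change pas la classe d'isomorphisme du descendant), on peut supposer $\sigma=x(A,r,t)$ avec $A\in\gl_m(D)$ semi-simple sans valeurs propres $\pm1$. La matrice $x(A,r,t)$ est diagonale par blocs selon la décomposition $V=(D^m\oplus D^m)\oplus D^{p-m-r}\oplus D^r\oplus D^{q-m-t}\oplus D^t$, le bloc sur $D^m\oplus D^m$ étant $\sigma_0=\begin{pmatrix} A & A-I\\ A+I & A\end{pmatrix}$. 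Sur les autres facteurs, $\sigma$ agit par $\pm1$. Par le lemme \ref{lem:JRlem4.3}, $\sigma_0$ n'a pas de valeurs propres $\pm1$. On en déduit que $G_\sigma=G_{\sigma_0}\times GL_{p-m-t+?}\dots$. Plus précisément, $G_\sigma$ est le produit de $(G_{2m})_{\sigma_0}$, de $GL_{D}(V_{\sigma=1})$ et de $GL_D(V_{\sigma=-1})$, où $V_{\sigma=1}=D^{p-m-r}\oplus D^{q-m-t}$ et $V_{\sigma=-1}=D^r\oplus D^t$. Comme $\theta$ préserve chacun de ces sous-espaces, chaque facteur est $\Int(\theta)$-stable.

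Sur les facteurs associés aux valeurs propres $\pm1$, $\theta$ agit par $\theta_{p-m-r,q-m-t}$, resp. $\theta_{r,t}$. On obtient donc deux triplets de type 3.

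Il reste le facteur $\sigma_0$, que je ramènerais au cas $p=q=m$. Soit $F[\sigma_0]$ l'algèbre commutative semi-simple engendrée par $\sigma_0$ ; c'est un produit de corps $\prod_i K_i$. Pour $s\in S$, on a $\theta\sigma_0\theta=\sigma_0^{-1}$, de sorte que $\Int(\theta)$ induit sur $F[\sigma_0]$ l'automorphisme $\tau:\sigma_0\mapsto\sigma_0^{-1}$. Celui-ci permute les facteurs $K_i$ et l'on distingue deux cas.

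Si $\tau$ échange deux facteurs $K_i\neq K_j$, on pose $L=K_i$. Le centralisateur de $\sigma_0$ contient alors $GL_{r,D'}(L)\times GL_{r,D'}(L)$, où $D'$ est l'algèbre à division dans la classe de Brauer de $D\otimes_F L$, et $\Int(\theta)$ échange les deux copies. Après identification via $\theta$, on obtient le type 1.

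Si $\tau$ stabilise $K_i$, $\tau$ est non trivial sur $K_i$, puisque $\sigma_0^2\neq1$ faute de valeurs propres $\pm1$. On pose alors $K=K_i$ et $L=K^\tau$, de sorte que $K/L$ est quadratique. Le centralisateur correspondant est $\Res_{K/F}GL_{r,D'\otimes_L K}$, sur lequel $\Int(\theta)$ induit une involution semi-linéaire relativement à $\Gal(K/L)$. Ses points fixes forment une $L$-forme, et il faut voir que c'est $\Res_{L/F}GL_{r,D'}$ (type 2).

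L'étape la plus délicate est précisément l'identification de ces points fixes. Elle revient à montrer que l'espace $V_i$ (la composante $K_i$-isotypique), muni de la structure de $D\otimes K$-module et de l'application $\theta$-semi-linéaire, provient d'un $D\otimes L$-module par extension des scalaires, c'est-à-dire qu'il relève d'une descente galoisienne (Hilbert 90 non commutatif pour $GL_{D'}$, ou encore description explicite). J'essaierais d'abord une description explicite. Les espaces propres $V^\pm$ de $\theta$ restreints à $V_i$ sont des $D\otimes L$-modules, et $V_i=V_i^+\otimes_L K$, grâce à un élément $\iota\in K$ tel que $\tau(\iota)=-\iota$ qui échange $V_i^+$ et $V_i^-$. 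Les points fixes sont alors $GL_{D\otimes L}(V_i^+)$, ce qui donne la forme cherchée, à l'image de \cite[proposition 4.1]{JR2} et \cite[proposition 5.1]{Zha2}.
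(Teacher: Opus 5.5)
Votre démonstration est correcte. Elle suit le même schéma global que l'article : réduction à $\sigma=x(A,r,t)$, puis deux facteurs de type 3 pour les valeurs propres $\pm1$, puis étude du bloc $\sigma_0$. C'est le traitement de ce bloc qui diffère.

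L'article se ramène d'abord, via le corollaire \ref{cor:Prdx}, au cas où $\Prd_A=\chi^k$ avec $\chi$ irréductible. Il pose $E=F(A)$ et $\tilde E=F(\sigma_0)$, qui est étale de degré $2$ sur $E$. Il note que $\Int(\theta)$ induit l'automorphisme non trivial de $\tilde E/E$, puisque $F(\diag(A,A))=F\bigl((\sigma_0+\sigma_0^{-1})/2\bigr)$ est fixe. Il sépare alors le cas $\tilde E\simeq E\times E$ (type 1) du cas où $\tilde E$ est un corps (type 2). Surtout, il obtient les points fixes sans aucune descente : un élément $\diag(X,Y)$ de $\ggo^\theta$ commute à $\sigma_0$ si et seulement si $X=Y$ et $XA=AX$, car $A-I_m$ est inversible. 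On en tire directement $G^\theta_\sigma\simeq\Res_{E/F}GL_{t,D'}$.

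Vous travaillez au contraire avec toute l'algèbre $F[\sigma_0]=\prod_i K_i$ munie de $\tau$. Cela règle d'un coup le découpage en produit et la dichotomie : les orbites de cardinal $2$ donnent le type 1, les facteurs stables le type 2. Vous identifiez ensuite les points fixes par la descente $V_i=V_i^+\oplus\iota V_i^+=V_i^+\otimes_L K$. Cette étape, que vous jugez délicate, est en fait immédiate telle que vous la décrivez. Elle a de plus l'avantage de montrer que $V_i$ est libre sur $D'\otimes_L K$. Le facteur correspondant de $G_\sigma$ est donc bien $\Res_{K/F}GL_{r,D'\otimes_L K}$, même quand $D'\otimes_L K$ n'est pas à division ; l'article passe rapidement sur ce point.

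Deux retouches mineures.
\begin{enumerate}
\item La non-trivialité de $\tau$ sur $K_i$ doit s'argumenter composante par composante, et non par $\sigma_0^2\neq1$. Si $\tau|_{K_i}=\Id$, l'image $s_i$ de $\sigma_0$ dans $K_i$ vérifie $s_i^2=1$, donc $\sigma_0$ agirait par $\pm1$ sur $V_i\neq0$, ce qu'exclut le lemme \ref{lem:JRlem4.3}.
\item La formule tronquée de votre premier paragraphe est à supprimer au profit de la phrase \og Plus précisément\fg{} qui la suit.
\end{enumerate}
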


\begin{remarque}
    Pour le type 2, $D'\otimes_L K$ n'est pas nécessairement une algèbre à division. 
\end{remarque}

\begin{preuve}
C'est une généralisation immédiate de \cite[proposition 4.3]{JR2} ou \cite[théorème 7.7.3]{AizGour} dans le cas $D=F$ et \cite[proposition 4.1]{Zha2} dans le cas $p=q$. On reprend l'argument ici pour la commodité des lecteurs et des auteurs. En utilisant la proposition \ref{prop:elementssr}, sans perte de généralité, on peut et on va supposer que $\sigma=x(A,r,t)$ défini en \eqref{eq:xArt} avec $A\in \gl_m(D)$ un élément  semi-simple sans valeurs propres $\pm1$. Grâce au lemme \ref{lem:JRlem4.3}, on trouve que le descendant $(G_\sigma,G_\sigma^\theta,\Int(\theta)|_{G_\sigma})$ est isomorphe au produit d'au plus deux triplets de type 3 avec le descendant de $(GL_{2m,D},GL_{m,D}\times GL_{m,D},\Int(\theta_{m,m}))$ associé au élément 
\begin{align}\label{eq:pf-des}
\begin{pmatrix}
  A  & A-I_m   \\ 
  A+I_m &   A  \\ 
\end{pmatrix}. 
  \end{align}
Alors il suffit d'étudier le dernier descendant autrement dit on peut et on va supposer $p=q=m$ et que $\sigma$ est de la forme \eqref{eq:pf-des} avec $A$ comme ci-dessus. D'après le corollaire \ref{cor:Prdx}, on est ramené au cas où $\Prd_A$ est une puissance d'un seul polynôme irréductible, ce que on va aussi supposer. Soit $\Prd_A=\chi^k$ avec $\chi$ un polynôme irréductible sur $F$. Comme $A$ est semi-simple, $\chi$ est le polynôme minimal de $A$. Rappelons $V=D^{2m}$ et $G=GL_D(V)$ avec les notations de \S \ref{S:D}. Alors $A$ agit sur $D^m$ et $\chi(A)=0$. Soit $E=F[t]/(\chi)\simeq F(A)$ qui est un corps où $F(A)$ désigne l'algèbre sur $F$ engendrée par $A$. L'action de $A$ munit $D^m$ d'une structure de $E\otimes_F D$-module. Soit $E\otimes_F D\simeq\gl_s(D') $ où $s\geq1$ est un entier et $D'$ est une algèbre à division centrale sur $E$. On a $D^m\simeq M_{t\times s}(D')$ pour un entier $t\geq1$ en tant que module semi-simple où $M_{t\times s}$ désigne une matrix de taille $t\times s$. Il s'ensuit que le centralisateur de $A$ dans $\gl_m(D)$, qui s'identifie avec $\ggo_\sigma^\theta$, est isomorphe à $\gl_{t}(D')$. L'égalité $\Prd_A=\chi^k$ implique que le degré de $\chi$ vaut $\deg\chi=\frac{dm}{k}$ où $d$ est le degré de $D$. En utilisant de nouveau le corollaire \ref{cor:Prdx}, on a 
\begin{align*}
    \Prd_\sigma(t)=(2t)^{dm}\Prd_A(\frac{t^2+1}{2t})=\left((2t)^{\deg\chi}\chi(\frac{t^2+1}{2t})\right)^k. 
\end{align*}
Soit $\tilde\chi(t)=(2t)^{\deg\chi}\chi(\frac{t^2+1}{2t})\in F[t]$ qui est séparable puisque $A$ n'a pas de valeurs propres $1$. Comme $\sigma$ est semi-simple, $\tilde \chi$ est le polynôme minimal de $\sigma$ dont le degré vaut $\deg\tilde\chi=2\deg\chi$. Soit $\tilde E=F[t]/(\tilde \chi)\simeq F(\sigma)$ qui est alors une algèbre étale sur $E$ de degré $2$ où $F(\sigma)$ désigne l'algèbre sur $F$ engendrée par $\sigma$. L'action de $\sigma$ munit $V=D^m\oplus D^m$ d'une structure de $\tilde E\otimes_F D$-module. Mais $V\simeq M_{2t\times s}(D')$ et 
\begin{align*}
    \tilde E\otimes_F D\simeq\tilde E\otimes_E (E\otimes_F D)\simeq\tilde E\otimes_E M_{t\times s}(D')\simeq M_{t\times s}(\tilde E\otimes_E D'). 
\end{align*}
On a $\ggo_\sigma\simeq\gl_t(\tilde E\otimes_E D')$. Notons que 
\begin{align*}
    \Int(\theta)(\sigma)=
\begin{pmatrix}
  A  & -A+I_m   \\ 
  -A-I_m &   A  \\ 
\end{pmatrix}=\sigma^{-1}.  
\end{align*}
Alors $\theta$ agit sur $\tilde E\simeq F(\sigma)$. De plus, on a 
\begin{align*}
    F(A)\simeq F\begin{pmatrix}
  A  & 0   \\ 
  0 &   A  \\ 
\end{pmatrix}=F(\frac{\sigma+\Int (\theta)(\sigma)}{2})\subset F(\sigma)^\theta 
\end{align*}
où l'on note $F(\sigma)^\theta$ la sous-algèbre de $F(\sigma)$ formé des éléments fixés par $\theta$. Donc $\theta$ est l'unique élément nontrivial du groupe d'automorphismes $\Aut_E(\tilde E)$. En résumé, on a 
\begin{align*}
    (G_\sigma, G_\sigma^\theta,\Int(\theta))\simeq(\Res_{\tilde E/F} GL_{t,\tilde E\otimes_E D'},\Res_{E/F} GL_{t,D'},\theta). 
\end{align*}
Si $\tilde E$ est isomorphe à $E\times E$, resp. est un corps, on trouve un triplet de type 1, resp. de type 2.  
\end{preuve} 
\end{paragr}

\begin{paragr}[Éléments anisotropes.]\label{S:anisotrope} ---
Soit $\sigma\in S(F)$ un élément semi-simple. On dit que $\sigma$ est $(G,\theta)$-anisotrope si la classe de $G^\theta(F)$-conjugaison de $\sigma$ ne rencontre pas $M(F)$ pour tout sous-groupe de Levi $M$ semi-standard et propre de $G$.  La condition est automatique si $G$ est anisotrope modulo son centre, c'est-à-dire si $p+q=1$, $G=GL_{1,D}$ et  $\sigma=\pm \Id_D$. Si $p+q>1$, il résulte de la proposition \ref{prop:elementssr} que $\sigma$ est $(G,\theta)$-anisotrope si et seulement si $p=q$ et $\sigma$ est conjugué sous $G^\theta(F)$ à $\begin{pmatrix}
  A & A-I_p \\ A+I_p & A
\end{pmatrix}$ avec $A\in \mathfrak{gl}_p(D)$ un élément semi-simple elliptique (au sens où son centralisateur dans $GL_{p,D}$ est anisotrope modulo le centre de $A_{GL_{p,D}}$) sans valeurs propres $\pm1$. Dans ce cas, on a, en particulier, $\sigma\in S^\circ(F)$. En tout cas, si $\sigma$ est $(G,\theta)$-anisotrope, alors $G_\sigma^\theta$ est anisotrope modulo $A_{G_\sigma^\theta}$.  Par ailleurs, on note que  $\sigma$ est $(G,\theta)$-anisotrope si et seulement si la classe de $G^\theta(F)$-conjugaison de $\sigma$ ne rencontre pas $P(F)$ pour tout $P\in \fc(P_0^\theta)$ sous-groupe parabolique propre de $G$. 

\begin{proposition}\label{prop:A_P_sigma}
    Soit $\sigma\in S(F)$ un élément semi-simple $(G,\theta)$-anisotrope. On a 
    \begin{align*}
        A_{G}=A_{G_\sigma}\cap A_{G^\theta}=A_{G_\sigma^\theta}=A_{G_\sigma}^\theta. 
    \end{align*}
\end{proposition}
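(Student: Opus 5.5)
Le plan est de se ramener, par la description des éléments anisotropes donnée juste avant l'énoncé, à un calcul explicite des tores centraux déployés. On commence par le cas trivial $p+q=1$. Alors $G=GL_{1,D}$, $\theta=\pm\Id$, donc $G^\theta=G$. Comme $\sigma=\pm\Id_D$ est central, on a aussi $G_\sigma=G$ et $G_\sigma^\theta=G$, et toutes les égalités sont évidentes.

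Supposons désormais $p+q>1$. Quitte à conjuguer par $G^\theta(F)$, ce qui transporte les quatre tores de façon compatible, on peut supposer $p=q$ et
\[\sigma=\begin{pmatrix} A & A-I_p \\ A+I_p & A\end{pmatrix},\]
avec $A$ semi-simple elliptique et sans valeurs propres $\pm1$. On établit d'abord les inclusions formelles. D'une part, $A_G\subset A_{G_\sigma}\cap A_{G^\theta}$ : en effet $A_G$ est central, donc contenu dans $G_\sigma$ et dans $G^\theta$, et il est central et déployé dans chacun. D'autre part, $A_{G_\sigma}^\theta$ est un sous-tore de $G_\sigma^\theta$ qui est central (car central dans $G_\sigma$) et déployé. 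On prend sa composante neutre si nécessaire, mais comme $A_{G_\sigma}$ est un tore déployé et que $\theta$ agit par une involution, sa partie fixe contient le tore $(A_{G_\sigma}^\theta)^\circ$ ; il faudra vérifier qu'elle est connexe dans notre cas, ce qui découlera du calcul ci-dessous. On a donc $A_{G_\sigma}^\theta\subset A_{G_\sigma^\theta}$. Enfin, $A_{G_\sigma}\cap A_{G^\theta}$ est contenu dans $A_{G_\sigma}^\theta$, puisque $A_{G^\theta}\subset G^\theta$.

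Il suffit alors de montrer $A_{G_\sigma^\theta}\subset A_G$, ce qui fermera la chaîne $A_G\subset A_{G_\sigma}\cap A_{G^\theta}\subset A_{G_\sigma}^\theta\subset A_{G_\sigma^\theta}\subset A_G$. C'est le point clé, et on l'obtient par la description explicite de la preuve de la proposition \ref{prop:descendant}. En décomposant $\Prd_A=\prod_i\chi_i^{k_i}$, on a $G_\sigma^\theta\simeq\prod_i\Res_{E_i/F}GL_{t_i,D_i'}$, qui est le centralisateur de $A$ dans $GL_{p,D}$ plongé diagonalement dans $GL_{p,D}\times GL_{p,D}$. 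Son tore central déployé maximal est $\prod_i\mathbb{G}_m$, formé des scalaires de $F^\times$ sur chaque $E_i$-composante. L'hypothèse d'ellipticité de $A$ (centralisateur anisotrope modulo le centre de $GL_{p,D}$) force un seul facteur $i$. Ce tore est alors exactement les scalaires $\mathbb{G}_m$ plongés diagonalement, c'est-à-dire $A_G$.

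L'obstacle principal est de bien interpréter l'ellipticité. Il faut vérifier qu'elle signifie que le tore central déployé de $(GL_{p,D})_A$ est réduit à $A_{GL_{p,D}}$. C'est ce qui force $\chi$ irréductible, car sinon chaque facteur $\chi_i$ fournit un $\mathbb{G}_m$ déployé supplémentaire. Il faut aussi contrôler que l'identification de $G_\sigma^\theta$ au centralisateur diagonal de $A$ envoie bien $A_{GL_{p,D}}$ sur $A_G$. Le même calcul montre que $A_{G_\sigma}$ est le tore déployé maximal du centre $\Res_{\tilde E/F}\mathbb{G}_m$ de $G_\sigma$, sur lequel $\theta$ agit par l'automorphisme galoisien de $\tilde E/E$ avec $E=F$-scalaires fixes. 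Ainsi $A_{G_\sigma}^\theta=\mathbb{G}_m$ est connexe, ce qui justifie l'inclusion laissée en suspens plus haut.
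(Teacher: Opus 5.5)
Your proof is correct and follows essentially the paper's route. You reduce to $\sigma=\begin{pmatrix} A & A-I_p\\ A+I_p & A\end{pmatrix}$ with $A$ elliptic, read $A_{G_\sigma^\theta}=A_G$ off the diagonal embedding of the centralizer of $A$, and obtain $A_{G_\sigma}^\theta$ from the $\tilde E$-structure of $G_\sigma$ in the proof of Proposition \ref{prop:descendant}: when $\tilde E\simeq E\times E$ one has $A_{G_\sigma}\simeq\mathbb{G}_m^2$ with $\theta$ swapping the factors, otherwise $A_{G_\sigma}=\mathbb{G}_m$ with trivial action, and in both cases the fixed part is the connected group of $F$-scalars. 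Closing the chain of inclusions is a slightly tidier packaging than the paper's, which reaches the last equality through the rank-one argument restricting to types 1 and 2 with $r=1$.
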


\begin{preuve}
    Si $p+q=1$, comme $\sigma=\pm1$ et $\Int(\theta)$ est l'automorphisme identique de $G$, l'assertion est triviale. 
    
    Si $p=q$, d'après la discussion ci-dessus, il suffit de considérer  $\sigma=\begin{pmatrix}
  A & A-I_p \\ A+I_p & A
\end{pmatrix}$ avec $A\in \mathfrak{gl}_p(D)$ un élément semi-simple elliptique sans valeurs propres $\pm1$. L'égalité $A_G=A_{G_\sigma^\theta}$ résulte d'un calcul direct. Mais on a également  
    \begin{align*}
        A_G\subseteq A_{G_\sigma}\cap A_{G^\theta}  \text{ et } A_{G^\theta}\subset A_{G_\sigma^\theta}.
    \end{align*}
    On en déduit que 
        \begin{align*}
        A_{G}=A_{G_\sigma}\cap A_{G^\theta}=A_{G_\sigma^\theta}. 
    \end{align*}
    On sait que $(G_\sigma, G_\sigma^\theta)$ est une paire symétrique de type 1, 2 ou 3 dans la proposition \ref{prop:descendant}. De plus, comme $G_\sigma^\theta$ est anisotrope modulo $A_{G_\sigma^\theta}$, l'égalité $A_G=A_{G_\sigma^\theta}$ implique que $G_\sigma^\theta$ est de $F$-rang $1$. Pour les types 1 et 2, on a $A_{G_\sigma^\theta}=A_{G_\sigma}^\theta$ ce qui conclut. Enfin, le type 3 n'apparaît pas. Effectivement, comme $p=q$ et $\sigma\in S^\circ(F)$, \cite[proposition 4.1]{Zha2} implique $r=t$ et en particulier $r+t\neq1$.  
\end{preuve}
  
\begin{corollaire}\label{cor:explicite-G}
    Supposons $p=q$. Soit $\sigma=\begin{pmatrix}
  A & A-I_p \\ A+I_p & A
\end{pmatrix}$ avec $A\in \mathfrak{gl}_p(D)$ un élément semi-simple elliptique sans valeurs propres $\pm1$. 
\begin{enumerate}
    \item Si $\sigma$ n'est pas elliptique dans $G$, alors $(G_\sigma, G_\sigma^\theta)$ est une paire symétrique de type 1 avec $r=1$. 
    
    \item Si $\sigma$ est elliptique dans $G$, alors $(G_\sigma, G_\sigma^\theta)$ est une paire symétrique de type 2 avec $r=1$.
\end{enumerate}
\end{corollaire}

\begin{remarque}\label{rmq:type}
  Un élément $A\in \mathfrak{gl}_p(D)$ comme dans le corollaire \ref{cor:explicite-G} sera dit de type 1 (resp. de type 2) si $(G_\sigma, G_\sigma^\theta)$ est une paire symétrique de type 1 (resp. de type 2). D'après le corollaire \ref{cor:Prdx}, le type de $A$ ne dépend que $\Prd_A$. 
\end{remarque}
  
  Soit $P\in\fc(P_0^\theta)$. Un élément semi-simple $\sigma\in (M_P\cap S)(F)$ est dit $(P,\theta)$-anisotrope, resp.  $(M_P,\theta)$-anisotrope,   si la classe de $M_P^\theta(F)$-conjugaison de $\sigma$ ne rencontre pas $Q(F)$, resp. $M'(F)$, pour tout $Q\in\fc(P_0^\theta)$, $Q\subsetneq P$, resp. tout $M'\in\lc(M_0)$, $M'\subsetneq M_P$.   Grâce à la proposition \ref{prop:elementssr}, on peut montrer que pour $P\in\fc(P_0^\theta)$, un élément semi-simple $\sigma\in(M_P\cap S)(F)$ est $(P,\theta)$-anisotrope si et seulement s'il est $(M_P,\theta)$-anisotrope. Dans ce cas, on sait que $M_{P,\sigma}^\theta$ est anisotrope modulo $A_{M_{P,\sigma}^\theta}$ avec la discussion ci-dessus. 

 \begin{corollaire}\label{cor:A_P_sigma}
     Soit $P\in\fc(P_0^\theta)$ avec $P=MN$ sa décomposition de Levi semi-standard. Soit $\sigma\in (M\cap S)(F)$ un élément semi-simple $(P,\theta)$-anisotrope. On a 
         \begin{align*}
        A_{P}=A_{M_\sigma}\cap A_{M^\theta}=A_{M_\sigma^\theta}=A_{M_\sigma}^\theta. 
    \end{align*}
 \end{corollaire}
 
 Soit $\sigma=x(A, r, t)$ défini en \eqref{eq:xArt} où $m,r,t\geq 0$ sont des entiers tels que $m\leq \min\{p-r,q-t\}$ et $A\in \gl_m(D)$ est un élément  semi-simple sans valeurs propres $\pm1$. Supposons que $A$ est une matrice bloc-diagonale
 \begin{align}\label{eq:bloc-diag-A}
     \diag(\underbrace{A_1,\cdots,A_1}_{s_1},\cdots,\underbrace{A_i,\cdots,A_i}_{s_i},\underbrace{A'_1,\cdots,A'_1}_{k_1},\cdots,\underbrace{A'_j,\cdots,A'_j}_{k_j})
 \end{align}
 où $A_1,\cdots,A_i$ sont de type 1 alors que $A'_1,\cdots,A'_j$ sont de type 2, dont les polynômes caractéristiques réduits sont mutuellement distincts, cf. remarque \ref{rmq:type}. Effectivement, on déduit de \cite[théorème 4]{CFYu} que tous les polynômes $\Prd_{A_\alpha}, 1\leq \alpha\leq i$ et $\Prd_{A'_\beta}, 1\leq \beta\leq j$ sont des puissances de polynômes irréductibles mutuellement distincts sur $F$. Supposons $A_\alpha\in\gl_{m_\alpha}(D)$ et $A'_\beta\in\gl_{m'_\beta}(D)$ pour $1\leq \alpha\leq i$ et $1\leq \beta\leq j$. Soit $D_\alpha$, resp. $D'_\beta$, le centralisateur de $A_\alpha$ dans $\gl_{m_\alpha}(D)$, resp. de $A'_\beta$ dans $\gl_{m'_\beta}(D)$. D'après le corollaire \ref{cor:explicite-G}, chaque $D_\alpha$, resp. $D'_\beta$, est une algèbre à division centrale sur une extension finie $L_\alpha$, resp. $L'_\beta$, de $F$. Ainsi
 \begin{align}\label{eq:Gsigma}
     G_\sigma\simeq\prod_{1\leq \alpha\leq i} (\Res_{L_\alpha/F} GL_{s_\alpha,D_\alpha}\times \Res_{L_\alpha/F} GL_{s_\alpha,D_\alpha})\times\prod_{1\leq \beta\leq j} \Res_{K'_\beta/F} GL_{k_\beta,D'_\beta\otimes_{L'_\beta} K'_\beta} \\ \nonumber
     \times GL_{p+q-2m-r-t,D}\times GL_{r+t,D}
 \end{align}
 où $K'_\beta$ est une extension quadratique de $L'_\beta$ et 
 \begin{align}\label{eq:Gsigmatheta}
     G_\sigma^\theta\simeq\prod_{1\leq \alpha\leq i} \Delta\Res_{L_\alpha/F} GL_{s_\alpha,D_\alpha} \times\prod_{1\leq \beta\leq j} \Res_{L'_\beta/F} GL_{k_\beta,D'_\beta}\\ \nonumber
     \times GL_{p-m-r,D}\times GL_{q-m-t,D}\times GL_{r,D}\times GL_{t,D}.
 \end{align} 
 Cela nous donne une description explicite de la proposition \ref{prop:descendant}.
 
 Pour un tel $\sigma$, si $P_1\in\fc(P_0^\theta)$ et $\sigma\in(M_1\cap S)(F)$ est $(P_1,\theta)$-anisotrope, alors on trouve que 
 \begin{align}\label{eq:M1}
     M_1\simeq\prod_{1\leq\alpha\leq i}(\underbrace{GL_{2m_\alpha,D}\times\cdots\times GL_{2m_\alpha,D}}_{s_\alpha})\times\prod_{1\leq\beta\leq j}(\underbrace{GL_{2m'_\beta,D}\times\cdots\times GL_{2m'_\beta,D}}_{k_\beta}) \\ \nonumber
     \times \underbrace{\GmD\times\cdots\times\GmD}_{p+q-2m}.
 \end{align}
 Il s'ensuit que 
  \begin{align}\label{eq:M1sigma}
     M_{1,\sigma}\simeq\prod_{1\leq\alpha\leq i}(\underbrace{\Res_{L_\alpha/F}\mathbb{G}_{m,D_\alpha}\times\cdots\times \Res_{L_\alpha/F}\mathbb{G}_{m,D_\alpha}}_{2s_\alpha})\\ \nonumber
     \times\prod_{1\leq\beta\leq j}(\underbrace{\Res_{K'_\beta/F} \mathbb{G}_{m,D'_\beta\otimes_{L'_\beta} K'_\beta}\times\cdots\times \Res_{K'_\beta/F} \mathbb{G}_{m,D'_\beta\otimes_{L'_\beta} K'_\beta}}_{k_\beta})  
     \times \underbrace{\GmD\times\cdots\times\GmD}_{p+q-2m}
 \end{align}
 qui est le sous-groupe des matrices diagonales de $G_\sigma$. 
Pour tout sous-groupe $H$ de $G$, on note $\Cent_G(H)$ le centralisateur dans $G$ de $H$. On voit que 
 \begin{align}\label{eq:centAsigthe}
     \Cent_G(A_{G_\sigma}^\theta)\simeq\prod_{1\leq \alpha\leq i} GL_{2s_\alpha m_\alpha,D} \times \prod_{1\leq \beta\leq j} GL_{2k_\beta m'_\beta,D}\times GL_{p+q-2m-r-t,D}\times GL_{r+t,D} 
 \end{align}
qui appartient à $\lc(M_1)$. Il est évident que $G_\sigma\subset \Cent_G(A_{G_\sigma}^\theta)$. 
\end{paragr}

\begin{paragr}[Ensembles de Weyl.] ---
      Pour tous $P_1, P_2\in\fc(P_0^\theta)$, soit $W^\theta(\ago_1,\ago_2)$ l'ensemble, éventuellement vide, des isomorphismes distincts de $\ago_1$ sur $\ago_2$ obtenus par restriction d'un élément de $W^\theta$ à $\ago_1$. On identifie $w\in W^\theta(\ago_1,\ago_2)$ à  l'unique élément de $W^\theta$, encore noté $w$, tel que $w\alpha$ est une racine de $(P_0^\theta, A_0)$ pour toute racine $\alpha\in\Delta_0^{P_1^\theta}$ autrement dit $w^{-1}\beta$ est une racine de $(P_0^\theta, A_0)$ pour toute racine $\beta\in\Delta_0^{P_2^\theta}$ (cf. \cite[lemme 1.3.6]{labWal}). 
\end{paragr}

\begin{paragr}[Slices à la Luna.] \label{S:luna}--- Soit $\sigma\in G$ semi-simple.         Soit $\ggo_\sigma\subset\ggo$ l'algèbre de Lie de $G_\sigma$. Pour tout $x\in G_\sigma$, soit $\Ad$ l'action adjointe  de $G_\sigma$ sur $\ggo/\ggo_\sigma$. Soit
    \begin{align*}
        D^\sigma(x)=\det(\Ad(x)-\Id|_{\ggo/\ggo_\sigma})\in F[G_\sigma]^{G_\sigma}. 
    \end{align*} 
Soit $c^\flat_\sigma : G_\sigma\to\cgo_\sigma=\Spec(F[G_\sigma]^{G_\sigma})$ le quotient catégorique et $\cgo'_\sigma$ l'ouvert de $\cgo_\sigma$ défini par $D^\sigma(x)\neq0$. Soit $G'_\sigma\subset G_\sigma$ l'image réciproque de $\cgo'_\sigma$ par $c^\flat_\sigma$. Alors $G'_\sigma$ est un ouvert de $G_\sigma$ qui contient $\sigma$ et tel que pour tout $x\in G'_\sigma$ on a $G_x\subset G_\sigma$. 

Supposons de plus $\sigma\in S$. Soit $S'_\sigma=S_\sigma\cap G'_\sigma$: c'est un ouvert de $S_\sigma$ qui contient $\sigma$ et tel que pour tout $x\in S'_\sigma$ on a $G^\theta_x\subset G^\theta_\sigma$. Soit $c^\flat_{S_\sigma} : S_\sigma\to\cgo_{S_\sigma}=\Spec(F[S_\sigma]^{G_\sigma^\theta})$ le quotient catégorique. On a le diagramme commutatif suivant : 
\begin{align*}
    \xymatrix{  S_\sigma \ar[r]^{i} \ar[d]^{c^\flat_{S_\sigma}}  & G_\sigma  \ar[d]^{c^\flat_\sigma}    \\
   \cgo_{S_\sigma} \ar[r]^{j} & \cgo_\sigma    }
\end{align*}     
où $i : S_\sigma\to G_\sigma$ est l'inclusion naturelle et $j : \cgo_{S_\sigma}\to\cgo_\sigma$ est dual du morphisme de restriction $F[G_\sigma]^{G_\sigma}\to F[S_\sigma]^{G_\sigma^\theta}$. Soit $\cgo'_{S_\sigma}\subset\cgo_{S_\sigma}$ l'image réciproque de $\cgo'_\sigma$ par $j$. Alors $\cgo'_{S_\sigma}$ est un ouvert de $\cgo_{S_\sigma}$ dont l'image réciproque dans $S_\sigma$ est $S'_\sigma$.

    \begin{lemme}\label{lem:etale}(cf. \cite[lemme 3.2]{RadRal} et \cite[théorème A.2.3]{AizGour})
         Soit $\sigma\in S$ semi-simple. On considère l'action de $G^\theta_\sigma$ sur $G^\theta\times S_\sigma$ donnée par $y\cdot(h,x)=(hy^{-1}, \Int(y)(x))$. L'application 
        \begin{align*}
            \beta: G^\theta\times^{G^\theta_\sigma}S'_\sigma\rightarrow S
        \end{align*}
        donnée par $(h,x)\mapsto\Int(h)(x)$ est étale. 
    \end{lemme}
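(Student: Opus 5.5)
Nous suivrons la stratégie de \cite[lemme 3.2]{RadRal} : réduire l'étalitude de $\beta$ à un calcul d'espaces tangents aux points de $G^\theta\times^{G^\theta_\sigma}S'_\sigma$. Les deux variétés sont lisses. En effet, $S$ est une réunion disjointe d'orbites homogènes $S_{p'}$. De même, $S_\sigma=\{x\in G_\sigma\mid (x\theta)^2=1\}$ est lisse : c'est la variété analogue pour la paire symétrique $(G_\sigma,G_\sigma^\theta)$, dont la proposition \ref{prop:descendant} donne une description explicite. Il suffit donc de montrer que la différentielle de $\beta$ est un isomorphisme en tout point. Par $G^\theta$-équivariance de $\beta$ (action à gauche sur le premier facteur, conjugaison sur $S$), on se ramène aux points de la forme $(1,x)$ avec $x\in S'_\sigma$.

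Calculons les espaces tangents en identifiant, via les translations, $T_xS$ à $\{X\in\ggo\mid \Ad(\theta)X=-\Ad(x^{-1})X\}$, ou plus commodément à $\sgo_x:=(\ggo)^{-\Ad(\theta x)}$ décrit dans la trivialisation par $x$. On obtient alors $T_{(1,x)}(G^\theta\times^{G^\theta_\sigma}S'_\sigma)\simeq (\ggo^\theta/\ggo^\theta_\sigma)\oplus T_xS_\sigma$. La différentielle envoie $(Y,Z)$ sur $(\Ad(x^{-1})-1)Y+Z$, à translation près. Pour la dimension, on utilise la décomposition $\ggo=\ggo_\sigma\oplus(1-\Ad(\sigma))\ggo$, qui est stable par $\Ad(\theta)$ car $\Int(\theta)(\sigma)=\sigma^{-1}$ (lemme \ref{lem:thetacent}). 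Posons $\mathfrak{q}=(1-\Ad(\sigma))\ggo$. On a alors $T_xS=T_xS_\sigma\oplus(\text{partie de }T_xS\text{ dans }\mathfrak{q})$, et $\ggo^\theta/\ggo_\sigma^\theta\simeq\mathfrak{q}^\theta$.

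L'étape clé consiste à montrer que $\Ad(x)-1$ est inversible sur $\mathfrak{q}=\ggo/\ggo_\sigma$ : c'est exactement la condition $D^\sigma(x)\neq0$ définissant $G'_\sigma$, donc $S'_\sigma$. Comme $\Ad(x)-1$ préserve $\mathfrak{q}$ (car $x\in G_\sigma$) et échange les parties $\pm$ de la bonne involution $\Ad(\theta)$ et $\Ad(\theta x)$, il faut vérifier que $(\Ad(x^{-1})-1)$ envoie bijectivement $\mathfrak{q}^\theta$ sur la composante de $T_xS$ dans $\mathfrak{q}$. L'injectivité découle de l'inversibilité sur $\mathfrak{q}$. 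L'égalité des dimensions s'obtient en comparant les dimensions de $\mathfrak{q}^\theta$ et de $\mathfrak{q}\cap T_xS$. Pour cela, on observe que $\Ad(\theta)$ et $\Ad(\theta x)$ sont deux involutions de $\mathfrak{q}$ conjuguées par une racine carrée de $x$ dans $G_\sigma$ au voisinage. Plus simplement, on raisonne à $\sigma$, puis par constance sur la composante : $\dim\mathfrak{q}^{\theta}=\dim\mathfrak{q}^{-\theta\sigma}$ car $1-\Ad(\sigma)$ est un isomorphisme de $\mathfrak{q}$ qui entrelace les deux involutions.

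L'obstacle principal sera cette comparaison de dimensions et la vérification que l'image tombe bien dans $T_xS$. On l'effectuera en passant à $\bar F$, où $x$ se décompose sous forme de Jordan ; on pourra aussi invoquer directement \cite[théorème A.2.3]{AizGour} une fois établies la lissité et l'injectivité de la différentielle. La différentielle étant un isomorphisme entre variétés lisses, $\beta$ est étale.
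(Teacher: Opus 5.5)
Your overall strategy is the paper's. You reduce by $G^\theta$-equivariance to the points $(1,x)$ with $x\in S'_\sigma$, use the differential $(Y,Z)\mapsto(\Ad(x^{-1})-\Id)Y+Z$, and use $D^\sigma(x)\neq 0$ to see that $\iota=\Ad(x^{-1})-\Id$ is invertible on $\mathfrak q\simeq\ggo/\ggo_\sigma$.

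The gap is exactly at the step you yourself flag as the main obstacle. You never prove that $\iota(\mathfrak q^\theta)$ lies in $T_xS$ and fills all of $\mathfrak q\cap T_xS$, and the route you sketch for the dimension equality does not work as stated. Checking the equality at $\sigma$ and then arguing \emph{par constance sur la composante} only reaches the connected component of $\sigma$ in $S'_\sigma$, and $S'_\sigma$ need not be connected. For $G=GL_3$, $\theta=\diag(1,1,-1)$ and $\sigma=\theta$, one has $G_\sigma=GL_2\times GL_1$, $S_\sigma=\{(a,b)\mid a^2=1,\ b^2=1\}$ and $S'_\sigma=\{\theta,-\theta\}$: two points lying in different components of $S_\sigma$. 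Deferring to a Jordan-form computation over $\bar F$, or to \cite{AizGour}, leaves the step unproved as written.

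The missing ingredient is an identity that holds at every $x\in S$, not only at $\sigma$. From $x\theta x\theta=1$ you get $x^{-1}\theta x^{-1}=\theta$, hence
\[
\iota\circ\Ad(\theta)=-\Ad(x^{-1}\theta)\circ\iota .
\]
In the left trivialization that matches your formula for the differential, $T_xS=\{Z\in\ggo\mid \Ad(x^{-1}\theta)Z=-Z\}$. This is your second description $\ggo^{-\Ad(\theta x)}$, since $\theta x=x^{-1}\theta$; your first one, $\Ad(\theta)X=-\Ad(x^{-1})X$, is the right-trivialized version.

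The identity gives $\iota(\ggo^\theta)\subset T_xS$ at once. Moreover, $\Ad(\theta)$ and $\Ad(x^{-1}\theta)$ both preserve $\mathfrak q$, and $\mathfrak q$ is the direct sum of the $(\pm1)$-eigenspaces of each of them. Since $\iota$ is a bijection of $\mathfrak q$ sending the $(\pm1)$-eigenspace of $\Ad(\theta)$ into the $(\mp1)$-eigenspace of $\Ad(x^{-1}\theta)$, both inclusions are equalities. So $\iota$ restricts to an isomorphism $\mathfrak q^\theta\to\mathfrak q\cap T_xS$, with no dimension count and no deformation to $\sigma$.

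This is exactly the paper's argument: $\iota$ induces $\ggo^\theta/\ggo^\theta_\sigma\simeq T_xS/T_xS_\sigma$. Combined with your splitting $T_xS=T_xS_\sigma\oplus(\mathfrak q\cap T_xS)$ and the injectivity you already have, it completes the proof.
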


    \begin{preuve}
        Le morphisme est clairement $G^\theta$-équivariant. Il suffit donc de vérifier l'action au point $(1,x)\in G^\theta\times S'_\sigma$. La différentielle de $G^\theta\times S'_\sigma\rightarrow S$ en ce point est donnée par 
        \begin{align*}
            (Y,X)\mapsto\Ad(x^{-1})(Y)-Y+X.
        \end{align*}
        Ici $Y\in\ggo^\theta$ et $X\in T_x S_\sigma$ i.e. $\Ad(x^{-1}\theta) (X) + X = 0$ et $\Ad(\sigma) (X) = X$. On a
        \begin{align*}
            \Ad(x^{-1} \theta) (\Ad(x^{-1}) (Y) - Y + X) + \Ad(x^{-1}) (Y) - Y + X 
            \\
            = Y - \Ad(x^{-1}) (Y) - X + \Ad(x^{-1}) (Y) - Y + X = 0.
        \end{align*}
        Donc l’application est bien à image dans $T_x S$.

        Par définition de $S'_\sigma$, l’application $\iota : Y \mapsto \Ad(x^{-1}) (Y) - Y$ induit un automorphisme de $\ggo/\ggo_\sigma$. Notons que $\Ad(x^{-1} \theta)$ induit une involution de ce quotient. On a de plus $\iota \circ \Ad(\theta) = -\Ad(x^{-1} \theta) \circ \iota$. En particulier, on voit que $\iota$ induit un isomorphisme $\ggo^\theta/ \ggo^\theta_\sigma \rightarrow T_x S / T_x S_\sigma$. De là il est facile de conclure. 
    \end{preuve}
\end{paragr}

\begin{paragr}[Variété unipotente.] ---
  Soit $H$ un groupe réductif connexe défini sur $F$. On note $\uc_H$ la clôture de Zariski dans $H$ de l'ensemble des éléments unipotents dans $H(F)$.  C'est une sous-variété algébrique fermée de $H$ et définie sur $F$. Notons également $\nc_\hgo$ le cône nilpotent de $\hgo$ pour l'action adjointe de $H$, cf. \cite[exemple 2.3.13]{AizGour}. 
  
  \begin{lemme}\label{lem:var-unip}
  Soit $0\leq p'\leq N$ et $\sigma\in S_{p'}$ un élément semi-simple. On a 
   \begin{align*}
    \uc_{G_\sigma}\cap\sigma^{-1}S_{p'}=\uc_{G_\sigma}\cap\sigma^{-1}S=\uc_{G_\sigma}\cap S^\circ=\uc_{G_\sigma}\cap S. 
  \end{align*} 
\end{lemme}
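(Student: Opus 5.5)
Le plan est d'établir la chaîne d'inclusions évidentes puis de fermer le cercle par un argument sur les éléments unipotents commutant avec $\sigma$. On a trivialement $\uc_{G_\sigma}\cap\sigma^{-1}S_{p'}\subset \uc_{G_\sigma}\cap\sigma^{-1}S$ et $\uc_{G_\sigma}\cap S^\circ\subset \uc_{G_\sigma}\cap S$. Il suffit donc de prouver les deux inclusions
\begin{align*}
\uc_{G_\sigma}\cap S\subset \uc_{G_\sigma}\cap S^\circ\cap \sigma^{-1}S_{p'} \quad\text{et}\quad \uc_{G_\sigma}\cap\sigma^{-1}S\subset \uc_{G_\sigma}\cap S.
\end{align*}
Tout se ramène à un calcul point par point sur des points géométriques unipotents $u\in G_\sigma$ (les deux membres étant des sous-variétés fermées, on peut raisonner sur $\bar F$), puisque $\uc_{G_\sigma}$ est exactement l'ensemble des unipotents de $G_\sigma$.

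\emph{Étape 1 : $\sigma u\in S$ équivaut à $u\in S$ pour $u\in G_\sigma$ unipotent.} On a $\sigma\in S$, donc $\Int(\theta)(\sigma)=\sigma^{-1}$, et par le lemme \ref{lem:thetacent} le groupe $G_\sigma$ est $\Int(\theta)$-stable, donc $\Int(\theta)(u)$ commute avec $\sigma$. Alors $\sigma u\,\Int(\theta)(\sigma u)=\sigma u\sigma^{-1}\Int(\theta)(u)=u\,\Int(\theta)(u)$, puisque $u$ commute avec $\sigma$. Ainsi $\sigma u\in S$ si et seulement si $u\Int(\theta)(u)=1$, c'est-à-dire $u\in S$. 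Ceci donne l'égalité $\uc_{G_\sigma}\cap\sigma^{-1}S=\uc_{G_\sigma}\cap S$.

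\emph{Étape 2 : un unipotent $u\in S$ est dans $S^\circ$.} C'est l'argument exponentiel du lemme \ref{lem:JRlem4.1} : on écrit $u=\exp(X)$ avec $X$ nilpotent, la condition $u\in S$ donne $\Ad(\theta)X=-X$ par unicité du logarithme, puis $u=\rho(\exp(X/2))$, d'où $u\in S^\circ$ (ou bien on relie $u$ à $1$ par $t\mapsto\exp(tX)\in S$, ce qui le met dans la composante connexe de l'identité).

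\emph{Étape 3 : $\sigma u\in S_{p'}$.} Par l'étape 1, $\sigma u\in S$ ; comme $\sigma$ et $u$ commutent, $\sigma u=\sigma\cdot u$ est la décomposition de Jordan de $\sigma u$, de partie semi-simple $\sigma\in S_{p'}$. Le lemme \ref{lem:JRlem4.1} (assertion 2) donne alors $\sigma u\in S_{p'}$, ce qui achève le cercle d'inclusions. Le seul point un peu délicat est que le lemme \ref{lem:JRlem4.1} est énoncé pour des $F$-points ; il faudra vérifier que sa preuve vaut pour des points géométriques (elle est purement algébrique, via l'exponentielle et la caractérisation \eqref{eq:Sp' noyau} par la dimension du noyau), ou bien invoquer directement que $\dim\ker(\sigma u\theta-\Id)$ ne dépend que de la partie semi-simple, car $\Prd_{\sigma u}=\Prd_\sigma$ à travers le corollaire \ref{cor:Prdx}. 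C'est l'étape que j'attends comme principal obstacle technique, bien qu'elle soit essentiellement formelle.
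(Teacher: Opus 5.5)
Votre preuve est correcte et utilise les mêmes ingrédients que celle de l'article (lemme \ref{lem:JRlem4.1} et l'astuce de la demi-exponentielle) ; seule la façon de fermer la boucle diffère. L'article tire du lemme \ref{lem:JRlem4.1} la chaîne $\uc_{G_\sigma}\cap\sigma^{-1}S_{p'}=\uc_{G_\sigma}\cap\sigma^{-1}S\subset\uc_{G_\sigma}\cap S^\circ=\uc_{G_\sigma}\cap S$. Il obtient ensuite l'inclusion restante $\uc_{G_\sigma}\cap S^\circ\subset\sigma^{-1}S_{p'}$ par un calcul explicite : si $u=\exp(X)$ avec $X\in\sgo_\sigma$ nilpotent et $\sigma=\rho_{p'}(g)$, alors $\rho_{p'}(\exp(X/2)g)=\sigma u$, ce qui exhibe directement $\sigma u$ dans l'orbite $S_{p'}$.

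Vous obtenez la même inclusion autrement. Vous utilisez l'identité $\sigma u\,\Int(\theta)(\sigma u)=u\,\Int(\theta)(u)$, valable pour tout $u\in G_\sigma$ (le lemme \ref{lem:thetacent} n'y sert d'ailleurs pas), puis l'assertion 2 du lemme \ref{lem:JRlem4.1}. La preuve de ce lemme est purement algébrique et vaut telle quelle dans le cadre déployé sur $\bar F$ : $G$ y devient $GL_{dN}$, et $S_{p'}$ l'ensemble des $g$ tels que $g\theta$ soit une involution dont l'espace propre pour $1$ est de dimension $dp'$. Le point qui vous inquiétait n'est donc pas un obstacle. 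Le calcul de l'article a l'avantage de ne reposer que sur la description de $S_{p'}$ comme image de $\rho_{p'}$ ; le vôtre donne en une ligne l'équivalence $\sigma u\in S\Leftrightarrow u\in S$.

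Abandonnez en revanche la variante fondée sur $\Prd_{\sigma u}=\Prd_\sigma$ : le polynôme caractéristique réduit ne détermine pas la composante. Par exemple, pour $N=2$, $D=F$ et $\theta=\diag(1,-1)$, les éléments $\diag(1,-1)\in S_2$ et $\diag(-1,1)\in S_0$ ont tous deux pour polynôme caractéristique $t^2-1$. De plus, le corollaire \ref{cor:Prdx} fait intervenir $x^+$, et l'égalité $\Prd_{(\sigma u)^+}=\Prd_{\sigma^+}$ n'est connue que par le corollaire \ref{cor:JRlem4.1}, lui-même déduit du lemme \ref{lem:JRlem4.1}.
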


\begin{preuve}
	Par le lemme \ref{lem:JRlem4.1}, on voit que  
  \begin{align*}
    \uc_{G_\sigma}\cap\sigma^{-1}S_{p'}=\uc_{G_\sigma}\cap\sigma^{-1}S\subset\uc_{G_\sigma}\cap S^\circ=\uc_{G_\sigma}\cap S. 
  \end{align*} 
  Soit $u\in \uc_{G_\sigma}\cap S^\circ$. Il existe un élément nilpotent $X\in\sgo$ tel que $u=\exp(X)$ et que $\Ad(\sigma)(X)=X$. Soit $g\in G$ un élément tel que $\sigma=\rho_{p'}(g)=g\theta_{p'}g^{-1}\theta$. On pose $v=\exp(X/2)\in G_\sigma$. Alors 
   \begin{align*}
    \rho_{p'}(vg)=vg\theta_{p'}(vg)^{-1}\theta=v(g\theta_{p'}g^{-1}\theta)\theta v^{-1}\theta=v\sigma \theta v^{-1}\theta=\sigma v\theta v^{-1} \theta=\sigma v^2=\sigma u. 
  \end{align*}   
Donc $\sigma u\in S_{p'}$ ce qui conclut. 
\end{preuve}

\begin{lemme}\label{lem:ssr-unip}
  Soit $0\leq p'\leq N$ et $\sigma\in S_{p'}$ un élément $G^\theta$-semi-simple et $G^\theta$-régulier. On a 
   \begin{align*}
    \uc_{G_\sigma}\cap\sigma^{-1}S_{p'}=\uc_{G_\sigma}\cap\sigma^{-1}S=\uc_{G_\sigma}\cap S^\circ=\uc_{G_\sigma}\cap S=1. 
  \end{align*} 
\end{lemme}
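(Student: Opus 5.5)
By lemme \ref{lem:var-unip}, the four sets in the statement coincide. It therefore suffices to show that $\uc_{G_\sigma}\cap S=1$, i.e.\ that any unipotent $u\in G_\sigma$ with $(u\theta)^2=1$ is trivial.

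First I reduce to an explicit normal form. By proposition \ref{prop:ssr-Prd}, after conjugating by $G^\theta(F)$ (which preserves $S$ and conjugates $G_\sigma$ and $\uc_{G_\sigma}$ compatibly), I may assume $\sigma=x(A)$ as in \eqref{eq:x(A)}, with $A\in\gl_\nu(D)$ semi-simple regular and without eigenvalues $\pm1$. The statement concerns geometric points, so I work over $\bar F$; the conjugation is defined over $F$, so this is harmless.

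Next I compute $G_\sigma$, repeating the argument already used in the proof of proposition \ref{prop:ssr-Prd} (implication 2$\Rightarrow$3). By lemme \ref{lem:JRlem4.3}, the block $\begin{pmatrix} A & A-I_\nu\\ A+I_\nu & A\end{pmatrix}$ is semi-simple regular without eigenvalues $\pm1$. The remaining diagonal blocks of $x(A)$ are $\pm I$, with sizes $\max\{p'-q,0\}$, $\max\{p-p',0\}$, $\max\{q-p',0\}$ and $\max\{p'-p,0\}$. Hence the centralizer of $\sigma$ is the product of a maximal torus of the $2\nu$-block with $GL$'s of the $+1$ and $-1$ eigenspaces of the complement. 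A unipotent $u\in G_\sigma$ is therefore trivial on the $2\nu$-block, since a maximal torus contains no nontrivial unipotents. By remarque \ref{rmq:nul}, the $+1$ eigenspace of $\sigma$ outside that block lies entirely in $V^+$ or entirely in $V^-$, and likewise for the $-1$ eigenspace. So $u$ has exactly the block form displayed in that proof, with off-diagonal blocks $B_i,C_i$ absent, and it commutes with $\theta$.

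Then $u\in S$ gives $1=(u\theta)^2=u^2$. Since $u$ is unipotent, $u+1$ is invertible, so $u^2=1$ forces $u=1$. This gives $\uc_{G_\sigma}\cap S=\{1\}$, and the other equalities follow from lemme \ref{lem:var-unip}.

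The only delicate step is the assertion that the unipotent elements of $G_\sigma$ are exactly the unipotent elements of the $GL$ factors on the $\pm1$ eigenspaces. This requires the centralizer of the regular semi-simple $2\nu$-block to be a torus, which is the content of its regularity, obtained via corollaire \ref{cor:Prdx} as in the proof of proposition \ref{prop:ssr-Prd}.
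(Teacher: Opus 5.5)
Your proof is correct and is essentially the paper's: the paper takes $u\in\uc_{G_\sigma}\cap\sigma^{-1}S_{p'}$ and uses lemme \ref{lem:c-ss} to see that $x=\sigma u$ has the same regular invariant as $\sigma$. It then appeals to the computation in the proof of proposition \ref{prop:ssr-Prd} (implication 2$\Rightarrow$3), which kills the unipotent part of such an $x$, and concludes with lemme \ref{lem:var-unip}. You run that same computation (torus centralizer on the $2\nu$-block, remarque \ref{rmq:nul} forcing $u$ to commute with $\theta$, hence $u^2=1$) directly on $u\in\uc_{G_\sigma}\cap S$; this bypasses the passage through $x=\sigma u$ and lemme \ref{lem:c-ss} but is otherwise the same argument.
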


\begin{preuve}
Soit $u\in\uc_{G_\sigma}\cap\sigma^{-1}S_{p'}$. Notons que $x:=\sigma u=u\sigma$ est la décomposition de Jordan de $x\in S_{p'}$ dans $G$. Il résulte du lemme \ref{lem:c-ss} et de la proposition \ref{prop:ssr-Prd} que $x$ est $G^\theta$-semi-simple et $G^\theta$-régulier. La preuve de la proposition \ref{prop:ssr-Prd} entraîne également que sa partie unipotente $u=1$. On conclut avec le lemme \ref{lem:var-unip}. 
\end{preuve}
\end{paragr}

\begin{paragr}[Exponentielle.]\label{S:exp} --- Soit $0\leq p'\leq N$ et $\sigma\in S_{p'}(F)$ un élément semi-simple. Pour tout sous-espace $\vgo\subset\ggo$, on note $\vgo_\sigma$ le centralisateur dans $\vgo$ de $\sigma$. L’exponentielle $\exp$ réalise un isomorphisme du cône nilpotent $\nc=\nc_\ggo$ sur la variété unipotente $\uc=\uc_G$. Par restriction, on obtient en particulier des isomorphismes de $\nc_\sgo=\nc\cap\sgo$ sur $\uc_S=\uc\cap S=\uc\cap S^\circ$ et de $\nc_{\ggo_\sigma}\cap\sgo=\nc \cap \sgo_\sigma$ sur $\uc_{G_\sigma}\cap S=\uc_{G_\sigma}\cap S^\circ$, cf. lemme \ref{lem:var-unip}. D'après les lemmes \ref{lem:etale} et \ref{lem:c-ss}, on a un isomorphisme 
\begin{align*}
    G^\theta\times^{G^\theta_\sigma} (\nc \cap \sgo_\sigma) \to S_{p', c_{p'}^\flat(\sigma)}
\end{align*}
donné par $ (h, X) \mapsto h\sigma \exp(X)h^{-1} $ où $c^\flat _{p'} : S_{p'}\to\cgo_{p'}\simeq\mathbf{A}^{d\nu}$ est le morphisme défini dans \S \ref{S:invariants}. 

Soit $V$ un ensemble fini de places de $F$ contenant les places archimédiennes assez grand. On peut supposer que si $ X \in (\nc \cap \sgo_\sigma)(\AAA), h \in G^\theta(\AAA) $ sont tels que $ h\sigma \exp(X)h^{-1} \in S_{p'}(\oc_v) $ pour $ v \notin V $ alors quitte à changer $ X $ et $ h $ par le même élément de $ G^\theta_\sigma(\AAA) $ on peut supposer que $ h_v \in G^\theta(\oc_v) $ et $ X_v \in \sgo_\sigma(\oc_v) $ pour $ v \notin V $. Bien sûr $ V $ dépend de $ \sigma $.

On définit un ouvert $ \omega_V \subset \ggo(F_V) $ contenant $0$ par la condition suivante : $ (X_v)_{v \in V} \in \omega_V $ si et seulement si les coefficients (non dominants) du polynôme caractéristique de $ X_v $ sont de valeur absolue $ < \eps_v $ pour tout $ v \in V $. Pour $ \eps_v $ assez petit, l’exponentielle induit un isomorphisme analytique de $ \omega_V $ sur un ouvert $ \Omega_V $ de $ G(F_V) $. 
Il est facile de montrer que l'application $X\mapsto \sigma\exp(X)$ induit un isomorphisme analytique 
\begin{align*}
    \sgo_\sigma(F_V)\cap\omega_V\to  S_\sigma(F_V)\cap (\sigma\Omega_V). 
\end{align*}
Soit $\omega'_V$ l'image réciproque de $ S'_\sigma(F_V)\cap (\sigma\Omega_V)$ par cet isomorphisme. Alors $\omega'_V$ est un ouvert de $\sgo_\sigma(F_V)\cap\omega_V$. Il résulte du lemme \ref{lem:etale} que l’application 
\begin{align*}
    G^\theta(F_V) \times \omega_V' \to S(F_V)
\end{align*}
donnée par $ (h, X) \mapsto h\sigma \exp(X)h^{-1} $ est une submersion analytique sur un ouvert $ \Omega_V' \subset S(F_V) $.

On considère $ \varphi = \varphi_V \otimes \mathbf{1}_{S_{p'}(\oc^V)} $ avec $ \varphi_V \in C^\infty_c(S_{p'}(F_V)) $. On suppose que $ \supp(\varphi_V) \subset \Omega_V' $. Il existe une fonction $ f_V \in C^\infty_c(G^\theta(F_V) \times \omega_V') $ telle que on ait
\begin{align*}
    \int_{G^\theta_\sigma(F_V)} f_V(h^{-1}y^{-1}, \Ad(y)(X)) \, dy = \varphi_V(\Int(h^{-1})(\sigma \exp(X))) 
\end{align*} 
pour tout $ h \in G^\theta(F_V) $ et tout $ X \in \omega_V' $. On pose $ f^V = \mathbf{1}_{G^\theta(\oc^V)} \otimes \mathbf{1}_{\sgo_\sigma(\oc^V)} $ et $f=f_V\otimes f^V\in C_c^\infty((G^\theta\times\sgo_\sigma)(\AAA))$. On a alors
\begin{align*}
    \int_{G^\theta_\sigma(\AAA)} f(h^{-1}y^{-1}, \Ad(y)(X)) \, dy = \varphi(\Int(h^{-1})(\sigma \exp(X))) 
\end{align*} 
 pour tout $ h \in G^\theta(\AAA) $ et tout $ X \in (\nc \cap \sgo_\sigma)(\AAA) $. 
    
\end{paragr}

\begin{paragr}[Une formule préliminaire.] --- \label{S:prelim}
  Soit $0\leq p'\leq N$ et $\of\in\cgo_{p'}(F)$. Soit $P_1\in\fc(P_0^\theta)$ et  $\sigma\in (M_{P_1}\cap S_{p',\of})(F)$  un élément semi-simple  $(P_1, \theta)$-anisotrope. D'après le corollaire \ref{cor:A_P_sigma}, on a 
   \begin{align}\label{eq:A_1}
    A_{M_1}=A_{M_{1,\sigma}}\cap A_{M_1^\theta}=A_{M_{1,\sigma}^\theta}=A_{M_{1,\sigma}}^\theta. 
  \end{align} 
  Puisque $M_{1,\sigma}^\theta$ est anisotrope modulo $A_{M_{1,\sigma}^\theta}$ (voir \S \ref{S:anisotrope}), le couple $(P_{1,\sigma}^\theta,M_{1,\sigma}^\theta)$ est formé d'un sous-groupe parabolique minimal de $G_\sigma^\theta$ ainsi que d'un facteur de Levi. Il résulte de la proposition \ref{prop:descendant} que le centralisateur dans $G_\sigma$ d'un tore déployé maximal de $G_\sigma^\theta$ est un facteur de Levi d'un sous-groupe parabolique minimal de $G_\sigma$. Alors \eqref{eq:A_1} implique que le couple $(P_{1,\sigma},M_{1,\sigma})$ est formé d'un sous-groupe parabolique minimal de  $G_\sigma$ ainsi que d'un facteur de Levi. 
  
  Un sous-groupe parabolique de $G_\sigma$ sera dit relativement standard s'il contient  $P_{1,\sigma}^\theta$. Soit $\fc^{G_\sigma}(P_{1,\sigma}^\theta)$ l'ensemble de sous-groupes paraboliques de $G_\sigma$ qui sont relativement standard.

\begin{proposition}
    On a 
    \begin{align*}
    \fc^{G_\sigma}(P_{1,\sigma}^\theta)\subset\fc^{G_\sigma}(M_{1,\sigma}). 
    \end{align*}
\end{proposition}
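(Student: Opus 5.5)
Il s'agit de montrer que tout sous-groupe parabolique $R'$ de $G_\sigma$ contenant $P_{1,\sigma}^\theta$ contient aussi $M_{1,\sigma}$. L'idée est de passer par le tore déployé $A_{M_1}$ : d'après \eqref{eq:A_1}, on a $A_{M_1}=A_{M_{1,\sigma}^\theta}$, et $M_{1,\sigma}^\theta$ est un facteur de Levi du sous-groupe parabolique minimal $P_{1,\sigma}^\theta$ de $G_\sigma^\theta$ (cf. ci-dessus). Ainsi $A_{M_1}$ est un tore déployé maximal de $G_\sigma^\theta$ contenu dans $P_{1,\sigma}^\theta$. Par ailleurs, on a vu que $M_{1,\sigma}=\Cent_{G_\sigma}(A_{M_1})$ : c'est le centralisateur dans $G_\sigma$ d'un tore déployé maximal de $G_\sigma^\theta$, et la discussion qui précède montre que c'est un facteur de Levi minimal de $G_\sigma$.

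On procède ensuite en trois étapes. \emph{Première étape :} soit $R'\in\fc^{G_\sigma}(P_{1,\sigma}^\theta)$. Comme $A_{M_1}\subset M_{1,\sigma}^\theta\subset P_{1,\sigma}^\theta\subset R'$, le tore déployé $A_{M_1}$ est inclus dans $R'$. Tout tore déployé d'un groupe parabolique est contenu dans un tore déployé maximal de celui-ci, lequel est à son tour contenu dans un facteur de Levi. Il existe donc un facteur de Levi $L'$ de $R'$ qui contient $A_{M_1}$. \emph{Deuxième étape :} on montre que $A_{M_1}$ contient le tore central déployé $A_{L'}$, à conjugaison près par $R'(F)$. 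Pour cela, on utilise que $A_{M_1}$ est un tore déployé maximal de $G_\sigma$. En effet, $M_{1,\sigma}=\Cent_{G_\sigma}(A_{M_1})$ est un Levi minimal, et par \eqref{eq:A_1} on a $A_{M_{1,\sigma}}\supset A_{M_1}$ ; l'égalité $A_{M_{1,\sigma}}=A_{M_1}$ se lit sur les descriptions explicites \eqref{eq:M1sigma} et \eqref{eq:centAsigthe}. \emph{Troisième étape :} un tore déployé maximal $A$ de $G_\sigma$ contenu dans $R'$ est contenu dans un facteur de Levi $L'$ de $R'$, et l'on a alors $A_{L'}\subset A$. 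On en déduit $\Cent_{G_\sigma}(A)\subset \Cent_{G_\sigma}(A_{L'})=L'\subset R'$, d'où $M_{1,\sigma}\subset R'$, ce qui conclut.

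Le point délicat est de vérifier que $A_{M_1}$ est bien un tore déployé \emph{maximal} de $G_\sigma$, et non seulement de $G_\sigma^\theta$. Il faut pour cela que le rang déployé de $G_\sigma$ égale celui de $G_\sigma^\theta$. Pour les triplets de type 1 et 2 de la proposition \ref{prop:descendant}, on le vérifie directement. Pour le type 1, le tore diagonal $\Delta$ de $GL_{r,D'}$ n'est pas maximal dans $GL\times GL$, donc il faut justifier $A_{M_{1,\sigma}}=A_{M_1}$ autrement. Je prendrais plutôt l'argument suivant, qui ne dépend pas de la maximalité. On a $M_{1,\sigma}=\Cent_{G_\sigma}(A_{M_1})$ d'après \eqref{eq:centAsigthe}, et $R'$ contient un tore déployé maximal $A'$ de $G_\sigma$ contenant $A_{M_1}$ ; il contient en outre un Levi $L'\supset A'$. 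Les racines de $A_{M_1}$ dans $\mathfrak{r}'$ forment une partie « parabolique », car $R'\supset P_{1,\sigma}^\theta$ et $R'$ est $\theta$-stable, si l'on suppose la stabilité. Alors $R'$ contient le Levi $\Cent_{G_\sigma}(A_{M_1})$ comme sous-groupe, puisqu'un parabolique contenant un tore déployé $S$ contient $\Cent(S)$ dès que $S\subset R'$ : en effet $\Cent(S)$ contient un tore déployé maximal de $R'$, dont le centralisateur est dans $R'$, et l'on raisonne par conjugaison sous $R'$. C'est cette dernière étape, « $S\subset R'$ implique $\Cent(S)\subset R'$ », que j'établirais soigneusement via la décomposition de Bruhat/Levi.
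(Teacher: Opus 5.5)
\textbf{There is a genuine gap.} Your second step, and with it your third step, rests on the claim that $A_{M_1}$ is a maximal split torus of $G_\sigma$, i.e. $A_{M_{1,\sigma}}=A_{M_1}$. This is false whenever $\sigma$ has a block of type 1.

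Indeed, \eqref{eq:A_1} only gives $A_{M_1}=A_{M_{1,\sigma}}^\theta$. Compare \eqref{eq:M1} with \eqref{eq:M1sigma}:
\begin{itemize}
\item the $s_\alpha$ copies of $GL_{2m_\alpha,D}$ in $M_1$ contribute $s_\alpha$ to the split rank of $A_{M_1}$;
\item the $2s_\alpha$ factors $\Res_{L_\alpha/F}\mathbb{G}_{m,D_\alpha}$ in $M_{1,\sigma}$ contribute $2s_\alpha$ to the split rank of $A_{M_{1,\sigma}}$.
\end{itemize}
For the same reason, the inclusion $A_{L'}\subset A_{M_1}$ (even up to conjugacy) fails, for instance for $R'=P_{1,\sigma}$.

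Your fallback does not repair this, for two reasons.
\begin{itemize}
\item The principle that a parabolic containing a split torus $S$ contains $\Cent(S)$ is false for non-maximal $S$. Take $S=A_{G_\sigma}$: it lies in every parabolic, while $\Cent_{G_\sigma}(S)=G_\sigma$.
\item You cannot assume $R'$ is $\theta$-stable. The set $\fc^{G_\sigma}(P_{1,\sigma}^\theta)$ contains all parabolic subgroups of $G_\sigma$ containing $P_{1,\sigma}^\theta$, not only the $\theta$-stable ones.
\end{itemize}

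\textbf{The missing idea} is to use the minimality of $M_{1,\sigma}=\Cent_{G_\sigma}(A_{M_1})$. You state this at the outset but never exploit it. The argument runs as follows.
\begin{itemize}
\item Take a maximal split torus of $R'$ containing $A_{M_1}$. It is a maximal split torus $T$ of $G_\sigma$ with $A_{M_1}\subset T\subset R'$.
\item Your third step is correct for a maximal split torus, so it gives $\Cent_{G_\sigma}(T)\subset R'$.
\item Since $A_{M_1}\subset T$, you also get $\Cent_{G_\sigma}(T)\subset \Cent_{G_\sigma}(A_{M_1})=M_{1,\sigma}$.
\item Both $\Cent_{G_\sigma}(T)$ and $M_{1,\sigma}$ are minimal Levi subgroups of $G_\sigma$. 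They are therefore conjugate and have the same dimension, so the inclusion is an equality.
\end{itemize}
Hence $M_{1,\sigma}=\Cent_{G_\sigma}(T)\subset R'$. This is exactly the paper's proof.
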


\begin{preuve}
    Considérons  $R\in\fc^{G_\sigma}(P_{1,\sigma}^\theta)$. On a  $A_{M_{1,\sigma}^\theta}\subset R$. Par la discussion ci-dessus, le centralisateur dans $G_\sigma$ de $A_{M_{1,\sigma}^\theta}$ est un facteur de Levi minimal $M_{1,\sigma}$. Soit $T$ un tore déployé maximal de $G_\sigma$ tel que $A_{M_{1,\sigma}^\theta}\subset T\subset R$. Alors $R$ contient le centralisateur dans $G_\sigma$ de $T$. Ce centralisateur est un facteur de Levi minimal contenu dans $M_{1,\sigma}$, qui doit être $M_{1,\sigma}$. 
\end{preuve}

    Soit $P\in\fc(P_0^\theta)$ et $\gamma\in (M_P\cap S_{p',\of})(F)$. La partie semi-simple $\gamma_s$ de $\gamma$ est conjuguée sous $M_P^\theta(F)$ à un élément $(P_2,\theta)$-anisotrope de $(M_{P_2}\cap S)(F)$ où $P_2\in\fc^P(P_0^\theta)$. À l'aide de la proposition \ref{prop:elementssr} et du lemme \ref{lem:var-unip}, on peut alors écrire 
   \begin{align}\label{eq:dec-mu-w}
    \gamma=\mu^{-1} w \sigma u w^{-1}\mu 
  \end{align}   
  où  
   \begin{align*}
    \mu\in M_P^\theta(F), w\in W^\theta(\ago_1, \ago_2),  \text{ et } u\in (\uc_{G_\sigma}\cap M_{P_w}\cap S^\circ)(F). 
  \end{align*} 
Si $T$ est un tore d'un groupe réductif connexe $H$, on note $W(H,T)$ le groupe de Weyl associé. 

\begin{proposition}\label{prop:unique-gd}
    L'élément de Weyl $w$ est uniquement déterminé modulo multiplication à gauche par $W(M_P^\theta, A_2)$ et multiplication à droite par $W(G_\sigma^\theta, A_1)$.
\end{proposition}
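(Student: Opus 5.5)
Je supposerai donnés deux écritures
\begin{align*}
\gamma=\mu^{-1} w \sigma u w^{-1}\mu=\mu'^{-1} w' \sigma' u' w'^{-1}\mu'
\end{align*}
du type \eqref{eq:dec-mu-w}, avec $\sigma\in (M_{P_1}\cap S)(F)$ et $\sigma'\in (M_{P_1'}\cap S)(F)$ des éléments $(P_1,\theta)$-, resp. $(P_1',\theta)$-anisotropes, $w\in W^\theta(\ago_1,\ago_2)$ et $w'\in W^\theta(\ago_1',\ago_2')$. Dans l'énoncé $P_1$, $\sigma$ et $P_2$ sont sous-entendus fixés ; on se ramène donc au cas $P_1=P_1'$, $\sigma=\sigma'$, $P_2=P_2'$. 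La première étape est de passer aux parties semi-simples. Par unicité de la décomposition de Jordan, puisque $u\in G_\sigma$ est unipotent, on a $\gamma_s=\Int(\mu^{-1}w)(\sigma)=\Int(\mu'^{-1}w')(\sigma)$. Posons $g=w'^{-1}\mu'\mu^{-1}w\in G^\theta(F)$. On a alors $g\in G^\theta_\sigma(F)$, et la question se réduit à comprendre les $g$ de la forme $w'^{-1}m w$ avec $m\in M_P^\theta(F)$ qui centralisent $\sigma$.

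La deuxième étape exploite les tores. D'après \eqref{eq:A_1}, on a $A_1=A_{M_{1,\sigma}^\theta}$, et c'est un tore déployé maximal de $G_\sigma^\theta$ puisque $(P_{1,\sigma}^\theta,M_{1,\sigma}^\theta)$ est minimal. De plus $wA_1w^{-1}=A_2$ et $w'A_1w'^{-1}=A_2$, et $A_2$ est le tore central déployé de $M_{P_2}$, qu'on peut prendre inclus dans $M_P^\theta$. L'élément $m=w' g w^{-1}\in M_P^\theta(F)$ envoie donc $wA_1w^{-1}=A_2$ sur $w'A_1w'^{-1}=A_2$, à conjugaison près dans $M_P^\theta$. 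Concrètement, $g A_1 g^{-1}$ est un tore déployé maximal de $G_\sigma^\theta$ ; par conjugaison des tores déployés maximaux dans $G_\sigma^\theta(F)$, on peut écrire $g=g_1 n$, avec $n$ dans le normalisateur de $A_1$ dans $G_\sigma^\theta(F)$ et $g_1\in G_\sigma^\theta(F)$ tel que $w'g_1w'^{-1}$ normalise de façon adéquate. La classe de $n$ définit un élément $s\in W(G_\sigma^\theta,A_1)$.

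Dans la troisième étape, on regarde la restriction à $\ago_1$. L'égalité $m=w'gw^{-1}$ avec $m\in M_P^\theta(F)$ et $gA_1g^{-1}=A_1$ (après l'ajustement précédent) donne $w'^{-1}\,\bar m\, w = s$ sur $\ago_1$, où $\bar m\in W(M_P^\theta,A_2)$ est induit par $m$ qui normalise alors $A_2$. Comme $w$ et $w'$ sont identifiés, par la convention du paragraphe sur les ensembles de Weyl, aux uniques représentants dans $W^\theta$ qui envoient les racines positives de $\Delta_0^{P_1^\theta}$ sur des racines positives, l'égalité $w'|_{\ago_1}=\bar m\, w\, s^{-1}|_{\ago_1}$ donne $w'\in W(M_P^\theta,A_2)\,w\,W(G_\sigma^\theta,A_1)$.

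Le point délicat sera de justifier proprement qu'on peut choisir $g$, à multiplication près par $G^\theta_\sigma(F)\cap$ (stabilisateurs), de sorte que $m$ normalise $A_2$ et que $g$ normalise $A_1$ simultanément. Je l'obtiendrais en conjuguant d'abord dans $M_P^\theta(F)$ le tore $w'A_1w'^{-1}$ et $A_2$ : tous deux sont centraux déployés maximaux du centralisateur de $\gamma_s$ dans $M_P^\theta$, grâce au corollaire \ref{cor:A_P_sigma} appliqué dans $M_P$. On appliquerait ensuite la conjugaison des tores déployés maximaux dans $(M_P^\theta)_{\gamma_s}$.
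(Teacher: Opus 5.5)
Your outline matches the paper's: pass to semisimple parts, observe that $g=w'^{-1}\mu'\mu^{-1}w\in G^\theta_\sigma(F)$, then use conjugacy of maximal split tori to produce elements normalizing $A_2$ and $A_1$. You rightly flag the delicate point, but neither your second step nor the repair you sketch establishes it.

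Conjugating $gA_1g^{-1}$ back to $A_1$ inside $G^\theta_\sigma(F)$ carries no information, since $g$ itself lies in $G^\theta_\sigma(F)$: one may simply take $g_1=g$ and $n=1$. The decomposition $g=g_1n$ is useful only if one also imposes $w'g_1w'^{-1}\in M_P^\theta(F)$, which is what ``normalise de façon adéquate'' leaves open. Without it, $w'nw^{-1}$ normalizes $A_2$ but has no reason to lie in $M_P^\theta(F)$. So the element $\bar m$ of your third step is not known to come from $W(M_P^\theta,A_2)$, and that is the whole content of the proposition.

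Your final paragraph does not close this gap. First, $w'A_1w'^{-1}$ and $A_2$ are the same torus, as you note yourself. Second, $A_2$ lies in $(M_P^\theta)_{w\sigma w^{-1}}$ and in $(M_P^\theta)_{w'\sigma w'^{-1}}$, but in general not in $(M_P^\theta)_{\gamma_s}$; there the pair to compare would be $\mu^{-1}A_2\mu$ and $\mu'^{-1}A_2\mu'$. Third, Corollary~\ref{cor:A_P_sigma} cannot be applied in $M_P$, because the element is only $(P_2,\theta)$-anisotropic. Finally, $A_2$ is a maximal split torus of these centralizers, not a central one. Its maximality follows from $A_1$ being maximal split in $G^\theta_\sigma$ together with $A_2=wA_1w^{-1}\subset M_P$.

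The conjugation must take place in the intersection $M_P^\theta\cap w'G^\theta_\sigma w'^{-1}=(M_P^\theta)_{w'\sigma w'^{-1}}$, so that the correcting element lies in both groups at once.
Put $m=\mu'\mu^{-1}$. Both $A_2$ and $mA_2m^{-1}$ are maximal split tori of $(M_P^\theta)_{w'\sigma w'^{-1}}$; the second because $m$ conjugates $w\sigma w^{-1}$ to $w'\sigma w'^{-1}$. Hence there is $m_1\in (M_P^\theta)_{w'\sigma w'^{-1}}(F)$ such that $n=m_1m\in M_P^\theta(F)$ normalizes $A_2$.
Since $m_1$ centralizes $w'\sigma w'^{-1}$, $n$ still conjugates $w\sigma w^{-1}$ to $w'\sigma w'^{-1}$. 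Therefore $v=w'^{-1}nw\in G^\theta_\sigma(F)$, and $v$ normalizes $A_1$ because $wA_1w^{-1}=w'A_1w'^{-1}=A_2$.
Thus $w'=nwv^{-1}$, with $n$ giving an element of $W(M_P^\theta,A_2)$ and $v$ one of $W(G^\theta_\sigma,A_1)$.
This is exactly the paper's proof. In your notation, it amounts to conjugating $gA_1g^{-1}$ to $A_1$ inside $G^\theta_\sigma\cap w'^{-1}M_P^\theta w'$ rather than inside $G^\theta_\sigma$.
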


\begin{preuve}
On trouve que  \eqref{eq:dec-mu-w} implique $\gamma_s=\mu^{-1} w \sigma w^{-1}\mu$. Soit $\gamma_s=\mu_*^{-1} w_* \sigma w_*^{-1}\mu_*$ avec $\mu_*\in M_P^\theta(F)$ et $w_*\in W^\theta(\ago_1, \ago_2)$. Comme $A_1=A_{M_{1,\sigma}^\theta}$ est un tore déployé maximal de $G_\sigma^\theta$, on voit que $A_2=A_{M_{2,w\sigma w^{-1}}^\theta}$ est un tore déployé maximal de $M_{P,w\sigma w^{-1}}^\theta$. Mais on a 
\begin{align}\label{eq:pf-mu-w}
    \mu_* \mu^{-1} (w\sigma w^{-1}) \mu \mu_*^{-1}=w_* \sigma w_*^{-1}. 
\end{align}
On en déduit que $\mu_* \mu^{-1} A_2 \mu \mu_*^{-1}$ et $A_2$ sont deux tores déployés maximaux de $M_{P,w_*\sigma w_*^{-1}}^\theta$. Il existe donc un $m\in M_{P,w_*\sigma w_*^{-1}}^\theta(F)$ tel que $m\mu_* \mu^{-1}$ normalise $A_2$. Soit $n=m\mu_* \mu^{-1}\in M_P^\theta(F)$. Il résulte de \eqref{eq:pf-mu-w} que $w_*^{-1} n w\in G^\theta_\sigma(F)$. De plus, $w_*^{-1} n w$ normalise $A_1$. On a montré que $w_*=nwv$ pour un certain $v\in G_\sigma^\theta(F)$ normalisant $A_1$ ce qui conclut. 
\end{preuve}

Si $w$ est fixé, $\mu$ est uniquement déterminé modulo multiplication à gauche par $(M_P^\theta\cap w G_\sigma w^{-1})(F)$. L'élément $u$ est uniquement déterminé par $\mu$ et $w$. Soit $W^\theta(\ago_1 ; P, G_\sigma)$ la réunion des éléments $w\in W^\theta(\ago_1,\ago_2)$ pour $P_2\in\fc^P(P_0^\theta)$ tel que $w^{-1}\alpha$ est positive pour toute racine $\alpha\in\Delta_{P_2^\theta}^{P^\theta}$ et tel que $w\beta$ est positive pour toute racine positive $\beta$ de $(G_\sigma^\theta,A_1)$ autrement dit 
\begin{align}\label{eq:W-2eme}
    w(P_{1,\sigma}^\theta)w^{-1}=P_{2,w\sigma w^{-1}}^\theta. 
\end{align}

Soit $\varphi\in C_c^\infty(S_{p'}(\AAA))$ et $x\in G^\theta(\AAA)$. Il résulte des propositions \ref{prop:elementssr} et \ref{prop:unique-gd} que l'expression $\tlK_{P,\of,\varphi}(x)$ défini en \eqref{eq:KtildePo} est égale à  
   \begin{align*}
    \sum_{w\in W^\theta(\ago_1 ; P, G_\sigma)} \sum_{\mu} \sum_{\nu} \sum_{u\in (\uc_{G_\sigma}\cap M_{P_w}\cap S^\circ)(F)} \int_{((\mu^{-1}w\sigma u w^{-1}\mu N_{P,\mu^{-1}w\sigma w^{-1}\mu})\cap S_{p'})(\AAA)} \varphi(\Int(\nu x)^{-1}(y)) \, dy
  \end{align*}
où les sommes sur $\mu$ et $\nu$ sont prises respectivement sur 
   \begin{align*}
    \mu\in (M_P^\theta\cap w G_\sigma w^{-1})(F)\back M_P^\theta(F) 
   \end{align*}
et 
   \begin{align*}
    \nu\in N_{P,\mu^{-1}w\sigma w^{-1}\mu}^\theta(F)\back N_P^\theta(F). 
    \end{align*}
En combinant les sommes sur $\mu$ et $\nu$, on trouve que 
   \begin{align*}
    \tlK_{P,\of,\varphi}(x)=\sum_{w\in W^\theta(\ago_1 ; P, G_\sigma)} \sum_{\pi} \sum_{u\in (\uc_{G_\sigma}\cap M_{P_w}\cap S^\circ)(F)} \int_{((w\sigma u w^{-1} N_{P,w\sigma w^{-1}})\cap S_{p'})(\AAA)} \varphi(\Int(\pi x)^{-1}(y)) \, dy
  \end{align*}
où la somme sur $\pi$ est prise sur 
   \begin{align*}
    \pi\in (P^\theta\cap w G_\sigma w^{-1})(F)\back P^\theta(F). 
   \end{align*}
Soit $T\in T_0+\overline{\ago_0^+}$ un paramètre de troncature. On injecte l'expression de $\tlK_{P,\of,\varphi}(x)$ dans la somme 
  \begin{align}\label{eq:sumtlK}
  \sum_{\delta\in P^\theta(F)  \back G^\theta(F)} \hat\tau_P(H_P(\delta x)-T_P) \tlK_{P,\of,\varphi}(\delta x)
  \end{align}
qui apparaît dans la définition \eqref{eq:tlKToS} de $\tlK_{\of,\varphi}^T(x)$. On change l'ordre des sommes sur $\delta$ et $w$ et puis on combine les sommes sur $\pi$ et $\delta$. L'expression \eqref{eq:sumtlK} devient 
   \begin{align*}
    \sum_{w\in W^\theta(\ago_1 ; P, G_\sigma)} \sum_{\xi} \hat\tau_P(H_P(\xi x)-T_P) \sum_{u\in (\uc_{G_\sigma}\cap M_{P_w}\cap S^\circ)(F)} \int_{((w\sigma u w^{-1} N_{P,w\sigma w^{-1}})\cap S_{p'})(\AAA)} \varphi(\Int(\xi x)^{-1}(y)) \, dy
  \end{align*}
où la somme sur $\xi$ est prise sur 
   \begin{align*}
    \xi\in (P^\theta\cap w G_\sigma w^{-1})(F)\back G^\theta(F). 
   \end{align*}
En remplaçant $\xi$ par $w\xi$ et utilisant \eqref{eq:tauw}, on voit que \eqref{eq:sumtlK} égale 
   \begin{align*}
    \sum_{w\in W^\theta(\ago_1 ; P, G_\sigma)} \sum_{\xi} \hat\tau_{P_w}(H_{P_w}(\xi x)-T_{P_w}) \sum_{u\in (\uc_{G_\sigma}\cap M_{P_w}\cap S^\circ)(F)} \int_{((\sigma u N_{P_w,\sigma})\cap S_{p'})(\AAA)} \varphi(\Int(\xi x)^{-1}(y)) \, dy
  \end{align*}
  où la somme sur $\xi$ est prise sur 
   \begin{align*}
    \xi\in (P_w^\theta\cap G_\sigma)(F)\back G^\theta(F). 
   \end{align*}
Soit 
   \begin{align*}
    R=P_{w,\sigma}.
   \end{align*}
D'après \eqref{eq:W-2eme}, c'est un sous-groupe parabolique relativement standard de $G_\sigma$.  
On obtient finalement que \eqref{eq:sumtlK} est égale à 
   \begin{align*}
    \sum_{w\in W^\theta(\ago_1 ; P, G_\sigma)} \sum_{\xi\in R^\theta(F)\back G^\theta(F)} \hat\tau_{P_w}(H_{P_w}(\xi x)-T_{P_w}) \sum_{u\in (\uc_{M_R}\cap S^\circ)(F)} \int_{((\sigma u N_R)\cap S_{p'})(\AAA)} \varphi(\Int(\xi x)^{-1}(y)) \, dy. 
  \end{align*}
  
On peut maintenant réécrire \eqref{eq:tlKToS}. Pour tout $R\in \fc^{G_\sigma}(P_{1,\sigma}^\theta)$ soit 
\begin{align}\label{eq:KRunip}
    K_{R,\varphi}^{\unip}(x) =      \sum_{u\in (\uc_{M_R}\cap S^\circ)(F)} \int_{((u N_R)\cap S^\circ)(\AAA)} \varphi(\Int(x^{-1})(\sigma y)) \, dy. 
\end{align}
D'après le lemme \ref{lem:var-unip}, on a $(\sigma u N_R)\cap S_{p'}=\sigma((u N_R)\cap S^\circ)$. Alors $K_{R,\varphi}^{\unip}(x)$ égale 
\begin{align*}
    \sum_{u\in (\uc_{M_R}\cap S^\circ)(F)} \int_{((\sigma u N_R)\cap S_{p'})(\AAA)} \varphi(\Int(x^{-1})(y)) \, dy. 
\end{align*}
Il s'ensuit que $\tlK_{\of,\varphi}^T(x)$ est égale à la somme sur $R\in \fc^{G_\sigma}(P_{1,\sigma}^\theta)$ et $\xi\in R^\theta(F)\back G^\theta(F)$ du produit de $K_{R,\varphi}^{\unip}(\xi x)$ avec 
   \begin{align*}
     \sum_{P, w} \eps_{P_w}^G \hat\tau_{P_w}(H_{P_w}(\xi x)-T_{P_w}) 
  \end{align*}
où la somme sur $P$ et $w$ est prise sur l'ensemble 
   \begin{align}\label{eq:ensPs}
     \{P\in\fc(P_0^\theta), w\in W^\theta(\ago_1; P, G_\sigma) | P_{w,\sigma}=R\}. 
  \end{align}
Pour tout $R\in \fc^{G_\sigma}(P_{1,\sigma}^\theta)$ on définit l'ensemble (éventuellement vide vu le lemme \ref{lem:Psigma})
    \begin{align*}
      \fc_R(M_1)=\{Q\in\fc(M_1) | Q_\sigma=R\}. 
    \end{align*}
    
\begin{proposition}
    Soit $R\in \fc^{G_\sigma}(P_{1,\sigma}^\theta)$. L'application 
    \begin{align}\label{eq:art189}
        (P,w)\mapsto P_w
    \end{align}
    induit une bijection de \eqref{eq:ensPs} sur $\fc_R(M_1)$.
\end{proposition}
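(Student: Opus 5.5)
The plan is to check successively that the map $(P,w)\mapsto P_w$ is well defined with values in $\fc_R(M_1)$, that it is injective, and that it is surjective. This follows the model of Arthur's descent argument for the group itself, with $W^\theta$ and relatively standard parabolic subgroups in place of $W$ and standard ones.

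\emph{Well-definedness.} Let $(P,w)$ be in the set \eqref{eq:ensPs}, with $w\in W^\theta(\ago_1,\ago_2)$ and $P_2\in\fc^P(P_0^\theta)$. Since $w$ maps $\ago_1$ onto $\ago_2$, we have $w^{-1}A_2w=A_1$, hence $w^{-1}M_2w=M_1$. As $M_2\subset M_P$, this gives $M_1\subset w^{-1}Pw=P_w$, so $P_w\in\fc(M_1)$. By the very definition of the set \eqref{eq:ensPs}, $(P_w)_\sigma=R$. Thus $P_w\in\fc_R(M_1)$.

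\emph{Injectivity.} Suppose $P_w=P'_{w'}$ for two pairs in \eqref{eq:ensPs}. Then $P'=vPv^{-1}$ with $v=w'w^{-1}\in W^\theta\subset G^\theta(F)$. Both $P$ and $P'$ contain $P_0^\theta$ and are $G^\theta(F)$-conjugate, so the injectivity argument of corollary \ref{cor:PWtheta} (a relatively standard parabolic is determined by its $G^\theta(F)$-conjugacy class) gives $P=P'$. Consequently $v$ normalises $P$, i.e. $v\in W^\theta\cap M_P$, so $w'\in W(M_P^\theta,A_0)\,w$.

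It remains to show $v=1$, using the two positivity conditions defining $W^\theta(\ago_1;P,G_\sigma)$:
\begin{itemize}
\item the condition $w^{-1}\alpha>0$ for $\alpha\in\Delta_{P_2^\theta}^{P^\theta}$ says that $w$ is the shortest element of its coset modulo $W(M_P^\theta)$ relative to $P_2$;
\item combined with $w(P_{1,\sigma}^\theta)w^{-1}=P_{2,w\sigma w^{-1}}^\theta$, it also pins down $P_2$, as the unique relatively standard parabolic of $M_P$ with Levi $wM_1w^{-1}$ containing $w P^\theta_{1,\sigma} w^{-1}$.
\end{itemize}
I would argue as in \cite[lemme 1.3.6]{labWal}: $P_2'=vP_2v^{-1}$, both are standard in $M_P^\theta$, hence $P_2'=P_2$ and $v$ normalises $A_2$. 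The condition \eqref{eq:W-2eme} for $w$ and $w'$ then forces $v$ to send the positive roots of $(M_{P,w\sigma w^{-1}}^\theta,A_2)$ into the positive ones and to preserve $\Delta_{P_2^\theta}^{P^\theta}$, whence $v=1$.

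\emph{Surjectivity.} Let $Q\in\fc(M_1)$ with $Q_\sigma=R$. Since $M_0\subset M_1\subset Q$, the group $Q^\theta$ is a semi-standard parabolic subgroup of $G^\theta$, so there is $w\in W^\theta$ with $wQ^\theta w^{-1}\supset P_0^\theta$; set $P=wQw^{-1}$. Then $P_w=Q$, and $P\in\fc(P_0^\theta)$ by lemme \ref{lem:parab-H}.

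Next choose the representative $w$ inside its coset $W(M_P^\theta)\,w$ so that the positive roots of $(G_\sigma^\theta,A_1)$ go to positive roots. This is possible because $R=Q_\sigma\supset P^\theta_{1,\sigma}$: the roots of $A_1$ in $P^\theta_{1,\sigma}$ lie in $Q$, so after conjugation they lie in $P$. Their $M_P$-components can then be made positive by adjusting by $W(M_P^\theta)$, which does not move $P$.

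Finally put $\ago_2=w\ago_1$ and let $P_2$ be the relatively standard parabolic of $M_P$ with Levi $wM_1w^{-1}$. One then checks $P_2\in\fc^P(P_0^\theta)$, the condition on $\Delta_{P_2^\theta}^{P^\theta}$, and \eqref{eq:W-2eme}; these follow from the choice of $w$ together with the proposition $\fc^{G_\sigma}(P_{1,\sigma}^\theta)\subset\fc^{G_\sigma}(M_{1,\sigma})$.

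The main obstacle is this last step: finding a single $w$ that satisfies both positivity conditions simultaneously, i.e. reconciling minimality modulo $W(M_P^\theta)$ with \eqref{eq:W-2eme}. I would first try to use \eqref{eq:A_1}, namely $A_1=A_{M^\theta_{1,\sigma}}$, which says the roots of $(G^\theta_\sigma,A_1)$ are restrictions of roots of $(G^\theta,A_0)$ vanishing on a common face. The two conditions then concern complementary sets of roots, those inside $M_P$ and those of $G_\sigma^\theta$, and can be arranged independently.
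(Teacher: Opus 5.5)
Your well-definedness check and the first half of surjectivity (finding $P\in\fc(P_0^\theta)$ and $w\in W^\theta$ with $P_w=Q$) are fine and agree with the paper. The gap is the step you flag yourself as the main obstacle, and your proposed way out does not work, because the two positivity conditions do not concern complementary sets of roots. For a positive root $\beta$ of $(G_\sigma^\theta,A_1)$, $w\beta$ is a root of $A_2$ in the Lie algebra of $P^\theta$, and it may well be trivial on $A_P$, i.e.\ a root of $M_P^\theta$. That is exactly where the adjustment by $W(M_P^\theta,A_0)$ acts. The same adjustment also decides the condition on $\Delta^{P^\theta}_{P_2^\theta}$, and even whether some $P_2\in\fc^P(P_0^\theta)$ with $\ago_{P_2}=w\ago_1$ exists at all; you assert this, but it fails for a badly chosen representative.

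The missing idea, which is the paper's whole proof, is that a single normalization does everything. Take for $w$ the unique element of its coset $W(M_P^\theta,A_0)\,w$ with $w^{-1}\alpha>0$ for every $\alpha\in\Delta_0^{P^\theta}$.
\begin{enumerate}
\item Since $wM_1w^{-1}\subset M_P$, minimality forces $w$ to send positive roots of $M_1^\theta$ to positive roots. Moreover $w\Delta_0^{P_1^\theta}\subset\Delta_0^{P^\theta}$, since a decomposition $w\alpha=\gamma_1+\gamma_2$ into positive roots of $M_P^\theta$ would pull back to a decomposition of the simple root $\alpha$. Hence $w\ago_1=\ago_2$ for some $P_2\in\fc^P(P_0^\theta)$, and the condition on $\Delta^{P^\theta}_{P_2^\theta}$ is automatic.
\item Every $A_0$-root $\tilde\beta$ of $G^\theta$ above a positive root $\beta$ of $(G_\sigma^\theta,A_1)$ occurs in $\ngo_{P_1}$, hence is positive. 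Since $wP^\theta_{1,\sigma}w^{-1}\subset wRw^{-1}\subset P$, there are two cases. If $w\beta$ is nontrivial on $A_P$, it is a root of $A_P$ in $\ngo_P$, hence positive. If it is trivial on $A_P$, then $w\tilde\beta$ is a root of $M_P^\theta$ that cannot be negative, since otherwise minimality would make $-\tilde\beta=w^{-1}(-w\tilde\beta)$ positive.
\end{enumerate}
Item 2 is exactly \eqref{eq:W-2eme}.

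Your injectivity argument also contains a false step. From $w'=vw$ with $v\in W(M_P^\theta,A_0)$ you only get $\ago_{P_2'}=v\,\ago_{P_2}$. The group $vP_2v^{-1}$ need not contain $P_0^\theta$, and distinct elements of $\fc^P(P_0^\theta)$ can have $W(M_P^\theta,A_0)$-conjugate Levi factors (the standard parabolics of types $(1,2)$ and $(2,1)$ in $GL_3$, with $\theta$ central, already do). So the inference \emph{$P_2'=vP_2v^{-1}$, both standard, hence equal} is unjustified.

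What does work is your first bullet made precise. Use the paper's convention that $w\in W^\theta(\ago_1,\ago_2)$ is represented by the element sending $\Delta_0^{P_1^\theta}$ to positive roots. Then the condition $w^{-1}\Delta^{P^\theta}_{P_2^\theta}>0$ forces $w^{-1}$ to send every positive root of $(M_P^\theta,A_0)$ to a positive root. Indeed, such a root is either a root of $M_{P_2}^\theta$, or restricts to $A_2$ as a nonzero nonnegative combination of $\Delta^{P^\theta}_{P_2^\theta}$. So $w$ and $w'=vw$ are both the minimal element of the same coset, whence $v=1$; \eqref{eq:W-2eme} plays no role here. This observation is also what makes the map $Q\mapsto(P,w)$ constructed in the paper a two-sided inverse.
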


\begin{preuve}
    Soit $Q\in\fc_R(M_1)$. Il existe un élément unique $P\in\fc(P_0^\theta)$ et un élément $w\in W^\theta$ tels que $Q=P_w$. On exige que $w^{-1}\alpha$ soit positive pour toute racine $\alpha\in\Delta_0^{P^\theta}$, de sorte que $w$ soit également uniquement déterminé. Comme $w\ago_1$ contient $w\ago_Q=\ago_P$, on trouve que $w\ago_1$ est de la forme $\ago_2$ pour un élément $P_2\in\fc^P(P_0^\theta)$ : sinon, $w^{-1}$ enverrait une combinaison linéaire positive non simple de racines de $\Delta_0^{P^\theta}$ vers une racine simple de $\Delta_0^{P_1^\theta}$, ce qui constitue une contradiction. De plus, puisque $P=wQw^{-1}\supset wRw^{-1}$, on a $P^\theta\supset (wRw^{-1})^\theta\supset wP_{1,\sigma}^\theta w^{-1}$. Avec l'hypothèse sur $w$, on voit que $w\beta$ est positive pour toute racine positive $\beta$ de $(G_\sigma^\theta, A_1)$. Il s'ensuit que la restriction de $w$ à $\ago_1$ définit un élément unique de $W^\theta(\ago_1;P,G_\sigma)$. De cette manière, on a trouvé une réciproque de l'application \eqref{eq:art189}. 
\end{preuve}

    En résumé, on a établi l'analogue suivant de \cite[lemme 3.1]{ar-family}. 

  \begin{lemme}\label{lem:art3.1}
  Soit $\varphi\in C_c^\infty(S_{p'}(\AAA))$ et $x\in G^\theta(\AAA)$. Pour tout $T\in T_0+\overline{\ago_0^+}$, $\tlK_{\of,\varphi}^T(x)$ égale la somme sur $R\in \fc^{G_\sigma}(P_{1,\sigma}^\theta)$ et $\xi\in R^\theta(F)\back G^\theta(F)$ du produit de $K_{R,\varphi}^{\unip}(\xi x)$ avec 
   \begin{align}\label{eq:sumfc_R}
     \sum_{P\in\fc_R(M_1)} \eps_P^G \hat\tau_P(H_P(\xi x)-T_P). 
  \end{align}
\end{lemme}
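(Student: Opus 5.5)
Le lemme \ref{lem:art3.1} est essentiellement un résumé des manipulations faites dans ce paragraphe; la preuve consiste à les enchaîner proprement. On part de la définition \eqref{eq:tlKToS}:
\begin{align*}
\tlK_{\of,\varphi}^T(x)=\sum_{P\in\fc(P_0^\theta)}\eps_P^G\sum_{\delta\in P^\theta(F)\back G^\theta(F)}\hat\tau_P(H_P(\delta x)-T_P)\tlK_{P,\of,\varphi}(\delta x).
\end{align*}
Pour chaque $P$, on utilise la décomposition \eqref{eq:dec-mu-w} des éléments $\gamma\in(M_P\cap S_{p',\of})(F)$, l'unicité de $w$ donnée par la proposition \ref{prop:unique-gd} (normalisée par la condition \eqref{eq:W-2eme}) et celle de $\mu$ modulo $(M_P^\theta\cap wG_\sigma w^{-1})(F)$, puis la description des fibres via le lemme \ref{lem:arthur}, pour réécrire $\tlK_{P,\of,\varphi}$ comme somme sur $w\in W^\theta(\ago_1;P,G_\sigma)$ et $\pi\in(P^\theta\cap wG_\sigma w^{-1})(F)\back P^\theta(F)$. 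Il faut ici vérifier que la réunion des sommes sur $\mu$ et $\nu$ reconstitue exactement la somme sur $\pi$, ce qui utilise la décomposition de Levi $P^\theta=M_P^\theta N_P^\theta$ et l'égalité $(P^\theta\cap wG_\sigma w^{-1})=(M_P^\theta\cap wG_\sigma w^{-1})N^\theta_{P,w\sigma w^{-1}}$.

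Ensuite on combine $\pi$ et $\delta$ en une seule somme sur $\xi\in(P^\theta\cap wG_\sigma w^{-1})(F)\back G^\theta(F)$, ce qui est licite car la somme sur $\delta$ est finie (\cite[lemme 5.1]{ar1}) et que $\hat\tau_P(H_P(\cdot)-T_P)$ est invariante à gauche par $P^\theta(F)$. On conjugue par $w$ en remplaçant $\xi$ par $w\xi$: l'égalité \eqref{eq:tauw} transforme $\hat\tau_P(H_P(w\xi x)-T_P)$ en $\hat\tau_{P_w}(H_{P_w}(\xi x)-T_{P_w})$, et $\eps_P^G=\eps_{P_w}^G$. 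En posant $R=P_{w,\sigma}$, qui est relativement standard dans $G_\sigma$ par \eqref{eq:W-2eme}, on a $P_w^\theta\cap G_\sigma=R^\theta$, $M_{P_w}\cap G_\sigma=M_R$ et $N_{P_w,\sigma}=N_R$; le lemme \ref{lem:var-unip} identifie alors l'intégrale interne à $K^{\unip}_{R,\varphi}(\xi x)$ défini en \eqref{eq:KRunip}.

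On intervertit ensuite l'ordre: somme extérieure sur $R\in\fc^{G_\sigma}(P_{1,\sigma}^\theta)$ et $\xi\in R^\theta(F)\back G^\theta(F)$, le facteur $K^{\unip}_{R,\varphi}(\xi x)$ ne dépendant que de $R$, et somme intérieure sur les couples $(P,w)$ de l'ensemble \eqref{eq:ensPs}. La dernière étape est d'appliquer la bijection $(P,w)\mapsto P_w$ de \eqref{eq:ensPs} sur $\fc_R(M_1)$ établie dans la proposition précédente, ce qui donne exactement \eqref{eq:sumfc_R}. Le point délicat est la justification des interversions de sommes: pour $\varphi$ à support compact, seuls un nombre fini de $\gamma$, donc de classes, interviennent pour $x$ fixé (somme sur $u$ finie car $\uc_{M_R}\cap S^\circ$ a ses points rationnels dans un nombre fini de classes rencontrant le support, et somme sur $\xi$ finie par \cite[lemme 5.1]{ar1}). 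On vérifiera soigneusement ce point en premier, l'algèbre restante étant formelle.
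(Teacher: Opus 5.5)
Your proposal is correct and follows essentially the same route as the paper. You use the decomposition \eqref{eq:dec-mu-w} with Proposition \ref{prop:unique-gd}, merge $\mu,\nu$ into $\pi$ and then $\pi,\delta$ into $\xi$, translate by $w$ via \eqref{eq:tauw} with $R=P_{w,\sigma}$, and conclude with the bijection of \eqref{eq:ensPs} onto $\fc_R(M_1)$.

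The only blemish is your parenthetical finiteness justification. Rational unipotent classes need not be finite in number, but the point is irrelevant: for fixed $x$ and $\varphi\in C_c^\infty(S_{p'}(\AAA))$, all the sums (over $\gamma$, $\nu$, $u$, $\delta$) are finite by compactness of the support. The rearrangements are therefore purely formal, exactly as the paper treats them.
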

\end{paragr}

\begin{paragr} \label{S:noyaux}
On continue avec les notations de \S \ref{S:prelim}. En particulier, pour $\varphi\in C_c^\infty(S_{p'}(\AAA))$, $ x \in G^\theta(\AAA) $ et $R\in \fc^{G_\sigma}(P_{1,\sigma}^\theta)$, on a défini $K_{R,\varphi}^{\unip}(x)$ en \eqref{eq:KRunip}. 
Pour $ u\in (\uc_{M_R}\cap S^\circ)(F) $, on écrit $ u = \exp(U) $ avec $ U \in \nc_{\mgo_R\cap \sgo}(F) $. On identifie $((u N_R)\cap S^\circ)(\AAA)$ à $ U + (\ngo_R \cap \sgo)(\AAA) $ via l'exponentielle en admettant qu'on a préservation des mesures. On a montré dans \S \ref{S:exp} qu'il existe une fonction $f\in C_c^\infty((G^\theta\times\sgo_\sigma)(\AAA))$ dépendante de $\varphi$ telle que 
\begin{align}\label{eq:var-KRunip}
    K_{R,\varphi}^{\unip}(x) = \sum_{U \in \nc_{\mgo_R\cap \sgo}(F)} \int_{(\ngo_R \cap \sgo)(\AAA)} \varphi (\Int(x^{-1})(\sigma \exp(U+V))) \, dV 
    \\ \nonumber
    = \sum_{U \in \nc_{\mgo_R\cap \sgo}(F)} \int_{(\ngo_R \cap \sgo)(\AAA)} \int_{G^\theta_\sigma(\AAA)} f(x^{-1}z^{-1}, \Ad(z)(U + V)) \, dz dV.
\end{align}

Pour tout $ h \in G^\theta(\AAA) $, on peut introduire la fonction  
\begin{align}\label{eq:defPsi}
    \Psi_h(X) = \int_{G^\theta_\sigma(\AAA)} f(h^{-1}z^{-1}, \Ad(z)(X)) \, dz,
\end{align}
qui est lisse sur $ \sgo_\sigma(\AAA) $. Pour $ x \in G^\theta(\AAA) $ et $ h \in G^\theta_\sigma(\AAA) $ on a  
\begin{align*}
    \Psi_{hx}(X) = \Psi_x(\Ad(h^{-1})(X)).
\end{align*}  
On a donc  
\begin{align}\label{eq:noyaux}
    K_{R,\varphi}^{\unip}(x) = \sum_{U \in \nc_{\mgo_R\cap \sgo}(F)} \int_{(\ngo_R \cap \sgo)(\AAA)} \Psi_x(U + V) \, dV.
\end{align}
\end{paragr}

\begin{paragr}
    Soit $H$ un  groupe réductif connexe défini sur $F$. Soit $X_\ast(H)$ le groupe des cocaractères algébriques de $H$ définis sur $F$. Pour tout $\lambda\in X_\ast(H)$, on note $\PP_H(\lambda)$ le sous-groupe fermé  formé des $x\in H$ tels que $\lim\limits_{a\to 0} \lambda(a)x\lambda(a)^{-1}$ existe. C'est un sous-groupe parabolique de $H$ défini sur $F$ (cf. \cite[\S 13.4]{SpringerLAG}). 
\end{paragr}

\begin{paragr}[Lemmes combinatoires.]\label{S:lem-combi} ---
	La prochaine étape est de transformer la somme \eqref{eq:sumfc_R} sur des sous-groupes paraboliques de $G$ en une somme sur des sous-groupes paraboliques de $G_\sigma$. 

    Soit $\sigma\in S(F)$ un élément semi-simple. Supposons que $\sigma\in (M_1\cap S)(F)$ est $(M_1,\theta)$-anisotrope avec $M_1\in\lc(M_0)$ autrement dit la classe de $M_1^\theta(F)$-conjugaison de $\sigma$ ne rencontre pas $M(F)$ pour tout sous-groupe de Levi $M$ semi-standard et propre de $M_1$. Comme au début de \S \ref{S:prelim}, on obtient une paire symétrique $(G_\sigma,G_\sigma^\theta)$ avec les sous-groupes de Levi $M_{1,\sigma}$ et $M_{1,\sigma}^\theta$ définis sur $F$ et minimaux respectivement de $G_\sigma$ et $G_\sigma^\theta$. D'après la proposition \ref{prop:A_P_sigma}, on a encore \eqref{eq:A_1}. Un sous-groupe parabolique de $G_\sigma$ sera dit semi-standard s'il contient $M_{1,\sigma}$. Soit $\fc^{G_\sigma}(M_{1,\sigma})$ l'ensemble de sous-groupes paraboliques de $G_\sigma$ qui sont semi-standard. On note 
     \begin{align}\label{eq:defFGsigthe}
      \fc^{G_\sigma}(M_{1,\sigma},\theta)=\{R\in\fc^{G_\sigma}(M_{1,\sigma}) |\theta R\theta=R\}. 
    \end{align}    
    
   \begin{lemme}\label{lem:Psigma}
   L'application 
     \begin{align*}
      P\mapsto P_\sigma
    \end{align*}
induit une application surjective de $\fc(M_1)$ dans $\fc^{G_\sigma}(M_{1,\sigma},\theta)$. 
   \end{lemme}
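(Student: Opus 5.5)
Le plan se découpe en trois étapes : bien-définition, $\theta$-stabilité, puis surjectivité.

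\emph{Bien-définition.} Pour $P\in\fc(M_1)$, on écrit $P=\PP_G(\lambda)$ avec $\lambda$ un cocaractère de $A_{M_1}$ en position générale relativement à $P$ (c'est possible puisque $P\supset M_1$). Comme $A_{M_1}\subset M_{1,\sigma}\subset G_\sigma$, on a $\lambda\in X_\ast(G_\sigma)$ et $P_\sigma=P\cap G_\sigma=\PP_{G_\sigma}(\lambda)$. C'est donc un sous-groupe parabolique de $G_\sigma$ contenant le centralisateur $M_{1,\sigma}$ de $\lambda$ dans $G_\sigma$, d'où $P_\sigma\in\fc^{G_\sigma}(M_{1,\sigma})$.

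\emph{$\theta$-stabilité.} D'après \eqref{eq:A_1}, $A_{M_1}=A_{M_{1,\sigma}}^\theta$. Le cocaractère $\lambda$ est donc fixé par $\Int(\theta)$, et $\theta\PP_{G_\sigma}(\lambda)\theta=\PP_{G_\sigma}(\theta\lambda\theta)=\PP_{G_\sigma}(\lambda)$. On utilise ici que $\Int(\theta)$ préserve $G_\sigma$ (lemme \ref{lem:thetacent}). Ainsi $P_\sigma\in\fc^{G_\sigma}(M_{1,\sigma},\theta)$.

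\emph{Surjectivité.} Soit $R\in\fc^{G_\sigma}(M_{1,\sigma},\theta)$. La clé est de produire un cocaractère définissant $R$ qui soit à valeurs dans $A_{M_1}$. On choisit d'abord $\mu\in X_\ast(A_{M_{1,\sigma}})$ tel que $R=\PP_{G_\sigma}(\mu)$, ce qui est possible car $R$ est semi-standard. Comme $\theta R\theta=R$, on a aussi $R=\PP_{G_\sigma}(\theta\mu\theta)$, et $\theta$ normalise $A_{M_{1,\sigma}}$. On pose $\lambda=\mu+\theta\mu\theta$, somme dans le réseau commutatif $X_\ast(A_{M_{1,\sigma}})$. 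Alors $\lambda$ est $\theta$-invariant, donc appartient à $X_\ast(A_{M_{1,\sigma}}^\theta)=X_\ast(A_{M_1})$ à isogénie près, encore par \eqref{eq:A_1}.

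Il faut ensuite vérifier que $\PP_{G_\sigma}(\lambda)=R$. Pour chaque racine $\alpha$ de $A_{M_{1,\sigma}}$ dans $G_\sigma$, les nombres $\langle\alpha,\mu\rangle$ et $\langle\alpha,\theta\mu\theta\rangle$ ont même signe au sens large, puisque les deux cocaractères définissent le même parabolique $R$. Plus précisément, ils sont tous deux $\geq0$ exactement pour les racines de $R$, et tous deux nuls exactement pour celles de $M_R$. Leur somme a donc le même signe, et $\PP_{G_\sigma}(\lambda)=R$.

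On pose alors $P=\PP_G(\lambda)$. Comme $\lambda\in X_\ast(A_{M_1})$, ce groupe contient $M_1$, donc $P\in\fc(M_1)$. Par l'argument de bien-définition, $P_\sigma=\PP_{G_\sigma}(\lambda)=R$.

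Le seul point délicat attendu est l'identification $X_\ast(A_{M_{1,\sigma}})^\theta\otimes\QQ=X_\ast(A_{M_1})\otimes\QQ$ ; on la tire de \eqref{eq:A_1}, en remplaçant au besoin $\lambda$ par un multiple entier. Il faut aussi justifier, dans l'étape de bien-définition, que $M_{1,\sigma}$ est le centralisateur de $A_{M_1}$ dans $G_\sigma$. Cela découle de la discussion du §~\ref{S:prelim}.
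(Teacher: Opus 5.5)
Your argument is correct and follows the paper's skeleton. You produce a $\theta$-fixed cocharacter $\lambda$ of $A_{M_{1,\sigma}}^\theta=A_{M_1}$ with $R=\PP_{G_\sigma}(\lambda)$, set $P=\PP_G(\lambda)$, and use $\PP_G(\lambda)\cap G_\sigma=\PP_{G_\sigma}(\lambda)$. The differences are local.

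\emph{Surjectivity.} The paper cites \cite[lemme 3.3]{HelWan} for the existence of $\lambda$. You instead prove it by averaging, $\lambda=\mu+\theta\mu\theta$, and comparing the signs of $\langle\alpha,\mu\rangle$ and $\langle\alpha,\theta\mu\theta\rangle$. This is self-contained and valid. It only needs that $\theta$ normalises $M_{1,\sigma}$, hence $A_{M_{1,\sigma}}$. That holds because $\theta\in M_0\subset M_1$ and $\theta G_\sigma\theta=G_{\sigma^{-1}}=G_\sigma$; you assert this normalisation without justifying it.

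\emph{$\theta$-stability.} The paper argues directly. Since $\theta\in M_1\subset P$ and $\theta\sigma\theta=\sigma^{-1}$, one gets $\theta P_\sigma\theta=P\cap G_{\sigma^{-1}}=P_\sigma$, which is shorter than going through cocharacters. Your first step has a compensating merit: it makes explicit that $P_\sigma$ is a parabolic subgroup of $G_\sigma$ containing $M_{1,\sigma}$, which the paper leaves implicit.

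Two small corrections.
\begin{enumerate}
\item The centraliser of $\lambda$ in $G_\sigma$ is $M_P\cap G_\sigma$. It contains $M_{1,\sigma}=\Cent_G(A_{M_1})\cap G_\sigma$ but is not equal to it unless $M_P=M_1$. Only the inclusion is needed.
\item The hedge \og à isogénie près\fg{} is unnecessary. A $\theta$-fixed cocharacter of $A_{M_{1,\sigma}}$ factors through the neutral component of the fixed-point group, so $\lambda\in X_\ast(A_{M_1})$ exactly, with no need to pass to a multiple.
\end{enumerate}
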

    
    \begin{preuve}
	Comme $\sigma\in S$, on a $\theta\sigma\theta=\sigma^{-1}$. Pour tout $P\in\fc(M_1)$ on a alors 
    \begin{align*}
      \theta (P_\sigma)\theta=P_{\theta\sigma\theta}=P_{\sigma^{-1}}=P_\sigma 
    \end{align*}
    c'est-à-dire que $P_\sigma\in\fc^{G_\sigma}(M_{1,\sigma},\theta)$. Soit $R\in\fc^{G_\sigma}(M_{1,\sigma},\theta)$. D'après \cite[lemme 3.3]{HelWan}, il existe $\lambda\in X_\ast(A_{M_{1,\sigma}}^\theta)$ tel que $R=\PP_{G_\sigma}(\lambda)$. Soit $P=\PP_G(\lambda)$. Par \eqref{eq:A_1}, on a $\lambda\in X_\ast(A_{M_1})$ et alors $P\in\fc(M_1)$. Comme $R=P\cap G_\sigma$, la surjectivité est claire. 
    \end{preuve}
    
    Soit $R\in\fc^{G_\sigma}(M_{1,\sigma},\theta)$. Pour tout $\RR$-espace vectoriel $W$ sur lequel $\theta$ agit, on note $W^\theta$ le sous-espace de vecteurs fixés par $\theta$. Ainsi on a 
    \begin{align}\label{eq:dec-aRtheta}
      \ago_R^\theta=\ago_G\oplus(\ago_{G_\sigma}^G)^\theta\oplus(\ago_R^{G_\sigma})^\theta. 
    \end{align} 
    On définit les ensembles $\fc_R^0(M_1)\subset\fc_R(M_1)\subset\bar\fc_R(M_1)$ suivants : 
    \begin{align*}
      &\fc_R^0(M_1)=\{P\in\fc(M_1) | P_\sigma=R, \ago_P=\ago_{R}^\theta\}, \\
      &\fc_R(M_1)=\{P\in\fc(M_1) |P_\sigma=R\}, \\
      &\bar\fc_R(M_1)=\{P\in\fc(M_1) | P_\sigma\supset R\}. 
    \end{align*}
	
Puisque on n'utilisera que les sous-groupes paraboliques semi-standard de $G$ et $G_\sigma$ dans le reste de ce paragraphe, par la proposition \ref{prop:ssr-Prd}, sans perte de généralité, on peut supposer $\sigma=x(A,r,t)$ où $A$ est de la forme de \eqref{eq:bloc-diag-A} comme à la fin de \S \ref{S:anisotrope}. On adoptera les notations qui y sont introduites. D'après \eqref{eq:Gsigma} et \eqref{eq:M1sigma}, on a 
\begin{align}\label{eq:Risom}
    R\simeq\prod_{1\leq \alpha\leq i} (R_\alpha\times R_\alpha)\times\prod_{1\leq \beta\leq j} R'_\beta  
     \times R_+\times R_-
\end{align}
où $R_\alpha$, $R'_\beta$, $R_+$ et $R_-$ sont des sous-groupes paraboliques semi-standard respectivement de 
\begin{align*}
    \Res_{L_\alpha/F} GL_{s_\alpha,D_\alpha},  \Res_{K'_\beta/F} GL_{k_\beta,D'_\beta\otimes_{L'_\beta} K'_\beta},  GL_{p+q-2m-r-t,D} \text{ et } GL_{r+t,D} 
\end{align*}
pour tous $1\leq \alpha\leq i$ et $1\leq \beta\leq j$. 
Par \eqref{eq:M1} et \eqref{eq:centAsigthe}, il existe un unique élément $\tilde R$ de $\fc^{\Cent_G(A_{G_\sigma}^\theta)}(M_1)$ tel que $\tilde R_\sigma=R$. Effectivement, l'application 
\begin{align}\label{eq:bijtilde}
    R\mapsto \tilde R
\end{align}
induit une bijection de $\fc^{G_\sigma}(M_{1,\sigma},\theta)$ sur $\fc^{\Cent_G(A_{G_\sigma}^\theta)}(M_1)$. De plus, on a $A_R^\theta=A_{\tilde R}$. En particulier, on a $\wt{G_\sigma}=\Cent_G(A_{G_\sigma}^\theta)$ et $A_{G_\sigma}^\theta=A_{\wt{G_\sigma}}$. On peut alors réécrire la décomposition \eqref{eq:dec-aRtheta} en 
\begin{align*}
    \ago_{\tilde R}=\ago_G\oplus\ago_{\wt{G_\sigma}}^G\oplus\ago_{\tilde R}^{\wt{G_\sigma}}. 
\end{align*}
Pour $P\in\fc(M_1)$, on a $P^\ast=P\cap\Cent_G(A_{G_\sigma}^\theta)\in\fc^{\Cent_G(A_{G_\sigma}^\theta)}(M_1)$ et $P_\sigma=P^\ast_\sigma$. Autrement dit, $P_\sigma=R$, resp. $P_\sigma\supset R$, si et seulement si $P^\ast=\tilde R$, resp. $P^\ast\supset\tilde R$. Si $P_\sigma=R$, on a alors $A_{P^\ast}=A_R^\theta$. 

Pour tout $P\in \bar{\fc}_R(M_1)$, on pose 
	\begin{align*}
	\fc^{P_\sigma}(R,\theta)=\{Q\in\fc^{G_\sigma}(M_{1,\sigma},\theta) | R\subset Q\subset P_\sigma\}. 
	\end{align*} 
Soit $\fc^{P^\ast}(\tilde R)$ l'ensemble des sous-groupes paraboliques de $\Cent_G(A_{G_\sigma}^\theta)$ inclus dans $P^\ast$ et contenant $\tilde R$. L'application \eqref{eq:bijtilde} induit également une bijection de $\fc^{P_\sigma}(R,\theta)$ sur $\fc^{P^\ast}(\tilde R)$. 

\begin{lemme}\label{lem:existFR0}
    L'ensemble $\fc_R^0(M_1)$ est non-vide. 
\end{lemme}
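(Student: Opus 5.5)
Je propose de construire explicitement un élément de $\fc_R^0(M_1)$ à l'aide d'un cocaractère générique, en suivant la preuve du lemme \ref{lem:Psigma}. On part de $R\in\fc^{G_\sigma}(P_{1,\sigma}^\theta)$. Comme $R$ est $\theta$-stable, on dispose d'après \cite[lemme 3.3]{HelWan} d'un cocaractère $\lambda\in X_\ast(A_{M_{1,\sigma}}^\theta)$ tel que $R=\PP_{G_\sigma}(\lambda)$. L'idée est de choisir $\lambda$ en position générale dans l'ouvert de $(\ago_{M_{1,\sigma}})^\theta$ qui définit $R$. Cet ouvert est la chambre de $\ago_R^\theta$ associée à $R$, translatée dans l'espace des $\theta$-invariants, et c'est bien un cône ouvert non vide de $(\ago_R)^\theta=\ago_{\tilde R}$. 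Par \eqref{eq:A_1}, on a $A_{M_{1,\sigma}}^\theta=A_{M_1}$, de sorte que $\lambda\in X_\ast(A_{M_1})$ et que $P:=\PP_G(\lambda)$ appartient à $\fc(M_1)$ et vérifie $P_\sigma=R$.

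Il reste à obtenir $\ago_P=\ago_R^\theta$. Je passerai par l'identification $(\ago_R)^\theta=\ago_{\tilde R}$ et par la bijection \eqref{eq:bijtilde}. Le facteur de Levi de $P=\PP_G(\lambda)$ est le centralisateur de $\lambda$ dans $G$, et l'on a $\ago_P=\ago_{\Cent_G(\lambda)}$. Si $\lambda$ est choisi générique dans $\ago_{\tilde R}$, c'est-à-dire hors de tous les hyperplans $\ker\alpha$ pour les racines $\alpha$ de $A_{M_1}$ dans $G$ qui ne s'annulent pas identiquement sur $\ago_{\tilde R}$, alors $\Cent_G(\lambda)=\Cent_G(A_{\tilde R})=M_{\tilde R}$. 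Ce dernier groupe est un Levi de $G$, puisque $A_{\tilde R}$ est un tore déployé de $A_{M_1}$ et que $M_{\tilde R}\in\lc(M_1)$. On obtient alors $\ago_P=\ago_{M_{\tilde R}}$. Or $A_{M_{\tilde R}}=A_{\tilde R}$, car $\tilde R$ est un parabolique de $\Cent_G(A_{G_\sigma}^\theta)$ qui est aussi un Levi de $G$, et son Levi $M_{\tilde R}$ reste un Levi de $G$ de tore central déployé $A_{\tilde R}$. On en tire $\ago_P=\ago_{\tilde R}=\ago_R^\theta$.

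Le point délicat est de concilier les deux conditions imposées à $\lambda$. D'une part, il faut $\PP_{G_\sigma}(\lambda)=R$, ce qui place $\lambda$ dans une chambre ouverte de $\ago_R^\theta$ relativement aux racines de $G_\sigma$. D'autre part, il faut éviter les hyperplans finis $\ker\alpha\cap\ago_{\tilde R}$, qui sont propres dans $\ago_{\tilde R}$ dès que $\alpha$ n'est pas nulle sur $\ago_{\tilde R}$. Une chambre ouverte non vide ne peut pas être recouverte par un nombre fini d'hyperplans propres, donc un $\lambda$ rationnel convenable existe, quitte à multiplier par un entier pour revenir dans $X_\ast$. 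Il faudra vérifier deux choses. La première est que cette chambre est bien ouverte et non vide dans $\ago_R^\theta$ tout entier, ce qui devrait découler de \cite[lemme 3.3]{HelWan} et de la description produit \eqref{eq:Risom}, où chaque facteur $R_\alpha\times R_\alpha$, $R'_\beta$, $R_\pm$ est défini par une chambre $\theta$-invariante. La seconde est que la condition $\PP_{G_\sigma}(\lambda)=R$ n'est pas détruite par la généricité imposée. C'est l'étape que j'attends comme principal obstacle, et je la traiterai au cas par cas sur les facteurs de \eqref{eq:Risom}.
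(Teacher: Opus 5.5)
Votre démonstration est correcte, mais elle emprunte une autre route que l'article. L'article construit un élément directement, sans argument de généricité. On prend $Q\in\fc(M_1)$ de Levi $M_Q=\Cent_G(A_{G_\sigma}^\theta)=\wt{G_\sigma}$ et on pose $P=\tilde R N_Q$. Comme $P\cap\wt{G_\sigma}=\tilde R$ et $G_\sigma\subset\wt{G_\sigma}$, on a $P_\sigma=\tilde R_\sigma=R$ ; comme $M_P=M_{\tilde R}$, on a $A_P=A_{\tilde R}=A_R^\theta$.

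Vous obtenez au contraire $P$ sous la forme $\PP_G(\lambda)$, avec $\lambda$ générique dans $X_\ast(A_R^\theta)$. Cela demande un peu plus : la finitude des hyperplans radiciels, puis la densité de $X_\ast(A_{\tilde R})\otimes\QQ$ dans $\ago_{\tilde R}$ suivie d'une multiplication par un entier. En contrepartie, votre méthode décrit en fait tous les éléments de $\fc_R^0(M_1)$ et rejoint l'argument $\lim_n\PP_G(\lambda\mu^n)$ que l'article utilise plus loin pour le lemme \ref{lem:art5.1}. Les deux preuves reposent sur les mêmes faits structurels : $A_R^\theta=A_{\tilde R}$ et $\Cent_G(A_{\tilde R})=M_{\tilde R}$.

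En revanche, l'\og obstacle principal \fg{} que vous annoncez n'en est pas un, et aucune analyse cas par cas sur \eqref{eq:Risom} n'est nécessaire. Notons $\ago_R^{+}$ la chambre ouverte de $R$ dans $\ago_R$ relativement à $G_\sigma$. L'ensemble des $\lambda\in\ago_R^\theta$ tels que $\PP_{G_\sigma}(\lambda)=R$ est exactement $\ago_R^{+}\cap\ago_R^\theta$.

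\begin{enumerate}
\item Cet ensemble est ouvert dans $\ago_R^\theta$, et non dans $(\ago_{M_{1,\sigma}})^\theta$ comme vous l'écrivez d'abord.
\item Il est non vide : $\theta$ normalise $R$ et $M_R$, donc préserve $\ago_R^{+}$, et $\mu+\theta(\mu)$ convient pour tout $\mu\in\ago_R^{+}$. C'est précisément ce que fournit \cite[lemme 3.3]{HelWan}.
\item Un ouvert non vide de $\ago_R^\theta=\ago_{\tilde R}$, privé d'un nombre fini d'hyperplans propres, reste un ouvert non vide stable par homothéties positives.
\end{enumerate}

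Ces trois points règlent vos deux vérifications d'un coup, y compris le passage à un vrai cocaractère.
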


\begin{preuve}
Soit $Q\in\fc(M_1)$ un sous-groupe parabolique tel que  $M_Q=\Cent_G(A_{G_\sigma}^\theta)$. Alors $P=\tilde R N_Q\in\fc(M_1)$ vérifie $P_\sigma=P^\ast_\sigma=\tilde R_\sigma=R$ et $A_P=A_{P^\ast}=A_R^\theta$.
\end{preuve}

\begin{lemme}\label{lem:convexe}
Soit $M$ le centralisateur de $A_{R}^\theta$ dans $G$. 
  \begin{enumerate}
  \item On a $M=M_{\tilde R}$ et $A_M=A_R^\theta$. 
  
  \item Le groupe $M$ est l'unique élément de $\lc(M_1)$ tel que 
    \begin{align*}
      \fc_R^0(M_1)\subset \pc(M). 
    \end{align*} 

  \item L'ensemble $\fc_R^0(M_1)$ est une famille convexe dans $\pc(M)$ au sens de l'appendice A de \cite{CZ}. 
\end{enumerate}
\end{lemme}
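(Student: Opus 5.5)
Pour l'assertion 1, j'utiliserai la bijection \eqref{eq:bijtilde} et la description explicite \eqref{eq:centAsigthe}. Le groupe $\tilde R$ est un sous-groupe parabolique semi-standard de $\wt{G_\sigma}=\Cent_G(A_{G_\sigma}^\theta)$. On a établi plus haut que $A_R^\theta=A_{\tilde R}$. Le centralisateur dans $G$ de $A_{\tilde R}$ est le facteur de Levi $M_{\tilde R}$. En effet, $\tilde R$ est un sous-groupe parabolique de $\wt{G_\sigma}$, qui est lui-même un Levi de $G$ ; donc $M_{\tilde R}$ est un sous-groupe de Levi de $G$, et un Levi $L\in\lc(M_0)$ est toujours le centralisateur de $A_L$. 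Il vient $M=M_{\tilde R}$ et $A_M=A_{M_{\tilde R}}=A_{\tilde R}=A_R^\theta$. Pour vérifier concrètement que $A_{\tilde R}$ est bien le tore central déployé de $M_{\tilde R}$, je m'appuierai sur les formes produits \eqref{eq:Risom}, \eqref{eq:M1} et \eqref{eq:M1sigma}, facteur par facteur.

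Pour l'assertion 2, soit $P\in\fc_R^0(M_1)$. Par définition $\ago_P=\ago_R^\theta=\ago_{\tilde R}=\ago_M$, donc $A_P=A_M$, et le facteur de Levi $M_P$, qui est le centralisateur de $A_P$, vaut $M$. Ainsi $P\in\pc(M)$. Comme $\fc_R^0(M_1)$ est non vide par le lemme \ref{lem:existFR0}, l'unicité est immédiate : un sous-groupe parabolique possède un unique Levi contenant $M_1$.

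L'assertion 3 est le point délicat. Je caractériserai d'abord $\fc_R^0(M_1)$ à l'intérieur de $\pc(M)$ : pour $P\in\pc(M)$, on a $P\in\fc_R^0(M_1)$ si et seulement si $P^\ast=P\cap\wt{G_\sigma}=\tilde R$. En effet $P_\sigma=P^\ast_\sigma$, et la bijection \eqref{eq:bijtilde} donne $P_\sigma=R$ si et seulement si $P^\ast=\tilde R$. Or $P^\ast$ est le parabolique de $\wt{G_\sigma}$ de Levi $M$ dont les racines de $A_M$ sont celles de $P$ contenues dans $\wt{G_\sigma}$. La condition porte donc uniquement sur le signe des racines de $A_M$ dans $\mathrm{Lie}(\wt{G_\sigma})$, c'est-à-dire sur les racines qui s'annulent sur $\ago_{\wt{G_\sigma}}$. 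Il s'agit alors de vérifier la définition de convexité de \cite[appendice A]{CZ}. Je m'attends à ce qu'elle revienne à ceci : l'ensemble est formé des chambres de $\ago_M$ contenues dans un cône défini par des conditions de signe sur un sous-ensemble de racines, ou encore que, pour deux chambres adjacentes de la famille, le mur qui les sépare ne correspond pas à une racine imposée. Ici, c'est la préimage, sous la projection $\ago_M\to\ago_M^{\wt{G_\sigma}}$, de la chambre ouverte de $\tilde R$, intersectée avec l'ensemble des chambres. C'est un ensemble convexe, et la famille associée est convexe. L'obstacle principal sera d'ajuster précisément cet énoncé à la formulation de \cite{CZ}, par exemple via les ensembles de chambres délimités par un hyperplan et leurs intersections.
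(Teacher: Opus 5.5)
Your proposal is correct and follows the paper's proof. Assertions 1 and 2 come from $A_R^\theta=A_{\tilde R}$ and $M_P=\Cent_G(A_P)$, exactly as you argue, and for assertion 3 the paper also first rewrites $\fc_R^0(M_1)=\{P\in\pc(M)\mid P^\ast=\tilde R\}$, then closes the step you leave as an expectation just as you anticipate: it notes that this set is $\bigcap_{\alpha\in\Phi(\tilde R,A_M)}H(\alpha)^+$, where $H(\alpha)^+=\{P\in\pc(M)\mid \alpha\in\Phi(P,A_M)\}$, and invokes \cite[lemme A.0.3.1]{CZ}.
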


    \begin{preuve}
    Comme $A_R^\theta=A_{\tilde R}$, on a $M=\Cent_G(A_R^\theta)=\Cent_G(A_{\tilde R})=M_{\tilde R}$ et alors $A_M=A_{\tilde R}=A_R^\theta$ d'où l'assertion 1. 
    
    Puisque $A_R^\theta$ est connexe, pour tout $P\in\fc_R^0(M_1)$, on a $A_R^\theta=A_P$ et alors 
    $M=\Cent_G(A_R^\theta)=\Cent_G(A_P)=M_P$
    d'où l'assertion 2. 
    
	 Avec notre discussion ci-dessus, on a 
	\begin{align*}
      \fc_R^0(M_1)=\{P\in\fc(M_1) | P_\sigma=R, \ago_P=\ago_{R}^\theta\}=\{P\in\fc(M_1) | P^\ast=\tilde R, A_P=A_R^\theta\}. 
    \end{align*}
    D'après les premières deux assertions, on peut écrire 
	\begin{align}\label{eq:ensFR0}
      \fc_R^0(M_1)=\{P\in\pc(M) | P^\ast=\tilde R\}. 
    \end{align}
	Pour tout $P\in\pc(M)$, notons $\Phi(G, A_M)$, resp. $\Phi(P, A_M)$, l'ensemble des racines réduites de $A_M$ sur $G$, resp. $P$. Pour  toute $\alpha\in\Phi(G, A_M)$, soit $H(\alpha)^+$ l'ensemble de $P\in\pc(M)$ tels que $\alpha\in\Phi(P, A_M)$. Notons $\Phi(\tilde R, A_{\tilde R})\subset\Phi(G, A_M)$ le sous-ensemble des racines réduites de $A_{\tilde R}=A_M$ sur $\tilde R$. On a 
    \begin{align*}
      \fc_R^0(M_1)=\bigcap_{\alpha\in\Phi(\tilde R, A_{\tilde R})} H(\alpha)^+. 
    \end{align*} 
L'assertion 3 résulte alors de \cite[lemme A.0.3.1]{CZ}. 
    \end{preuve}

\begin{lemme}\label{lem:art5.1}
La fonction 
    \begin{align}\label{eq:sum-barF_R}
	\sum_{P\in\bar{\fc}_R(M_1)} \eps_P^G \hat\tau_P(X), \forall  X\in\ago_{R}^\theta
    \end{align}
est égale à la fonction caractéristique de l'ensemble 
    \begin{align}\label{eq:ens-art5.1}
	\{X\in\ago_G\oplus(\ago_{R}^{G_\sigma})^\theta | \varpi(X)\leq0, \forall  \varpi\in\hat\Delta_{R}^{G_\sigma}\}. 
    \end{align}
\end{lemme}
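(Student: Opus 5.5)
Le plan est de transporter la somme sur $\bar\fc_R(M_1)$ dans le groupe de Levi $\Cent_G(A_{G_\sigma}^\theta)=\wt{G_\sigma}$, puis d'appliquer les identités combinatoires classiques d'Arthur (du type \cite[lemme 5.1]{ar-family}). Pour cela, on utilise la bijection $R\mapsto\tilde R$ de \eqref{eq:bijtilde} et l'observation faite plus haut : $P_\sigma\supset R$ si et seulement si $P^\ast=P\cap\wt{G_\sigma}\supset\tilde R$. On regroupe alors les $P\in\bar\fc_R(M_1)$ selon le sous-groupe parabolique $P^\ast\in\fc^{\wt{G_\sigma}}(\tilde R)$. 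Pour $X\in\ago_R^\theta=\ago_{\tilde R}$, on écrit $X=X_{G}+X_{\wt{G_\sigma}}^G+X^{\wt{G_\sigma}}$ selon la décomposition $\ago_{\tilde R}=\ago_G\oplus\ago_{\wt{G_\sigma}}^G\oplus\ago_{\tilde R}^{\wt{G_\sigma}}$.

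\textbf{Étape 1.} On paramètre les $P\in\fc(M_1)$ ayant un $P^\ast$ donné. Un tel $P$ s'écrit de manière unique $P=P^\ast N_Q$, où $Q\in\fc(M_1)$ est un sous-groupe parabolique dont le facteur de Levi $M_Q$ contient $\wt{G_\sigma}$ (via $P\mapsto Q$, l'unique sous-groupe contenant $P$ avec $M_Q\supset \wt{G_\sigma}$). Plus précisément, on attend une bijection entre ces $P$ et les couples $(P^\ast,Q')$, où $Q'\in\pc(\wt{G_\sigma})$ est semi-standard et $P=P^\ast N_{Q'}$, avec $\ago_P=\ago_{P^\ast}$ et $\hat\tau_P$ relatif à $P$ dans $G$.

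\textbf{Étape 2.} Pour $P^\ast$ fixé, la somme $\sum_{P}\eps_P^G\hat\tau_P(X)$ porte sur les $P\in\fc(M_{P^\ast})$ dont l'intersection avec $\wt{G_\sigma}$ est $P^\ast$. Elle s'exprime par les $(G,M)$-familles et la convexité du lemme \ref{lem:convexe}. On s'attend à ce qu'elle se factorise en $\eps_{P^\ast}^{\wt{G_\sigma}}\hat\tau_{P^\ast}^{\wt{G_\sigma}}(X)$ multiplié par un facteur ne dépendant que de la projection $X_{\wt{G_\sigma}}^G$. Ce facteur est la somme sur les $Q\in\fc(\wt{G_\sigma})$ de $\eps_Q^G\hat\tau_Q(X)$, et il vaut la fonction caractéristique de $\{X_{\wt{G_\sigma}}^G=0\}$ par l'argument d'Arthur : somme alternée sur les chambres d'une famille convexe, cf. \cite[lemme A.0.3.1]{CZ} et \cite[lemme 5.1]{ar-family}. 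C'est l'étape principale, car la factorisation n'est pas exacte : $\hat\tau_P$ mélange les poids de $\wt{G_\sigma}$ et ceux de $G$. Je commencerais par établir une formule du type \eqref{eq:sigma 12}, ou utiliser la famille $\hat\tau$ comme $(G,M)$-famille orthogonale, puis appliquer le lemme combinatoire de Langlands-Arthur à la famille convexe $\fc_R^0(M_1)\subset\pc(M)$ décrite dans \eqref{eq:ensFR0}.

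\textbf{Étape 3.} Il reste la somme sur $P^\ast\in\fc^{\wt{G_\sigma}}(\tilde R)$ de $\eps_{P^\ast}^{\wt{G_\sigma}}\hat\tau_{P^\ast}^{\wt{G_\sigma}}(X)$. On la transporte dans $G_\sigma$ par la bijection avec $\fc^{G_\sigma}(R,\theta)$, en utilisant $\hat\Delta_{\tilde R}^{\wt{G_\sigma}}\leftrightarrow\hat\Delta_R^{G_\sigma}$ sur $(\ago_R^{G_\sigma})^\theta$. Le lemme de Langlands, sous la forme $\sum_{R\subset Q}\eps_Q\hat\tau_Q$ = fonction caractéristique de $\{\varpi\leq 0\}$ (cf. \cite[proposition 1.7.2]{labWal}), donne alors la condition $\varpi(X)\leq0$ pour tout $\varpi\in\hat\Delta_R^{G_\sigma}$. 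Combinée à la condition d'annulation de $X_{\wt{G_\sigma}}^G$ de l'étape 2, elle fournit exactement l'ensemble \eqref{eq:ens-art5.1}, en remarquant que $(\ago_{G_\sigma}^G)^\theta=\ago_{\wt{G_\sigma}}^G$.
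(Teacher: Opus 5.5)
Your Step 1 is false, and Step 2, which carries the whole content of the lemma, is announced but never proved.

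\textbf{Step 1 fails.} The elements of $\bar\fc_R(M_1)$ with a given $P^\ast=P\cap\wt{G_\sigma}$ are not all of the form $P^\ast N_{Q'}$ with $Q'\in\pc(\wt{G_\sigma})$.
\begin{itemize}
\item Already $P=G$ lies in $\bar\fc_R(M_1)$ and has $P^\ast=\wt{G_\sigma}$, but $\ago_G\neq\ago_{\wt{G_\sigma}}$ as soon as $\wt{G_\sigma}\neq G$.
\item The description fails even among the $P$ whose Levi is $M_{P^\ast}$. Take $G=GL_3$, $\theta=\diag(1,1,-1)$, $\sigma=\diag(1,-1,1)$. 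Then $\wt{G_\sigma}=G_\sigma$ is of type $(2,1)$ and $M_1$ is the diagonal torus. Let $\tilde R$ be a Borel subgroup of $\wt{G_\sigma}$.
\item In this example there are five $P$ with $P^\ast=\tilde R$: three Borel subgroups and two maximal parabolics. Only two of them are of the form $\tilde RN_{Q'}$, whereas $\fc(\wt{G_\sigma})$ has three elements.
\end{itemize}
So no term-by-term matching of a fibre of $P\mapsto P^\ast$ with $\fc(\wt{G_\sigma})$ can produce the product you want.

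\textbf{Step 2 is equivalent to the lemma.} The identity you hope for,
\[
\sum_{P\in\fc(M_{P^\ast}),\ P\cap\wt{G_\sigma}=P^\ast}\eps_P^G\hat\tau_P(X)=\eps_{P^\ast}^{\wt{G_\sigma}}\hat\tau_{P^\ast}^{\wt{G_\sigma}}(X)\,\mathbf{1}[X^G_{\wt{G_\sigma}}=0],
\]
is in fact true, but only as an identity of sums. By M\"obius inversion over the Boolean lattice $\fc^{\wt{G_\sigma}}(\tilde R)$, it is equivalent to the lemma itself for all $Q^\ast\supset P^\ast$. Your Steps 2--3 therefore merely restate the claim. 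Step 3 is correct once Step 2 is granted, but it is the easy part.

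Moreover, the fibre $\{P: P\cap\wt{G_\sigma}=P^\ast\}$ is not stable under enlargement, so the convexity argument of \cite{CZ} cannot be applied to it.

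\textbf{What is missing.} You need the structural fact that lets that argument treat the whole sum at once, with no grouping by $P^\ast$, followed by a cone identification:
\begin{enumerate}
\item $\bar\fc_R(M_1)$ is exactly the set of $P\in\fc(M_{\tilde R})$ containing some element of the convex family $\fc_R^0(M_1)$ of \eqref{eq:ensFR0}. To see this, write $P=\PP_G(\mu)$ and $\tilde R=\PP_{\wt{G_\sigma}}(\lambda)$ with $\lambda$ regular. For $n\gg0$, $\PP_G(\lambda\mu^n)$ lies in $P$ and satisfies $\PP_G(\lambda\mu^n)\cap\wt{G_\sigma}=\tilde R$, hence belongs to $\fc_R^0(M_1)$.
\item Then \cite[lemme A.0.5.3]{CZ} gives the sum as the characteristic function of $\bigcap_{P\in\fc_R^0(M_1)}\{X:\varpi(X)\leq0\ \forall\varpi\in\hat\Delta_P\}$.
\item This cone must still be identified with \eqref{eq:ens-art5.1}. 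The two chambers $\tilde RN_Q$ and $\tilde RN_{\ov Q}$, with $M_Q=\wt{G_\sigma}$, force the $\ago^G_{\wt{G_\sigma}}$-component to vanish and give $\varpi\leq0$ on $\hat\Delta^{\wt{G_\sigma}}_{\tilde R}$. Cone duality, using $\ago_P^{G,+}\subset\ago^G_{\wt{G_\sigma}}+\ago^{\wt{G_\sigma},+}_{\tilde R}$, gives the reverse inclusion.
\end{enumerate}
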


    \begin{preuve}
Soit $M=M_{\tilde R}$. Il existe $\lambda\in X_\ast(A_M)$ tel que $\tilde R=\PP_{\Cent_G(A_{G_\sigma}^\theta)}(\lambda)$. D'après notre discussion, on peut écrire 
    \begin{align*} 
	\bar\fc_R(M_1)=\{P\in\fc(M) | P^\ast\supset\tilde R\}. 
    \end{align*}
Pour $P\in\bar\fc_R(M_1)$, il existe $\mu\in X_\ast(A_P)$ tel que $P=\PP_G(\mu)$. On a $\PP_{\Cent_G(A_{G_\sigma}^\theta)}(\mu)=P^\ast\supset\tilde R$. Notons que la séquence $\PP_G(\lambda \mu^n), n\in\NN$ se stabilise lorsque l'entier $n$ est assez grand. Soit $Q=\lim\limits_{n\to\infty}\PP_G(\lambda \mu^n)$. Par \eqref{eq:ensFR0}, on trouve que $Q\in \fc_R^0(M_1)$ et $Q\subset P$. On a montré que 
    \begin{align}\label{eq:barcontient0} 
	\bar\fc_R(M_1)=\{P\in\fc(M) | \exists  Q\in \fc_R^0(M_1), Q\subset P\}. 
    \end{align} 
    Il résulte alors de \cite[lemme A.0.5.3]{CZ} et du lemme \ref{lem:convexe} que \eqref{eq:sum-barF_R} est égale à la fonction caractéristique de l'ensemble 
    \begin{align}\label{eq:ens-barF_R}
	\{X\in\ago_M | \varpi(X)\leq0, \forall P\in \fc_R^0(M_1), \forall  \varpi\in\hat\Delta_P\}. 
    \end{align}
   Il est évident que cet ensemble est invariant par translation de $\ago_G$. 
   
   Comme dans la preuve du lemme \ref{lem:existFR0}, soit $Q\in\fc(M_1)$ un sous-groupe parabolique tel que  $M_Q=\wt{G_\sigma}$ et soit $P^\dagger=\tilde R N_Q\in \fc_R^0(M_1)$. Notons $\ov{Q}$ le sous-groupe parabolique semi-standard opposé à $Q$. Alors $P^\ddagger=\tilde R N_{\ov{Q}}\in\fc_R^0(M_1)$. Soit $X\in\ago_M^G$ un élément de \eqref{eq:ens-barF_R}. On écrit $X=X_1+X_2$ selon la décomposition $\ago_M^G=\ago_{\wt{G_\sigma}}^G\oplus\ago_M^{\wt{G_\sigma}}$. Comme $\varpi_\alpha(X)\leq0$ pour toute $\alpha\in\Delta_{P^\dagger}$, en particulier, on a $\varpi_\alpha(X_1)=\varpi_\alpha(X)\leq0$ pour toute $\alpha\in\Delta_{P^\dagger}-\Delta_{P^\dagger}^Q$ c'est-à-dire $\varpi(X_1)\leq0$ pour tout $\varpi\in\hat\Delta_Q$. De même, en remplaçant $P^\dagger$ par $P^\ddagger$, on a $\varpi(X_1)\leq0$ pour tout $\varpi\in\hat\Delta_{\ov{Q}}$. On en déduit  $\varpi(X_1)=0$ pour tout $\varpi\in\hat\Delta_Q$ autrement dit $X_1=0$. On a montré que la projection dans $\ago_{\wt{G_\sigma}}^G=(\ago_{G_\sigma}^G)^\theta$ de tout élément de \eqref{eq:ens-barF_R} s'annule. 
   
   Soit $X\in\ago_M^{\wt{G_\sigma}}=(\ago_{R}^{G_\sigma})^\theta$ un élément de \eqref{eq:ens-barF_R}. La propriété $\varpi_\alpha(X)\leq0$ pour toute $\alpha\in\Delta_{P^\dagger}^Q\subset\Delta_{P^\dagger}$ implique $\varpi(X)\leq0$ pour tout $\varpi\in\hat\Delta_{P^\dagger}^Q=\hat\Delta_{\tilde R}^{\wt{G_\sigma}}$. On a prouvé que \eqref{eq:ens-barF_R} est un sous-ensemble de \eqref{eq:ens-art5.1}. 
   
   Pour tout cône $C$ de l'espace euclidien $\ago_M^G$, cf. \S \ref{S:Haar}, notons $C^\vee$ le cône dual et $-C=\{-X | X\in C\}$. Pour deux sous-ensembles non-vides $A, B\subset\ago_M^G$, soit $A+B$ leur somme de Minkowski. Soit $P\in \fc_R^0(M_1)$. D'après \eqref{eq:ensFR0}, on a $P\cap \wt{G_\sigma}=\tilde R$ et $\ago_P^{G,+}\subset\ago_{\wt{G_\sigma}}^G+\ago_{\tilde R}^{\wt{G_\sigma},+}$.
   Mais  
    \begin{align}\label{eq:pf5.1-cone1}
	-\ov{\ago_P^{G,+}}^\vee=\{X\in\ago_M^G | \varpi(X)\leq0, \forall  \varpi\in\hat\Delta_P\} 
    \end{align}
    et
    \begin{align}\label{eq:pf5.1-cone2}
	-\ov{\ago_{\wt{G_\sigma}}^G+\ago_{\tilde R}^{\wt{G_\sigma},+}}^\vee=\{X\in\ago_M^{\wt{G_\sigma}} | \varpi(X)\leq0, \forall  \varpi\in\hat\Delta_{\tilde R}^{\wt{G_\sigma}}\}. 
    \end{align}
    On en déduit que \eqref{eq:pf5.1-cone2} est un sous-ensemble de \eqref{eq:pf5.1-cone1}. Alors \eqref{eq:ens-art5.1} est un sous-ensemble de \eqref{eq:ens-barF_R}. 
    
    En résumé, on a établi l'égalité des ensembles \eqref{eq:ens-barF_R} et \eqref{eq:ens-art5.1} ce qui conclut. 
    \end{preuve} 

\begin{lemme}\label{lem:art5.2}
Pour $X\in\ago_R^\theta$ on a 
    \begin{align*}
	\sum_{P\in\fc_R(M_1)} \eps_P^G \hat\tau_P(X) = 
\left\{ \begin{array}{ll}
(-1)^{\dim(\ago_R^{G_\sigma})^\theta} \hat\tau_R^{G_\sigma}(X) & \text{si $X\in\ago_G\oplus(\ago_R^{G_\sigma})^\theta$ ; }\\
0 & \text{sinon. }\\
\end{array} \right. 
    \end{align*}
\end{lemme}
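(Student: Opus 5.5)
Le plan est de déduire l'énoncé du lemme \ref{lem:art5.1} par une inversion de Möbius sur l'ensemble ordonné des sous-groupes paraboliques de $G_\sigma$ stables par $\theta$ qui contiennent $R$. On commence par partitionner $\bar\fc_R(M_1)$ selon la valeur de $P_\sigma$. Tout $P\in\bar\fc_R(M_1)$ a un unique $Q=P_\sigma\in\fc^{G_\sigma}(M_{1,\sigma},\theta)$ avec $Q\supset R$ (lemme \ref{lem:Psigma}), d'où
\begin{align*}
\sum_{P\in\bar\fc_R(M_1)}\eps_P^G\hat\tau_P(X)=\sum_{Q\in\fc^{G_\sigma}(M_{1,\sigma},\theta),\,Q\supset R}\ \sum_{P\in\fc_Q(M_1)}\eps_P^G\hat\tau_P(X).
\end{align*}
Pour $X\in\ago_R^\theta$, il faudra donner un sens à $\hat\tau_P(X)$ quand $P\in\fc_Q(M_1)$ : on projette $X$ sur $\ago_P$. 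On utilisera la bijection $Q\mapsto\tilde Q$ et l'égalité $A_Q^\theta=A_{\tilde Q}$ pour ramener tout au groupe de Levi $\wt{G_\sigma}=\Cent_G(A_{G_\sigma}^\theta)$. On vérifiera que la formule du lemme \ref{lem:art5.1}, appliquée à chaque $Q\supset R$ et restreinte à $\ago_R^\theta$ via la projection sur $\ago_Q^\theta$, reste valable. C'est le point technique à contrôler.

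Posons $f(Q)(X)=\sum_{P\in\fc_Q(M_1)}\eps_P^G\hat\tau_P(X)$ et $g(Q)(X)=\sum_{Q'\supset Q}f(Q')(X)$. Le lemme \ref{lem:art5.1} appliqué à $Q$ donne $g(Q)$ : c'est la fonction caractéristique de l'ensemble des $X$ dont la projection sur $(\ago_{G_\sigma}^G)^\theta$ est nulle et tels que $\varpi(X)\leq 0$ pour tout $\varpi\in\hat\Delta_Q^{G_\sigma}$. L'inversion de Möbius sur le treillis des sous-groupes paraboliques semi-standard de $\wt{G_\sigma}$ contenant $\tilde R$, qui est booléen avec pour fonction de Möbius $(-1)^{\dim \ago_R^Q}$, donne
\begin{align*}
f(R)(X)=\sum_{Q\supset R}(-1)^{\dim(\ago_R^{Q})^\theta}g(Q)(X).
\end{align*}
Ici on identifie $\dim(\ago_R^Q)^\theta$ à $\dim\ago_{\tilde R}^{\tilde Q}$.

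Si la composante de $X$ dans $(\ago_{G_\sigma}^G)^\theta$ est non nulle, tous les $g(Q)(X)$ sont nuls et l'on obtient $0$. Sinon, on se place dans $\ago_{\tilde R}^{\wt{G_\sigma}}$. Il reste l'identité combinatoire standard
\begin{align*}
\sum_{\tilde R\subset\tilde Q}(-1)^{\dim\ago_{\tilde R}^{\tilde Q}}\,\mathbf 1\{\varpi(X)\le 0,\ \forall\varpi\in\hat\Delta_{\tilde Q}^{\wt{G_\sigma}}\}=(-1)^{\dim\ago_{\tilde R}^{\wt{G_\sigma}}}\hat\tau_{\tilde R}^{\wt{G_\sigma}}(X).
\end{align*}
On l'obtient en écrivant $\mathbf 1\{\varpi\le0\}=1-\mathbf 1\{\varpi>0\}$ et en développant, puisque les poids $\hat\Delta_{\tilde Q}$ sont les poids de $\hat\Delta_{\tilde R}$ indexés par les racines simples hors de $\tilde Q$ ; c'est une variante de \cite[proposition 1.7.2]{labWal}. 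On conclut avec $\hat\tau_R^{G_\sigma}=\hat\tau_{\tilde R}^{\wt{G_\sigma}}$ sur $(\ago_R^{G_\sigma})^\theta$.

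L'obstacle principal est de justifier la compatibilité des projections entre $\ago_R^\theta$ et $\ago_Q^\theta$ dans l'application du lemme \ref{lem:art5.1} à chaque $Q$. On la vérifiera via la décomposition \eqref{eq:dec-aRtheta} et l'identification $\ago_{\tilde R}$.
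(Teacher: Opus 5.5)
Votre démonstration est correcte et suit essentiellement celle de l'article. L'article insère l'identité binomiale $\sum_{Q\in\fc^{P_\sigma}(R,\theta)}(-1)^{\dim(\ago_R^Q)^\theta}=\delta_{R,P_\sigma}$ dans la somme sur $\bar\fc_R(M_1)$ puis échange les sommes, ce qui est exactement votre inversion de Möbius. Il applique ensuite le lemme \ref{lem:art5.1} à chaque $Q$ via $X\mapsto X_Q$ (point immédiat, et non un véritable obstacle) et conclut par la même identité binomiale. La seule différence est de présentation : l'article raisonne directement sur les parties $\theta$-stables de $\Delta_R^{G_\sigma}$, en introduisant $R'$ défini par $\hat\Delta_{R'}^{G_\sigma}=\{\varpi\in\hat\Delta_R^{G_\sigma}\mid\varpi(X)\leq 0\}$, au lieu de transporter le treillis vers $\wt{G_\sigma}$ par $Q\mapsto\tilde Q$.
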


\begin{preuve}
Pour tout $P\in \bar{\fc}_R(M_1)$, l'application 
\begin{align*}
    Q\mapsto\Delta_R^Q, \text{ resp. } Q\mapsto\hat\Delta_Q^{P_\sigma}, 
\end{align*}
induit une bijection de $\fc^{P_\sigma}(R,\theta)$ sur 
l'ensemble des parties $\theta$-stables de $\Delta_R^{P_\sigma}$, resp. de $\hat\Delta_R^{P_\sigma}$, ce qui est un cas particulier de \cite[lemme 2.7.1]{labWal}. D'après la formule de binôme, cf. \cite[proposition 1.1]{ar1}, on a 
  \begin{align*}
   \sum_{Q\in\fc^{P_\sigma}(R,\theta)} (-1)^{\dim(\ago_R^Q)^\theta}=
  \left\{ \begin{array}{ll}
1 & \text{si $R=P_\sigma$ ; }\\
0 & \text{sinon. }\\
\end{array} \right. 
  \end{align*}
Ainsi on obtient 
  \begin{align}\nonumber
  \sum_{P\in\fc_R(M_1)} \eps_P^G \hat\tau_P(X) = \sum_{P\in\bar\fc_R(M_1)} \eps_P^G \hat\tau_P(X) \left(\sum_{Q\in \fc^{P_\sigma}(R,\theta)} (-1)^{\dim(\ago_R^Q)^\theta}\right) \\ \label{eq:pf-art5.2}
= \sum_{Q\in\fc^{G_\sigma}(R,\theta)} (-1)^{\dim(\ago_R^Q)^\theta} \sum_{P\in\bar\fc_Q(M_1)} \eps_P^G \hat\tau_P(X). 
  \end{align}
Si $X\notin \ago_G\oplus(\ago_R^{G_\sigma})^\theta$, alors $X\notin \ago_G\oplus(\ago_Q^{G_\sigma})^\theta$ pour tout $Q\in\fc^{G_\sigma}(R,\theta)$. Dans ce cas, il résulte du lemme \ref{lem:art5.1} que  la dernière expression s'annule. Supposons maintenant que $X\in\ago_G\oplus(\ago_R^{G_\sigma})^\theta$. Notons que l'ensemble $\{\varpi\in\hat{\Delta}_R^{G_\sigma} | \varpi(X)\leq 0\}$ est un sous-ensemble $\theta$-stable de $\hat{\Delta}_R^{G_\sigma}$. Alors il existe un unique $R'\in\fc^{G_\sigma}(R,\theta)$ tel que 
	\begin{align}\label{eq:pf5.2R'}
	\hat{\Delta}_{R'}^{G_\sigma}=\{\varpi\in\hat{\Delta}_R^{G_\sigma} | \varpi(X)\leq 0\}. 
	\end{align} 
Soit $Q\in\fc^{G_\sigma}(R,\theta)$. Notons que 
\begin{align}\label{eq:pf5.2-proj}
    \sum\limits_{P\in\bar\fc_Q(M_1)} \eps_P^G \hat\tau_P(X)=\sum\limits_{P\in\bar\fc_Q(M_1)} \eps_P^G \hat\tau_P(X_Q).
\end{align}
Si cette expression est non nulle, le lemme \ref{lem:art5.1} implique que $\varpi(X)=\varpi(X_Q)\leq0$ pour tout $\varpi\in\hat{\Delta}_Q^{G_\sigma}$ autrement dit $\hat{\Delta}_Q^{G_\sigma}\subset\hat{\Delta}_{R'}^{G_\sigma}$ ou encore $R'\subset Q$. Si $R'\subset Q$, on a $\varpi(X_Q)=\varpi(X)\leq0$ pour tout $\varpi\in\hat{\Delta}_Q^{G_\sigma}\subset\hat{\Delta}_{R'}^{G_\sigma}$ ce qui implique que \eqref{eq:pf5.2-proj} vaut $1$. Il s'ensuit que l'expression \eqref{eq:pf-art5.2} devient 
  \begin{align*}
   \sum_{Q\in\fc^{G_\sigma}(R',\theta)} (-1)^{\dim(\ago_R^Q)^\theta}=
  \left\{ \begin{array}{ll}
(-1)^{\dim(\ago_R^{G_\sigma})^\theta} & \text{si $R'=G_\sigma$ ; }\\
0 & \text{sinon. }\\
\end{array} \right. 
  \end{align*}
Mais d'après \eqref{eq:pf5.2R'}, $R'=G_\sigma$ si et seulement si $\varpi(X)>0$ pour tout $\varpi\in\hat{\Delta}_R^{G_\sigma}$ c'est-à-dire que $\hat\tau_R^{G_\sigma}(X)=1$. Le lemme s'ensuit. 
\end{preuve}

\begin{lemme}\label{lem:art5.3}
Pour $X\in\ago_R^\theta$ on a 
    \begin{align*}
	\sum_{Q\in\bar\fc_R(M_1)} \eps_Q^G \tau_R^{Q_\sigma}(X) \hat\tau_Q(X) = 
\left\{ \begin{array}{ll}
1 & \text{si $R=G_\sigma$ et $X\in\ago_G$ ; }\\
0 & \text{sinon. }\\
\end{array} \right. 
    \end{align*}
\end{lemme}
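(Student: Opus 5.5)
Le plan est de suivre l'argument d'Arthur pour \cite[lemme 5.3]{ar-family} (analogue de la relation $\sum_{R\subset Q}\eps_Q^G\tau_R^Q\hat\tau_Q=\delta_{R,G}$), en remplaçant la somme sur $Q\in\bar\fc_R(M_1)$ par une double somme indexée d'abord par $Q'=Q_\sigma\in\fc^{G_\sigma}(R,\theta)$, puis par $Q\in\fc_{Q'}(M_1)$. On fixe $X\in\ago_R^\theta$ et on regroupe
\begin{align*}
\sum_{Q\in\bar\fc_R(M_1)} \eps_Q^G \tau_R^{Q_\sigma}(X)\hat\tau_Q(X)=\sum_{Q'\in\fc^{G_\sigma}(R,\theta)} \tau_R^{Q'}(X)\sum_{Q\in\fc_{Q'}(M_1)}\eps_Q^G\hat\tau_Q(X_{Q'}),
\end{align*}
en utilisant que $\hat\tau_Q(X)$ ne dépend que de la projection de $X$ sur $\ago_Q$, qui se factorise par $\ago_{Q'}^\theta$ lorsque $Q_\sigma=Q'$. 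Cette égalité n'est qu'une réorganisation, par le lemme \ref{lem:Psigma}, de $\bar\fc_R(M_1)$ comme réunion disjointe des $\fc_{Q'}(M_1)$.

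On applique ensuite le lemme \ref{lem:art5.2} à chaque $Q'$. La somme intérieure vaut $(-1)^{\dim(\ago_{Q'}^{G_\sigma})^\theta}\hat\tau_{Q'}^{G_\sigma}(X_{Q'})$ si $X_{Q'}\in\ago_G\oplus(\ago_{Q'}^{G_\sigma})^\theta$, et $0$ sinon. Or $X_{Q'}$ a même composante que $X$ dans $(\ago_{G_\sigma}^G)^\theta$, par \eqref{eq:dec-aRtheta}. La condition équivaut donc à l'annulation de la composante $X^{(1)}$ de $X$ dans $(\ago_{G_\sigma}^G)^\theta$, et elle est indépendante de $Q'$.

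Si $X^{(1)}\neq0$, tous les termes sont nuls, ce qui est cohérent avec l'énoncé puisque alors $X\notin\ago_G$. Si $X^{(1)}=0$, il reste
\begin{align*}
\sum_{Q'\in\fc^{G_\sigma}(R,\theta)}(-1)^{\dim(\ago_{Q'}^{G_\sigma})^\theta}\tau_R^{Q'}(X)\hat\tau_{Q'}^{G_\sigma}(X).
\end{align*}
C'est l'identité combinatoire de Langlands pour le système de racines \og $\theta$-invariant\fg{} sur $(\ago_R^{G_\sigma})^\theta$. Les $Q'\in\fc^{G_\sigma}(R,\theta)$ correspondent aux parties $\theta$-stables de $\Delta_R^{G_\sigma}$, cf. \cite[lemme 2.7.1]{labWal}, et le signe est $(-1)$ élevé à la dimension de l'espace fixe. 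Le plus propre est de passer par la bijection \eqref{eq:bijtilde} $Q'\mapsto\tilde Q'$ vers $\fc^{\wt{G_\sigma}}(\tilde R)$, qui identifie $(\ago_{Q'}^{G_\sigma})^\theta$ à $\ago_{\tilde Q'}^{\wt{G_\sigma}}$. Il faut vérifier que les fonctions $\tau_R^{Q'}$ et $\hat\tau_{Q'}^{G_\sigma}$, restreintes aux vecteurs $\theta$-fixes, coïncident avec $\tau_{\tilde R}^{\tilde Q'}$ et $\hat\tau_{\tilde Q'}^{\wt{G_\sigma}}$. Pour cela, on observe que les racines et poids simples $\theta$-conjugués ont même restriction à $(\ago_R)^\theta$, à un facteur positif près. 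La somme devient alors $\sum_{\tilde R\subset \tilde Q'}\eps_{\tilde Q'}^{\wt{G_\sigma}}\tau_{\tilde R}^{\tilde Q'}\hat\tau_{\tilde Q'}^{\wt{G_\sigma}}(X)$ pour le groupe réductif $\wt{G_\sigma}$. D'après \cite[lemme 6.1]{ar1} (voir aussi \eqref{eq:sigma 12}, ou le lemme de Langlands \cite[proposition 1.7.2]{labWal}), elle vaut $1$ si $\tilde R=\wt{G_\sigma}$, c'est-à-dire $R=G_\sigma$, et $0$ sinon.

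Lorsque $R=G_\sigma$, on a $X\in\ago_{G_\sigma}^\theta=\ago_G\oplus(\ago_{G_\sigma}^G)^\theta$ et la condition $X^{(1)}=0$ signifie exactement $X\in\ago_G$, ce qui donne l'énoncé. Le point délicat est la vérification ci-dessus de la compatibilité des fonctions $\tau$, $\hat\tau$ restreintes au sous-espace $\theta$-fixe avec celles de $\wt{G_\sigma}$. Je la mènerais explicitement grâce à la description produit \eqref{eq:Risom}. Dans les facteurs de type 1, $\theta$ échange les deux copies $R_\alpha\times R_\alpha$, et les vecteurs fixes sont diagonaux ; dans les autres facteurs, $\theta$ agit trivialement sur $\ago$. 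Dans les deux cas, la restriction des racines et poids simples est évidente.
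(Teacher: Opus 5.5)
Your proof is correct and follows the paper's argument. You regroup the sum according to $Q_\sigma$ via lemme \ref{lem:Psigma}, then apply lemme \ref{lem:art5.2} to each $Q_\sigma\in\fc^{G_\sigma}(R,\theta)$, which forces the $(\ago_{G_\sigma}^G)^\theta$-component of $X$ to vanish, and you conclude with Langlands' combinatorial lemma on the $\theta$-fixed spaces. The only difference is that the paper invokes the $\theta$-twisted form of that lemma directly from \cite[proposition 1.7.2 et lemme 2.9.2]{labWal}, whereas you transport it to the ordinary lemma for $\wt{G_\sigma}$ via the bijection \eqref{eq:bijtilde}; this costs you the compatibility check of $\tau$ and $\hat\tau$ that you describe, which does hold by \eqref{eq:Risom}.
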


\begin{preuve}
	En utilisant le lemme \ref{lem:Psigma}, on voit que l'expression de gauche égale 
	\begin{align}\label{eq:pf5.3}
	\sum_{P\in\fc^{G_\sigma}(R,\theta)} \tau_R^P(X) \sum_{Q\in\fc_P(M_1)} \eps_Q^G \hat\tau_Q(X). 
	\end{align}
	En vertu du lemme \ref{lem:art5.2}, cette expression s'annule sauf si $X\in \ago_G\oplus(\ago_R^{G_\sigma})^\theta$. Dans ce cas, pour tout $P\in\fc^{G_\sigma}(R,\theta)$, on a 
	\begin{align*}
	 \sum_{Q\in\fc_P(M_1)} \eps_Q^G \hat\tau_Q(X)=\sum_{Q\in\fc_P(M_1)} \eps_Q^G \hat\tau_Q(X_P)=(-1)^{\dim(\ago_P^{G_\sigma})^\theta} \hat\tau_P^{G_\sigma}(X) \end{align*}
	 puisque $X_P\in\ago_G\oplus(\ago_P^{G_\sigma})^\theta$. 
	Alors l'expression \eqref{eq:pf5.3} devient 
	\begin{align*}
	\sum_{P\in\fc^{G_\sigma}(R,\theta)} (-1)^{\dim(\ago_P^{G_\sigma})^\theta} \tau_R^P(X)  \hat\tau_P^{G_\sigma}(X). 
	\end{align*}
D'après le lemme combinatoire de Langlands (voir \cite[proposition 1.7.2 et lemme 2.9.2]{labWal}), la dernière somme est nulle sauf si $R=G_\sigma$ auquel cas cette somme égale $1$. 
\end{preuve}

\begin{lemme}\label{lem:art4.2}
Pour $P\in\bar\fc_R(M_1)$ et $X\in\ago_R^\theta$ on a 
    \begin{align*}
	\sum_{Q\in\bar\fc_R(M_1), Q\subset P} \eps_Q^P \tau_R^{Q_\sigma}(X) \hat\tau_Q^P(X) = 
\left\{ \begin{array}{ll}
1 & \text{si $P\in\fc_R(M_1)$ et $X\in\ago_P$ ; }\\
0 & \text{sinon. }\\
\end{array} \right. 
    \end{align*}
\end{lemme}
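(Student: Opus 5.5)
Le plan est de réduire l'énoncé au lemme \ref{lem:art5.3} appliqué au groupe de Levi $M_P$ au lieu de $G$, exactement comme \cite[lemme 4.2]{ar-family} se déduit de l'analogue de \cite[lemme 5.3]{ar-family}. Fixons $P\in\bar\fc_R(M_1)$. La somme ne porte que sur des $Q\subset P$, donc sur des $Q\cap M_P$, qui sont des sous-groupes paraboliques de $M_P$ contenant $M_1$. L'élément $\sigma$ reste $(M_1,\theta)$-anisotrope et le centralisateur de $\sigma$ dans $M_P$ est $M_{P,\sigma}$. Celui-ci est un sous-groupe de Levi $\theta$-stable du groupe $P_\sigma$, lequel contient $R$.

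On pose $R_P=R\cap M_{P}$, ou plus exactement son image $R\cap M_{P,\sigma}$. C'est un élément de $\fc^{M_{P,\sigma}}(M_{1,\sigma},\theta)$. On vérifie que l'application $Q\mapsto Q\cap M_P$ induit une bijection de $\{Q\in\bar\fc_R(M_1)\mid Q\subset P\}$ sur l'ensemble $\bar\fc^{M_P}_{R_P}(M_1)$ défini relativement à $M_P$. Les identifications habituelles donnent $\ago_Q^P=\ago_{Q\cap M_P}^{M_P}$, $\eps_Q^P=\eps_{Q\cap M_P}^{M_P}$ et $\hat\tau_Q^P=\hat\tau_{Q\cap M_P}^{M_P}$. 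On a aussi $\tau_R^{Q_\sigma}=\tau_{R_P}^{(Q\cap M_P)_\sigma}$ sur $\ago_R^\theta$, en utilisant que $\ago_R=\ago_{R_P}$ pour les tores centraux. Le point à contrôler est que $R$ est bien de la forme $R_P N_{P_\sigma}$, puisque $R\subset P_\sigma$, et que $Q_\sigma$ se décompose de même.

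On applique alors le lemme \ref{lem:art5.3} à $(M_P,M_P^\theta)$, avec $R_P$ et $X\in\ago_{R_P}^\theta$. La somme vaut $1$ si $R_P=M_{P,\sigma}$ et $X\in\ago_{M_P}=\ago_P$, et $0$ sinon. Il reste à traduire la condition : $R_P=M_{P,\sigma}$ équivaut à $R=P_\sigma$, c'est-à-dire à $P\in\fc_R(M_1)$, ce qui donne l'énoncé.

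L'obstacle principal est de vérifier que toutes les constructions de la section, dont les lemmes \ref{lem:Psigma}, \ref{lem:convexe} et \ref{lem:art5.1} ainsi que la bijection \eqref{eq:bijtilde}, valent pour $M_P$, produit de groupes $GL_{D}$, avec l'involution induite. Il faut aussi vérifier la compatibilité $A_{R_P}^\theta=A_R^\theta$ hors de $\ago_P$, c'est-à-dire que la décomposition $\ago_R^\theta=\ago_P\oplus(\ago_R^{P_\sigma})^\theta$ est cohérente. Si cette descente posait problème, on referait directement l'argument du lemme \ref{lem:art5.3} avec $P_\sigma$ à la place de $G_\sigma$, en utilisant \eqref{eq:pf-art5.2} et le lemme de Langlands.
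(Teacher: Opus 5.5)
Your proposal is correct and is essentially the paper's proof. The steps match: the bijection $Q\mapsto Q\cap M_P$ from $\{Q\in\bar\fc_R(M_1)\mid Q\subset P\}$ onto $\bar\fc^{M_P}_{M_{P,\sigma}\cap R}(M_1)$, justified by $R=(M_{P,\sigma}\cap R)N_{P,\sigma}$ since $R\subset P_\sigma$; then Lemma~\ref{lem:art5.3} for $(M_P,M_{P,\sigma},M_{P,\sigma}\cap R)$; then $M_{P,\sigma}\cap R=M_{P,\sigma}\iff R=P_\sigma$.

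One correction to your closing remark, which the argument does not use: in general $\ago_R^\theta\neq\ago_P\oplus(\ago_R^{P_\sigma})^\theta$. The reason is that $(\ago_{P_\sigma})^\theta$ can be strictly larger than $\ago_P$ when $\sigma$ is not $(M_P,\theta)$-anisotropic; compare \eqref{eq:dec-aRtheta}, whose middle term $(\ago_{G_\sigma}^G)^\theta$ need not vanish.
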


\begin{preuve}
	Soit $P=MN\in\bar\fc_R(M_1)$ muni de sa décomposition de Levi. Alors $M_\sigma\cap R\in\fc^{M_\sigma}(M_{1,\sigma},\theta)$ qui est définie comme un analogue de \eqref{eq:defFGsigthe} avec $G_\sigma$ remplacé par $M_\sigma$. On note 
	\begin{align*}	           \bar\fc_{M_\sigma\cap R}^M(M_1)=\{Q'\in\fc^M(M_1) | Q'_\sigma\supset M_\sigma\cap R \}. 
	\end{align*}
	Il est connu que  l'application 
	\begin{align}\label{eq:pfart4.2}
	Q\mapsto M\cap Q
	\end{align}
    induit une bijection de $\fc^P(M_1)$ sur $\fc^M(M_1)$ dont la réciproque est donnée par $Q'\mapsto Q'N$. D'une part, si $Q\in\fc^P(M_1)$ vérifie $Q_\sigma\supset R$, on a $(M\cap Q)_\sigma=M_\sigma\cap Q_\sigma\supset M_\sigma\cap R$. D'autre part, si $Q'\in\fc^M(M_1)$ vérifie $Q'_\sigma\supset M_\sigma\cap R$, comme $P_\sigma\in\fc^{G_\sigma}(R,\theta)$, on a
    \begin{align}\label{eq:pf4.2R}
        R=(M_{P_\sigma}\cap R)N_{P_\sigma}=(M_\sigma\cap R) N_\sigma
    \end{align}
    et alors $(Q'N)_\sigma=Q'_\sigma N_\sigma\supset (M_\sigma \cap R)N_\sigma=R$. On a montré que la restriction de \eqref{eq:pfart4.2} induit une bijection de l'ensemble $\{Q\in\bar\fc_R(M_1) | Q\subset P\}$ sur $\bar\fc_{M_\sigma\cap R}^M(M_1)$. 
    
La somme en question est similaire à celle du lemme \ref{lem:art5.3} mais avec $(G, G_\sigma, R)$ remplacé par $(M, M_{\sigma}, M_{\sigma}\cap R)$ qui est effectivement un analogue du produit. Elle est donc nulle sauf si $M_{\sigma}\cap R=M_{\sigma}$ et $X\in\ago_M=\ago_P$ auquel cas elle vaut $1$. Pour conclure on remarque que $M_{\sigma}\cap R=M_{\sigma}$ si et seulement si $R=P_\sigma$ d'après \eqref{eq:pf4.2R}. 
\end{preuve}
\end{paragr}

\begin{paragr}
	En suivant \cite[\S2]{arthur2}, pour tout $P\in\fc(M_0)$ on définit 
	\begin{align*}
	\Gamma_P^G(X,Y)=\sum_{Q\in\fc(P)} \eps_Q^G\tau_P^Q(X)\hat\tau_Q(X-Y), \forall X, Y\in\ago_0. 
	\end{align*}
	Cette fonction ne dépend que des projections de $X,Y$ sur $\ago_P^G$. Elle vérifie 
	\begin{align}\label{eq:tau=Gam}
	\hat\tau_P(X-Y)=\sum_{Q\in\fc(P)} \eps_Q^G\hat\tau_P^Q(X)\Gamma_Q^G(X,Y), \forall X, Y\in\ago_0. 
	\end{align}
	Pour tout $Y$ fixé, la fonction $\Gamma_P^G(\cdot,Y)$ sur $\ago_P^G$ est à support compact d'après \cite[lemme 2.1]{arthur2}. Soit 
	\begin{align*}
	c'_P(\lambda, Y)=\int_{\ago_P^G} \Gamma_P^G(X,Y) \exp(\langle \lambda, X \rangle) dX, \forall \lambda\in \ago_{P,\CC}^\ast. 
	\end{align*}
C'est une fonction entière de $\lambda$. En particulier pour tout $x\in G(\AAA)$, on pose 
	\begin{align*}
	v'_P(\lambda, x)=\int_{\ago_P^G} \Gamma_P^G(X,-H_P(x)) \exp(\langle \lambda, X \rangle) dX, \forall \lambda\in \ago_{P,\CC}^\ast. 
	\end{align*}
On notera souvent $c'_P(Y)$ et $v'_P(x)$ les valeurs en $\lambda=0$ respectivement de $c'_P(\lambda, Y)$ et $v'_P(\lambda, x)$. 
\end{paragr}

\begin{paragr}
Soit $R\in\fc^{G_\sigma}(M_{1,\sigma},\theta)$. Supposons que 
    \begin{align*}
	\yc_R=\{Y_P | P\in\fc_R^0(M_1)\}
    \end{align*}
est un ensemble de points dans $\ago_R^\theta$ qui est $A_R^\theta$-orthogonal positif au sens de \cite[(A.0.4.1)]{CZ} \footnote{Nous abusons légèrement de la notion ici, car en toute rigueur on devrait associer un point dans $\ago_R^\theta$ à chaque sous-groupe parabolique dans $\pc(M_{\tilde R})$.} (cette définition est due à \cite[\S 3]{ar-character} et on utilise le lemme \ref{lem:convexe}). Pour tout $Q\in\bar\fc_R(M_1)$, d'après \eqref{eq:barcontient0}, il existe $P\in\fc_R^0(M_1)$ contenu dans $Q$ et on définit $Y_Q$ comme la projection de $Y_P$ sur $\ago_Q$. Grâce à notre hypothèse sur $\yc_R$, $Y_Q$ est indépendant du choix de $P\subset Q$. De cette manière, on obtient pour tout $R'\in\fc^{G_\sigma}(R,\theta)$ un ensemble $A_{R'}^\theta$-orthogonal positif
    \begin{align*}
	\yc_{R'}=\{Y_Q | Q\in\fc_{R'}^0(M_1)\}. 
    \end{align*}

Soit $X\in \ago_R^\theta$. On définit 
    \begin{align*}
	\Gamma_R^G(X,\yc_R)=\sum_{R'\in\fc^{G_\sigma}(R,\theta)} \tau_R^{R'}(X)\left(\sum_{Q\in\fc_{R'}(M_1)}\eps_Q^G\hat\tau_Q(X-Y_Q)\right). 
    \end{align*}
Soit $R'\in\fc^{G_\sigma}(R,\theta)$. On pose 
	\begin{align*}
	\fc^{R'}(R,\theta)=\{H\in\fc^{G_\sigma}(M_{1,\sigma},\theta) | R\subset H\subset R'\}.	\end{align*} 
Le lemme combinatoire de Langlands (voir \cite[proposition 1.7.2 et lemme 2.9.2]{labWal}) implique que 
	\begin{align*}
	\sum_{H\in\fc^{R'}(R,\theta)} (-1)^{\dim(\ago_R^H)^\theta} \tau_R^H(X)  \hat\tau_H^{R'}(X) = 
\left\{ \begin{array}{ll}
1 & \text{si $R=R'$ ; }\\
0 & \text{sinon. }\\
\end{array} \right. 
	\end{align*}
On en déduit que 
    \begin{align}\label{eq:tau=GamR}
	\sum_{P\in\fc_R(M_1)}\eps_P^G\hat\tau_P(X-Y_P)=\sum_{R'\in\fc^{G_\sigma}(R,\theta)} (-1)^{\dim (\ago_R^{R'})^\theta} \hat\tau_R^{R'}(X) \Gamma_{R'}^G(X,\yc_{R'}). 
    \end{align}
Soit 
	\begin{align*}
	c'_R(\lambda, \yc_R)=\sum_{P\in\fc_R^0(M_1)} c'_P(\lambda, Y_P), \forall \lambda\in \ago_{R,\CC}^{\theta, \ast}. 
	\end{align*}
On notera souvent $c'_R(\yc_R)$ la valeur en $\lambda=0$ de $c'_R(\lambda, \yc_R)$. En particulier, on obtient une fonction continue $v'_R(x)$ en $x\in G(\AAA)$ lorsque $Y_P=-H_P(x)$ pour tout $P\in\fc_R^0(M_1)$. 

\begin{lemme}\label{lem:art4.1}
Le support de la fonction 
    \begin{align*}
	X\mapsto\Gamma_R^G(X,\yc_R), \forall  X\in(\ago_R^G)^\theta
    \end{align*}
est compact et dépend continûment de $\yc_R$. De plus,  
    \begin{align*}
	\int_{(\ago_R^G)^\theta} \Gamma_R^G(X,\yc_R) \exp(\langle\lambda,X\rangle) \, dX=c'_R(\lambda, \yc_R), \forall  \lambda\in \ago^{\theta,\ast}_{R,\CC}.
    \end{align*}
\end{lemme}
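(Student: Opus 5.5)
Le plan est de reproduire l'argument d'Arthur (\cite[lemme 2.2]{arthur2}, ou son analogue pour les familles orthogonales positives dans \cite[annexe A]{CZ}), en ramenant la fonction $\Gamma_R^G(\cdot,\yc_R)$ à une combinaison de fonctions $\Gamma_P^G(\cdot,Y_P)$ pour $P\in\fc_R^0(M_1)$, restreintes au sous-espace $(\ago_R^G)^\theta$. On part de l'identité \eqref{eq:tau=GamR}, qu'on inverse par le lemme combinatoire de Langlands sur $\fc^{G_\sigma}(R,\theta)$. On obtient ainsi $\Gamma_R^G(X,\yc_R)$ comme somme alternée, sur $R'\supset R$, de $\tau_R^{R'}(X)$ multiplié par $\sum_{Q\in\fc_{R'}(M_1)}\eps_Q^G\hat\tau_Q(X-Y_Q)$. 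D'après le lemme \ref{lem:art5.2} (appliqué à $R'$ et au point $X-Y$), chacune de ces sommes intérieures est, au signe près, $\hat\tau_{R'}^{G_\sigma}$ restreinte à $\ago_G\oplus(\ago_{R'}^{G_\sigma})^\theta$. Cette réécriture rend $\Gamma_R^G$ analogue à la fonction $\Gamma'$ d'Arthur, mais pour la paire $(\wt{G_\sigma},\tilde R)$ via la bijection \eqref{eq:bijtilde}. On identifie en effet $(\ago_R^{G_\sigma})^\theta$ à $\ago_{\tilde R}^{\wt{G_\sigma}}$ et la famille $\yc_R$ à une famille orthogonale positive indexée par $\pc(M_{\tilde R})$ (lemme \ref{lem:convexe}).

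Pour la compacité du support, je suivrais la preuve de \cite[lemme 2.1]{arthur2}. On fixe $X$ dans le support et l'on note $R'$ le sous-groupe défini par $\hat\Delta$, comme dans la preuve du lemme \ref{lem:art5.2}, via \eqref{eq:pf5.2R'}. Le lemme \ref{lem:art5.1} montre que la composante de $X$ sur $(\ago_{G_\sigma}^G)^\theta$ est contrainte par les deux familles opposées $P^\dagger,P^\ddagger$ à vivre dans un compact. Sur la partie $(\ago_R^{G_\sigma})^\theta$, les conditions $\tau_R^{R'}(X)=1$ et $\varpi(X-Y_{R'})\le 0$ bornent $X$ en fonction de $\max\|Y_P\|$, ce qui donne un support contenu dans une boule de rayon contrôlé par les $Y_P$. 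Ce contrôle fournit la dépendance continue en $\yc_R$.

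Pour la formule intégrale, je procéderais comme Arthur. Pour $\lambda$ de partie réelle en position générale dans un cône convenable, on intègre terme à terme l'expression de $\Gamma_R^G$ comme somme sur $P\in\fc_R^0(M_1)$. Grâce à \eqref{eq:ensFR0} et au lemme \ref{lem:convexe}, on se ramène à la décomposition en cônes de \cite[lemme A.0.5.3]{CZ}, chaque cône contribuant $\int\Gamma_P^G(X,Y_P)e^{\langle\lambda,X\rangle}dX=c'_P(\lambda,Y_P)$, restreint à $\ago_R^{\theta}$. La sommation donne alors $c'_R(\lambda,\yc_R)$. Comme les deux membres sont entiers en $\lambda$ (le premier parce que le support est compact), l'égalité s'étend à tout $\ago_{R,\CC}^{\theta,*}$.

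L'étape délicate sera la compatibilité entre les poids intervenant dans $\Gamma_R^G$, qui sont indexés par $\fc_{R'}(M_1)$ et non seulement par $\fc_{R'}^0(M_1)$, et les intégrales $c'_P$ définies sur $\ago_P^G$ tout entier. Il faudra vérifier, via le lemme \ref{lem:art5.2}, que les $P\notin\fc^0$ ne contribuent que sur des sous-espaces de mesure nulle dans $(\ago_R^G)^\theta$, ou se compensent. C'est là que j'essaierais d'abord d'utiliser l'identification $\ago_P=\ago_R^\theta$ pour $P\in\fc_R^0(M_1)$.
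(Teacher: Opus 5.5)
Il y a une vraie lacune. Tout repose sur l'identité
\[
\Gamma_R^G(X,\yc_R)=\sum_{P\in\fc_R^0(M_1)}\Gamma_P^G(X,Y_P)\qquad\text{pour presque tout } X\in(\ago_R^G)^\theta .
\]
Votre troisième paragraphe l'utilise comme acquise (\og l'expression de $\Gamma_R^G$ comme somme sur $P\in\fc_R^0(M_1)$ \fg{}), et le dernier la désigne comme le point délicat sans la démontrer. Or le mécanisme que vous proposez ne peut pas la fournir.

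Le lemme \ref{lem:art5.2} calcule $\sum_{P\in\fc_R(M_1)}\eps_P^G\hat\tau_P(X)$ en un \emph{seul} point. Dans $\Gamma_R^G$, au contraire, chaque $Q\in\fc_{R'}(M_1)$ porte son propre décalage $Y_Q$, projection de l'un des $Y_P$ avec $P\in\fc_R^0(M_1)$. Ces $Y_P$ forment une vraie famille orthogonale, et non les projections d'un point commun : \og appliquer le lemme au point $X-Y$ \fg{} n'a donc pas de sens.

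La conclusion que vous en tirez est d'ailleurs fausse. Prenez $R=G_\sigma$ : seul $R'=G_\sigma$ intervient, on a $\fc_R(M_1)=\fc(\wt{G_\sigma})$, et $\Gamma_R^G(X,\yc_R)=\sum_{Q\in\fc(\wt{G_\sigma})}\eps_Q^G\hat\tau_Q(X-Y_Q)$. C'est la fonction caractéristique de l'enveloppe convexe des $Y_P$, $P\in\pc(\wt{G_\sigma})$ (\cite[proposition 1.8.7]{labWal}), dont l'intégrale est le volume de cette enveloppe (c'est le poids du théorème \ref{thm:IOP}). Votre réécriture donnerait au contraire une fonction nulle hors d'un translaté de $\ago_G$, donc nulle presque partout sur $(\ago_R^G)^\theta$.

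Votre argument de compacité (lemme \ref{lem:art5.1}, familles $P^\dagger,P^\ddagger$) porte sur des sommes non décalées et reste donc heuristique. Notez aussi que le domaine est $(\ago_R^G)^\theta=\ago_{\tilde R}^G$ et non $\ago_{\tilde R}^{\wt{G_\sigma}}$. Ainsi $\Gamma_R^G$ n'est pas la fonction $\Gamma'$ d'Arthur pour $(\wt{G_\sigma},\tilde R)$.

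L'idée manquante est d'éliminer les décalages \emph{avant} la combinatoire, ce que fait l'article à la suite de \cite{ar-family}.

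\begin{enumerate}
\item On développe chaque $\hat\tau_Q(X-Y_Q)$ par \eqref{eq:tau=Gam}. Comme la projection de $Y_Q$ sur $\ago_P$ est $Y_P$ pour $P\supset Q$, on obtient $\eps_Q^G\hat\tau_Q(X-Y_Q)=\sum_{P\in\fc(Q)}\eps_Q^P\hat\tau_Q^P(X)\Gamma_P^G(X,Y_P)$.
\item On réindexe les couples $(R',Q)$ avec $Q\in\fc_{R'}(M_1)$ par $Q\in\bar\fc_R(M_1)$, en posant $R'=Q_\sigma$ (lemme \ref{lem:Psigma}), puis on intervertit les sommes.
\item Pour $P\in\bar\fc_R(M_1)$, le coefficient de $\Gamma_P^G(X,Y_P)$ devient $\sum_{Q\in\bar\fc_R(M_1),\,Q\subset P}\eps_Q^P\tau_R^{Q_\sigma}(X)\hat\tau_Q^P(X)$. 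Par le lemme \ref{lem:art4.2}, il vaut $\mathbf{1}_{\ago_P}(X)$ si $P\in\fc_R(M_1)$ et $0$ sinon. C'est seulement là, via le lemme \ref{lem:art5.3}, que le lemme \ref{lem:art5.2} intervient.
\end{enumerate}

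On obtient, pour tout $X\in\ago_R^\theta$,
\[
\Gamma_R^G(X,\yc_R)=\sum_{P\in\fc_R(M_1)}\Gamma_P^G(X,Y_P)\,\mathbf{1}_{\ago_P}(X).
\]
Les termes avec $P\notin\fc_R^0(M_1)$ sont portés par des sous-espaces propres $\ago_P\subsetneq\ago_R^\theta$, et pour $P\in\fc_R^0(M_1)$ on a $\ago_P^G=(\ago_R^G)^\theta$. La compacité du support et sa dépendance continue en $\yc_R$ découlent alors de \cite[lemme 2.1]{arthur2} appliqué à chaque $\Gamma_P^G(\cdot,Y_P)$. La formule intégrale est immédiate par définition de $c'_R(\lambda,\yc_R)=\sum_{P\in\fc_R^0(M_1)}c'_P(\lambda,Y_P)$, sans prolongement analytique ni décomposition en cônes.
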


\begin{preuve}
	L'égalité \eqref{eq:tau=Gam} entraîne que pour tout $X\in\ago_R^\theta$, 
    \begin{align*}
	\Gamma_R^G(X,\yc_R)=\sum_{R'\in\fc^{G_\sigma}(R,\theta)} \tau_R^{R'}(X)\left(\sum_{Q\in\fc_{R'}(M_1)}\sum_{P\in\fc(Q)} \eps_Q^P\hat\tau_Q^P(X)\Gamma_P^G(X,Y_P)\right).
    \end{align*}
    Par le lemme \ref{lem:Psigma}, c'est égal à 
    \begin{align*}
	\sum_{P\in\bar\fc_R(M_1)} \Gamma_P^G(X,Y_P) \left(\sum_{Q\in\bar\fc_R(M_1), Q\subset P} \eps_Q^P\tau_R^{Q_\sigma}(X)\hat\tau_Q^P(X)\right). 
    \end{align*}
    Soit $P\in\bar\fc_R(M_1)$; on a $\ago_P\subset \ago_R^\theta$.  Soit $\mathbf{1}_{\ago_P}$ la fonction caractéristique de $\ago_P$ définie sur $\ago_R^\theta$. D'après le lemme \ref{lem:art4.2}, on obtient 
    \begin{align*}
      \Gamma_R^G(X,\yc_R)&=\sum_{P\in\fc_R(M_1)} \Gamma_P^G(X,Y_P) \mathbf{1}_{\ago_P}(X), \forall X\in\ago_R^\theta
      \\   &=	\sum_{P\in\fc_R^0(M_1)} \Gamma_P^G(X,Y_P) 
    \end{align*}
pour tout $X\in \ago_R^\theta$ en dehors d'un ensemble de mesure nulle. Pour conclure on utilise \cite[lemme 2.1]{arthur2}. 
\end{preuve}
\end{paragr}

\begin{paragr}[Réduction au cas unipotent.] ---\label{S:red-unip}
	On reprend les notations du paragraphe \ref{S:prelim}. En particulier, on fixe $0\leq p'\leq N$, $\of\in\cgo_{p'}(F)$, $P_1\in\fc(P_0^\theta)$ et  $\sigma\in (M_{P_1}\cap S_{p',\of})(F)$ un élément semi-simple  $(P_1, \theta)$-anisotrope. Pour tout $Q\in \fc^{G_\sigma}(P_{1,\sigma}^\theta)$, on pose  
    \begin{align*}
	\fc^Q(P_{1,\sigma}^\theta,\theta)=\{R\in\fc^{G_\sigma}(P_{1,\sigma}^\theta) |  R\subset Q, \theta R\theta=R\}. 
    \end{align*}
    Pour tout $v\in V_F$ soit $K_{\sigma,v}\in G_\sigma(F_v)$ un sous-groupe compact maximal en bonne position par rapport à $M_{1,\sigma}$ (au sens de \cite[p.9]{arthur2}). On le choisira comme dans \S \ref{S:choixdeK} vu la proposition \ref{prop:descendant}. On en déduit que $\Int(\theta)(K_{\sigma,v})=K_{\sigma,v}$ et que $K_{\sigma,v}^\theta=K_{\sigma,v}\cap G_\sigma^\theta(F_v)$ est aussi un sous-groupe compact maximal de $G_\sigma^\theta(F_v)$ en bonne position par rapport à $M_{1,\sigma}^\theta$. Soit $K_\sigma=\prod_{v\in V}K_{\sigma,v}$ et $K_\sigma^\theta=\prod_{v\in V}K_{\sigma,v}^\theta$. Soit $R\in\fc^{G_\sigma}(M_{1,\sigma},\theta)$. On a alors une application $H_R : G_\sigma(\AAA)\to\ago_R$ de la manière habituelle selon la décomposition $G_\sigma(\AAA)=R(\AAA)K_\sigma$. Par restriction on obtient une application $G_\sigma^\theta(\AAA)\to\ago_R^\theta$. Pour tout $x\in G_\sigma^\theta(\AAA)$, soit $k_R(x)$ le composant de $x$ dans $K_\sigma^\theta$ par rapport à la décomposition $G_\sigma^\theta(\AAA)=R^\theta(\AAA)K_\sigma^\theta$. Il est uniquement déterminé modulo multiplication à gauche par $R^\theta(\AAA)\cap K_\sigma^\theta$. On note 
    \begin{align*}
	[M_R^\theta]^R=M_R^\theta(F) \back M_R^\theta(\AAA)\cap M_R(\AAA)^1. 
    \end{align*}       
Soit $T'\in\ago_{M_{1,\sigma}}^\theta\cap\ov{\ago_{P_{1,\sigma}}^+}$ un point suffisamment positif par rapport à $(G_\sigma,P_{1,\sigma})$. 
On note $T'_R$ la projection de $wT'$ sur $\ago_R^\theta$, où $w\in W(G_\sigma, A_{M_{1,\sigma}}^\theta)$ est tel que $P_{1,\sigma}\subset R_w$. 

\begin{remarque}\label{rmq:combi}
    Il nous faut vérifier l'existence de tels $T'$ et $w$. En particulier, si $(G_\sigma, G_\sigma^\theta)$ est de type 1 ou 2, resp. de type 2 ou 3, dans la proposition \ref{prop:descendant}, on peut supposer que $T'\in\ago_{M_{1,\sigma}^\theta}\cap\ov{\ago_{P_{1,\sigma}^\theta}^+}$, resp. $T'\in\ago_{M_{1,\sigma}}\cap\ov{\ago_{P_{1,\sigma}}^+}$, est un point suffisamment positif par rapport à $(G_\sigma^\theta,P_{1,\sigma}^\theta)$, resp. à $(G_\sigma,P_{1,\sigma})$, et on trouve que  $W(G_\sigma, A_{M_{1,\sigma}}^\theta)$ s'identifie à $W(G_\sigma^\theta, A_{M_{1,\sigma}^\theta})$, resp. $W(G_\sigma, A_{M_{1,\sigma}})$. Nous avons combiné ces éléments dans l'hypothèse ci-dessus sur $T'$ et $w$ d'où leur existence. 
\end{remarque}
    
	Soit $\varphi\in C_c^\infty(S_{p'}(\AAA))$ et $\eta: G(\AAA) \to \CC^\times$ un caractère unitaire comme dans \S \ref{S:JchiT}. D'après le théorème \ref{thm:cv-tlK} et les lemmes \ref{lem:art3.1} et \ref{lem:Psigma}, pour tout $T\in T_0+\overline{\ago_0^+}$ suffisamment positif avec $\|T^G\|$ assez grand (par rapport au support de $\varphi$), on a 
    \begin{align}\label{eq:var-prelim}
	J_\of^T(\eta, \varphi)=\int_{[G^\theta]^G} \tlK_{\of,\varphi}^T(x) \eta(x) \, dx
    \end{align}	
où $\tlK_{\of,\varphi}^T(x)$ égale la somme sur $R\in \fc^{G_\sigma}(P_{1,\sigma}^\theta,\theta)$ et $\xi\in R^\theta(F)\back G^\theta(F)$ du produit de $K_{R,\varphi}^{\unip}(\xi x)$ avec \eqref{eq:sumfc_R}. Comme dans \cite[\S6]{ar-family}, on fera d'abord formellement des changements de variables dont la justification sera reportée. On change la somme et intégrale sur 
    \begin{align*}
	(\xi,x)\in (R^\theta(F)\back G^\theta(F)) \times [G^\theta]^G
    \end{align*}	
en une somme et intégrale sur 
    \begin{align*}
	(\delta,x,g)\in (R^\theta(F)\back G_\sigma^\theta(F)) \times (G_\sigma^\theta(F)\back (G_\sigma^\theta(\AAA)\cap G(\AAA)^1)) \times (G_\sigma^\theta(\AAA)\back G^\theta(\AAA)). 
    \end{align*}	
    Soit $P\in\fc_R(M_1)$. Comme $H_P(\delta x g)$ est égal à la projection sur $\ago_P$ de $H_R(\delta x)+H_P(k_R(\delta x) g)$, on a
    \begin{align*}
      \hat\tau_P(H_P(\delta x g)-T_P)= \hat\tau_P(H_R(\delta x)-T'_R-Y_P^{T,T'}(\delta x, g)) 
    \end{align*}
où l'on pose
   \begin{align*}
     Y_P^{T,T'}(\delta x, g)=-H_P(k_R(\delta x) g)+T_P-T'_R. 
  \end{align*}
D'après \cite[lemme 3.6]{ar-character} et le fait que $T\in\overline{\ago_0^+}$, l'ensemble 
   \begin{align*}
     \yc_R^{T,T'}(\delta x, g)=\{Y_P^{T,T'}(\delta x, g) | P\in\fc_R^0(M_1)\}
  \end{align*}
  est $A_R^\theta$-orthogonal positif. Ainsi on déduit de \eqref{eq:tau=GamR} que la somme
  \begin{align*}
     \sum_{P\in\fc_R(M_1)} \eps_P^G \hat\tau_P(H_P(\delta x g)-T_P)
  \end{align*}
  est égale à la somme sur $R'\in\fc^{G_\sigma}(R,\theta)$ de 
    \begin{align}\label{eq:art(6.2)}
	(-1)^{\dim (\ago_R^{R'})^\theta} \hat\tau_R^{R'}(H_R(\delta x)-T'_R) \Gamma_{R'}^G(H_{R'}(\delta x)-T'_{R'},\yc_{R'}^{T,T'}(\delta x, g)). 
    \end{align}
L'expression $J_\of^T(\eta, \varphi)$ devient l'intégrale sur $g\in G_\sigma^\theta(\AAA)\back G^\theta(\AAA)$ et $x\in G_\sigma^\theta(F)\back (G_\sigma^\theta(\AAA)\cap G(\AAA)^1)$ et la somme sur $R'\in\fc^{G_\sigma}(P_{1,\sigma}^\theta, \theta)$, $R\in\fc^{R'}(P_{1,\sigma}^\theta, \theta)$ et $\delta\in R^\theta(F)\back G_\sigma^\theta(F)$ du produit de $K_{R,\varphi}^{\unip}(\delta xg)$, \eqref{eq:art(6.2)} et $\eta(xg)$. 

Ensuite, la somme sur $\delta\in R^\theta(F)\back G_\sigma^\theta(F)$ se décompose en une double somme sur 
   \begin{align*}
     (\mu, \xi)\in (R^\theta\cap M_{R'^\theta})(F)\back M_{R'^\theta}(F) \times R'^\theta(F)\back G_\sigma^\theta(F). 
  \end{align*}
On remplace la somme et intégrale sur 
   \begin{align*}
     (\xi, x)\in R'^\theta(F)\back G_\sigma^\theta(F)\times G_\sigma^\theta(F)\back (G_\sigma^\theta(\AAA)\cap G(\AAA)^1)
  \end{align*}
par une intégrale sur 
   \begin{align*}
     (v,a,m,k)\in [N_{R'}^\theta]\times (A_{R'}^\theta)^{G,\infty} \times [M_{R'}^\theta]^{R'}\times K_\sigma^\theta 
  \end{align*}
où un facteur $\exp(\langle -2\rho_{R'^\theta}^{G_\sigma^\theta}, H_{R'^\theta}(am) \rangle)$ est introduit. 
Notons que 
   \begin{align*}
     \hat\tau_R^{R'}(H_R(\mu vamk)-T'_R)=\hat\tau_R^{R'}(H_R(\mu m)-T'_R), \\ 
     \Gamma_{R'}^G(H_{R'}(\mu vamk)-T'_{R'},\yc_{R'}^{T,T'}(\mu vamk, g))=\Gamma_{R'}^G(H_{R'}(a)-T'_{R'},\yc_{R'}^{T,T'}(k, g)), \\
     \eta(\mu vamkg)=\eta(mkg).  
  \end{align*}
  Avec \eqref{eq:var-KRunip}, on voit que 
   \begin{align}\label{eq:art(6.3)}
     K_{R,\varphi}^{\unip}(\mu vamkg)=K_{R,\varphi}^{\unip}(\mu amkg)=\exp(\langle 2\rho_{\ngo_{R'}\cap\sgo}, H_{R'^\theta}(a) \rangle) K_{R,\varphi}^{\unip}(\mu mkg). 
  \end{align}
Rappelons que l'on a introduit une fonction $\Psi_h\in C_c^\infty(\sgo_\sigma(\AAA))$ en \eqref{eq:defPsi} dépendante de $\varphi$ pour tout $h\in G^\theta(\AAA)$. On définit également pour tout $U\in (\mgo_{R'}\cap\sgo)(\AAA)$
  \begin{align}\label{eq:def-fonctionPsi}
      \Psi_{h,R'}(U)=\int_{(\ngo_{R'}\cap\sgo)(\AAA)} \Psi_h(U+V) \, dV. 
  \end{align} 
Il est évident que $\Psi_{h,R'}\in C_c^\infty((\mgo_{R'}\cap\sgo)(\AAA))$ dépend continûment de $h$.
  On réécrit enfin l'intégrale sur $V\in (\ngo_R\cap\sgo)(\AAA)$ dans \eqref{eq:noyaux} en une intégrale double sur 
   \begin{align*}
     (V_1,V_2)\in (\ngo_R\cap\mgo_{R'}\cap\sgo)(\AAA)\oplus(\ngo_{R'}\cap\sgo)(\AAA). 
  \end{align*}
Après le changement de variables $V_2\mapsto\Ad(\mu m)(V_2)$, l'expression \eqref{eq:art(6.3)} devient le produit de $\exp(\langle 2\rho_{\ngo_{R'}\cap\sgo}, H_{R'^\theta}(am) \rangle) $ avec 
   \begin{align*}
     \sum_{U\in\nc_{\mgo_R\cap\sgo}(F)} \int_{(\ngo_R\cap\mgo_{R'}\cap\sgo)(\AAA)} \int_{(\ngo_{R'}\cap\sgo)(\AAA)} \Psi_{kg}(\Ad(\mu m)^{-1}(U+V_1)+V_2) \, dV_2 dV_1 \\
     =\sum_{U\in\nc_{\mgo_R\cap\sgo}(F)} \int_{(\ngo_R\cap\mgo_{R'}\cap\sgo)(\AAA)}  \Psi_{kg,R'}(\Ad(\mu m)^{-1}(U+V_1)) \, dV_1.
   \end{align*}
Comme $\vol([N_{R'}^\theta])=1$, l'expression $J_\of^T(\eta, \varphi)$ s'écrit alors l'intégrale et somme sur $g\in G_\sigma^\theta(\AAA)\back G^\theta(\AAA)$, $R'\in\fc^{G_\sigma}(P_{1,\sigma}^\theta, \theta)$, $k\in K_\sigma^\theta$, $a\in (A_{R'}^\theta)^{G,\infty}$ et $m\in[M_{R'}^\theta]^{R'}$ de
   \begin{align}\label{eq:art(6.4)}
     \eta(mkg) \exp(\langle 2\rho_{\ngo_{R'}\cap\sgo}-2\rho_{R'^\theta}^{G_\sigma^\theta}, H_{R'^\theta}(am) \rangle) \Gamma_{R'}^G(H_{R'}(a)-T'_{R'},\yc_{R'}^{T,T'}(k, g)) \\ \nonumber \sum_{R\in\fc^{R'}(P_{1,\sigma}^\theta, \theta)} (-1)^{\dim(\ago_R^{R'})^\theta}  \sum_{\mu\in(R^\theta\cap M_{R'^\theta})(F)\back M_{R'^\theta}(F)} \hat\tau_R^{R'}(H_R(\mu m)-T'_R) \\ \nonumber \sum_{U\in\nc_{\mgo_R\cap\sgo}(F)} \int_{(\ngo_R\cap\mgo_{R'}\cap\sgo)(\AAA)}  \Psi_{kg,R'}(\Ad(\mu m)^{-1}(U+V_1)) \, dV_1.
  \end{align}

On justifie maintenant les changements de variables ci-dessus. Par le théorème de Fubini, il suffit de montrer que \eqref{eq:art(6.4)} est absolument intégrable sur $g, R', k, a$ et $m$. Comme $\sigma$ est un élément semi-simple de $G(F)$ au sens usuel (vu le lemme \ref{lem:closedorbits}) et $S(\AAA)\subset G(\AAA)^1$, d'après \cite[lemme 6.1]{ar-family}, on peut et on va choisir un sous-ensemble compact $\Sigma$ de $G_\sigma^\theta(\AAA)\back G^\theta(\AAA)$ tel que l'intersection de $g^{-1}\sigma(\uc_{G_\sigma}\cap S)(\AAA)g$ avec le support de $\varphi$ est vide sauf si $g\in \Sigma$. Puisque $H_{R'}$ induit un homéomorphisme de $(A_{R'}^\theta)^{G,\infty}$ sur $(\ago_{R'}^G)^\theta$, le lemme \ref{lem:art4.1} implique que $\Gamma_{R'}^G(H_{R'}(a)-T'_{R'},\yc_{R'}^{T,T'}(k, g))$ s'annule pour tout $a$ en dehors d'un ensemble compact qui dépend continûment de $k$ et $g$. Par conséquent, l'expression \eqref{eq:art(6.4)} s'annule pour $(a,k,g)$ en dehors d'un ensemble compact et indépendant de $m$. Ainsi l'intégrale sur $m\in[M_{R'}^\theta]^{R'}$ de \eqref{eq:art(6.4)} est le produit de 
	\begin{align}\label{eq:JunipMT-supp}
	\eta(kg) \exp(\langle 2\rho_{\ngo_{R'}\cap\sgo}-2\rho_{R'^\theta}^{G_\sigma^\theta}, H_{R'^\theta}(a) \rangle)\Gamma_{R'}^G(H_{R'}(a)-T'_{R'},\yc_{R'}^{T,T'}(k, g)) 
	\end{align}
avec 
   \begin{align}\label{eq:JunipMT}
     J_{\nilp}^{R', T'}(\eta, 2\rho_{\ngo_{R'}\cap\sgo}-2\rho_{R'^\theta}^{G_\sigma^\theta}, \Psi_{kg,R'}) 
  \end{align} 
qui est défini en \eqref{eq:def-infi-Jo} ci-dessous. 
D'après le théorème \ref{thm:cv-infi}, l'intégrale \eqref{eq:JunipMT} est absolument convergente et majorée par $\|\Psi_{kg,R'}\|$ où $\|\cdot\|$ est une semi-norme continue sur $C_c^\infty((\mgo_{R'}\cap\sgo)(\AAA))$. Puisque le produit de \eqref{eq:JunipMT-supp} avec $\|\Psi_{kg,R'}\|$ est majoré par une fonction continue de $(a,k,g)$, \eqref{eq:art(6.4)} est absolument intégrable sur $g, R', k, a$ et $m$. On a montré que tous les changements de variables ci-dessus sont valables. De plus, on a prouvé que $J_\of^T(\eta, \varphi)$ égale 
   \begin{align}\label{eq:art(6.6)}
     \int_{G_\sigma^\theta(\AAA)\back G^\theta(\AAA)} \sum_{R'\in\fc^{G_\sigma}(P_{1,\sigma}^\theta, \theta)} \bigg(\int_{K_\sigma^\theta} \int_{(A_{R'}^\theta)^{G,\infty}} \eta(kg) \exp(\langle 2\rho_{\ngo_{R'}\cap\sgo}-2\rho_{R'^\theta}^{G_\sigma^\theta}, H_{R'^\theta}(a) \rangle) \\ \nonumber
      \Gamma_{R'}^G(H_{R'}(a)-T'_{R'},\yc_{R'}^{T,T'}(k, g)) J_{\nilp}^{R', T'}(\eta, 2\rho_{\ngo_{R'}\cap\sgo}-2\rho_{R'^\theta}^{G_\sigma^\theta}, \Psi_{kg,R'}) \, da dk\bigg) \, dg. 
  \end{align}
  
  Considérons l'intégrale 
    \begin{align}\label{eq:uTT'}
	u_{R'}^{T,T'}(k,g)=\int_{(A_{R'}^\theta)^{G,\infty}} \exp(\langle 2\rho_{\ngo_{R'}\cap\sgo}-2\rho_{R'^\theta}^{G_\sigma^\theta}, H_{R'^\theta}(a) \rangle) \Gamma_{R'}^G(H_{R'}(a)-T'_{R'},\yc_{R'}^{T,T'}(k, g)) \, da. 
    \end{align}   
En faisant un changement de variables, on voit que $u_{R'}^{T,T'}(k,g)$ égale
    \begin{align*}
	\exp(\langle 2\rho_{\ngo_{R'}\cap\sgo}-2\rho_{R'^\theta}^{G_\sigma^\theta}, T'^G_{R'} \rangle) \int_{(\ago_{R'}^G)^\theta} \exp(\langle 2\rho_{\ngo_{R'}\cap\sgo}-2\rho_{R'^\theta}^{G_\sigma^\theta}, X \rangle) \Gamma_{R'}^G(X,\yc_{R'}^{T,T'}(k, g)) \, dX. 
    \end{align*}    
D'après le lemme \ref{lem:art4.1}, c'est égale à 
    \begin{align*}
	\exp(\langle 2\rho_{\ngo_{R'}\cap\sgo}-2\rho_{R'^\theta}^{G_\sigma^\theta}, T'^G_{R'} \rangle) \sum_{P\in\fc_{R'}^0(M_1)} c'_P(2\rho_{\ngo_{R'}\cap\sgo}-2\rho_{R'^\theta}^{G_\sigma^\theta}, -H_P(kg)+T_P-T'_{R'}). 
    \end{align*}   
Ainsi le lemme \ref{lem:art2.2-li5.6} implique qu'il existe un polynôme $c_{R',P,Q}$ sur $\ago_Q^G$ pour tout $Q\in\fc(P)$ tel que 
    \begin{align*}
	u_{R'}^{T,T'}(k,g)=\exp(\langle 2\rho_{\ngo_{R'}\cap\sgo}-2\rho_{R'^\theta}^{G_\sigma^\theta}, T'^G_{R'} \rangle) \sum_{P\in\fc_{R'}^0(M_1)} \sum_{Q\in\fc(P)} \\ \nonumber 
	\exp(\langle 2\rho_{\ngo_{R'}\cap\sgo}-2\rho_{R'^\theta}^{G_\sigma^\theta}, (-H_P(kg)+T_P-T'_{R'})_Q^G \rangle) c_{R',P,Q}((-H_P(kg)+T_P-T'_{R'})_Q^G). 
    \end{align*}  
Il s'ensuit que $u_{R'}^{T,T'}(k,g)$ s'écrit 
    \begin{align*}
	\sum_{P\in\fc_{R'}^0(M_1)} \sum_{Q\in\fc(P)} \exp(\langle 2\rho_{\ngo_{R'}\cap\sgo}-2\rho_{R'^\theta}^{G_\sigma^\theta}, T_Q^G \rangle) \exp(\langle 2\rho_{\ngo_{R'}\cap\sgo}-2\rho_{R'^\theta}^{G_\sigma^\theta}, T'^Q_{R'} \rangle) \\ \nonumber 
	\exp(\langle 2\rho_{\ngo_{R'}\cap\sgo}-2\rho_{R'^\theta}^{G_\sigma^\theta}, -H_Q(kg)^G \rangle) c_{R',P,Q}((-H_P(kg)+T_P-T'_{R'})_Q^G). 
    \end{align*}   
On en déduit que $u_{R'}^{T,T'}(k,g)$ est un polynôme-exponentielle en $T$ et $T'$ dont le terme constant s'annule sauf si 
    \begin{align}\label{eq:cond-ctneq0}
 (2\rho_{\ngo_{R'}\cap\sgo}-2\rho_{R'^\theta}^{G_\sigma^\theta})|_{(\ago_{R'}^G)^\theta}=0
    \end{align}    
auquel cas $u_{R'}^{T,T'}(k,g)$ est un polynôme en $T$ et $T'$. 
Dans le dernier cas, le terme constant de $u_{R'}^{T,T'}(k,g)$ est sa valeur en $T=T'=0$ c'est-à-dire 
    \begin{align*}
	\int_{(\ago_{R'}^G)^\theta} \Gamma_{R'}^G(X,\yc_{R'}^{0,0}(k, g)) \, dX=v'_{R'}(kg) 
    \end{align*}  
où l'on fait à nouveau appel au lemme \ref{lem:art4.1}. 

  Avec la notation \eqref{eq:uTT'}, l'expression \eqref{eq:art(6.6)} s'écrit  
    \begin{align*}
     \int_{G_\sigma^\theta(\AAA)\back G^\theta(\AAA)} \sum_{R'\in\fc^{G_\sigma}(P_{1,\sigma}^\theta, \theta)} \bigg(\int_{K_\sigma^\theta} \eta(kg) u_{R'}^{T,T'}(k,g) J_{\nilp}^{R', T'}(\eta, 2\rho_{\ngo_{R'}\cap\sgo}-2\rho_{R'^\theta}^{G_\sigma^\theta}, \Psi_{kg,R'}) \,  dk\bigg) \, dg. 
    \end{align*} 
On peut changer l'ordre des intégrales sur $k\in K_\sigma^\theta$ et  $m\in[M_{R'}^\theta]^{R'}$ (dans la définition de $J_{\nilp}^{R', T'}$). La dernière expression devient 
    \begin{align*}
	\int_{G_\sigma^\theta(\AAA)\back G^\theta(\AAA)} \sum_{R'\in\fc^{G_\sigma}(P_{1,\sigma}^\theta, \theta)} J_{\nilp}^{R',T'}(\eta, 2\rho_{\ngo_{R'}\cap\sgo}-2\rho_{R'^\theta}^{G_\sigma^\theta}, \Phi_{g,R'}^{T,T'}) \eta(g) \, dg
    \end{align*}	
    où l'on définit pour tout $X\in (\mgo_{R'}\cap\sgo)(\AAA)$
    \begin{align*}
	\Phi_{g,R'}^{T,T'}(X)=\int_{K_\sigma^\theta} \Psi_{kg,R'}(X) u_{R'}^{T,T'}(k,g) \eta(k) \, dk. 
    \end{align*} 
Notons que $\Phi_{g,R'}^{T,T'}$ appartient à $C_c^\infty((\mgo_{R'}\cap\sgo)(\AAA))$ et dépend continûment en $g$. De plus, $\Phi_{g,R'}^{T,T'}(X)$ est un polynôme-exponentielle en $T$ et $T'$. Effectivement, en tant que fonctions de $T'$, $u_{R'}^{T,T'}(k,g)$ et $\Phi_{g,R'}^{T,T'}(X)$ ne dépendent que de $T'_{R'}$. Soit $\Phi_{g,R'}(X)$ le terme constant de $\Phi_{g,R'}^{T,T'}(X)$. Alors $\Phi_{g,R'}(X)=0$ sauf si $R'$ vérifie \eqref{eq:cond-ctneq0} auquel cas  
    \begin{align}\label{eq:defPhigR'}
	\Phi_{g,R'}(X)=\int_{K_\sigma^\theta} \Psi_{kg,R'}(X) v'_{R'}(kg) \eta(k) \, dk. 
    \end{align}  
D'après le théorème \ref{thm:expol-infi}, l'expression 
    \begin{align*}
	J_{\nilp}^{R',T'}(\eta, 2\rho_{\ngo_{R'}\cap\sgo}-2\rho_{R'^\theta}^{G_\sigma^\theta}, \Phi_{g,R'}^{T,T'})
    \end{align*}  
est un polynôme-exponentielle en $T$ et $T'$ dont le terme constant est 
    \begin{align*}
	J_{\nilp}^{R'}(\eta, 2\rho_{\ngo_{R'}\cap\sgo}-2\rho_{R'^\theta}^{G_\sigma^\theta}, \Phi_{g,R'}). 
    \end{align*}  

Rappelons qu'on note  $J_\of(\eta,\varphi)$ le terme constant du polynôme-exponentielle $T\mapsto J_\of^T(\eta,\varphi)$, cf. \S \ref{S:dist sur S}. Notre progrès est résumé par le théorème suivant qui est un analogue de \cite[lemme 6.2]{ar-family}. 

    \begin{theoreme}\label{thm:descente}
        Soit $\varphi\in C_c^\infty(S_{p'}(\AAA))$. On a  
    \begin{align}\label{eq:art(6.8)}
	J_\of(\eta, \varphi)=\int_{G_\sigma^\theta(\AAA)\back G^\theta(\AAA)} \sum_{R'} J_{\nilp}^{R'}(\eta, 2\rho_{\ngo_{R'}\cap\sgo}-2\rho_{R'^\theta}^{G_\sigma^\theta}, \Phi_{g,R'}) \eta(g) \, dg
    \end{align}  
où la somme est prise sur $R'\in\fc^{G_\sigma}(P_{1,\sigma}^\theta, \theta)$ vérifiant \eqref{eq:cond-ctneq0}. 
    \end{theoreme}
    
    \begin{remarque}
        La condition \eqref{eq:cond-ctneq0} est triviale pour les facteurs de type 1 ou 2 dans la proposition \ref{prop:descendant}. Donc seulement les facteurs de type 3 comptent. Pour cela, cette condition ressemble à \eqref{eq:la condition} avec $\theta_1=\theta_2$. On peut consulter la proposition \ref{prop:lacond1=2} pour une description plus explicite. 
    \end{remarque}
\end{paragr}

\subsection{Intégrales orbitales pondérées}\label{ssec:IOP}

\begin{paragr}\label{S:defcar}
  On continue avec les notations du paragraphe \ref{S:red-unip}. Mais dans cette sous-section, on suppose $\of\in\cgo_{p',\rs}(F)$ et  $S_\of(F)\not=\emptyset$ (voir § \ref{S:invariants} et § \ref{S:cat quot ss} pour la définition de $\cgo_{p', \rs}$). D'après la proposition \ref{prop:ssr-Prd}, tous les éléments de $S_{p',\of}$ sont alors $G^\theta$-semi-simples réguliers et  $S_{p',\of}(F)$ est une classe de $G^\theta(F)$-conjugaison. Rappelons que l'on fixe  $P_1\in\fc(P_0^\theta)$ et  $\sigma\in (M_{P_1}\cap S_{p',\of})(F)$ un élément $(P_1, \theta)$-anisotrope. 

  Soit $M\in\lc(M_0)$. Pour tout  $T\in\overline{\ago_0^+}$ et tout $g\in G(\AAA)$, soit $v_M(g,T)$ le volume de la projection sur $\ago_M^G$ de  l'enveloppe convexe des  points $T_P-H_P(g) $ lorsque $P$ décrit $\pc(M)$. C'est le poids usuel introduit par Arthur dans \cite[p.951]{ar1}.
  C'est une fonction positive sur $G(\AAA)$ qui est invariante à droite par $K$ et à gauche par $M(\AAA)$. Soit $v_M(g)=v_M(g,0)$. 
\end{paragr}

\begin{paragr}
  En utilisant la proposition \ref{prop:ssr-Prd}, sans perte de généralité, on peut supposer $\sigma=x(A)$ défini en \eqref{eq:x(A)} où $A\in\gl_\nu(D)$ est semi-simple régulier, sans valeurs propres $\pm1$ et elliptique dans une sous-algèbre de Levi standard de $\gl_\nu(D)$ disons de type $(\nu_1,\cdots,\nu_k)$ où $\sum_{1\leq i\leq k} \nu_i=\nu$.   Soit $\tilde G=GL_{2\nu,D}$, $H=GL_{|p'-q|,D}\times GL_{|p-p'|,D}$ et $\sigma_0= \begin{pmatrix}
          A & A-I_\nu \\
          A+I_\nu & A  \\
  \end{pmatrix}$. D'après le corollaire \ref{cor:Prdx}, le centralisateur $ \tilde G_{\sigma_0}$ est un tore maximal de $\tilde G$. On trouve que  
  \begin{align*}
      G_\sigma= \tilde G_{\sigma_0}\times H. 
  \end{align*}        
  Soit $\theta_0=\begin{pmatrix}
          I_\nu & 0  \\
          0 & -I_\nu  
  \end{pmatrix}$. Alors 
  \begin{align}\label{eq:tildeGsigthe0}
       \tilde G_{\sigma_0}^{\theta_0}=\left\{\begin{pmatrix}
          x & 0  \\
          0 & x  
  \end{pmatrix} : x\in GL_{\nu,D}, xA=Ax\right\} 
  \end{align}
  est isomorphe à un tore maximal de $GL_{\nu,D}$. Il résulte de la remarque \ref{rmq:nul} que l'action de $\theta$ sur $H$ est triviale. On en déduit que 
  \begin{align}\label{eq:Gsigthe}
      G_\sigma^\theta= \tilde G_{\sigma_0}^{\theta_0}\times H. 
  \end{align} 
  
  \begin{proposition}
  \begin{enumerate}
      \item Le groupe $H$ est réduit au singleton $\{1\}$ si et seulement si $p=q=p'=q'$. 
      
      \item Si $\eta(G_\sigma^\theta(\AAA))=1$, alors $\eta=1$ ou $H=\{1\}$. 
      
      \item Si $\eta^2=1$, la réciproque de l'assertion 2 est aussi vraie. 
  \end{enumerate}
  \end{proposition}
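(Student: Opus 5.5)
Nous traiterons les trois assertions séparément en exploitant la décomposition $G_\sigma^\theta=\tilde G_{\sigma_0}^{\theta_0}\times H$ de \eqref{eq:Gsigthe}, avec $H=GL_{|p'-q|,D}\times GL_{|p-p'|,D}$.

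Pour l'assertion 1, le groupe $H$ est trivial si et seulement si $|p'-q|=0$ et $|p-p'|=0$, c'est-à-dire $p'=q$ et $p=p'$. Avec $p+q=N=p'+q'$, cela équivaut à $p=q=p'=q'$. Il s'agit d'une vérification directe.

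Pour l'assertion 2, on raisonne par contraposée : si $H\neq\{1\}$ et $\eta\neq 1$, on exhibe un élément de $G_\sigma^\theta(\AAA)$ sur lequel $\eta$ n'est pas trivial. Si $H\neq\{1\}$, l'un des facteurs, disons $GL_{r,D}$ avec $r\geq 1$, est non trivial. La restriction de la norme réduite $\Nrd: GL_{r,D}(\AAA)\to\AAA^\times$ a pour image l'ensemble des $x\in\AAA^\times$ localement normes réduites. Par le théorème de Hasse--Schilling--Maass (image de la norme réduite), cette image contient $\prod_v F_v^{\times}$ restreint, avec pour exception les places réelles où $D$ ramifie, qui n'imposent que la positivité. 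Comme $\eta$ est trivial sur $\RR_+^\times$ (cf. \S \ref{S:JchiT}), la composante archimédienne est contrôlée. On montrera que $\eta\circ\Nrd$ non trivial sur $GL_{r,D}(\AAA)$ dès que $\eta\neq1$ : si $\eta$ était trivial sur l'image de $\Nrd$, il serait trivial sur un sous-groupe d'indice fini de $\AAA^\times$ contenant $F^\times\cdot\prod_{v\ \mathrm{finie}}\oc_v^\times$ (quitte à ajuster), ce qui forcerait $\eta$ d'ordre divisant $2$ concentré aux places réelles ramifiées. Ce cas résiduel est l'obstacle principal. On le traitera via l'approximation faible : on choisit $x\in F^\times$ positif aux places réelles ramifiées et de signe prescrit ailleurs, pour conclure que $\eta=1$. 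Comme $H\subset G_\sigma^\theta$ et que $\eta$ est défini par composition avec $\Nrd$ sur $G$, la restriction de $\eta$ à $H(\AAA)$ est bien $\eta\circ\Nrd_H$ (produit des normes réduites des blocs), ce qui conclut.

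Pour l'assertion 3, supposons $\eta^2=1$. Si $\eta=1$, l'assertion est évidente. Si $H=\{1\}$, alors $G_\sigma^\theta=\tilde G_{\sigma_0}^{\theta_0}$, identifié par \eqref{eq:tildeGsigthe0} aux matrices $\diag(x,x)$ avec $x$ dans le centralisateur de $A$. On a alors $\eta(\diag(x,x))=\eta(\Nrd(x))^2=\eta^2(\Nrd(x))=1$, d'où $\eta(G_\sigma^\theta(\AAA))=1$.
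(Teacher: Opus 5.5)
Parts 1 and 3 are exactly the paper's argument. For part 2 you reduce, as the paper does, to the fact that $\eta$ cannot be trivial on $H(\AAA)$ when $\eta\neq 1$ and $H\neq\{1\}$. The paper simply quotes this as known; you try to prove it, but the decisive step is stated the wrong way round.

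An element $x\in F^\times$ that is \emph{positive} at the real places where $D$ ramifies gives no information. Such an $x$ already lies in $U=\{y\in\AAA^\times \mid y_v>0 \text{ for every real place } v \text{ where } D \text{ ramifies}\}$. This $U$ is exactly $\Nrd(GL_{r,D}(\AAA))$ for $r\geq 1$, and $\eta$ is trivial on it by assumption. You need the opposite choice: given $y\in\AAA^\times$, weak approximation gives $x\in F^\times$ with the same sign as $y_v$ at each of these finitely many places. Then $x^{-1}y\in U$ and $\eta(y)=\eta(x)\,\eta(x^{-1}y)=1$. Thus $F^\times U=\AAA^\times$ and $\eta=1$, which contradicts $\eta\neq 1$ as required.

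With this correction your proof is complete. It only uses the local image of the reduced norm: surjective at finite places, at complex places and at split real places; image $\RR_{>0}$ at ramified real places; and integral points mapping onto $\oc_v^\times$ at almost all $v$. The global Hasse--Schilling--Maass theorem is not needed.

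Two sentences in your argument should go. First, the triviality of $\eta$ on $\RR_+^\times$ plays no role and does not control the sign characters of $\eta$ at the individual real places, so drop that remark. Second, replace the vague intermediate claim about a finite-index subgroup containing $F^\times\prod_v\oc_v^\times$ by the exact image $U$.
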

  
  \begin{preuve}
  L'assertion 1 est évidente puisque $p+q=p'+q'$. Il est connu que $\eta(H(\AAA))=1$ si et seulement si $\eta=1$ ou $H=\{1\}$, ce qui implique l'assertion 2. Si $\eta^2=1$, on a  $\eta(\tilde G_{\sigma_0}^{\theta_0}(\AAA))=1$ par \eqref{eq:tildeGsigthe0}, d'où l'assertion 3 vu \eqref{eq:Gsigthe}. 
  \end{preuve}
  
  Soit $\tilde M_1$ le centralisateur dans $\tilde G$ du sous-tore déployé maximal du tore $\tilde G_{\sigma_0}^{\theta_0}$. On a $\tilde M_{1,\sigma_0}=\tilde G_{\sigma_0}$ et $A_{\tilde M_1}=A_{\tilde G_{\sigma_0}^{\theta_0}}=A_{\tilde G_{\sigma_0}}^{\theta_0}$ (cf. corollaire \ref{cor:A_P_sigma}). Soit $T_0\subset B$ le sous-groupe des produits des matrices diagonales et le sous-groupe des produits des matrices triangulaires supérieures de $H$. On a $M_1=\tilde M_1\times T_0$ qui est un sous-groupe de Levi semi-standard de $G$ de type $(2\nu_1,\cdots,2\nu_k,\underbrace{1,\cdots,1}_{p+q-2\nu})$. 
  Alors 
  \begin{align*}
      P_{1,\sigma}=\tilde G_{\sigma_0}\times B \text{ et } P_{1,\sigma}^\theta=\tilde G_{\sigma_0}^{\theta_0}\times B. 
  \end{align*}
  Il existe donc une bijection 
  \begin{align}\label{eq:bij-par-ssr}
   \fc^H(B)\to \fc^{G_\sigma}(P_{1,\sigma}^\theta)=\fc^{G_\sigma}(P_{1,\sigma}^\theta,\theta)
  \end{align} 
  donnée par $Q\mapsto \tilde G_{\sigma_0}\times Q$. Pour $R\in\fc^{G_\sigma}(P_{1,\sigma}^\theta)$, on en déduit 
  \begin{align}\label{eq:theta-theta}
   A_R^\theta=A_{R^\theta} \text{ et } [M_R^\theta]^R=[M_{R^\theta}]^1. 
  \end{align}   
  
  Rappelons qu'en utilisant la bijection \eqref{eq:bijtilde}, nous avons noté $\wt{G_\sigma}\in\lc(M_1)$ le centralisateur dans $G$ de $A_{G_\sigma}^\theta=A_{G_\sigma^\theta}$ (vu \eqref{eq:theta-theta}). On écrit également 
  \begin{align*}
      v_\sigma(g,T)=v_{\wt{G_\sigma}}(g,T) \text{ et } v_\sigma(g)=v_{\wt{G_\sigma}}(g). 
  \end{align*}
  Effectivement, on a $\wt{G_\sigma}=\tilde M_1\times H$ qui est un sous-groupe de Levi semi-standard de $G$ de type $(2\nu_1,\cdots,2\nu_k,|p'-q|,|p-p'|)$. Dans \S \ref{S:lem-combi}, nous avons aussi vu que  
  \begin{align}\label{eq:sigma-L}
      \fc_{G_\sigma}(M_1)=\fc(\wt{G_\sigma}) \text{ et } A_{\wt{G_\sigma}}=A_{G_\sigma}^\theta=A_{G_\sigma^\theta}. 
  \end{align}

  \begin{lemme}
    \label{lem:cv}
    Pour tout  $T\in\overline{\ago_0^+}$ il existe une semi-norme continue $\|\cdot\|$ sur $\Sc(S_{p'}(\AAA))$ telle que pour tout $\varphi\in \Sc(S_{p'}(\AAA))$ on ait:
    \begin{align*}
       \int_{G_\sigma^\theta(\AAA)\back G^\theta(\AAA)} |\varphi(\Int(g^{-1})(\sigma)) |v_\sigma(g,T)  \, dg  \leq \|\varphi\|.
    \end{align*}
  \end{lemme}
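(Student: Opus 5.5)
Le plan consiste à se ramener à la convergence d'intégrales orbitales semi-simples régulières pondérées pour le groupe $G^\theta$, en contrôlant le poids $v_\sigma(g,T)$ par une puissance de la hauteur et la fonction $\varphi$ par une semi-norme. D'abord, par définition, $v_\sigma(g,T)$ est le volume d'un polytope dont les sommets sont les $T_P-H_P(g)$ pour $P\in\pc(\wt{G_\sigma})$. Ce volume est majoré par $C(1+\|T\|+\sup_P\|H_P(g)\|)^{r}$ avec $r=\dim\ago_{\wt{G_\sigma}}^G$. De plus $\|H_P(g)\|\ll \log(1+\|g\|)$. Comme $v_\sigma(\cdot,T)$ est invariante à gauche par $\wt{G_\sigma}(\AAA)\supset G_\sigma^\theta(\AAA)$, on peut remplacer $\|g\|$ par $\inf_{h\in G^\theta_\sigma(\AAA)}\|hg\|$. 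On obtient ainsi $v_\sigma(g,T)\ll \|g\|_{G_\sigma^\theta\back G^\theta}^{\eps}$ pour tout $\eps>0$, où $\|\cdot\|_{G_\sigma^\theta\back G^\theta}$ est une hauteur sur le quotient.

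Ensuite, il faut comparer la hauteur sur $G_\sigma^\theta(\AAA)\back G^\theta(\AAA)$ à celle du point $\Int(g^{-1})(\sigma)\in S_{p'}(\AAA)$. L'orbite $G^\theta\cdot\sigma$ est fermée dans $S_{p'}$, par le lemme \ref{lem:closedorbits} et la proposition \ref{prop:ssr-Prd}. L'application orbitale $G_\sigma^\theta\back G^\theta\to S_{p'}$ est donc une immersion fermée. Par les propriétés des hauteurs sur les variétés affines (\cite[proposition A.1.1]{RBP}), on a alors $\|g\|_{G^\theta_\sigma\back G^\theta}\ll \|\Int(g^{-1})\sigma\|^{c}$ pour un $c>0$. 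Il suffit par conséquent de majorer
\[
\int_{G_\sigma^\theta(\AAA)\back G^\theta(\AAA)}|\varphi(\Int(g^{-1})\sigma)|\,\|\Int(g^{-1})\sigma\|^{c\eps}\,dg
\]
par $\sup_{y}\|y\|^{N}|\varphi(y)|$ pour un $N$ assez grand. On utilise pour cela la convergence de $\int_{G_\sigma^\theta(\AAA)\back G^\theta(\AAA)}\|\Int(g^{-1})\sigma\|^{-N}dg$ pour $N$ grand, qui est l'analogue de \cite[proposition A.1.1 (x)]{RBP} (intégrabilité d'une puissance négative de la hauteur sur une orbite fermée munie d'une mesure invariante), en procédant place par place pour le produit eulérien.

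L'obstacle principal est ce dernier point : la convergence globale adélique de l'intégrale de $\|\cdot\|^{-N}$ sur l'orbite. Aux places finies presque partout, il faut contrôler le volume de $\{g : \Int(g^{-1})\sigma\in S_{p'}(\oc_v)\}$ modulo $G_\sigma^\theta(F_v)$. Je procéderais comme au \S\ref{S:exp} : hors d'un ensemble fini $V$ de places, quitte à translater par $G_\sigma^\theta(\AAA)$, on a $g_v\in G^\theta(\oc_v)$, de sorte que le facteur local vaut $1$. Aux places de $V$, la majoration locale résulte du fait que l'orbite est fermée (critère de Harish-Chandra pour les intégrales orbitales semi-simples), appliqué au cadre symétrique via la tranche de Luna du lemme \ref{lem:etale}. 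La continuité en $\varphi$ est alors automatique, la borne étant une semi-norme de type $\|\varphi\|_{N,1}$.
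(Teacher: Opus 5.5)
La majoration du poids par $\|g\|_{G_\sigma^\theta\back G^\theta}^{\eps}$ est correcte, et la comparaison des hauteurs sur l'orbite fermée est acceptable, puisque vous la citez. Le problème est l'étape que vous désignez vous-même comme l'obstacle principal : l'argument que vous en donnez est faux tel quel. La hauteur sur $S_{p'}(\AAA)$ étant un produit de normes locales, l'intégrale $\int_{G_\sigma^\theta(\AAA)\back G^\theta(\AAA)}\|\Int(g^{-1})(\sigma)\|^{-N}\,dg$ est un produit eulérien. Son facteur en une place $v\notin V$ est $\int_{G_\sigma^\theta(F_v)\back G^\theta(F_v)}\|\Int(g_v^{-1})(\sigma)\|_v^{-N}\,dg_v$. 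Ce que vous tirez du \S\ref{S:exp} ne porte que sur le lieu où $\Int(g_v^{-1})(\sigma)\in S_{p'}(\oc_v)$, dont le volume modulo $G_\sigma^\theta(F_v)$ vaut bien $1$. Mais l'intégrande n'est pas la fonction caractéristique de ce lieu : il est strictement positif en dehors. Le facteur local vaut donc $1$ plus une queue, et non $1$. Pour que le produit converge, il faudrait majorer ces queues uniformément en $v$, par exemple par $O(q_v^{-1-\delta})$ où $q_v$ est le cardinal du corps résiduel. Cela exige un comptage uniforme en $v$ des points de l'orbite de hauteur $q_v^k$ : rien de tel n'est fourni, et ce n'est pas formel.

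Vous pouvez combler ce trou sans argument place par place, avec vos propres ingrédients. Soit $h_0$ tel que $\|h_0g\|\le 2\|g\|_{G_\sigma^\theta\back G^\theta}$. La sous-multiplicativité des hauteurs donne $\|hg\|\le\|hh_0^{-1}\|\,\|h_0g\|$ pour tout $h\in G_\sigma^\theta(\AAA)$. En posant $c_M=\int_{G_\sigma^\theta(\AAA)}\|h\|^{-M}\,dh$, fini pour $M$ assez grand, on obtient
\[
\|g\|_{G_\sigma^\theta\back G^\theta}^{-M}\le 2^M c_M^{-1}\int_{G_\sigma^\theta(\AAA)}\|hg\|^{-M}\,dh .
\]
On en déduit $\int_{G_\sigma^\theta(\AAA)\back G^\theta(\AAA)}\|g\|_{G_\sigma^\theta\back G^\theta}^{-M}\,dg\le 2^Mc_M^{-1}\int_{G^\theta(\AAA)}\|g\|^{-M}\,dg<\infty$. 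Votre comparaison $\|g\|_{G_\sigma^\theta\back G^\theta}\ll\|\Int(g^{-1})(\sigma)\|^{c}$ conclut alors. Une autre option consiste à raisonner sur chaque $\Sc(S_{p'}(\AAA),C)^J$ : hors d'un ensemble fini de places, la partie finie de $\varphi$ est supportée dans les points entiers, et votre calcul des facteurs non ramifiés devient correct.

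C'est, sous forme duale, la voie de l'article, qui évite toute estimation de hauteur sur l'orbite. On restreint $\varphi$ à l'orbite fermée $R$. On identifie $R(\AAA)$ à $G_\sigma^\theta(\AAA)\back G^\theta(\AAA)$ grâce à la trivialité de $H^1(F_v,G_\sigma^\theta)$ (Hilbert 90 et lemme de Shapiro). On relève $\varphi|_R$ en $f\in\Sc(G^\theta(\AAA))$ par l'application ouverte et surjective d'intégration sur les fibres. Il ne reste qu'à majorer $\int_{G^\theta(\AAA)}|f(g)|\,v_\sigma(g,T)\,dg$, ce qui est immédiat puisque $v_\sigma(\cdot,T)$ croît au plus comme une puissance de $\log\|g\|$.
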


  \begin{preuve}
    L'application $g\mapsto \Int(g^{-1})(\sigma)$ induit un isomorphisme de $G_\sigma^\theta\back G^\theta$ sur une sous-variété lisse et fermée $R$ de $S_{p'}$. On a un morphisme de restriction qui est continu et surjectif de $\Sc(S_{p'}(\AAA))$ sur $\Sc(R(\AAA))$. Par ailleurs, à l'aide du théorème 90 de Hilbert et du lemme de Shapiro, on déduit de \eqref{eq:Gsigthe} que le premier ensemble de cohomologie du groupe $G_\sigma^\theta(F_v)$ est trivial pour toute place $v$ de $F$. L'application considérée envoie alors bijectivement $G_\sigma^\theta(\AAA)\back G^\theta(\AAA)$ sur  $R(\AAA)$. On a ainsi une application continue, surjective et ouverte de  $\Sc(G^\theta(\AAA))$ sur  $\Sc(G_\sigma^\theta(\AAA)\back G^\theta(\AAA))\simeq \Sc(R(\AAA))$ donnée par l'intégration sur $G_\sigma^\theta(\AAA)$, cf. \S \ref{S:Schwartz}. Il suffit donc de prouver qu'il existe une  semi-norme continue sur $\Sc( G^\theta(\AAA))$ telle que  pour tout $f\in \Sc(G^\theta(\AAA))$,  on ait:
    \begin{align*}
        \int_{G_\sigma^\theta(\AAA)\back G^\theta(\AAA)} \left| \int_{G_\sigma^\theta(\AAA)   }f(hg) \, dh\right|v_\sigma(g,T)  \, dg  \leq \|f\|.
    \end{align*}
    Il suffit donc de prouver la continuité de
    \begin{align*}
f\in \Sc(G^\theta(\AAA)) \mapsto   \int_{G^\theta(\AAA)} \left| f(g) \right|v_\sigma(g,T)  \, dg.  
    \end{align*}
    La convergence et la continuité de l'intégrale sont évidentes ici. En effet, le poids $v_\sigma(g,T)$ est majoré par $c \max_{P\in \pc(\wt{G_\sigma})}\left( \|T_P - H_P(g)\|^{\dim(\ago_{\wt{G_\sigma}}^G)}\right)$ pour une certaine constante $c>0$. Par ailleurs, si $\|\cdot\|$ est une hauteur sur $G(\AAA)$, il existe $c_1>0$ tel que on a $\|H_P(g)\|\leq c_1(1+\log\|g\|)$ pour tout $g\in G(\AAA)$.  
  \end{preuve}
\end{paragr}

\begin{paragr}
  Rappelons que pour toute $\varphi\in\Sc(S_{p'}(\AAA))$ on note  $J_\of(\eta,\varphi)$ le terme constant du polynôme-exponentielle $T\mapsto J_\of^T(\eta,\varphi)$, cf. \S \ref{S:dist sur S}. La proposition suivante exprime $J_\of(\eta,\varphi)$ comme une intégrale pondérée tordue par le caractère $\eta$ construite à l'aide du poids $v_{\sigma}(g)$.

  \begin{theoreme}
    \label{thm:IOP}
    Soit $\varphi\in \Sc(S_{p'}(\AAA))$.
    On a $J_\of(\eta,\varphi)=0$ sauf si  $\eta(G_\sigma^\theta(\AAA))=1$, auquel cas on a 
    \begin{align*}
	    J_\of(\eta,\varphi)=\vol([G_\sigma^\theta]^1) \int_{G_\sigma^\theta(\AAA)\back G^\theta(\AAA)} \varphi(\Int(g^{-1})(\sigma)) v_\sigma(g) \eta(g) \, dg  
    \end{align*}    
    où $[G_\sigma^\theta]^1$ est défini dans \S \ref{S:compact}. 
  \end{theoreme}

  \begin{preuve} Il s'agit de montrer l'égalité de deux formes linéaires sur $\Sc(S_{p'}(\AAA))$ qui sont continues vu \S \ref{S:dist sur S} et le lemme \ref{lem:cv}. Il suffit de l'établir sur le sous-espace dense $\Cc(S_{p'}(\AAA))$.  Soit $\varphi\in \Cc(S_{p'}(\AAA))$ et $T\in T_0+\overline{\ago_0^+}$ suffisamment positif avec $\|T^G\|$ assez grand (par rapport au support de $\varphi$). L'argument suivant ressemble à celui du paragraphe \ref{S:red-unip}. On commence par l'expression \eqref{eq:var-prelim} de $J_\of^T(\eta,\varphi)$. Mais comme $\sigma$ est $G^\theta$-semi-simple et $G^\theta$-régulier, d'après le lemme  \ref{lem:ssr-unip}, l'expression \eqref{eq:KRunip} de  $K_{R,\varphi}^{\unip}(x)$ devient $\varphi(\Int(x^{-1})(\sigma))$ qui est indépendant de $R$. On en déduit que $J_\of^T(\eta,\varphi)$ égale 
   \begin{align}\label{eq:rss-prelim}
     \int_{[G^\theta]^G} \sum_{R\in \fc^{G_\sigma}(P_{1,\sigma}^\theta,\theta)} \sum_{\xi\in R^\theta(F)\back G^\theta(F)} \varphi(\Int(\xi x)^{-1}(\sigma)) \left(\sum_{P\in\fc_R(M_1)}\eps_P^G \hat\tau_P(H_P(\xi x)-T_P)\right) \eta(x) \, dx. 
  \end{align}
  Comme dans \S \ref{S:red-unip}, on fera également d'abord formellement des changements de variables dont la justification sera similaire et omise. En changeant la somme et intégrale sur $(\xi,x)$ et utilisant \eqref{eq:tau=GamR}, on obtient 
    \begin{align*}
      J_\of^T(\eta, \varphi)=\int_{ G_\sigma^\theta(\AAA)\back G^\theta(\AAA)}
      \int_{ G_\sigma^\theta(F)\back (G_\sigma^\theta(\AAA)\cap G(\AAA)^1)}  \sum_{R'\in\fc^{G_\sigma}(P_{1,\sigma}^\theta, \theta)} \sum_{R\in\fc^{R'}(P_{1,\sigma}^\theta, \theta)}
      \sum_{\delta\in R^\theta(F)\back G_\sigma^\theta(F)} \\
      \varphi(\Int(g^{-1})(\sigma)) (-1)^{\dim (\ago_R^{R'})^\theta} \hat\tau_R^{R'}(H_R(\delta x)-T'_R) \Gamma_{R'}^G(H_{R'}(\delta x)-T'_{R'},\yc_{R'}^{T,T'}(\delta x, g)) \eta(xg) \, dxdg.
    \end{align*}

Ensuite, on décompose la somme sur $\delta\in R^\theta(F)\back G_\sigma^\theta(F)$ et applique la décomposition d'Iwasawa. Puisque $\vol([N_{R'}^\theta])=1$, l'expression $J_\of^T(\eta, \varphi)$ s'écrit l'intégrale et somme sur $g\in G_\sigma^\theta(\AAA)\back G^\theta(\AAA)$, $R'\in\fc^{G_\sigma}(P_{1,\sigma}^\theta, \theta)$, $k\in K_\sigma^\theta$, $a\in (A_{R'}^\theta)^{G,\infty}$ et $m\in[M_{R'}^\theta]^{R'}=[M_{R'^\theta}]^1$ (cf. \eqref{eq:theta-theta}) de
   \begin{align*}
     \varphi(\Int(g^{-1})(\sigma)) \eta(mkg) \exp(\langle -2\rho_{R'^\theta}^{G_\sigma^\theta}, H_{R'^\theta}(a) \rangle) \Gamma_{R'}^G(H_{R'}(a)-T'_{R'},\yc_{R'}^{T,T'}(k, g)) \\ \nonumber \sum_{R\in\fc^{R'}(P_{1,\sigma}^\theta, \theta)} (-1)^{\dim(\ago_R^{R'})^\theta}  \sum_{\mu\in(R^\theta\cap M_{R'^\theta})(F)\back M_{R'^\theta}(F)} \hat\tau_R^{R'}(H_R(\mu m)-T'_R). 
  \end{align*}

  Considérons maintenant l'intégrale 
    \begin{align*}
	u_{R'}^{T,T'}(k,g)=\int_{(A_{R'}^\theta)^{G,\infty}} \exp(\langle -2\rho_{R'^\theta}^{G_\sigma^\theta}, H_{R'^\theta}(a) \rangle) \Gamma_{R'}^G(H_{R'}(a)-T'_{R'},\yc_{R'}^{T,T'}(k, g)) \, da. 
    \end{align*}    
Par un changement de variables et les lemmes \ref{lem:art4.1} et \ref{lem:art2.2-li5.6}, il existe un polynôme $c_{R',P,Q}$ sur $\ago_Q^G$ pour tout $Q\in\fc(P)$ tel que 
    \begin{align*}
	u_{R'}^{T,T'}(k,g)=\sum_{P\in\fc_{R'}^0(M_1)} \sum_{Q\in\fc(P)} \exp(\langle -2\rho_{R'^\theta}^{G_\sigma^\theta}, T_Q^G \rangle) \exp(\langle -2\rho_{R'^\theta}^{G_\sigma^\theta}, T'^Q_{R'} \rangle) \\ \nonumber 
	\exp(\langle -2\rho_{R'^\theta}^{G_\sigma^\theta}, -H_Q(kg)^G \rangle) c_{R',P,Q}((-H_P(kg)+T_P-T'_{R'})_Q^G). 
    \end{align*}    
Donc $u_{R'}^{T,T'}(k,g)$ est un polynôme-exponentielle en $T$ et $T'_{R'}$ dont le terme constant s'annule sauf si 
    \begin{align}\label{eq:cond-ctneq0-rss}
 (-2\rho_{R'^\theta}^{G_\sigma^\theta})|_{(\ago_{R'}^G)^\theta}=0
    \end{align}    
auquel cas $u_{R'}^{T,T'}(k,g)$ est un polynôme en $T$ et $T'_{R'}$. Mais la bijection \eqref{eq:bij-par-ssr} implique que la condition \eqref{eq:cond-ctneq0-rss} est équivalente à  $R'=G_\sigma$.  

Il faut prendre en compte aussi l'intégrale 

    \begin{align*}
        \int_{[M_{R'}^\theta]^{R'}} \eta(m) \sum_{\mu\in(R^\theta\cap M_{R'^\theta})(F)\back M_{R'^\theta}(F)} \hat\tau_R^{R'}(H_R(\mu m)-T'_R) \, dm. 
    \end{align*}    
Par la décomposition d'Iwasawa et $\vol([N_R^{R',\theta}])=1$, cette intégrale est égale à l'intégrale sur $a'\in (A_R^{R'})^{\theta,\infty}=A_{R^\theta}^{R'^\theta,\infty}$, $m'\in[M_R^\theta]^R=[M_{R^\theta}]^1$ (cf. \eqref{eq:theta-theta}) et $k'\in K_\sigma^\theta\cap M_{R'^\theta}(\AAA)$ de 
    \begin{align*}
        \eta(m'k') \exp(\langle -2\rho_{R^\theta\cap M_{R'^\theta}}^{M_{R'^\theta}}, H_{R^\theta}(a') \rangle)\hat\tau_R^{R'}(H_R(a')-T'_R). 
    \end{align*}
    Notons que 
        \begin{align*}
        \int_{a'\in (A_R^{R'})^{\theta,\infty}}  \exp(\langle -2\rho_{R^\theta\cap M_{R'^\theta}}^{M_{R'^\theta}}, H_{R^\theta}(a') \rangle)\hat\tau_R^{R'}(H_R(a')-T'_R) \, da' = C \exp(\langle -2\rho_{R^\theta\cap M_{R'^\theta}}^{M_{R'^\theta}}, {T'}_R^{R'} \rangle)
    \end{align*}
    avec un constant 
        \begin{align*}
        C=\int_{(\ago_R^{R'})^\theta} \exp(\langle -2\rho_{R^\theta\cap M_{R'^\theta}}^{M_{R'^\theta}}, H \rangle)\hat\tau_R^{R'}(H) \, dH < +\infty.
    \end{align*}  
    C'est une fonction exponentielle en ${T'}_R^{R'}$ dont le terme constant s'annule sauf si 
    \begin{align}\label{eq:Levi-ctneq0-rss}(-2\rho_{R^\theta\cap M_{R'^\theta}}^{M_{R'^\theta}})|_{(\ago_{R}^{R'})^\theta}=0. 
    \end{align}
D'après la bijection \eqref{eq:bij-par-ssr}, la condition \eqref{eq:Levi-ctneq0-rss} est équivalente à $R=R'$.

Comme la fonction $T'\mapsto u_{R'}^{T,T'}(k,g)$ ne dépend que $T'_{R'}$, par la discussion ci-dessus, on trouve que le terme constant  $J_\of(\eta,\varphi)$ provient de $R=G_\sigma$ dans \eqref{eq:rss-prelim}. Il est donc le terme constant de  
   \begin{align*}
     \int_{[G^\theta]^G} \sum_{\xi\in G_\sigma^\theta(F)\back G^\theta(F)} \varphi(\Int(\xi x)^{-1}(\sigma)) \left(\sum_{P\in\fc_{G_\sigma}(M_1)}\eps_P^G \hat\tau_P(H_P(\xi x)-T_P)\right) \eta(x) \, dx 
     \end{align*}
qui est égal à     
\begin{align*} \int_{G_\sigma^\theta(\AAA)\back G^\theta(\AAA)} \varphi(\Int(g^{-1})(\sigma)) \left(\int_{G_\sigma^\theta(F)\back (G_\sigma^\theta(\AAA)\cap G(\AAA)^1)}   \sum_{P\in\fc_{G_\sigma}(M_1)}\eps_P^G \hat\tau_P(H_P(xg)-T_P) \eta(xg) \, dx\right) \, dg. 
\end{align*}
Comme $G_\sigma^\theta(\AAA)\cap G(\AAA)^1=G_\sigma^\theta(\AAA)^1\times A_{G_\sigma^\theta}^{G,\infty}$, la dernière expression devient 
\begin{align*}
 \int_{G_\sigma^\theta(\AAA)\back G^\theta(\AAA)} \varphi(\Int(g^{-1})(\sigma)) \left(\int_{A_{G_\sigma^\theta}^{G,\infty}} \sum_{P\in\fc_{G_\sigma}(M_1)}\eps_P^G \hat\tau_P(H_P(ag)-T_P) \, da\right) \left(\int_{[G_\sigma^\theta]^1} \eta(yg) \, dy\right) \, dg.  
 \end{align*}
 Elle s'annule sauf si $\eta(G_\sigma^\theta(\AAA))=1$, auquel cas on a 
 \begin{align*}
     \int_{[G_\sigma^\theta]^1} \eta(yg) \, dy=\vol([G_\sigma^\theta]^1)\eta(g). 
 \end{align*}
 Par ailleurs, en utilisant  \eqref{eq:sigma-L}, on obtient 
\begin{align*}
    \int_{A_{G_\sigma^\theta}^{G,\infty}} \sum_{P\in\fc_{G_\sigma}(M_1)}\eps_P^G \hat\tau_P(H_P(ag)-T_P) \, da=\int_{\ago_{\wt{G_\sigma}}^G} \sum_{P\in\fc(\wt{G_\sigma})}\eps_P^G \hat\tau_P(X+H_P(g)-T_P) \, dX. 
\end{align*}
La famille  $(T_P-H_P(g) )_{ P\in\fc(\wt{G_\sigma})}$ est une famille $\wt{G_\sigma}$-orthogonale et régulière au sens de \cite[§ 1.5]{labWal}: cela résulte de \cite[lemme 3.6]{ar-character} et du fait que $T\in \ago_0^+$. On déduit  alors de \cite[proposition 1.8.7]{labWal} que 
     \begin{align*}
 X\in\ago_{\wt{G_\sigma}}^G\mapsto	\sum_{P\in\fc(\wt{G_\sigma})}\eps_P^G \hat\tau_P(X-(T_P-H_P(g))) 
     \end{align*}	
     est la fonction caractéristique de la projection sur $\ago_{\wt{G_\sigma}}^G $ de l'enveloppe convexe des points $(T_P-H_P(g) )_{ P\in\pc(\wt{G_\sigma})}$. On conclut en prenant le terme constant. 
  \end{preuve}

\end{paragr}

\subsection{Une variante infinitésimale}\label{ssec:var-infi}

\begin{paragr}
  Dans la suite, on change les notations et on note désormais $(G, G^\theta, \theta)$ une paire symétrique de type 1, 2 ou 3 dans la proposition \ref{prop:descendant}. Soit $\ggo$ l'algèbre de Lie de $G$ et 
    \begin{align*}
	\sgo=\{X\in\ggo \mid X+\theta(X)=0\}
    \end{align*}  
    qui s'identifie à l'espace tangent en $1$ de l'espace symétrique $G/G^\theta$. Dans cette sous-section, on rappelle ce qu'est le développement géométrique de la \og  formule des traces \fg{} pour l'action de $G^\theta$ sur $\sgo$. Pour les types 1 et 2, on a une identification  $G^\theta$-équivariante de $\ggo^\theta$ avec  $\sgo$ donnée respectivement  par $X\mapsto (X,-X)$ et $X\mapsto X \al$ où $\al$ est un  élément non nul de $K$ tel que  $\trace_{K/L}(\al)=0$. Dans ces cas, la formule des traces n'est qu'une reformulation \og relative \fg{} de la formule des traces pour les algèbres de Lie de \cite{Cha} alors que pour le type 3, elle est l'objet d'étude de  \cite{li1}.

On transposera dans notre cadre la plupart des notations utilisées précédemment sans plus de commentaire. Précisons-en cependant quelques-unes. Fixons un tore déployé maximal $\theta$-stable $A_0$ de $G$ tel que $A_0^\theta$ est un tore déployé maximal de $G^\theta$. Un sous-groupe parabolique de $G$, resp. de $G^\theta$, est dit semi-standard s'il contient $A_0$, resp. $A_0^\theta$. 
Fixons un sous-groupe parabolique minimal semi-standard $\theta$-stable $P_0$ de $G$. Alors $P_0^\theta$ est un sous-groupe parabolique minimal semi-standard de $G^\theta$. Soit $\fc(P_0^\theta,\theta)$ l'ensemble des sous-groupes paraboliques semi-standard $\theta$-stables de $G$ contenant $P_0^\theta$. Soit $\fc^\theta(P_0^\theta)$ l'ensemble des sous-groupes paraboliques de $G^\theta$ contenant $P_0^\theta$. Notons que l'application 
    \begin{align*}
	P\mapsto P^\theta
    \end{align*}  
de $\fc(P_0^\theta,\theta)$ dans $\fc^\theta(P_0^\theta)$ est surjective. Pour les types 1 et 2, cette application est une bijection. 
\end{paragr}

\begin{paragr}
On considère la relation d'équivalence suivante sur $\sgo(F)$ : deux éléments sont équivalents si et seulement si leurs parties semi-simples (pour la décomposition de Jordan dans $\ggo(F)$) sont conjuguées sous $G^\theta(F)$. On note $\oc$ l'ensemble des classes d'équivalence. La classe d'équivalence contenant $0\in\sgo(F)$ est appelée nilpotente. Si $\of\in\oc$ est nilpotente, on remplacera souvent $\of$ en indice par ``nilp''. 
\end{paragr}

\begin{paragr}[Majoration du noyau tronqué.] ---
  On note $\Sc(\sgo(\AAA))$ l'espace de Schwartz sur $\sgo(\AAA)$. Soit $\varphi\in\Sc(\sgo(\AAA))$, $\of\in\oc$ et $x\in G^\theta(\AAA)$. On définit pour tout $P=MN\in\fc(P_0^\theta,\theta)$ avec sa décomposition de Levi 
  \begin{align*}
  K_{P,\of,\varphi}(x)=\sum_{X\in \mgo(F)\cap\of} \int_{(\ngo\cap\sgo)(\AAA)} \varphi(\Ad(x^{-1})(X+U)) \, dU. 
  \end{align*}
En remplaçant le point $T_0\in\ov{\ago_0^{G,+}}$ dans \S \ref{S:T0} par $T_0+\theta(T_0)$ si nécessaire (seulement pour le type 1), on peut et on va supposer que $T_0\in\ago_0^\theta\cap\ov{\ago_0^{G,+}}$. On définit pour tout $T\in T_0+\ago_0^\theta\cap\ov{\ago_0^+}$  
  \begin{align*}
  K_{\of,\varphi}^T(x)=\sum_{P\in\fc(P_0^\theta,\theta)} (-1)^{\dim{(\ago_P^G)^\theta}} \sum_{\delta\in P^\theta(F)  \back G^\theta(F)} \hat\tau_P(H_P(\delta x)-T_P) K_{P,\of,\varphi}(\delta x). 
  \end{align*}
  À l'aide de \cite[lemme 5.1]{ar1}, on voit que la somme sur $\delta$ dans la dernière expression est finie. On note  $[G^\theta]^G=G^\theta(F)\back (G^\theta(\AAA)\cap G(\AAA)^1)$. 
  
\begin{theoreme}(\cite[théorème 3.1]{Cha} et \cite[théorème 4.14]{li1})\label{thm:cv-infi}
	Pour tout $T\in T_0+\ago_0^\theta\cap\overline{\ago_0^+}$ et tout $\chi\in \ago_{G^\theta}^\ast$, il existe une semi-norme continue $\|\cdot\|$ sur $\Sc(\sgo(\AAA))$ telle que pour tout $\varphi\in\Sc(\sgo(\AAA))$ on ait 
  \begin{align*}
  	\sum_{\of\in\oc} \int_{[G^\theta]^G} | K_{\of,\varphi}^T(x) | \exp(\langle \chi, H_{G^\theta}(x) \rangle) \, dx \leq \|\varphi\|. 
  \end{align*}	
\end{theoreme}

\begin{remarque}
    Pour les types 1 et 2,  on a $G^\theta(\AAA)\cap G(\AAA)^1 = G^\theta(\AAA)^1$ et alors $\exp(\langle \chi, H_{G^\theta}(x) \rangle)$ vaut $1$ pour tous $\chi\in \ago_{G^\theta}^\ast$ et $x\in [G^\theta]^G$.
\end{remarque}
\end{paragr}

\begin{paragr}[Distributions géométriques.] ---
  Soit $\varphi\in\Sc(\sgo(\AAA))$, $\of\in \oc$ et $\chi\in \ago_{G^\theta,\CC}^\ast$. 
  Soit $\eta:F^\times\back \AAA^\times \to \CC^\times$ un caractère unitaire comme dans \S \ref{S:JchiT}. Fixons un caractère $\iota\in X^\ast(G)$. Par composition avec $\iota$, on en déduit un caractère $G(\AAA)\to\CC^\times$ trivial sur les sous-groupes $A_G^\infty$ et $G(F)$, encore noté $\eta$. 
  On définit pour tout $T\in T_0+\ago_0^\theta\cap\overline{\ago_0^+}$ 
   \begin{align}\label{eq:infi-JoGT}
     J_\of^{G,T}(\eta, \chi, \varphi)=\int_{[G^\theta]^G} K_{\of,\varphi}^T(x) \eta(x) \exp(\langle \chi, H_{G^\theta}(x) \rangle)  \, dx
  \end{align}	
qui converge absolument d'après le théorème \ref{thm:cv-infi}. 

Plus généralement, soit $Q\in\fc(P_0^\theta,\theta)$ et $f\in\Sc((\mgo_Q\cap\sgo)(\AAA))$. Soit
\begin{align*}
    \fc^Q(P_0^\theta,\theta)=\{P\in \fc(P_0^\theta,\theta) | P\subset Q \}.
\end{align*}
On note $\oc^{\mgo_Q\cap\sgo}$ l'ensemble des classes de $M_Q^\theta(F)$-conjugaison semi-simples dans $(\mgo_Q\cap\sgo)(F)$. L'intersection $\of\cap (\mgo_Q\cap\sgo)(F)$ est une réunion finie, éventuellement vide, de classes $\of_1, \dots, \of_n \in \oc^{\mgo_Q\cap\sgo}$. Un sous-groupe parabolique de $M_Q$, resp. de $M_Q^\theta$, est dit semi-standard s'il contient $A_0$, resp. $A_0^\theta$. Le groupe $P'_0=P_0\cap M_Q$ est un sous-groupe parabolique minimal semi-standard $\theta$-stable de $M_Q$. Ainsi $B_Q=P'_0N_Q$ est un sous-groupe parabolique minimal semi-standard $\theta$-stable de $G$ inclus dans $Q$. Le groupe $P'^\theta_0=P_0^\theta\cap M_Q^\theta$ est un sous-groupe parabolique minimal semi-standard de $M_Q^\theta$. 
Soit $\fc^{M_Q}(P'^\theta_0,\theta)$ l'ensemble des sous-groupes paraboliques semi-standard $\theta$-stables de $M_Q$ contenant $P'^\theta_0$. L'application 
\begin{align}\label{eq:PcapM}
    P\mapsto P\cap M_Q
\end{align}
induit une bijection de $\fc^Q(P_0^\theta, \theta)$ sur $\fc^{M_Q}(P'^\theta_0,\theta)$ dont la réciproque est l'application $P'\mapsto P'N_Q$. 
Soit $\fc^{M_Q^\theta}(P'^\theta_0)$ l'ensemble des sous-groupes paraboliques de $M_Q^\theta$ contenant $P'^\theta_0$. On définit pour tout $P'=M'N'\in\fc^{M_Q}(P'^\theta_0,\theta)$ avec sa décomposition de Levi, tout $1\leq i\leq n$ et tout $x\in M_Q^\theta(\AAA)$
  \begin{align*}
  K_{P',\of_i,f}(x)=\sum_{X\in (\mgo'\cap\sgo)(F)\cap\of_i} \int_{(\ngo'\cap\sgo)(\AAA)} f(\Ad(x^{-1})(X+U)) \, dU. 
  \end{align*}
Pour tout $T'\in\ago_0^\theta\cap\ov{\ago_{P'_0}^+}$ et tout sous-groupe parabolique semi-standard $\theta$-stable $R$ de $M_Q$, on note $T'_R$ la projection de $w T'$ sur $\ago_R^\theta$, où $w\in W(M_Q, A_0^\theta)$ est tel que $P'_0\subset R_w$ (cf. remarque \ref{rmq:combi} pour l'existence). En particulier, pour $T\in T_0+\ago_0^\theta\cap\ov{\ago_0^+}$, on a $T_{B_Q}\in\ago_0^\theta\cap\overline{\ago_{B_Q}^+}\subset\ago_0^\theta\cap\ov{\ago_{P'_0}^+}$ et on note $T_R=(T_{B_Q})_R\in\ago_R^\theta$. 

\begin{remarque}\label{rmq:TR=TRN}
    Comme $P'_0\subset R_w$ si et seulement si $B_Q\subset (R N_Q)_w$, où $R N_Q$ est un sous-groupe parabolique semi-standard $\theta$-stable de $G$, on a $T_R=T_{R N_Q}$ avec notre notation. 
\end{remarque}

On définit pour tout $T\in T_0+\ago_0^\theta\cap\ov{\ago_0^+}$, tout $1\leq i\leq n$ et tout $x\in M_Q^\theta(\AAA)$
\begin{align*}
    K_{\of_i,f}^{Q,T}(x)=\sum_{P'\in\fc^{M_Q}(P'^\theta_0,\theta)} (-1)^{\dim{(\ago_{P'}^{M_Q})^\theta}}    \sum_{\delta\in {P'}^\theta(F)  \back M_Q^\theta(F)} \hat\tau_{P'}(H_{P'}(\delta x)-T_{P'}) K_{P',\of_i,f}(\delta x). 
\end{align*}
En utilisant la bijection \eqref{eq:PcapM} et la remarque \ref{rmq:TR=TRN}, on peut également écrire  
\begin{align*}
    K_{\of_i,f}^{Q,T}(x)=\sum_{P\in\fc^Q(P_0^\theta, \theta)} (-1)^{\dim{(\ago_P^Q)^\theta}} \sum_{\delta\in P^\theta(F)  \back Q^\theta(F)} \hat\tau_{P}(H_{P}(\delta x)-T_{P}) K_{P\cap M_Q,\of_i,f}(\delta x).  
\end{align*}
Soit $[M_Q^\theta]^Q=M_Q^\theta(F)\back (M_Q^\theta(\AAA)\cap M_Q(\AAA)^1)$. On définit pour tout $\xi\in \ago^\ast_{Q^\theta,\CC}$ 
\begin{align*}
    J_{\of_i}^{Q,T}(\eta, \xi, f)=\int_{[M_Q^\theta]^Q} K_{\of_i,f}^{Q,T}(x) \eta(x) \exp(\langle \xi, H_{Q^\theta}(x) \rangle)  \, dx.
\end{align*}
C'est essentiellement un produit de distributions de la forme \eqref{eq:infi-JoGT}. Enfin, on définit 
\begin{align}\label{eq:def-infi-Jo}
    J_{\of}^{Q,T}(\eta, \xi, f)=\sum_{1\leq i\leq n} J_{\of_i}^{Q,T}(\eta, \xi, f). 
\end{align}
\end{paragr}

\begin{paragr}[Comportement en $T$.] ---
Soit $\fc(Q,\theta)$ l'ensemble de sous-groupes paraboliques $\theta$-stables de $G$ contenant $Q$. On définit 
  \begin{align*}
   \Gamma_Q^G(H, T)=\sum_{R\in\fc(Q,\theta)} (-1)^{\dim (\ago_R^G)^\theta} \tau_Q^R(H) \hat\tau_R(H-T),  \forall H, T\in \ago_0^\theta. 
   \end{align*}	
Pour tout $T$ fixé, la fonction $\Gamma_Q^G(\cdot, T)$ sur $(\ago_Q^G)^\theta$ est à support compact d'après \cite[lemmes 1.8.3 et 2.9.2]{labWal}. Pour $\lambda\in\ago_{Q,\CC}^{\theta,\ast}$, on pose 
  \begin{align*}
 c'_Q(\lambda, T)=\int_{(\ago_Q^G)^\theta} \Gamma_Q^G(H,T)  \exp(\langle \lambda, H\rangle) \, dH, \forall T\in\ago_Q^\theta. 
  \end{align*}	
En particulier, on note  
  \begin{align*}
 p_{Q,\chi}(T)=c'_Q(\chi+2\rho_{\ngo_Q\cap\sgo}-2\rho_{Q^\theta}^{G^\theta}, T), \forall T\in\ago_Q^\theta. 
  \end{align*}	
  
\begin{lemme}\label{lem:art2.2-li5.6}
Il existe un polynôme $c_{\lambda,Q,R}$ sur $(\ago_R^G)^\theta$ pour tout $R\in\fc(Q,\theta)$ tel que 
  \begin{align*}
 c'_Q(\lambda, T)=\sum_{R\in\fc(Q,\theta)} \exp(\langle\lambda, T_R^G\rangle) c_{\lambda,Q,R}(T_R^G). 
  \end{align*}	
  En particulier, $c'_Q(\lambda, T)$ est un polynôme-exponentielle en $T$. 
\end{lemme}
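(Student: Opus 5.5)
On suit la preuve de \cite[lemma 2.2]{arthur2} (et de \cite[lemme 5.6]{li1}), en transposant la combinatoire des sous-groupes paraboliques à celle des sous-groupes paraboliques $\theta$-stables de $\fc(Q,\theta)$. Cette combinatoire s'identifie, via les parties $\theta$-stables de $\Delta_Q$, à celle d'un système de racines sur $(\ago_Q^G)^\theta$. La première étape consiste à calculer $c'_Q(\lambda,T)$ pour $\lambda$ en position générale. On injecte la définition de $\Gamma_Q^G(H,T)$ dans l'intégrale. Pour $\mathrm{Re}(\lambda)$ dans un cône convenable, et pour chaque $R\in\fc(Q,\theta)$, l'intégrale de $\tau_Q^R(H)\hat\tau_R(H-T)\exp(\langle\lambda,H\rangle)$ se scinde selon $(\ago_Q^G)^\theta=(\ago_Q^R)^\theta\oplus(\ago_R^G)^\theta$. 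Le facteur porté par $\ago_R^G$ donne, après la translation $H\mapsto H+T_R^G$, le facteur $\exp(\langle\lambda,T_R^G\rangle)$ multiplié par l'inverse d'un produit de formes linéaires en $\lambda$, de type $\hat\Theta_R^G$. Le facteur porté par $\ago_Q^R$ est l'inverse d'un produit de type $\Theta_Q^R$ et ne dépend pas de $T$. On obtient ainsi, comme dans la preuve du lemme \ref{lem:le terme cst},
\begin{align*}
c'_Q(\lambda,T)=\sum_{R\in\fc(Q,\theta)}(-1)^{\dim(\ago_R^G)^\theta}\frac{\exp(\langle\lambda,T_R^G\rangle)}{\Theta_Q^{R}(\lambda)\hat\Theta_R^{G}(\lambda)},
\end{align*}
où les fonctions $\Theta$ et $\hat\Theta$ sont relatives aux racines et copoids de $(\ago_Q^G)^\theta$. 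Cette identité s'étend par prolongement analytique à tout $\lambda$ en position générale. En effet, $\Gamma_Q^G(\cdot,T)$ est à support compact \cite[lemmes 1.8.3 et 2.9.2]{labWal}, donc le membre de gauche est entier en $\lambda$.

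La deuxième étape traite un $\lambda$ quelconque, éventuellement singulier. On fixe un vecteur générique $\mu$ et l'on écrit $c'_Q(\lambda,T)=\lim_{t\to0}c'_Q(\lambda+t\mu,T)$. Dans chaque terme indexé par $R$, on développe $\exp(\langle\lambda+t\mu,T_R^G\rangle)=\exp(\langle\lambda,T_R^G\rangle)\exp(t\langle\mu,T_R^G\rangle)$ en série de Taylor en $t$. Le dénominateur est un polynôme en $t$ de degré au plus $\dim(\ago_Q^G)^\theta$ et ne dépend pas de $T$. Le membre de gauche est entier, donc la somme n'a pas de pôle en $t=0$. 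Sa valeur en $t=0$ s'obtient alors terme à terme : pour chaque $R$, c'est le coefficient de $t^0$ de la série de Laurent, c'est-à-dire $\exp(\langle\lambda,T_R^G\rangle)$ multiplié par un polynôme en $T_R^G$ de degré au plus $\dim(\ago_Q^G)^\theta$. Ce polynôme, noté $c_{\lambda,Q,R}$, s'exprime à l'aide des coefficients de Taylor de $\exp(t\langle\mu,\cdot\rangle)$ et des coefficients de Laurent de l'inverse du dénominateur.

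L'étape délicate est de justifier la décomposition en facteurs de la première étape. Dans le cadre $\theta$-stable, il faut vérifier que $\tau_Q^R\hat\tau_R$ restreint à $(\ago_Q^G)^\theta$ est bien la fonction caractéristique d'un produit de cônes simpliciaux. Il faut aussi contrôler les projections $T\mapsto T_R^G$, en particulier leur compatibilité, pour $R\in\fc(Q,\theta)$, avec la remarque \ref{rmq:combi}. Si ce calcul explicite pose problème, on peut procéder autrement. Pour $\lambda$ fixé, on montre que $T\mapsto c'_Q(\lambda,T)$ satisfait, par rapport aux translations de $T$, une formule de récurrence de type \eqref{eq:tau=Gam}. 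On établit ensuite le résultat par récurrence sur $\dim(\ago_Q^G)^\theta$, comme dans \cite[§2]{arthur2}.
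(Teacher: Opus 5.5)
Your argument is correct and is essentially the one the paper invokes when it cites the proof of \cite[lemmes 1.9.1 et 2.9.2]{labWal} (and \cite[proposition 5.6]{li1}): an explicit formula for $c'_Q(\lambda,T)$ at generic $\lambda$, obtained from the $\theta$-stable combinatorics on $(\ago_Q^G)^\theta$, followed by extraction of the $t^0$ Laurent coefficient along $\lambda+t\mu$. There are two harmless slips: the factor $\tau_Q^R$ produces the coweight product $\hat\Theta_Q^R$ and the factor $\hat\tau_R$ the coroot product $\Theta_R^G$ (not the reverse), with overall sign $(-1)^{\dim(\ago_Q^R)^\theta}$ as in lemme \ref{lem:le terme cst}; and here $T_R^G$ is simply the projection of $T$ onto $(\ago_R^G)^\theta$, so remarque \ref{rmq:combi} plays no role.
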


\begin{preuve}
Il se déduit de la preuve de \cite[lemmes 1.9.1 et 2.9.2]{labWal} (voir aussi \cite[proposition 5.6]{li1}). 
\end{preuve}

  \begin{remarque}
      Pour les types 1 et 2, on a $\exp(\langle \chi+2\rho_{\ngo_Q\cap\sgo}-2\rho_{Q^\theta}^{G^\theta}, H\rangle)=1$ pour tout $H\in(\ago_Q^G)^\theta$. Il s'ensuit que  $p_{Q,\chi}(T)$ est effectivement un polynôme en $T$. 
  \end{remarque}
  
On définit $\varphi_Q^\eta\in\Sc((\mgo_Q\cap\sgo)(\AAA))$ par 
  \begin{align*}
 \varphi_Q^\eta(X)=\int_{K^\theta} \int_{(\ngo_Q\cap\sgo)(\AAA)} \varphi(\Ad(k^{-1})(X+V)) \eta(k) \, dVdk, \forall X\in(\mgo_Q\cap\sgo)(\AAA). 
  \end{align*}

\begin{theoreme}\label{thm:expol-infi}
Soit $T, T'\in T_0+\ago_0^\theta\cap\overline{\ago_0^+}$. 
  \begin{enumerate}
  \item On a 
  \begin{align*}
  	 J_\of^{G,T}(\eta,\chi,\varphi) = \sum_{Q\in\fc(P'_0,\theta)} p_{Q,\chi}(T^G_Q-T'^G_Q) \exp(\langle \chi+2\rho_{\ngo_Q\cap\sgo}-2\rho_{Q^\theta}^{G^\theta}, T'^G_Q\rangle) \\ 
  	 \times J_\of^{Q,T'}(\eta,\chi+2\rho_{\ngo_Q\cap\sgo}-2\rho_{Q^\theta}^{G^\theta},\varphi_Q^\eta). 
  \end{align*}	 
  
  \item L'expression $J_\of^{Q,T}(\eta, \xi, f)$ est la restriction d'un polynôme-exponentielle en $T$ indépendant de $T_Q$. 
\end{enumerate}
\end{theoreme}

\begin{preuve}
C'est une généralisation immédiate de \cite[théorème 4.2]{Cha} et \cite[théorème 5.8]{li1} qui concernent seulement un caractère $\eta$ vérifiant $\eta^2=1$ mais leurs preuves valent encore pour tout caractère unitaire. 
\end{preuve}
\end{paragr}

\begin{paragr}\label{S:def-dist-infi}
Avec le théorème \ref{thm:expol-infi}, on définit $J_\of^Q(\eta,\xi,f)$ comme le terme constant de $J_\of^{Q,T}(\eta, \xi, f)$. En particulier, on obtient la distribution nilpotente $J_{\nilp}^Q(\eta,\xi,\cdot)$ sur $\Sc((\mgo_Q\cap\sgo)(\AAA))$. Il est également clair que l'on peut généraliser les constructions et les résultats de cette section au cas d'un produit de paires symétriques des trois types dans la proposition \ref{prop:descendant}. 
\end{paragr}

\bibliography{bibliographie}
\bibliographystyle{alpha}

\begin{flushleft}
Pierre-Henri Chaudouard \\
Université Paris Cité\\
CNRS \\
IMJ-PRG \\
Bâtiment Sophie Germain\\
8 place Aurélie Nemours\\
F-75013 PARIS \\
France\\
\medskip
Institut Universitaire de France\\
\medskip

email:\\
Pierre-Henri.Chaudouard@imj-prg.fr \\
\end{flushleft}

\begin{flushleft}
Huajie Li\\
Yau Mathematical Sciences Center \\
Tsinghua University \\
Beijing 100084 \\
China \\
\medskip
email:\\
lihuajie@mail.tsinghua.edu.cn \\
\end{flushleft}

\end{document}